\numberwithin{equation}{section}
\numberwithin{equation}{section}
\tikzset{
%Define standard arrow tip
>=stealth',
%Define style for different line styles
help lines/.style={dashed, thick},
axis/.style={<->},
important line/.style={thick},
connection/.style={thick, dotted},
punkt/.style={
rectan\mathrm{GL}e,
rounded corners,
draw=black, thick,
text width=4.5em,
minimum height=2em,
text centered,
},
pil/.style={
->,
thick,
gray,
shorten <=2pt,
shorten >=2pt,}
}
\newtheorem{proposition}{Proposition}[section]
\newtheorem{lemma}[proposition]{Lemma}
\newtheorem{corollary}[proposition]{Corollary}
\newtheorem{theorem}[proposition]{Theorem}
\theoremstyle{definition}
\newtheorem{definition}[proposition]{Definition}
\newtheorem{remark}[proposition]{Remark}
\newtheorem{conjecture}[proposition]{Conjecture}
\newenvironment{customthm}[1]
  {\innercustomthm}
  {\endinnercustomthm}
\newcommand{\arxiv}[1]{\href{http://arxiv.org/abs/#1}{\tt arXiv:\nolinkurl{#1}}}
\newcommand{\Rmnum}[1]{\expandafter\@slowromancap\romannumeral #1@}
\def \g{\mathfrak{g}}
\def \dm{\diamond}
\def \N{\mathbb{N}}
\def \Q{\mathbb{Q}}
\def \C{\mathbb{C}}
\def \Z{\mathbb{Z}}
\def \I{\mathbb{I}}
\def \Br{\mathrm{Br}}
\def \cJ{\mathcal{J} }
\def \bcG{\mathcal{G}_{\bullet}}
\def \tbcG{\widetilde{\mathcal{G}}_{\bullet}}
\def \ck{\mathcal{K}}
\def \fX{\Upsilon}
\def \wI{\I_{\circ}}
\def \wItau{\I_{\circ,\tau}}
\def \bI{\I_{\bullet}}
\def \bIi{\I_{\bullet,i}}
\def \tfX{\widetilde{\Upsilon}}
\def \cR{\mathcal{R}}
\def \bs{\mathbf{r}} %\mathbf{s}}
\def \bF{\mathbb{F}}
\def \bw{w_\bullet}
\def \bwi{w_{\bullet,i}}
\def \bW{W_{\bullet}}
\def \reW{W^{\circ}}
\def \bbw{{\boldsymbol{w}}_\circ}
\def \tbU{\tU_{\bullet}}
\def \bU{\U_{\bullet}}
\def \ba{\mathbf{a}}
\def \tk{\widetilde{k}}
\def \tT{\widetilde{\mathscr T}}
\def \tTD{\widetilde{T}}
\def \cT{\mathcal{T}}
\def \tTT{\widetilde{\mathbf{T}}}
\def \sT{\mathscr T}
\newcommand{\ta}[1]{T'_{#1,-1}}
\newcommand{\tb}[1]{T''_{#1,+1}}
\newcommand{\Ta}[1]{\TT'_{#1,-1}}
\newcommand{\Tb}[1]{\TT''_{#1,+1}}
\newcommand{\tTa}[1]{\tTT'_{#1,-1}}
\newcommand{\tTb}[1]{\tTT''_{#1,+1}}
\newcommand{\Tae}[1]{\TT'_{#1,e}}
\newcommand{\Tbe}[1]{\TT''_{#1, e}}
\newcommand{\tTae}[1]{\tTT'_{#1,e}}
\newcommand{\tTbe}[1]{\tTT''_{#1, e}}
\def \T{\mathrm{T}}
\def \cL{\DH} %\mathcal{D}}
\def \Id{\mathrm{Id}}
\def \tpsi{\psi} %\widetilde{\psi}}
\def \tPsi{\widetilde{\Psi}}
\newcommand{\U}{\mathbf{U}}
\newcommand{\Ui}{(\U)^\imath}
\newcommand{\tU}{\widetilde{{\mathbf U}} }
\newcommand{\tUi}{\widetilde{{\mathbf U}}^\imath}
\newcommand{\Aut}{\operatorname{Aut}\nolimits}
\newcommand{\qbinom}[2]{\begin{bmatrix} #1\\#2 \end{bmatrix} }
\def \ad{\text{Ad}}
\newcommand{\nc}{\newcommand}
\nc{\greentext}[1]{\textcolor{green}{#1}}
\nc{\redtext}[1]{\textcolor{red}{#1}}
\nc{\bluetext}[1]{\textcolor{blue}{#1}}
\nc{\brown}[1]{\browntext{ #1}}
\nc{\green}[1]{\greentext{ #1}}
\nc{\red}[1]{\redtext{ #1}}
\nc{\blue}[1]{\bluetext{ #1}}
\def \Q {\mathbb Q}
\def \TT{\mathbf T}
\newcommand{\wt}{\text{wt}}
\def \bvs{{\boldsymbol{\varsigma}}}
\def \obvs{\overline{\bvs}}
\def \balpha{\overline{\alpha}} %\gamma}
\def \lc{l_\circ}
\def \vs{\varsigma}
\def \U{\mathbf U}
\def \Ui{\mathbf{U}^\imath}
\begin{document}
\title[Relative braid group symmetries on $\imath$quantum groups]{An intrinsic approach to relative braid group symmetries on $\imath$quantum groups}

\author[Weiqiang Wang]{Weiqiang Wang}
%\address{Department of Mathematics, University of Virginia, Charlottesville, VA 22904, USA}

\author[Weinan Zhang]{Weinan Zhang}
\address{Department of Mathematics, University of Virginia, Charlottesville, VA 22904, USA}
\email{ww9c@virginia.edu (Wang)}
\email{wz3nz@virginia.edu (Zhang)}

\subjclass[2010]{Primary 17B37, 20G42.}

\keywords{Quantum groups, quantum symmetric pairs, braid group actions, quasi $K$-matrix}

\begin{abstract}
We initiate a general approach to the relative braid group symmetries on (universal) $\imath$quantum groups, arising from quantum symmetric pairs of arbitrary finite types, and their modules. Our approach is built on new intertwining properties of quasi $K$-matrices which we develop and braid group symmetries on (Drinfeld double) quantum groups. Explicit formulas for these new symmetries on $\imath$quantum groups are obtained.  We establish a number of fundamental properties for these symmetries on $\imath$quantum groups, strikingly parallel to their well-known quantum group counterparts. We apply these symmetries to fully establish rank one factorizations of quasi $K$-matrices, and this factorization property in turn helps to show that the new symmetries satisfy relative braid relations. As a consequence, conjectures of Kolb-Pellegrini and Dobson-Kolb are settled affirmatively. Finally, the above approach allows us to construct compatible relative braid group actions on modules over quantum groups for the first time.
\end{abstract}

\maketitle

\tableofcontents

%%%%%%%
%%%%%%%
\section{Introduction}

\subsection{Background}

Braid group symmetries have played an essential role in understanding the structures of Drinfeld-Jimbo quantum groups $\U$ and have found applications in geometric representation theory and categorification among others. These symmetries were constructed by Lusztig and used in first constructions of PBW bases and canonical bases in ADE type \cite{Lus90a}. They have further been generalized to non-simply-laced types and beyond \cite{Lus90b, Lus93}. Another crucial property is that there exists a compatible braid group action on integrable $\U$-modules. A systematic exposition on the braid group actions on quantum groups and their modules forms a significant portion of Lusztig's book  \cite[Ch.~ 5, Part~ VI]{Lus93}.

Let $\tU =\langle E_i, F_i, K_i, K_i' \mid i\in \I \rangle$ be the Drinfeld double quantum group, where $K_i K_i'$ are central. The quantum group $\U =\langle E_i, F_i, K_i^{\pm 1} \mid i\in \I \rangle$ is recovered from $\tU$ by a central reduction: \[
\U =\tU/ ( K_i K_i' -1 \mid i\in \I).
\]
The Drinfeld doubles naturally arise from the Hall algebra construction of Bridgeland \cite{Br13}, and it is shown in \cite{LW21b}  that reflection functors provide braid group actions on the Drinfeld doubles; see Proposition~\ref{prop:braid1}. As a straightforward generalization for Lusztig's symmetries on $\U$ \cite[37.2.4]{Lus93}, there are four variants of braid group operators $\tTD_{i,e}', \tTD_{i,e}''$ on $\tU$, %there are 4 variants of braid group operators, $T_{i,e}', T_{i,e}''$,
for $e \in \{\pm 1\}$ and $i\in \I$, which are related to each other by conjugations of certain (anti-) involutions \cite{LW21b}; see \eqref{eq:sTs}:
\begin{equation}
  \label{eq:4braid}
  \tTD'_{i,-e} =\sigma \circ \tTD''_{i,+e} \circ \sigma,
  \qquad
  \tTD''_{i,-e}:= \tpsi \circ \tTD''_{i,+e} \circ \tpsi,
  \qquad
  \tTD'_{i,+e}:= \tpsi \circ \tTD'_{i,-e} \circ \tpsi.
\end{equation}
Here $\tpsi$ is the bar involution and $\sigma$ is an anti-involution on $\tU$; see Proposition~\ref{prop:QG4}.

%\cite{Lus90a, Lus90b} \cite{Lus93}

%
%\subsection{}

Associated with any Satake diagram $(\I =\bI \cup \wI, \tau)$, a quantum symmetric pair $(\U, \Ui_\bvs)$ was introduced by Gail Letzter in finite type \cite{Let99, Let02} as a $q$-deformation of the usual symmetric pair; here, $\Ui_\bvs$ is a coideal subalgebra of $\U$ depending on parameters $\bvs =(\bvs_i)_{i\in \wI}$. {\em Universal} quantum symmetric pairs $(\tU, \tUi)$ (of quasi-split type) were formulated in \cite{LW22}, where the parameters are replaced by suitable central elements in $\tUi$, and $\Ui_\bvs$ is recovered from $\tUi$ by a central reduction. ($\Ui_\bvs, \tUi$ will be referred to as $\imath$quantum groups, and they are called {\em quasi-split} if $\bI =\emptyset$ and {\em split} if in addition $\tau =\Id$.) Several fundamental constructions on quantum groups, including (quasi) $R$-matrix, canonical bases, and Hall algebra realization have been generalized to the setting of quantum symmetric pairs in recent years; see \cite{BW18a, BW18b, BK19, LW22}.

Lusztig's braid group actions on $\U$ do not preserve the subalgebra $\Ui_\bvs$ in general. Kolb-Pellegrini \cite{KP11} proposed that there should be relative braid group symmetries on $\imath$quantum groups corresponding to the relative (or restricted) Weyl groups for the underlying symmetric pairs. For a class of $\imath$quantum groups of finite type (including all quasi-split types and type AII) with some specific parameters, formulas for such braid group actions were found and verified {\em loc. cit.} via computer computation. The relative braid group action for type AI appeared earlier in \cite{Ch07} and \cite{MR08}.

There has been some limited progress on relative braid group action on $\Ui_\bvs$ in the last decade; for type AIII see Dobson \cite{Dob20}. An $\imath$Hall algebra approach has been developed to realize the universal {\em quasi-split} $\imath$quantum groups $\tUi$ \cite{LW22}. As a generalization of Ringel's construction \cite{Rin96}, reflection functors \cite{LW21a, LW21b} are used to construct relative braid group actions on $\tUi$ of quasi-split type, where the braid group operators act on the central elements in $\tUi$ non-trivially.  For $\tUi$ or $\Ui_\bvs$ in general beyond quasi-split type, no conjectural formulas or conceptual explanations for relative braid group actions were available.
%Besides the quasi-split constraints, there are additional mild constraints (excluding type AIII with Dynkin diagram $A_{2\ell}$ in finite type) arising from the $\imath$quiver requirements for $\imath$Hall algebras.

There are  braid group actions on $\U$-modules which are compatible with braid group actions on quantum groups, cf. \cite{Lus93}. In contrast, no relative braid group action on $\Ui_\bvs$-modules has been known to date. The Hall algebra approach does not help providing any clue on such action at the module level.

\subsection{Goal}

Our goal is to develop a conceptual and general approach to relative braid group actions on $\imath$quantum groups, arising from (universal) quantum symmetric pairs of arbitrary finite type, and on their modules for the first time. This in particular settles the longstanding conjecture of Kolb and Pellegrini \cite{KP11} in a constructive manner.
%(We systematically adopt the terminology of {\em relative} braid group action to distinguish from Lusztig's braid group action, as they may act on the same $\U$-modules differently.)

It is crucial for us to work with universal $\imath$quantum groups. We shall formulate relative braid group symmetries $\tTT_{i,e}', \tTT_{i,e}''$ on $\tUi$, for $e \in \{\pm 1\}$ and $i\in \wItau$, which are related to each other via conjugations by a bar involution $\tpsi^\imath$ and an anti-involution $\sigma^\imath$ on $\tUi$; compare \eqref{eq:4braid}:
\begin{equation*}
 %\label{eq:4braidUi}
  \tTT'_{i,-e} =\sigma^\imath \circ \tTT''_{i,+e} \circ \sigma^\imath,
  \qquad
  \tTT''_{i,-e}:= \tpsi^\imath \circ \tTT''_{i,+e} \circ \tpsi^\imath,
  \qquad
  \tTT'_{i,+e}:= \tpsi^\imath \circ \tTT'_{i,-e} \circ \tpsi^\imath.
\end{equation*}

By central reductions and rescaling automorphisms, these symmetries descend to relative braid group actions on $\imath$quantum groups with parameters $\Ui_\bvs$. Moreover, we are able to formulate compatible relative braid group actions on integrable $\U$-modules. We further establish a number of basic properties of these new symmetries which are natural $\imath$-counterparts of well-known properties for Lusztig's braid group symmetries.

\subsection{The basic idea}

Various constructions for quantum groups can be regarded as constructions for quantum symmetric pairs of diagonal type $(\U \otimes \U, \U)$, and hence $\imath$quantum groups can be viewed as a vast generalization of quantum groups. This simple observation can be instrumental on determining what form a suitable $\imath$-generalization should take; for example, this view was applied successfully in the developments of $\imath$canonical bases arising from quantum symmetric pairs in \cite{BW18b} and $\imath$Hall algebras which realize universal $\imath$quantum groups \cite{LW22}; see also the recent development of $\imath$crystal bases by Watanabe \cite{W21b}.

Denote by ${\bf L}_i''$ the rank 1 quasi $R$-matrix associated to $i\in \I$, and let ${\bf L}_i'$ be its inverse. The following formula in \cite[37.3.2]{Lus93}:
\begin{align}
  \label{eq:TTL}
(\ta{i} \otimes  \ta{i}) \Delta (\tb{i} u) = {\bf L}_i' \Delta(u) {\bf L}_i''
\end{align}
provides a relation between braid group actions on $\U$ and $\U\otimes \U$; a formula similar to \eqref{eq:TTL} via a different formulation of braid operators appeared in \cite{LS90} and \cite{KR90}. A starting point of this paper is to view a variant of the identity \eqref{eq:TTL} as a formula in the setting of (universal) quantum symmetric pairs of diagonal type $(\tU \otimes \tU, \tU)$; see \S\ref{sec:diag}.

Now, let $(\tU, \tUi)$ be a general universal quantum symmetric pair. Inspired by the relation \eqref{eq:TTL}, we aim at formulating a relation between braid group action on the Drinfeld double $\tU$ and the desired relative braid group action on the universal $\imath$quantum group $\tUi$ through conjugations of rank 1 quasi $K$-matrices $\tfX_i$, for $i\in \wI$.

Quasi $K$-matrices were originally formulated in \cite{BW18a} as an intertwiner between the embedding $\imath: \Ui_\bvs \rightarrow \U$ and a bar-involution conjugated embedding (for parameters $\bvs$ satisfying strong constraints); a proof in greater generality was given in \cite{BK19} under a technical assumption (which was removed later in \cite{BW21}). A reformulation by Appel and Vlaar \cite{AV22} (also see \cite{KY20}) bypassed a direct use of the bar maps, allowing more general parameters $\bvs$. In this paper, we upgrade these constructions by formulating the quasi $K$-matrices $\tfX$ for universal quantum symmetric pairs, and in particular, the rank 1 quasi $K$-matrices $\tfX_i$, for $i\in \wI$.

Dobson and Kolb \cite{DK19} proposed (conjectural) factorizations of quasi $K$-matrices in finite types into products of rank 1 quasi $K$-matrices, analogous to factorizations of quasi $R$-matrices \cite{LS90, KR90}. In their formulation, a certain scaling twist shows up. In this paper, we upgrade the formulation of the factorization together with the corresponding scaling twist to quasi $K$-matrices $\tfX$ in the universal setting.

Examples indicate that our basic idea of constructing the desired relative braid group action on $\tUi$ via quasi $K$-matrix and braid group action on $\tU$ (viewed as a generalization of \eqref{eq:TTL}) basically works --- up to a simple twist: it is necessary to use {\em suitably rescaled} braid group operators on $\tU$. Remarkably, this scaling turns out to coincide with the aforementioned scaling which appears in the factorizations of a quasi $K$-matrix $\tfX$. We are able to explore this compatibility to draw strong consequences on the seemingly unrelated topics: relative braid group actions and the factorization of quasi $K$-matrices.

\subsection{Main results}

%%%
\subsubsection{New intertwining properties of quasi $K$-matrices}

We formulate universal quantum symmetric pairs $(\tU, \tUi)$ associated to arbitrary  Satake diagrams and their basic properties in Section~\ref{sub:iQG}, following and  generalizing the quasi-split setting in \cite{LW22}. The algebra $\tUi$ contains $\tU^{\imath 0}$ and $\tbU$ naturally as subalgebras, where $\tbU$ is the Drinfeld double associated to $\bI$ and $\tU^{\imath 0}$ is a Cartan subalgebra generated by $\tk_i =K_iK_{\tau i}'$, for $i\in \wI$.

We recall the recent somewhat technical formulation of a quasi $K$-matrix $\fX_\bvs$ for $(\U, \Ui_\bvs)$ from \cite{AV22} (cf. \cite{BW18a, BK19, BW18b} for earlier constructions) in Theorem~\ref{thm:qK} and upgrade it to a universal version $\tfX$ for $(\tU, \tUi)$ in Theorem~\ref{thm:qK2}. It turns out that $\tfX$ admits a more conceptual and simpler characterization in terms of the anti-involution $\sigma$ on $\tU$ as follows.

 \begin{customthm} {\bf A}
  [Theorem~\ref{thm:fX1}]
  \label{thm:A}
 The quasi $K$-matrix $\tfX =\sum_{\mu \in \N \I} \tfX^\mu$, for $\tfX^\mu\in \tU_\mu^+$, is uniquely characterized by $\tfX^0=1$ and the following intertwining relations:
 \begin{align*}
 B_i  \tfX &=  \tfX B_i^{\sigma}    \quad (i\in \wI),
 \qquad\quad
 x  \tfX = \tfX x    \quad (x\in \tU^{\imath 0}\tbU).
 \end{align*}
\end{customthm}
This characterization of $\tfX$ plays a basic role in producing explicit formulas for relative braid group actions on $\tUi$; see the proof of Theorem~\ref{thm:rktwo1} in \S\ref{subsec:proofTiBj}.
There is a similar simple characterization of $\fX_\bvs$ for $\Ui_\bvs$ in terms of the anti-involution $\sigma\tau$ on $\U$; see Theorem~\ref{thm:quasiKUi}. (It is tempting to regard this as a new definition of $\fX_\bvs$.)

We use a distinguished scaling automorphism $\tPsi_{\bvs_\star}$ to define a rescaled bar involution $\tpsi_\star$ on $\tU$ (by twisting the bar involution $\tpsi$ on $\tU$). By exploring further intertwining properties via $\tfX$ as in \cite{Ko21}, we establish in Kac-Moody generality a bar involution $\tpsi^\imath$ (see Proposition~\ref{prop:newb3}) and an anti-involution $\sigma^\imath$ (see Proposition~\ref{prop:newb1}) from $\tpsi_\star$ and $\sigma$, respectively. These (anti-)involutions $\tpsi^\imath$ and $\sigma^\imath$ were known in some quasi-split cases; see \cite{CLW23}.

Denote by $\tfX_i$, for $i\in \wI$, the quasi $K$-matrix associated to the rank one Satake subdiagram $(\bI\cup \{i,\tau i\},\tau)$.

%%%
\subsubsection{New symmetries $\tTT'_{i,e}, \tTT''_{i,e}$}

Associated to a Satake diagram $(\I =\bI \cup \wI, \tau)$, one has the (absolute) Weyl group $W$ generated by the simple reflections $s_i$, for $i\in \I$, and a finite parabolic subgroup $W_\bullet =\langle s_i \mid i \in \bI \rangle$ with the longest element $w_\bullet$. Given $i\in \wI$, one has a rank 1 Satake subdiagram $(\I_{\bullet,i} =\bI \cup \{i, \tau i\}, \tau)$, and define $\bs_i \in W$ as in \eqref{def:bsi}. As $\bs_i =\bs_{\tau i}$, it suffices to restrict to $\bs_i$, for $i\in \wItau$ (here $\wItau$ is a set of fixed representatives of $\tau$-orbits on $\I$). The relative Weyl group $\reW$ is a subgroup of $W$ generated by $\bs_i$, for $i\in \wItau$; abstractly, $\reW$ is a Weyl group with $\bs_i$ $(i \in \wItau$) as simple reflections \cite{Lus76}; also see \cite{OV90, Lus03, DK19}.

Let $\tTD''_{i,+1}$ and $\tTD'_{i,-1}$, for $i\in \I$, be the braid group operators on $\tU$ \cite{LW21b}; see Proposition~\ref{prop:braid1}. Let $\tT''_{i,+1}$ and $\tT'_{i,-1}$ be the rescaled version of $\tTD''_{i,+1}$ and $\tTD'_{i,-1}$ via conjugation by a scaling automorphism $ \tPsi_{\bvs_\diamond}$; see \eqref{def:tT}--\eqref{def:tT-1}. As $\tT'_{j,-1}$, for $j\in \I$, satisfy the braid relations, we can make sense of
$\tT_{w,-1}'$, for $w\in W$, and in particular $\tT_{\bs_i,-1}'$, for $i\in \wI$, as automorphisms of $\tU.$

\begin{customthm} {\bf B}
  [Theorem~\ref{thm:newb0}, Proposition~\ref{prop:Cartanblack}, Theorem~\ref{thm:rkone1}, Theorem~\ref{thm:rktwo1}]
\label{thm:B}
Let $i\in \wI$. There exists a unique automorphism $\tTa{i}$ of $\tUi$ such that the following intertwining relation holds:
\begin{align}
   \label{eq:braid}
\tTa{i}(x) \tfX_i
=\tfX_i \tT_{\bs_i,-1}' (x), \qquad \text{ for all } x\in \tUi.
\end{align}
More precisely, the action of $\tTa{i}$ on $\tUi$ is given as follows:
\begin{enumerate}
\item
$\tTa{i}(x) =(\widehat{\tau}_{\bullet,i} \circ \widehat{\tau})(x)$, and
$\tTa{i}(\tk_{j,\dm}) =\tk_{\bs_i \alpha_j,\dm}$, for all $x\in \tbU$, $j\in \wI$.

\item
$\tTa{i}(B_i) =-q^{-(\alpha_i,\bw\alpha_{\tau i}) } \tT_{w_\bullet}^2 ( B_{\tau_{\bullet,i} \tau i})\ck_{\tau_{\bullet,i} \tau i}^{ -1}.$

\item
The formulas for $\tTa{i}(B_j)$ ($i \neq j \in \wItau$) are listed in Table \ref{table:rktwoSatake}.
\end{enumerate}
\end{customthm}
%Let $W_{\bullet,i}$ be the parabolic subgroup of $W$ generated by $s_i,i\in \I_{\bullet,i}$. Let $w_{\bullet,i}$ the longest element of $W_{\bullet,i}$.
See \eqref{def:taui} and \eqref{eq:kla} for notation $\tau_{\bullet,i}$ and $\tk_{\lambda,\dm}$; also see \eqref{eq:Tw} and Remark~\ref{rem:sameT} for the braid group operator $\tT_{w_\bullet}$. By definition, we have $\bs_i =\bs_{\tau i}$, $\tfX_i =\tfX_{\tau i}$, and $\tTa{i} =\tTa{\tau i}$; thus, we only need to consider $\tTT_{i,-1}'$, for $i\in\I_{\circ,\tau}$.

In the same spirit of \eqref{eq:braid} in Theorem~\ref{thm:B}, the identity \eqref{eq:TTL} for the Drinfeld double quantum group $\tU$ can be reformulated as the intertwining relation \eqref{eq:diag4} for quantum symmetric pair $(\tU \otimes \tU, \tU)$ of diagonal type.

Another symmetry $\tTb{i}$ on $\tUi$, for $i\in \wI$, is formulated in Theorem~\ref{thm:Tb} which satisfies the following intertwining relation in \eqref{eq:newb0-1}, similar to \eqref{eq:braid}:
\begin{align*}
\tTb{i}(x)\, \tT''_{\bs_i,+1}(\tfX_i^{-1}) = \tT''_{\bs_i,+1}(\tfX_i^{-1})\, \tT''_{\bs_i,+1}(x),
\quad \text{ for all } x\in \tUi.
\end{align*}

We further define 2 more symmetries $\tTT'_{i,+1}$ and $\tTT''_{i,-1}$ on $\tUi$ by conjugating $\tTa{i}$ and $\tTb{i}$ via the involution $\tpsi^\imath$; see \eqref{def:tTT2}.
These symmetries are related to each other as follows; compare \cite[Chap.~37]{Lus93}.

\begin{customthm} {\bf C}
  [Theorem~\ref{thm:newb1}]
  \label{thm:C}
  Let $e =\pm 1$ and $i \in \wI$.
The symmetries $\tTT_{i,e}'$ and $\tTT''_{i,-e}$ are mutual inverses. Moreover, we have
$\tTT_{i,e}' =\sigma^\imath \circ \tTT''_{i,-e} \circ \sigma^\imath.$
\end{customthm}
Actually, part of the proof of Theorem~\ref{thm:B} (i.e., the invertibility of $\tTa{i}$) is completed only when it is established in Theorem~\ref{thm:C} that $\tTa{i}$ and $\tTb{i}$ are mutual inverses. This is one main reason why we have formulated $\tTb{i}$ separately in spite of its many similarities with the properties for $\tTa{i}$ which we already established.

Here is an outline of proofs of Theorems~\ref{thm:B}--\ref{thm:C}. We first establish the existence of an endomorphism $\tTa{i}$ on $\tUi$ which satisfies the intertwining relation \eqref{eq:braid}, by proving Properties (1)-(3) in Theorem~\ref{thm:B} one-by-one. Properties (1)-(2) are established uniformly in Proposition~\ref{prop:Cartanblack} and  Theorem~\ref{thm:rkone1}.
We formulate a structural result in Proposition~\ref{prop:rktwoRij} as a main step toward a uniform proof of the rank 2 formulas in (3) (see Theorem~\ref{thm:rktwo1}); Proposition~\ref{prop:rktwoRij} is then verified by a type-by-type computation in Appendix~\ref{app1}.
In order to prove the invertibility of $\tTa{i}$, we establish another endomorphism $\tTb{i}$ on $\tUi$ which satisfies the intertwining relation \eqref{eq:newb0-1} in Theorem~\ref{thm:Tb}; the existence for $\tTb{i}$ is proved by a strategy similar to the one for $\tTa{i}$.
Finally, we show in Theorem~\ref{thm:newb1} that $\tTa{i}$ and $\tTb{i}$ are mutual inverses by invoking the uniqueness of elements satisfying an intertwining relation.

The formulas for actions of $\tTa{i}$ and $\tTb{i}$ on generators of $\tUi$ are mostly new. In quasi-split types, up to some twistings, we recover the formulas obtained by Hall algebra computation in \cite{LW21a}, and by central reductions to $\Ui_\bvs$, we recover formulas obtained by computer computation in \cite{KP11}.

%%%
\subsubsection{A basic property of braid symmetries}

The following theorem is a generalization of a well-known basic property of braid group action on quantum groups; see \cite{Lus93}.

\begin{customthm} {\bf D}
  [see Theorem~\ref{thm:fact1}]
  \label{thm:D}
Suppose that $wi\in \wI$, for $w\in \reW$ and $i\in \wI$. Then we have $\tTb{w}(B_i)=B_{wi}$.
\end{customthm}
The dependence in the formulation of Theorem~\ref{thm:fact1} on reduced expressions $\underline{w}$ of $w$ can be removed, once Theorem~\ref{thm:F} on braid relations for $\tTb{j}$ is established. We reduce the proof of Theorem~\ref{thm:D} to the rank 2 cases. The proofs in rank 2 cases are largely uniform (avoiding type-by-type computation), based on the counterpart results in quantum group setting, the defining intertwining property of $\tTb{w}$, and some weight arguments.

%%%
\subsubsection{Factorizations of a quasi $K$-matrix}

It is well known that a quasi $R$-matrix admits a factorization into a product of rank 1 $R$-matrices parametrized by positive roots; see \cite{KR90, LS90}; also cf. \cite{Ja95}.

Dobson and Kolb \cite{DK19} proposed a conjecture on an analogous factorization of a quasi $K$-matrix into a product, denoted by $\tfX_{\bbw}$, of rank 1 factors parametrized by restricted positive roots; see \eqref{eq:Upk} for notation. They established a reduction from a general finite type to the rank 2 Satake diagrams. In addition, they established the rank 2 cases of {\em split} types and type AII/AIII, via a type-by-type lengthy computation based on several explicit formulas for rank 1 quasi $K$-matrices which they computed.

Exploring (the rank 2 cases of) Theorem~\ref{thm:D} and some of its consequences, we provide a uniform and concise proof that $\tfX_{\bbw}$ satisfies the same defining intertwining relations for $\tfX$. Then the factorization property for arbitrary finite types follows by the uniqueness of $\tfX$.

\begin{customthm} {\bf E}
  [Dobson-Kolb Conjecture, Theorem~\ref{thm:factor}]
  \label{thm:E}
The quasi $K$-matrix $\tfX$ for $\tUi$ of finite type admits a factorization $\tfX = \tfX_{\bbw}$.
\end{customthm}

%%%
\subsubsection{Relative braid group relations}

Recall Lusztig's symmetries $T_{i,e}', T_{i,e}''$ on a quantum group $\U$ satisfy braid group relations associated to the (absolute) Weyl group $W$ \cite{Lus93}; see \cite{LW21b} for analogous statements on a Drinfeld double $\tU$. We have the following generalization in the setting of $\imath$quantum groups. Denote by $\Br(\reW)$ the braid group associated to $\reW.$

\begin{customthm} {\bf F}
  [Theorem~\ref{thm:newb2}]
  \label{thm:F}
Fix $e \in \{\pm 1\}$.
The symmetries $\tTae{i}$ (and respectively, $\tTbe{i}$) of $\tUi$, for $i \in \wItau$, satisfy the relative braid group relations in $\Br(\reW)$.
\end{customthm}

With the help of the intertwining relation \eqref{eq:braid}, the proof of Theorem~\ref{thm:F} is built on the braid group relations for $\tT_i$ ($i\in \I$) and the factorization properties of rank 2 quasi $K$-matrices established in Theorem~\ref{thm:E}.

It was shown in \cite{BW18b} that Lusztig's symmetries $T_{i,e}'$ and $T_{i,e}''$ on $\U$, for $i\in \bI$, preserve the subalgebra $\Ui_\bvs$ (under some constraints on $\bvs$). We easily upgrade this statement to the universal quantum symmetric pair $(\tU, \tUi)$, providing a braid group action of $\Br(\bW)$ on $\tUi$; see Proposition~\ref{prop:Tjblack}. Actually, we obtain 4 variants of actions of $\Br(\bW)$ on $\tUi$ generated by $\tT'_{j,e}$ or $\tT''_{j,e}$, for $j\in \bI$, respectively.

It is further established that the two (``black and white") braid group actions on $\tUi$ combine neatly into an action of a semi-direct product $\Br(\bW) \rtimes \Br(\reW)$ on $\tUi$.

\begin{customthm} {\bf G}
  [Theorem~\ref{thm:newb3}, Corollary~\ref{cor:newb1}]
  \label{thm:G}
  Let $e=\pm 1$.
\begin{enumerate}
    \item
There exists a braid group action of $\Br(\bW) \rtimes \Br(\reW)$ on $\tUi$ as automorphisms of algebras generated by $\tT'_{j,e} \; (j\in \bI)$ and $\tTT'_{i,e} \;(i\in \wItau)$.
    \item
There exists a braid group action of $\Br(\bW) \rtimes \Br(\reW)$ on $\tUi$ as automorphisms of algebras generated by $\tT''_{j,e} \; (j\in \bI)$ and $\tTT''_{i,e} \;(i\in \wItau)$.
\end{enumerate}
\end{customthm}

Theorem~\ref{thm:G} (or more precisely, its $\Ui_\bvs$-counterpart in Theorem~\ref{thm:braid6}; see \S\ref{subsec:BrUi} below) confirms an old conjecture of Kolb and Pellegrini \cite[Conjecture 1.2]{KP11} in full generality, and moreover, we have provided precise formulas for the braid group actions.

%%%
\subsubsection{Relative braid group symmetries on $\Ui_\bvs$}
 \label{subsec:BrUi}

By central reductions, the symmetries $\tTa{i}, \tTb{i}$ on the universal $\imath$quantum group $\tUi$, for $i\in \wI$, descend naturally to the $\imath$quantum group $\Ui_{\bvs_\dm}$ with the distinguished parameter $\bvs_\dm$. On the other hand, the symmetries $\tTT'_{i,+1}, \tTT''_{i,-1}$ naturally descend to $\Ui_{\overline{\bvs}_{\star\dm}}$; see the commutative diagrams in \S\ref{braid:Uibvs}.
We then transport the relative braid group symmetries from $\Ui_{\bvs_\dm}$ and  $\Ui_{\overline{\bvs}_{\star\dm}}$ to the $\imath$quantum groups $\Ui_\bvs$ (see Theorems~\ref{thm:braid5}--\ref{thm:braid6}), for an arbitrary parameter $\bvs$, thanks to the isomorphism $\Ui_{\bvs_\dm} \cong \Ui_{\bvs}$ given in Proposition~\ref{prop:QG3}.
%; this isomorphism optimally improves earlier isomorphism results of Letzter on $\imath$quantum groups with different parameters in this direction; see \cite{Let02}.

%%%
\subsubsection{Relative braid group actions on $\U$-modules}
Let $i\in \wI$, $e =\pm 1$, and $\bvs$ be a balanced parameter (see the line below \eqref{def:par}).
We show that the symmetries  $\Tae{i}, \Tbe{i}$ on the $\imath$quantum group $\Ui_\bvs$ (defined by central reductions) satisfy natural intertwining relations with the usual braid group symmetries on $\U$. These intertwining properties allow us to formulate automorphisms (denoted again by the same notations $\Tae{i}, \Tbe{i}$) on an arbitrary finite-dimensional $\U$-module $M$ of type {\bf 1}; see \eqref{eq:mod4}. These operators on $M$ admit favorable properties parallel to those satisfied by Lusztig's braid group actions on modules.

\begin{customthm} {\bf H}
  [Theorem~\ref{thm:braidM}, Theorem~\ref{thm:mod2}]
  \label{thm:H}
Let $i\in \wI$ and $e =\pm 1$, and let $M$ be any finite-dimensional $\U$-module of type {\bf 1}. The automorphisms $\Tae{i},  \Tbe{i}$ on $M$ are compatible with the corresponding automorphisms on $\Ui_\bvs$, i.e.,
\begin{equation*}
\Tae{i}(x v)=\Tae{i}(x) \Tae{i}(v),\qquad
\Tbe{i}(x v)=\Tbe{i}(x) \Tbe{i}(v),
\end{equation*}
 for any $x\in \Ui_\bvs, v\in M.$
 Moreover, the operators $\Tae{i}$ (respectively, $\Tbe{i}$) on $M$, for $i\in \wI$, satisfy the relative braid group relations in $\Br(\reW)$.
\end{customthm}

%
%\subsection{Remarks on rational forms}

In this paper we have assumed that a ground field $\mathbb F$ is the algebraic closure of $\Q(q)$ partly due to uses of rescaling automorphisms, though often it suffices to work with the field $\Q(q^{\frac12})$ if we choose the parameters $\bvs$ suitably. There is a $\Q(q)$-form ${}_{\Q}\tUi$ of $\tUi$ such that $\tUi =\bF\otimes_{\Q(q)} {}_{\Q}\tUi$; see \eqref{eq:UiQ}. The symmetries $\tTae{i}, \tTbe{i}$ indeed preserve the $\Q(q)$-subalgebra ${}_{\Q}\tUi$; see Proposition~\ref{prop:Q}.  Theorems~\ref{thm:A}--\ref{thm:G} remain valid for ${}_{\Q}\tUi$.

\subsection{Future works and applications}

The formulations of the main results (Theorems~\ref{thm:A}--\ref{thm:H}), up to some reasonable rephrasing, make sense for universal quantum symmetric pairs of arbitrary Kac-Moody type (cf. \cite{Ko14}), and we conjecture they are valid in this great generality. For example, the symmetries $\tTa{i}$, for $i\in \wI$, for $\tUi$ of Kac-Moody type will follow once Conjecture~\ref{conj:TiBjKM} is confirmed. The main reason on the restriction to finite types in this paper is that we rely on the classification of Satake diagrams to explicitly compute the rank 2 formulas for $\tTa{i}(B_j)$ and $\tTb{i}(B_j)$, which in particular verify that they lie in $\tUi$. Section~\ref{sec:quasi K} is valid in Kac-Moody generality. Steps (1)--(2) in Theorem~\ref{thm:B} (which occupy most of Section~\ref{sec:symmetry}) are also valid in the Kac-Moody setting.

Some further developments will be carried out in future works. We shall extend the constructions of relative braid group actions to (universal) $\imath$quantum groups of affine type. We plan to use the new tools developed in this paper to attack the conjectures in \cite{CLW21a, CLW23} on relative braid group actions on quasi-split universal $\imath$quantum groups of Kac-Moody type. We also plan to understand the relative braid group action on $\Ui_\bvs$-modules more explicitly, and this may serve as a starting point for a new approach toward relative braid group action on $\imath$quantum groups; compare \cite{Lus93}.

The relative braid group symmetries of this paper (and their affine generalization) will be used crucially in the Drinfeld type presentation of quasi-split affine $\imath$quantum groups in an upcoming work joint with Ming Lu. It is expected that they will continue to play a key role for Drinfeld type presentations of general affine $\imath$quantum groups.

One may hope that these new braid group symmetries preserve the integral $\Z[q,q^{-1}]$-form on (modified) $\imath$quantum groups in \cite{BW18b, BW21}. (This will be highly nontrivial to verify, as the $\imath$divided powers are much more sophisticated than the divided powers.) It will be interesting to develop further connections among relative braid group actions, PBW bases and $\imath$canonical bases; compare \cite{Lus93}. They may help to stimulate further KLR type categorification of $\imath$quantum groups as well as $\imath$Hall algebra realization of $\imath$quantum groups beyond quasi-split type.

Kolb and Yakimov \cite{KY20} extended the construction of quantum symmetric pairs to the setting of Nichols algebras of diagonal type.
The new intertwining properties of quasi $K$-matrices and the relative braid group actions established in this paper seem well suited for generalizations in this direction.

The notion of relative Coxeter groups, which is valid in a more general setting than symmetric pairs, admits a geometric interpretation \cite{Lus76, Lus03}. It will be exciting to realize relative braid group action in geometric and categorical frameworks, and develop possible connections to the representation theory of real groups (cf. \cite{BV21} and references therein). It will be very interesting to explore more general braid group actions associated to relative Coxeter groups.

\subsection{Organization}

The paper is organized into Sections~\ref{sec:QG}--\ref{sec:modules} and Appendix~\ref{app1}.
Below we provide a detailed description section by section.

In Section~\ref{sec:QG}, we review and set up the basics and notations on quantum groups $\U$ and Drinfeld doubles $\tU$, including several (anti-) involutions and a rescaling automorphism $\tPsi_{\ba}$ on $\tU$. We recall explicit formulas for braid group actions on $\tU$. Associated to a  Satake diagram $(\I =\bI \cup \wI, \tau)$, we form a relative Weyl group $\reW =\langle \bs_i \mid i\in \wI \rangle$. Then we formulate (universal) quantum symmetric pairs $(\tU, \tUi)$ and $(\U, \Ui_\bvs)$.

In Section~\ref{sec:quasi K}, we formulate quasi $K$-matrix $\tfX$ in the universal quantum symmetric pair setting, and establish a new intertwining property via the anti-involution $\sigma$ on $\tU$. We establish an anti-involution $\sigma^\imath$ on $\tUi$ via $\sigma$ and an intertwining property of $\tfX$. We formulate a rescaled bar involution $\tpsi_\star$ on $\tU$, and then establish a bar involution $\tpsi^\imath$ on $\tUi$ via $\tpsi_\star$ and an intertwining property of $\tfX$. An anti-involution $\sigma_\tau$ on $\Ui_\bvs$ for an arbitrary parameter $\bvs$ is also established.

In Section~\ref{sec:symmetry}, we formulate rescaled braid group symmetries $\tT'_{w,-1}$, for $w\in W$, on $\tU$ via a rescaling automorphism $\Psi_{\bvs_\diamond}$. We define $\tTa{i}$ in terms of an intertwining property involving $\tfX$ and the rescaled braid group symmetries $\tT_{\bs_i}^{-1} \equiv \tT'_{\bs_i,-1}$; see Theorem~\ref{thm:newb0}. We then formulate additional symmetries $\tTT_{i,-1}'$ and $\tTT_{i,\pm 1}''$ on $\tUi$ via conjugations of $\tTa{i}$ by an anti-involution $\sigma^\imath$ and a bar involution $\tpsi^\imath$. We obtain explicit formulas for the actions of $\tTa{i}$ on $\tU^{\imath 0} \tbU$ in Proposition~\ref{prop:Cartanblack} and on $B_i$ in Theorem~\ref{thm:rkone1}.

In Section~ \ref{sec:rktwo}, we formulate a general structural result which relates formulas for $\tTa{i}( B_j)$ and $\tT_{\bs_i}^{-1} (F_j)$; see Proposition~\ref{prop:rktwoRij}. The explicit formulas for $\tTa{i}( B_j)$ in each rank 2 universal $\imath$quantum group are collected in Table~ \ref{table:rktwoSatake}. The type-by-type verification of these formulas is postponed to Appendix~\ref{app1}. %From the results in Sections~\ref{sec:symmetry}--\ref{sec:rktwo} we conclude that $\tTa{i}$ is an endomorphism of $\tUi$.

In Section~ \ref{sec:Tb}, we formulate another symmetry $\tTb{i}$ on $\tUi$ using a different intertwining property. Then we formulate the counterparts of the results in Sections~\ref{sec:symmetry}--\ref{sec:rktwo}. We collect all rank 2 formulas for $\tTb{i}(B_j)$ in Table~\ref{table:rktwoSatake2}, whose proofs similar to Appendix~\ref{app1} will be skipped (the detail can be found in Appendix~B in an arXiv version).

We then show that $\tTa{i}$ and $\tTb{i}$ are mutual inverses, completing the proofs that $\tTa{i}$ and $\tTb{i}$ are automorphisms of $\tUi$. The property $\tTT_{i,e}' =\sigma^\imath \circ \tTT''_{i,-e} \circ \sigma^\imath$ follows by inspection from the explicit formulas for the actions of $\tTa{i}$ and $\tTb{i}$.

In Section~\ref{sec:BiBj}, we establish a basic formula $\tTT_{\underline{w}} (B_i ) =B_j$, for $i,j \in \wI$ and $w\in \reW$ such that $w \alpha_i =\alpha_j$, generalizing a well-known formula in quantum groups. We reduce the proof of the formula to the rank 2 Satake diagrams. We then provide uniform proofs in the rank 2 cases.

In Section~\ref{sec:factor}, we prove uniformly the factorization property of quasi $K$-matrices in all rank 2 quantum symmetric pairs, completing the proof of Dobson-Kolb's conjecture in arbitrary finite types. This is an application of the formula established in Section~\ref{sec:BiBj}.

In Section~\ref{sec:braid}, we verify that the symmetries $\tTae{i},\tTbe{i}$ satisfy the braid group relations in $\Br(\reW)$. Together with the braid group action given by $\tT'_{j,e},\tT''_{j,e}$, for $j\in \bI$, we obtain four braid group actions of $\Br(W_\bullet) \rtimes \Br(\reW)$ on $\tUi$. By taking central reductions and using the isomorphism $\phi_\bvs:\Ui_{\bvs_\dm}\cong\Ui_\bvs$, we construct relative braid group symmetries $\Tae{i},\Tbe{i}$ on $\tUi_\bvs$ for general parameters $\bvs$, confirming the main conjecture in \cite{KP11}.

In Section~\ref{sec:modules}, we formulate linear operators $\Tae{i}, \Tbe{i}$ on any finite-dimensional $\U$-module. We show that they are compatible with corresponding automorphisms on $\tUi$, and that they satisfy the relative braid group relations.

\subsection{Notations}

We list the notations which are often used throughout the paper.
\smallskip

$\triangleright$ $\N,\Z,\Q$, $\C$ -- sets of nonnegative integers, integers, rational  and complex numbers
\medskip

$\triangleright$ $\cR, \cR^\vee$ -- systems of roots and coroots with simple systems $\Pi=\{\alpha_i|i\in \I\}$ and $\Pi^\vee=\{\alpha_i^\vee|i\in \I\}$, respectively
\medskip

$\triangleright$ $W, \ell(\cdot)$ -- the Weyl group and its length function
\medskip

$\triangleright$ $w_0,\tau_0$ -- the longest element in $W$ and its associated diagram involution
\medskip

$\triangleright$ $\tTD'_{i,e}, \tTD''_{i,e}$ -- braid group symmetries on $\tU$
\medskip

$\triangleright$ $(\I =\bI \cup \wI, \tau)$ -- admissible pairs (aka Satake diagrams)
\medskip

$\triangleright$ $W_\bullet,\cR_\bullet$ -- the Weyl group and root system associated to the subdiagram $\bI$
\medskip

$\triangleright$ $w_\bullet$ -- the longest element in $W_\bullet$
\medskip

%$\triangleright$ $\rho_\bullet, \rho_\bullet^\vee$ -- half sums of positive (co)roots in $\cR_\bullet$ and $\cR^\vee_\bullet$, respectively
%\medskip

$\triangleright$  $W_{\bullet,i}$ -- the parabolic subgroup of $W$ generated by $s_k$, for $k \in \I_{\bullet,i}:=\bI\cup \{i,\tau i\}$
\medskip

$\triangleright$  $w_{\bullet,i}$, $\tau_{\bullet,i}$ -- the longest element of $W_{\bullet,i}$ and its associated diagram involution
\medskip

$\triangleright$ $\reW, \ell_\circ(\cdot)$ -- the relative Weyl group generated by $\bs_i :=w_{\bullet,i} w_{\bullet}$, for $i\in \wI$, and its length function such that $\ell_\circ(\bs_i)=1$
\medskip

$\triangleright$ $\bbw$ -- the longest element in $\reW$
\medskip

$\triangleright$ $\U, \tU$ -- quantum group and Drinfeld double
\medskip

$\triangleright$ $\widehat{\tau},\widehat{\tau}_0$ -- involutions on $\tU$ induced by the diagram involutions $\tau,\tau_0$
\medskip

$\triangleright$ $\tUi,\Ui_\bvs$ -- universal $\imath$quantum group and  $\imath$quantum group with parameter $\bvs$
\medskip

$\triangleright$ $\tfX$ -- quasi $K$-matrix for universal quantum symmetric pair $(\tU, \tUi)$
\medskip

$\triangleright$ $\bvs_\diamond,\bvs_\star$ -- two distinguished parameters; see \eqref{def:vsi} and \eqref{eq:bvs star}
\medskip

$\triangleright$ $\tPsi_{\ba}$ -- a rescaling automorphism of $\tU$; see \eqref{tPsi}
\medskip

$\triangleright$ $\Phi_{\ba}$ -- a rescaling automorphism of $\U$; see \eqref{def:Phi}
\medskip

$\triangleright$ $\pi_{\bvs}$ -- a central reduction from $\tU$ to $\U$; see \eqref{pibvs}
\medskip

$\triangleright$ $\pi^\imath_{\bvs}$ -- a central reduction from $\tUi$ to $\Ui_\bvs$; see Proposition~\ref{prop:QG2}
\medskip

$\triangleright$ $\tpsi^\imath$ -- a bar involution on $\tUi$; see \eqref{eq:newb9}
\medskip

$\triangleright$ $\sigma^\imath$ -- an anti-involution on $\tUi$; see \eqref{eq:newb1-2}
\medskip

$\triangleright$ $\sigma_\tau$ -- an anti-involution on $\Ui_\bvs$; see \eqref{eq:fX8}
\medskip

$\triangleright$ $\tT'_{i,e}, \tT''_{i,e}$ -- rescaled (via $\tPsi_{\ba}$) braid group symmetries on $\tU$; see \eqref{def:tT}--\eqref{def:tT-1}
\medskip

$\triangleright$ $\tTT_{i,e}', \tTT_{i,e}''$ -- braid group symmetries on $\tUi$
\medskip

$\triangleright$ $\tT_i, \tT_i^{-1}, \tTT_i, \tTT_i^{-1}$ -- shorthand notations for $\tT''_{i,+1},\tT'_{i,-1}, \tTT''_{i,+1},\tTT'_{i,-1}$
\medskip

$\triangleright$ $\sT'_{i,e;\bvs}, \sT''_{i,e;\bvs}$ -- rescaled braid group symmetries on $\U$; see \eqref{def:sT1}, \eqref{def:sT2}
%\medskip

%$\triangleright$ $\widehat{B}_i :=\tT_{\bs_i}\big(\tTa{i}(B_i) \big)$; see \eqref{eq:Bhat}
%\medskip

%$\triangleright$ $\cL :=\tT_{w_0} \tT_{\bw} \widehat{\tau}_0 \widehat{\tau}$; see \eqref{eq:D}
%\medskip

\vspace{4mm}

{\bf Acknowledgment.}
We thank Stefan Kolb and Ming Lu for helpful comments and suggestions. We thank an anonymous referee for a careful reading and helpful comments. WW is partially supported by the NSF grant DMS-2001351. WZ is supported by a GSAS fellowship at University of Virginia and WW's NSF Graduate Research Assistantship.

%%%%%%%
%%%%%%%
\section{Drinfeld doubles and quantum symmetric pairs}
  \label{sec:QG}

In this section, we set up notations for quantum groups, Drinfeld doubles, and quantum symmetric pairs. We review the relative Weyl and braid groups associated to Satake diagrams. Several basic properties of (universal) $\imath$quantum groups are presented.

\subsection{Quantum groups and Drinfeld doubles}

We set up notations for a quantum group $\U$ of finite type and its Drinfeld double $\tU$.

Let $\g$ be a semisimple Lie algebra over $\C$ with a symmetrizable Cartan matrix $C=(c_{ij})_{i,j \in \I}$. Let $D =\text{diag}(\epsilon_i\mid \epsilon_i\in \Z_{\ge 1},\; i\in \I)$ be a symmetrizer, i.e., $DC$ is symmetric, such that $\gcd \{\epsilon_i\mid i\in \I \}=1$. Fix a simple system $\Pi=\{\alpha_i|i\in\I\}$ of $\g$ and a set of simple coroots $\Pi^\vee=\{\alpha_i^\vee|i\in \I\}$. Let $\cR$ and $ \cR^\vee $ be the corresponding root and coroot systems. Denote the root lattice by $\Z\I:= \oplus_{i\in \I} \Z\alpha_i$. Let $(\cdot ,\cdot)$ be the normalized Killing form on $\Z\I$ so that the short roots have squared length~ 2. The Weyl group $W$ is generated by the  simple reflections $s_i:\Z { \I}\rightarrow\Z { \I}$, for $i \in  \I$, such that $s_i(\alpha_j)=\alpha_j-c_{ij}\alpha_i$. Set $w_0$ to be the longest element of $W$.

Let $q$ be an indeterminate and $\Q(q)$ be the field of rational functions in $q$ with coefficients in $\Q$, the field of rational numbers. Set $\bF$ to be the algebraic closure of $\Q(q)$ and $\bF^\times:= \bF \setminus\{0\}$. We denote
\[
q_i:=q^{\epsilon_i}, \qquad \forall i\in \I.
\]

Denote, for $r,m \in \N$,
\[
 [r]_t =\frac{t^r-t^{-r}}{t-t^{-1}},
 \quad
 [r]_t!=\prod_{i=1}^r [i]_t, \quad \qbinom{m}{r}_t =\frac{[m]_t [m-1]_t \ldots [m-r+1]_t}{[r]_t!}.
\]
We mainly take $t=q, q_i$.

Then $\tU := \tU_q(\g)$ is defined to be the $\bF$-algebra generated by $E_i,F_i, K_i,K_i'$, $i\in \I$, where $K_i, K_i'$ are invertible, subject to the following relations: $K_i, K_j'$ commute with each other, for all $i,j \in \I$,
\begin{align}
[E_i,F_j]= \delta_{ij} \frac{K_i-K_i'}{q-q^{-1}},   \quad
%&
%\qquad [K_i,K_j]=[K_i,K_j']  =[K_i',K_j']=0,\label{eq:KK} \\
K_i E_j & =q_i^{c_{ij}} E_j K_i,  \quad K_i F_j=q_i^{-c_{ij}} F_j K_i,
\label{eq:EK}
\\
K_i' E_j=q_i^{-c_{ij}} E_j K_i', & \qquad K_i' F_j=q_i^{c_{ij}} F_j K_i',
 \label{eq:K2}
\end{align}
 and the quantum Serre relations, for $i\neq j \in \I$,
\begin{align}
& \sum_{s=0}^{1-c_{ij}} (-1)^s \qbinom{1-c_{ij} }{s}_{q_i} E_i^s E_j  E_i^{1-c_{ij}-s}=0,
  \label{eq:serre1} \\
& \sum_{s=0}^{1-c_{ij}} (-1)^s \qbinom{1-c_{ij} }{s}_{q_i} F_i^s F_j  F_i^{1-c_{ij}-s}=0.
  \label{eq:serre2}
\end{align}
Note that $K_i K_i'$ are central in $\tU$, for all $i \in \I$.

The comultiplication $\Delta: \widetilde{\U} \rightarrow \widetilde{\U} \otimes \widetilde{\U}$ is defined as follows:
\begin{align}  \label{eq:Delta}
\begin{split}
\Delta(E_i)  = E_i \otimes 1 + K_i \otimes E_i, & \quad \Delta(F_i) = 1 \otimes F_i + F_i \otimes K_{i}', \\
 \Delta(K_{i}) = K_{i} \otimes K_{i}, & \quad \Delta(K_{i}') = K_{i}' \otimes K_{i}'.
 \end{split}
\end{align}

Let $\U =\U_q(\g)$ be the Drinfeld-Jimbo quantum group associated to $\g$ over $\bF$ with Chevalley generators $\{ E_i,F_i,K_i^{\pm 1}|i\in \I\}$, whose relations can be obtained from $\tU$ above by simply replacing $K_i'$ by $K_i^{-1}$, for all $i$; that is, one identifies $\U =\tU/ ( K_i K_i' -1\mid i\in \I)$. Both $\tU$ and $\U$ admit standard triangular decompositions, $\tU =\tU^-\tU^0 \tU^+$ and $\U =\U^-\U^0 \U^+$; we identify $\tU^+ =\U^+ =\langle E_i \mid i\in \I \rangle$ and $\tU^- =\U^-$.
%Let $E_i^{(n)},F_i^{(n)}$ denote the divided powers $\frac{E_i^n}{[n]_{q_i}!},\frac{F_i^n}{[n]_{q_i}!}$, for $n\in \N$.

For any scalars $\ba=( a_i)_{i\in \I}\in  \bF^{\times,\I}$, one has a isomorphism
\[
\tU/ ( K_i K_i'- a_i \mid i\in \I ) \stackrel{\cong}{\longrightarrow} \U
\]
through the central reduction
\begin{align}
  \label{pibvs}
  \begin{split}
\pi_{\ba} &: \tU \longrightarrow \U,
\\
F_i \mapsto F_i, \quad & E_i \mapsto \sqrt{a_i}E_i, \quad K_i \mapsto \sqrt{a_i}K_i,  \quad K'_i \mapsto \sqrt{a_i}K_i^{-1}.
\end{split}
\end{align}
The canonical identification uses $\pi_{\bf 1}$, for ${\bf 1} =\{1 \}_{i\in \I}$.

\begin{proposition}
  \label{prop:QG1}
Let $\ba=(a_i)_{i\in \I} \in  ( \bF^\times )^\I$. We have an automorphism $\tPsi_{\ba}$ on the $\bF$-algebra $\tU$ such that
\begin{align}
\label{tPsi}
\tPsi_{\ba}&: K_i\mapsto a_{i}^{1/2}K_i,\quad  K'_i\mapsto a_{i }^{1/2} K'_i,\quad  E_i\mapsto a_{i}^{1/2} E_i, \quad  F_i\mapsto F_i.
%\\
%\tPhi_{\ba}&: K_i \mapsto K_i,\quad K_i' \mapsto K'_i, \quad E_i \mapsto a_i^{1/2} E_i,\quad F_i \mapsto a_i^{-1/2} F_i.
\end{align}
We have an automorphism $\Phi_\ba$ on the $\bF$-algebra $\U$ such that
\begin{align}
\label{def:Phi}
\Phi_\ba&: K_i \mapsto K_i, \quad E_i \mapsto a_i^{1/2} E_i,\quad F_i \mapsto a_i^{-1/2} F_i.
\end{align}
\end{proposition}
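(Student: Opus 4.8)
The plan is to realize both $\tPsi_{\ba}$ and $\Phi_{\ba}$ as rescaling automorphisms attached to suitable gradings on the respective algebras, so that the homomorphism property becomes automatic and an explicit inverse is immediate. Throughout, for $\mu=\sum_i m_i\alpha_i \in \Z\I$ I would write $a^{\mu}:=\prod_i a_i^{m_i}$, extended to $\tfrac12\Z\I$ using fixed square roots $a_i^{1/2}\in\bF^\times$, so that $a^{\mu}a^{\nu}=a^{\mu+\nu}$.

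For $\Phi_{\ba}$ on $\U$ I would invoke the standard root-lattice grading $\U=\bigoplus_{\mu\in\Z\I}\U_\mu$, in which $E_i$ has weight $\alpha_i$, $F_i$ has weight $-\alpha_i$, and $K_i^{\pm1}$ has weight $0$. The rule $x\mapsto a^{\mu/2}x$ for $x\in\U_\mu$ then defines an algebra endomorphism, because the multiplicativity $a^{\mu/2}a^{\nu/2}=a^{(\mu+\nu)/2}$ matches the additivity of weights under the product; on the Chevalley generators it agrees with \eqref{def:Phi}. Applying the same construction to $\ba^{-1}=(a_i^{-1})_i$ produces a two-sided inverse, so $\Phi_{\ba}$ is an automorphism.

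For $\tPsi_{\ba}$ on $\tU$ the subtlety is that $F_i$ is fixed while $E_i,K_i,K_i'$ are all rescaled, so the symmetric weight grading does not directly apply. Instead I would introduce the modified $\tfrac12\Z\I$-grading with $\deg E_i=\deg K_i=\deg K_i'=\tfrac12\alpha_i$ and $\deg F_i=0$. The key step is to verify that every defining relation \eqref{eq:EK}--\eqref{eq:serre2} of $\tU$ is homogeneous for this assignment: the relation $[E_i,F_j]=\delta_{ij}(K_i-K_i')/(q-q^{-1})$ has both sides in degree $\tfrac12\alpha_i$ (the right-hand side using $i=j$); the $K$–$E$ and $K$–$F$ relations, as well as the commutativity of the $K_i,K_j'$, are homogeneous since the product adds degrees on both sides; each monomial $E_i^sE_jE_i^{1-c_{ij}-s}$ in the Serre relation \eqref{eq:serre1} lies in the single degree $\tfrac12\big((1-c_{ij})\alpha_i+\alpha_j\big)$ independent of $s$, while all terms of \eqref{eq:serre2} sit in degree $0$. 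Granting this homogeneity, $\tU$ inherits the $\tfrac12\Z\I$-grading, and $\tPsi_{\ba}$ is precisely the associated rescaling $x\mapsto a^{d}x$ on the degree-$d$ piece, hence an algebra automorphism with inverse $\tPsi_{\ba^{-1}}$.

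Neither map poses a genuine obstacle; the only point requiring care is the homogeneity check for $\tPsi_{\ba}$, exactly because its grading is the asymmetric one that fixes the $F_i$ rather than the usual weight grading. As an alternative to the grading formulation, one may simply verify relation by relation that each defining relation of $\tU$ (respectively $\U$) is preserved, observing in each case that both sides are multiplied by the same monomial in the $a_i$; the grading perspective merely packages this bookkeeping in a conceptual and reusable form.
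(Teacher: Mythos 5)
Your proof is correct. The paper states Proposition~\ref{prop:QG1} without proof, treating it as a routine check that the assignments preserve the defining relations \eqref{eq:EK}--\eqref{eq:serre2}; your reformulation via a $\tfrac12\Z\I$-grading (the usual weight grading for $\Phi_{\ba}$, and the asymmetric grading with $\deg F_i=0$ and $\deg E_i=\deg K_i=\deg K_i'=\tfrac12\alpha_i$ for $\tPsi_{\ba}$) is a clean and accurate way to package exactly that verification, and it delivers the inverse $\tPsi_{\ba^{-1}}$ for free. The homogeneity checks you list are all right: the only relation mixing degrees is $[E_i,F_j]=\delta_{ij}(K_i-K_i')/(q-q^{-1})$, and both sides indeed sit in degree $\tfrac12\alpha_i$. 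The one point worth keeping explicit, which you do address, is that the square roots $a_i^{1/2}$ must be fixed once and extended multiplicatively so that $a^{-\mu}=(a^{\mu})^{-1}$; this is what makes $\tPsi_{\ba^{-1}}$ a genuine two-sided inverse rather than merely another endomorphism of the same shape.
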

We have
\begin{equation}\label{eq:braid12}
  \pi_{\ba}=\pi_{\mathbf{1}} \circ  \tPsi_{\ba}.
\end{equation}

A $\Q$-linear operator on a $\bF$-algebra  is {\em anti-linear} if it sends $q^m \mapsto q^{-m}$, for $m\in\Z$.

\begin{proposition}
  \label{prop:QG4}
  {\quad}
\begin{enumerate}
\item
There exists an anti-linear involution $\tpsi$ on $\tU$, which fixes $E_i,F_i$ and swaps $K_i \leftrightarrow K_i'$, for $i\in \I $;
\item
There exists an anti-linear involution on $\U$, also denoted by $\psi$, which fixes $E_i,F_i$ and swaps $K_i \leftrightarrow K_i^{-1}$, for $i\in \I $;
\item
There exists an anti-involution $\sigma$ on $\tU$ which fixes $E_i,F_i$ and swaps $K_i \leftrightarrow K_i'$, for $i\in \I $;
\item
There exists a Chevalley involution $\omega$ on $\tU$ which swaps $E_i$ and $F_i$ and swaps $K_i \leftrightarrow K_i'$, for $i\in \I$.
\end{enumerate}
\end{proposition}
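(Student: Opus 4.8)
The plan is to handle all four maps by the same mechanism: each is specified on the Chevalley generators, so by the presentation of $\tU$ (respectively $\U$) it will extend to a well-defined map as soon as I check that the prescribed images satisfy every defining relation --- the commutativity of the $K_i, K_j'$, the Cartan-type relations \eqref{eq:EK}--\eqref{eq:K2}, the commutator relation, and the quantum Serre relations \eqref{eq:serre1}--\eqref{eq:serre2}. For the anti-linear involutions $\tpsi, \psi$ I will apply $q \mapsto q^{-1}$ (hence $q_i \mapsto q_i^{-1}$) to the right-hand sides; for the anti-involution $\sigma$ I will in addition reverse the order of each product; $\omega$ is an ordinary homomorphism. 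In every case the square of the map fixes all generators and is linear, so it is the identity and the map is an involution.

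First, for $\tpsi$: applying it to the commutator relation sends the right-hand side $\delta_{ij}(K_i - K_i')/(q - q^{-1})$ to $\delta_{ij}(K_i' - K_i)/(q^{-1} - q)$, and these agree; applying it to $K_i E_j = q_i^{c_{ij}} E_j K_i$ produces $K_i' E_j = q_i^{-c_{ij}} E_j K_i'$, which is precisely \eqref{eq:K2}, and the other Cartan relations pair up the same way. The Serre relations are preserved because the coefficients $\qbinom{1-c_{ij}}{s}_{q_i}$ are invariant under $q_i \mapsto q_i^{-1}$ (as $[n]_t = [n]_{t^{-1}}$) while $E_i, F_i$ are fixed. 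For $\psi$ on $\U$ I can either repeat this check, or observe that $\tpsi$ preserves each central element $K_i K_i' - 1$ and so descends along $\U = \tU/(K_i K_i' - 1 \mid i \in \I)$ to the desired involution with $K_i \mapsto K_i^{-1}$; I will take the latter, shorter route.

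Next, for $\sigma$: applying it to $K_i E_j = q_i^{c_{ij}} E_j K_i$ gives $E_j K_i' = q_i^{c_{ij}} K_i' E_j$, i.e.\ \eqref{eq:K2} after rearranging, and applying it to the commutator relation negates both sides consistently. The one step needing attention is the Serre relation: $\sigma$ carries $\sum_s (-1)^s \qbinom{1-c_{ij}}{s}_{q_i} E_i^s E_j E_i^{1-c_{ij}-s}$ to $\sum_s (-1)^s \qbinom{1-c_{ij}}{s}_{q_i} E_i^{1-c_{ij}-s} E_j E_i^{s}$, and the reindexing $s \mapsto 1 - c_{ij} - s$ together with the palindromic symmetry $\qbinom{n}{s}_{q_i} = \qbinom{n}{n-s}_{q_i}$ (with $n = 1 - c_{ij}$) identifies this, up to the global sign $(-1)^{n}$, with the original vanishing expression. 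For $\omega$, which is a homomorphism, the commutator relation is respected via $[F_i, E_j] = -[E_j, F_i]$ combined with the swap $K_i \leftrightarrow K_i'$, the Cartan relations interchange under $E \leftrightarrow F$, $K \leftrightarrow K'$, and $E$-Serre \eqref{eq:serre1} is carried exactly onto $F$-Serre \eqref{eq:serre2}.

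All of this is routine relation-checking, so I do not expect a genuine obstacle. The only steps that require care are bookkeeping ones: for the anti-linear maps, tracking how $q \mapsto q^{-1}$ interacts with the exponents $q_i^{\pm c_{ij}}$ and confirming the $q$-binomials are fixed; and for $\sigma$, seeing that the order reversal forced on the Serre relations is absorbed by the palindromic symmetry of the $q$-binomial coefficients. I would single out this last point --- the Serre relation under the anti-homomorphism $\sigma$ --- as the only mildly delicate part of the argument.
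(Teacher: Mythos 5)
Your verification is correct, and it is the standard argument: the paper states Proposition~\ref{prop:QG4} without proof precisely because it reduces to the routine check, on each defining relation of $\tU$ (respectively $\U$), that you carry out — including the two genuinely nontrivial points you flag, namely the bar-invariance of the $q$-binomial coefficients for $\tpsi$ and the palindromic symmetry $\qbinom{n}{s}_{q_i}=\qbinom{n}{n-s}_{q_i}$ absorbing the order reversal of the Serre relation under $\sigma$. Nothing further is needed.
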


Let $\tU=\bigoplus_{\nu \in \Z\I} \tU_{\nu}$ be the weight decomposition of $\tU$ such that $E_i\in \tU_{\alpha_i},F_i\in \tU_{-\alpha_i}, K_i,K_i'\in \tU_0$. Write $\tU_\nu^+:=\tU_\nu \cap \tU^+.$
%Let $\tU^+$ ({\em resp.} $\tU^-$) be the subalgebra of $\tU$ generated by all $E_i$ ({\em resp.} $F_i$) for $i\in \I$. Set notations $\U_\nu,\U^+,\U^-,\U^+_\nu$ similarly for $\U$.

%
%
\subsection{Braid group action on the Drinfeld double $\tU$}
  \label{subsec:BrDouble}

Lusztig introduced braid group symmetries $T'_{i,e},T''_{i,e}$, for $i\in \I$ and $e =\pm 1$, on a quantum group $\U$ \cite[\S 37.1.3]{Lus93}.
%We often write $T_i =T''_{i,+1}, T_i^{-1} =T'_{i,-1}.$
Analogous braid group symmetries $\tTD'_{i,e},\tTD''_{i,e}$, for $i\in \I$ and $e =\pm 1$,  exist on the Drinfeld double $\tU$; see \cite[Propositions 6.20--6.21]{LW21b}. (Our notations $\tTD'_{i,e}$, $\tTD''_{i,e}$ here correspond to $\TT'_{i,e}$, $\TT''_{i,e}$ therein.) We recall the formulation of $\tTD''_{i,+1}$ below.

\begin{proposition}
  \cite[Proposition 6.21]{LW21b}
   \label{prop:braid1}
Set $r=-c_{ij}$. There exist an automorphism $\tTD''_{i,+1}$, for $i\in \I$, on $\tU$ such that
\begin{align*}
&\tTD''_{i,+1}(K_j)= K_j K_i^{-c_{ij}}, \qquad \tTD''_{i,+1}(K_j')=  K'_j {K'_i}^{-c_{ij}},\\
&\tTD''_{i,+1}(E_i)=- F_i {K_i'}^{-1},\qquad \tTD''_{i,+1}(F_i)=- K_i^{-1}E_i,\\
&\tTD''_{i,+1}(E_j)= \sum_{s=0}^r (-1)^s q_i^{-s} E_i^{(r-s)} E_j E_i^{(s)},\qquad  j\neq i,\\
&\tTD''_{i,+1}(F_j)= \sum_{s=0}^r (-1)^s q_i^{s} F_i^{(s)} F_j F_i^{(r-s)}, \qquad  j\neq i.
\end{align*}
Moreover, the $\tTD''_{i,+1}$, for $i\in \I$, satisfy the braid relations.
\end{proposition}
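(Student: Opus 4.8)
The plan is to establish $\tTD''_{i,+1}$ as a well-defined algebra endomorphism of $\tU$ by checking that the proposed images of the generators respect all the defining relations \eqref{eq:EK}--\eqref{eq:serre2}, then to deduce invertibility via an explicit inverse, and finally to obtain the braid relations by transporting Lusztig's corresponding theorem on $\U$ through the family of central reductions $\pi_{\ba}$. First I would verify the torus relations: the commutation of the $K_j,K_k'$ and the relations \eqref{eq:EK}--\eqref{eq:K2} reduce to monomial bookkeeping in the images $K_j\mapsto K_jK_i^{-c_{ij}}$, $K_j'\mapsto K_j'(K_i')^{-c_{ij}}$ together with the rank one images $\tTD''_{i,+1}(E_i)=-F_iK_i'^{-1}$ and $\tTD''_{i,+1}(F_i)=-K_i^{-1}E_i$. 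A key simplification is that the quantum Serre relations \eqref{eq:serre1}--\eqref{eq:serre2} applied to $\tTD''_{i,+1}(E_j),\tTD''_{i,+1}(F_j)$ for $j\neq i$ take place entirely in $\tU^+=\U^+$ and $\tU^-=\U^-$ (the formulas involve no $K,K'$), so they are identical to the verification already known for Lusztig's $T''_{i,+1}$ on $\U$ and require no new computation.

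The genuinely new checks are the commutators $[E_i,F_j]=\delta_{ij}(K_i-K_i')/(q-q^{-1})$. The case $i=j$ is the short rank one identity $[-F_iK_i'^{-1},\,-K_i^{-1}E_i]=(K_i-K_i')/(q-q^{-1})$, while the mixed case $i\neq j$ requires combining the $K$--$E$/$K$--$F$ relations with the divided-power expressions for $\tTD''_{i,+1}(E_j),\tTD''_{i,+1}(F_j)$; this is the one finite but nontrivial computation I expect to carry out in detail. Once $\tTD''_{i,+1}$ is known to be an endomorphism, I would construct its inverse directly as $\tTD'_{i,-1}=\sigma\circ\tTD''_{i,+1}\circ\sigma$ supplied by \eqref{eq:4braid} and the anti-involution $\sigma$ of Proposition~\ref{prop:QG4}, and check that $\tTD'_{i,-1}\circ\tTD''_{i,+1}=\Id$ on the four families of generators, which is again a rank one calculation; this promotes $\tTD''_{i,+1}$ to an automorphism.

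For the braid relations I would avoid a direct rank two computation and instead reduce to Lusztig's theorem on $\U$. The crucial observation is that $\tTD''_{i,+1}$ preserves the central subalgebra generated by the $K_jK_j'$, acting there by $K_jK_j'\mapsto (K_jK_j')(K_iK_i')^{-c_{ij}}$, i.e.\ through the Weyl group action of $s_i$ on $\Z\I$. Consequently each composite $\pi_{\ba}\circ\tTD''_{i,+1}$ equals a rescaled Lusztig operator (of the form $\Phi_{\ba}\circ T''_{i,+1}$, cf.\ Proposition~\ref{prop:QG1}) precomposed with a reparametrized reduction $\pi_{\ba'}$, using the identity \eqref{eq:braid12} and the rescaling automorphisms $\tPsi_{\ba},\Phi_{\ba}$. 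On the torus generators the braid identities then follow from the braid relations of $W$ acting on $\Z\I$, applied to both the $K$- and the $K'$-lattices. On the remaining generators, applying $\pi_{\ba}$ to both sides of a putative braid relation of length $m_{ij}$ turns each side into an honest composition of rescaled Lusztig operators on $\U$, which agree by \cite[\S37.1.3]{Lus93}; hence both sides of the braid relation in $\tU$ agree after every reduction $\pi_{\ba}$. Since $\tU$ is free over the Laurent algebra $\bF[(K_jK_j')^{\pm 1}]$ and $\bF$ is infinite, the family $\{\pi_{\ba}\}_{\ba\in(\bF^\times)^\I}$ is jointly faithful, so the braid relations hold in $\tU$ itself. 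The main obstacle in this final step is the bookkeeping of the rescaling twists, ensuring that the parameters $\ba',\ba''$ match up consistently along a length-$m_{ij}$ composition; once that normalization is pinned down, the reduction to Lusztig's result is formal.
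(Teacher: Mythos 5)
This proposition is quoted in the paper from \cite[Proposition 6.21]{LW21b} without proof, so there is no in-paper argument to compare against; the cited source obtains these operators from reflection functors on $\imath$Hall algebras, whereas your route is a direct algebraic verification combined with a reduction to Lusztig's theorem on $\U$. Your overall architecture is sound, and in fact the reduction step is cleaner than you anticipate: checking the explicit formulas generator by generator against \eqref{pibvs} shows that $\pi_{\ba}\circ \tTD''_{i,+1}=T''_{i,+1}\circ \pi_{s_i\ba}$ holds \emph{exactly}, where $(s_i\ba)_j=a_ja_i^{-c_{ij}}$, with no residual rescaling $\Phi$ to track. Since this identity of maps on generators does not presuppose that $\tTD''_{i,+1}$ is a homomorphism, it can be used not only for the braid relations but already for well-definedness (every defining relator of $\tU$ is killed by every $\pi_{\ba}$ after substitution, and the family $\{\pi_{\ba}\}$ is jointly faithful because $\tU$ is free over $\bF[(K_jK_j')^{\pm1}]$ and $\bF$ is infinite), and likewise for the invertibility, via $\pi_{\ba}\circ\sigma=\sigma\circ\pi_{\ba}$ and Lusztig's $T'_{i,-1}=\sigma T''_{i,+1}\sigma$ on $\U$.

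Two small corrections to the direct-verification half of your plan. First, the claim that the quantum Serre relations ``take place entirely in $\tU^+$ and $\tU^-$'' is only true for the relations between $E_j,E_k$ (resp.\ $F_j,F_k$) with $j,k\neq i$: the Serre relations involving the index $i$ itself have image containing $\tTD''_{i,+1}(E_i)=-F_iK_i'^{-1}$, which is not in $\tU^+$, so these are not literally Lusztig's computations in $\U^+$ and need either a separate (rank two) check or the $\pi_{\ba}$-reduction above. Second, the same caveat applies to verifying $\tTD'_{i,-1}\circ\tTD''_{i,+1}=\Id$ on $E_j,F_j$ for $j\neq i$: this is a rank two identity (Lusztig's \cite[37.1.2]{Lus93}), not a rank one one, though it does hold verbatim in $\tU^{\pm}=\U^{\pm}$. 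Neither point is a genuine gap, but both deserve to be stated accurately; the rank one commutator check $[-F_iK_i'^{-1},-K_i^{-1}E_i]=(K_i^{-1}-K_i'^{-1})/(q-q^{-1})$ that you single out is correct and does match the required images $\tTD''_{i,+1}(K_i)=K_i^{-1}$, $\tTD''_{i,+1}(K_i')=K_i'^{-1}$.
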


We sometimes use the following conventional short notations
\begin{align*}
    \tTD_{i}:= \tTD''_{i,+1},\quad \tTD_{i}^{-1}:= \tTD'_{i,-1},
    \quad
    T_{i}:= T''_{i,+1},\quad T_{i}^{-1}:= T'_{i,-1}.
\end{align*}

Hence, we can define
\begin{align*}  
\tTD_w \equiv \tTD_{w,+1}'' :=\tTD_{i_1}\cdots \tTD_{i_r} \in \Aut (\tU),
\end{align*}
where $w=s_{i_1} \cdots s_{i_r}$ is any reduced expression of $w\in W$.
Similarly, one defines $T_{w}$ for $w\in W$.

The symmetries $\tTD'_{i,e}$ and $\tTD''_{i,e}$, for $i\in \I$, satisfy the following identities in $\tU$ \cite{LW21b} (analogous to  \cite[37.2.4]{Lus93} in $\U$)
\begin{align}
  \label{eq:sTs}
  \begin{split}
  \tTD'_{i,-1} &=\sigma \circ \tTD''_{i,+1} \circ \sigma,
  \\
  \tTD''_{i,-e} = \tpsi \circ \tTD''_{i,+e} \circ \tpsi,
  & \qquad
  \tTD'_{i,+e} = \tpsi \circ \tTD'_{i,-e} \circ \tpsi.
  \end{split}
\end{align}

The automorphism $\tTD''_{i,+1}$ descends to Lusztig's automorphisms $T''_{i,+1}$ on $\U$:
\begin{equation}  \label{eq:braid11}
\pi_{\mathbf{1}}\circ \tTD''_{i,+1} =T''_{i,+1}\circ \pi_{\mathbf{1}}.
\end{equation}

%%%%%%
%%%%%%\section{Quantum symmetric pairs and quasi $K$-matrices}
 % \label{sec:QSP}

%
%\subsection{Satake diagrams and admission pairs}
\subsection{Satake diagrams and relative Weyl/braid groups}

Given a subset $\bI\subset \I$, denote by $W_{\bullet}$ the parabolic subgroup of $W$ generated by $s_i,i\in \bI.$ Set $\bw$ to be the longest element of $W_\bullet$. Let $\cR_{\bullet}$ be the set of roots which lie in the span of $\alpha_i,i\in \bI.$ Similarly, $\cR_{\bullet}^\vee$ is the set of coroots which lie in the span of $\alpha_i^\vee,i\in \bI.$
Let $\rho_\bullet$ be the half sum of positive roots in the root system $\cR_{\bullet}$, and $\rho_\bullet^\vee$ be the half sum of positive coroots in $\cR_\bullet^\vee$.

An {\em admissible pair} $(\I=\bI \cup \wI,\tau)$ (cf. \cite{BBMR, Ko14}) consists of a partition $\bI\cup \wI$ of $\I$, and a Dynkin diagram involution $\tau$ of $\g$ (where $\tau=\Id$ is allowed) such that
\begin{itemize}
\item[(1)]
 $\bw(\alpha_j) = - \alpha_{\tau j}$ for $j\in \bI$,
\item[(2)]
 If $j\in \wI$ and $\tau j =j$, then $\alpha_j(\rho_{\bullet}^\vee)\in \Z$.
\end{itemize}
The diagrams associated to admissible pairs are known as  Satake diagrams.
We shall use the terms between admissible pairs and  Satake diagrams interchangeably.
Throughout the paper, we shall always work with admissible pairs $(\I=\bI \cup \wI,\tau)$. A symmetric pair $(\g,\theta)$ (of finite type) consists of a semisimple Lie algebra $\g$ and an involution $\theta$ on $\g$; the irreducible symmetric pairs (of finite type) are classified by Satake diagrams.

Given an admissible pair $(\I=\bI \cup \wI,\tau)$, the corresponding involution $\theta$ (acting on the weight lattice) is recovered as
\begin{align}
  \label{eq:theta}
\theta=-\bw \circ \tau.
\end{align}
Set $\wItau$ to be a (fixed) set of representatives of $\tau$-orbits in $\wI$. The (real) rank of a Satake diagram is the cardinality of $\wItau$. We call a Satake diagram $(\I^1=\bI^1 \cup \wI^1,\tau^1)$ a subdiagram of another Satake diagram $(\I=\bI \cup \wI,\tau)$, if
$\wI^1\subset \wI$,
$\bI^1\subset \bI,$
 $\tau^1 |_{\wI^1} =\tau |_{\wI^1}$,
and $\bI^1$ contains all black nodes in $\I$ which lie in the connected components of $\wI^1$ in $\bI \cup \wI^1$.

%
%\subsection{Relative Weyl groups and relative braid groups}

Given an admissible pair $(\I=\bI \cup \wI,\tau)$ and $i\in \wI$, we set
\begin{align}
  \label{eq:Iib}
 \I_{\bullet,i}:=\bI\cup \{i,\tau i\}.
 \end{align}
 Let $W_{\bullet,i}$ be the parabolic subgroup of $W$ generated by $s_i,i\in \I_{\bullet,i}$. Let $w_{\bullet,i}$ the longest element of $W_{\bullet,i}$. The following constructions are a special case of those by Lusztig \cite{Lus76}; also cf. \cite{Lus03, DK19}. Define $\bs_i\in W_{\bullet,i}$ such that
\begin{align}\label{def:bsi}
\bwi= \bs_i w_\bullet \, (=w_\bullet \bs_i),
\qquad \text{ where }
\ell(\bwi) = \ell(\bs_i) + \ell(w_\bullet).
\end{align}
(It follows from the admissible pair requirement that $\bwi,\bs_i,$ and $w_\bullet$ commute with each other.) Then the subgroup of $W$,
\[
\reW :=\langle \bs_i| i\in \wItau \rangle,
\]
is a Weyl group by itself with its simple reflections identified with $\{\bs_i \mid i\in \wItau\}$. Denote by $\ell_{\circ}$ the length function of the Coxeter systerm $(\reW, \wItau)$ and by $\bbw$ its longest element.

\begin{proposition}  \cite[Theorem 5.9]{Lus76} 
 \label{prop:LL}
Let $w_1, w_2 \in \reW$. Then $\ell(w_1 w_2) =\ell(w_1) +\ell(w_2)$ if and only if $\ell_\circ(w_1 w_2) =\ell_\circ (w_1) +\ell_\circ (w_2)$.
\end{proposition}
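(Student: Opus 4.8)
The plan is to express each length as a count of root inversions and to transport inversion sets between $\cR$ and the restricted root system attached to the involution $\theta=-w_\bullet\tau$ of \eqref{eq:theta}. Let $V$ be the real span of the root lattice $\Z\I$, let $p\colon V\to V,\ p(v)=\tfrac12(v-\theta v)$, be the projection onto the $(-1)$-eigenspace of $\theta$, and set $\Sigma=\{p(\alpha)\mid\alpha\in\cR,\ p(\alpha)\neq 0\}$. It is standard that $\reW$ acts faithfully on the image of $p$ as the Weyl group $W(\Sigma)$ of this restricted root system, each $\bs_i$ acting as the reflection in $p(\alpha_i)$; in particular $\ell_\circ(w)$ counts the reflection hyperplanes (equivalently, the positive restricted roots up to positive scaling) sent by $w$ into $\Sigma^-$. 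Throughout I would use the standard additivity criterion in a Weyl group with positive system $\Phi^+$: one has $\ell(uv)=\ell(u)+\ell(v)$ if and only if, for every $\beta\in\Phi^+$, $v\beta\in\Phi^-$ implies $uv\beta\in\Phi^-$.

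First I would record how $p$ interacts with $\reW$. As $\bI$ and each $\I_{\bullet,i}=\bI\cup\{i,\tau i\}$ are $\tau$-stable, $\tau$ fixes $w_\bullet$ and every $w_{\bullet,i}$, hence commutes with each $\bs_i$; since $\reW$ already commutes with $w_\bullet$, it commutes with $\theta$ and therefore with $p$, giving $p(w\alpha)=w\,p(\alpha)$ for all $w\in\reW$. Using $w_\bullet\alpha_j=-\alpha_{\tau j}$ $(j\in\bI)$ one checks that $\theta$ fixes $\cR_\bullet$ pointwise, so $\cR_\bullet=\{\alpha\mid p(\alpha)=0\}$, and a direct computation with $\bs_i=w_{\bullet,i}w_\bullet$ shows that each $\bs_i$, hence every $w\in\reW$, preserves $\cR_\bullet^+$.

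The heart of the argument is a sign-matching lemma: for $w\in\reW$ and $\alpha\in\cR^+\setminus\cR_\bullet$ one has $w\alpha\in\cR^-$ if and only if $w\,p(\alpha)\in\Sigma^-$. To prove it I would first show the ambient positive system is $\theta$-adapted, namely $p(\cR^+\setminus\cR_\bullet)\subseteq\Sigma^+$: since the inversion set of $w_\bullet$ is exactly $\cR_\bullet^+$, the element $w_\bullet$ permutes $\cR^+\setminus\cR_\bullet$, and $\tau$ preserves $\cR^+$, so the identity $p(\alpha)=\tfrac12(\alpha+w_\bullet\tau\alpha)$ exhibits $p(\alpha)$ as a positive combination of positive roots. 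This simultaneously yields $\Sigma=\Sigma^+\sqcup(-\Sigma^+)$ and the equivalence $\alpha\in\cR^+\Leftrightarrow p(\alpha)\in\Sigma^+$ for $\alpha\notin\cR_\bullet$; applying the latter to $w\alpha$ (which lies in $\cR\setminus\cR_\bullet$ as $w$ preserves $\cR_\bullet$), together with $p(w\alpha)=w\,p(\alpha)$, gives the lemma. The theorem then follows by a quantifier manipulation: apply the additivity criterion in $W$ to $u=w_1,\ v=w_2$; the premise $w_2\beta\in\cR^-$ never holds for $\beta\in\cR_\bullet^+$ (as $w_2$ preserves $\cR_\bullet^+$), so only $\alpha\in\cR^+\setminus\cR_\bullet$ contribute, and for these the sign-matching lemma, applied to $w_2$ and to $w_1w_2$, converts the criterion into ``$w_2\,p(\alpha)\in\Sigma^-\Rightarrow w_1w_2\,p(\alpha)\in\Sigma^-$''. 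Because $p$ maps $\cR^+\setminus\cR_\bullet$ onto $\Sigma^+$, this is precisely the additivity criterion in $\reW=W(\Sigma)$, proving that the two additivity conditions are equivalent.

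I expect the $\theta$-adaptedness step to be the main obstacle: it is exactly where the admissible-pair axioms are used, guaranteeing that the given absolute positive system projects consistently onto a single positive system of restricted roots, so that sign-matching holds root-by-root. A secondary point is the bookkeeping when $\Sigma$ is non-reduced (the type BC restricted systems): there one must count reflection hyperplanes rather than restricted roots in $\ell_\circ$, but this is harmless for the criterion since proportional restricted roots change sign together, so ranging $\beta$ over all of $\Sigma^+$ yields the same condition as ranging over hyperplanes.
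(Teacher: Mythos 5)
The paper offers no proof of this statement: Proposition~\ref{prop:LL} is quoted directly from \cite[Theorem 5.9]{Lus76}, so there is nothing internal to compare your argument against. That said, your proof is correct and is essentially the standard (and, up to presentation, Lusztig's own) argument: both length functions are inversion counts, and the projection $p=\tfrac12(1-\theta)$ matches inversions in $\cR^+\setminus\cR_\bullet$ with inversions in the restricted positive system, while $\cR_\bullet^+$ contributes nothing because $\reW$ permutes it. The three ingredients you isolate are exactly the right ones: (i) $\reW$ commutes with $\theta$ and permutes $\cR_\bullet^+$ (each $\bs_i$ acts on $\bI$ by the diagram involution $\tau_{\bullet,i}\tau$, as the paper itself uses elsewhere); (ii) $\theta$-adaptedness of $\cR^+$, which follows since $w_\bullet$ permutes $\cR^+\setminus\cR_\bullet$ and $\tau$ preserves $\cR^+$, so $p(\alpha)=\tfrac12(\alpha+w_\bullet\tau\alpha)$ lands in $\sum_{i\in\wI}\N\balpha_i\setminus\{0\}$; (iii) the vacuity of the additivity criterion on $\cR_\bullet^+$. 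The one input you should flag explicitly as external is that $(\reW,\{\bs_i\}_{i\in\wItau})$ is the Weyl group of the restricted root system with simple system $\{\balpha_i\}$ and that $\ell_\circ$ is its inversion count; this is part of the same circle of results in \cite{Lus76} (and \cite{Ar62, OV90}) and is logically prior to, not a consequence of, the statement being proved, so there is no circularity. Your remark on the non-reduced (BC) case is also the right disposal of that issue: proportional restricted roots change sign together, so the inversion criterion is insensitive to whether one counts roots or hyperplanes.
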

Hence there is no ambiguity to refer to the Coxeter system $\reW$ or $W$ when we talk about reduced expressions of an element $w\in \reW \subset W$.
By definition, we have identifications $\I_{\bullet,i} =\I_{\bullet,\tau i}, W_{\bullet,i} =W_{\bullet,\tau i}, w_{\bullet,i} =w_{\bullet,\tau i}$, and $\bs_i=\bs_{\tau i} $. Denote by $\tau_{\bullet,i}$ the diagram involution on $\bIi$ such that
\begin{align} \label{def:taui}
\bwi(\alpha_j) =-\alpha_{\tau_{\bullet,i} j}, \qquad \forall j\in \bIi.
\end{align}

The {\em relative Weyl group} associated to the Satake diagram $(\I=\bI \cup \wI,\tau)$ can be identified with $\reW$. Let $\{\balpha_i|i\in \wItau\}$  be the simple system of the relative (or restricted) root system, where $\balpha_i$ is identified with the following element (cf. \cite[\S 2.3]{DK19})
\begin{align}
 \label{def:balpha}
\balpha_i:=\frac{\alpha_i - \theta(\alpha_i)}{2},\qquad (i\in \wI).
\end{align}
Note that $\balpha_i=\balpha_{\tau i}$.

We introduce a subgroup of $W$:
\[
W^\theta = \{w\in W \mid w \theta =\theta w\}.
\]
It is well known that (see, e.g., \cite[\S 2.2]{DK19})
\[
W_\bullet \rtimes \reW \cong W^\theta.
\]

We shall refer to the braid group associated to the relative Weyl group $\reW$ as the {\em relative braid group} and denote it by $\Br(\reW)$. Accordingly, we denote the braid group associated to $\bW$ by $\Br(\bW)$.

\subsection{Universal $\imath$quantum groups}
  \label{sub:iQG}

We set up some basics for the universal quantum symmetric pair $(\tU,\tUi)$, following and somewhat generalizing \cite{LW22}.

Let $(\I=\bI\cup \wI, \tau)$ be a  Satake diagram. Define $\tbU$ to be the subalgebra of $\tU$ with the set of Chevalley generators
\[
\tbcG := \{E_j,F_j,K_j,K_j' \mid j\in \bI\}.
\]
The universal $\imath$quantum group associated to the Satake diagram $(\I=\bI\cup \wI,\tau)$ is defined to be the $\bF$-subalgebra of $\tU$
\[
\tUi=\langle B_i,\tk_i, g \mid i\in \wI, g \in\tbcG\rangle
\]
via the embedding $\imath:\tUi\rightarrow\tU$, $u \mapsto u^\imath$, with
\begin{align}\label{def:gen}
B_i&\mapsto  F_i + \tTD_{\bw}(E_{\tau i}) K'_i,\quad \tk_i \mapsto K_i K'_{\tau i}, \quad g \mapsto g,\quad \text{for }\;  i\in \wI, \; g\in \tbcG.
\end{align}
(The notation $u^\imath$ for $u\in \tUi$, e.g., $B_i^\imath$, is mainly used when we need to apply braid group operators on $\tU$ to $u^\imath$.)
By definition, $\tUi$ contains the Drinfeld double $\tbU$ associated to $\bI$ as a subalgebra.

Let $\tU^{\imath 0}$ denote the subalgebra of $\tUi$ generated by $\tk_i, K_j,K_j',$ for $i\in \wI, j\in \bI$. The following lemma is clear.

\begin{lemma}
If $i=\tau i,i\in \wI$, then $\tk_i$ is central in $\tUi$. If $\tau i \neq i\in \wI$, then $\tk_i \tk_{\tau i}$ is central in $\tUi$.
\end{lemma}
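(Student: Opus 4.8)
The plan is to reduce everything to the single structural fact, recorded just after the defining relations of $\tU$, that $K_j K_j'$ is central in $\tU$ for every $j\in \I$, together with the fact that all the $K_j$ and $K_j'$ commute with one another. Since $\tUi$ is by construction a subalgebra of $\tU$ via the embedding $\imath$, any element of $\tUi$ that is central in the ambient algebra $\tU$ is automatically central in $\tUi$; thus it suffices to exhibit each of the two elements in question as a product of central elements of $\tU$, and to note that both elements genuinely lie in $\tUi$ (which is clear, as $\tk_i$ and $\tk_{\tau i}$ are among the listed generators).

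First I would treat the $\tau$-fixed case $i=\tau i$. Under $\imath$ we have $\tk_i \mapsto K_i K_{\tau i}' = K_i K_i'$, which is exactly one of the distinguished central elements of $\tU$. Hence $\tk_i$ is central in $\tU$, and therefore central in $\tUi$. Next I would handle the case $\tau i \neq i$. Using $\tk_{\tau i} \mapsto K_{\tau i} K_{\tau(\tau i)}' = K_{\tau i} K_i'$ (as $\tau$ is an involution), I compute
\[
\tk_i \tk_{\tau i} \longmapsto K_i K_{\tau i}' K_{\tau i} K_i' = (K_i K_i')(K_{\tau i} K_{\tau i}'),
\]
where the last equality uses that all the $K_j, K_j'$ commute. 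The right-hand side is a product of the two central elements $K_i K_i'$ and $K_{\tau i} K_{\tau i}'$ of $\tU$, hence central in $\tU$, and thus central in $\tUi$.

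There is essentially no serious obstacle here; the argument is a direct consequence of the centrality of $K_jK_j'$ in $\tU$. The only point worth spelling out is why centrality in $\tU$ suffices, which is immediate from $\tUi \subseteq \tU$: an element of $\tUi$ commuting with all of $\tU$ a fortiori commutes with all of $\tUi$. I would also remark, to motivate the grouping in the second case, that $\tk_i = K_i K_{\tau i}'$ alone is \emph{not} of the form $K_jK_j'$ when $\tau i \ne i$ and so need not be central by itself; it is precisely the product $\tk_i\tk_{\tau i}$ that reassembles into the two central factors.
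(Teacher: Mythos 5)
Your proof is correct, and it is exactly the argument the paper has in mind when it says the lemma "is clear": in the fixed case $\tk_i=K_iK_i'$ is one of the distinguished central elements of $\tU$, and in the non-fixed case $\tk_i\tk_{\tau i}=(K_iK_i')(K_{\tau i}K_{\tau i}')$ is a product of two of them, so both are central in $\tU$ and a fortiori in the subalgebra $\tUi$. Nothing further is needed.
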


Following \cite{Let99} and \cite[\S 6.2]{Ko14}, we formulate a monomial basis for $\tUi$.  Denote $B_j =F_j$, for $j\in \bI$. For a multi-index $J=(j_1,j_2,\cdots,j_n)\in \I^n$, we define $F_J:=F_{j_1}F_{j_2}\cdots F_{j_n}$ and $B_J:=B_{j_1}B_{j_2}\cdots B_{j_n}$. Let $\cJ$ be a fixed subset of $\bigcup_{n\geq 0} \I^n$ such that $\{F_J| J\in \cJ\}$ forms a basis of $\tU $ as a $\tU^+ \tU^0$-module.

\begin{proposition}  (cf. \cite[Proposition 6.2]{Ko14})
\label{prop:QG5}
The set $\{B_J| J\in \cJ \}$ is a basis of the left (or right) $\tbU^+ \tU^{\imath 0}$-modules $\tUi$.
\end{proposition}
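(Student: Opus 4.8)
The plan is to prove that $\{B_J \mid J \in \cJ\}$ is a basis of $\tUi$ as a left (equivalently right) $\tbU^+ \tU^{\imath 0}$-module by transporting the known basis property of $\{F_J \mid J \in \cJ\}$ for $\tU$ through a suitable filtration argument. The central idea is that each generator $B_i = F_i + \tTD_{\bw}(E_{\tau i})K_i'$ has $F_i$ as its ``leading term'': the correction term $\tTD_{\bw}(E_{\tau i})K_i'$ lies in $\tU^+ \tU^0$, which is of strictly higher weight in the $\tU^-$-direction. So at the associated-graded level the $B_i$ behave exactly like the $F_i$.

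First I would set up the appropriate grading or filtration on $\tU$ making this precise. The natural device is the weight decomposition $\tU = \bigoplus_{\nu \in \Z\I} \tU_\nu$ together with a filtration by the ``lowering degree'' — that is, filter by the total $\tU^-$-content, counting how many $F$'s appear. Concretely, since $F_i \in \tU_{-\alpha_i}$ while $\tTD_{\bw}(E_{\tau i})K_i' \in \tU^+\tU^0$ sits in a weight space $\tU_\mu$ with $\mu \in \N\I$ (a nonnegative combination of simple roots), the two summands of $B_i$ live in different weight spaces. I would exploit this by ordering monomials according to their $\tbU^+\tU^{\imath 0}$-bidegree, so that the embedding $\imath$ respects a filtration whose associated graded sends $B_i \mapsto F_i$. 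Using Proposition~\ref{prop:QG5}'s reference model \cite[Proposition 6.2]{Ko14}, one knows the analogous statement holds in the non-universal or classical setting, and the universal case differs only by the presence of the extra central/Cartan generators $\tk_i$, which are absorbed into $\tU^{\imath 0}$.

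The key steps, in order, would be: (i) show the set $\{B_J \mid J \in \cJ\}$ spans $\tUi$ over $\tbU^+ \tU^{\imath 0}$, by using the triangular decomposition $\tU = \tU^- \tU^0 \tU^+$ to rewrite any element of $\tUi$, replacing each occurrence of $F_j$ by $B_j$ minus its correction and iterating — the iteration terminates because each replacement either reduces the lowering degree or is absorbed into $\tbU^+\tU^{\imath 0}$; (ii) establish linear independence by passing to the associated graded with respect to the lowering-degree filtration, where $\mathrm{gr}(B_J) = F_J$ up to the central twist, and invoking that $\{F_J\}$ is a basis of $\tU$ over $\tU^+\tU^0$ to conclude that any nontrivial relation among the $B_J$ would force a nontrivial relation among the $F_J$; (iii) verify the left-versus-right equivalence, which follows from applying an anti-involution such as $\sigma$ (Proposition~\ref{prop:QG4}) or by a symmetric version of the spanning/independence argument.

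The main obstacle I anticipate is making the filtration argument fully rigorous at step (ii): one must check that the leading-term map is compatible with the module structure, i.e., that multiplying $B_J$ on the left by elements of $\tbU^+\tU^{\imath 0}$ does not create unexpected cancellations that lower the degree in a way that destroys independence. This requires controlling how $\tbU^+\tU^{\imath 0}$ interacts with the correction terms $\tTD_{\bw}(E_{\tau i})K_i'$ under the commutation relations \eqref{eq:EK}--\eqref{eq:K2}; since those corrections lie in $\tU^+\tU^0$, their products with $\tbU^+\tU^{\imath 0}$ stay in the ``higher'' part of the filtration, but verifying this cleanly (especially that the $\tbU$-part involving the $F_j$ for $j \in \bI$ does not interfere) is the technical heart of the proof. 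Once the filtration is shown to be multiplicative and the associated graded identified with the $\tU^+\tU^0$-module $\tU$ generated by $\{F_J\}$, both spanning and independence follow formally, and the result is established.
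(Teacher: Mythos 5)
The paper does not actually prove this proposition: it is stated with only the citation ``(cf.\ \cite[Proposition 6.2]{Ko14})'', and the argument behind that citation is exactly the filtration/leading-term strategy you describe — $B_i$ has $F_i$ as its lowest-weight component relative to the triangular decomposition, the correction terms lie in $\tU^+\tU^0$, and linear independence over $\tbU^+\tU^{\imath 0}$ is extracted by projecting a putative relation onto its extremal $\tU^-$-component and invoking the basis property of $\{F_J\}$. Your sketch is therefore essentially the same approach as the (cited) proof; the technical point you flag in step (ii) is real but is handled in the standard way, since left multiplication by $\tbU^+\tU^{\imath 0}$ cannot lower the $\tU^-$-degree.
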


%This monomial basis of $\tUi$ will be used in Section \ref{sec:fact2}.

%%%
\subsection{$\imath$Quantum group $\Ui_\bvs$ via central reduction}

We recall some basics for quantum symmetric pairs $(\U, \Ui_\bvs)$, cf. \cite{Let99, Ko14}, where the parameter $\bvs=(\vs_i)_{i\in \wI}\in {\mathbb F}^{\times, \wI}$ is always assumed to satisfy the following conditions (cf. \cite{Let99} \cite[Section 5.1]{Ko14})
\begin{align}
   \label{def:par}
\vs_i&=\vs_{\tau i} , \qquad \text{ if } \tau i\neq i \text{ and } (\alpha_i, \bw \alpha_{\tau i})=0.
%\vs_j&=1, \qquad \quad \text{ if } j\in \bI.
\end{align}
We call $\bvs$ a {\em balanced parameter}, if $\vs_i=\vs_{\tau i}$ for any $i\in \wI$. For an arbitrary parameter $\bvs$, we define an associated balanced parameter $\bvs^e$ such that
\begin{align}
  \label{def:bvse}
\vs^e_i=\vs^e_{\tau i}= \sqrt{\vs_i \vs_{\tau i}}.
\end{align}

 Define $\bU$ to be the subalgebra of $\U$ with the set of Chevalley generators
\[
\bcG := \{E_j,F_j,K_j^{\pm 1} \mid j\in \bI\}.
\]
The $\imath$quantum group associated to the Satake diagram $(\I=\bI\cup \wI,\tau)$ with parameter $\bvs$ is defined to be the $\bF$-subalgebra of $\U$
\[
\Ui_\bvs =\langle B_i,k_j , g \mid i\in \wI, j \in \I \setminus \wItau, g \in\bcG\rangle
\]
via the embedding $\imath:\Ui_\bvs \rightarrow \U$ with
\begin{align}
 \label{eq:Uibvs}
B_i & \mapsto F_i + \vs_i T_{\bw}(E_{\tau i}) K^{-1}_i,\qquad k_j \mapsto K_j K_{\tau j}^{-1}, \qquad \forall i\in \wI,j\in \I\setminus \wItau.
\end{align}
Note that $\Ui$ contains $\bU$ as a subalgebra. For $i\in \wItau$, we set $k_i=1$ if $i=\tau i$ and $k_i=k_{\tau i}^{-1}$ if $i\neq \tau i$.
Similarly, we denote by $\U^{\imath 0}$ the subalgebra of $\Ui$ generated by $k_i, K_j$, for $i\in \wI, j\in \bI$.

%%%\subsection{Distinguished parameter $\bvs_\dm$ and Table~\ref{table:localSatake} }

Recall from \eqref{def:balpha} that $\balpha_i =(\alpha_i+ \bw\alpha_{\tau i} )/{2}$. Define a distinguished balanced parameter $\bvs_{\diamond}=(\vs_{i,\diamond})_{i\in \wI}$ such that
\begin{align}
   \label{def:vsi}
 \vs_{i,\dm} %=-q^{-(\alpha_i+\bw\alpha_{\tau i},\alpha_i+\bw\alpha_{\tau i})/4}
 = - q^{-(\alpha_i , \alpha_i+\bw\alpha_{\tau i})/2} =-q^{- (\balpha_i, \balpha_i)},
 \quad \text{for }i \in \wI.
\end{align}
The parameter $\bvs_\dm$ will play a basic role in this paper; also cf. \cite{DK19}.

\begin{table}[h]
\caption{Rank 1 Satake diagrams and local datum}
\label{table:localSatake}
\resizebox{5.5 in}{!}{%
\begin{tabular}{| c | c | c | c |}
\hline
Type & Satake diagram & $\vs_{i,\dm}$ & $\bs_i$
\\
\hline
\begin{tikzpicture}[baseline=0]
\node at (0, -0.15) {AI$_1$};
\end{tikzpicture}

&	
    \begin{tikzpicture}[baseline=0]
		\node  at (0,0) {$\circ$};
		\node  at (0,-.3) {\small 1};
	\end{tikzpicture}
& $\vs_{1,\dm}=-q^{-2}$
& $\bs_1=s_1$
\\
\hline
\begin{tikzpicture}[baseline=0]
\node at (0, -0.15) {AII$_3$};
\end{tikzpicture}
&
   \begin{tikzpicture}[baseline=0]
		\node at (0,0) {$\bullet$};
		\draw (0.1, 0) to (0.4,0);
		\node  at (0.5,0) {$\circ$};
		\draw (0.6, 0) to (0.9,0);
		\node at (1,0) {$\bullet$};
		\node at (0,-.3) {\small 1};
		\node  at (0.5,-.3) {\small 2};
		\node at (1,-.3) {\small 3};
	\end{tikzpicture}
& $\vs_{2,\dm}=-q^{-1}$
& $\bs_2=s_{2132}$
\\
\hline
\begin{tikzpicture}[baseline=0]
\node at (0, -0.2) {AIII$_{11}$};
\end{tikzpicture}
 &
\begin{tikzpicture}[baseline = 6] %, scale =1.3]
		\node at (-0.5,0) {$\circ$};
%		\draw[dashed] (-0.4,0) to (0.4,0);
		\node at (0.5,0) {$\circ$};
%involutions
		\draw[bend left, <->] (-0.5, 0.2) to (0.5, 0.2);
		\node at (-0.5,-0.3) {\small 1};
%		\node at (0,-0.2) {\small 2};
		\node at (0.5,-0.3) {\small 2};
	\end{tikzpicture}
& $\vs_{1,\dm}=-q^{-1}$
& $\bs_1=s_1 s_2$
\\
\hline
\begin{tikzpicture}[baseline=0]
\node at (0, -0.2) {AIV, n$\geq$2};
\end{tikzpicture}
&
\begin{tikzpicture}	[baseline=6]
		\node at (-0.5,0) {$\circ$};
		\draw[-] (-0.4,0) to (-0.1, 0);
		\node  at (0,0) {$\bullet$};
		\node at (2,0) {$\bullet$};
		\node at (2.5,0) {$\circ$};
		\draw[-] (0.1, 0) to (0.5,0);
		\draw[dashed] (0.5,0) to (1.4,0);
		\draw[-] (1.6,0)  to (1.9,0);
		\draw[-] (2.1,0) to (2.4,0);
%involution
		\draw[bend left, <->] (-0.5, 0.2) to (2.5, 0.2);
		\node at (-0.5,-.3) {\small 1};
		\node  at (0,-.3) {\small 2};
		\node at (2.5,-.3) {\small n};
	\end{tikzpicture}
& $\vs_{1,\dm}=-q^{-1/2}$
& $\bs_1=s_{1 \cdots  n \cdots 1} $
\\
\hline
\begin{tikzpicture}[baseline=0]
\node at (0, -0.2) {BII, n$\ge$ 2};
\end{tikzpicture} &
    	\begin{tikzpicture}[baseline=0, scale=1.5]
		\node at (1.05,0) {$\circ$};
		\node at (1.5,0) {$\bullet$};
		\draw[-] (1.1,0)  to (1.4,0);
		\draw[-] (1.4,0) to (1.9, 0);
		\draw[dashed] (1.9,0) to (2.7,0);
		\draw[-] (2.7,0) to (2.9, 0);
		\node at (3,0) {$\bullet$};
		\draw[-implies, double equal sign distance]  (3.05, 0) to (3.55, 0);
		\node at (3.6,0) {$\bullet$};
		\node at (1,-.2) {\small 1};
		\node at (1.5,-.2) {\small 2};
		\node at (3.6,-.2) {\small n};
	\end{tikzpicture}	
& $\vs_{1,\dm}=-q_1^{-1}$
& $ \bs_1=s_{1\cdots n \cdots 1}$
\\
\hline
\begin{tikzpicture}[baseline=0]
\node at (0, -0.15) {CII, n$\ge$3};
\end{tikzpicture}
&
		\begin{tikzpicture}[baseline=6]
		\draw (0.6, 0.15) to (0.9, 0.15);
		\node  at (0.5,0.15) {$\bullet$};
		\node at (1,0.15) {$\circ$};
		\node at (1.5,0.15) {$\bullet$};
		\draw[-] (1.1,0.15)  to (1.4,0.15);
		\draw[-] (1.4,0.15) to (1.9, 0.15);
		\draw (1.9, 0.15) to (2.1, 0.15);
		\draw[dashed] (2.1,0.15) to (2.7,0.15);
		\draw[-] (2.7,0.15) to (2.9, 0.15);
		\node at (3,0.15) {$\bullet$};
		\draw[implies-, double equal sign distance]  (3.1, 0.15) to (3.7, 0.15);
		\node at (3.8,0.15) {$\bullet$};
		\node  at (0.5,-0.15) {\small 1};
		\node at (1,-0.15) {\small 2};
		\node at (3.8,-0.15) {\small n};
	\end{tikzpicture}
& $\vs_{2,\dm}=-q_2^{-1/2}$
&	$\bs_2=s_{2\cdots n\cdots 2 1 2\cdots n\cdots 2} $	
\\
\hline
 %%%
\begin{tikzpicture}[baseline=0]
\node at (0, -0.05) {DII, n$\ge$4};
\end{tikzpicture}&
	\begin{tikzpicture}[baseline=0]
		\node at (1,0) {$\circ$};
		\node at (1.5,0) {$\bullet$};
		\draw[-] (1.1,0)  to (1.4,0);
		\draw[-] (1.4,0) to (1.9, 0);
		\draw[dashed] (1.9,0) to (2.7,0);
		\draw[-] (2.7,0) to (2.9, 0);
		\node at (3,0) {$\bullet$};
		\node at (3.5, 0.4) {$\bullet$};
		\node at (3.5, -0.4) {$\bullet$};
		\draw (3.05, 0.05) to (3.4, 0.39);
		\draw (3.05, -0.05) to (3.4, -0.39);
		\node at (1,-.3) {\small 1};
		\node at (1.5,-.3) {\small 2};
		\node at (3.6, 0.25) {\small n-1};
		\node at (3.5, -0.6) {\small n};
	\end{tikzpicture}		
& $\vs_{1,\dm}=-q^{-1}$
&
\begin{tikzpicture}[baseline=0]
\node at (0, 0.15) { $\bs_1=$};
\node at (0, -0.35) { $ s_{1 \cdots n-2 \cdot n-1 \cdot n \cdot n-2  \cdots 1}$};
\end{tikzpicture}
\\
\hline
\begin{tikzpicture}[baseline=0]
\node at (0, -0.2) {FII};
\end{tikzpicture}
&
\begin{tikzpicture}[baseline=0][scale=1.5]
	\node at (0,0) {$\bullet$};
	\draw (0.1, 0) to (0.4,0);
	\node at (0.5,0) {$\bullet$};
	\draw[-implies, double equal sign distance]  (0.6, 0) to (1.2,0);
	\node at (1.3,0) {$\bullet$};
	\draw (1.4, 0) to (1.7,0);
	\node at (1.8,0) {$\circ$};
	\node at (0,-.3) {\small 1};
	\node at (0.5,-.3) {\small 2};
	\node at (1.3,-.3) {\small 3};
	\node at (1.8,-.3) {\small 4};
\end{tikzpicture}
& $\vs_{4,\dm}= -q_4^{-1/2}$
& $\bs_4=s_{432312343231234}$
\\
\hline
\end{tabular}
}
\newline
\smallskip
\end{table}

Letzter \cite{Let02} and Kolb \cite[Proposition 9.2, Theorem 9.7]{Ko14} raised and addressed the question on when $\imath$quantum groups for different parameters are related by Hopf algebra automorphisms of $\tU$. Watanabe \cite[Lemma ~2.5.1]{W21a} showed that the $\imath$quantum groups for {\em arbitrarily} different parameters are all isomorphic (not necessarily by Hopf algebra automorphisms); we recall the following special case of Watanabe's result.

\begin{proposition}
\cite[Lemma 2.5.1]{W21a}
  \label{prop:QG3}
For any parameter $\bvs$, there exists an algebra isomorphism $\phi_\bvs:\Ui_{\bvs_\diamond} \rightarrow \Ui_{\bvs}$ which sends $B_i\mapsto \sqrt{\vs_{i,\dm}(\vs_i \vs_{\tau i})^{-1/2}}B_i, E_j\mapsto E_j, F_j \mapsto F_j, K_j\mapsto K_j,k_r\mapsto \sqrt{\vs_{r}^{-1}\vs_{\tau r}}  k_r,i\in \wI,j\in \bI,r\in \I\setminus\wItau$.
\end{proposition}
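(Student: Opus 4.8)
The statement is the specialization to $\bvs_\diamond$ of Watanabe's parameter-independence lemma, so the plan is to produce the isomorphism explicitly by factoring it through the \emph{balanced} parameter $\bvs^e$ of \eqref{def:bvse}, treating the ``balanced part'' by an honest automorphism of $\U$ and the remaining ``unbalanced part'' intrinsically. Concretely, I would write $\phi_\bvs=\phi''\circ\phi'$ with $\phi'\colon\Ui_{\bvs_\diamond}\xrightarrow{\sim}\Ui_{\bvs^e}$ and $\phi''\colon\Ui_{\bvs^e}\xrightarrow{\sim}\Ui_\bvs$, using that $\vs^e_i\vs^e_{\tau i}=\vs_i\vs_{\tau i}$, so that $\bvs^e$ and $\bvs$ share the $\tau$-symmetric products and differ only in the antisymmetric ratios $\vs_i/\vs_{\tau i}$. (Recall $\bvs^e$ is balanced, hence satisfies \eqref{def:par}.)

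For $\phi'$ I would use the rescaling automorphism $\Phi_\ba$ of $\U$ from Proposition~\ref{prop:QG1}. Choosing $a_j=1$ for $j\in\bI$ and $a_i=a_{\tau i}=\sqrt{\vs_i\vs_{\tau i}}\,\vs_{i,\diamond}^{-1}$ for $i\in\wI$ (a $\tau$-symmetric, hence consistent, choice since $\vs_{i,\diamond}=\vs_{\tau i,\diamond}$), one checks that $\Phi_\ba$ acts on the weight space $\U_\mu$ by the scalar $a_\mu^{1/2}$, where $a_\mu=\prod_k a_k^{m_k}$ for $\mu=\sum_k m_k\alpha_k$. Since $F_i$ has weight $-\alpha_i$ and $T_{\bw}(E_{\tau i})K_i^{-1}$ has weight $\bw\alpha_{\tau i}$, whose only $\wI$-support is $\alpha_{\tau i}$ (because $\bw\in W_\bullet$ moves weights by $\Z\bI$ and $a_j=1$ there), a short computation gives
\[
\Phi_\ba\bigl(F_i+\vs_{i,\diamond}T_{\bw}(E_{\tau i})K_i^{-1}\bigr)
=a_i^{-1/2}\bigl(F_i+\vs^e_i\,T_{\bw}(E_{\tau i})K_i^{-1}\bigr).
\]
Thus $\Phi_\ba(\Ui_{\bvs_\diamond})=\Ui_{\bvs^e}$, and the restriction $\phi'$ sends $B_i\mapsto a_i^{-1/2}B_i=\sqrt{\vs_{i,\diamond}(\vs_i\vs_{\tau i})^{-1/2}}\,B_i$, fixes the black generators, and fixes $k_r$ (as $\Phi_\ba$ fixes every $K_j$). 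This already yields the $B_i$-factor and the black part of the claimed formula.

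For $\phi''$ I would argue intrinsically, via the Letzter--Kolb presentation of $\Ui_\bvs$: I claim the assignment $B_i\mapsto B_i$, $k_r\mapsto\sqrt{\vs_r^{-1}\vs_{\tau r}}\,k_r$, and identity on black generators respects all defining relations. The commutation relations of the $k_r$ with the remaining generators are parameter-free and homogeneous of degree one in each $k_r$, hence stable under rescaling the $k_r$; the only relations carrying the parameters are the $\imath$Serre relations, whose correction terms depend on $\bvs$ together with the $k$'s in such a way that, after the prescribed antisymmetric rescaling of the $k_r$, they are governed solely by the $\tau$-symmetric products $\vs_i\vs_{\tau i}=\vs^e_i\vs^e_{\tau i}$, which are unchanged in passing from $\bvs^e$ to $\bvs$. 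Hence $\phi''$ is a well-defined homomorphism, and since its inverse has the same shape (swapping $\bvs$ and $\bvs^e$ and inverting the scalars) it is an isomorphism; alternatively one checks it carries the monomial PBW basis (the $\Ui_\bvs$-analogue of Proposition~\ref{prop:QG5}) to a basis. Composing, $\phi_\bvs=\phi''\circ\phi'$ has exactly the stated action on generators.

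The hard part will be precisely $\phi''$, the unbalanced step. Rescaling $k_r=K_rK_{\tau r}^{-1}$ cannot be induced by any algebra automorphism of $\U$, since scaling $K_rK_{\tau r}^{-1}$ while fixing $E_r,F_r$ is incompatible with $[E_r,F_r]=(K_r-K_r^{-1})/(q-q^{-1})$. So, unlike the balanced case, the isomorphism for genuinely unbalanced parameters has no ambient realization inside $\U$ and must be verified on relations; the delicate point is to confirm, uniformly across the Satake classification, that the correction terms in the $\imath$Serre relations indeed depend on the parameters only through the symmetric products $\vs_i\vs_{\tau i}$ once the $k_r$ are rescaled. This is precisely the content of \cite[Lemma~2.5.1]{W21a}, which one may alternatively invoke directly.
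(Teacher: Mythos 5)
The paper does not actually prove this statement: it is quoted verbatim as \cite[Lemma 2.5.1]{W21a} and used as a black box (the only place the paper comes close to an argument is the later observation, before \eqref{def:sT1}, that for \emph{balanced} $\bvs$ the map $\phi_\bvs$ is the restriction of the rescaling automorphism $\Phi_{\overline{\bvs}_\dm\bvs}$ of $\U$). Your factorization $\phi_\bvs=\phi''\circ\phi'$ through $\Ui_{\bvs^e}$ is therefore a genuinely different (and more informative) route, and it mirrors exactly the decomposition the paper itself exploits in \eqref{phiphi} and in the proof of Proposition~\ref{prop:QG2}. Your balanced step $\phi'$ is correct and complete: the choice $a_i=a_{\tau i}=\sqrt{\vs_i\vs_{\tau i}}\,\vs_{i,\dm}^{-1}$, $a_j=1$ on $\bI$, is consistent because $\bvs_\dm$ is balanced; $\Phi_\ba$ acts on each weight component of $\U^+$ by the scalar you describe, so $T_{\bw}(E_{\tau i})K_i^{-1}$ picks up exactly $a_i^{1/2}$ (its $\wI$-support being $\alpha_{\tau i}$ alone), and the resulting factor $a_i^{-1/2}=\sqrt{\vs_{i,\dm}(\vs_i\vs_{\tau i})^{-1/2}}$ matches the statement. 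What this buys over the bare citation is a transparent reason why the balanced part of the parameter change is an inner, ambient phenomenon in $\U$, with only the antisymmetric ratios $\vs_i/\vs_{\tau i}$ requiring genuinely intrinsic input.

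The unbalanced step $\phi''$ is where your argument is not self-contained. The assertion that, after rescaling $k_r\mapsto\sqrt{\vs_r^{-1}\vs_{\tau r}}\,k_r$, the correction terms of the $\imath$Serre relations depend on the parameters only through the products $\vs_i\vs_{\tau i}$ is plausible (and consistent with the known rank-2 relations, where the corrections appear through combinations such as $\vs_{\tau i}k_i$), but you do not verify it against a presentation of $\Ui_\bvs$, and doing so uniformly across the Satake classification is precisely the content of Watanabe's lemma. As written, either you invoke \cite[Lemma 2.5.1]{W21a} for $\phi''$ --- in which case your proof is a legitimate refinement of the paper's citation, isolating what that lemma is really needed for --- or you owe the reader the case-by-case check of the relations; the plausibility argument alone does not close it.
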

It follows that there is an algebra isomorphism \begin{align} \label{phiphi}
    \phi_{\bvs}\phi_{\bvs^e}^{-1}: \Ui_{\bvs^e} \longrightarrow \Ui_{\bvs}
\end{align}
which sends $B_i\mapsto B_i, E_j\mapsto E_j, F_j \mapsto F_j, K_j\mapsto K_j, k_r\mapsto \sqrt{\vs_{r}^{-1}\vs_{\tau r}}  k_r$, for $i\in \wI,j\in \bI,r\in \I\setminus\wItau$.

We have the following central reduction $\pi_{\bvs}^\imath: \tUi \rightarrow \Ui_\bvs $, generalizing \cite[Proposition 6.2]{LW22} in the quasi-split setting.

\begin{proposition}
  \label{prop:QG2}
There exists a quotient morphism $\pi_{\bvs}^\imath:\tUi\rightarrow \Ui_{\bvs}$ sending
\begin{align*}
B_i \mapsto B_i, \quad \tk_j \mapsto \vs_{\tau j} k_j, \quad \tk_{\tau j} \mapsto \vs_{j} k_{\tau j},\qquad (i\in \wI, j\in \wItau),
\end{align*}
and $\pi_{\bvs}^\imath|_{\tbU}=\pi_{\bf 1} |_{\tbU}$. The kernel of $\pi_{\bvs}^\imath$ is generated by
\begin{align*}
& \tk_i - \vs_i  \quad (i=\tau i,i\in \wI), \qquad \tk_i \tk_{\tau i}-\vs_i\vs_{\tau i} \quad (i\neq \tau i,i\in \wI), \quad K_j K_j'-1\quad(j\in \bI).
\end{align*}
\end{proposition}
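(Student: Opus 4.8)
The plan is to realize $\pi^\imath_\bvs$ as an explicit algebra map assembled from the central reduction $\pi_\ba\colon\tU\to\U$ of \eqref{pibvs}, in the spirit of \cite[Proposition 6.2]{LW22}: first I would treat balanced parameters as a restriction of a single $\pi_\ba$, and then reach arbitrary $\bvs$ by a rescaling twist. Since $\pi_\ba$ is already an algebra homomorphism on all of $\tU$, well-definedness of $\pi^\imath_\bvs$ is automatic once one checks that it maps the generators of $\tUi$ into $\Ui_\bvs$; the real content is the kernel. For balanced $\bvs$ (i.e.\ $\vs_i=\vs_{\tau i}$) I would choose $\ba=(a_i)_{i\in\I}$ with $a_j=1$ for $j\in\bI$ and $a_i=\vs_i$ for $i\in\wI$, and set $\pi^\imath_\bvs:=\pi_\ba|_{\tUi}$, evaluating on the generators \eqref{def:gen}. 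On $\tbU$ we have $a_j=1$, so $\pi_\ba|_{\tbU}=\pi_{\mathbf 1}|_{\tbU}$ lands in $\bU$; a direct computation gives $\pi_\ba(\tk_i)=\pi_\ba(K_iK'_{\tau i})=(a_ia_{\tau i})^{1/2}k_i=\vs_{\tau i}k_i$. For $B_i$, writing $\pi_\ba=\pi_{\mathbf 1}\circ\tPsi_\ba$ as in \eqref{eq:braid12}, using that $\tPsi_\ba$ scales the weight space $\tU^+_\mu$ by $a^{\mu/2}:=\prod_j a_j^{\mu_j/2}$, that $\tTD_\bw(E_{\tau i})\in\tU^+$ has weight $\bw\alpha_{\tau i}=2\balpha_i-\alpha_i$ (see \eqref{def:balpha}), and the intertwining relation \eqref{eq:braid11}, one obtains
\[
\pi_\ba\big(\tTD_\bw(E_{\tau i})\,K'_i\big)=a^{\balpha_i}\,T_\bw(E_{\tau i})K_i^{-1}=\vs_i\,T_\bw(E_{\tau i})K_i^{-1},
\]
so that $\pi_\ba(B^\imath_i)$ is precisely the generator $B_i$ of $\Ui_\bvs$ in \eqref{eq:Uibvs}.

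For arbitrary $\bvs$ I would pass to the associated balanced parameter $\bvs^e$ of \eqref{def:bvse} and set $\pi^\imath_\bvs:=(\phi_\bvs\phi_{\bvs^e}^{-1})\circ\pi^\imath_{\bvs^e}$, using the isomorphism \eqref{phiphi}. A short check shows that the rescaling $k_r\mapsto\sqrt{\vs_r^{-1}\vs_{\tau r}}\,k_r$ built into \eqref{phiphi} converts the balanced value $\tk_j\mapsto\sqrt{\vs_j\vs_{\tau j}}\,k_j$ into the asymmetric values $\tk_j\mapsto\vs_{\tau j}k_j$ and $\tk_{\tau j}\mapsto\vs_j k_{\tau j}$, while fixing $B_i$ and $\tbcG$; this produces exactly the index swap in the statement. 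Surjectivity is then immediate, since $B_i$, the $k_j$, and the generators in $\bcG$ all lie in the image.

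The identification of the kernel is the main obstacle. Let $\mathcal{I}\subseteq\tUi$ be the two-sided ideal generated by the listed elements; the inclusion $\mathcal{I}\subseteq\ker\pi^\imath_\bvs$ follows from the computations above. For the reverse inclusion I would use the free-module basis of Proposition~\ref{prop:QG5}, $\tUi=\bigoplus_{J\in\cJ}\tbU^+\tU^{\imath 0}B_J$, together with its counterpart $\Ui_\bvs=\bigoplus_{J\in\cJ}\bU^+\U^{\imath 0}B_J$ (cf.\ \cite[Proposition 6.2]{Ko14}). Since $\pi^\imath_\bvs(B_J)=B_J$ and $\pi^\imath_\bvs$ restricts to an isomorphism $\tbU^+\xrightarrow{\sim}\bU^+$, the problem reduces to the torus factor: one verifies that $\pi^\imath_\bvs|_{\tU^{\imath 0}}$ is the quotient of the Laurent polynomial algebra $\tU^{\imath 0}=\bF[\tk_i^{\pm1}\,(i\in\wI),\,K_j^{\pm1},(K'_j)^{\pm1}\,(j\in\bI)]$ by the ideal $J_0$ generated by the listed relations, with image exactly $\U^{\imath 0}$. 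The decisive structural point is that every listed generator is central in $\tUi$ --- which is why one must use $\tk_i\tk_{\tau i}$ rather than $\tk_i$ alone when $i\neq\tau i$ (see the Lemma in \S\ref{sub:iQG}) --- so that the relations may be commuted past each $B_J$, giving $\mathcal{I}=\bigoplus_{J}(\tbU^+\otimes J_0)B_J$. Matching this termwise against the basis expansion of a kernel element $\sum_J c_J B_J$ then yields $\ker\pi^\imath_\bvs=\mathcal{I}$. I expect the most delicate part to be this basis-matching on both sides, namely verifying that $\sum_J c_J B_J\in\ker\pi^\imath_\bvs$ forces each coefficient $c_J\in\tbU^+\tU^{\imath 0}$ to lie in $\tbU^+\otimes J_0$.
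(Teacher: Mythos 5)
Your proposal is correct and follows essentially the same route as the paper: restrict the central reduction $\pi_{\ba}$ to $\tUi$ (landing in $\Ui_{\bvs^e}$) and then compose with the isomorphism $\phi_{\bvs}\phi_{\bvs^e}^{-1}$ of \eqref{phiphi}. Your basis-matching argument via Proposition~\ref{prop:QG5} for the kernel is a sound and welcome elaboration of a step the paper only asserts (it simply deduces the generators of $\ker\pi^{\imath}_{\bvs^e}$ from $\ker\pi^{\imath}_{\bvs^e}=\ker\pi_{\bvs}\cap\tUi$ and the generators of $\ker\pi_{\bvs}$), and the centrality of the listed elements is indeed the point that makes it work.
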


\begin{proof}
By \eqref{pibvs}, the restriction of $\pi_{\bvs}$ on $\tUi$ sends 
\begin{align*}
&B_i \mapsto F_i + \sqrt{\vs_{i}\vs_{\tau i}} T_{\bw}(E_{\tau i}) K_i^{-1},
\qquad 
\tk_i \mapsto \sqrt{\vs_{i}\vs_{\tau i}} k_i,
\qquad i\in \wI,
\\
&K_j \mapsto K_j,
\qquad
E_j\mapsto E_j,
\qquad 
F_j \mapsto F_j,
\qquad 
j\in \bI.
\end{align*}
Since the images generate $\Ui_{\bvs^e}$ (see \eqref{def:bvse} for the definition of $\bvs^e$), $\pi_{\bvs}$ restricts to a surjective homomorphism $\tUi \rightarrow \Ui_{\bvs^e}$, and we denote it by $\pi_{\bvs^e}^\imath$. Moreover, we have $\ker \pi_{\bvs^e}^\imath=\ker \pi_{\bvs} \cap \tUi$. Since $\ker \pi_{\bvs}$ is generated by elements $K_i K_i'-\vs_{i}$ for $i\in \I$, we conclude that $\ker \pi_{\bvs^e}^\imath$ is generated by 
\begin{align*}
& \tk_i - \vs_i \quad (i=\tau i,i\in \wI), \qquad \tk_i \tk_{\tau i}-\vs_{i}\vs_{\tau i} \quad (i\neq \tau i,i\in \wI), \quad K_j K_j'-1\quad(j\in \bI).
\end{align*}

The composition with the isomorphism $\phi_{\bvs}\phi_{\bvs^e}^{-1}$ from \eqref{phiphi}, $\pi_{\bvs}^\imath:=\phi_{\bvs}\phi_{\bvs^e}^{-1}\circ \pi_{\bvs^e}^\imath$, defines a surjective homomorphism $\tUi \rightarrow \Ui_\bvs$. Finally, it is clear from Proposition~\ref{prop:QG3} that $\ker \pi_{\bvs}^\imath$ is generated by the desired elements.
\end{proof}

\begin{remark}
For a balanced parameter $\bvs$, $\pi_{\bvs}^\imath$ coincides with the restriction of $\pi_{\bvs}$ on $\tUi$. However, this is not the case for an unbalanced parameter.
\end{remark}

%%%%%%%
%%%%%%%
\section{Quasi $K$-matrix and intertwining properties}
\label{sec:quasi K}

In this section, we establish the quasi $K$-matrix $\tfX$ for the universal quantum symmetric pair $(\tU, \tUi)$, and a new characterization of $\tfX$ in terms of an anti-involution $\sigma$.
%We use a distinguished scaling automorphism $\tPsi_{\bvs_\star}$ to twist the bar involution $\tpsi$ on $\tU$ to define a new bar involution $\tpsi_\star$ on $\tU$.
Then using suitable intertwining properties with the quasi $K$-matrix, we establish an anti-involution $\sigma^\imath$ and a bar involution $\tpsi^\imath$ on $\tUi$ from the anti-involution $\sigma$ and a rescaled bar involution $\tpsi_\star$ on $\tU$. We also establish an anti-involution $\sigma_\tau$ on $\Ui_\bvs$ for an arbitrary parameter $\bvs$.

\subsection{Quasi $K$-matrix}

 The quasi $K$-matrix was introduced in \cite[\S 2.3]{BW18a} as the intertwiner between the embedding $\imath: \Ui_{\bvs} \rightarrow \U$ and its bar-conjugated embedding (where some constraints on $\bvs$ are imposed); this was expected to be valid for general quantum symmetric pairs early on. A proof for the existence of the quasi $K$-matrix was given in \cite{BK19} in greater generality (modulo a technical assumption, which was later removed in \cite{BW21}).
Appel-Vlaar \cite[Theorem 7.4]{AV22} reformulated the definition of quasi $K$-matrix $\fX_{\bvs}$ associated to $(\U,\Ui_{\bvs})$ without reference to the bar involution on $\Ui_\bvs$; this somewhat technical (see \eqref{eq:fX1av}) reformulation removes constraints on the parameter $\bvs$ for quasi $K$-matrix. Recall the bar involution $\psi$ on $\U$. %[\text{cf. \cite[Theorem 7.4]{AV22} \cite[Proposition 3.3]{Ko21}}]

 \begin{theorem} (cf. \cite{AV22})
   \label{thm:qK}
 There exists a unique element $\fX_{\bvs}=\sum_{\mu \in \N \I} \fX_{\bvs}^\mu$, for $\fX_{\bvs}^\mu\in \U_\mu^+$, such that $\fX_{\bvs}^0=1$ and the following identities hold:
 \begin{align}\label{eq:fX1av}
 B_i   \fX_{\bvs} &=  \fX_{\bvs}  \Big(F_i+(-1)^{\alpha_i( 2\rho_\bullet^\vee)}q^{(\alpha_i, w_\bullet(\alpha_{\tau i})+2\rho_\bullet)}\vs_{\tau i}\psi\big(T_{w_\bullet}E_{\tau i} \big) K_i \Big),\\
 x  \fX_{\bvs}  &=  \fX_{\bvs}  x,
 \end{align}
 for $i\in \wI$ and $x \in \U^{\imath 0}  \bU$. Moreover, $\tfX^\mu =0$ unless $\theta (\mu) =-\mu$.
 \end{theorem}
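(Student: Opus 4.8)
The plan is to separate the three assertions: existence and uniqueness follow the standard quasi $K$-matrix recursion (\cite{BK19, BW21, AV22}), while the support statement $\fX_{\bvs}^\mu=0$ unless $\theta(\mu)=-\mu$ is a short weight computation I would carry out directly. For uniqueness I would work in the completion $\widehat{\U}=\prod_\mu \U_\mu$ and bring in Lusztig's twisted derivations $r_j,{}_jr\colon \U^+\to\U^+$ (\cite[\S1.2, \S3.1]{Lus93}), characterized for $x\in\U^+$ by $xF_j-F_jx=\big(r_j(x)K_j-K_j^{-1}\,{}_jr(x)\big)/(q_j-q_j^{-1})$, together with the non-degeneracy fact that an element of $\U_\mu^+$ with $\mu\neq 0$ vanishes as soon as $r_j$ annihilates it for every $j\in\I$. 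Expanding the two defining identities of $\fX_{\bvs}=\sum_\mu\fX_{\bvs}^\mu$ weight-by-weight yields a recursion. The commutation $F_j\fX_{\bvs}=\fX_{\bvs}F_j$ for $j\in\bI$ forces $r_j(\fX_{\bvs}^\mu)={}_jr(\fX_{\bvs}^\mu)=0$, since $\U^+K_j$ and $\U^+K_j^{-1}$ are linearly independent. For $i\in\wI$, substituting $B_i=F_i+\vs_i T_{\bw}(E_{\tau i})K_i^{-1}$ into \eqref{eq:fX1av} and cancelling the $\fX_{\bvs}F_i$ terms, comparison of the $\U^+K_i$- and $\U^+K_i^{-1}$-components expresses $r_i(\fX_{\bvs}^\mu)$ and ${}_ir(\fX_{\bvs}^\mu)$ explicitly through the strictly lower component $\fX_{\bvs}^{\mu-2\balpha_i}$ (note $\alpha_i+\bw\alpha_{\tau i}=2\balpha_i$ by \eqref{def:balpha}). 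Thus $r_j(\fX_{\bvs}^\mu)$ is determined for all $j\in\I$, and non-degeneracy pins down $\fX_{\bvs}^\mu$; with $\fX_{\bvs}^0=1$, uniqueness follows by induction on the height of $\mu$.

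For existence, the same recursion produces candidate values for all $r_j(\fX_{\bvs}^\mu)$, and what remains is to realize them by a genuine element of $\U_\mu^+$, i.e.\ to verify that the prescribed derivations are mutually compatible. \textbf{This compatibility is the main obstacle}, and it is precisely the point at which constraints on $\bvs$ entered the earlier bar-involution approach of \cite{BW18a}. I would instead follow the reformulation of Appel--Vlaar \cite{AV22}: the right-hand factor in \eqref{eq:fX1av}, with its scalar $(-1)^{\alpha_i(2\rho_\bullet^\vee)}q^{(\alpha_i,\bw\alpha_{\tau i}+2\rho_\bullet)}\vs_{\tau i}$, is engineered so that the compatibility holds for \emph{all} parameters; one closes the induction by checking it against the defining relations of $\Ui_{\bvs}$ (the $\imath$-Serre relations) and the behaviour of $T_{\bw}$ and $\psi$ on $\U^+$, thereby assembling $\fX_{\bvs}\in\widehat{\U}$.

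For the support statement, since $\fX_{\bvs}^\mu\in\U_\mu^+$, conjugation by $K_j$ $(j\in\bI)$ and by $k_i=K_iK_{\tau i}^{-1}$ $(i\in\wI)$ scales $\fX_{\bvs}^\mu$ by $q^{(\alpha_j,\mu)}$ and by $q^{(\alpha_i-\alpha_{\tau i},\mu)}$, respectively. The relation $x\fX_{\bvs}=\fX_{\bvs}x$ for $x\in\U^{\imath 0}$ therefore forces $(\mu,\alpha_j)=0$ for $j\in\bI$ and $(\mu,\alpha_i)=(\mu,\alpha_{\tau i})$ for $i\in\wI$; the latter also holds for $j\in\bI$ as both sides vanish, so it holds for all $i\in\I$. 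Since $\tau$ preserves the form and $\tau\alpha_i=\alpha_{\tau i}$, we get $(\tau\mu,\alpha_i)=(\mu,\alpha_{\tau i})=(\mu,\alpha_i)$ for all $i\in\I$, whence $\tau\mu=\mu$. Orthogonality of $\mu$ to $\cR_\bullet$ gives $\bw\mu=\mu$, and hence $\theta(\mu)=-\bw\tau(\mu)=-\mu$ by \eqref{eq:theta}, as claimed.
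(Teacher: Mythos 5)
The paper does not actually prove this statement: Theorem~\ref{thm:qK} is imported from \cite{AV22} (and the universal version, Theorem~\ref{thm:qK2}, is proved only by the remark that one reruns the argument of \cite{AV22} or \cite{Ko21}). Your sketch follows exactly that standard route: the uniqueness argument via the skew-derivations $r_j,{}_jr$ and the resulting height recursion is correct and complete (the weight bookkeeping $\mu'=\mu-\alpha_i-\bw\alpha_{\tau i}=\mu-2\balpha_i$ checks out), and your self-contained proof of the support condition $\theta(\mu)=-\mu$ from the Cartan commutation relations is correct -- indeed it is more explicit than anything the paper offers. You rightly identify the compatibility of the prescribed skew-derivations as the one genuinely hard step and defer it to \cite{AV22}, which is precisely what the authors do; the only imprecision is your suggestion that this is closed by checking the $\imath$-Serre relations, whereas \cite{BK19, AV22} verify consistency by a projection/Iwasawa-decomposition argument rather than via a Serre presentation of $\Ui_\bvs$. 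Since the statement is explicitly a citation, this does not constitute a gap relative to the paper.
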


Recall the bar involution $\tpsi$ on $\tU$ from Proposition~ \ref{prop:QG4}. The quasi $K$-matrix $\tfX$ associated to $(\tU,\tUi)$ is defined in a similar way as in Theorem~\ref{thm:qK}.

\begin{theorem}\label{thm:qK2}
 There exists a unique element $\tfX=\sum_{\mu \in \N \I} \tfX^{\mu}$ such that $\tfX^0=1, \tfX^{\mu}\in \tU_{\mu}^+$ and the following identities hold:
 \begin{align}
  B_i   \tfX &=  \tfX \Big(F_i+(-1)^{\alpha_i( 2\rho^\vee_\bullet)}q^{(\alpha_i, w_\bullet \alpha_{\tau i} +2\rho_\bullet)} \tpsi \big(\tTD_{w_\bullet}E_{\tau i} \big)K_i \Big),
 \label{eq:fX1} \\
 x  \tfX &= \tfX  x,
 \label{eq:fX1b}
\end{align}
 for $i\in \wI$ and $x \in \tU^{\imath 0}\tbU$. Moreover, $\tfX^\mu =0$ unless $\theta (\mu) =-\mu$.
\end{theorem}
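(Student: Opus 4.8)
The plan is to prove uniqueness together with the support condition by a weight-graded recursion, and to obtain existence from the already-established non-universal case (Theorem~\ref{thm:qK}) by transporting the consistency of that recursion through the central reduction $\pi_\bvs$.

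For uniqueness, I would expand \eqref{eq:fX1} homogeneously in $\N\I$. Writing $B_i=F_i+\tTD_{\bw}(E_{\tau i})K'_i$ and using that each $\tfX^\mu\in\tU^+_\mu$, one commutes $F_i$ past $\tfX^\mu$ by the standard formula $[F_i,u]=\frac{{}_i r(u)\,K_i-K'_i\,r_i(u)}{q_i-q_i^{-1}}$ for $u\in\tU^+$, where ${}_i r,r_i\colon\tU^+\to\tU^+$ are Lusztig's skew-derivations; the $\tU^+$-summand $\tTD_{\bw}(E_{\tau i})$ of $B_i$ and the $\tU^+$-summand $\tpsi(\tTD_{w_\bullet}E_{\tau i})$ of the right-hand factor contribute only in strictly higher degree. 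Collecting the terms of each fixed $\tU^0$-weight then expresses the skew-derivatives $\{r_i(\tfX^\mu)\}_i$ entirely in terms of the lower-degree pieces $\tfX^\nu$. Since a nonzero homogeneous element of $\tU^+$ of positive height is not annihilated by all ${}_i r$ (nondegeneracy of Lusztig's bilinear form on $\tU^+$), the derivatives determine $\tfX^\mu$, so $\tfX^\mu$ is forced recursively by $\tfX^0=1$; this gives uniqueness. The support statement is read off by matching $\tU^0$-eigenvalues in \eqref{eq:fX1}–\eqref{eq:fX1b}: relation \eqref{eq:fX1b} applied to $\tU^{\imath 0}$ forces $(\alpha_j,\mu)=0$ for $j\in\bI$ and $(\alpha_j-\alpha_{\tau j},\mu)=0$ for $j\in\wI$, and combining these with the Cartan shift in \eqref{eq:fX1} pins down $\theta(\mu)=-\mu$.

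For existence, the recursion constructs a candidate $\tfX^\mu$ provided the prescribed derivatives $v_i:=r_i(\tfX^\mu)$ actually assemble into an element of $\tU^+_\mu$, i.e.\ satisfy the closedness condition $r_j(v_i)=r_i(v_j)$ forced by $r_ir_j=r_jr_i$; the reconstruction of $\tfX^\mu$ from closed data is then unique by the same nondegeneracy. Rather than check closedness by hand, I would transport it from Theorem~\ref{thm:qK}. The central reduction $\pi_\bvs=\pi_{\mathbf 1}\circ\tPsi_\bvs$ restricts on each weight space to the rescaling isomorphism $\tPsi_\bvs\colon\tU^+_\mu\to\U^+_\mu$, and by Proposition~\ref{prop:QG2} it sends the universal data $B_i^\imath$ and $\tU^{\imath 0}\tbU$ to the generators defining $\Ui_{\bvs^e}$. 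Consequently $\pi_\bvs$ turns the universal recursion into the recursion for $\fX_\bvs$ in Theorem~\ref{thm:qK} (up to the explicit rescaling $\tPsi_\bvs$), and since $\fX_\bvs$ exists the corresponding closedness holds in $\U^+_\mu$; applying the isomorphism $\tPsi_\bvs^{-1}$ returns the closedness needed in $\tU^+_\mu$. Assembling the resulting $\tfX^\mu$ over all $\mu$ produces $\tfX$, and because the graded recursion is equivalent to \eqref{eq:fX1}–\eqref{eq:fX1b}, this $\tfX$ satisfies the defining relations.

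The main obstacle is the constant bookkeeping in the step ``$\pi_\bvs$ sends the universal relation to the parametrized one.'' The delicate point is that $\tpsi$, which swaps $K_i\leftrightarrow K'_i$, does not descend to the bar involution $\psi$ on $\U$ directly but only after conjugation by a rescaling; this conjugation is exactly what converts the clean universal term $\tpsi(\tTD_{w_\bullet}E_{\tau i})K_i$ into the parametrized term carrying the factor $\vs_{\tau i}$ in \eqref{eq:fX1av}, while the sign $(-1)^{\alpha_i(2\rho_\bullet^\vee)}$ and the power $q^{(\alpha_i,w_\bullet\alpha_{\tau i}+2\rho_\bullet)}$ must be matched verbatim on both sides. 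Equivalently, one is verifying the covariance of the quasi $K$-matrix under the rescaling automorphisms $\tPsi_\ba$. Carrying out this matching, together with the nondegeneracy input used for uniqueness, is the technical heart; the remaining steps are formal.
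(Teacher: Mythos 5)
Your proposal is correct in outline but takes a genuinely different route from the paper for the existence half. The paper's proof of Theorem~\ref{thm:qK2} is a one-line citation: one reruns the recursive construction of \cite{AV22}, \cite{Ko21} verbatim inside the Drinfeld double, where the splitting of the commutator $[F_i,\tfX^{\mu}]$ into independent $K_i$- and $K_i'$-components makes the induction, if anything, cleaner than in $\U$. You instead prove only uniqueness (and the support condition) directly by the graded recursion, and then \emph{import} existence from the already-established Theorem~\ref{thm:qK} by observing that $\pi_{\bvs}$ restricts to a weight-space isomorphism $\tU^+_\mu\to\U^+_\mu$ carrying the universal recursion to the recursion for $\fX_{\bvs^e}$; since solvability of the recursion is a collection of weight-homogeneous linear conditions internal to $\tU^+=\U^+$, it transfers back. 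This is legitimate: the constant-matching you flag as the technical heart is exactly the computation the paper records in the remark following the theorem (see \eqref{eq:fX5} and \eqref{eq:sameUp}), only run in the opposite direction, and the fact that $\pi_{\bvs}$ stays injective on $\tU^+_\nu K_i\oplus\tU^+_\nu K_i'$ guarantees that the two components of the recursion do not collapse under the reduction. What your route buys is that no consistency condition ever has to be re-verified in the universal setting; what the paper's route buys is independence from the parametrized case and hence no bookkeeping of the rescaling.

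Two inaccuracies, neither fatal. First, the closedness condition you cite is wrong as stated: $r_i$ and $r_j$ do not commute (the commuting pair is $r_i$ with ${}_j r$), and the correct reconstruction criterion is the one of \cite{BW18a}, \cite{BK19}: a family $(v_i)_i$ arises as $(r_i(u))_i$ iff the induced linear functional on $\bigoplus_i \U^+_{\mu-\alpha_i}$ annihilates the kernel of multiplication, i.e.\ the Serre relations. Since you transport the condition rather than verify it, only the citation needs repair. Second, $\theta(\mu)=-\mu$ does not follow from \eqref{eq:fX1b} together with a Cartan-eigenvalue count in the way you phrase it; the clean argument is that the recursion \eqref{eq:fX1} only links $\tfX^{\mu}$ to $\tfX^{\mu-(\alpha_i-\theta(\alpha_i))}$, so the support lies in $\sum_{i\in\wI}\N\big(\alpha_i-\theta(\alpha_i)\big)$, on which $\theta$ acts by $-1$.
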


 \begin{proof}
 Follows by a rerun of the proof of Theorem~\ref{thm:qK} as in \cite{AV22} or in \cite{Ko21}. (The strategy of the proof does not differ substantially from the one given in \cite{BW18a}.)
 \end{proof}

\begin{remark}
Applying the central reduction $\pi_\bvs$ in \eqref{pibvs} to \eqref{eq:fX1} gives us
\begin{align}\notag
\big( F_i + &\sqrt{\vs_i \vs_{\tau i}} T_{\bw}(E_{\tau i}) K_i^{-1}\big) \pi_\bvs(\tfX)\\\label{eq:fX5}
&= \pi_\bvs(\tfX) \big(F_i+(-1)^{\alpha_i( 2\rho_\bullet^\vee)}q^{(\alpha_i, w_\bullet(\alpha_{\tau i})+2\rho_\bullet)}\sqrt{\vs_i \vs_{\tau i}} \psi\big(T_{w_\bullet}(E_{\tau i}) \big) K_i\big),\\
 x \pi_\bvs(\tfX)  &= \pi_\bvs(\tfX)  x,
\end{align}
for $i\in \wI,x \in \U^{\imath 0} \bU$. Comparing \eqref{eq:fX5} with \eqref{eq:fX1av}, we obtain by the uniqueness of the quasi $K$-matrix that (see \eqref{def:bvse} for $\bvs^e$)
\begin{align}
  \label{eq:sameUp}
\pi_\bvs(\tfX)=\fX_{\bvs^e}.
\end{align}
 In particular, $\pi_\bvs(\tfX)=\fX_{\bvs}$ if and only if $\bvs$ is a balanced parameter.
\end{remark}

\subsection{A bar involution $\tpsi^\imath$ on $\tUi$}

Introduce a balanced parameter $\bvs_\star=(\vs_{i,\star})_{i\in \wI}$ by letting
\begin{align}
 \label{eq:bvs star}
\vs_{i,\star} =(-1)^{\alpha_i( 2\rho^\vee_\bullet)}q^{(\alpha_i, w_\bullet \alpha_{\tau i}+2\rho_\bullet)},\qquad (i\in \wI).
\end{align}
Note that $\vs_{i,\star}$ are exactly the scalars appearing on the RHS \eqref{eq:fX1}. We extend $\bvs_\star$ trivially to an $\I$-tuple, again denoted by $\bvs_\star$ by abuse of notation, by setting
\begin{align*}
\vs_{j,\star}=1 \qquad (j\in \bI).
\end{align*}

Recall the scaling automorphism $\tPsi_{\bvs_\star}$ from \eqref{tPsi} and the bar involution $\tpsi$ on $\tU$ from Proposition~\ref{prop:QG4}. The composition
\begin{align} \label{eq:psi star}
\tpsi_\star :=\tPsi_{\bvs_\star} \circ \tpsi
\end{align}
is an anti-linear involutive automorphism of $\tU$.

Let $\ad_{y}$ be the operator such that $\ad_{y}(u ):=y u y^{-1}$ for $y$ invertible.
\begin{proposition}
   \label{prop:newb3}
There exists a unique anti-linear involution $\tpsi^\imath$ of $\tUi$ such that
 \begin{align}\label{eq:newb9}
 \tpsi^\imath(B_i) =B_i, \qquad \tpsi^\imath(x) =\tpsi_\star(x),
\qquad
\text{ for } i\in \wI, x \in \tU^{\imath 0} \tbU.
 \end{align}
Moreover, $\tpsi^\imath$ satisfies the following intertwining relation,
 \begin{equation}\label{eq:newb10}
 \tpsi^\imath(x)   \tfX =\tfX  \tpsi_\star(x),
 \qquad
  \text{ for all }x\in \tUi.
 \end{equation}
($\tpsi^\imath$ is called a bar involution on $\tUi.$)
\end{proposition}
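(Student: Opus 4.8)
The plan is to realize $\tpsi^\imath$ as the restriction to $\tUi$ of the anti-linear map $\ad_{\tfX}\circ\tpsi_\star$ on $\tU$, conjugation being by the quasi $K$-matrix. Since $\tfX=1+\sum_{\mu\neq0}\tfX^\mu$ is invertible in the relevant completion of $\tU$, the composite $\ad_{\tfX}\circ\tpsi_\star$ is an anti-linear algebra homomorphism from $\tU$ into that completion; the entire content is that it carries $\tUi$ into $\tUi$ and reduces to \eqref{eq:newb9} on generators. Building $\tpsi^\imath$ this way is what buys well-definedness as an algebra map for free: I never have to check directly that the defining relations of $\tUi$ are preserved, only that each algebra generator is sent where \eqref{eq:newb9} prescribes.

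As preparation I would record that $\tpsi_\star=\tPsi_{\bvs_\star}\circ\tpsi$ preserves $\tU^{\imath 0}\tbU$; this is an easy check, since $\tpsi$ fixes the Chevalley generators of $\tbU$ and sends $\tk_i\mapsto\tk_{\tau i}$, while $\tPsi_{\bvs_\star}$ fixes $\tbU$ pointwise ($\vs_{j,\star}=1$ for $j\in\bI$) and scales $\tk_i$ by $\vs_{i,\star}$ (using that $\bvs_\star$ is balanced), so that $\tpsi_\star(\tk_i)=\vs_{i,\star}\tk_{\tau i}$. I would then verify the two families of generators. For $x\in\tU^{\imath 0}\tbU$ we have $\tpsi_\star(x)\in\tU^{\imath 0}\tbU$, which commutes with $\tfX$ by \eqref{eq:fX1b}, whence $\ad_{\tfX}(\tpsi_\star(x))=\tpsi_\star(x)\in\tUi$, as required. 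For $B_i$, $i\in\wI$, the crux is the identity
\[
\tpsi_\star(B_i)=F_i+\vs_{i,\star}\,\tpsi(\tTD_{w_\bullet}E_{\tau i})\,K_i,
\]
whose right-hand side is precisely the intertwining partner of $B_i$ in \eqref{eq:fX1}, since $\vs_{i,\star}=(-1)^{\alpha_i(2\rho_\bullet^\vee)}q^{(\alpha_i,w_\bullet\alpha_{\tau i}+2\rho_\bullet)}$. Granting it, \eqref{eq:fX1} reads $B_i\tfX=\tfX\,\tpsi_\star(B_i)$, so $\ad_{\tfX}(\tpsi_\star(B_i))=B_i\in\tUi$. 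To prove the displayed identity I would expand $B_i=F_i+\tTD_{w_\bullet}(E_{\tau i})K_i'$, use $\tpsi(F_i)=F_i$ and $\tpsi(K_i')=K_i$, and exploit that $\tTD_{w_\bullet}(E_{\tau i})\in\tU^+$ has weight $w_\bullet\alpha_{\tau i}$, in which $\alpha_{\tau i}$ occurs with coefficient $1$ and all remaining contributions come from $\alpha_j$, $j\in\bI$; hence $\tPsi_{\bvs_\star}$ scales it by $\vs_{\tau i,\star}^{1/2}=\vs_{i,\star}^{1/2}$, which combines with the $\vs_{i,\star}^{1/2}$ from $K_i$ to yield $\vs_{i,\star}$.

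With both checks in hand, $\tpsi^\imath:=(\ad_{\tfX}\circ\tpsi_\star)|_{\tUi}$ is a well-defined anti-linear algebra endomorphism of $\tUi$ satisfying \eqref{eq:newb9}, and the intertwining relation \eqref{eq:newb10} holds by construction, as $\tpsi^\imath(x)=\tfX\tpsi_\star(x)\tfX^{-1}$ for all $x\in\tUi$. Involutivity is then checked on generators: $(\tpsi^\imath)^2(B_i)=B_i$ and $(\tpsi^\imath)^2(x)=\tpsi_\star^2(x)=x$ for $x\in\tU^{\imath 0}\tbU$ (as $\tpsi_\star$ is an involution preserving $\tU^{\imath 0}\tbU$), so $\tpsi^\imath$ is in particular bijective; uniqueness is immediate, since \eqref{eq:newb9} pins down $\tpsi^\imath$ on a generating set of $\tUi$.

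I expect the main obstacle to be twofold. First, one must make the conjugation $\ad_{\tfX}$ rigorous in the completion and confirm that the output genuinely lands back in $\tUi$ rather than only in the completion; this is exactly the point at which the defining intertwining relations \eqref{eq:fX1}--\eqref{eq:fX1b} of $\tfX$ do the essential work. Second, the rank-one generator computation for $\tpsi_\star(B_i)$ is the single place where the precise value of $\vs_{i,\star}$, together with the balancedness of $\bvs_\star$ and its triviality on $\bI$, must be used; everything else is formal.
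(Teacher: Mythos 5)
Your proof is correct and follows essentially the same route as the paper: realize $\tpsi^\imath$ as the restriction of $\ad_{\tfX}\circ\tpsi_\star$ to $\tUi$, verify on the generating set $\{B_i\}\cup\tU^{\imath 0}\tbU$ that the images land in $\tUi$ via the defining relations \eqref{eq:fX1}--\eqref{eq:fX1b} of $\tfX$ (the key point being $\tpsi_\star(B_i)=F_i+\vs_{i,\star}\tpsi(\tTD_{w_\bullet}E_{\tau i})K_i$), and then read off \eqref{eq:newb9}--\eqref{eq:newb10}, involutivity, and uniqueness. The only difference is that you spell out the weight computation behind $\tPsi_{\bvs_\star}$ scaling $\tTD_{w_\bullet}(E_{\tau i})K_i$ by $\vs_{i,\star}$, which the paper records without proof as \eqref{eq:newb4}.
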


\begin{proof}
We follow the same strategy in \cite{Ko21} who established a bar involution on $\Ui_\bvs$ (for suitable $\bvs$) without using a Serre presentation.

By definition of $\tpsi_\star$, we have, for $i\in \wI,x\in \tU^{\imath 0}\tbU,$
\begin{align}
\begin{split}
\tpsi_\star(B_i)&=F_i+(-1)^{\alpha_i( 2\rho^\vee_\bullet)}q^{(\alpha_i, w_\bullet \alpha_{\tau i}+2\rho_\bullet)} \tpsi\big(\tTD_{w_\bullet} E_{\tau i} \big)K_i,\\
\tpsi_\star(x) &\in \tU^{\imath 0} \tbU.
\end{split}
\label{eq:newb4}
\end{align}
The composition $\ad_{\tfX} \circ \tpsi_\star$ is an anti-linear homomorphism from $\tU$ to a completion of $\tU$. Then the image of $\tUi$ under $\ad_{\tfX} \circ \tpsi_\star$ is a subalgebra generated by
 \begin{align*}
 (\ad_{\tfX} \circ \tpsi_\star)(B_i),\quad (\ad_{\tfX} \circ \tpsi_\star)(x), \qquad \text{ for } i\in \wI, x\in \tU^{\imath 0}\tbU.
 \end{align*}
 By Theorem~\ref{thm:qK2} and the identities \eqref{eq:newb4}, we have, for $i\in \wI,x\in \tU^{\imath 0}\tbU,$
 \begin{align}
   \label{eq:newb5}
 (\ad_{\tfX} \circ \tpsi_\star)(B_i)=B_i,\quad (\ad_{\tfX} \circ \tpsi_\star)(x)=\tpsi_\star(x).
 \end{align}
 Since each element in \eqref{eq:newb5} lies in $\tUi$, $\ad_{\tfX} \circ \tpsi_\star$ restricts to an anti-linear endomorphism on $\tUi$, which we shall denote by $\tpsi^\imath: \tUi \rightarrow \tUi$.

 By construction, $\tpsi^\imath$ satisfies \eqref{eq:newb9}--\eqref{eq:newb10}. Finally, $\tpsi^\imath$ is unique and is an involutive automorphism of $\tUi$ since it satisfies \eqref{eq:newb9}.
\end{proof}

\begin{proposition}
\label{prop:psi1}
We have
\begin{align}
\label{eq:psi1}
 \tpsi_\star(\tfX) \tfX=1.
\end{align}
\end{proposition}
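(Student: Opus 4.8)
The plan is to derive the identity entirely from the uniqueness statement in Theorem~\ref{thm:qK2}, rather than from any direct recursion. Since $\tfX = 1 + (\text{higher weight terms})$ is a unit in the completion $\widehat{\tU}$ in which the quasi $K$-matrix is defined, the asserted equality $\tpsi_\star(\tfX)\tfX = 1$ is equivalent to $\tpsi_\star(\tfX) = \tfX^{-1}$. I would therefore set $W := \tpsi_\star(\tfX)^{-1}$ and show that $W$ satisfies exactly the two intertwining relations \eqref{eq:fX1}--\eqref{eq:fX1b} together with the normalization $W^0 = 1$ that characterize $\tfX$; uniqueness then yields $W = \tfX$, which is precisely $\tpsi_\star(\tfX) = \tfX^{-1}$.

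First I would record the relevant formal properties of $\tpsi_\star = \tPsi_{\bvs_\star}\circ\tpsi$: it is an anti-linear algebra automorphism of $\tU$ with $\tpsi_\star^2 = \Id$, it preserves each weight space $\tU^+_\mu$ (because $\tpsi$ fixes the $E_i$ and $\tPsi_{\bvs_\star}$ merely rescales them), and it preserves $\tU^{\imath 0}\tbU$ (it fixes $\tbU$ up to rescaling and sends $\tk_i = K_iK'_{\tau i}$ into $\tU^{\imath 0}$). Consequently $\tpsi_\star$ acts term-by-term on $\tfX = \sum_\mu \tfX^\mu$, so $\tpsi_\star(\tfX)\in\widehat{\tU^+}$ has constant term $\tpsi_\star(1) = 1$, and hence so does $W$. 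Next, using \eqref{eq:newb4} I would rewrite \eqref{eq:fX1} as $B_i\tfX = \tfX\,\tpsi_\star(B_i)$ and apply the automorphism $\tpsi_\star$ to obtain $\tpsi_\star(B_i)\,\tpsi_\star(\tfX) = \tpsi_\star(\tfX)\,B_i$, which rearranges to $B_i W = W\,\tpsi_\star(B_i)$; likewise, applying $\tpsi_\star$ to $x\tfX = \tfX x$ from \eqref{eq:fX1b} shows $\tpsi_\star(\tfX)$, and therefore $W$, commutes with all of $\tU^{\imath 0}\tbU$. Thus $W$ meets every hypothesis of Theorem~\ref{thm:qK2}, forcing $W = \tfX$ and finishing the argument.

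The only genuine obstacle is the bookkeeping in the completion: one must check that applying the (anti-linear, weight-preserving) automorphism $\tpsi_\star$ to identities valid in $\widehat{\tU}$ is legitimate, and that the rearrangements passing to the inverse of $\tpsi_\star(\tfX)$ take place in the same completion where the intertwining relations are formulated. An alternative, heavier route is available through Proposition~\ref{prop:newb3}: since $\tpsi^\imath = \ad_{\tfX}\circ\tpsi_\star$ is involutive, a short computation gives $(\tpsi^\imath)^2 = \ad_{\tfX\tpsi_\star(\tfX)}$, so $\tfX\tpsi_\star(\tfX)$ is central in $\tUi$; but concluding from there would require the nontrivial fact that an element of $\widehat{\tU^+}$ with constant term $1$ that centralizes $\tUi$ must equal $1$. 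I would prefer the uniqueness argument precisely because it bypasses this centralizer-triviality step.
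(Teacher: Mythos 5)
Your proof is correct and follows essentially the same route as the paper: both show that $\tpsi_\star(\tfX)^{-1}$ has constant term $1$ and satisfies the defining intertwining relations \eqref{eq:fX1}--\eqref{eq:fX1b}, then invoke the uniqueness in Theorem~\ref{thm:qK2}. The only (cosmetic) difference is that you apply $\tpsi_\star$ directly to \eqref{eq:fX1}--\eqref{eq:fX1b}, whereas the paper applies it to the intertwining relation \eqref{eq:newb10} for $\tpsi^\imath$ and then specializes to generators, arriving at the same relations; your variant avoids any appeal to Proposition~\ref{prop:newb3}, and your alternative ``centralizer'' route is rightly set aside.
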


\begin{proof}
Applying $\psi_\star$ to \eqref{eq:newb10} results the identity
$\tpsi_\star(y) \tpsi_\star(\tfX )= \tpsi_\star(\tfX)\tpsi^\imath(y)$, for $y\in \tUi$.
We rewrite this identity as
\begin{align}
\label{eq:psi2}
 \tpsi^\imath(y) \tpsi_\star(\tfX )^{-1}= \tpsi_\star(\tfX)^{-1} \tpsi_\star(y).
\end{align}
Using \eqref{eq:newb4} and Proposition~\ref{prop:newb3}, the above identity \eqref{eq:psi2} implies following relations
\begin{align}
\label{eq:psi3}
\begin{split}
 B_i \tpsi_\star(\tfX )^{-1}
 &=
 \tpsi_\star(\tfX)^{-1}\Big(F_i+(-1)^{\alpha_i( 2\rho^\vee_\bullet)}q^{(\alpha_i, w_\bullet \alpha_{\tau i}+2\rho_\bullet)} \tpsi\big(\tTD_{w_\bullet} E_{\tau i} \big)K_i\Big),\\
 x \tpsi_\star(\tfX )^{-1}&= \tpsi_\star(\tfX)^{-1} x,
 \end{split}
\end{align}
for $i\in \wI,x\in \tU^{\imath 0} \tbU$. Hence, $\tpsi_\star(\tfX )^{-1}$ satisfies \eqref{eq:fX1}--\eqref{eq:fX1b} as well. Clearly, $\tpsi_\star(\tfX )^{-1}$ has constant term 1. Thanks to the uniqueness of $\tfX$ in Theorem~\ref{thm:qK2}, we have $\tpsi_\star(\tfX )^{-1}=\tfX$.
\end{proof}

\subsection{Quasi $K$-matrix and anti-involution $\sigma$}

We provide a new characterization for $\tfX$ in terms of the anti-involution $\sigma$ (see Proposition~\ref{prop:QG4}), which turns out to be much cleaner than Theorem~\ref{thm:qK2}. Denote
\begin{align}
 \label{eq:Bsig}
 B_i^\sigma=\sigma(B_i)= F_i + K_i \tTD_{w_\bullet}^{-1}(E_{\tau i}),
\end{align}
where the second identity above follows by noting $\tTD_{w_\bullet}^{-1} =\sigma \tTD_{w_\bullet} \sigma$; see \eqref{eq:sTs}. The following characterization of a quasi $K$-matrix $\tfX$ is valid for $\tUi$ of arbitrary Kac-Moody type.

 \begin{theorem}
  \label{thm:fX1}
  There exists a unique element $\tfX=\sum_{\mu \in \N \I} \tfX^{\mu}$ such that $\tfX^0=1, \tfX^{\mu}\in \tU_{\mu}^+$ and the following intertwining relations hold:
 \begin{align}\label{eq:fX2}
 \begin{split}
 B_i  \tfX &=  \tfX B_i^{\sigma},\qquad (i\in \wI),\\
 x  \tfX &= \tfX x,\qquad (x\in \tU^{\imath 0}\tbU).
 \end{split}
 \end{align}
 Moreover, $\tfX^\mu =0$ unless $\theta (\mu) =-\mu$.
 \end{theorem}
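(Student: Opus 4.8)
The plan is to deduce Theorem~\ref{thm:fX1} directly from Theorem~\ref{thm:qK2}, reducing the entire statement to a single identity in $\tU$. The second relation in \eqref{eq:fX2} is literally \eqref{eq:fX1b}, so once I verify the element identity
\[
B_i^\sigma = F_i+(-1)^{\alpha_i( 2\rho^\vee_\bullet)}q^{(\alpha_i, w_\bullet \alpha_{\tau i} +2\rho_\bullet)}\, \tpsi \big(\tTD_{w_\bullet}E_{\tau i} \big)K_i \qquad (i\in\wI),
\]
the two systems \eqref{eq:fX1}--\eqref{eq:fX1b} and \eqref{eq:fX2} become identical; existence, uniqueness, and the weight condition $\tfX^\mu=0$ unless $\theta(\mu)=-\mu$ then transfer verbatim from Theorem~\ref{thm:qK2}, with no new triangularity argument needed. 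So the whole content is the displayed identity.

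First I would compute $B_i^\sigma=\sigma(B_i)$. Since $\sigma$ is an anti-involution fixing $E_j,F_j$ and swapping $K_j\leftrightarrow K_j'$, and since $w_\bullet$ is an involution so that \eqref{eq:sTs} gives $\sigma\tTD_{w_\bullet}\sigma=\tTD_{w_\bullet}^{-1}$, one obtains \eqref{eq:Bsig}, i.e. $B_i^\sigma=F_i+K_i\tTD_{w_\bullet}^{-1}(E_{\tau i})$. Commuting $K_i$ past the weight vector $\tTD_{w_\bullet}^{-1}(E_{\tau i})\in\tU^+_{w_\bullet\alpha_{\tau i}}$ contributes $q^{(\alpha_i,\,w_\bullet\alpha_{\tau i})}$ (using $K_iuK_i^{-1}=q^{(\alpha_i,\nu)}u$ for $u\in\tU^+_\nu$). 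After cancelling $F_i$ and this exponent, the identity is equivalent to the purely positive-part statement
\[
\tTD_{w_\bullet}^{-1}(E_{\tau i}) = (-1)^{\alpha_i( 2\rho^\vee_\bullet)}q^{(\alpha_i, 2\rho_\bullet)}\,\tpsi\big(\tTD_{w_\bullet}(E_{\tau i})\big).
\]

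This is the heart of the matter, and I would prove it by comparing the two families of braid symmetries along a reduced word of $w_\bullet$. Rewrite the left side as $\tTD'_{w_\bullet,-1}(E_{\tau i})$, and, using $\tpsi\tTD_{w_\bullet}\tpsi=\tTD''_{w_\bullet,-1}$ (from \eqref{eq:sTs}) together with $\tpsi(E_{\tau i})=E_{\tau i}$, the right side as a scalar times $\tTD''_{w_\bullet,-1}(E_{\tau i})$. The explicit formulas of Proposition~\ref{prop:braid1} combined with \eqref{eq:sTs} yield the single-reflection comparison $\tTD'_{j,-1}(u)=(-q_j)^{\langle\lambda,\alpha_j^\vee\rangle}\tTD''_{j,-1}(u)$ for weight vectors $u\in\tU^+_\lambda$ lying in the subalgebra where both operators preserve $\tU^+$; since the scalar is additive in the weight, this extends multiplicatively from the generators $E_k$. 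I then apply it step by step along a reduced expression $w_\bullet=s_{i_1}\cdots s_{i_N}$ (all $i_k\in\bI$): each intermediate weight $\mu_k=s_{i_{k+1}}\cdots s_{i_N}(\alpha_{\tau i})$ is a positive root, because $s_{i_k}$ alters only the $\alpha_{i_k}$-coefficient and so the $\alpha_{\tau i}$-coefficient stays $1$, which keeps everything in $\tU^+$ and licenses the comparison at each stage. The accumulated scalar is $\prod_{k}(-q_{i_k})^{\langle\mu_k,\alpha_{i_k}^\vee\rangle}$.

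It remains to evaluate this product in closed form, and here the key is a telescoping through the inversion set of $w_\bullet$: the roots $s_{i_N}\cdots s_{i_{k+1}}(\alpha_{i_k})$, $1\le k\le N$, run exactly once through $\cR_\bullet^+$. Writing $\langle\mu_k,\alpha_{i_k}^\vee\rangle=\langle\alpha_{\tau i},\,s_{i_N}\cdots s_{i_{k+1}}(\alpha_{i_k}^\vee)\rangle$ and $(\mu_k,\alpha_{i_k})=(\alpha_{\tau i},\,s_{i_N}\cdots s_{i_{k+1}}(\alpha_{i_k}))$, the two sums collapse to $\langle\alpha_{\tau i},2\rho_\bullet^\vee\rangle$ and $(\alpha_{\tau i},2\rho_\bullet)$ respectively. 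Finally, since $\tau$ preserves $\bI$ and hence fixes $\rho_\bullet$ and $\rho_\bullet^\vee$, I may replace $\alpha_{\tau i}$ by $\alpha_i$, producing precisely $(-1)^{\alpha_i(2\rho_\bullet^\vee)}q^{(\alpha_i,2\rho_\bullet)}$ and closing the argument. I expect the main obstacle to be the bookkeeping in the single-reflection comparison together with checking that positivity is preserved at every step; once the scalar is phrased through the inversion set of $w_\bullet$, the telescoping is clean and type-independent.
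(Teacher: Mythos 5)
Your proof follows the same route as the paper's: both deduce Theorem~\ref{thm:fX1} from Theorem~\ref{thm:qK2} by showing the twisted generator on the right-hand side of \eqref{eq:fX1} equals $B_i^\sigma$, first commuting $K_i$ past the weight-$w_\bullet\alpha_{\tau i}$ vector (which absorbs the $q^{(\alpha_i,w_\bullet\alpha_{\tau i})}$ factor) and then invoking the identity
$(-1)^{\alpha_i(2\rho_\bullet^\vee)}q^{(\alpha_i,2\rho_\bullet)}\,\tpsi\big(\tTD_{w_\bullet}(E_{\tau i})\big)=\tTD_{w_\bullet}^{-1}(E_{\tau i})$.
The only real difference is that the paper cites \cite[Lemma~4.17]{BW18b} for this identity, while you prove it from scratch by comparing $\tTD'_{j,-1}$ with $\tTD''_{j,-1}$ along a reduced word of $w_\bullet$ and telescoping over its inversion set. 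Your computation is correct: the per-reflection scalar $(-q_j)^{\langle\lambda,\alpha_j^\vee\rangle}$ checks out against Proposition~\ref{prop:braid1} and \eqref{eq:sTs}, the roots $s_{i_N}\cdots s_{i_{k+1}}(\alpha_{i_k})$ do run once through $\cR_\bullet^+$ so the sums collapse to $2\rho_\bullet$ and $2\rho_\bullet^\vee$, and $\tau\rho_\bullet=\rho_\bullet$ lets you replace $\alpha_{\tau i}$ by $\alpha_i$. This reproduces exactly the content of the cited lemma, so the reduction to Theorem~\ref{thm:qK2} and the transfer of existence, uniqueness, and the condition $\theta(\mu)=-\mu$ are all legitimate.

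One small repair is needed in the self-contained part. You license the single-reflection comparison only for $u$ in the subalgebra generated by the $E_k$ with $k\ne j$, but the intermediate vectors $v_k$ of weight $\mu_k=s_{i_{k+1}}\cdots s_{i_N}(\alpha_{\tau i})$ need not lie in $\langle E_l : l\ne i_k\rangle$: when $\mu_k$ has a nonzero $\alpha_{i_k}$-coefficient (which already happens for $w_\bullet\alpha_{\tau i}$ in type BI, say), no element of that weight belongs to that subalgebra. This is harmless, because the identity $\tTD'_{j,-1}(u)=(-q_j)^{\langle\lambda,\alpha_j^\vee\rangle}\tTD''_{j,-1}(u)$ in fact holds for every $u\in\tU_\lambda$ with no positivity restriction: both sides are algebra homomorphisms and the scalar is multiplicative in $\lambda$, so it suffices to verify it on all Chevalley generators $E_l,F_l,K_l,K_l'$, which follows from Proposition~\ref{prop:braid1} and \eqref{eq:sTs} by the same index substitution $s\mapsto r-s$ you already use for the $E_k$. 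With that adjustment the telescoping requires no bookkeeping about staying in $\tU^+$, and the argument is complete.
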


 \begin{proof}
 We show that the identity \eqref{eq:fX2} is equivalent to \eqref{eq:fX1}, for any fixed $i\in \wI$. Since $\tpsi\big(\tTD_{w_\bullet}(E_{\tau i})\big)$ has weight $w_\bullet \alpha_{\tau i}$, the identity \eqref{eq:fX1} is equivalent to
 \begin{align}
   \label{eq:fX3}
 B_i \tfX &=  \tfX \Big(F_i+(-1)^{\alpha_i(2\rho^\vee_\bullet)}q^{(\alpha_i, 2\rho_\bullet)} K_i\tpsi\big(\tTD_{w_\bullet}(E_{\tau i})\big) \Big).
 \end{align}
 Moreover, by \cite[Lemma 4.17]{BW18b} and $\tU^+ =\U^+$, we have
 \begin{equation*}
 (-1)^{\alpha_i(2\rho_\bullet^\vee)} q^{(\alpha_i,2\rho^\vee_\bullet)}\tpsi\big(\tTD_{w_\bullet}(E_{\tau i})\big)=\tTD_{w_\bullet}^{-1} (E_{\tau i}),
 \end{equation*}
 and hence, the identity \eqref{eq:fX3} is equivalent to \eqref{eq:fX2} as desired.
 \end{proof}

 \begin{remark}
 By abuse of notation, we denote again by $\sigma$ the anti-involution on $\U$ which fixes $E_i,F_i$ and sends $K_i\mapsto K_i^{-1}$ for $i\in \I$. For a balanced parameter $\bvs$, we obtain the intertwining relation for $\Ui_\bvs$,
 $ B_i  \fX_{\bvs} =  \fX_{\bvs}  B_i^{\sigma}$ $(i\in \wI),$
 by applying the central reduction $\pi_\bvs$ to \eqref{eq:fX2}, thanks to \eqref{eq:sameUp}. Here $B_i^{\sigma}=\sigma(B_i)=F_i + \vs_i K_i T_{w_\bullet}^{-1}(E_{\tau i})$.

 On the other hand, for (not necessarily balanced) parameter $\bvs$, we have
 \begin{align}\label{eq:fX6}
 B_i  \fX_{\bvs} &=  \fX_{\bvs}  B_{\tau i}^{\sigma \tau}.
 \end{align}
 \end{remark}

Note that the involution $\tau$ induces an involution $\widehat{\tau} \in \Aut (\tU)$ which preserves $\tUi$. For $i\in \wI$, the rank one quasi $K$-matrix
\[
\tfX_i \in \tU^+_{\bIi} (\subset \tU^+)
\]
is defined to be the quasi $K$-matrix associated to the rank one Satake subdiagram $(\bI\cup \{i,\tau i\}, \tau)$; cf. \eqref{eq:Iib}. Clearly, we have $\tfX_i =\tfX_{\tau i}$.

 \begin{proposition}
   \label{prop:inv}
We have $\sigma(\tfX)=\tfX$ and $\widehat{\tau} (\tfX)=\tfX$, In addition, for $i\in \wI$, we have
\begin{align*}
    \sigma(\tfX_i)=\tfX_i,\qquad
    \widehat{\tau} (\tfX_i)=\tfX_i.
\end{align*}
In addition, $\widehat{\tau}_{\bullet,i}(\tfX_i)=\tfX_i.$
\end{proposition}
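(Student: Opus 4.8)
The plan is to deduce every one of these invariances from the uniqueness characterization of the quasi $K$-matrix in Theorem~\ref{thm:fX1}. The general principle is this: if $\phi$ is an (anti-)automorphism of $\tU$ that fixes $1$, maps $\tU^+$ into itself respecting (up to a diagram symmetry) the weight grading, and carries the locus $\{\mu\in\N\I:\theta(\mu)=-\mu\}$ to itself, then to prove $\phi(\tfX)=\tfX$ it suffices to check that $\phi(\tfX)$ again satisfies the two intertwining relations $B_i\,(-)=(-)\,B_i^\sigma$ for $i\in\wI$ and $x\,(-)=(-)\,x$ for $x\in\tU^{\imath 0}\tbU$; uniqueness then forces equality. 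I would apply this template four times.

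First I would treat $\sigma$. Since $\sigma$ is an anti-involution fixing each $E_i$, it preserves $\tU^+$ and each weight space, so $\sigma(\tfX)$ has constant term $\sigma(1)=1$ and the correct support. Applying the anti-automorphism $\sigma$ to $B_i\tfX=\tfX B_i^\sigma$ and using $\sigma(B_i)=B_i^\sigma$ together with $\sigma(B_i^\sigma)=B_i$ (see \eqref{eq:Bsig}) converts this identity into $B_i\,\sigma(\tfX)=\sigma(\tfX)\,B_i^\sigma$, which is the same relation. Because $\sigma$ sends the generating set of $\tU^{\imath 0}\tbU$ to itself (it fixes $E_j,F_j$, swaps $K_j\leftrightarrow K_j'$, and sends $\tk_i\mapsto\tk_{\tau i}$), applying $\sigma$ to $x\tfX=\tfX x$ shows $\sigma(\tfX)$ commutes with all of $\tU^{\imath 0}\tbU$. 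Uniqueness then yields $\sigma(\tfX)=\tfX$.

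For $\widehat{\tau}$ the key input is $\widehat{\tau}\,\tTD_{\bw}=\tTD_{\bw}\,\widehat{\tau}$, which holds because $\widehat{\tau}\,\tTD_{j}\,\widehat{\tau}^{-1}=\tTD_{\tau j}$ and $\tau$ fixes the longest element $\bw$ of $W_\bullet$. Using this and $\tau^2=\Id$ one computes $\widehat{\tau}(B_i)=B_{\tau i}$ and $\widehat{\tau}(B_i^\sigma)=B_{\tau i}^\sigma$, while $\widehat{\tau}$ preserves $\tU^{\imath 0}\tbU$ (it sends $\tk_i\mapsto\tk_{\tau i}$). Applying $\widehat{\tau}$ to the defining relations and re-indexing $i\mapsto\tau i$ (a permutation of $\wI$) shows $\widehat{\tau}(\tfX)$ satisfies them; the support is preserved since $\theta$ commutes with $\tau$, giving $\theta(\tau\mu)=-\tau\mu$ whenever $\theta(\mu)=-\mu$. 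Hence $\widehat{\tau}(\tfX)=\tfX$. The two rank-one identities $\sigma(\tfX_i)=\tfX_i$ and $\widehat{\tau}(\tfX_i)=\tfX_i$ then follow verbatim, now applying Theorem~\ref{thm:fX1} to the rank one Satake subdiagram $(\bI\cup\{i,\tau i\},\tau)$ and noting that $\sigma$ and $\widehat{\tau}$ restrict to the corresponding maps on the Drinfeld double of $\bIi$.

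The one genuinely new point, and the main obstacle, is $\widehat{\tau}_{\bullet,i}(\tfX_i)=\tfX_i$: here $\tau_{\bullet,i}=-w_{\bullet,i}$ is a priori only a Dynkin-diagram automorphism of $\bIi$, so one must first verify that it is compatible with the Satake structure before the uniqueness argument can be rerun. I would establish the needed structural facts as follows. As $\tau$ permutes $\{s_k:k\in\bIi\}$ it fixes the longest element $w_{\bullet,i}$, so $\tau$ commutes with $\tau_{\bullet,i}$; combined with the commutation of $\bw$ and $w_{\bullet,i}$ this gives $\theta\,\tau_{\bullet,i}=\tau_{\bullet,i}\,\theta$ for $\theta=-\bw\circ\tau$. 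Since the black nodes of $\bIi$ are exactly the simple roots fixed by $\theta$ (for white $k$ one has $\theta(\alpha_k)=-\bw\,\alpha_{\tau k}$, a positive root different from $\alpha_k$), this commutation forces $\tau_{\bullet,i}(\bI)=\bI$, whence $\tau_{\bullet,i}(\{i,\tau i\})=\{i,\tau i\}$ and $\tau_{\bullet,i}$ fixes $\bw$. With these facts the computation from the $\widehat{\tau}$ case goes through: $\widehat{\tau}_{\bullet,i}\,\tTD_{\bw}=\tTD_{\bw}\,\widehat{\tau}_{\bullet,i}$, and $\widehat{\tau}_{\bullet,i}(B_k)=B_{\tau_{\bullet,i}k}$, $\widehat{\tau}_{\bullet,i}(B_k^\sigma)=B_{\tau_{\bullet,i}k}^\sigma$ for $k\in\{i,\tau i\}$, with $\widehat{\tau}_{\bullet,i}$ preserving the subalgebra generated by $\tk_i,\tk_{\tau i}$ and $\tbU$. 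Re-indexing $k\mapsto\tau_{\bullet,i}k$ in the defining relations of $\tfX_i$ and invoking its uniqueness completes the proof.
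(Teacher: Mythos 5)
Your proposal is correct, and for the bulk of the statement ($\sigma$- and $\widehat{\tau}$-invariance of $\tfX$ and of the rank-one $\tfX_i$) it runs exactly as the paper does: apply the (anti-)automorphism to the intertwining relations of Theorem~\ref{thm:fX1}, observe that the relations are permuted among themselves, and invoke uniqueness.

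Where you genuinely diverge is the last assertion, $\widehat{\tau}_{\bullet,i}(\tfX_i)=\tfX_i$. The paper disposes of it by consulting the rank-one classification in Table~\ref{table:localSatake}: there $\tau_{\bullet,i}$ is the identity except in type AIV, where it coincides with $\tau|_{\bIi}$, so the claim reduces to the already-established $\widehat{\tau}$-invariance. You instead prove, without any classification, that $\tau_{\bullet,i}$ is compatible with the Satake structure of $(\bIi,\tau)$ --- it commutes with $\tau$ and with $\theta$ (since $\tau$ and $\bw$ both commute with $w_{\bullet,i}$), hence preserves $\bI$ and fixes $\bw$, which lets you rerun the uniqueness argument with the re-indexing $k\mapsto\tau_{\bullet,i}k$. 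Both arguments are valid; yours is longer but uniform and would survive in Kac--Moody generality where no classification table is available, which is in the spirit of the paper's other arguments, while the paper's is a one-line lookup. One cosmetic slip: for white $k$ the root $\theta(\alpha_k)=-\bw\alpha_{\tau k}$ is a \emph{negative} root (since $\bw\alpha_{\tau k}>0$), not a positive one as you wrote; the only fact you use, namely $\theta(\alpha_k)\neq\alpha_k$, is unaffected.
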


\begin{proof}
 By applying the anti-involution $\sigma$ to the identities in Theorem~\ref{thm:fX1}, we have
  \begin{align}
 \sigma (\tfX)  B_i^{\sigma} &=  B_i \sigma (\tfX),\qquad (i\in \wI),\\
\sigma (\tfX) y &= y \sigma (\tfX),\qquad (x\in \tU^{\imath 0}\tbU),
 \end{align}
 where $y =\sigma (x) \in \tU^{\imath 0}\tbU$. This means that $\sigma (\tfX)$ satisfies the same characterization in Theorem~\ref{thm:fX1} as $\tfX$, and hence by uniqueness, we have $\sigma(\tfX)=\tfX$.

 Noting that $\sigma \widehat{\tau} =\widehat{\tau} \sigma$ and $\widehat{\tau}$ preserves $\tU^{\imath 0}\tbU$, then the identity $\widehat{\tau} (\tfX)=\tfX$ follows by the same type argument as above.

 The identities $\sigma(\tfX_i)=\tfX_i$ and $\widehat{\tau} (\tfX_i)=\tfX_i$ are immediate by restricting $\sigma$ and $\widehat{\tau}$ to the Drinfeld double associated to rank 1 Satake subdiagram $(\bIi,\bI,\tau|_{\bIi})$.

 According to the rank one Table~\ref{table:localSatake}, $\tau_{\bullet,i} =1$ except in type AIV when $\tau_{\bullet,i}$ coincides with the restriction of $\tau$ to the rank 1 Satake diagram. In either case, we have $\widehat{\tau}_{\bullet,i}(\tfX_i)=\tfX_i$.
 %$\tau_{\bullet,i}(\alpha_i)=\alpha_i$ or $\alpha_{\tau i}$ and $\tau_{\bullet,i}$ permutes $\bI$. Then $\widehat{\tau}_{\bullet,i}$ preserves $\tU^{\imath 0}\tbU$, and the identity $\widehat{\tau}_{\bullet,i}(\tfX_i)=\tfX_i$ follows by the same type argument as above.
\end{proof}

 \begin{remark}
 For balanced parameters $\bvs$, by taking a central reduction $\pi_\bvs$, the property $\tau (\fX_{i,\bvs})=\fX_{i,\bvs}$ remains valid. However, for unbalanced parameters $\bvs$, we do not necessarily have $\tau (\fX_{i,\bvs})=\fX_{i,\bvs}$; instead, we have $\tau (\fX_{i,\bvs})=\fX_{i,\tau\bvs}$, which can be proved by Theorem~\ref{thm:qK}. The property $\fX_{i,\bvs} =\fX_{\tau i,\bvs}$ is true, regardless of balanced or unbalanced parameters.
 \end{remark}

\begin{remark}
\label{rmk:fX2}
It follows by Theorem~\ref{thm:qK2} that the rank one quasi $K$-matrix $\tfX_i$ has the form $\tfX_i=\sum_{m\geq 0} \tfX_{i,m}$, for $\tfX_{i,m}\in \tU_{m(\alpha_i+\bw\alpha_{\tau i})}$.
\end{remark}

 \subsection{An anti-involution $\sigma^\imath$ on $\tUi$}

%We construct an anti-involution $\sigma^\imath$ on $\tUi$ and use it to twist $\tTa{i}$ to obtain another symmetry of $\tUi$.

 Define $\ck_i\in \tUi$ by
\begin{align}
  \label{def:Ki}
\ck_i = K_i K'_{\bw \alpha_{\tau i}},\qquad  \text{ for } i\in \wI.
\end{align}

\begin{lemma}
  \label{lem:rkone1}
Let $i \in \wI$. We have $\ck_i\in \tU^{\imath 0}$.
%Both $\ck_i$ and $\ck_{\tau i}$ commute with the quasi $K$-matrix $\tfX_i$.
 \end{lemma}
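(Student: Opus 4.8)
The plan is to rewrite $\ck_i$ as an explicit monomial in the generators $\tk_i = K_i K'_{\tau i}$ ($i\in\wI$) and $K'_j$ ($j\in\bI$) of $\tU^{\imath 0}$. The only nontrivial ingredient will be pinning down the precise shape of the weight $\bw\alpha_{\tau i}$, so I would address that first; once it is in hand the rest is a direct substitution.

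First I would record that $W_\bullet = \langle s_j \mid j\in\bI\rangle$ acts trivially on the quotient lattice $\Z\I / \Z\bI$, where $\Z\bI := \bigoplus_{j\in\bI}\Z\alpha_j$. Indeed each generator satisfies $s_j(\alpha_k) = \alpha_k - c_{jk}\alpha_j \equiv \alpha_k \pmod{\Z\bI}$, so every $w\in W_\bullet$ fixes $\Z\I/\Z\bI$ pointwise. Applying this to $w=\bw$ and the simple root $\alpha_{\tau i}$, and using that $\tau i\in\wI$ (so $\tau i\notin\bI$), I obtain
\begin{align*}
\bw\alpha_{\tau i} = \alpha_{\tau i} + \sum_{j\in\bI} m_j\alpha_j
\end{align*}
for suitable integers $m_j\in\Z$, with the coefficient of $\alpha_{\tau i}$ equal to exactly $1$.

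Substituting into the definition \eqref{def:Ki} then gives
\begin{align*}
\ck_i = K_i K'_{\bw\alpha_{\tau i}} = K_i K'_{\tau i} \prod_{j\in\bI}(K'_j)^{m_j} = \tk_i \prod_{j\in\bI}(K'_j)^{m_j}.
\end{align*}
Since $\tk_i\in\tU^{\imath 0}$ and each $K'_j$ ($j\in\bI$) is by definition a generator of $\tU^{\imath 0}$, the right-hand side lies in $\tU^{\imath 0}$, which is the claim. There is no genuine obstacle here: the one point needing care is the parabolic-invariance of the class of $\alpha_{\tau i}$ modulo $\Z\bI$ — the standard fact that $W_\bullet$ acts trivially on $\Z\I/\Z\bI$ — and everything else reduces to reading off exponents.
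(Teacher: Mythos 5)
Your proof is correct and follows essentially the same route as the paper: the paper's (one-line) argument likewise observes that $\ck_i$ is the product of $\tk_i = K_iK'_{\tau i}\in\tU^{\imath 0}$ and an element of $\tbU^0$, which is exactly your factorization $\ck_i=\tk_i\prod_{j\in\bI}(K'_j)^{m_j}$. You have merely made explicit the (standard) justification that $\bw\alpha_{\tau i}\equiv\alpha_{\tau i}\pmod{\Z\bI}$, which the paper leaves implicit.
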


 \begin{proof}
By definition, the element $\ck_i$ is a product of $\tk_i=K_i K'_{\tau i} \in \tU^{\imath 0}$ and an element in $\tbU^0$, and hence $\ck_i\in \tU^{\imath 0}$. %The second statement follows by Theorem~\ref{thm:fX1}.
\end{proof}

Recall the anti-involution $\sigma$ on $\tU$ from Proposition~\ref{prop:QG4}.
 %We construct an anti-involution $\sigma^\imath$ on $\tUi$, and relate it to the anti-involution $\sigma$ on $\tU$ via an intertwining relation.

 \begin{proposition}
   \label{prop:newb1}
 There exists a unique anti-involution $\sigma^\imath$ of $\tUi$ such that
 \begin{align}\label{eq:newb1-2}
 \sigma^\imath(B_i) =B_i, \qquad \sigma^\imath(x) =\sigma(x),
 \quad \text{ for } i\in \wI,x\in \tU^{\imath 0}\tbU.
 \end{align}
Moreover, $\sigma^\imath$ satisfies the following intertwining relation:
 \begin{equation}
   \label{eq:newb1}
 \sigma^\imath(x)  \tfX =\tfX  \sigma(x), \qquad \text{ for all } x\in \tUi.
 \end{equation}
 \end{proposition}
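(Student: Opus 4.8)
The plan is to mirror the construction of the bar involution $\tpsi^\imath$ in Proposition~\ref{prop:newb3}, replacing the rescaled bar involution by the anti-involution $\sigma$ and keeping careful track of the fact that $\sigma$ reverses multiplication. Concretely, I would \emph{define} $\sigma^\imath$ to be the restriction to $\tUi$ of $\ad_{\tfX}\circ\sigma$, where $\ad_{\tfX}(u)=\tfX u\tfX^{-1}$. Since $\sigma$ is an anti-automorphism of $\tU$ while $\ad_{\tfX}$ is an algebra homomorphism into a suitable completion $\widehat{\tU}$ of $\tU$, the composite $\ad_{\tfX}\circ\sigma$ is an anti-homomorphism $\tU\to\widehat{\tU}$. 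With this definition the intertwining relation \eqref{eq:newb1} holds tautologically, since $\sigma^\imath(x)\tfX=\tfX\sigma(x)$ is merely a rearrangement of $\sigma^\imath(x)=\tfX\sigma(x)\tfX^{-1}$.

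The key step is to verify that $\ad_{\tfX}\circ\sigma$ sends the generators of $\tUi$ back into $\tUi$ (rather than merely into the completion), and that it does so by the formulas \eqref{eq:newb1-2}. For $x\in\tU^{\imath 0}\tbU$, I would first note that $\sigma$ stabilizes $\tU^{\imath 0}\tbU$: it fixes $E_j,F_j$ and swaps $K_j\leftrightarrow K_j'$ for $j\in\bI$, hence preserves $\tbU$, while $\sigma(\tk_i)=\sigma(K_iK'_{\tau i})=K_{\tau i}K'_i=\tk_{\tau i}$ shows it preserves $\tU^{\imath 0}$. Since $\tfX$ commutes with $\tU^{\imath 0}\tbU$ by the second relation in \eqref{eq:fX2}, we get $\sigma^\imath(x)=\tfX\sigma(x)\tfX^{-1}=\sigma(x)\in\tU^{\imath 0}\tbU$. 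For $B_i$ with $i\in\wI$, the defining relation $B_i\tfX=\tfX B_i^\sigma$ from Theorem~\ref{thm:fX1} rearranges to $\tfX B_i^\sigma\tfX^{-1}=B_i$, so that $\sigma^\imath(B_i)=\tfX\sigma(B_i)\tfX^{-1}=\tfX B_i^\sigma\tfX^{-1}=B_i$, using \eqref{eq:Bsig}. As the $B_i$ ($i\in\wI$) together with $\tU^{\imath 0}\tbU$ generate $\tUi$, and the images of all these generators lie in $\tUi$, the anti-homomorphism $\ad_{\tfX}\circ\sigma$ restricts to an anti-endomorphism $\sigma^\imath\colon\tUi\to\tUi$ satisfying \eqref{eq:newb1-2}.

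It then remains to check that $\sigma^\imath$ is involutive and unique. Because $\sigma^\imath$ is an anti-homomorphism, $(\sigma^\imath)^2$ is an (order-preserving) algebra homomorphism of $\tUi$; it fixes each $B_i$, since $\sigma^\imath(B_i)=B_i$, and restricts to $\sigma^2=\Id$ on $\tU^{\imath 0}\tbU$. Thus $(\sigma^\imath)^2$ fixes a generating set of $\tUi$ and therefore equals $\Id$, so $\sigma^\imath$ is an anti-involution. Uniqueness is immediate: any anti-involution obeying \eqref{eq:newb1-2} is prescribed on a generating set of $\tUi$, and an anti-homomorphism is determined by its values there.

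The one genuine subtlety — the main obstacle — is identical to the one in Proposition~\ref{prop:newb3}: a priori $\ad_{\tfX}\circ\sigma$ only maps $\tU$ into the completion $\widehat{\tU}$, so one must argue that it descends to an honest endomorphism of the subalgebra $\tUi\subset\tU$. This is precisely what the generator computations accomplish, and it is the crucial place where the $\sigma$-characterization of $\tfX$ in Theorem~\ref{thm:fX1} and the commutation relation \eqref{eq:fX2} enter. A secondary point to keep in mind is that, unlike the honest homomorphism $\ad_{\tfX}\circ\tpsi_\star$ appearing in Proposition~\ref{prop:newb3}, here the order-reversal of $\sigma$ must be respected throughout, in particular when identifying $(\sigma^\imath)^2$ as a non-reversing homomorphism in the involution argument.
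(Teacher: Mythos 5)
Your proposal is correct and follows essentially the same route as the paper: both verify the intertwining relation on the generating set $\{B_i\mid i\in\wI\}\cup\tU^{\imath 0}\tbU$ via Theorem~\ref{thm:fX1}, propagate it by anti-multiplicativity, and deduce involutivity and uniqueness from \eqref{eq:newb1-2}. The only cosmetic difference is that the paper packages the argument as a uniqueness claim for elements $\widehat{x}$ satisfying $\widehat{x}\tfX=\tfX\sigma(x)$ rather than explicitly invoking $\ad_{\tfX}\circ\sigma$ as a map into a completion, which is exactly the phrasing the paper itself uses for $\tpsi^\imath$ in Proposition~\ref{prop:newb3}.
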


 \begin{proof}
Given $x\in \tUi$, an element $\widehat x \in \tUi$ (if it exists) such that $\widehat{x}  \tfX =\tfX  \sigma(x)$ must be unique due to the invertibility of $\tfX$.

{\bf Claim $(*)$.}
Suppose that there exist  $\widehat x, \widehat y \in \tUi$ that $\widehat{x}  \tfX =\tfX  \sigma(x)$ and $\widehat{y}  \tfX =\tfX  \sigma(y)$, for given $x, y \in \tUi$. Then we have
\[
\widehat{y} \widehat{x} \tfX =\tfX  \sigma(xy).
\]
Indeed, the Claim holds since $\widehat{y} \widehat{x} \tfX =\widehat{y} \tfX  \sigma(x) = \tfX  \sigma(y) \sigma(x) =\tfX  \sigma(xy)$.

 Observe that $\sigma$ preserves the subalgebra $\tU^{\imath 0}\tbU$ of $\tUi$. Hence by Theorem~\ref{thm:fX1}, we have $\sigma(x)  \tfX =\tfX  \sigma(x)$, for all $x\in \tU^{\imath 0}\tbU$. By Theorem~\ref{thm:fX1} again, we have $B_i \tfX =\tfX  \sigma(B_i)$, for all $i\in \wI$. Since the assumption for Claim ($*$) holds for a generating set $\tU^{\imath 0}\tbU \cup \{B_i|i\in \wI\}$ of $\tUi$, we conclude by Claim ($*$) that there exists a (unique) $\widehat x\in \tUi$ such that $\widehat{x}  \tfX =\tfX  \sigma(x)$, for any $x\in \tUi$, and moreover, sending $x\mapsto \widehat{x}$ defines an anti-endmorphism of $\tUi$ (which will be denoted by $\sigma^\imath$).

Clearly, by construction $\sigma^\imath$ satisfies \eqref{eq:newb1-2} and the identity \eqref{eq:newb1}. Finally, $\sigma^\imath$ is an involutive anti-automorphism of $\tUi$ since it satisfies \eqref{eq:newb1-2}.
\end{proof}

\begin{remark}
The strategy in establishing a bar involution on $\Ui_\bvs$ without use of a Serre presentations appeared first in \cite{Ko21}. For quasi-split $\imath$quantum groups, i.e., $\bI=\varnothing,$ our $\tpsi^\imath$ coincides with the bar involution in \cite[Lemma 2.4(a)]{CLW23} (see also \cite[Lemma 6.9]{LW21b}). Unlike the proof {\em loc. cit.}, our proofs of Propositions~\ref{prop:newb3} and  \ref{prop:newb1} do not use a Serre presentation of $\tUi$. Hence, the (anti-) involutions $\sigma^\imath$ and $\tpsi^\imath$ are valid for $\tUi$ of arbitrary Kac-Moody type.
\end{remark}

 \subsection{An anti-involution $\sigma_\tau$ on $\Ui_\bvs$}

The anti-involution $\sigma^\imath$ on $\tUi$ in Proposition~\ref{prop:newb1} can descend to an $\imath$quantum group $\Ui_\bvs$, only for any {\em balanced} parameter $\bvs$. It turns out that the anti-involution $\sigma^\imath \tau$ on $\tUi$ can descend to an $\imath$quantum group $\Ui_\bvs$, for an {\em arbitrary} parameter $\bvs$.

\begin{proposition}
 \label{prop:sigmatau}
 Let $\bvs$ be an arbitrary parameter. There exists a unique anti-involution $\sigma_\tau$ of $\Ui_\bvs$ such that
 \begin{align}
 \label{eq:fX8}
 \sigma_\tau(B_i) =B_{\tau i}, \qquad \sigma_\tau(x) =\sigma\tau(x),
 \quad \text{ for } i\in \wI,x\in \U^{\imath 0}\bU.
 \end{align}
Moreover, $\sigma_\tau$ satisfies the following intertwining relation:
 \begin{equation}
 \label{eq:fX9}
 \sigma_\tau(x)  \fX_\bvs =\fX_\bvs  \sigma\tau(x), \qquad \text{ for all } x\in \Ui_\bvs.
 \end{equation}
 \end{proposition}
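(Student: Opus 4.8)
The plan is to follow the proof of Proposition~\ref{prop:newb1} essentially verbatim, replacing the anti-involution $\sigma$ by the composite $\sigma\tau$ and the characterization of Theorem~\ref{thm:fX1} by the intertwining relation \eqref{eq:fX6}. Since $\fX_\bvs$ is invertible, for each $x\in\Ui_\bvs$ there is at most one element $\widehat x\in\Ui_\bvs$ with $\widehat x\,\fX_\bvs=\fX_\bvs\,\sigma\tau(x)$; this delivers uniqueness immediately and will simultaneously force the intertwining relation \eqref{eq:fX9}. The whole construction then reduces to producing such an $\widehat x$ on a generating set of $\Ui_\bvs$ and checking that $x\mapsto\widehat x$ extends to an anti-endomorphism.

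First I would record the multiplicativity step: if $\widehat x,\widehat y\in\Ui_\bvs$ satisfy the relation for $x$ and $y$, then, because $\sigma\tau$ is an anti-homomorphism of $\U$, the product $\widehat y\,\widehat x$ satisfies it for $xy$, since $\widehat y\,\widehat x\,\fX_\bvs=\widehat y\,\fX_\bvs\,\sigma\tau(x)=\fX_\bvs\,\sigma\tau(y)\sigma\tau(x)=\fX_\bvs\,\sigma\tau(xy)$. It therefore suffices to exhibit $\widehat x$ on the generators $B_i$ ($i\in\wI$) and $x\in\U^{\imath 0}\bU$. For the Cartan-type generators, $\sigma\tau$ preserves $\U^{\imath 0}\bU$, so the commutation identity $x\,\fX_\bvs=\fX_\bvs\,x$ of Theorem~\ref{thm:qK} applied to $\sigma\tau(x)$ gives $\widehat x=\sigma\tau(x)$. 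For $B_i$, the key input is \eqref{eq:fX6}: taking $j=\tau i$ there yields $B_{\tau i}\,\fX_\bvs=\fX_\bvs\,\sigma\tau(B_i)$, so $\widehat{B_i}=B_{\tau i}\in\Ui_\bvs$. Both images lie in $\Ui_\bvs$, so $x\mapsto\widehat x$ defines an anti-endomorphism $\sigma_\tau$ satisfying \eqref{eq:fX8} and \eqref{eq:fX9}.

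It then remains to check that $\sigma_\tau$ is involutive. As $\sigma_\tau$ is an anti-homomorphism, $\sigma_\tau^2$ is an algebra homomorphism, so it suffices to verify $\sigma_\tau^2=\Id$ on generators: $\sigma_\tau^2(B_i)=\sigma_\tau(B_{\tau i})=B_i$, while on $\U^{\imath 0}\bU$ one has $\sigma_\tau^2=(\sigma\tau)^2=\Id$ because $\sigma$ and $\tau$ commute as maps on $\U$ and are each involutive there.

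The main subtlety—and the reason the $\tau$-twist is unavoidable for an arbitrary, possibly unbalanced, parameter—is precisely the occurrence of $B_{\tau i}$ rather than $B_i$ on the right of \eqref{eq:fX6}. For a balanced $\bvs$ one has $\sigma(\fX_\bvs)=\fX_\bvs$ and $\sigma^\imath$ descends directly to $\Ui_\bvs$; for an unbalanced $\bvs$ only the twisted relation $\tau(\fX_{i,\bvs})=\fX_{i,\tau\bvs}$ is available, so no $\tau$-untwisted symmetry survives, and it is exactly the composite $\sigma\tau$ (sending $B_i\mapsto B_{\tau i}$) that restores a well-defined anti-involution of $\Ui_\bvs$. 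I expect the only genuine care needed is the index bookkeeping in applying \eqref{eq:fX6} with $j=\tau i$ together with confirming that $\sigma\tau$ stabilizes $\U^{\imath 0}\bU$; everything else is formal.
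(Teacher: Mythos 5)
Your proposal is correct and follows essentially the same route as the paper: uniqueness from invertibility of $\fX_\bvs$, the multiplicativity observation, verification on the generating set $\{B_i\}\cup\U^{\imath 0}\bU$ via \eqref{eq:fX6} (with the index replaced by $\tau i$) and Theorem~\ref{thm:qK}, and involutivity read off from \eqref{eq:fX8}. The paper's own proof is exactly this outline, deferring the formal multiplicativity step to the argument already given for Proposition~\ref{prop:newb1}.
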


\begin{proof}
A proof similar to the one for Proposition~\ref{prop:newb1} works here, and we outline it.

 We claim that, for any $x\in \Ui_\bvs$, there exists $\widehat{x}\in \Ui_\bvs$ such that
 \begin{align}
 \label{eq:fX7}
    \widehat{x}\fX_\bvs =\fX_\bvs  \sigma\tau(x).
 \end{align}
As argued in the proof of Proposition~\ref{prop:newb1}, it suffices to show that \eqref{eq:fX7} holds for $x$ in a generating set $\{ B_i|i\in \wI\} \cup\U^{\imath 0}\bU$ of $\Ui_\bvs$. Indeed, by \eqref{eq:fX6}, we have $B_{\tau i}\fX_\bvs =\fX_\bvs  \sigma\tau(B_i)$.
 For $x\in \U^{\imath 0}\bU$, note that $\sigma\tau(x)\in\U^{\imath 0}\bU$, and then by Theorem~\ref{thm:qK}, we have $\sigma\tau(x)\fX_\bvs =\fX_\bvs  \sigma\tau(x)$. This proves \eqref{eq:fX7}.

Now sending $x\mapsto \widehat{x}$ defines an anti-endomorphism $\sigma_\tau$, which satisfies \eqref{eq:fX8} and \eqref{eq:fX9} by construction above. Finally, $\sigma_\tau$ is involutive since it satisfies \eqref{eq:fX8}.
 \end{proof}

\begin{remark}
Our construction of $\sigma_\tau$ generalizes the $\sigma_\imath$ in \cite[Proposition 3.13]{BW21}, which is constructed via bar involutions under certain restrictions on parameters.
\end{remark}

Thanks to Proposition~\ref{prop:sigmatau}, we have a conceptual formulation of the quasi K-matrix $\fX_\bvs$ for $\Ui_\bvs$ below, which is a variant of Theorem~\ref{thm:fX1}; compare Theorem~\ref{thm:qK} (see \cite{AV22}). This new formulation can also be proved directly.

\begin{theorem}
 \label{thm:quasiKUi}
 Let $\bvs$ be an arbitrary parameter. There exists a unique element $\fX_\bvs=\sum_{\mu \in \N \I} \fX^{\mu}$ such that $\fX^0=1, \fX^{\mu}\in \tU_{\mu}^+$ and the following intertwining relations hold:
\begin{align*}
 B_{\tau i}  \tfX &=  \tfX \sigma\tau (B_i),\qquad (i\in \wI),\\
 x  \tfX &= \tfX x,\qquad (x\in \U^{\imath 0}\bU).
\end{align*}
\end{theorem}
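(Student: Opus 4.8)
The plan is to derive both the existence and the uniqueness from Theorem~\ref{thm:qK} by checking that the two intertwining relations stated here are \emph{equivalent} to the two defining relations of $\fX_\bvs$ in Theorem~\ref{thm:qK}, after the index substitution $i\mapsto\tau i$ in the first one. The second relation, $x\fX_\bvs=\fX_\bvs x$ for $x\in\U^{\imath 0}\bU$, is literally the second relation of Theorem~\ref{thm:qK}, so no work is needed there; the whole content lies in the first relation. Once the equivalence is in place, the element of Theorem~\ref{thm:qK} is the unique solution of the present system and we are done.

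For existence I would simply verify that the quasi $K$-matrix $\fX_\bvs$ produced by Theorem~\ref{thm:qK} already satisfies the stated relations. The first one is obtained from \eqref{eq:fX6} by replacing $i$ with $\tau i$: unwinding $B_{\tau i}^{\sigma\tau}=\sigma\tau(B_{\tau i})$ and using $\tau^2=\Id$ gives exactly $B_{\tau i}\fX_\bvs=\fX_\bvs\,\sigma\tau(B_i)$. Alternatively, and more conceptually, this relation is immediate from Proposition~\ref{prop:sigmatau}: specializing the intertwining identity \eqref{eq:fX9} to $x=B_i$ and invoking $\sigma_\tau(B_i)=B_{\tau i}$ from \eqref{eq:fX8} yields precisely the first relation. (The homogeneity $\fX^\mu\in\U_\mu^+$ and $\fX^0=1$ are inherited verbatim from Theorem~\ref{thm:qK}.)

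For uniqueness I would show that the first relation above is equivalent to the first relation of Theorem~\ref{thm:qK} indexed by $\tau i$, so any solution of the present system is forced to equal $\fX_\bvs$. Concretely, one first records that the diagram involution sends $\widehat\tau(B_i)=F_{\tau i}+\vs_i T_{w_\bullet}(E_i)K_{\tau i}^{-1}$, whence $\sigma\tau(B_i)=F_{\tau i}+\vs_i K_{\tau i}T_{w_\bullet}^{-1}(E_i)$; this is \emph{not} $B_{\tau i}$ unless $\bvs$ is balanced, since $\widehat\tau$ fixes the scalar $\vs_i$ rather than converting it to $\vs_{\tau i}$. Commuting $K_{\tau i}$ past the weight-$w_\bullet\alpha_i$ factor $T_{w_\bullet}^{-1}(E_i)$ produces a $q$-power, and applying \cite[Lemma 4.17]{BW18b} to convert between $T_{w_\bullet}^{-1}(E_i)$ and $\psi(T_{w_\bullet}E_i)$ recovers exactly the scalar $(-1)^{\alpha_{\tau i}(2\rho_\bullet^\vee)}q^{(\alpha_{\tau i},\,w_\bullet\alpha_i+2\rho_\bullet)}\vs_i$ multiplying $\psi(T_{w_\bullet}E_i)K_{\tau i}$ on the right-hand side of the Theorem~\ref{thm:qK} relation for $\tau i$. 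This is the same weight-commutation computation as in the proof of Theorem~\ref{thm:fX1}, transported through $\tau$, using that $\tau$ preserves $\bI$ so that $(\alpha_{\tau i},2\rho_\bullet)=(\alpha_i,2\rho_\bullet)$ and $\alpha_{\tau i}(2\rho_\bullet^\vee)=\alpha_i(2\rho_\bullet^\vee)$.

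The main obstacle is bookkeeping rather than conceptual: one must keep the index swaps $i\leftrightarrow\tau i$ aligned with the parameter distinction $\vs_i$ versus $\vs_{\tau i}$ (which coincide only in the balanced case) and confirm the action of $\widehat\tau$ on $B_i$ noted above. I would also remark that a self-contained direct proof, bypassing Theorem~\ref{thm:qK}, is available: the first relation determines the homogeneous components $\fX^\mu$ recursively in terms of lower-weight components exactly as in the original existence argument for quasi $K$-matrices, which simultaneously yields existence and uniqueness.
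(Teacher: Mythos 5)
Your proposal is correct and follows exactly the route the paper intends: the paper gives no written proof of Theorem~\ref{thm:quasiKUi}, only the remark that it follows ``thanks to Proposition~\ref{prop:sigmatau}'' as a variant of Theorem~\ref{thm:fX1}, and your argument supplies precisely those details (existence from \eqref{eq:fX6}/Proposition~\ref{prop:sigmatau}, uniqueness by showing the first relation is equivalent to the Theorem~\ref{thm:qK} relation at index $\tau i$ via the same weight-commutation and \cite[Lemma 4.17]{BW18b} computation used for Theorem~\ref{thm:fX1}). Your observation that $\sigma\tau(B_i)$ carries the scalar $\vs_i$ rather than $\vs_{\tau i}$, so the bookkeeping is consistent for unbalanced parameters, is exactly the point that makes this formulation work for arbitrary $\bvs$.
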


%%%%%%%
%%%%%%%
\section{New symmetries $\tTa{i}$ on $\tUi$}
   \label{sec:symmetry}

In this section, we define explicitly certain rescaled braid group actions $\tT'_{j,-1}$ on a Drinfeld double $\tU$. We then formulate the new symmetries $\tTa{i}$ on $\tUi$, for $i\in \wI$,  via an intertwining property using the quasi $K$-matrix $\tfX$ and a rescaled braid automorphism $\tT'_{\bs_i,-1}$; the proof will be completed in the coming sections.
We show that $\tT'_{\bs_i,-1}$ on $\tU$ preserves the subalgebra $\tU^{\imath 0} \tbU$, and that the actions of $\tTa{i}$ and $\tT'_{\bs_i,-1}$ on $\tU^{\imath 0} \tbU$ coincide. Explicit formulas for the action of $\tTa{i}$ on $\tU^{\imath 0} \tbU$ are presented. Then we obtain a compact close rank one formula for $\tTa{i}(B_i)$.

\subsection{Rescaled braid group action on $\tU$}
  \label{Double}

Recall the distinguished parameter $\bvs_{\diamond}$ from \eqref{def:vsi}. Extend $\bvs_\diamond$ trivially to an $\I$-tuple of scalars $(\vs_{i,\diamond})_{i\in \I} $ by setting
\begin{align}  \label{def:vsi1}
\vs_{j,\diamond}=1, \qquad \text{ for } j\in \bI.
\end{align}
Then we have the scaling automorphism $ \tPsi_{\bvs_\diamond}$ on $\tU$ by Proposition~\ref{prop:QG1}. We define symmetries $\tT''_{i,+1}$ and $\tT'_{i,-1}$ on $\tU$ by rescaling $ \tTD''_{i,+1}$ and $\tTD'_{i,-1}$ in Proposition~\ref{prop:braid1} and \eqref{eq:sTs} via the rescaling automorphism $ \tPsi_{\bvs_\diamond}$:
\begin{align}
  \label{def:tT}
& \tT_{i,+1}'' := \tPsi_{\bvs_\diamond}^{-1} \circ \tTD''_{i,+1} \circ \tPsi_{\bvs_\diamond}
\\
\label{def:tT-1}
 & \tT_{i,-1}' := \tPsi_{\bvs_\diamond}^{-1} \circ \tTD'_{i,-1} \circ \tPsi_{\bvs_\diamond}.
\end{align}

 Since $\tTD''_{i,+1},\tTD'_{i,-1}$ are mutually inverses, $\tT''_{i,+1},\tT'_{i,-1}$ are also mutually inverses. We shall often use the shorthand notation
 \begin{align} \label{TTshort}
 \tT_i =\tT_{i,+1}'',
 \qquad
 \tT_i^{-1} =\tT_{i,-1}'.
 \end{align}

\begin{remark}
These rescaled symmetries $\tT_{i}^{-1}$ will play a central role in our construction of symmetries on $\tUi$; see Theorem~\ref{thm:newb0}. Our rescaling twist using $ \tPsi_{\bvs_\diamond}$ is compatible with the rescaling twist in \cite[(3.45), Remark 3.16]{DK19}.
\end{remark}

We write down the explicit actions for $\tT_i$ and $\tT_i^{-1}$ for later use.

\begin{proposition}
  \label{prop:braid0}
Set $r=-c_{ij}$, for $i, j \in \I$. The automorphism $\tT_i \in \Aut (\tU)$  defined in \eqref{def:tT} is given by
\begin{align*}
&\tT_i(K_j)= \vs_{i,\diamond}^{c_{ij}/2} K_j K_i^{-c_{ij}}, \qquad \tT_i(K_j')=\vs_{i,\diamond}^{c_{ij}/2}  K'_j {K'_i}^{-c_{ij}},\\
 &\tT_i(E_i)=-\vs_{i,\diamond}F_i {K_i'}^{-1},\quad \tT_i(F_i)=- K_i^{-1}E_i,\\
&\tT_i(E_j)=\vs_{i,\diamond}^{-r/2}\sum_{s=0}^r (-1)^s q_i^{-s} E_i^{(r-s)} E_j E_i^{(s)},\qquad j\neq i,\\
&\tT_i(F_j)= \sum_{s=0}^r (-1)^s q_i^{s} F_i^{(s)} F_j F_i^{(r-s)}, \qquad j\neq i.
\end{align*}
The inverse of $\tT_i$ (see \eqref{def:tT-1}) is given by
\begin{align*}
&\tT_i^{-1}(K_j)=\vs_{i,\diamond}^{c_{ij}/2} K_j K_i^{-c_{ij}}, \qquad \tT_i^{-1}(K'_j)=\vs_{i,\diamond}^{c_{ij}/2} K'_j {K_i'}^{-c_{ij}},\\
&\tT_i^{-1}(E_i)=-\vs_{i,\diamond}K_i^{-1}F_i, \qquad \tT_i^{-1}(F_i)=- E_i {K'_i}^{-1},\\
&\tT_i^{-1}(E_j)=\vs_{i,\diamond}^{-r/2}\sum_{s=0}^r (-1)^s q_i^{-s} E_i^{(s)} E_j E_i^{(r-s)},\qquad j\neq i,\\
&\tT_i^{-1}(F_j)= \sum_{s=0}^r (-1)^s q_i^{s} F_i^{(r-s)} F_j F_i^{(s)}, \qquad j\neq i.
\end{align*}
Moreover, $\tT_i$, for $i\in \I$, satisfy the braid group relations.
\end{proposition}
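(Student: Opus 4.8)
The plan is to prove the three assertions in turn --- the explicit formula for $\tT_i$, the explicit formula for $\tT_i^{-1}$, and the braid relations --- all of which reduce to a routine conjugation by the rescaling automorphism. First I would establish the formula for $\tT_i = \tPsi_{\bvs_\diamond}^{-1}\circ\tTD_i\circ\tPsi_{\bvs_\diamond}$ (see \eqref{def:tT}) by applying this composite to each Chevalley generator, using the explicit action of $\tPsi_{\bvs_\diamond}$ from Proposition~\ref{prop:QG1} and the explicit action of $\tTD_i = \tTD''_{i,+1}$ from Proposition~\ref{prop:braid1}.

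The bookkeeping is straightforward: since $\tPsi_{\bvs_\diamond}$ scales $K_j, K'_j, E_j$ by $\vs_{j,\diamond}^{1/2}$ and fixes every $F_j$, the outer $\tPsi_{\bvs_\diamond}^{-1}$ merely records the net $E$- and $K$-degree of the output of $\tTD_i$. For instance, on $K_j$ the two $K_i^{-c_{ij}}$ factors produce the scalar $\vs_{i,\diamond}^{c_{ij}/2}$ while the $\vs_{j,\diamond}^{\pm1/2}$ on $K_j$ cancel; on $E_j$ ($j\neq i$), each summand $E_i^{(r-s)}E_jE_i^{(s)}$ has total $E_i$-degree $r$, so after cancelling the $\vs_{j,\diamond}^{\pm1/2}$ one is left with the overall factor $\vs_{i,\diamond}^{-r/2}$; on $E_i$ and $F_i$ the scalars combine to $-\vs_{i,\diamond}$ and $1$ respectively. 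Because $\tPsi_{\bvs_\diamond}$ fixes all $F_j$, the formulas for $\tT_i(F_j)$ coincide verbatim with those for $\tTD_i(F_j)$, unaffected by the rescaling. The formula for $\tT_i^{-1}=\tT'_{i,-1}$ (see \eqref{def:tT-1}) follows by the identical computation, starting from the explicit action of $\tTD'_{i,-1}$ on $\tU$ recorded in \cite{LW21b} (or obtained from the $\tTD''_{i,+1}$ formulas via the relation $\tTD'_{i,-1}=\sigma\circ\tTD''_{i,+1}\circ\sigma$ of \eqref{eq:sTs}); alternatively, one verifies directly that the two displayed formulas compose to the identity on each generator, reconfirming that $\tT_i$ and $\tT_i^{-1}$ are mutual inverses.

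Finally, the braid relations are purely formal. By definition $\tT_i$ is the conjugate $\tPsi_{\bvs_\diamond}^{-1}\,\tTD_i\,\tPsi_{\bvs_\diamond}$ of $\tTD_i$ by the single fixed automorphism $\tPsi_{\bvs_\diamond}\in\Aut(\tU)$, and conjugation by a fixed group element preserves every relation; hence from the braid relations satisfied by the $\tTD_i$ (Proposition~\ref{prop:braid1}) one obtains
$$\tT_i\tT_j\tT_i\cdots = \tPsi_{\bvs_\diamond}^{-1}(\tTD_i\tTD_j\tTD_i\cdots)\tPsi_{\bvs_\diamond} = \tPsi_{\bvs_\diamond}^{-1}(\tTD_j\tTD_i\tTD_j\cdots)\tPsi_{\bvs_\diamond} = \tT_j\tT_i\tT_j\cdots$$
for the appropriate number of factors.

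There is no genuine obstacle: the whole proposition is a scalar-bookkeeping exercise, and the braid relations come for free from conjugation. The only points requiring care are the consistent tracking of the half-integer powers $\vs_{i,\diamond}^{\pm1/2}$ (which are well defined since $\bF$ is algebraically closed) and the correct counting of the $E_i$-degree inside each divided-power monomial, using that $\tPsi_{\bvs_\diamond}$ scales $E_i^{(s)}$ by $\vs_{i,\diamond}^{s/2}$.
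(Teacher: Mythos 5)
Your proposal is correct and matches the paper's (implicit) treatment: the paper states Proposition~\ref{prop:braid0} as a routine consequence of the definitions \eqref{def:tT}--\eqref{def:tT-1}, and the content is exactly the scalar bookkeeping you carry out, with the braid relations following formally from conjugation by the fixed automorphism $\tPsi_{\bvs_\diamond}$. The verification of the individual scalars (e.g.\ $\vs_{i,\diamond}^{c_{ij}/2}$ on $K_j$, $-\vs_{i,\diamond}$ on $E_i$, $\vs_{i,\diamond}^{-r/2}$ on $E_j$) checks out against Propositions~\ref{prop:QG1} and \ref{prop:braid1}.
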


Hence, we obtain
\begin{align}
  \label{eq:Tw}
\tT_w = \tT_{w,+1}'' :=\tT_{i_1}\cdots \tT_{i_r} \in \Aut (\tU), \quad \text{ for } w\in W,
\end{align}
where $w=s_{i_1} \cdots s_{i_r}$ is any reduced expression. Similarly, we have $\tT_{w,-1}'  \in \Aut (\tU)$.

\begin{remark}
 \label{rem:sameT}
Let $i \in \bI$. The rescaling for $\tT_i^{\pm 1}$ is trivial, thanks to $\vs_{\diamond,i}=1$; that is, $\tT_i =\tTD_i$. In particular, $\tT_{w_\bullet} =\tTD_{w_\bullet}$. Moreover, $\tTD_{\bw}(E_{\tau i}) =\tT_{\bw}(E_{\tau i}) =T_{\bw}(E_{\tau i})$ in $\tU^+ =\U^+$; cf. the formula for $B_i$ in \eqref{def:gen}.
\end{remark}

Let $\tau_0$ be the diagram automorphism associated to the longest element $w_0$ of the Weyl group $W$. The following fact is well known (up to the rescaling via $\bvs_{\diamond}$); cf., e.g., \cite[Lemma 3.4]{Ko14}.

\begin{lemma}
  \label{lem:braid1}
We have, for $j\in \I,$
\begin{align*}
\tT_{w_0}(F_j)&=-K_{\tau_0 j}^{-1} E_{\tau_0 j}, \qquad \qquad \tT_{w_0}(E_j)=-\vs_{j,\diamond} F_{\tau_0 j} K_{\tau_0 j}'^{-1},\\
 \tT_{w_0}^{-1}(E_j)&= -\vs_{j,\diamond}  K_{\tau_0 j}^{-1} F_{\tau_0 j},\qquad\;\;\; \tT_{w_0}^{-1}(F_j)=-E_{\tau_0 j} K_{\tau_0 j}^{'-1}.
\end{align*}
\end{lemma}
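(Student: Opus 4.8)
The plan is to deduce the rescaled formulas from the well-known \emph{unrescaled} ones by conjugating with the rescaling automorphism $\tPsi_{\bvs_\diamond}$, and to obtain the inverse formulas from the direct ones via the anti-involution $\sigma$. First I would record the unrescaled $w_0$-formulas on the Drinfeld double,
\[
\tTD_{w_0}(F_j) = -K_{\tau_0 j}^{-1}E_{\tau_0 j},
\qquad
\tTD_{w_0}(E_j) = -F_{\tau_0 j}(K_{\tau_0 j}')^{-1},
\qquad (j\in\I),
\]
which are the $\tU$-analogues of the classical statement (cf. \cite[Lemma 3.4]{Ko14} and \cite{Lus93} for $\U$); they either follow from the cited references or can be derived from the rank one formulas in Proposition~\ref{prop:braid1} by the standard reduced-word argument, the sign and the Cartan part being pinned down by the $\mathfrak{sl}_2$ case. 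The two inverse formulas then come for free: since $\tTD_{w_0}^{-1} = \sigma\circ\tTD_{w_0}\circ\sigma$ by \eqref{eq:sTs}, applying $\sigma$ (which fixes $E_j,F_j$, swaps $K_j\leftrightarrow K_j'$, and reverses products) to the two displayed identities yields $\tTD_{w_0}^{-1}(E_j) = -K_{\tau_0 j}^{-1}F_{\tau_0 j}$ and $\tTD_{w_0}^{-1}(F_j) = -E_{\tau_0 j}(K_{\tau_0 j}')^{-1}$.

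Next I would run the conjugation $\tT_{w_0} = \tPsi_{\bvs_\diamond}^{-1}\circ\tTD_{w_0}\circ\tPsi_{\bvs_\diamond}$ (and likewise for the inverse) and bookkeep the scalars, recalling from \eqref{tPsi} that $\tPsi_{\bvs_\diamond}$ fixes each $F_j$ and rescales $E_j, K_j, K_j'$ by $\vs_{j,\diamond}^{1/2}$. For the $F$-formulas the scalars cancel: the input $F_j$ is fixed, while on the output $\tPsi_{\bvs_\diamond}^{-1}$ multiplies $K_{\tau_0 j}^{-1}$ by $\vs_{\tau_0 j,\diamond}^{1/2}$ and $E_{\tau_0 j}$ by $\vs_{\tau_0 j,\diamond}^{-1/2}$, so no net factor survives and one gets $\tT_{w_0}(F_j) = -K_{\tau_0 j}^{-1}E_{\tau_0 j}$ and $\tT_{w_0}^{-1}(F_j) = -E_{\tau_0 j}(K_{\tau_0 j}')^{-1}$. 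For the $E$-formulas the input $E_j$ contributes a factor $\vs_{j,\diamond}^{1/2}$ and the Cartan factor $(K_{\tau_0 j}')^{-1}$ (resp.\ $K_{\tau_0 j}^{-1}$) on the output contributes $\vs_{\tau_0 j,\diamond}^{1/2}$, so one obtains the prefactor $-\sqrt{\vs_{j,\diamond}\,\vs_{\tau_0 j,\diamond}}$ in front of $F_{\tau_0 j}(K_{\tau_0 j}')^{-1}$ (resp.\ $K_{\tau_0 j}^{-1}F_{\tau_0 j}$).

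Thus everything reduces to the single scalar identity $\vs_{\tau_0 j,\diamond} = \vs_{j,\diamond}$, which is the one point genuinely requiring the Satake structure and which I expect to be the main (if minor) obstacle. By \eqref{def:vsi}--\eqref{def:balpha} this amounts to $(\balpha_{\tau_0 j},\balpha_{\tau_0 j}) = (\balpha_j,\balpha_j)$; using $(\balpha_j,\balpha_j) = \tfrac12\big((\alpha_j,\alpha_j) - (\alpha_j,\theta\alpha_j)\big)$ together with $\alpha_{\tau_0 j} = -w_0\alpha_j$ and the $w_0$-invariance of the form, it is in turn equivalent to $(\alpha_j,w_0\theta w_0\alpha_j) = (\alpha_j,\theta\alpha_j)$, i.e.\ to the commutativity $w_0\theta = \theta w_0$. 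The plan for this last step is to invoke the standard compatibility of the opposition involution $\tau_0$ with $\theta = -\bw\tau$ (recall \eqref{eq:theta}): the automorphism $\tau_0$ commutes with every Dynkin diagram automorphism (as $w_0$ is intrinsically characterized, hence $\tau$-invariant, giving $w_0\tau w_0^{-1}=\tau$) and stabilizes $\bI$ (a standard property of Satake diagrams), so that $w_0\bw w_0^{-1} = \bw$ as well, whence $w_0\theta w_0^{-1} = -(w_0\bw w_0^{-1})(w_0\tau w_0^{-1}) = -\bw\tau = \theta$. Feeding $\vs_{\tau_0 j,\diamond} = \vs_{j,\diamond}$ back into the previous paragraph collapses $-\sqrt{\vs_{j,\diamond}\,\vs_{\tau_0 j,\diamond}}$ to $-\vs_{j,\diamond}$ and completes all four formulas.
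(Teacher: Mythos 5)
Your argument is correct and is exactly the route the paper intends: the paper offers no proof, merely noting the statement is ``well known up to the rescaling via $\bvs_\diamond$'' (citing \cite[Lemma 3.4]{Ko14}), and your proposal supplies precisely the missing bookkeeping, correctly isolating $\vs_{\tau_0 j,\diamond}=\vs_{j,\diamond}$ as the one substantive verification. As a minor simplification, that identity (equivalently $w_0\theta=\theta w_0$) also follows at once from the decomposition $w_0=\bbw\, w_\bullet$ used in the proof of Proposition~\ref{prop:tau0}, which places $w_0$ in $W^\theta$.
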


\subsection{Symmetries $\tT''_{j,+1}$, for $j\in \bI$}

It is known \cite{BW18b} that Lusztig's operators $T'_{j,\pm 1},T''_{j,\pm 1}$ on $\U$, for $j\in \bI$, restrict to automorphisms of $\Ui_{\bvs}$ (where the $\bvs$ satisfies certain constraints); moreover, these operators fix $\Upsilon$. In this subsection, we formulate analogous statements for the universal quantum symmetric pair $(\tU,\tUi)$ while skipping the identical proofs.

Recall the automorphisms $\tTD''_{i,+1}$ on the Drinfeld double $\tU$, for $i\in \I$, from Proposition~\ref{prop:braid1}, and recall Remark~\ref{rem:sameT}.

\begin{proposition}[\text{cf. \cite[Theorem 4.2]{BW18b}}]
  \label{prop:Tjblack}
Let $j\in \bI$. The automorphism $\tT''_{j,+1} =\tTD''_{j,+1}$ on $\tU$ restricts to an automorphism of $\tUi$. Moreover, the action of $\tT''_{j,+1}$ on $B_i \; (i\in \wI)$ is given by
\begin{align}\label{eq:newb-1}
\begin{split}
\tT''_{j,+1}(B_i)=\sum_{s=0}^r (-1)^s q_j^s F_j^{(s)} B_i F_j^{(r-s)},\qquad \text{ for } r=-c_{ij}.
\end{split}
\end{align}
\end{proposition}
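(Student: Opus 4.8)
The plan is to check that $\tTD''_{j,+1}$ carries each algebra generator of $\tUi$ into $\tUi$, and then promote this to an automorphism. Recall from \eqref{def:gen} that $\tUi$ is generated by $\tbcG=\{E_k,F_k,K_k,K_k'\mid k\in\bI\}$, by the Cartan elements $\tk_i$ $(i\in\wI)$, and by the $B_i$ $(i\in\wI)$. The two easy families are handled directly from Proposition~\ref{prop:braid1}. For $g\in\tbcG$, since $j\in\bI$ the formulas there express $\tTD''_{j,+1}(g)$ using only generators indexed by $\bI$; hence $\tTD''_{j,+1}$ restricts to the braid operator of the Levi Drinfeld double $\tbU$ and sends $\tbcG$ into $\tbU\subseteq\tUi$. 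For $\tk_i=K_iK'_{\tau i}$ a direct application gives $\tTD''_{j,+1}(\tk_i)=\tk_i\,K_j^{-c_{ji}}K_j'^{-c_{j,\tau i}}\in\tU^{\imath 0}\subseteq\tUi$. This leaves only the generators $B_i$.

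The heart of the argument is the case of $B_i$, and the strategy is to show that $B_i$ is indistinguishable from the simple root vector $F_i$ from the viewpoint of the rank one subalgebra $\tU_j:=\langle E_j,F_j,K_j,K_j'\rangle$ attached to the black node $j$. Concretely, I would establish
\[
[E_j,B_i]=0,\qquad K_jB_iK_j^{-1}=q_j^{-c_{ji}}B_i,\qquad K_j'B_iK_j'^{-1}=q_j^{c_{ji}}B_i,
\]
together with the quantum Serre vanishing $\mathrm{ad}_{F_j}^{\,1-c_{ji}}(B_i)=0$. These are exactly the relations that $F_i$ satisfies with respect to $\tU_j$, and they are precisely the relations used in deriving the formula for $\tTD''_{j,+1}(F_i)$ in Proposition~\ref{prop:braid1}. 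Consequently $\tTD''_{j,+1}$ acts on $B_i$ by the identical divided-power formula, with $B_i$ substituted for $F_i$, which is \eqref{eq:newb-1}; this is the parallel of \cite[Theorem~4.2]{BW18b} in the universal setting, and the underlying rank one identity is as in \cite[Ch.~37]{Lus93}. Since the right-hand side of \eqref{eq:newb-1} is a polynomial in $F_j=B_j$ and $B_i$, it lies in $\tUi$.

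The single nontrivial verification — and what I expect to be the main obstacle — is the commutation $[E_j,B_i]=0$. As $i\neq j$ we have $[E_j,F_i]=0$, so it remains to show $E_j$ commutes with $Y:=\tTD_{\bw}(E_{\tau i})K_i'$. Writing $E_j\,\tTD_{\bw}(E_{\tau i})=\tTD_{\bw}\big(\tTD_{\bw}^{-1}(E_j)\,E_{\tau i}\big)$ and using that $\tTD_{\bw}^{-1}(E_j)$ is, up to an invertible Cartan factor, the negative root vector $F_{\tau j}$ with $\tau j\in\bI$, the commutativity $[F_{\tau j},E_{\tau i}]=0$ (valid since $\tau j\neq\tau i$) yields the $q$-commutation $E_j\,\tTD_{\bw}(E_{\tau i})=q_j^{-c_{ji}}\tTD_{\bw}(E_{\tau i})\,E_j$. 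Combining this with $E_jK_i'=q_i^{c_{ij}}K_i'E_j$ produces the overall scalar $q_j^{-c_{ji}}q_i^{c_{ij}}$ in $E_jY-YE_j$, and this equals $1$ precisely because $DC$ is symmetric, i.e. $\epsilon_ic_{ij}=\epsilon_jc_{ji}$. Hence $[E_j,Y]=0$ and $[E_j,B_i]=0$. The two weight identities follow from the same root computation together with $\bw\alpha_{\tau i}=-\theta(\alpha_i)$ (see \eqref{eq:theta}) and $\langle\bw\alpha_{\tau i},\alpha_j^\vee\rangle=-c_{ji}$, and the Serre vanishing is then automatic from integrability of the locally finite adjoint action of $\tU_j$, exactly as for $F_i$. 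It is worth noting that this $q$-commutation computation is the one place where the Satake structure (the operator $\tTD_{\bw}$, the diagram involution $\tau$, and the symmetry of $DC$) genuinely enters.

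Finally, to conclude that $\tTD''_{j,+1}$ is an automorphism of $\tUi$ rather than merely an endomorphism, I would run the identical argument for $\tTD'_{j,-1}=(\tTD''_{j,+1})^{-1}$, obtaining $\tTD'_{j,-1}(\tUi)\subseteq\tUi$. Since $\tTD''_{j,+1}$ and $\tTD'_{j,-1}$ are mutually inverse on all of $\tU$, their restrictions to $\tUi$ are mutually inverse, so $\tTD''_{j,+1}$ restricts to an automorphism of $\tUi$ with the stated action on $B_i$.
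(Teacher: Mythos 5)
Your preparatory computations are correct and worth keeping: the identity $[E_j,B_i]=0$ (obtained from $\tTD_{\bw}^{-1}(E_j)=-K_{\tau j}^{-1}F_{\tau j}$, the $q$-commutation with $E_{\tau i}$ and with $K_i'$, and the symmetry of $DC$), the weight identities, the treatment of $\tbcG$ and of $\tk_i$, and the reduction of the automorphism claim to showing that both $\tTD''_{j,+1}$ and $\tTD'_{j,-1}$ send generators into $\tUi$. Note that the paper gives no argument at all for this proposition (it declares the proof ``identical'' to \cite[Theorem 4.2]{BW18b}); the route taken there is a direct computation: write $B_i=F_i+\tTD_{\bw}(E_{\tau i})K_i'$, use $\tTD''_{j,+1}\tTD_{\bw}=\tTD_{\bw}\tTD''_{\tau j,+1}$ (from $s_jw_\bullet=w_\bullet s_{\tau j}$) together with the rank-one formula $\tTD_{\bw}(E_{\tau j})=-F_j(K_j')^{-1}$, and match the resulting expression for $\tTD''_{j,+1}\big(\tTD_{\bw}(E_{\tau i})K_i'\big)$ against $\sum_s(-1)^sq_j^sF_j^{(s)}\,\tTD_{\bw}(E_{\tau i})K_i'\,F_j^{(r-s)}$ term by term.

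The genuine gap is your ``transport of structure'' step: from the fact that $B_i$ satisfies the same relations as $F_i$ with respect to $\langle E_j,F_j,K_j,K_j'\rangle$ you conclude that $\tTD''_{j,+1}(B_i)$ is given by the same divided-power formula. This inference is invalid. The braid operator is an algebra automorphism defined on generators, and its value on an element $x$ is not determined by the commutation relations of $x$ with the rank-one subalgebra. A concrete counterexample inside $\tU$: take $x=F_i\,K_iK_i'$. Since $K_iK_i'$ is central, $x$ satisfies exactly the relations you list ($[E_j,x]=0$, the same $K_j$- and $K_j'$-weights as $F_i$, and the Serre vanishing with $F_j$), yet $\tTD''_{j,+1}(x)=\tTD''_{j,+1}(F_i)\,K_iK_i'(K_jK_j')^{-c_{ji}}$, which differs from $\sum_s(-1)^sq_j^sF_j^{(s)}xF_j^{(r-s)}=\tTD''_{j,+1}(F_i)\,K_iK_i'$ whenever $c_{ji}\neq0$ --- i.e.\ precisely in the nontrivial cases. (On the Drinfeld double $\tU$, unlike on integrable $\U$-modules, the braid operators are not conjugation by a quantum Weyl group element: they move the central elements $K_kK_k'$, so no characterization of $\tTD''_{j,+1}(x)$ purely in terms of the rank-one relations satisfied by $x$ can hold.) Since the second summand of $B_i$ carries exactly such a Cartan factor $K_i'$, this is not a removable technicality: one must actually compute $\tTD''_{j,+1}$ on $\tTD_{\bw}(E_{\tau i})K_i'$ as described above. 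Your commutation identities then reappear as useful tools in that matching, but they do not replace it.
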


\begin{proposition}[\text{cf. \cite[Proposition 4.13]{BW18b}}]
  \label{prop:newb-1}
Let $j\in \bI$. Then  $\tT''_{j,+1}(\tfX)=\tfX$, and $\tT''_{j,+1}(\tfX_i)=\tfX_i$, for $i\in \wI$.
\end{proposition}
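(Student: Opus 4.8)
The plan is to prove the stronger, weight-by-weight statement that $\tTD''_{j,+1}$ fixes each homogeneous component $\tfX^\mu$ of $\tfX=\sum_\mu \tfX^\mu$, and likewise each component of $\tfX_i$; since $\tT''_{j,+1}=\tTD''_{j,+1}$ for $j\in\bI$ by Remark~\ref{rem:sameT}, this yields the proposition. The argument rests on two observations: that every weight $\mu$ occurring in $\tfX$ is orthogonal to $\alpha_j$, and that $\tfX$ lies in the commutant of the rank-one subalgebra attached to $j$.

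First I would pin down the weights. By Theorem~\ref{thm:qK2} (equivalently Theorem~\ref{thm:fX1}) every component $\tfX^\mu\in\tU_\mu^+$ satisfies $\theta(\mu)=-\mu$. For $j\in\bI$ the defining property of a Satake diagram gives $\bw(\alpha_j)=-\alpha_{\tau j}$, and applying this to the index $\tau j\in\bI$ yields $\bw(\alpha_{\tau j})=-\alpha_j$; hence $\theta(\alpha_j)=-\bw\tau(\alpha_j)=-\bw(\alpha_{\tau j})=\alpha_j$, so $\theta$ fixes $\alpha_j$. Since $\theta=-\bw\circ\tau$ is an isometry of the normalized form, I get $(\mu,\alpha_j)=(\theta\mu,\theta\alpha_j)=(-\mu,\alpha_j)=-(\mu,\alpha_j)$, forcing $(\mu,\alpha_j)=0$ and therefore $\langle\mu,\alpha_j^\vee\rangle=0$ for all $j\in\bI$. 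In particular $s_j\mu=\mu$, so $\tTD''_{j,+1}$ maps $\tU_\mu^+$ back into $\tU_\mu^+$.

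Next I would use the commutation relations. By the second intertwining relation of Theorem~\ref{thm:fX1}, $\tfX$ commutes with $\tU^{\imath 0}\tbU$, in particular with $E_j$ and $F_j$ for $j\in\bI$; together with $\langle\mu,\alpha_j^\vee\rangle=0$, which also forces $\tfX^\mu$ to commute with $K_j,K_j'$, this shows that each $\tfX^\mu$ centralizes the rank-one subalgebra $\langle E_j,F_j,K_j^{\pm 1}\rangle$. The braid symmetry $\tTD''_{j,+1}$ is implemented on every integrable module by the corresponding $\sl_2$-operator $\mathcal T_j$ and is equivariant, so $\tTD''_{j,+1}(u)$ acts as $\mathcal T_j\,u\,\mathcal T_j^{-1}$; an element commuting with the actions of $E_j$ and $F_j$ therefore commutes with $\mathcal T_j$, whence $\tTD''_{j,+1}(\tfX^\mu)=\tfX^\mu$. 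Summing over $\mu$ gives $\tTD''_{j,+1}(\tfX)=\tfX$. For the rank-one factor, $\tfX_i$ is itself the quasi $K$-matrix of the Satake subdiagram $(\bI\cup\{i,\tau i\},\tau)$, whose black set is again $\bI$, so the same three steps apply verbatim (using Remark~\ref{rmk:fX2} for its weights) to give $\tTD''_{j,+1}(\tfX_i)=\tfX_i$.

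The main obstacle is the last step, namely justifying that $\tTD''_{j,+1}$ fixes an element centralizing the $j$-th $\sl_2$. On the Drinfeld double $\tU$ one must be slightly careful, since the module-equivariance of the braid symmetry must be available for $\tTD''_{j,+1}$ rather than only for Lusztig's $T''_{j,+1}$ on $\U$. I would either transport the statement to $\U$ via the identification $\tU^+=\U^+$ and the compatibility $\pi_{\mathbf{1}}\circ\tTD''_{j,+1}=T''_{j,+1}\circ\pi_{\mathbf{1}}$ of \eqref{eq:braid11} (noting $\tfX\in\tU^+$), thereby reducing to Lusztig's rank-one computation \cite{Lus93}, or invoke the corresponding module compatibility for the Drinfeld-double braid symmetries directly. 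Everything else is routine bookkeeping of weights together with the defining relations already established.
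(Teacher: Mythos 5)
Your outline is essentially the intended argument: the paper itself gives no proof of Proposition~\ref{prop:newb-1}, deferring to the ``identical'' proof of \cite[Proposition 4.13]{BW18b}, and that proof runs exactly through your three steps --- $\theta(\mu)=-\mu$ (Theorem~\ref{thm:qK2}) together with $\theta(\alpha_j)=\alpha_j$ for $j\in\bI$ forces $(\mu,\alpha_j)=0$; the relation $x\tfX=\tfX x$ for $x\in\tU^{\imath 0}\tbU$ then makes each component $\tfX^\mu$ centralize $E_j,F_j,K_j^{\pm1}$; and an element centralizing the $j$-th $\sl_2$-triple is fixed by the braid operator, as one sees from the explicit action on integrable modules. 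The same reasoning applies verbatim to $\tfX_i$, as you say.

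The one place you need to be more careful is the descent between $\tU$ and $\U$, which you flag but do not actually resolve. Your second option (running the module argument directly for $\tTD''_{j,+1}$) does not work as stated: type $\mathbf 1$ modules factor through the central reduction and do not separate points of $\tU$. Your first option is the right one, but it is not enough to know $\pi_{\mathbf 1}\bigl(\tTD''_{j,+1}(\tfX^\mu)\bigr)=T''_{j,+1}(\tfX^\mu)=\tfX^\mu$, because $\pi_{\mathbf 1}$ is not injective on the full weight space $\tU_\mu$ and the braid operator does not preserve $\tU^+$ in general. The missing ingredient is that $[\tfX^\mu,F_j]=0$ forces both skew-derivations $r_j(\tfX^\mu)$ and ${}_jr(\tfX^\mu)$ to vanish (the two terms of the commutator formula carry the independent group-likes $K_j$ and $K_j'$, so no cancellation is possible), and by \cite[38.1.6]{Lus93} the operator $T''_{j,+1}$ carries such elements back into $\U^+$ by a formula involving only the $E$'s; the identical formula computes $\tTD''_{j,+1}(\tfX^\mu)$ inside $\tU^+=\U^+$, where $\pi_{\mathbf 1}$ is the identity. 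With that remark inserted, the argument is complete.
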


\subsection{Characterization of $\tTa{i}$}

Let $(\tU,\tUi)$ be the quantum symmetric pair associated to an arbitrary Satake diagram $(\I=\bI \cup \wI,\tau)$. Recall that  $\tfX_i$, for $i\in \wI$, are the quasi $K$-matrix associated to the rank one Satake subdiagram $(\bI\cup \{i,\tau i\},\tau|_{\bI\cup \{i,\tau i\}})$. Recall $\bs_i \in W$ from \eqref{def:bsi} and $\tT_{\bs_i,-1}' \in \Aut(\tU)$ from \eqref{eq:Tw} whose definition uses \eqref{def:tT}. We now formulate our first main result.

\begin{theorem}
 \label{thm:newb0}
Let $i\in \wI$.
\begin{enumerate}
\item
For any $x \in \tUi$, there is a unique element $\hat x \in \tUi$ such that $\hat x \tfX_i =\tfX_i \tT'_{\bs_i,-1}(x^\imath)$.
\item
The map $x \mapsto \hat x$ is an automorphism of the algebra $\tUi$, denoted by $\tTa{i}$.
\end{enumerate}
\end{theorem}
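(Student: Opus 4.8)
The plan is to mimic the construction of the anti-involution $\sigma^\imath$ in Proposition~\ref{prop:newb1}, but with $\sigma$ replaced by the rescaled braid automorphism $\tT'_{\bs_i,-1}$ and with conjugation carried out by the rank one quasi $K$-matrix $\tfX_i$. Concretely, I would study the composite $\ad_{\tfX_i} \circ \tT'_{\bs_i,-1} \circ \imath$, a map from $\tUi$ into a completion of $\tU$; since $\ad_{\tfX_i}$ is an algebra homomorphism, $\tT'_{\bs_i,-1} \in \Aut(\tU)$, and $\imath$ is an algebra embedding, this composite is an algebra homomorphism. Uniqueness in part (1) is then immediate: because $\tfX_i = 1 + (\text{higher weight terms})$ is invertible in the completion, the relation $\hat x \tfX_i = \tfX_i \tT'_{\bs_i,-1}(x^\imath)$ forces $\hat x = \tfX_i \tT'_{\bs_i,-1}(x^\imath)\tfX_i^{-1}$. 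The multiplicativity needed for part (2) is the homomorphism analogue of Claim $(*)$ in Proposition~\ref{prop:newb1}: if $\hat x$ works for $x$ and $\hat y$ works for $y$, then $\hat x \hat y \tfX_i = \hat x \tfX_i \tT'_{\bs_i,-1}(y^\imath) = \tfX_i \tT'_{\bs_i,-1}(x^\imath y^\imath)$, so $\hat x \hat y$ works for $xy$.

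The core existence question is whether $\hat x$ actually lands in $\tUi$ rather than merely in the completion. By the multiplicativity just noted, it suffices to exhibit $\hat g \in \tUi$ for each element $g$ of the generating set $\tU^{\imath 0}\tbU \cup \{B_i \mid i\in\wI\}$ of $\tUi$. For $x \in \tU^{\imath 0}\tbU$ I would first verify that $\tT'_{\bs_i,-1}$ preserves this subalgebra; combined with the commutation of $\tfX_i$ with $\tU^{\imath 0}\tbU$ (which follows from the rank one intertwining relation of Theorem~\ref{thm:fX1} together with the weight constraint $\theta(\mu)=-\mu$ on the components of $\tfX_i$), this gives $\hat x = \tT'_{\bs_i,-1}(x) \in \tUi$ and yields the closed formulas of Theorem~\ref{thm:B}(1), established uniformly in Proposition~\ref{prop:Cartanblack}. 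For the generator $B_i$ the candidate $\hat B_i$ is the compact rank one expression of Theorem~\ref{thm:rkone1}, and for $B_j$ with $j\neq i$ it is the rank two formula recorded in Table~\ref{table:rktwoSatake} and verified in Theorem~\ref{thm:rktwo1}. In each case one checks that the proposed element lies in $\tUi$ and satisfies the defining intertwining relation, at which point $x \mapsto \hat x =: \tTa{i}(x)$ is a well-defined algebra endomorphism of $\tUi$.

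I expect the main obstacle to be twofold. First, the existence of $\hat B_j$ for $j\neq i$ is genuinely computational: it rests on the rank two formulas, whose derivation relies on the classification of rank two Satake diagrams and a type-by-type verification (organized through the structural reduction of Proposition~\ref{prop:rktwoRij} and completed in Appendix~\ref{app1}). Second, and more essentially, upgrading the endomorphism $\tTa{i}$ to an \emph{automorphism} in part (2) cannot be done directly from the intertwining relation. The natural route is to construct a companion symmetry $\tTb{i}$ by a completely parallel argument, using the variant intertwining relation of Theorem~\ref{thm:Tb} built from $\tT''_{\bs_i,+1}(\tfX_i^{-1})$, and then to prove in Theorem~\ref{thm:newb1} that $\tTa{i}$ and $\tTb{i}$ are mutual inverses. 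That last step itself proceeds by composing the two intertwining relations and invoking the uniqueness established in part (1); only once it is in hand is the invertibility — and hence the automorphism assertion of part (2) — fully justified.
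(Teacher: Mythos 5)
Your proposal is correct and follows essentially the same route as the paper: uniqueness from invertibility of $\tfX_i$, multiplicativity via the analogue of Claim $(*)$, existence checked on the generating set through Proposition~\ref{prop:Cartanblack}, Theorem~\ref{thm:rkone1}, and the rank two formulas of Theorem~\ref{thm:rktwo1}, and invertibility deferred to the companion symmetry $\tTb{i}$ and Theorem~\ref{thm:newb1}. You have also correctly identified the two genuine difficulties (the type-by-type rank two computations and the indirect proof of surjectivity), so nothing is missing.
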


Therefore, we have
\begin{align}
   \label{eq:newb0}
\tTa{i}(x) \tfX_i
=\tfX_i \tT_{\bs_i,-1}'(x^\imath), \qquad \text{ for all } x\in \tUi.
\end{align}

\begin{proof}
A complete proof of this theorem requires the developments in the coming Sections~\ref{sec:symmetry}--\ref{sec:Tb}. Let us outline the main steps below.

For a given $x\in \tUi$, the element $\hat{x} \in \tUi$ satisfying the identity in (1) is clearly unique (if it exists) since $\tfX_i$ is invertible.

The explicit formulas of $\hat{x}$ associated to generators $x$ of $\tUi$, for each of (rank one and two) Satake diagrams, are given in the forthcoming Sections~\ref{sec:symmetry}--\ref{sec:rktwo}. The formulas therein show manifestly that $\hat{x} \in \tUi$; see Proposition~\ref{prop:Cartanblack} on $\tU^{\imath 0} \tbU$, Theorem~\ref{thm:rkone1} for rank 1, and
Theorem~\ref{thm:rktwo1} for rank 2.

Assume that $\hat{x}, \hat{y} \in \tUi$ satisfy (1), for $x,y \in \tUi$; that is, $\hat{x} \tfX_i =\tfX_i \tT_{\bs_i,-1}'(x^\imath)$, and $y' \tfX_i =\tfX_i \tT_{\bs_i,-1}'(y^\imath)$. Then it follows readily that $\hat{x}\hat{y} \in \tUi$ satisfies the identity in (1) for $xy$; that is, $\hat{x}\hat{y} \tfX_i
=\tfX_i \tT_{\bs_i,-1}'((xy)^\imath)$. Hence we have obtained a well-defined endomorphism $\tTa{i}$ on $\tUi$ which sends $x \mapsto \hat{x}$.

To complete the proof of the theorem, it remains to show that $\tTT_{i,-1}'$ is surjective. To this end, we introduce and study in depth a variant of $\tTa{i}$, a second endomorphism $\tTb{i}$ on $\tUi$ in Section~\ref{sec:Tb}. The bijectivity of $\tTa{i}$ follows by Theorem~\ref{thm:newb1} which shows that $\tTa{i}$ and $\tTb{i}$ are mutual inverses.
\end{proof}

\begin{remark}
 \label{rmk:newb0}
By Proposition~\ref{prop:inv} and the definition \eqref{def:bsi} of $\bs_i$, we have $\tfX_i =\tfX_{\tau i}$, $\bs_i=\bs_{\tau i}$, and hence $\tTa{i}= \tTa{\tau i}$. Thus, we may label $\tTa{i}$ by $\wItau$ instead of $\wI.$
\end{remark}

%\begin{lemma}
%We have $\widehat{\tau} \circ \tTT'_{i,-1} = \tTT'_{i,-1} \circ \widehat{\tau}$, for $i\in \wItau.$
%\end{lemma}

%\begin{proof}
%Note that $\tau$ commutes with $\bs_i$ and hence $\widehat{\tau} \circ \tT'_{\bs_i,-1}=\tT'_{\bs_i,-1} \circ \widehat{\tau}.$ By Proposition~\ref{prop:inv}, $\tfX_i$ is fixed by $\widehat{\tau}.$ Thus, using Theorem~\ref{thm:newb0}, we have
%\begin{align*}
%(\tTT'_{i,-1} \circ \widehat{\tau}) (x) \tfX_i =  \tfX_i (\tT'_{\bs_i,-1} \circ \widehat{\tau}) (x)
%&= \widehat{\tau} \big(\tfX_i  \tT'_{\bs_i,-1}  (x) \big) =(\widehat{\tau}\circ \tTT'_{i,-1}) (x) \tfX_i.
%\end{align*}
%The lemma follows.
%\end{proof}

In this and later sections, we shall construct 4 variants of symmetries of $\tUi$ (denoted by $\tTae{i}$, $\tTbe{i}$) via \eqref{eq:newb0} and 3 additional intertwining relations and the rescaled braid group symmetries $\tT'_{\bs_i,\pm 1},\tT''_{\bs_i,\pm 1}$ of $\tUi$. We choose to start with the (simplest) intertwining relation \eqref{eq:newb0} for $\tT'_{\bs_i,-1}$. From now on, following \eqref{TTshort}, we often write
\[
\tT_{\bs_i}^{-1} =\tT'_{\bs_i,-1},
\qquad
\tT_{\bs_i} =\tT''_{\bs_i,+1}.
\]

\subsection{Quantum symmetric pairs of diagonal type}
\label{sec:diag}

Recall from Proposition~\ref{prop:QG4} the Chevalley involution $\omega$  and the comultiplication $\Delta$ \eqref{eq:Delta} on $\tU$. Denote ${}^\omega{\bf L}''_i:=(\omega\otimes 1){\bf L}''_i $ for $i\in \I$, where ${\bf L}''_i,i\in \I$ is the rank one quasi $R$-matrix for $\tU$ (same as for $\U$); see \cite{Lus93}. We regard $\tU$ as a coideal subalgebra of $\tU \otimes \tU$ via the embedding ${}^\omega\Delta:=(\omega\otimes 1)\Delta$ and then $(\tU\otimes \tU,\tU)$ is a universal quantum symmetric pair of diagonal type; cf. \cite[Remark 4.10]{BW18b}.
In this way, the rank one quasi $K$-matrices for quantum symmetric pairs of diagonal type are given by $ {}^\omega{\bf L}''_i $.

In this subsection, we shall reformulate the identity \cite[37.3.2]{Lus93} (= \eqref{eq:TTL})
as an intertwining relation in the framework of quantum symmetric pairs of diagonal type.

%Define
%$\tT_{i,\dm} :=\tPsi^{-2}_{\bvs_\dm}\circ\tTD_{i,-1}'\circ\tPsi^{2}_{\bvs_\dm}$.

%By definition \eqref{def:vsi}, in this case, $\vs_{j,\dm}=-q_j^{-1}$ for any $j\in \I$.

\begin{proposition}
For the quantum symmetric pair of diagonal type $(\tU\otimes \tU,\tU)$, the following intertwining relation holds:
\begin{align}
  \label{eq:diag4}
{}^\omega\Delta(\tTD'_{i,-1} u)\;{}^\omega{\bf L}_i''&={}^\omega{\bf L}_i''\;  (\tT''_{i,-1} \otimes  \tT_{i,-1}') \; {}^\omega\Delta (u),\qquad \forall u\in \tU.
\end{align}
\end{proposition}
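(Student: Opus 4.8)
The plan is to recognize that \eqref{eq:diag4} is precisely a restatement of Lusztig's identity \eqref{eq:TTL} once the right dictionary is fixed, and then to verify it by the standard strategy for such intertwining relations: check it on the algebra generators $E_j, F_j, K_j, K_j'$ and propagate it multiplicatively. The essential translation is that the rank one quasi $R$-matrix ${\bf L}_i''$ (and its inverse ${\bf L}_i'$) intertwines the braid operators on the two tensor factors, as recorded in \eqref{eq:TTL}. Thus my first step is to unwind the notation: ${}^\omega\Delta = (\omega \otimes 1)\Delta$ and ${}^\omega{\bf L}_i'' = (\omega \otimes 1){\bf L}_i''$, and to rewrite \eqref{eq:diag4} as
\begin{align*}
(\omega \otimes 1)\Delta(\tTD'_{i,-1} u)\; (\omega \otimes 1){\bf L}_i''
= (\omega \otimes 1){\bf L}_i''\; (\tT''_{i,-1}\otimes \tT'_{i,-1})(\omega \otimes 1)\Delta(u).
\end{align*}
Applying $(\omega^{-1}\otimes 1)$ throughout (and using that $\omega$ is an algebra automorphism of $\tU$, so that $\omega^{-1}\tT''_{i,-1}\omega$ is again a braid-type operator on the first factor) should reduce \eqref{eq:diag4} to a plain statement about $\Delta$ and ${\bf L}_i''$, namely a variant of \eqref{eq:TTL}.

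Next I would set up the generator check. Since both sides of \eqref{eq:diag4} are compositions of algebra homomorphisms (or anti-homomorphisms) precomposed and postcomposed with multiplication by fixed invertible elements, and since ${}^\omega\Delta$ is an algebra homomorphism into $\tU\otimes\tU$, it suffices to verify the identity when $u$ runs over the Chevalley generators $E_j, F_j, K_j, K_j'$ for $j\in\I$, together with the observation that if the relation holds for $u$ and $u'$ then it holds for $uu'$ (this multiplicativity is the exact analogue of Claim~$(*)$ in the proof of Proposition~\ref{prop:newb1}, and follows because ${}^\omega{\bf L}_i''$ is a fixed element conjugating one homomorphism into another). The Cartan generators $K_j, K_j'$ are the easy case: the braid operators act on them by monomial rescalings, ${}^\omega\Delta$ sends them to grouplike-type elements, and ${\bf L}_i''$ has a definite weight, so both sides reduce to a bookkeeping of $q$-powers. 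The genuine content is in the $E_j$ and $F_j$ cases, where the higher-order terms of ${\bf L}_i''$ interact with the $\tT_i$-formulas of Proposition~\ref{prop:braid0}; but here I would not grind the computation by hand and instead invoke \eqref{eq:TTL} directly, reading off \eqref{eq:diag4} as its repackaging.

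The main obstacle I anticipate is bookkeeping of the three different braid operators appearing: $\tTD'_{i,-1}$ on the diagonal $\tU$, and the pair $(\tT''_{i,-1}\otimes \tT'_{i,-1})$ on $\tU\otimes\tU$, where the latter are the \emph{rescaled} operators from \eqref{def:tT}--\eqref{def:tT-1} and the former is \emph{unrescaled}. I must track the scaling automorphism $\tPsi_{\bvs_\diamond}$ carefully and confirm that the rescaling factors cancel correctly once the formula \eqref{eq:TTL} (which is stated for the unrescaled $\ta{i}, \tb{i}$) is matched against the rescaled left-hand side; the twist by $\omega$ in ${}^\omega\Delta$ and ${}^\omega{\bf L}_i''$ is what converts the coproduct of $E_i$ into the coideal-compatible form needed for the diagonal quantum symmetric pair. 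Since \eqref{eq:diag4} is asserted to be a faithful reformulation of \eqref{eq:TTL}, I expect this cancellation to be clean, and the proof to amount to applying $\omega\otimes 1$ to \eqref{eq:TTL} and matching the mixed prime/double-prime indices $e=-1$ on both sides.
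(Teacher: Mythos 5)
Your proposal follows the paper's route: apply $\omega\otimes 1$ to (the $\tU$-version of) \eqref{eq:TTL}, using $\omega\circ\tTD'_{i,-1}\circ\omega=\tTD''_{i,-1}$ to produce the mixed indices, and then reduce the remaining discrepancy between the rescaled and unrescaled braid operators to a generator check propagated multiplicatively --- this is exactly the paper's reduction to the identity $(\tT''_{i,-1}\otimes\tT'_{i,-1})\,{}^\omega\Delta(u)=(\tTD''_{i,-1}\otimes\tTD'_{i,-1})\,{}^\omega\Delta(u)$ on generators. The one point you leave unverified, and slightly misstate, is the mechanism of the cancellation you ``expect to be clean.'' The operator $\tT''_{i,-1}$ is not among those defined in \eqref{def:tT}--\eqref{def:tT-1}; by Lemma~\ref{lem:rescale2} it equals $\tPsi_{\bvs_{\star\dm}}\circ\tTD''_{i,-1}\circ\tPsi_{\bvs_{\star\dm}}^{-1}$, i.e.\ it is conjugated in the direction \emph{opposite} to $\tT'_{i,-1}=\tPsi_{\bvs_{\dm}}^{-1}\circ\tTD'_{i,-1}\circ\tPsi_{\bvs_{\dm}}$. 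Since $\bvs_{\star\dm}=\bvs_{\dm}$ for the diagonal pair, the two twists combine on the tensor square to conjugation by $\tPsi_{\bvs_{\dm}}\otimes\tPsi_{\bvs_{\dm}}^{-1}$, and one checks from \eqref{eq:diag5} that ${}^\omega\Delta$ of each generator (and its image under $\tTD''_{i,-1}\otimes\tTD'_{i,-1}$) is fixed by this tensor automorphism; had both factors been rescaled in the same direction, the factors would not cancel. With that one observation supplied, your argument is complete and coincides with the paper's proof.
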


\begin{proof}
Recall from \cite[37.2.4]{Lus93} that \begin{align}  \label{oTo}
    \omega\circ \tTD_{i,-1}'\circ\omega=\tTD_{i,-1}''. 
\end{align}
The identity \eqref{eq:TTL} for $\U$ admits an identical version for $\tU$. Applying $\omega\otimes 1$ to this identity, we obtain
\begin{align}  \label{eq:TTLomega}
{}^\omega\Delta(\tTD_{i,-1}' u)\; {}^\omega{\bf L}_i''={}^\omega{\bf L}_i''\; (\tTD''_{i,-1} \otimes  \tTD_{i,-1}') \; {}^\omega\Delta (u),\qquad \forall u\in \tU.
\end{align}
To prove \eqref{eq:diag4}, it suffices to prove the following identity
\begin{align}
\label{eq:diag6}
    (\tT''_{j,-1} \otimes \tT'_{j,-1} )\; {}^\omega\Delta(u)
    =(\tTD''_{j,-1} \otimes \tTD'_{j,-1} ) \; {}^\omega\Delta(u),
    \qquad \forall u\in \tU.
\end{align}
Clearly, it suffices to prove \eqref{eq:diag6} when $u$ is the generator of $\tU$. We have the following formulas:
% of ${}^\omega\Delta(u)$ for $u=E_j,F_j,K_j,K_j',j\in \I$
\begin{align}
\label{eq:diag5}
    \begin{split}
        {}^\omega\Delta(E_j)=F_j \otimes 1 + K_j'\otimes E_j,
        &\qquad
        {}^\omega\Delta(F_j)=1\otimes F_j  + E_j\otimes K_j',\\
        {}^\omega\Delta(K_j)=K_j' \otimes K_j,
        &\qquad {}^\omega\Delta(K_j')=K_j \otimes K_j'.
    \end{split}
\end{align}

Recall $\tT'_{j,-1}=\tPsi_{\bvs_{\dm}}^{-1}\tTD'_{j,-1}\tPsi_{\bvs_{\dm}}$ from \eqref{def:tT-1}.
By Lemma \ref{lem:rescale2} and noting that $\bvs_{\star\dm} =\bvs_{\dm}$ in our case, the twisting for $\tT''_{j,-1}$ is opposite to the one on $\tT'_{j,-1}$, i.e.,
$\tT''_{j,-1} =\tPsi_{\bvs_{\dm}}\tTD''_{j,-1}\tPsi_{\bvs_{\dm}}^{-1}.$ By Proposition~\ref{prop:QG1}, we see that the RHS of each formula in \eqref{eq:diag5} is fixed by $\tPsi_{\bvs_{\dm}}^{-1}\otimes\tPsi_{\bvs_{\dm}}$. The formulas for $\tTD''_{i,+1}$ is given in Proposition~\ref{prop:braid1}, and the formulas for $\tTD'_{i,-1},\tTD''_{i,-1}$ can be obtained from there by suitable twisting; using these formulas, we observe that $(\tTD''_{j,-1} \otimes \tTD'_{j,-1} ) \; {}^\omega\Delta(u)$ is fixed by $\tPsi_{\bvs_{\dm}}\otimes\tPsi_{\bvs_{\dm}}^{-1}$ for $u=E_j,F_j,K_j,K_j'$. Hence, for $u=E_j,F_j,K_j,K_j',j\in \I$,
\begin{align*}
(\tT''_{j,-1} \otimes \tT'_{j,-1} )\; {}^\omega\Delta(u)
    &= (\tPsi_{\bvs_{\dm}}\otimes\tPsi_{\bvs_{\dm}}^{-1})
    (\tTD''_{j,-1} \otimes \tTD'_{j,-1})
    (\tPsi_{\bvs_{\dm}}^{-1}\otimes\tPsi_{\bvs_{\dm}}){}^\omega\Delta(u)\\
    &= (\tTD''_{j,-1} \otimes \tTD'_{j,-1} ) \; {}^\omega\Delta(u),
\end{align*}
which implies the desired identity \eqref{eq:diag6}.
\end{proof}

In this way, the intertwining relation \eqref{eq:diag4} (reformulated from \eqref{eq:TTLomega} via \eqref{oTo}) can be viewed as a variant of the intertwining relation \eqref{eq:newb0} in the setting of quantum symmetric pair $(\tU\otimes \tU, \tU)$, where the coideal subalgebra is identified with the image of the embedding ${}^\omega\Delta: \tU \rightarrow \tU \otimes \tU$.

%\begin{proposition}
%For the quantum symmetric pair $(\tU\otimes \tU,\tU)$, the automorphism $\tTa{i}$ defined in \eqref{eq:newb0} coincides with Lusztig's automorphism $\tTD_{i,-1}'$.
%\end{proposition}

%\begin{proof}
%{Note that $\tT''_{i,-1} \otimes  \tT_{i,-1}'$ is exactly ``$\tT'_{\bs_i,-1}$" on $\tU \otimes \tU$.--seems not true under $\tU \otimes \tU\cong \tU(\g\oplus\g)$.} Then comparing \eqref{eq:diag4} with \eqref{eq:newb0}, we conclude that $\tTa{i}=\tTD'_{i,-1}$ as desired.
%\end{proof}

%%%%%%%
%%%%%%%\section{Rank one formulas}
%  \label{sec:rkone}

%
%
\subsection{Action of $\tTa{i}$ on $\tU^{\imath 0} \tbU$}
  \label{tTtk}

We formulate $\tTa{i}(x)$, for $i\in \wI,x\in \tU^{\imath 0}\tbU$ in this subsection. We will show that $\tT_{\bs_i}^{-1}$ preserves both $\tU^{\imath 0}$ and $\tbU$; hence, by Theorem~\ref{thm:fX1}, the element $\tTa{i}(x):=\tT_{\bs_i}^{-1}(x)$ satisfies \eqref{eq:newb0} for $x \in \tU^{\imath 0} \tbU$.

Recall that the diagram involution associated to $\bwi$ is denoted by $\tau_{\bullet,i}$. By definition of admissible pairs, the diagram involution associated to $\bw$ is $\tau |_{\bI}$.  Both $\tau_{\bullet,i}$ and $\tau$ induce (commuting) involutive automorphisms, denoted by $\widehat{\tau}_{\bullet,i}$ and $\widehat{\tau}$, on $\tbU$.

We first calculated $\tT_{\bs_i}^{-1}(x)$ for $x\in \tbU$. By applying Lemma~\ref{lem:braid1} twice, we obtain
\begin{align*}
\tT_{\bw}^{-1}\widehat{\tau}(x) = \tT_{\bwi}^{-1}\widehat{\tau}_{\bullet,i}(x)
=\tT_{\bw}^{-1}\tT_{\bs_i}^{-1}\widehat{\tau}_{\bullet,i}(x);
\end{align*}
note that the second identity above holds since $\tT_{\bwi}^{-1} =\tT_{\bw}^{-1} \tT_{\bs_i}^{-1}$ by \eqref{def:bsi}.
Hence, we have $\widehat{\tau}(x) = \tT_{\bs_i}^{-1}\widehat{\tau}_{\bullet,i}(x)$, which implies that
\begin{align}\label{eq:newb8}
\tT_{\bs_i}^{-1}(x)=\widehat{\tau}_{\bullet,i} \circ \widehat{\tau}(x) \in \tbU,
\qquad \text{ for all }x\in \tbU.
\end{align}

We next formulate the actions of $\tT_{\bs_i}^{-1}$ on $\tU^{\imath 0}$, for $i\in \wI$. Recall $\bvs_\diamond$ from \eqref{def:vsi} and \eqref{def:vsi1}, and $\tPsi_{\bvs_\diamond}$ from Proposition~\ref{prop:QG1}. Denote
\begin{align}
   \label{def:tkdm}
\tk_{i,\dm} := \tPsi_{\bvs_\diamond}^{-1}(\tk_i )=\vs_{i,\dm}^{-1} K_i K'_{\tau i} \in \tU^{\imath 0}.
\end{align}
Note that $\tk_{j,\dm}=\tk_j=K_j K_{\tau j}'$, for $j\in \bI$. We shall denote
\begin{align}
  \label{eq:kla}
\tk_{\lambda,\dm}:= \prod_{i\in \I}\tk_{i,\dm}^{m_i}  \in \tU^{\imath 0},
\qquad \text{ for } \lambda=\sum_{i\in \I} m_i \alpha_i \in \Z \I.
\end{align}
%Similarly, we denote $\tk_\lambda = \prod_{i\in \bI}\tk_{i}^{m_i},$ for $\lambda=\sum_{i\in \bI} m_i \alpha_i \in \Z \bI.$

\begin{lemma}
  \label{lem:newb1}
Let $ w\in W$ be such that $w \tau= \tau w$. Then $\tT'_{w,-1}(\tk_{j,\dm}) =\tk_{w \alpha_j,\dm}$, for $j\in \wI$.
\end{lemma}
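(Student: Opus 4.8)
The plan is to reduce the statement to the standard action of the braid group operators on the torus of $\tU$, and then to bookkeep the rescaling scalars. First I would strip off the rescaling: by \eqref{def:tT-1} we have $\tT'_{i,-1}=\tPsi_{\bvs_\diamond}^{-1}\circ\tTD'_{i,-1}\circ\tPsi_{\bvs_\diamond}$, and along any reduced expression of $w$ the inner conjugations telescope, giving $\tT'_{w,-1}=\tPsi_{\bvs_\diamond}^{-1}\circ\tTD'_{w,-1}\circ\tPsi_{\bvs_\diamond}$. Since $\tk_{j,\dm}=\tPsi_{\bvs_\diamond}^{-1}(\tk_j)$ by \eqref{def:tkdm} and $\tPsi_{\bvs_\diamond}$ is an automorphism, the outer $\tPsi_{\bvs_\diamond}$ cancels and I am left to evaluate $\tT'_{w,-1}(\tk_{j,\dm})=\tPsi_{\bvs_\diamond}^{-1}\big(\tTD'_{w,-1}(\tk_j)\big)$.

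Next I would compute $\tTD'_{w,-1}(\tk_j)$ using the action of $\tTD'_{w,-1}$ on the Cartan part. Reading off Proposition~\ref{prop:braid1}, one has $\tTD''_{i,+1}(K_{\alpha_j})=K_{\alpha_j}K_{\alpha_i}^{-c_{ij}}=K_{s_i\alpha_j}$ and similarly for the $K'$'s; since $s_i$ is an involution the inverse operator $\tTD'_{i,-1}$ obeys the same rule, and composing over a reduced expression (legitimate by the braid relations in Proposition~\ref{prop:braid0}) yields $\tTD'_{w,-1}(K_\mu)=K_{w\mu}$ and $\tTD'_{w,-1}(K'_\mu)=K'_{w\mu}$ for all $\mu\in\Z\I$. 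Applying this to $\tk_j=K_{\alpha_j}K'_{\alpha_{\tau j}}$ gives $\tTD'_{w,-1}(\tk_j)=K_{w\alpha_j}K'_{w\alpha_{\tau j}}$.

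The one place where the hypothesis enters is the reorganization of the second factor: writing $w\alpha_j=\sum_{i\in\I}m_i\alpha_i$, the commutation $w\tau=\tau w$ gives $w\alpha_{\tau j}=w\tau\alpha_j=\tau w\alpha_j=\sum_{i\in\I}m_i\alpha_{\tau i}$, whence
\[
\tTD'_{w,-1}(\tk_j)=\Big(\prod_{i\in\I}K_i^{m_i}\Big)\Big(\prod_{i\in\I}(K'_{\tau i})^{m_i}\Big)=\prod_{i\in\I}(K_iK'_{\tau i})^{m_i}=\prod_{i\in\I}\tk_i^{m_i}.
\]
Finally I would apply $\tPsi_{\bvs_\diamond}^{-1}$, which is multiplicative and sends $\tk_i\mapsto\tk_{i,\dm}$, to obtain $\prod_{i\in\I}\tk_{i,\dm}^{m_i}$, which is precisely $\tk_{w\alpha_j,\dm}$ by the definition \eqref{eq:kla}.

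This argument is essentially a bookkeeping exercise, so I do not expect a serious obstacle. The only delicate points are making sure that the torus action really factors through the Weyl group independently of the chosen reduced word (guaranteed by the braid relations) and keeping the $K$ versus $K'$ labels and the diagram involution $\tau$ correctly aligned --- the latter being exactly the step that forces the commutation hypothesis $w\tau=\tau w$.
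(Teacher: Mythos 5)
Your proof is correct and follows essentially the same route as the paper: conjugate away the rescaling $\tPsi_{\bvs_\diamond}$, compute $\tTD'_{w,-1}(K_jK'_{\tau j})=K_{w\alpha_j}K'_{w\alpha_{\tau j}}$ on the torus, and use $w\tau=\tau w$ to rewrite $K'_{w\alpha_{\tau j}}=K'_{\tau w\alpha_j}$ so the product regroups into $\tk_{w\alpha_j}$. The paper's proof is just a terser version of the same bookkeeping, so there is nothing to add.
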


\begin{proof}
By Proposition~\ref{prop:braid1}, we have
\begin{align*}
\tTD'_{w,-1}(\tk_j)=\tTD'_{w,-1}(K_j K'_{ \tau j }) = K_{w\alpha_j} K'_{w\alpha_{\tau j}}= K_{w\alpha_j}K'_{\tau w\alpha_{j}}=\tk_{w\alpha_j}.
\end{align*}
By \eqref{def:tkdm}--\eqref{eq:kla}, we have
$\tk_{\lambda,\dm}= \tPsi_{\bvs_\diamond}^{-1}(\tk_\lambda)$,  for $\lambda\in \Z \I$. By \eqref{def:tT} and \eqref{eq:Tw}, we have $\tT'_{w,-1}=\tPsi_{\bvs_\diamond}^{-1} \circ \tTD'_{w,-1}\circ\tPsi_{\bvs_\diamond}$, and hence
\begin{align*}
\tT'_{w,-1}(\tk_{j,\dm}) = (\tPsi_{\bvs_\diamond}^{-1} \circ \tTD'_{w,-1}) (\tk_j)=\tPsi_{\bvs_\diamond}^{-1}(\tk_{w\alpha_j})=\tk_{w\alpha_j,\dm}.
\end{align*}
The lemma is proved.
\end{proof}

In particular, setting $w=\bs_i$ ($i\in \wI$) in Lemma \ref{lem:newb1} gives us
\[
\tT_{\bs_i}^{-1}(\tk_{j,\dm})=\tk_{\bs_i \alpha_j,\dm}.
\]

Summarizing the above discussion, we have obtained the following.

\begin{proposition}
  \label{prop:Cartanblack}
Let $i \in \wI$. %We have $\tT_{\bs_i}^{-1}(x) \in \tU^{\imath 0} \tbU$, for $x \in \tU^{\imath 0} \tbU$. Hence, $\tT_{\bs_i}^{-1}(x)$ commutes with $\tfX$; that is, setting
There exists element $\tTa{i}(x):=\tT_{\bs_i}^{-1}(x)$, which satisfies the intertwining relation \eqref{eq:newb0}, for $x \in \tU^{\imath 0} \tbU$. More explicitly, we have
\begin{align}
\tTa{i}(u)& =(\widehat{\tau}_{\bullet,i} \circ \widehat{\tau} )(u),
\qquad
\text{ for } u\in \tbU,
  \label{Tiu}\\
\tTa{i}(\tk_{j,\dm})&=\tk_{\bs_i \alpha_j,\dm},
\qquad\qquad
\text{ for }  j\in \wI.
\label{Tik}
\end{align}
\end{proposition}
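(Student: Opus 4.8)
The plan is to recognize that the two explicit formulas are already in hand and then to deduce the intertwining relation \eqref{eq:newb0} purely from the fact that $\tfX_i$ commutes with $\tU^{\imath 0}\tbU$. Indeed, \eqref{Tiu} is precisely the identity \eqref{eq:newb8}, while \eqref{Tik} is the specialization $w=\bs_i$ of Lemma~\ref{lem:newb1} (recorded in the display immediately following it, using that $\bs_i$ commutes with $\tau$). Together they show that $\tT_{\bs_i}^{-1}$ carries $\tbU$ into $\tbU$ and each $\tk_{j,\dm}$ into $\tk_{\bs_i\alpha_j,\dm}\in\tU^{\imath 0}$; since $\tU^{\imath 0}$ is generated by the $\tk_{j,\dm}$ $(j\in\wI)$ together with the $K_l,K_l'$ $(l\in\bI)$, which lie in $\tbU$, it follows that $\tT_{\bs_i}^{-1}$ preserves $\tU^{\imath 0}\tbU$. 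Hence $\tTa{i}(x):=\tT_{\bs_i}^{-1}(x)$ is a well-defined element of $\tU^{\imath 0}\tbU\subseteq\tUi$, and \eqref{Tiu}--\eqref{Tik} are established.

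It remains to check \eqref{eq:newb0}. For $x\in\tU^{\imath 0}\tbU$ one has $x^\imath=x$, so \eqref{eq:newb0} reduces to the assertion that $\tT_{\bs_i}^{-1}(x)$ commutes with $\tfX_i$; as $\tT_{\bs_i}^{-1}(x)$ again lies in $\tU^{\imath 0}\tbU$, it suffices to prove that $\tfX_i$ commutes with every element of $\tU^{\imath 0}\tbU$. Commutation with the subalgebra $\tbU$ is immediate from Theorem~\ref{thm:fX1} applied to the rank one Satake subdiagram $(\I_{\bullet,i},\tau)$, whose black part is again $\bI$. The remaining and main point is commutation of $\tfX_i$ with the generators $\tk_l$ $(l\in\wI)$ of $\tU^{\imath 0}$: for $l\notin\I_{\bullet,i}$ these lie outside the rank one Cartan and so are not directly covered by the rank one intertwining relations.

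This last point I would settle by a weight argument. By Remark~\ref{rmk:fX2} every homogeneous component of $\tfX_i$ has weight in $\N(\alpha_i+\bw\alpha_{\tau i})=\N\cdot 2\balpha_i$, and $\balpha_i$ lies in the $(-1)$-eigenspace of $\theta=-\bw\tau$ since $\theta(\balpha_i)=-\balpha_i$. Conjugating a weight-$\nu$ element by $\tk_l=K_lK'_{\tau l}$ multiplies it by $q^{(\alpha_l-\alpha_{\tau l},\,\nu)}$, so it is enough to verify $(\alpha_l-\alpha_{\tau l},\balpha_i)=0$. The $(-1)$-eigenspace projection of $\alpha_l-\alpha_{\tau l}$ equals $\balpha_l-\balpha_{\tau l}=0$ because $\balpha_{\tau l}=\balpha_l$; thus $\alpha_l-\alpha_{\tau l}$ lies in the $(+1)$-eigenspace of $\theta$, which is orthogonal to the $(-1)$-eigenspace as $\theta$ is an isometry of $(\cdot,\cdot)$. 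Hence $(\alpha_l-\alpha_{\tau l},\balpha_i)=0$, so $\tfX_i$ commutes with each $\tk_l$ and therefore with all of $\tU^{\imath 0}\tbU$. Combined with the first paragraph, $\tTa{i}(x)=\tT_{\bs_i}^{-1}(x)$ commutes with $\tfX_i$, which is exactly \eqref{eq:newb0}.
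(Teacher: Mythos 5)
Your proof is correct and follows essentially the same route as the paper's: \eqref{Tiu} is the identity \eqref{eq:newb8}, \eqref{Tik} is Lemma~\ref{lem:newb1} specialized to $w=\bs_i$, and the intertwining relation \eqref{eq:newb0} then reduces to the commutation of $\tfX_i$ with $\tU^{\imath 0}\tbU$. Your eigenspace-orthogonality argument showing $(\alpha_l-\alpha_{\tau l},\balpha_i)=0$, hence that $\tfX_i$ commutes with the generators $\tk_l$ for $l\notin\I_{\bullet,i}$ (which the rank one instance of Theorem~\ref{thm:fX1} does not directly cover), correctly supplies a detail the paper passes over by citing Theorem~\ref{thm:fX1} without comment.
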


\subsection{Integrality of $\tTa{i}$}

The formula \eqref{Tiu} clearly preserves the Lusztig integral $\Z[q,q^{-1}]$-form on $\tbU$.
We shall explain below that our braid group action is also integral on the Cartan part, even though the definition \eqref{def:tkdm} of $\tk_{j,\dm}$ may involve $q^{1/2}$.

\begin{lemma} \label{lem:Z}
We have
\begin{align}
\label{eq:cartan1}
\tTa{i}(\tk_{j }) =\vs_{\bs_i\alpha_j-\alpha_j,\dm}^{-1}\tk_{\bs_i \alpha_j },
\end{align}
where $\vs_{\bs_i\alpha_j-\alpha_j,\dm}^{-1} \in \Z[q,q^{-1}]$, for all $i,j\in \wI$.
\end{lemma}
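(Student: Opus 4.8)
The statement bundles an explicit formula with an integrality assertion, and I would treat these separately. For the formula \eqref{eq:cartan1} the plan is to move between the two normalizations of the Cartan elements: writing $\tk_\lambda:=\prod_{k\in\I}\tk_k^{m_k}$ and $\vs_{\lambda,\dm}:=\prod_{k\in\I}\vs_{k,\dm}^{m_k}$ for $\lambda=\sum_k m_k\alpha_k$, the definitions \eqref{def:tkdm}--\eqref{eq:kla} give $\tk_{\lambda,\dm}=\vs_{\lambda,\dm}^{-1}\tk_\lambda$, and $\lambda\mapsto\vs_{\lambda,\dm}$ is a homomorphism $\Z\I\to\bF^\times$. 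Feeding this into \eqref{Tik} of Proposition~\ref{prop:Cartanblack}, I would compute
\[
\tTa{i}(\tk_j)=\tTa{i}\big(\vs_{j,\dm}\tk_{j,\dm}\big)=\vs_{j,\dm}\,\tk_{\bs_i\alpha_j,\dm}=\vs_{j,\dm}\vs_{\bs_i\alpha_j,\dm}^{-1}\,\tk_{\bs_i\alpha_j},
\]
and then use multiplicativity to rewrite $\vs_{j,\dm}\vs_{\bs_i\alpha_j,\dm}^{-1}=\vs_{\alpha_j-\bs_i\alpha_j,\dm}=\vs_{\bs_i\alpha_j-\alpha_j,\dm}^{-1}$, which is \eqref{eq:cartan1}.

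The real content is the integrality of the scalar, and here the subtlety is that an individual $\vs_{k,\dm}=-q^{-(\balpha_k,\balpha_k)}$ may carry a half-integral (a priori even quarter-integral) power of $q$; one must show these cancel. Setting $\mu:=\bs_i\alpha_j-\alpha_j=\sum_k m_k\alpha_k$, I have $\vs_{\mu,\dm}^{-1}=(\pm1)\,q^{B(\mu)}$ with $B$ the linear functional $B(\lambda)=\sum_k m_k(\balpha_k,\balpha_k)$ (the sign being harmless), so it suffices to prove $B(\mu)\in\Z$; I would in fact pin down the exact value $B(\mu)=-2(\balpha_i,\balpha_j)$.

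To obtain this value I would run two parallel computations along a fixed reduced expression $\bs_i=s_{k_1}\cdots s_{k_p}$, with $\gamma_t:=s_{k_{t+1}}\cdots s_{k_p}\alpha_j$ so that $s_{k_t}\gamma_t=\gamma_{t-1}$. Linearity of $B$ and the reflection rule $s_{k}\gamma=\gamma-\langle\gamma,\alpha_{k}^\vee\rangle\alpha_{k}$ give, by telescoping,
\[
B(\mu)=-\sum_{t=1}^{p}\langle\gamma_t,\alpha_{k_t}^\vee\rangle(\balpha_{k_t},\balpha_{k_t})=-(\balpha_i,\balpha_i)\sum_{t:\,k_t\in\{i,\tau i\}}\langle\gamma_t,\alpha_{k_t}^\vee\rangle,
\]
where only the steps with $k_t\in\{i,\tau i\}$ survive since $(\balpha_{k_t},\balpha_{k_t})=0$ for $k_t\in\bI$ and $\balpha_i=\balpha_{\tau i}$. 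Applying instead the projection $\pi:=\tfrac12(1-\theta)$ (which satisfies $\pi(\alpha_k)=\balpha_k$, commutes with $\bs_i$ because $\bs_i\in W^\theta$, and annihilates $\alpha_k$ for $k\in\bI$) to the same telescoping expression yields $\pi(\mu)=-\big(\sum_{t:\,k_t\in\{i,\tau i\}}\langle\gamma_t,\alpha_{k_t}^\vee\rangle\big)\balpha_i$. On the other hand $\pi(\mu)=\bs_i\balpha_j-\balpha_j=-\langle\balpha_j,\balpha_i^\vee\rangle\balpha_i$, where $\langle\balpha_j,\balpha_i^\vee\rangle:=2(\balpha_j,\balpha_i)/(\balpha_i,\balpha_i)$, because $\bs_i$ acts on the restricted root lattice as the reflection in $\balpha_i$ (Lusztig \cite{Lus76}). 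Comparing the coefficients of $\balpha_i\neq0$ identifies the two sums, whence $B(\mu)=-(\balpha_i,\balpha_i)\langle\balpha_j,\balpha_i^\vee\rangle=-2(\balpha_i,\balpha_j)$.

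It then remains to note $2(\balpha_i,\balpha_j)=(\alpha_i,\alpha_j)-(\alpha_i,\theta\alpha_j)\in\Z$: the normalized form is $\Z$-valued on $\Z\I$ and $\theta\alpha_j=-\bw\alpha_{\tau j}\in\Z\I$. Hence $B(\mu)\in\Z$ and $\vs_{\bs_i\alpha_j-\alpha_j,\dm}^{-1}\in\Z[q,q^{-1}]$, completing the proof. I expect the main obstacle to be exactly the middle step---establishing $B(\mu)=-2(\balpha_i,\balpha_j)$---since $B$ is a genuinely linear (not quadratic) functional that does not factor through $\pi$ in general, so the cancellation of fractional powers is not automatic and is forced only by matching it against the restricted reflection along the reduced word.
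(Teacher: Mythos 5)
Your derivation of the formula \eqref{eq:cartan1} from \eqref{Tik} is the same as the paper's (unravel $\tk_{j,\dm}=\vs_{j,\dm}^{-1}\tk_j$ and use multiplicativity of $\lambda\mapsto\vs_{\lambda,\dm}$), but your proof of the integrality is genuinely different and, I checked, correct. The paper splits into the cases $j=i$ (where $\bs_i\alpha_i\in-\alpha_i+\Z\bI$ forces the scalar to be $\vs_{i,\dm}^{2}\in q^{\Z}$) and $j\neq i,\tau i$ (where it invokes the \emph{classification} of Satake diagrams and restricted root systems to see that, when $\vs_{i,\dm}=-q^{-1/2}$, the only nontrivial possibility is $\bs_i\balpha_j-\balpha_j=2\balpha_i$, whence the scalar is $\vs_{i,\dm}^{-2}$). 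You instead compute the exponent exactly and uniformly: writing $\mu=\bs_i\alpha_j-\alpha_j$, your telescoping along a reduced word of $\bs_i$ (all of whose letters lie in $\I_{\bullet,i}$, so each simple root contributes either $0$ or $(\balpha_i,\balpha_i)$ to $B$ and either $0$ or $\balpha_i$ to $\pi=\tfrac12(1-\theta)$) identifies $B(\mu)$ with $(\balpha_i,\balpha_i)$ times the $\balpha_i$-coordinate of $\pi(\mu)=\bs_i\balpha_j-\balpha_j=-\langle\balpha_j,\balpha_i^\vee\rangle\balpha_i$, giving $B(\mu)=-2(\balpha_i,\balpha_j)=(\alpha_i,\theta\alpha_j)-(\alpha_i,\alpha_j)\in\Z$. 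I verified this against type AIII$_n$ ($i=2$, $j=1$: $\mu=\alpha_2+\cdots+\alpha_{n-1}$, $B(\mu)=1=-2(\balpha_2,\balpha_1)$, scalar $q$), and it also reproduces the paper's answers in both of its cases. What your route buys is a classification-free argument with a closed-form exponent, which would survive outside finite type; what it costs is the need for the (standard, but not proved in this paper) fact that $\bs_i$ acts on restricted roots as the reflection in $\balpha_i$, which the paper's case analysis sidesteps. No gaps.
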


\begin{proof}
Formula \eqref{eq:cartan1} follows from \eqref{Tik} by unraveling the notation $\tk_{j,\dm}, \tk_{\bs_i \alpha_j,\dm}$ in \eqref{def:tkdm}--\eqref{eq:kla}.

It remains to show that $\vs_{\bs_i\alpha_j-\alpha_j,\dm}^{-1} \in \Z[q,q^{-1}]$.
Recall from the definition \eqref{def:vsi}, we have $\vs_{j,\diamond} \in - q^{\Z/2}$, for all $j \in \wI.$

For $j=i$, since $\bs_i(\alpha_i)=-\alpha_i + \alpha_\bullet$ for some $\alpha_\bullet\in \Z\bI$, we have
\begin{align*}
\tTa{i}(\tk_{i}) =\vs_{i,\dm}^2 \tk_{\bs_i \alpha_i }.
\end{align*}
where $\vs_{i,\dm}^2 \in q^\Z$.
The integrality for $\tTa{i}(\tk_{\tau i})$ can be then obtained by applying $\widehat{\tau}$ to the above formula.

For $j\neq i,\tau i$, we only need to consider the case $\vs_{i,\dm}=-q^{-1/2}$. In this case, by \eqref{def:vsi}, $\balpha_i$ is a short root.
Moreover, due to the classification of Satake diagrams and the corresponding restricted root systems \cite{Ar62}, we have $\frac{(\balpha_j,\balpha_i)}{(\balpha_i,\balpha_i)}=-2 $ or $0$.
It remains to consider the nontrivial case $\frac{(\balpha_j,\balpha_i)}{(\balpha_i,\balpha_i)}=-2 $.
It follows that $\bs_i\balpha_j-\balpha_j=2\balpha_i$, which implies
$\bs_i\alpha_j\in \alpha_j + k\alpha_i +l\alpha_{\tau i}+\Z\bI$, for some $k,l\geq 0,k+l=2.$
Since $\vs_{i,\dm}=\vs_{\tau i,\dm}$, the formula \eqref{eq:cartan1} is unraveled as the following integral formula
$\tTa{i}(\tk_{j}) =\vs_{i,\dm}^{-2} \tk_{\bs_i \alpha_j }.$

Therefore, the integrality of \eqref{eq:cartan1} holds in all cases.
\end{proof}

\subsection{A uniform formula for $\tTa{i}(B_i)$}
  \label{rkone1}

In this subsection, we introduce a uniform method to calculate $\tTa{i}(B_i)$. Note that $\tTT_i=\tTT_{\tau i}$ and this takes care of $\tTa{i}(B_{\tau i})$. To that end, without loss of generality, we can restrict ourselves to a Satake diagram $(\I=\bI \cup \wI,\tau)$ of real rank one; that is, $\wI =\{i,\tau i\}$ for some $i\in \wI$.

 Recall the diagram involution $\tau_{\bullet,i}$ associated to the longest element $\bwi$ in the Weyl group $W_{\bI\cup \{i,\tau i\}}$. By definition of admissible pairs, the diagram involution associated to $\bw$ is $\tau$.  Observe that $\tau_{\bullet,i} \tau i \in \{i,\tau i\}$, by Table~\ref{table:localSatake} on rank one Satake diagrams.

Recall $\ck_i, \ck_{\tau i} \in \tU^{\imath 0}$  from \eqref{def:Ki}.

\begin{lemma}
  \label{lem:bsiBi}
We have
\begin{align}
  \label{eq:TBi}
 \tT_{\bs_i}^{-1}(B_i) =-q^{-(\alpha_i,\bw\alpha_{\tau i}) } \tT_{w_\bullet}^2 ( B_{\tau_{\bullet,i} \tau i}^{\sigma})\ck_{\tau_{\bullet,i} \tau i}^{-1},
\end{align}
where $B_i^{\sigma}$ is given in \eqref{eq:Bsig}.
\end{lemma}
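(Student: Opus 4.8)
The plan is to exploit that in the present rank-one situation ($\wI=\{i,\tau i\}$) the parabolic $W_{\bullet,i}$ equals all of $W$, so that $w_{\bullet,i}=w_0$ and $\tau_{\bullet,i}=\tau_0$. First I would record the braid factorization coming from \eqref{def:bsi}: the relation $w_0=\bs_i w_\bullet=w_\bullet\bs_i$ with $\ell(w_0)=\ell(\bs_i)+\ell(w_\bullet)$ yields $\tT_{w_0}=\tT_{\bs_i}\tT_{w_\bullet}=\tT_{w_\bullet}\tT_{\bs_i}$, hence $\tT_{\bs_i}^{-1}=\tT_{w_\bullet}\tT_{w_0}^{-1}$ and $\tT_{\bs_i}$ commutes with $\tT_{w_\bullet}$. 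I would also note that $\tau_0$ commutes with $\tau$ (since any diagram automorphism fixes $w_0$, hence $\tau_0=-w_0$), so that $\tau\tau_0\tau i=\tau_0 i$, and that $\tau_0\tau i\in\{i,\tau i\}$, read off from Table~\ref{table:localSatake}.

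Next I would apply the algebra automorphism $\tT_{\bs_i}^{-1}$ to $B_i=F_i+\tT_{\bw}(E_{\tau i})K_i'$ (see \eqref{def:gen} and Remark~\ref{rem:sameT}) one factor at a time. For the first summand, $\tT_{\bs_i}^{-1}(F_i)=\tT_{w_\bullet}\tT_{w_0}^{-1}(F_i)$, and Lemma~\ref{lem:braid1} gives $\tT_{w_0}^{-1}(F_i)=-E_{\tau_0 i}(K'_{\tau_0 i})^{-1}$, producing the ``$E$-part'' $-\tT_{w_\bullet}(E_{\tau_0 i})\,\tT_{w_\bullet}((K'_{\tau_0 i})^{-1})$. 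For the second summand I would use commutativity to write $\tT_{\bs_i}^{-1}(\tT_{\bw}(E_{\tau i}))=\tT_{w_\bullet}\tT_{\bs_i}^{-1}(E_{\tau i})=\tT_{w_\bullet}^2\,\tT_{w_0}^{-1}(E_{\tau i})$, and Lemma~\ref{lem:braid1} gives $\tT_{w_0}^{-1}(E_{\tau i})=-\vs_{\tau i,\diamond}K_{\tau_0\tau i}^{-1}F_{\tau_0\tau i}$; this is precisely where the square $\tT_{w_\bullet}^2$ and the factor $\tT_{w_\bullet}^2(F_{\tau_0\tau i})$ emerge. Finally $\tT_{\bs_i}^{-1}(K_i')$ is a pure torus computation via Proposition~\ref{prop:braid0}, yielding a scalar multiple of $K'_{\bs_i\alpha_i}$.

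The last step is to assemble the three computed factors and identify the result with the right-hand side. Expanding the target via \eqref{eq:Bsig} gives $\tT_{w_\bullet}^2(B_{\tau_0\tau i}^\sigma)=\tT_{w_\bullet}^2(F_{\tau_0\tau i})+\tT_{w_\bullet}^2(K_{\tau_0\tau i})\,\tT_{w_\bullet}(E_{\tau_0 i})$, using $\tau\tau_0\tau i=\tau_0 i$; then the ``$E$-part'' above matches the $E$-contribution and the ``$F$-part'' matches the $F$-contribution. Here I would simplify $\tT_{w_\bullet}^2$ on torus elements using $w_\bullet^2=1$ and the triviality of the black rescalings (so $\tT_{w_\bullet}^2(K_{\tau_0\tau i})=K_{\tau_0\tau i}$), and recall $\ck_{\tau_0\tau i}=K_{\tau_0\tau i}K'_{\bw\alpha_{\tau_0 i}}$ from \eqref{def:Ki}.

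The main obstacle is the scalar and Cartan bookkeeping in this final assembly. One must commute the various torus elements past the root vectors $\tT_{w_\bullet}(E_{\tau_0 i})$ and $\tT_{w_\bullet}^2(F_{\tau_0\tau i})$ using \eqref{eq:EK}--\eqref{eq:K2}, track the rescaling factors $\vs_{\tau i,\diamond}=\vs_{i,\dm}$ (cf. \eqref{def:vsi}) introduced by $\tPsi_{\bvs_\diamond}$, and verify that all the accumulated $q$-powers collapse exactly to the prefactor $-q^{-(\alpha_i,\bw\alpha_{\tau i})}$ while the residual torus part is precisely $\ck_{\tau_0\tau i}^{-1}$. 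I expect this to be a finite and uniform calculation once the weights of $\tT_{w_\bullet}(E_{\tau_0 i})$ and $\tT_{w_\bullet}^2(F_{\tau_0\tau i})$ are used to pin down the commutation exponents, with the classification entering only through the elementary fact $\tau_0\tau i\in\{i,\tau i\}$.
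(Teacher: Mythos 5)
Your proposal follows essentially the same route as the paper's proof: it uses the factorization $\tT_{\bs_i}^{-1}=\tT_{w_\bullet}\tT_{w_{\bullet,i}}^{-1}$ coming from \eqref{def:bsi} (with $w_{\bullet,i}=w_0$, $\tau_{\bullet,i}=\tau_0$ in the rank-one setting), applies Lemma~\ref{lem:braid1} separately to $F_i$, $E_{\tau i}$ and the torus factor, and reassembles the result as $\tT_{w_\bullet}^2(B^\sigma_{\tau_{\bullet,i}\tau i})$ times a Cartan element. The remaining scalar/Cartan bookkeeping you defer is exactly the finite computation the paper carries out, and it does collapse to the stated prefactor $-q^{-(\alpha_i,\bw\alpha_{\tau i})}$ and to $\ck_{\tau_{\bullet,i}\tau i}^{-1}$.
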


\begin{proof}
Recall from \eqref{def:vsi} and \eqref{def:vsi1} that $ \vs_{i,\diamond}= - q^{-(\alpha_i , \alpha_i+\bw\alpha_{\tau i})/2},$ for $i\in \wI,$ and $\vs_{j,\diamond}=1$, for $j\in \bI$.
%Recall that, since $\bwi=w_\bullet \bs_i$, $w_\bullet$, and $\bs_i$ commute with each other,
By \eqref{def:bsi}, we have $\tT_{\bwi}= \tT_{\bs_i} \tT_{w_\bullet}$. By Lemma \ref{lem:braid1}, we compute
 \begin{align*}
 \tT_{\bs_i}^{-1}(B_i)
 &= \tT_{\bs_i}^{-1}\big(F_i+\tT_{w_\bullet}(E_{\tau i}) K_i'\big)
 \\
 &= \tT_{w_\bullet} \tT_{\bwi}^{-1}\big(F_i+\tT_{w_\bullet}(E_{\tau i}) K_i'\big)
 \\
 &= \tT_{w_\bullet}^2 \big(\tT_{w_\bullet}^{-1}\tT_{\bwi}^{-1}(F_i)+\tT_{\bwi}^{-1}(E_{\tau i}) \tT_{w_\bullet}^{-1}\tT_{\bwi}^{-1}(K_i')\big)
 \\
 &= \tT_{w_\bullet}^2 \big(-\tT_{w_\bullet}^{-1}( E_{\tau_{\bullet,i} i} K_{\tau_{\bullet,i} i}'^{-1}) - q^{-(\alpha_i,\alpha_i + \bw\alpha_{\tau i})}K_{\tau_{\bullet,i} \tau i}^{-1} F_{\tau_{\bullet,i} \tau i}\tT_{w_\bullet}^{-1}(K_{\tau_{\bullet,i} i}'^{-1})\big)
 \\
 %&=\tT_{w_\bullet}^2 \big(-\tT_{w_\bullet}^{-1}( E_{\tau i})\tT_{w_\bullet}^{-1}(K_i')^{-1} -q^{ -(\alpha_i,\bw(\alpha_{\tau i}))}F_i K_i^{-1} \tT_{w_\bullet}^{-1}(K_{\tau i}'^{-1})\big)\\
 &= -\tT_{w_\bullet}^2 \big( \tT_{w_\bullet}^{-1}(E_{\tau_{\bullet,i} i}) K_{\tau_{\bullet,i} \tau i}  + q^{ -(\alpha_i,\bw\alpha_{\tau i})}F_{\tau_{\bullet,i} \tau i}\big)K_{\tau_{\bullet,i} \tau i}^{-1} \tT_{w_\bullet}(K_{\tau_{\bullet,i} i}')^{-1}
 \\
 &= -q^{-(\alpha_i,\bw\alpha_{\tau i}) } \tT_{w_\bullet}^2 \big( K_{\tau_{\bullet,i} \tau i} \tT_{w_\bullet}^{-1}(E_{\tau_{\bullet,i} i}) +  F_{\tau_{\bullet,i} \tau i}\big)\ck_{\tau_{\bullet,i} \tau i}^{-1}
 \\
 &= -q^{-(\alpha_i,\bw\alpha_{\tau i}) } \tT_{w_\bullet}^2 ( B_{\tau_{\bullet,i} \tau i}^{\sigma})\ck_{\tau_{\bullet,i} \tau i}^{-1}.
 \end{align*}
 This proves the lemma. % identity \eqref{eq:TBi}.
\end{proof}

 \begin{theorem}
  \label{thm:rkone1}
Let $i\in \wI$. There exists a unique element $\tTa{i}(B_i)\in \tUi$ which satisfies the following intertwining relation (see \eqref{eq:newb0})
\[
\tTa{i}(B_i) \tfX_i =\tfX_i \tT_{\bs_i}^{-1}(B_i).
\]
More explicitly, we have
 \begin{equation}
   \label{eq:rkone2}
 \tTa{i}(B_i)=-q^{-(\alpha_i,\bw\alpha_{\tau i}) } \tT_{w_\bullet}^2 ( B_{\tau_{\bullet,i} \tau i})\ck_{\tau_{\bullet,i} \tau i}^{ -1}.
 \end{equation}
 \end{theorem}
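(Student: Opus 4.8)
The plan is to verify directly that the explicit element on the right-hand side of \eqref{eq:rkone2} satisfies the required intertwining relation, and to deduce uniqueness from the invertibility of $\tfX_i$. Since all the data ($\bs_i$, $w_\bullet$, $\tfX_i$, and the node $\tau_{\bullet,i}\tau i$) live inside the rank one Satake subdiagram $(\bI\cup\{i,\tau i\},\tau)$, I will regard $\tfX_i$ as the quasi $K$-matrix of that subdiagram and apply Theorem~\ref{thm:fX1} to it. Uniqueness is then immediate: if $\hat x\,\tfX_i=\tfX_i\,\tT_{\bs_i}^{-1}(B_i)$, then $\hat x=\tfX_i\,\tT_{\bs_i}^{-1}(B_i)\,\tfX_i^{-1}$ is determined.

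For existence, write $j:=\tau_{\bullet,i}\tau i$, which by Table~\ref{table:localSatake} lies in $\{i,\tau i\}\subset\wI$, and set
\[
y:=-q^{-(\alpha_i,\bw\alpha_{\tau i})}\,\tT_{w_\bullet}^2(B_{j})\,\ck_{j}^{-1},
\]
the proposed value of $\tTa{i}(B_i)$. I first check $y\in\tUi$: by Remark~\ref{rem:sameT} the operator $\tT_{w_\bullet}=\tTD_{w_\bullet}$ is a product of the $\tT''_{k,+1}$ with $k\in\bI$, each of which preserves $\tUi$ by Proposition~\ref{prop:Tjblack}; hence $\tT_{w_\bullet}^2(B_j)\in\tUi$, while $\ck_j^{-1}\in\tU^{\imath 0}\subset\tUi$ by Lemma~\ref{lem:rkone1}.

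The heart of the argument is the intertwining relation, which I would assemble from three already-established facts. First, the rank one case of Theorem~\ref{thm:fX1} applied to the white node $j\in\wI$ gives $B_j\,\tfX_i=\tfX_i\,B_j^{\sigma}$. Second, since $w_\bullet$ is a product of simple reflections $s_k$ with $k\in\bI$, Proposition~\ref{prop:newb-1} yields $\tT_{w_\bullet}(\tfX_i)=\tfX_i$, whence $\tT_{w_\bullet}^2(\tfX_i)=\tfX_i$; applying the algebra automorphism $\tT_{w_\bullet}^2$ to the first identity then gives
\[
\tT_{w_\bullet}^2(B_j)\,\tfX_i=\tfX_i\,\tT_{w_\bullet}^2(B_j^{\sigma}).
\]
Third, $\ck_j^{-1}\in\tU^{\imath 0}$ commutes with $\tfX_i$ by Theorem~\ref{thm:fX1}. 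Combining these three facts, multiplying by the scalar $-q^{-(\alpha_i,\bw\alpha_{\tau i})}$ and sliding $\ck_j^{-1}$ past $\tfX_i$, I obtain $y\,\tfX_i=\tfX_i\,\tT_{\bs_i}^{-1}(B_i)$, where the right-hand side is recognized via Lemma~\ref{lem:bsiBi}. By the uniqueness from the first paragraph, $\tTa{i}(B_i)=y$, which is precisely \eqref{eq:rkone2}.

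Since Lemma~\ref{lem:bsiBi} has already carried out the nontrivial Weyl-group bookkeeping (rewriting $\tT_{\bs_i}^{-1}(B_i)$ in the twisted form $\tT_{w_\bullet}^2(B_j^{\sigma})\,\ck_j^{-1}$), I expect no serious new difficulty here; the remaining work is organizational. The one point that must be handled with care is confirming that $\ck_j$ genuinely lies in the \emph{rank one} $\tU^{\imath 0}$, so that it commutes with $\tfX_i$ under Theorem~\ref{thm:fX1} — this is exactly the content of Lemma~\ref{lem:rkone1} — and matching the $\sigma$-twist on $B_j$ together with the scalar prefactor on both sides of the identity.
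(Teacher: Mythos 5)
Your proposal is correct and follows essentially the same route as the paper's own proof: it combines Lemma~\ref{lem:bsiBi} with the rank one case of Theorem~\ref{thm:fX1}, the invariance $\tT_{w_\bullet}(\tfX_i)=\tfX_i$ from Proposition~\ref{prop:newb-1}, the commutation of $\ck_{\tau_{\bullet,i}\tau i}\in\tU^{\imath 0}$ with $\tfX_i$ (Lemma~\ref{lem:rkone1}), and Proposition~\ref{prop:Tjblack} to see the element lies in $\tUi$. The only cosmetic difference is that you make the uniqueness step (invertibility of $\tfX_i$) explicit, which the paper leaves to Theorem~\ref{thm:newb0}(1).
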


\begin{proof}
Recall $\tau_{\bullet,i} \tau i \in \{i,\tau i\}$; see Table~\ref{table:localSatake}. By Theorem~\ref{thm:fX1}, we have $\tfX_i B_{\tau_{\bullet,i} \tau i}^{\sigma} = B_{\tau_{\bullet,i} \tau i} \tfX_i$. By Proposition~\ref{prop:newb-1}, we have $\tT_{w_\bullet} (\tfX_i) =\tfX_i$, and hence $\tfX_i \tT_{w_\bullet}^2 (B_{\tau_{\bullet,i} \tau i}^{\sigma}) = \tT_{w_\bullet}^2 (B_{\tau_{\bullet,i} \tau i})\tfX_i$. By Lemma~\ref{lem:rkone1}, we have $\ck_{\tau_{\bullet,i} \tau i} \in \tU^{\imath 0}$, % commutes with $\tfX_i$
and hence $\ck_{\tau_{\bullet,i} \tau i}$ commutes with $\tfX_i$. Putting these together with \eqref{eq:TBi}, we have
 \begin{equation}
    \label{eq:rkone1}
  -q^{-(\alpha_i,\bw(\alpha_{\tau i})) }\tT_{w_\bullet}^2 ( B_{\tau_{\bullet,i} \tau i})\ck_{\tau_{\bullet,i} \tau i}^{-1}   \tfX_i
  =\tfX_i  \tT_{\bs_i}^{-1}(B_i).
 \end{equation}
It follows by Proposition~\ref{prop:Tjblack} that
$-q^{-(\alpha_i,\bw(\alpha_{\tau i})) }\tT_{w_\bullet}^2 ( B_{\tau_{\bullet,i} \tau i})\ck_{\tau_{\bullet,i} \tau i}^{-1} \in \tUi.$  Hence, setting $\tTa{i}(B_i)=-q^{-(\alpha_i,\bw\alpha_{\tau i}) } \tT_{w_\bullet}^2 ( B_{\tau_{\bullet,i} \tau i})\ck_{\tau_{\bullet,i} \tau i}^{ -1}$, we have proved the theorem.
 \end{proof}

%%%%%%
%%%%%%
\section{Rank 2 formulas for  $\tTa{i}(B_j)$}
  \label{sec:rktwo}

Let $(\I=\bI \cup \wI,\tau)$ be a rank 2 irreducible Satake diagram. Fix $i,j\in \wItau$ such that $i\neq j$, such that $\wI =\{i, \tau i, j, \tau j\}$. A complete list of formulas for $\tTa{i}(B_j)$ is formulated in Table~\ref{table:rktwoSatake} (listed after \S\ref{subsec:braidmodule}). We show that the formulas for $\tTa{i}(B_j)$ in Table~\ref{table:rktwoSatake} satisfy the intertwining relation \eqref{eq:newb0}; see  Theorem~\ref{thm:rktwo1}. Together with the formulas in the previous section, we have established the existence of an endomorphism $\tTa{i}$ on $\tUi$ satisfying  \eqref{eq:newb0}.
%The combination of Proposition~\ref{prop:Cartanblack} (for generators $x$ in $\tU^{\imath 0} \tbU$), Theorem~\ref{thm:rkone1} (for rank 1), and Theorem~\ref{thm:rktwo1} (for rank 2)
%completes the proof for Theorem~\ref{thm:newb0}.

%
%
\subsection{Some commutator relations with $\tfX$}

For $w\in W,$ let $\U^+[w]$ be the well-known subalgebra of $\U^+$ spanned by PBW basis elements generated by certain $q$-root vectors so that $\U^+[w_0] =\U^+$; see \cite[8.24]{Ja95}. As we identify $\tU^+ =\U^+$, we denote by $\tU^+[w]$ the subalgebra of $\tU^+$ corresponding to $\U^+[w]$.
%Recall $\bwi =w_\bullet \bs_i= \bs_i w_\bullet.$
The next lemma is valid for all  Satake diagrams.

\begin{lemma}
  \label{lem:rktwo1}
For $i\neq j\in \wItau$, we have
\begin{align}
 F_j  \tfX_i &=\tfX_i  F_j,
 \label{eq:FjUp}\\
\tT_{\bs_i}^{-1}\big(\tT_{\bw}(E_{\tau j})K_j'\big)  \cdot \tfX_i
&= \tfX_i \cdot  \tT_{\bs_i}^{-1}\big(\tT_{\bw}(E_{\tau j}) K_j'\big).
 \label{eq:TEjUp}
\end{align}
\end{lemma}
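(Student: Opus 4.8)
My plan is to prove the two identities separately; \eqref{eq:FjUp} is elementary, while \eqref{eq:TEjUp} carries essentially all of the content. For \eqref{eq:FjUp}, recall from Remark~\ref{rmk:fX2} that $\tfX_i$ is a sum of weight vectors of weights $m(\alpha_i+\bw\alpha_{\tau i})$, and since $\bw\alpha_{\tau i}\in\alpha_{\tau i}+\N\bI$ these weights all lie in $\N\I_{\bullet,i}$. Hence $\tfX_i$ belongs to $\tU^+[\bwi]$, the subalgebra of $\tU^+$ generated by $\{E_k:k\in\I_{\bullet,i}\}$. Because $i\neq j$ in $\wItau$ forces $\{i,\tau i\}\cap\{j,\tau j\}=\varnothing$ and $j\notin\bI$, we have $j\notin\I_{\bullet,i}$, so $[E_k,F_j]=0$ for every $k\in\I_{\bullet,i}$. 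Thus $F_j$ commutes with each generator of that subalgebra, and \eqref{eq:FjUp} follows.

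For \eqref{eq:TEjUp}, I would first transport the Cartan part out of the way. Set $P:=\tT_{\bs_i}^{-1}\bigl(\tT_\bw(E_{\tau j})\bigr)$, a weight vector of weight $\beta:=\bs_i\bw\alpha_{\tau j}=\bwi\alpha_{\tau j}$, and note that $\tT_{\bs_i}^{-1}(K_j')$ is a scalar multiple of the grouplike element $K'_{\bs_i\alpha_j}$, so that $\tT_{\bs_i}^{-1}\bigl(\tT_\bw(E_{\tau j})K_j'\bigr)$ is a scalar times $P\,K'_{\bs_i\alpha_j}$. Writing $\tfX_i=\sum_{m\ge0}\tfX_{i,m}$ with $\tfX_{i,m}\in\tU^+_{m\gamma}$ and $\gamma:=\alpha_i+\bw\alpha_{\tau i}$, and commuting $K'_{\bs_i\alpha_j}$ past $\tfX_{i,m}$, the assertion \eqref{eq:TEjUp} becomes equivalent to the family of $q$-commutations
\[
\tfX_{i,m}\,P \;=\; q^{-(\bs_i\alpha_j,\,m\gamma)}\,P\,\tfX_{i,m}, \qquad m\ge 0.
\]

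I would then match this exponent with the ``expected'' one $-(\beta,m\gamma)$. Using $\theta=-\bw\tau$ we have $\bw\alpha_{\tau j}=-\theta(\alpha_j)$, and since $\bs_i\in\reW\subseteq W^\theta$ commutes with $\theta$ this gives $\beta=-\theta(\bs_i\alpha_j)$, so $\beta-\bs_i\alpha_j=-\bigl(\bs_i\alpha_j+\theta(\bs_i\alpha_j)\bigr)$ is fixed by $\theta$. On the other hand $\gamma=\alpha_i-\theta(\alpha_i)$ lies in the $(-1)$-eigenspace of $\theta$. As $\theta$ is an isometry, its $(+1)$- and $(-1)$-eigenspaces are orthogonal, whence $(\beta-\bs_i\alpha_j,\gamma)=0$ and the exponent above equals $-(\beta,m\gamma)$. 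It then remains to prove the clean relation $\tfX_{i,m}P=q^{-(\beta,m\gamma)}P\,\tfX_{i,m}$.

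This last $q$-commutation is the crux, and is where the PBW subalgebras $\tU^+[w]$ are meant to enter. Since $\bwi$ is the longest element of $W_{\bullet,i}$, one has $\tfX_{i,m}\in\tU^+[\bwi]$, a span of PBW monomials in the root vectors $E_\delta$ for $\delta$ a positive root in the span of $\{\alpha_k:k\in\I_{\bullet,i}\}$; and one checks that $P$ lies in $\tU^+$ and is, up to a scalar, the PBW root vector $E_\beta$ attached to $\beta=\bwi\alpha_{\tau j}$. As $\alpha_{\tau j}$ is $\I_{\bullet,i}$-antidominant ($\tau j\notin\I_{\bullet,i}$), $\beta$ is the highest root in its $W_{\bullet,i}$-orbit, so $\beta+\delta$ is never a root for such $\delta$; choosing a convex order in which $\beta$ is maximal, the reordering of $P$ past the root-vector factors of $\tfX_{i,m}$ produces no lower corrections and yields precisely the $q$-commutation with exponent $-(\beta,m\gamma)$. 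The main obstacle will be organizing this convexity/PBW bookkeeping uniformly across the relevant Satake types; granting it, summing over $m$ gives \eqref{eq:TEjUp}.
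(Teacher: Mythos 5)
Your handling of \eqref{eq:FjUp} is fine (and a bit more elementary than the paper's, which quotes $\tfX_{i,m}\in\tU^+[\bs_i]$ from \cite[Proposition 4.5]{BW18b}), and your reduction of \eqref{eq:TEjUp} to the family of $q$-commutations $\tfX_{i,m}\,P=q^{-(\beta,m\gamma)}P\,\tfX_{i,m}$, together with the eigenspace-orthogonality computation giving $(\beta-\bs_i\alpha_j,\gamma)=0$, is correct. But that $q$-commutation \emph{is} the whole content of \eqref{eq:TEjUp}, and the argument you sketch for it does not close — indeed, taken literally it proves the wrong relation. The element $P=\tT'_{\bs_i,-1}\big(\tT_{\bw}(E_{\tau j})\big)$ is \emph{not} (a scalar multiple of) the standard PBW root vector $E_\beta=\tT''_{\bwi,+1}(E_{\tau j})$; it is its $\sigma$-twin, and the two $q$-commute with the subalgebra generated by $E_k$ $(k\in\bIi)$ with \emph{opposite} exponents. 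Already in split type $A_2$ (where $\gamma=2\alpha_i$, $\tfX_{i,m}\propto E_i^{2m}$, $\beta=\alpha_i+\alpha_j$) the relation you need is $E_iP=q^{-1}PE_i$ for $P\propto E_jE_i-q^{-1}E_iE_j$, whereas the Levendorskii--Soibelman relation for the PBW vector $E_\beta=E_iE_j-q^{-1}E_jE_i$ reads $E_iE_\beta=q\,E_\beta E_i$; both follow from the Serre relation, but your convex-order argument, applied to the actual PBW vector, lands on the wrong sign. Separately, "$\beta+\delta$ is never a root" does not by itself kill the LS correction terms, which are sums of \emph{monomials} $E_{\gamma_1}\cdots E_{\gamma_k}$ with $\sum_l\gamma_l=\beta+\delta$, not single root vectors; and the general implication "$\beta$ is $\bIi$-dominant $\Rightarrow\beta+\delta\notin\cR$" is false (e.g.\ $\beta=e_1$ in $B_3$ with $J=\{e_2-e_3,\,e_3\}$), although it does hold for $\beta=\bwi\alpha_{\tau j}$ since $\bwi(\beta+\delta)=\alpha_{\tau j}+\bwi\delta$ has coefficients of mixed sign. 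You flag this step as "the main obstacle," and it is: the proof is incomplete precisely at the crux.

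The approach is salvageable, but the bookkeeping must be done with the correct flavor of root vector: apply $\sigma$ to the LS relation for the reduced word $\bwi s_{\tau j}$ (using $\sigma\circ\tT''_{w,+1}=\tT'_{w,-1}\circ\sigma$ and $\sigma(E_k)=E_k$) to get $xP=q^{-(\beta,\mu)}Px$ for $x$ in the weight-$\mu$ part of the subalgebra generated by $E_k$ $(k\in\bIi)$, the absence of correction terms being forced by a \emph{support} argument: any correction would be a monomial in root vectors of $\cR^+_{\bullet,i}$, hence of weight in $\N\bIi$, while its weight must have $\alpha_{\tau j}$-coefficient $1$. The paper sidesteps all of this by a different maneuver: using $\widehat{\tau}_{\bullet,i}(\tfX_i)=\tfX_i$ and Lemma~\ref{lem:braid1} it shows $\tT_{\bwi}(\tfX_{i,m})\,(K'_{\gamma})^{m}\in\tU^-_{-m\gamma}$, which lies in the subalgebra generated by $F_i,F_{\tau i},F_r$ $(r\in\bI)$ and therefore commutes with $E_{\tau j}$ on the nose; a short Cartan computation and an application of $\tT_{\bs_i}^{-1}\tT_{\bw}$ then transport the identity back to \eqref{eq:TEjUp}. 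Either route works, but as written your proposal leaves the decisive step unproved and mis-specified.
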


\begin{proof}
Write $\tfX_i=\sum_{m\geq 0} \tfX_{i,m},$ where $ \tfX_{i,m}\in \tU^+_{m (\alpha_i+ w_\bullet \alpha_{\tau i})}$. By \cite[Proposition~4.5]{BW18b}, we have $\tfX_{i,m}\in  \tU^+[\bs_i]$, for $m\geq 0$. Since the simple reflection $s_j$ does not appear in any reduced expression of $\bs_i$,  $F_j$ commutes with any element in $\tU^+[\bs_i]$; in particular, $F_j$ commutes with $\tfX_i.$ This proves the identity \eqref{eq:FjUp}.

By Proposition~\ref{prop:inv}, $\tfX$ is fixed by $\widehat{\tau}_{\bullet,i}$ (which is equal to either $\text{Id}$ or $\widehat{\tau}$). Hence, by Lemma~ \ref{lem:braid1} and the fact that $\tfX_{i,m}\in \tU^+_{m (\alpha_i+ w_\bullet \alpha_{\tau i})}$, we have
\begin{align*}
\tT_{w_{\bullet,i}}(\tfX_{i,m})
=\tT_{w_{\bullet,i}}\widehat{\tau}_{\bullet,i}(\tfX_{i,m}) \in \tU^-_{-m (\alpha_i +\bw\alpha_{\tau i})} K_{\alpha_i+w_\bullet\alpha_{\tau i}}'^{-m},
\end{align*}
or equivalently,
\begin{align}
 \label{eq:TUpK}
\mathcal{F}:= \tT_{\bwi}(\tfX_{i,m}) K_{\alpha_i+w_\bullet\alpha_{\tau i}}'^{m}
\in \tU^-_{-m (\alpha_i +\bw\alpha_{\tau i})}.
\end{align}
Since $\bw\alpha_{\tau i}= \alpha_{\tau i}+\sum_{r\in \bI} a_r \alpha_r$ for some $a_r\in \N$, the eigenspace $\tU^-_{-m (\alpha_i +\bw\alpha_{\tau i})}$ lies in the subalgebra of $\tU^-$ generated by $F_i,F_{\tau i},F_r,r\in \bI$; clearly, $E_{\tau j}$ commutes with any of these elements and hence we have by \eqref{eq:TUpK} that
$[E_{\tau j}, \mathcal{F}]=0$. For each $m$, we compute
\begin{align*}
& \big[E_{\tau j} \tT_{\bw}(K_j'), \tT_{w_{\bullet,i}}(\tfX_{i,m}) \big]
\\
&= \big[E_{\tau j} \tT_{\bw}(K_j'), \mathcal{F} K_{\alpha_i+w_\bullet(\alpha_{\tau i})}'^{-m} \big]
\\
&= q^{m(\bw\alpha_{ j},\alpha_i+w_\bullet\alpha_{\tau i} )} E_{\tau j}\mathcal{F}\tT_{\bw}(K_j')K_{\alpha_i+w_\bullet\alpha_i}'^{-m}
  -q^{m(\alpha_{\tau j},\alpha_i+w_\bullet\alpha_{\tau i} )}\mathcal{F} E_{\tau j}\tT_{\bw}(K_j')K_{\alpha_i+w_\bullet\alpha_i}'^{-m}
  \\
&= q^{m(\alpha_{\tau j},\alpha_i+w_\bullet\alpha_{\tau i} )}[E_{\tau j},  \mathcal{F}] \cdot \tT_{\bw}(K_j')K_{\alpha_i+w_\bullet\alpha_i}'^{-m}= 0.
\end{align*} 
Hence we obtain an identity
\begin{align}
  \label{eq:rktwo2}
E_{\tau j}  \tT_{\bw}(K_j')  \cdot \tT_{w_{\bullet,i }}(\tfX_i)= \tT_{w_{ \bullet,i}}(\tfX_i) \cdot E_{\tau j}  \tT_{\bw}(K_j') .
\end{align}
The desired identity \eqref{eq:TEjUp} now follows by applying $\tT_{\bs_i}^{-1} \tT_{\bw}$ to \eqref{eq:rktwo2}. Indeed, we have $\tT_{\bs_i}^{-1} \tT_{\bw} \tT_{w_{\bullet,i }} (\tfX_i) =\tT_{\bw}^2 (\tfX_i) =\tfX_i$
since $\tT_{w_{\bullet,i }} =\tT_{\bs_i}\tT_{\bw} =\tT_{\bw}\tT_{\bs_i}$ by \eqref{def:bsi} and $\tT_{\bw} (\tfX_i) =\tfX_i$ by Proposition~\ref{prop:newb-1}. Also we clearly  have $\tT_{\bw}^2(K_j') =K_j'.$
\end{proof}

\subsection{Motivating examples: types BI, DI, DIII$_4$}
  \label{subsec:example}

We provide examples in this subsection to motivate how we obtain the general rank two formulas $\tTa{i}(B_j)$ in Theorem~\ref{thm:rktwo1} below. The three examples are of types BI$_n$ ($n\ge 3$), DI$_n$ ($n\ge 5$), DIII$_4$, and they will be treated uniformly.

The Satake diagrams of these types are listed below. For each type, we define elements $t_j\in W_\bullet$ for $j\in \bI$ following each diagram; these notations $t_j$ allow a uniform proof of Lemma~\ref{lem:rktwo2-1} thanks to the properties \eqref{rsts} below.
 \begin{center}
\begin{tikzpicture}[baseline=0, scale=1.5]
		\node at (0.55,0) {$\circ$};
		\node at (1.05,0) {$\circ$};
		\node at (1.5,0) {$\bullet$};
		\draw[-] (0.6,0)  to (1.0,0);
		\draw[-] (1.1,0)  to (1.4,0);
		\draw[-] (1.4,0) to (1.9, 0);
		\draw[dashed] (1.9,0) to (2.7,0);
		\draw[-] (2.7,0) to (2.9, 0);
		\node at (3,0) {$\bullet$};
		\draw[-implies, double equal sign distance]  (3.1, 0) to (3.7, 0);
		\node at (3.8,0) {$\bullet$};
		\node at (0.5,-.2) {\small 1};
		\node at (1,-.2) {\small 2};
		\node at (1.5,-.2) {\small 3};
		\node at (3, -.2) {\small n-1};
		\node at (3.8,-.2) {\small n};
	\end{tikzpicture}\\
BI$_n,n\geq 3$\\
$t_a =s_a \cdots s_n \cdots s_a,\qquad (3\leq a \leq n).$
\end{center}
\vspace{1em}
\begin{center}
\begin{tikzpicture}[baseline=0, scale=1.5]
		\node at (0.55,0) {$\circ$};
		\node at (1.05,0) {$\circ$};
		\node at (1.5,0) {$\bullet$};
		\draw[-] (0.6,0)  to (1.0,0);
		\draw[-] (1.1,0)  to (1.4,0);
		\draw[-] (1.4,0) to (1.9, 0);
		\draw[dashed] (1.9,0) to (2.7,0);
		\draw[-] (2.7,0) to (2.9, 0);
		\node at (3,0) {$\bullet$};
		\node at (3.8,0.5) {$\bullet$};
		\node at (3.8,-0.5) {$\bullet$};
        \draw (3,0) to (3.8,0.5);
        \draw (3,0) to (3.8,-0.5);
		\node at (0.5,-.2) {\small 1};
		\node at (1,-.2) {\small 2};
		\node at (1.5,-.2) {\small 3};
		\node at (3, -.2) {\small n-2};
		\node at (3.8,0.3) {\small n-1};
		\node at (3.8,-0.7) {\small n};
	\end{tikzpicture}\\
DI$_n,n\geq 5$\\
$t_{a}=s_{a\cdots n-2}s_{n-1}s_n s_{n-2\cdots a}, \qquad (3\leq a \leq n-2),$\qquad
$t_{n-1}=t_{n}=s_{n-1} s_n.$
\end{center}

\vspace{1em}
\begin{center}
\begin{tikzpicture}[baseline=0, scale=1.5]
		\node at (1.05,0) {$\circ$};
		\node at (1.5,0) {$\circ$};
		\draw[-] (1.1,0)  to (1.45,0);
		\node at (2.3,0.5) {$\bullet$};
		\node at (2.3,-0.5) {$\bullet$};
        \draw (1.55,0) to (2.3,0.5);
        \draw (1.55,0) to (2.3,-0.5);
		\node at (1,-.2) {\small 1};
		\node at (1.5,-.2) {\small 2};
		\node at (2.3,0.3) {\small 3};
		\node at (2.3,-0.7) {\small 4};
	\end{tikzpicture}\\
DIII$_4$\\
$t_3=t_4=s_3 s_4.$
\end{center}
Note that, for each of the three types, we always have
\begin{align}  \label{rsts}
\bs_2=s_2 t_3 s_2,
\qquad \ell(\bs_2) =  \ell( t_3) + 2,
\qquad
B_1=F_1+E_1 K_1'.
\end{align}
 %Without loss of generality, we assume that $q_2=q$.
Recall the notation $B_i^\sigma$ from \eqref{eq:Bsig}.

\begin{lemma}
  \label{lem:rktwo2-1}
We have
\begin{align}
  \label{eq:rktwo6-1}
\tT_{\bs_2}^{-1}(F_1)&=\big[ \tT_{\bw} ( B_2^\sigma), [B_2^\sigma,F_1]_{q_2}\big]_{q_2}-{q_2} F_1 \tT_{\bw}(K_2) K_2'.
\end{align}
\end{lemma}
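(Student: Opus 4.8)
The plan is to exploit the reduced factorization $\bs_2 = s_2 t_3 s_2$ with $\ell(\bs_2)=\ell(t_3)+2$ from \eqref{rsts}, which yields $\tT_{\bs_2}^{-1}=\tT_2^{-1}\tT_{t_3}^{-1}\tT_2^{-1}$ on $\tU$. First I would compute the innermost term: by Proposition~\ref{prop:braid0} with $-c_{21}=1$ one gets $\tT_2^{-1}(F_1)=[F_2,F_1]_{q_2}$. The opening observation — which lets me avoid ever expanding $B_2^{\sigma}$ — is that this already equals $[B_2^{\sigma},F_1]_{q_2}$. Indeed, writing $B_2^{\sigma}=F_2+K_2\tT_{\bw}^{-1}(E_2)$ via \eqref{eq:Bsig}, the element $\tT_{\bw}^{-1}(E_2)$ is a polynomial in the $E_k$ with $k\neq 1$, hence commutes with $F_1$, while $K_2F_1=q_2F_1K_2$ (as $c_{21}=-1$); therefore the extra summand $[K_2\tT_{\bw}^{-1}(E_2),F_1]_{q_2}$ vanishes.

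Next I would apply $\tT_{t_3}^{-1}$ and then the outer $\tT_2^{-1}$. Because node $1$ has no black neighbour (equivalently $B_1=F_1+E_1K_1'$ in \eqref{rsts}, so both $\bw$ and $t_3$ fix $\alpha_1$), one has $\tT_{t_3}^{-1}(F_1)=F_1$; since these maps are algebra homomorphisms, folding the two $q$-commutators gives $\tT_{\bs_2}^{-1}(F_1)=\big[\tT_2^{-1}\tT_{t_3}^{-1}(F_2),\,[B_2^{\sigma},F_1]_{q_2}\big]_{q_2}$. The heart of the argument is then the identification $\tT_2^{-1}\tT_{t_3}^{-1}(F_2)=\tT_{\bw}(F_2)$, which I would split into two stages. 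Stage (i): $\tT_{\bw}(F_2)=\tT_{t_3}(F_2)$, using the length-additive parabolic factorization $\bw=t_3\,\bw^{(4)}$, where $\bw^{(4)}$ is the longest element of the black nodes not adjacent to $2$, so that $\tT_{\bw^{(4)}}(F_2)=F_2$. Stage (ii): $\tT_2^{-1}\tT_{t_3}^{-1}(F_2)=\tT_{t_3}(F_2)$, which is equivalent to $\tT_{t_3s_2t_3}(F_2)=F_2$. The latter holds because $t_3s_2t_3(\alpha_2)=\alpha_2$ — a consequence of $t_3$ being an involution together with $s_2(\bw\alpha_2)=\bw\alpha_2$, which itself follows from $w_{\bullet,2}(\alpha_2)=-\alpha_2$ and $\bs_2=s_2t_3s_2$ — so the standard braid-group fact ``$w\alpha_i$ simple $\Rightarrow \tT_w(F_i)=F_{wi}$'' applies. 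All of this is uniform across types BI, DI, DIII$_4$ thanks to \eqref{rsts} and the explicit choices of $t_3$.

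Finally, I would reconcile with the stated right-hand side. Writing $\tT_{\bw}(B_2^{\sigma})=\tT_{\bw}(F_2)+\tT_{\bw}(K_2)E_2$, it remains to check that the $\tU^0\tU^+$-part reproduces the correction term, i.e.
\[
\big[\tT_{\bw}(K_2)E_2,\,[F_2,F_1]_{q_2}\big]_{q_2}=q_2\,F_1\,\tT_{\bw}(K_2)K_2'.
\]
This is a short self-contained computation: using $[E_2,F_2]=(K_2-K_2')/(q_2-q_2^{-1})$ and $[E_2,F_1]=0$ one finds $E_2[F_2,F_1]_{q_2}=[F_2,F_1]_{q_2}E_2+F_1K_2'$, after which the two $E_2$-terms cancel precisely because the weight pairings $(\bw\alpha_2,\alpha_2)=0$ and $c_{21}=-1$ give the correct $q$-powers, leaving exactly the claimed Cartan term. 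Combining this with the identification above turns $[\tT_{\bw}(F_2),[B_2^{\sigma},F_1]_{q_2}]_{q_2}$ into the right-hand side of the lemma. I expect the main obstacle to be the Weyl-group combinatorics in stages (i)--(ii): verifying the factorization $\bw=t_3\bw^{(4)}$, the reducedness of $t_3s_2t_3$, and $w_{\bullet,2}(\alpha_2)=-\alpha_2$ uniformly for the three diagrams; the algebra, by contrast, becomes routine once the $B_2^{\sigma}$-trick and the commutator identity are in place.
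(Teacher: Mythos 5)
Your proof is correct and follows essentially the same route as the paper's: reduce $[B_2^\sigma,F_1]_{q_2}$ to $[F_2,F_1]_{q_2}$ via Lemma~\ref{lem:app1}, use $\bs_2=s_2t_3s_2$ to get $\tT_{\bs_2}^{-1}(F_1)=\big[\tT_2^{-1}\tT_{t_3}^{-1}(F_2),[F_2,F_1]_{q_2}\big]_{q_2}$, identify $\tT_2^{-1}\tT_{t_3}^{-1}(F_2)=\tT_{t_3}(F_2)=\tT_{\bw}(F_2)$, and absorb the Cartan correction term exactly as in \eqref{eq:rktwo7}. The paper leaves your stages (i)--(ii) as ``a direct computation'' while you spell them out; the only point to tighten is that invoking the standard fact $\tT_w(F_i)=F_{wi}$ for $w=t_3s_2t_3$ presupposes $\ell(t_3s_2t_3)=2\ell(t_3)+1$ so that $\tT_{t_3}\tT_2\tT_{t_3}$ is indeed $\tT_{t_3s_2t_3}$ computed from a reduced word.
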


\begin{proof}
%We prove \eqref{eq:rktwo6-1}. The proof for \eqref{eq:rktwo6-2} is similar and omitted.
By Lemma \ref{lem:app1}, $[B_2^\sigma,F_1]_{q_2} =[F_2,F_1]_{q_2}$, and RHS \eqref{eq:rktwo6-1} is simplified as follows:
\begin{align}\notag
\big[ \tT_{\bw} ( B_2^\sigma), [B_2^\sigma,F_1]_{q_2}\big]_{q_2}&= \big[ \tT_{\bw} ( B_2^\sigma), [F_2 ,F_1]_{q_2}\big]_{q_2}\\\notag
&=\big[ \tT_{\bw} (F_2) , [F_2,F_1]_{q_2}\big]_{q_2} + \big[ E_2 \tT_{\bw}(K_2), [F_2 ,F_1]_{q_2}\big]_{q_2}\\\label{eq:rktwo7}
&=\big[ \tT_{\bw} (F_2) , [F_2,F_1]_{q_2}\big]_{q_2} +{q_2} F_1 \tT_{\bw}(K_2) K_2'.
\end{align}
On the other hand, by a direct computation using \eqref{rsts} and Proposition~\ref{prop:braid0}, we have
\begin{align}\notag
\tT_{\bs_2}^{-1}(F_1)
&=  \tT_2^{-1}\tT_{t_3}^{-1}([F_2,F_1]_{q_2} )
\\
&= \big[\tT_2^{-1}\tT_{t_3}^{-1}(F_2) , [F_2,F_1]_{q_2}\big]_{q_2}
\notag \\
&= \big[ \tT_{t_3} (F_2) , [F_2,F_1]_{q_2}\big]_{q_2}.
  \label{eq:rktwo8}
\end{align}
The desired formula \eqref{eq:rktwo6-1} follows from \eqref{eq:rktwo7}--\eqref{eq:rktwo8} by noting that $\tT_{\bw} ( F_2 )=\tT_{t_3}(F_2)$.
\end{proof}

Note that $q_1=q_2$ in all three types.

\begin{lemma}\label{lem:rktwo2-2}
We have
\begin{align}
\tT_{\bs_2}^{-1} ( E_1 K_1' )&=\big[ \tT_{\bw} ( B_2), [B_2, E_1 K_1']_{q_2}\big]_{q_2}-{q_2} E_1 K_1' \tT_{\bw}(K_2) K_2'.
\label{eq:rktwo6-2}
\end{align}
\end{lemma}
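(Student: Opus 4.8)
The identity \eqref{eq:rktwo6-2} is the ``$E_1K_1'$-half'' of $\tT_{\bs_2}^{-1}(B_1)$, complementary to the ``$F_1$-half'' treated in Lemma~\ref{lem:rktwo2-1}, so my plan is to prove it by the same explicit braid-group computation in $\tU$, based on \eqref{rsts} and the formulas in Proposition~\ref{prop:braid0}. Before committing to this, I would check whether \eqref{eq:rktwo6-2} can be deduced from \eqref{eq:rktwo6-1} by an $(anti\text{-})$automorphism of $\tU$. This shortcut fails: since $\bs_2$ is an involution one has $\sigma\circ\tT_{\bs_2}^{-1}=\tT_{\bs_2}\circ\sigma$, so the anti-involution $\sigma$ converts $\tT_{\bs_2}^{-1}(F_1)$ into $\tT_{\bs_2}(F_1)$ (the forward operator) rather than $\tT_{\bs_2}^{-1}(E_1K_1')$; moreover, a weight count rules out any weight-graded automorphism sending $F_1\mapsto E_1K_1'$ while fixing $F_2$ and sending $B_2^\sigma\mapsto B_2$, because $\bw\alpha_2\neq\alpha_2$. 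Hence I would carry out the direct computation.

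First I would simplify the right-hand side of \eqref{eq:rktwo6-2}. The observation replacing the use of Lemma~\ref{lem:app1} in \eqref{eq:rktwo6-1} is that $[F_2,E_1K_1']_{q_2}=0$: using $[E_1,F_2]=0$ and the relations \eqref{eq:EK}--\eqref{eq:K2} one finds $[F_2,E_1K_1']_{q_2}=(q_1-q_2)\,E_1K_1'F_2$, which vanishes since $q_1=q_2$ in all three types. Writing $B_2=F_2+\tT_{\bw}(E_2)K_2'$ (recall $\tTD_{\bw}(E_2)=\tT_{\bw}(E_2)$ by Remark~\ref{rem:sameT}), this gives $[B_2,E_1K_1']_{q_2}=[\tT_{\bw}(E_2)K_2',E_1K_1']_{q_2}$. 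I would then expand $\tT_{\bw}(B_2)=\tT_{\bw}(F_2)+\tTD_{\bw}^2(E_2)\tT_{\bw}(K_2')$ inside the outer bracket; by a cross-term computation parallel to the one in the proof of Lemma~\ref{lem:rktwo2-1}, I expect the contribution of the second summand to cancel the correction term $-q_2E_1K_1'\tT_{\bw}(K_2)K_2'$, reducing the right-hand side to a single nested $q_2$-commutator with outer factor $\tT_{\bw}(F_2)=\tT_{t_3}(F_2)$.

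Next I would compute the left-hand side. Using $\bs_2=s_2t_3s_2$ from \eqref{rsts} and $\tT_{\bs_2}^{-1}=\tT_2^{-1}\tT_{t_3}^{-1}\tT_2^{-1}$, Proposition~\ref{prop:braid0} gives $\tT_2^{-1}(E_1)=\vs_{2,\dm}^{-1/2}(E_1E_2-q_2^{-1}E_2E_1)$ and $\tT_2^{-1}(K_1')=\vs_{2,\dm}^{-1/2}K_1'K_2'$, hence $\tT_2^{-1}(E_1K_1')=\vs_{2,\dm}^{-1}[E_1,E_2]_{q_2^{-1}}K_1'K_2'$. I would then apply $\tT_{t_3}^{-1}$ and the outer $\tT_2^{-1}$, using that $t_3\in W_\bullet$ has support away from node $1$ (so it commutes with $E_1K_1'$ and fixes $E_1,K_1'$), and reorganize the outcome into the same normal form obtained on the right-hand side, thereby establishing \eqref{eq:rktwo6-2}.

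The main obstacle is the scaling bookkeeping that is absent in Lemma~\ref{lem:rktwo2-1}. The $F$-side formulas in Proposition~\ref{prop:braid0} carry no scaling, whereas the $E$- and $K$-side formulas carry factors $\vs_{2,\dm}^{\pm1/2}$; in addition $\tT_{\bw}$ applied to the $E$-part of $B_2$ produces $\tTD_{\bw}^2(E_2)$, which does not collapse (unlike $\tTD_{\bw}^{-1}(E_2)$ in \eqref{eq:rktwo6-1}, where $\tT_{\bw}\tTD_{\bw}^{-1}=\mathrm{id}$). I therefore anticipate that the crux of the argument is to verify, via the definition \eqref{def:vsi} of $\vs_{\dm}$ together with $q_1=q_2$, that all the $\vs_{2,\dm}$ and $q_2$-power factors conspire to yield the manifestly scaling-free expression \eqref{eq:rktwo6-2}.
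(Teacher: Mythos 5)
Your plan diverges from the paper's proof at the very first step, and the divergence matters. The paper \emph{does} deduce \eqref{eq:rktwo6-2} from \eqref{eq:rktwo6-1} by transporting along an automorphism, namely $\cL:=\tT_{\bw}\tT_{w_0}$: by Lemma~\ref{lem:braid1} one has $\cL(F_1)=-K_1^{-1}E_1=-q_2^{-2}E_1K_1'\tk_1^{-1}$, by \eqref{eq:TBi} (rewritten as \eqref{eq:rktwo3-2}) one has $\cL(B_2^\sigma)=-B_2\,\tT_{\bs_2}(\ck_2)$, and $\cL$ commutes with both $\tT_{\bw}$ and $\tT_{\bs_2}$. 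Your objection --- that no weight-graded automorphism sends $F_1\mapsto E_1K_1'$ and $B_2^\sigma\mapsto B_2$ on the nose --- is literally true but beside the point: $\cL$ does this \emph{up to Cartan factors and scalars}, and those extra factors are then pushed to one side by weight considerations and absorbed into $\tT_{\bs_2}(\tk_1^{-1})$. So the shortcut you discarded is exactly the intended argument, and it also disposes of all the $\vs_{2,\dm}$-bookkeeping you flag as the ``main obstacle'' at the end.

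More seriously, the direct computation you propose in its place contains an error that a weight count exposes. Writing $\tT_{\bw}(B_2)=\tT_{\bw}(F_2)+\tT_{\bw}^2(E_2)\tT_{\bw}(K_2')$, the first summand has weight $-\bw\alpha_2$ and the second has weight $\bw^2\alpha_2=\alpha_2$; since $[B_2,E_1K_1']_{q_2}=[\tT_{\bw}(E_2)K_2',E_1K_1']_{q_2}$ has weight $\alpha_1+\bw\alpha_2$, the cross-term coming from $\tT_{\bw}(F_2)$ lies in weight $\alpha_1$ --- the same weight as the correction term $-q_2E_1K_1'\tT_{\bw}(K_2)K_2'$ --- while the contribution of $\tT_{\bw}^2(E_2)\tT_{\bw}(K_2')$ lies in weight $\alpha_1+\alpha_2+\bw\alpha_2=\bs_2\alpha_1$, which is the weight of the target $\tT_{\bs_2}^{-1}(E_1K_1')$. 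Hence it is the $\tT_{\bw}(F_2)$-term that must cancel the correction, and the $E$-summand that produces the main term; your assignment is exactly reversed, and the claimed reduction of the right-hand side to the single bracket $\big[\tT_{\bw}(F_2),[B_2,E_1K_1']_{q_2}\big]_{q_2}$ cannot hold, as the two sides would live in different weight spaces. A corrected direct computation would have to match $\big[\tT_{\bw}^2(E_2)\tT_{\bw}(K_2'),[\tT_{\bw}(E_2)K_2',E_1K_1']_{q_2}\big]_{q_2}$ against $\tT_2^{-1}\tT_{t_3}^{-1}\tT_2^{-1}(E_1K_1')$, which is considerably harder than the $F$-side computation --- precisely the reason the paper transports \eqref{eq:rktwo6-1} by $\cL$ instead.
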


\begin{proof}
We shall establish the identity \eqref{eq:rktwo6-2} by applying the operator $\cL:=\tT_{\bw}\tT_{w_0}$ to \eqref{eq:rktwo6-1} as follows.

Recall $\ck_i \in \tU^{\imath 0}$  from \eqref{def:Ki}.
By \eqref{eq:TBi} 
%the formula \eqref{eq:rkone2} in Theorem~\ref{thm:rkone1}
and noting $(\alpha_2, w_\bullet \alpha_{\tau 2}) =0$ in each of the three types, we have $\tT_{\bs_2}^{-1}(B_2^\imath) = -\tT_{\bw}^2(B_2^{\sigma}) \ck_2^{-1},$ or equivalently,
\begin{align}
  \label{eq:rktwo3}
\tT_{\bs_2}^{-1}(B_2^\imath)  \ck_2 =-\tT_{\bw}^2(B_2^{\sigma}).
\end{align}
By Lemma~\ref{lem:braid1}, we have $\tT_{w_0}(B_2^\sigma)=\tT_{w_{\bullet,2}}(B_2^\sigma)$. Hence, applying $\tT_{\bs_2}$ to both sides of \eqref{eq:rktwo3} we obtain
\begin{align}\label{eq:rktwo3-2}
B_2 \tT_{\bs_2}(\ck_2^\imath)=- \tT_{\bw}\tT_{w_{\bullet,2}}(B_2^\sigma) =- \tT_{\bw}\tT_{w_0}(B_2^\sigma).
\end{align}
Moreover, by Lemma~\ref{lem:braid1}, we have $\cL(F_1)=-K_1^{-1} E_1=-{q_2}^{-2}E_1K_1' \tk_{1}^{-1}$. Note also that $\cL$ commutes with both $\tT_{\bw}$ and $\tT_{\bs_2}$. Hence, by applying $\cL$ to \eqref{eq:rktwo6-1} and then using \eqref{eq:rktwo3-2}, we have
\begin{align}
 \notag
 \tT_{\bs_2}^{-1}(E_1 K_1') \tT_{\bs_2}(\tk_1^{-1})
 &=\big[ \tT_{\bw} ( B_2)\tT_{w_{\bullet,2}}(\ck_2), [B_2 \tT_{\bs_2}(\ck_2),E_1K_1'\tk_{1}^{-1} ]_{q_2}\big]_{q_2}\\
 &-{q_2} E_1 K_1' \tk_1^{-1} \tT_{\bw}\cL(\ck_2).
 \label{eq:rktwo4-1}
\end{align}
For weight reason, \eqref{eq:rktwo4-1} is simplified as
\begin{align}\notag
 \tT_{\bs_2}^{-1}(E_1 K_1') \tT_{\bs_2}(\tk_1^{-1})
 &={q_2}^2 \big[ \tT_{\bw} ( B_2), [B_2, E_1K_1']_{q_2}\big]_{q_2}\tT_{w_{\bullet,2}}(\ck_2)\tT_{\bs_2}(\ck_2)\tk_{1}^{-1}
  \\
 &-{q_2} E_1 K_1' \tk_1^{-1} \tT_{\bs_2}(\tk_2) K_{\bw\alpha_2 -\alpha_2}.
 \label{eq:rktwo4-2}
\end{align}
By definition \eqref{def:Ki}, we have $\ck_2=\tk_2 K'_{\bw\alpha_2 -\alpha_2}$; in addition, by \eqref{eq:newb8}, $K'_{\bw\alpha_2 -\alpha_2}$ is fixed by $\tT_{\bs_2}$. We also have $\tT_{w_{\bullet,2}}(\ck_2)={q_2}^{-2} \ck_2^{-1}$. Hence, \eqref{eq:rktwo4-2} is further simplified as
\begin{align}\notag
 \tT_{\bs_2}^{-1}(E_1 K_1') \tT_{\bs_2}(\tk_1^{-1})
 &=\big[ \tT_{\bw} ( B_2), [B_2, E_1K_1' ]_{q_2}\big]_{q_2} \tk_2^{-1} \tT_{\bs_2}(\tk_2)\tk_{1}^{-1}\\
 &-{q_2} E_1 K_1' K_{\bw(\alpha_2)}K_2'\tk_1^{-1} \tT_{\bs_2}(\tk_2) \tk_2^{-1}.\label{eq:rktwo4-3}
\end{align}
Finally, by Lemma~\ref{lem:newb1}, we have $\tT_{\bs_2}(\tk_1^{-1})=\tk_1^{-1} \tT_{\bs_2}(\tk_2) \tk_2^{-1}$, and then the identity \eqref{eq:rktwo4-3} can be transformed into an equivalent form \eqref{eq:rktwo6-2}.
\end{proof}

\begin{proposition}
The following element
\begin{align}
   \label{eq:rktwo4-4}
\tTa{2}(B_1) :=\big[ \tT_{\bw} ( B_2), [B_2 ,B_1 ]_{q_2}\big]_{q_2} - {q_2}  B_1 \tT_{\bw}( \ck_2) \in\tUi
\end{align}
satisfies the intertwining relation $\tTa{2}(B_1) \tfX_2
=\tfX_2 \tT_{\bs_2}^{-1}(B_1^\imath)$ (i.e., \eqref{eq:newb0}, for $i=2,x=B_1$).
\end{proposition}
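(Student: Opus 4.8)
The plan is to prove the intertwining relation in the equivalent form $\ad_{\tfX_2}\big(\tT_{\bs_2}^{-1}(B_1^\imath)\big)=\tTa{2}(B_1)$, where $\ad_{\tfX_2}$ is conjugation by the invertible element $\tfX_2$; multiplying on the right by $\tfX_2$ then recovers $\tTa{2}(B_1)\tfX_2=\tfX_2\tT_{\bs_2}^{-1}(B_1^\imath)$. Since $B_1=F_1+E_1K_1'$ by \eqref{rsts} and $\tT_{\bs_2}^{-1}$ is an algebra homomorphism, I would split the computation into the $F_1$-contribution and the $E_1K_1'$-contribution, governed by Lemmas~\ref{lem:rktwo2-1} and \ref{lem:rktwo2-2} respectively. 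Note that in each of the three diagrams node $1$ is adjacent to no black node, so $\tT_{\bw}(E_1)=E_1$, which is exactly why the $B_1$ of \eqref{rsts} has this simple form.

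For the $E_1K_1'$-part, the key observation is that $\tT_{\bs_2}^{-1}(E_1K_1')$ already commutes with $\tfX_2$: taking $j=1$ in \eqref{eq:TEjUp} of Lemma~\ref{lem:rktwo1} and using $\tT_{\bw}(E_1)=E_1$ gives $\tT_{\bs_2}^{-1}(E_1K_1')\,\tfX_2=\tfX_2\,\tT_{\bs_2}^{-1}(E_1K_1')$. Hence $\ad_{\tfX_2}\big(\tT_{\bs_2}^{-1}(E_1K_1')\big)=\tT_{\bs_2}^{-1}(E_1K_1')$, and substituting Lemma~\ref{lem:rktwo2-2} produces $\big[\tT_{\bw}(B_2),[B_2,E_1K_1']_{q_2}\big]_{q_2}-q_2\,E_1K_1'\,\tT_{\bw}(K_2)K_2'$. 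For the $F_1$-part I would instead apply the algebra automorphism $\ad_{\tfX_2}$ directly to the right-hand side of Lemma~\ref{lem:rktwo2-1}, using that it preserves $q$-commutators. The three inputs needed are $\ad_{\tfX_2}(B_2^\sigma)=B_2$ (Theorem~\ref{thm:fX1} for the rank-one subdiagram $\I_{\bullet,2}$), $\ad_{\tfX_2}\big(\tT_{\bw}(B_2^\sigma)\big)=\tT_{\bw}(B_2)$ (apply $\tT_{\bw}$ to the previous identity and use $\tT_{\bw}(\tfX_2)=\tfX_2$ from Proposition~\ref{prop:newb-1}), and $\ad_{\tfX_2}(F_1)=F_1$ (from \eqref{eq:FjUp}); the Cartan factor $\tT_{\bw}(K_2)K_2'$ is likewise fixed. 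This converts every $\sigma$-twisted generator into its untwisted form, yielding $\big[\tT_{\bw}(B_2),[B_2,F_1]_{q_2}\big]_{q_2}-q_2\,F_1\,\tT_{\bw}(K_2)K_2'$.

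Adding the two contributions and collecting $F_1+E_1K_1'=B_1$ produces $\big[\tT_{\bw}(B_2),[B_2,B_1]_{q_2}\big]_{q_2}-q_2\,B_1\,\tT_{\bw}(K_2)K_2'$. The last bookkeeping step is the identity $\tT_{\bw}(\ck_2)=\tT_{\bw}(K_2)K_2'$, which I would derive from $\ck_2=K_2K'_{\bw\alpha_2}$ together with $\tT_{\bw}(K_\lambda)=K_{\bw\lambda}$, $\tT_{\bw}(K'_\lambda)=K'_{\bw\lambda}$ (the $\bvs_\diamond$-scaling is trivial on black braid operators by Remark~\ref{rem:sameT}) and $\bw^2=\Id$; this matches the result with $\tTa{2}(B_1)$. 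Membership $\tTa{2}(B_1)\in\tUi$ is then immediate, since $B_1,B_2\in\tUi$, $\tT_{\bw}(\ck_2)\in\tU^{\imath 0}$, and $\tT_{\bw}(B_2)\in\tUi$ because $\tT_{\bw}$ is a composite of the operators $\tT''_{j,+1}$ ($j\in\bI$), each preserving $\tUi$ by Proposition~\ref{prop:Tjblack}. I expect the only genuine subtlety to be the Cartan bookkeeping: verifying both that $\tT_{\bw}(K_2)K_2'=\tk_2K_\beta$ (with $\bw\alpha_2=\alpha_2+\beta$, $\beta\in\Z_{\ge0}\bI$) lies in $\tU^{\imath 0}\tbU$, so that it commutes with $\tfX_2$ via Theorem~\ref{thm:fX1}, and that it rewrites as $\tT_{\bw}(\ck_2)$. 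No structural difficulty should arise, the conceptual content having been isolated in Lemmas~\ref{lem:rktwo2-1}--\ref{lem:rktwo2-2} and Lemma~\ref{lem:rktwo1}.
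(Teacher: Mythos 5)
Your proposal is correct and follows essentially the same route as the paper: split $B_1=F_1+E_1K_1'$, commute $\tT_{\bs_2}^{-1}(E_1K_1')$ past $\tfX_2$ via \eqref{eq:TEjUp}, conjugate the formula of Lemma~\ref{lem:rktwo2-1} termwise (turning $B_2^\sigma$ into $B_2$ via Theorem~\ref{thm:fX1} and Proposition~\ref{prop:newb-1}), then substitute Lemma~\ref{lem:rktwo2-2} and recombine. The only difference is that you spell out the bookkeeping $\tT_{\bw}(\ck_2)=\tT_{\bw}(K_2)K_2'$ and the $\tUi$-membership, which the paper leaves implicit.
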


\begin{proof}
The intertwining relation follows by the following computation:
\begin{align*}
\tfX_2 & \tT_{\bs_2}^{-1}(B_1^\imath) \tfX_2^{-1}
\\
&=\tfX_2 \big(\tT_{\bs_2}^{-1}(F_1)+ \tT_{\bs_2}^{-1}(E_1 K_1')\big) \tfX_2^{-1}\\
&\overset{\eqref{eq:TEjUp}}{=}
\tfX_2\tT^{-1}_{\bs_2}(F_1)\tfX_2^{-1} + \tT^{-1}_{\bs_2}(E_1 K_1')
\\
& \overset{\eqref{eq:rktwo6-1}}{=}
\tfX_2\Big(\big[ \tT_{\bw} ( B_2^\sigma), [B_2^\sigma,F_1]_{q_2}\big]_{q_2}-{q_2} F_1 \tT_{\bw}(K_2) K_2' \Big)\tfX_2^{-1} + \tT_{\bs_2}^{-1}(E_1 K_1')\\
& \overset{(*)}{=} \big[ \tT_{\bw} ( B_2), [B_2, F_1]_{q_2}\big]_{q_2}-{q_2} F_1 \tT_{\bw}(K_2) K_2' + \tT_{\bs_2}^{-1}(E_1 K_1') \\
& \overset{\eqref{eq:rktwo6-2}}{=}
\big[ \tT_{\bw} ( B_2), [B_2, F_1]_{q_2}\big]_{q_2}-{q_2} F_1 \tT_{\bw}(K_2) K_2'
\\
&\qquad
+\big[ \tT_{\bw} ( B_2), [B_2, E_1 K_1']_{q_2}\big]_{q_2}-{q_2} E_1 K_1' \tT_{\bw}(K_2) K_2'\\
& =\big[ \tT_{\bw} ( B_2), [B_2, B_1]_{q_2}\big]_{q_2}-{q_2} B_1 \tT_{\bw}(K_2) K_2'
= \tTa{2}(B_1),
\end{align*}
where the equality (*) follows from Theorem~\ref{thm:fX1} and Lemma~\ref{lem:rktwo1}.
%
%Now the RHS of the above reformulation of $\tfX_2 \cdot \tT_{\bs_2}^{-1}(B_1^\imath)\cdot \tfX_2^{-1}$ is clearly the embedding of the element given in \eqref{eq:rktwo4-4}. Therefore, the element $\tTa{2}(B_1)$ satisfies the intertwining relation \eqref{eq:newb0} for $i=2,x=B_1$.
\end{proof}

%\newpage

%
%
\subsection{Formulation for $\tTa{i}(B_j)$}

%Let $\overline{\cR}$ denote the type of restricted root systems.
 \begin{table}[H]
\caption{Rank 2 Satake diagrams}% and the restricted root systems}
     \label{table:rktwodatum}
 \resizebox{5.4 in}{!}{%
\begin{tabular}{|c| c | c|| c|c|c|}
\hline
\begin{tikzpicture}[baseline=0]
\node at (0, -0.15) {SP};
\end{tikzpicture}
&
\begin{tikzpicture}[baseline=0]
\node at (0, -0.15) {Satake diagrams};
\end{tikzpicture}
&

\begin{tikzpicture}[baseline=0]
\node at (0, -0.15) {RS};
\end{tikzpicture}
&
\begin{tikzpicture}[baseline=0]
\node at (0, -0.15) {SP};
\end{tikzpicture}
&
\begin{tikzpicture}[baseline=0]
\node at (0, -0.15) {Satake diagrams};
\end{tikzpicture}
&

\begin{tikzpicture}[baseline=0]
\node at (0, -0.15) {RS};
\end{tikzpicture}
\\
\hline
AI$_2$
&
\begin{tikzpicture}[baseline=0, scale=1.2]
		\node at (-0.5,0.2) {$\circ$};
		\node at (0.5,0.2) {$\circ$};
       \draw[-]  (0.45, 0.2) to (-0.45, 0.2);
		\node at (-0.5, 0) {\small 1};
		\node at (0.5, 0) {\small 2};
%\node at (-1.5, -0.05) {AI$_2$};
	\end{tikzpicture}
&
A$_2$
&
CII$_n$
&
\begin{tikzpicture}[baseline=6,scale=1.1]
		\node  at (0,0.2) {$\bullet$};
		\node  at (0,0) {1};
		\draw (0.05, 0.2) to (0.45, 0.2);
		\node  at (0.5,0.2) {$\circ$};
		\node  at (0.5,0) {2};
		\draw (0.55, 0.2) to (0.95, 0.2);
		\node at (1,0.2) {$\bullet$};
		\node at (1,0) {3};
		\node at (1.5,0.2) {$\circ$};
		\node at (1.5,0) {4};
		\draw[-] (1.05,0.2)  to (1.45,0.2);
		\draw[-] (1.55,0.2) to (1.95, 0.2);
		\node at (2,0.2) {$\bullet$};
		\node at (2,0) {5};
		\draw (1.9, 0.2) to (2.1, 0.2);
		\draw[dashed] (2.1,0.2) to (2.7,0.2);
		\draw[-] (2.7,0.2) to (2.9, 0.2);
		\node at (3,0.2) {$\bullet$};
		%\node at (3,-0.2) {$n-1$};
		\draw[implies-, double equal sign distance]  (3.1,0.2) to (3.7, 0.2);
		\node at (3.8,0.2) {$\bullet$};
		\node at (3.8,0) {$n$};
%\node at (2.15, -0.55) {CII$_n,n\geq 5$};
	\end{tikzpicture}
&
BC$_2$
\\
\hline
CI$_2$
&
\begin{tikzpicture}[baseline=0, scale=1.2]
		\node at (-0.5,0.2) {$\circ$};
		\node at (0.5,0.2) {$\circ$};
		\draw[-implies, double equal sign distance]  (0.4, 0.2) to (-0.4, 0.2);
		\node at (-0.5,0) {\small 1};
		\node at (0.5,0) {\small 2};
%\node at (-1.5,-0.05) {CI$_2$};
	\end{tikzpicture}
&
C$_2$
&
CII$_4$
&
\begin{tikzpicture}[baseline=6,scale=1.5]
        \node at (-1, 0.2) {$\bullet$};
        \node at (-1,0) {1};
		\draw[-] (-0.95,0.2) to (-0.55, 0.2);
        \node at (-0.5,0.2) {$\circ$};
        \node at (-0.5,0) {2};
		\draw[-] (-.45,0.2) to (-0.05, 0.2);
		\node at (0,0.2) {$\bullet$};
		\node at (0,0) {3};
		\draw[implies-, double equal sign distance]  (0.05, 0.2) to (0.75, 0.2);
		\node at (0.8,0.2) {$\circ$};
		\node at (0.8,0) {4};
%\node at (-1.5, 0) {CII$_4$};
	\end{tikzpicture}
&
C$_2$
\\
\hline
G$_2$
&
\begin{tikzpicture}[baseline=0, scale=1.5]
		\node at (-0.5,0) {$\circ$};
		\node at (0.5,0) {$\circ$};
		\draw[->]  (0.4, 0.05) to (-0.4, 0.05);
		\draw[->]  (0.4, -0.05) to (-0.4, -0.05);
		\draw[->]  (0.4, 0) to (-0.4, 0);
		%\node at ( 0, 0.2) {3};
		\node at (-0.5, -.2) {\small 1};
		\node at (0.5,-.2) {\small 2};
%\node at (-1 , -0.05) {G$_2$};
	\end{tikzpicture}
&
G$_2$
&
EIV
&
\begin{tikzpicture}[baseline = 0, scale =1.5]
		\node at (-1,0.2) {$\circ$};
        \node at (-1,0) {1};
		\draw (-0.95,0.2) to (-0.55,0.2);
		\node at (-0.5,0.2) {$\bullet$};
        \node at (-0.5,0) {2};
		\draw (-0.45,0.2) to (-0.05,0.2);
		\node at (0,0.2) {$\bullet$};
        \node at (0.1,0) {3};
		\draw (0.05,0.2) to (0.45,0.2);
		\node at (0.5,0.2) {$\bullet$};
		\node at (0.5,0) {4};
		\draw (0.55,0.2) to (0.95,0.2);
		\node at (1,0.2) {$\circ$};
		\node at (1,0) {5};
		\draw (0, 0.15) to (0,-0.25);
		\node at (0,-0.2) {$\bullet$};
		\node at (-.15,-0.15) {6};
%\node at (-1.5, -0.05) {EIV};
\end{tikzpicture}
&
A$_2$
\\
\hline
BI$_n$
&
 \begin{tikzpicture}[baseline=0, scale=1.2]
		\node at (0.5,0) {$\circ$};
		\node at (1.0,0) {$\circ$};
		\node at (1.5,0) {$\bullet$};
		\draw[-] (0.55,0)  to (0.95,0);
		\draw[-] (1.05,0)  to (1.45,0);
		\draw[-] (1.55,0) to (1.8, 0);
		\draw[dashed] (1.8,0) to (2.5,0);
		\draw[-] (2.5,0) to (2.75, 0);
		\node at (2.8,0) {$\bullet$};
		\draw[-implies, double equal sign distance]  (2.85, 0) to (3.45, 0);
		\node at (3.5,0) {$\bullet$};
		\node at (0.5,-.2) {\small 1};
		\node at (1,-.2) {\small 2};
		\node at (1.5,-.2) {\small 3};
		%\node at (3, -.2) {$\small n-1$};
		\node at (3.5,-.2) {$n$};
 %\node at (2.5, -0.35) {BI$_n,n\geq 3$};
	\end{tikzpicture}
&
B$_2$
&
AIII$_3$
&
\begin{tikzpicture}[baseline=0,scale=1.5]
		\node  at (-0.65,0) {$\circ$};
		\node  at (0,0) {$\circ$};
		\node  at (0.65,0) {$\circ$};
		\draw[-] (-0.6,0) to (-0.05, 0);
		\draw[-] (0.05, 0) to (0.6,0);
		\node at (-0.65,-0.15) {1};
		\node at (0,-0.15) {2};
		\node at (0.65,-0.15) {3};
        \draw[bend left,<->,red] (-0.65,0.1) to (0.65,0.1);
        \node at (0,0.2) {$\textcolor{red}{\tau}$};
%      \node at (-1.5,-0.05) {AIII$_3$};
	\end{tikzpicture}
&
C$_2$
\\
\hline
DI$_n$
&
  \begin{tikzpicture}[baseline=0, scale=1.2]
		\node at (0.55,0) {$\circ$};
		\node at (1.05,0) {$\circ$};
		\node at (1.5,0) {$\bullet$};
		\draw[-] (0.6,0)  to (1.0,0);
		\draw[-] (1.1,0)  to (1.4,0);
		\draw[-] (1.4,0) to (1.9, 0);
		\draw[dashed] (1.9,0) to (2.7,0);
		\draw[-] (2.7,0) to (2.9, 0);
		\node at (3,0) {$\bullet$};
		\node at (3.8,0.35) {$\bullet$};
		\node at (3.8,-0.35) {$\bullet$};
        \draw (3,0) to (3.8,0.35);
        \draw (3,0) to (3.8,-0.35);
		\node at (0.5,-.2) {\small 1};
		\node at (1,-.2) {\small 2};
		\node at (1.5,-.2) {\small 3};
 %\node at (2.5, -0.45) {DI$_n,n\geq 5$};
	\end{tikzpicture}
&
B$_2$
&
AIII$_n$
&
 \begin{tikzpicture}[baseline=0,scale=1.0]
		\node  at (-2.1,0) {$\circ$};
		\node  at (-1.3,0) {$\circ$};
		\node  at (-0.5,0) {$\bullet$};
		\node  at (0.5,0) {$\bullet$};
		\node  at (1.3,0) {$\circ$};
		\node  at (2.1,0) {$\circ$};
		\draw[-] (-2.05,0) to (-1.35, 0);
		\draw[-] (-1.25,0) to (-0.55, 0);
		\draw[-] (0.55,0) to (1.25, 0);
		\draw[-] (1.35, 0) to (2.05,0);
		\node at (-2.1,-0.2) {1};
		\node at (-1.3,-0.2) {2};
		%\node at (-0.5,-0.2) {3};
		%\node at (0.5,-0.2) {\small$n-2$};
		\node at (1.3,-0.2) {\small$n-1$};
		\node at (2.1,-0.2) {$n$ };
        \draw[dashed] (-0.5,0) to (0.5,0);
        \draw[bend left,<->,red] (-1.3,0.1) to (1.3,0.1);
        \draw[bend left,<->,red] (-2.1,0.1) to (2.1,0.1);
        \node at (0,0.6) {$\textcolor{red}{\tau} $};
%\node at (0, -0.55) {AIII$_n,n\geq 4$};
	\end{tikzpicture}
&
BC$_2$
\\
\hline
DIII$_4$
&
\begin{tikzpicture}[baseline=0]
\end{tikzpicture}
  \begin{tikzpicture}[baseline=0, scale=1.2]
		\node at (0.65,0) {$\circ$};
		\node at (1.5,0) {$\circ$};
		\draw[-] (0.7,0)  to (1.45,0);
		\node at (2.3,0.4) {$\bullet$};
		\node at (2.3,-0.4) {$\bullet$};
        \draw (1.55,0) to (2.3,0.4);
        \draw (1.55,0) to (2.3,-0.4);
		\node at (0.65,-.2) {\small 1};
		\node at (1.5,-.2) {\small 2};
		\node at (2.35,0.25) {\small 3};
		\node at (2.35,-0.25) {\small 4};
% \node at (-.5, 0) {DIII$_4$};
	\end{tikzpicture}
&
C$_2$
&
DIII$_5$
&
\begin{tikzpicture}[baseline=0,scale=1.5]
		\node at (-0.5,0) {$\bullet$};
		\node at (0,0) {$\circ$};
		\node at (0.5,0) {$\bullet$};
        \node at (1,0.3) {$\circ$};
        \node at (1,-0.3) {$\circ$};
		\draw[-] (-0.45,0) to (-0.05, 0);
		\draw[-] (0.05, 0) to (0.45,0);
		\draw[-] (0.5,0) to (0.965, 0.285);
		\draw[-] (0.5, 0) to (0.965,-0.285);
		\node at (-0.5,-0.2) {1};
		\node at (0,-0.2) {2};
		\node at (0.5,-0.2) {3};
        \node at (1,0.15) {4};
        \node at (1,-0.45) {5};
        \draw[bend left,<->,red] (1.1,0.3) to (1.1,-0.3);
        \node at (1.4,0) {$\textcolor{red}{\tau} $};
%\node at (-1, 0) {DIII$_5$};
	\end{tikzpicture}
&
BC$_2$
\\
\hline
AII$_5$
&
 \begin{tikzpicture}[baseline=0,scale=1.2]
		\node at (-0.5,0) {$\bullet$};
		\node  at (0,0) {$\circ$};
		\node at (0.5,0) {$\bullet$};
		\node at (1,0) {$\circ$};
		\node at (1.5,0) {$\bullet$};
		\draw[-] (-0.5,0) to (-0.05, 0);
		\draw[-] (0.05, 0) to (0.5,0);
		\draw[-] (0.5,0) to (0.95,0);
		\draw[-] (1.05,0)  to (1.5,0);
		\node at (-0.5,-0.2) {1};
		\node  at (0,-0.2) {2};
		\node at (0.5,-0.2) {3};
		\node at (1,-0.2) {4};
		\node at (1.5,-0.2) {5};
%\node at (-1.5, -0.05) {AII$_5$};
	\end{tikzpicture}
&
A$_2$
&
EIII
&
\begin{tikzpicture}[baseline = 0, scale =1.5]
		\node at (-1,0) {$\circ$};
        \node at (-1,-0.2) {1};
		\draw (-0.95,0) to (-0.55,0);
		\node at (-0.5,0) {$\bullet$};
        \node at (-0.5,-0.2) {2};
		\draw (-0.45,0) to (-0.05,0);
		\node at (0,0) {$\bullet$};
        \node at (0.1,-0.2) {3};
		\draw (0.05,0) to (0.45,0);
		\node at (0.5,0) {$\bullet$};
		\node at (0.5,-0.2) {4};
		\draw (0.55,0) to (0.95,0);
		\node at (1,0) {$\circ$};
		\node at (1,-0.2) {5};
		\draw (0,-0.05) to (0,-0.35);
		\node at (0,-0.4) {$\circ$};
		\node at (-.15,-0.35) {6};
        \draw[bend left, <->, red] (-0.9,0.1) to (0.9,0.1);
        \node at (0,0.2) {$\color{red} \tau $};
%\node at (-1.5, 0) {EIII};
	\end{tikzpicture}
&
BC$_2$
\\
\hline
\end{tabular}
}%
\newline
(SP=symmetric pair, RS=relative root system)
\end{table}

\begin{theorem}
   \label{thm:rktwo1}
The elements $\tTa{i}(B_j) \in \tUi$ listed in Table \ref{table:rktwoSatake} satisfy the following intertwining relation (see \eqref{eq:newb0}):
\begin{align}
  \label{eq:twinij}
\tTa{i}(B_j)  \tfX_i =\tfX_i  \tT'_{\bs_i,-1}(B_j).
\end{align}
\end{theorem}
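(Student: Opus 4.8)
The plan is to promote the three worked types of \S\ref{subsec:example} into a uniform verification. Rather than producing $\tTa{i}(B_j)$ ab initio, I would start from the already-computable quantity $\tT'_{\bs_i,-1}(B_j^\imath)$ and \emph{conjugate it by} $\tfX_i$, checking that the result is the tabulated element. Thus I would compute $\tfX_i\,\tT_{\bs_i}^{-1}(B_j^\imath)\,\tfX_i^{-1}$ and match it against the entry of Table~\ref{table:rktwoSatake}; because $\tfX_i$ is invertible this is exactly \eqref{eq:twinij}, and since each table entry is a $q$-commutator polynomial in $\tT_{\bw}(B_i)$, $B_j$ and Cartan elements — all of which lie in $\tUi$ (using Proposition~\ref{prop:Tjblack} for $\tT_{\bw}(B_i)$) — this simultaneously gives the asserted membership $\tTa{i}(B_j)\in\tUi$.

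The conjugation splits along the decomposition $B_j^\imath = F_j + \tT_{\bw}(E_{\tau j})K_j'$ of \eqref{def:gen}:
\[
\tfX_i\,\tT_{\bs_i}^{-1}(B_j^\imath)\,\tfX_i^{-1}
= \tfX_i\,\tT_{\bs_i}^{-1}(F_j)\,\tfX_i^{-1}
 + \tT_{\bs_i}^{-1}\!\big(\tT_{\bw}(E_{\tau j})K_j'\big),
\]
where the second summand has already been pulled out of the conjugation using \eqref{eq:TEjUp} of Lemma~\ref{lem:rktwo1}. Everything then hinges on computing $\tT_{\bs_i}^{-1}(F_j)$ and writing it as a $q$-commutator expression in the $\sigma$-twisted generators $B_k^\sigma$ $(k\in\I_{\bullet,i})$ together with Cartan elements, in the manner of \eqref{eq:rktwo6-1}. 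Conjugation by $\tfX_i$ then strips off the $\sigma$-twist: Theorem~\ref{thm:fX1} gives $\tfX_i B_k^\sigma\tfX_i^{-1}=B_k$, Proposition~\ref{prop:newb-1} gives $\tT_{\bw}(\tfX_i)=\tfX_i$ so that $\tT_{\bw}(B_k^\sigma)$ conjugates to $\tT_{\bw}(B_k)$, \eqref{eq:FjUp} gives $[F_j,\tfX_i]=0$, and Cartan elements commute with $\tfX_i$ by \eqref{eq:fX1b}.

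To package this step cleanly I would first isolate the structural Proposition~\ref{prop:rktwoRij}, which turns an explicit $\sigma$-twisted formula for $\tT_{\bs_i}^{-1}(F_j)$ into the untwisted formula for $\tTa{i}(B_j)$. The mechanism is the Chevalley-type operator $\cL=\tT_{\bw}\tT_{w_0}$ from Lemma~\ref{lem:rktwo2-2}: applying $\cL$ to the formula for $\tT_{\bs_i}^{-1}(F_j)$ yields the \emph{parallel, already untwisted} formula for $\tT_{\bs_i}^{-1}\big(\tT_{\bw}(E_{\tau j})K_j'\big)$, so that the two summands above share an identical outer commutator/Cartan shape with $F_j$ and the $E$-part occupying the same inner slot. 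Adding them reconstitutes $B_j$ there, and the tabulated entry drops out. This reduces the entire theorem to a single datum per diagram: the explicit value of $\tT_{\bs_i}^{-1}(F_j)$.

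The main obstacle is exactly that remaining datum, carried out type-by-type in Appendix~\ref{app1}. For each rank $2$ Satake diagram one must propagate the braid operators of Proposition~\ref{prop:braid0} along a reduced word for $\bs_i$ read off from Table~\ref{table:localSatake}, producing nested $q$-commutators whose depth and coefficients are dictated by the rank $2$ relative root system — $A_2,B_2,C_2,G_2$ or $BC_2$, cf.\ Table~\ref{table:rktwodatum}. The $A_2$ cases are essentially \S\ref{subsec:example}; the doubly-laced $B_2/C_2$, the $G_2$, and especially the $BC_2$ cases (where $\balpha_i,\balpha_j$ have different lengths and $\bs_i$ has a long word) demand the heaviest bookkeeping, including careful tracking of the rescaling scalars $\vs_{i,\dm}$ entering through $\tPsi_{\bvs_\diamond}$ and of the Cartan corrections $\ck_{\bullet}$.
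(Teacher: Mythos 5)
Your proposal is correct and follows essentially the same route as the paper: decompose $B_j^\imath = F_j + \tT_{\bw}(E_{\tau j})K_j'$, pull the $E$-part out of the conjugation via \eqref{eq:TEjUp}, reduce everything to the structural Proposition~\ref{prop:rktwoRij} (with linearity in the fifth slot reassembling $B_j$), and verify that proposition type-by-type in Appendix~\ref{app1} using the operator $\cL$ to pass from the $\sigma$-twisted $F_j$-formula to the untwisted $E$-part formula. The only cosmetic caveat is that the general appendix version of $\cL$ in \eqref{eq:D} carries the extra diagram-involution factors $\widehat{\tau}_0\widehat{\tau}$ beyond the $\tT_{\bw}\tT_{w_0}$ of the motivating examples, which your bookkeeping remark already anticipates.
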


We clarify a few points regarding Table~\ref{table:rktwoSatake} in the following remarks.

\begin{remark}
Recall that $\tT_s$ ($s\in \bI$) restrict to automorphisms on $\tUi$ by Proposition~\ref{prop:Tjblack}; hence, the use of $\tT_s$ ($s\in \bI$) in the formulas of $\tTa{i}(B_j)$ is legitimate; see \eqref{eq:newb-1}.
\end{remark}

\begin{remark}
Let $\rho$ be a diagram involution on the underlying Dynkin diagram ($\rho$ is not necessarily equal to $\tau$). By the intertwining relation \eqref{eq:newb0}, the formula of $\tTa{\rho i} (B_{\rho j})$ can be obtained from $\tTa{i}(B_j)$ via
\begin{align*}
\tTa{\rho i} (B_{\rho j})=\rho \big( \tTa{i}(B_j)\big).
\end{align*}
In particular, when $\rho=\tau,$ we have $\tTa{i}(B_{\tau j})=\widehat{\tau} \big( \tTa{i}(B_j)\big)$ by Remark~\ref{rmk:newb0}.
Accordingly, only one formula of $\tTa{\rho i}(B_{\rho j})$ and $\tTa{i}(B_ j)$ is included in the table; see types AII$_5$, EIV, and all types with $\tau \neq \Id$.
\end{remark}

\begin{remark}
The formulas of $\tTa{i}(B_j)$ only depend on the subdiagram generated by nodes $i,\tau i,j$ and the component of black nodes which is connected to either $i$ or $\tau i$. For example, the formula for $\tTa{2}(B_4)$ in type DIII$_5$ is formally identical to the formula for $\tTa{2}(B_4)$ in type AII$_5$. (Note that such a subdiagram may not be a Satake subdiagram as the vertex $\tau j$ is not included.)
\end{remark}

Recall $\tUi$ is defined over an extension field $\bF$ of $\Q(q)$. Denote
 \begin{align}
   \label{eq:UiQ}
 _{\Q}\tUi:= \text{ $\Q(q)$-subalgebra of $\tUi$ generated by $B_i,\tk_i,x$ for $i\in \wI,x\in \tbcG$.}
 \end{align}

 \begin{proposition}
    \label{prop:Q}
The symmetries $\tTa{i}$ ($i\in \wI$) preserve the $\Q(q)$-algebra $_{\Q}\tUi$.
 \end{proposition}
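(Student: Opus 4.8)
The plan is to use that $\tTa{i}$ is already known to be an algebra automorphism of $\tUi$ (Theorem~\ref{thm:newb0}, whose surjectivity is secured once $\tTa{i}$ and $\tTb{i}$ are shown to be mutual inverses in Theorem~\ref{thm:newb1}), so that it suffices to check that $\tTa{i}$ carries each of the generators $B_j,\tk_j$ $(j\in\wI)$ and $g\in\tbcG$ of ${}_{\Q}\tUi$ into ${}_{\Q}\tUi$; since $\tTa{i}=\tTa{\tau i}$ by Remark~\ref{rmk:newb0}, it is enough to treat $i\in\wItau$. For $g\in\tbcG$, Proposition~\ref{prop:Cartanblack} gives $\tTa{i}(g)=(\widehat{\tau}_{\bullet,i}\circ\widehat{\tau})(g)$, which is again an element of $\tbcG$ (each diagram involution permutes $E_s,F_s,K_s^{\pm1},K_s'^{\pm1}$ with coefficient $1$), hence one of the listed generators of ${}_{\Q}\tUi$. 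For $\tk_j$ the decisive input is Lemma~\ref{lem:Z}: by \eqref{eq:cartan1} one has $\tTa{i}(\tk_j)=\vs_{\bs_i\alpha_j-\alpha_j,\dm}^{-1}\,\tk_{\bs_i\alpha_j}$, where the scalar lies in $\Z[q,q^{-1}]\subseteq\Q(q)$ and $\tk_{\bs_i\alpha_j}$ is a Laurent monomial in the Cartan generators $\tk_r$ $(r\in\I)$, hence in ${}_{\Q}\tUi$.

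The remaining and decisive case is $B_j$. For $j\in\{i,\tau i\}$ I would invoke the closed formula \eqref{eq:rkone2} from Theorem~\ref{thm:rkone1}: its prefactor $-q^{-(\alpha_i,\bw\alpha_{\tau i})}$ is a signed integral power of $q$ (the normalized form takes integer values on pairs of roots), hence lies in $\Q(q)$; the operator $\tT_{w_\bullet}=\tTD_{w_\bullet}$ (Remark~\ref{rem:sameT}) preserves ${}_{\Q}\tUi$ because, by Proposition~\ref{prop:Tjblack} and \eqref{eq:newb-1}, its action on the $B$'s and on $\tbU$ involves only divided powers with $\Q(q)$-coefficients, so that $\tT_{w_\bullet}^2(B_{\tau_{\bullet,i}\tau i})\in{}_{\Q}\tUi$; and $\ck_{\tau_{\bullet,i}\tau i}^{-1}$ is a Laurent monomial in Cartan generators. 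For $j\notin\{i,\tau i\}$ I would reduce to the rank $2$ situation and read off the formulas in Table~\ref{table:rktwoSatake}: each entry is assembled from iterated $q_s$-commutators of the $B$'s and of their $\bI$-braid translates $\tT_s(\cdot)$ $(s\in\bI)$ together with Cartan Laurent monomials, and every scalar that appears is a signed integral power of $q$ or a $q$-binomial/divided-power coefficient, all in $\Q(q)$.

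The point of genuine difficulty, and the reason the statement is not automatic, is that the very definition of $\tTa{i}$ passes through the rescaling automorphism $\tPsi_{\bvs_\diamond}$ and the rescaled operators $\tT'_{\bs_i,-1}$, whose scalars $\vs_{i,\diamond}^{1/2}$ genuinely involve fractional powers of $q$ (cf.\ Table~\ref{table:localSatake}); the content of the proposition is precisely that these fractional powers cancel in the action on the generators of $\tUi$. For the Cartan generators this cancellation is exactly the integrality asserted in Lemma~\ref{lem:Z}, and for the $B_j$ it is what one verifies by inspecting the explicit entries of Table~\ref{table:rktwoSatake} type by type; this type-by-type check is the main labor. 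Having obtained $\tTa{i}({}_{\Q}\tUi)\subseteq{}_{\Q}\tUi$, I would derive the reverse inclusion by running the identical argument for the inverse $\tTb{i}=\tTa{i}^{-1}$, whose generator formulas (Table~\ref{table:rktwoSatake2}) have the same shape and the same $\Q(q)$-coefficients; since $\tTa{i}\circ\tTb{i}=\Id$, it follows that $\tTa{i}$ restricts to an automorphism of ${}_{\Q}\tUi$, as claimed.
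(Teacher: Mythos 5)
Your proposal is correct and takes essentially the same route as the paper's proof, which likewise reduces to a generator check using Lemma~\ref{lem:Z} for the Cartan part, the rank one formula \eqref{eq:rkone2}, and the rank 2 formulas of Table~\ref{table:rktwoSatake}. The only material you add beyond the paper's one-line argument is the closing observation that the identical check for $\tTb{i}=\tTa{i}^{-1}$ upgrades the inclusion to a restriction of the automorphism, which is a harmless and correct refinement.
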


\begin{proof}
This follows by the formula for $\tTa{i}$ acting on the Cartan part in Proposition~\ref{prop:Cartanblack} (see Lemma~ \ref{lem:Z}), the rank one formulas in \eqref{eq:rkone2}, and the rank 2 formulas in Table~\ref{table:rktwoSatake}.
\end{proof}

\begin{remark}
It would cause no difficulty if we have replaced $\tUi$ (over $\mathbb F$) by $_{\Q}\tUi$ over $\Q(q)$ throughout the paper. We need to work with $\tU$ over $\Q(q^{\frac12})$ in several places. The results for $\Ui_{\bvs_\dm}$ will be valid over $\Q(q)$, while some results over $\Ui_\bvs$, for
$\bvs$ over $\Q(q)$, are valid over $\Q(q^{\frac12})$.
%\blue{maybe also add a remark earlier for rationality of $\tT_i$?(it sends $_{\Q}\tUi$ to $_{\Q}\tU$ )}
\end{remark}

\subsection{Proof of Theorem~\ref{thm:rktwo1}}
 \label{subsec:proofTiBj}

%\begin{lemma}
%  \label{lem:rktwo5}
%We have, for $i=\tau i,i\neq j\in\wItau$,
%\begin{enumerate}
%\item
%$R_{ij}(x,y, z ;\tbcG) $ is linear in $z$;
%\item
%$\tT_{\bs_i}^{-1}(F_j)=R_{ij}(B_i^\sigma, \ck_i, F_j;\tbcG)$;
%\item
%$\tT_{\bs_i}^{-1}(\tT_{\bw}\big(E_{\tau j})K_j'\big)=R_{ij}(B_i, \ck_i, \tT_{\bw}(E_{\tau j}) K_j';\tbcG)$.
%\end{enumerate}
%\end{lemma}

\begin{proposition}
  \label{prop:rktwoRij}
Let $i,j\in \wItau$ be such that $j \not \in \{i, \tau i\}$. Then there exists a non-commutative polynomial $R_{ij}(x_i,x_{\tau i}, y_i, y_{\tau i}, z ;\tbcG)$, which is linear in $z$, such that
\begin{enumerate}
\item
$\tT_{\bs_i}^{-1}(F_j)= R_{ij}(B_i^\sigma,B_{\tau i}^\sigma, \ck_i, \ck_{\tau i}, F_j;\tbcG)$;
\item
$\tT_{\bs_i}^{-1}(\tT_{\bw}\big(E_{\tau j})K_j'\big)=R_{ij}(B_i, B_{\tau i}, \ck_i, \ck_{\tau i}, \tT_{\bw}(E_{\tau j}) K_j';\tbcG)$.
\end{enumerate}
\end{proposition}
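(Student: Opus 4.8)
The plan is to take part~(1) as the \emph{definition} of the polynomial $R_{ij}$ and then deduce part~(2) from it by a single ``flip'' operator, so that the substantive labor is confined to computing one expression, namely $\tT_{\bs_i}^{-1}(F_j)$. First I would compute $\tT_{\bs_i}^{-1}(F_j)$ directly: fix a reduced expression of $\bs_i$ inside $W_{\bullet,i}$ (using \eqref{def:bsi}) and apply the explicit rank one braid formulas of Proposition~\ref{prop:braid0} repeatedly. The output is a single element of $\tU$, which I would record as a non-commutative polynomial $R_{ij}(B_i^\sigma,B_{\tau i}^\sigma,\ck_i,\ck_{\tau i},F_j;\tbcG)$. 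That such a presentation exists, with $R_{ij}$ \emph{linear} in its fifth slot, follows from a weight argument: since $\tT_{\bs_i}^{-1}$ is an algebra automorphism and $F_j$ is a root vector of weight $-\alpha_j$, the image $\tT_{\bs_i}^{-1}(F_j)$ is a weight vector of weight $-\bs_i(\alpha_j)$; because $j\notin\I_{\bullet,i}$ and $\bs_i\in W_{\bullet,i}$, the $\alpha_j$-coordinate of $\bs_i(\alpha_j)$ equals $1$, so exactly one factor of $F_j$ occurs. This pins down both the existence of $R_{ij}$ and the linearity in the $z$-slot.

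For part~(2), I would introduce the operator $\cL:=\tT_{\bw}\tT_{w_0}$ (twisted by the diagram involution $\widehat{\tau}_0$ when $\tau_0\neq\tau$), generalizing the device used in Lemma~\ref{lem:rktwo2-2}. The properties I would establish are: (a) $\cL$ commutes with $\tT_{\bs_i}^{-1}$, since $\tT_{\bw}$ commutes with $\tT_{\bs_i}$ because $\tT_{\bwi}=\tT_{\bs_i}\tT_{\bw}=\tT_{\bw}\tT_{\bs_i}$ by the length-additivity $\ell(\bwi)=\ell(\bs_i)+\ell(\bw)$ in \eqref{def:bsi}, while conjugation by $\tT_{w_0}$ preserves $\bs_i$ as a relative reflection up to the diagram involution $\tau_0$, which is absorbed into the twist; (b) $\cL(B_k^\sigma)=-B_k\cdot(\text{torus})$ for $k\in\{i,\tau i\}$, which is exactly the content of \eqref{eq:rktwo3-2} read off from the rank one formula \eqref{eq:TBi}; (c) $\cL(F_j)$ is a scalar multiple of $\tT_{\bw}(E_{\tau j})K_j'$ times a torus element, computed from Lemma~\ref{lem:braid1}; and (d) $\cL$ maps $\tbcG$ into $\tbU$ compatibly with the $\tbcG$-dependence of $R_{ij}$. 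Applying $\cL$ to the identity of part~(1) and using (a)--(d)---together with Lemma~\ref{lem:newb1} to commute $\tT_{\bs_i}^{-1}$ past torus elements---both sides can be rearranged so that, after cancelling the torus and scalar factors supplied by (b) and (c), the identity reads $\tT_{\bs_i}^{-1}\big(\tT_{\bw}(E_{\tau j})K_j'\big)=R_{ij}(B_i,B_{\tau i},\ck_i,\ck_{\tau i},\tT_{\bw}(E_{\tau j})K_j';\tbcG)$, which is exactly part~(2).

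The heart of the argument---and the step I expect to be the main obstacle---is the \emph{exact} Cartan bookkeeping needed to show that the polynomial surviving after applying $\cL$ is \emph{literally} $R_{ij}$, with no stray scalar or torus factor. This is where the distinguished parameter $\bvs_\dm$ of \eqref{def:vsi} and the elements $\ck_i$ of \eqref{def:Ki} earn their keep: the scalar $-q^{-(\alpha_i,\bw\alpha_{\tau i})}$ and the factors $\ck_{\tau_{\bullet,i}\tau i}^{\pm1}$ produced in (b)--(c) must telescope against the torus elements moved past the $B$'s and $F_j$, which I would track using Lemma~\ref{lem:newb1}, the identity $\ck_i=\tk_i K'_{\bw\alpha_{\tau i}}$, and the fact that $\tT_{\bwi}(\ck_i)$ is a scalar multiple of $\ck_i^{-1}$, exactly as in the model computation of Lemma~\ref{lem:rktwo2-2}. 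Because the relation between $\tau_0$ and $\tau$, and the precise shape of $\bs_i(\alpha_j)$, vary across the diagrams of Table~\ref{table:rktwodatum}, I do not expect a fully case-free matching; the residual verification of the explicit $R_{ij}$ and of the cancellation of Cartan factors is what I would organize type-by-type in Appendix~\ref{app1}, recording the resulting formulas in Table~\ref{table:rktwoSatake}. Once Proposition~\ref{prop:rktwoRij} is in hand, the linearity in the $z$-slot is precisely what lets $F_j$ and $\tT_{\bw}(E_{\tau j})K_j'$ recombine into $B_j$ after conjugation by $\tfX_i$, yielding the intertwining relation \eqref{eq:newb0} for $\tTa{i}(B_j)$ in Theorem~\ref{thm:rktwo1}.
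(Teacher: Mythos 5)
Your proposal follows essentially the same route as the paper: part (1) is obtained type-by-type from a reduced expression of $\bs_i$ via Proposition~\ref{prop:braid0}, and part (2) is deduced by applying the operator $\cL=\tT_{w_0}\tT_{\bw}\widehat{\tau}_0\widehat{\tau}$ of \eqref{eq:D}, whose commutation with $\tT_{\bs_i}$ and whose action on $B_i^\sigma$ and $F_j$ are precisely Lemma~\ref{lem:cL} and \eqref{eq:app2}--\eqref{eq:app3}, with the Cartan bookkeeping done case by case in Appendix~\ref{app1}. The one caveat is that your weight argument shows only that any such presentation must be linear in the $z$-slot, not that a presentation in terms of $B_i^\sigma,B_{\tau i}^\sigma,\ck_i,\ck_{\tau i},\tbcG$ exists at all --- that existence is exactly what the type-by-type computation you defer to the appendix must supply, as in the paper.
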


\begin{remark}
 \label{rem:fixed}
In case $\tau i =i$, the polynomials $R_{ij}$  depend only on $x_i, y_i, z$ and $\tbcG$. In this case, it is understood in Proposition~\ref{prop:rktwoRij} that $R_{ij}(x_i,x_{\tau i}, y_i, y_{\tau i}, z; \tbcG)$ is replaced by $R_{ij}(x_i, y_i, z; \tbcG)$ (which is linear in $z$), and    $R_{ij}(B_i^\sigma,B_{\tau i}^\sigma, \ck_i, \ck_{\tau i}, F_j;\tbcG)$ is replaced by $R_{ij}(B_i^\sigma, \ck_i, F_j;\tbcG)$, and so on.
\end{remark}

The proof of Proposition~\ref{prop:rktwoRij} will be carried out through type-by-type computation in Appendix~\ref{app1}.

We define
\begin{align}
  \label{eq:TaiBj}
\tTa{i}(B_j) :=
\begin{cases}
R_{ij}(B_i,\ck_i, B_j;\tbcG),
& \text{ if } i=\tau i,
  \\
R_{ij}(B_i,B_{\tau i}, \ck_i,\ck_{\tau i}, B_j;\tbcG)
& \text{ if }i \neq \tau i.
\end{cases}
\end{align}
Clearly, we have $\tTa{i}(B_j) \in \tUi$; see Table \ref{table:rktwoSatake}.

The polynomials $R_{ij}$ in all types can be read off from Table \ref{table:rktwoSatake}.
For instance, in type AII$_5$ it reads as follows:
\begin{align*}
R_{ij}(x,y,z; \tbcG) =\big[[x,F_3],z\big]_q.
\end{align*}
In order to read $R_{ij}$ off from Table \ref{table:rktwoSatake}, one first needs to unravel $\tT_w$, for $w\in \bW$,  appearing in those formulas in terms of $E_j,F_j,K_j,K_j',j\in \bI$.

\begin{proof} [Proof of Theorem~\ref{thm:rktwo1}]
We start with a general comment. Originally, we computed the explicit formulas in Table ~\ref{table:rktwoSatake} type by type; see \S\ref{subsec:example} for examples in types BI, DI, and DIII$_4$. In the process, we observed that parts of the arguments can be streamlined a uniform formulation in  Proposition~\ref{prop:rktwoRij}, even though its proof requires quite some computations. We hope this uniform formulation helps to conceptualize the structures of the formulas for $\tT_{\bs_i}^{-1}(B_j)$.

We now prove Theorem~\ref{thm:rktwo1} using Proposition~\ref{prop:rktwoRij}.
Recall by Theorem~\ref{thm:fX1} that $\tfX_i B_i^\sigma \tfX_i^{-1}=B_i$ and $\tfX_i x \tfX_i^{-1}=x$ for $x\in \tU^{\imath 0} \tbU.$

For definiteness, let us assume that $i\neq\tau i$. (The case when $i=\tau i$ is similar using the interpretation of notation in Remark~\ref{rem:fixed}.) By Lemma~\ref{lem:rktwo1}, Proposition~\ref{prop:rktwoRij} and definition of $\tTa{i}(B_j)$ in \eqref{eq:TaiBj}, we have
\begin{align*}
\tfX_i \tT_{\bs_i}^{-1}(B_j)
&= \tfX_i\tT_{\bs_i}^{-1}(F_j)
+ \tfX_i \tT_{\bs_i}^{-1}(\tT_{\bw}\big(E_{\tau j})K_j'\big)
 \\
&= \tfX_i\tT_{\bs_i}^{-1}(F_j)
+   \tT_{\bs_i}^{-1}(\tT_{\bw}\big(E_{\tau j})K_j'\big) \tfX_i
 \\
&= \tfX_iR_{ij}(B_i^\sigma,B_{\tau i}^\sigma, \ck_i,\ck_{\tau i}, F_j;\tbcG)
 +  R_{ij}(B_i,B_{\tau i}, \ck_i,\ck_{\tau i}, \tT_{\bw}(E_{\tau j}) K_j';\tbcG) \tfX_i
 \\
&= R_{ij}(B_i,B_{\tau i}, \ck_i,\ck_{\tau i}, F_j;\tbcG) \tfX_i
 + R_{ij}(B_i,B_{\tau i}, \ck_i,\ck_{\tau i}, \tT_{\bw}(E_{\tau j}) K_j';\tbcG) \tfX_i
 \\
&= R_{ij}(B_i,B_{\tau i}, \ck_i,\ck_{\tau i}, B_j;\tbcG) \tfX_i
= \tTa{i}(B_j) \tfX_i,
\end{align*}
where the second last step follows form the linearity of $R_{ij}$ in its fifth component. This proves the desired identity \eqref{eq:twinij}, whence the theorem.
\end{proof}

\begin{conjecture}
  \label{conj:TiBjKM}
For $\tUi$ of Kac-Moody type, Proposition~\ref{prop:rktwoRij} remains valid.
\end{conjecture}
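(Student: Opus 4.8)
\textbf{Proof proposal for Conjecture~\ref{conj:TiBjKM}.}
The plan is to strip the proof of Proposition~\ref{prop:rktwoRij} of its two finite-type ingredients---the classification of rank~2 Satake diagrams and the longest element $w_0$---and replace them by structural arguments valid in Kac-Moody generality. First I would localize: exactly as in finite type, the elements $\tT_{\bs_i}^{-1}(F_j)$ and $\tT_{\bs_i}^{-1}\big(\tT_{\bw}(E_{\tau j})K_j'\big)$ have support contained in the rank~2 Satake subdiagram on $\{i,\tau i,j,\tau j\}$ together with the black nodes connected to $i$ or $\tau i$. This is a weight/support statement for braid operators that is insensitive to finiteness of the ambient Weyl group, so it reduces the conjecture to an arbitrary rank~2 Kac-Moody Satake diagram---assuming, as is needed for $\bs_i$ to be defined at all, that the parabolic $W_{\bullet,i}$ is finite so that $\bwi$ exists.

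For part~(1) I would compute $\tT_{\bs_i}^{-1}(F_j)=\tT_{\bw}\tT_{\bwi}^{-1}(F_j)$ directly, using $\tT_{\bwi}=\tT_{\bs_i}\tT_{\bw}$ from \eqref{def:bsi}. Since $s_j$ occurs in no reduced word for $\bwi$, the element $\tT_{\bwi}^{-1}(F_j)$ is a root vector of weight $-\bwi\alpha_j$ inside the lower-triangular part of the rank~1 Drinfeld double attached to $\I_{\bullet,i}$, with $F_j$ entering \emph{linearly} because its coefficient in $\bwi\alpha_j-\alpha_j$ equals $1$. Expanding this root vector via the $q$-commutator recursions of Proposition~\ref{prop:braid0} and re-expressing the resulting $F_i,F_{\tau i}$ and black generators through $B_i^\sigma,B_{\tau i}^\sigma$ and $\tbcG$, modulo the Cartan elements $\ck_i,\ck_{\tau i}$, yields a polynomial $R_{ij}$ linear in its last slot. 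The linearity and the precise Cartan prefactors are dictated purely by weights, hence are uniform.

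The crux is part~(2): that the \emph{same} $R_{ij}$ governs $\tT_{\bs_i}^{-1}\big(\tT_{\bw}(E_{\tau j})K_j'\big)$. In finite type this is deduced from~(1) by applying $\cL=\tT_{\bw}\tT_{w_0}$, whose defining feature (Lemma~\ref{lem:braid1}) is that it flips $F_j$ into a Cartan multiple of $\tT_{\bw}(E_{\tau j})K_j'$ while converting $B_i^\sigma$ into $B_i$ (Lemma~\ref{lem:rktwo2-2}). In Kac-Moody generality this single operator is unavailable, and I would separate its two roles. The replacement of $B_i^\sigma$ by $B_i$ is purely the $\sigma$-twist, supplied by conjugation with the rank~1 quasi $K$-matrix $\tfX_i$ (Theorem~\ref{thm:fX1}), which also commutes with $F_j$ by \eqref{eq:FjUp} and with $\ck_i,\ck_{\tau i}$. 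The genuinely missing piece is the $E$--$F$ flip $F_j\leftrightarrow \tT_{\bw}(E_{\tau j})K_j'$: for this I would use the commutation $\tT_{\bs_i}^{-1}\tT_{\bw}=\tT_{\bw}\tT_{\bs_i}^{-1}$ to reduce~(2) to the computation of $\tT_{\bs_i}^{-1}(E_{\tau j})$, and then relate the latter to $\tT_{\bs_i}^{-1}(F_j)$ through the Chevalley involution $\omega$ via the relation \eqref{oTo} (in its rescaled, $\bs_i$ version) together with the explicit formulas of Proposition~\ref{prop:braid0}, all of which persist in Kac-Moody type. Matching these expressions would pin down the common $R_{ij}$ by the invertibility of $\tfX_i$.

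The main obstacle I anticipate is making this substitute for the $w_0$-flip precise: one must verify that the $\omega$-transport of part~(1) reproduces the action of $\tT_{\bs_i}^{-1}$ on $\tT_{\bw}(E_{\tau j})K_j'$ with \emph{exactly} the same polynomial $R_{ij}$---not merely a parallel one---including the correct Cartan prefactors $\ck_i,\ck_{\tau i}$. A compounding difficulty is that for unbounded off-diagonal Cartan entries the word $\bs_i$ is long, so $\tT_{\bwi}^{-1}(F_j)$ acquires higher-order $q$-bracket (higher $\imath$-Serre) terms; thus $R_{ij}$ must be controlled uniformly rather than tabulated as in Table~\ref{table:rktwoSatake}, and one must check $R_{ij}(\cdots)\in\tUi$ without recourse to the classification. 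I expect this combinatorial bookkeeping, rather than any conceptual gap, to be where the real work lies.
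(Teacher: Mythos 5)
The statement you are addressing is left as an \emph{open conjecture} in the paper: there is no proof of Conjecture~\ref{conj:TiBjKM} to compare against, and the authors say explicitly that the restriction to finite type is made precisely because they rely on the classification of rank~2 Satake diagrams to compute the formulas for $\tTa{i}(B_j)$ type by type in Appendix~\ref{app1}. Your text is a research plan rather than a proof, and you concede as much in your final paragraph. Two of the gaps are substantive rather than "combinatorial bookkeeping." First, for part~(1) the \emph{existence} of $R_{ij}$ is the whole content of the statement: rewriting $\tT_{\bs_i}^{-1}(F_j)$, a polynomial in the $F$'s, in terms of $B_i^\sigma=F_i+K_i\tTD_{w_\bullet}^{-1}(E_{\tau i})$ introduces cross-terms in the $E$'s, and one must show that these assemble into Cartan corrections in $\ck_i,\ck_{\tau i}$. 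Weight considerations constrain the possible shape of such corrections but do not force the cross-terms to cancel; in finite type this cancellation is exactly what the case-by-case computations of Appendix~\ref{app1} verify (see, e.g., Lemma~\ref{lem:G2-0}, whose identities are specific to the $G_2$ Cartan data). You supply no uniform mechanism for it, and with unbounded off-diagonal Cartan entries the higher $q$-bracket terms you mention make this strictly harder than any case in Table~\ref{table:rktwoSatake}.

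Second, your substitute for the operator $\cL=\tT_{w_0}\tT_{\bw}\widehat{\tau}_0\widehat{\tau}$ is only sketched and faces a concrete obstruction. The Chevalley involution satisfies $\omega\circ\tTD'_{i,-1}\circ\omega=\tTD''_{i,-1}$ by \eqref{oTo}, so transporting part~(1) by $\omega$ produces statements about operators of type $\tT''_{\bullet,-1}$, not $\tT'_{\bullet,-1}$; the two differ by conjugation with the bar involution \eqref{eq:sTs}, and the bar involution does not fix $\tfX_i$ (Proposition~\ref{prop:psi1} gives $\tpsi_\star(\tfX)\tfX=1$ instead). Moreover $\omega(F_j)=E_j$, not $\tT_{\bw}(E_{\tau j})K_j'$, so further twists by $\tT_{\bw}$, $\widehat{\tau}$ and Cartan elements are required, and one must check that the resulting composite simultaneously commutes with $\tT_{\bs_i}^{-1}$ and carries $B_i^\sigma$ to a Cartan multiple of $B_i$ --- the two properties that $\cL$ achieves at once in finite type via Lemma~\ref{lem:braid1}. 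Until such an operator is exhibited and shown to reproduce the \emph{identical} polynomial $R_{ij}$ in part~(2), including its membership in $\tUi$, the conjecture remains open; your proposal identifies the right obstacles but does not overcome them.
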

Assume Conjecture~\ref{conj:TiBjKM} holds. Then
$\tTa{i}(B_j) \in \tUi$ defined in \eqref{eq:TaiBj} satisfies the intertwining relation \eqref{eq:twinij}, and hence, $\tTa{i}$ is a symmetry of $\tUi$ of Kac-Moody type.

 \subsection{A comparison with earlier results}
   \label{rkone2}

We compare our formulas with some special cases obtained in the literature.

By choosing a reduced expression of $\bw,$ we can write out the formula \eqref{eq:rkone2} explicitly for rank one Satake diagrams in Table~\ref{table:localSatake}. We list some explicit formulas of $\tTa{i}(B_i)$ and compare them with braid group actions obtained earlier in \cite{LW21a},\cite{Dob20},\cite{KP11}. (The index $i$ is specified in each case.) In some rank 2 cases, our formulas  differ from those in \cite{LW21a} and they can be matched by some twisting. As noted in \cite[Remark~7.4]{LW21a}, the formulas for braid operators in \cite{KP11} may involve $\sqrt{v}$ and are related to those in \cite{LW21a} by some other twisting.

\subsubsection{Type AI$_1$}

We shall label the single white node in rank 1 type AI by $1$.
% \begin{center}
%\begin{tikzpicture}[scale=1.5,thick]
%\node (1) at (0,-0.4) {$\circ $};
%\node at (0,-0.5) {\small 1};
%\end{tikzpicture}
%\\ AI$_1$
%\end{center}
In this case, the formula \eqref{eq:rkone2} reads as follows:
\begin{align}
  \label{eq:rkone21}
\tTa{1}(B_1)=-q^{-2} B_1 \ck_1^{-1}=-q^{-2} B_1 \tk_1^{-1}.
\end{align}
Note also that $\vs_{1,\dm}=-q^{-2}$.
Applying the central reduction $\pi_{\bvs_\dm}^\imath$ to \eqref{eq:rkone21}, we have
$\TT_{1,\dm}^{-1}(B_1)= B_1 \in \Ui_{\bvs_\dm}.$
Our formula \eqref{eq:rkone21} of $\tTa{1}(B_1)$ coincides with the formula $\T_i^{-1}(B_i)$ in \cite[Lemma 5.1]{LW21a}. Our formulation of $\TT_{1,\dm}^{-1}(B_1)$ coincides with the formula $\tau_i^{-1}(B_i)$ given in \cite[(3.1)]{KP11} for $(\U,\Ui_{q^{-2}})$.

\subsubsection{Type AII$_3$}

The rank 1 Satake diagram of type AII is given by
 \begin{center}
\begin{tikzpicture}[scale=1.5,thick]
\node (1) at (-1,-0.2) {$\bullet $};
\node (2) at (0,-0.2) {$\circ $};
\node (3) at (1,-0.2) {$\bullet $};
\node at (-1,-0.5) {\small 1};
\node at (0,-0.5) {\small 2};
\node at (1,-0.5) {\small 3};
\draw[-] (-1,-0.2) edge (-0.05,-0.2);
\draw[-]  (1,-0.2) edge (0.05,-0.2);
\end{tikzpicture}
%\\
%AII$_3$
\end{center}
By Table~\ref{table:localSatake}, $\bs_2 = s_{2132}$, and the formula \eqref{eq:rkone2} reads as follows
\begin{align*}
\tTa{2}(B_2) &=  -q^{-2} (q-q^{-1})^2 \big[[B_{2},F_{3}]_q, F_{1}\big]_q E_{3} E_{ 1}  \tk_2^{-1} \\
& +(q-q^{-1})\big( [B_2,F_{3}]_q K_{ 1} E_{3} +  [B_2,F_{1}]_q K_{3} E_{ 1}  \big) \tk_2^{-1}
 -q^2 B_2 K_{3}K_{ 1} \tk_2^{-1}.
\end{align*}

\subsubsection{Type AIII$_{11}$}

The AIII$_{11}$ Satake diagram is given by
\begin{center}
\begin{tikzpicture}[scale=1.5,thick]
\node (1) at (-0.5,-0.2) {$\circ$};
\node (2) at (0.5,-0.2) {$\circ$};
\node  at (-0.5,-.5) {$ 1$};
\node  at (0.5,-.5) {$ 2$};
\path[bend left,<->,red] (1) edge node[above]{$\tau $} (2);
\end{tikzpicture}
%\\AIII$_{11}$\\
\end{center}
In this case, the formula \eqref{eq:rkone2} reads as $\tTa{1}(B_1)=- B_2 \ck_2^{-1}=- B_2 \tk_2^{-1}$.

\subsubsection{Type AIII$_{11}$}

The rank 1 AIV Satake diagram is given by
\begin{center}
\begin{tikzpicture}[scale=.5,thick]
\node (1) at (-6,-0.3) {$\circ$};
\node (2) at (-3,-0.3) {$\bullet$};
\node (3) at (3,-0.3) {$\bullet$};
\node (4) at (7,-0.3) {$\circ$};
\node  at (-6,-1) {$ 1$};
\node  at (-3,-1) {$ 2$};
\node  at (3,-1) {$ n-1 $};
\node  at (7,-1) {$ n$};
\path (1) edge (2);
\path (3) edge (4);
\path[bend left,<->,red] (1) edge node[above]{$\tau $} (4);
\begin{scope}[dashed]
\draw (2) -- (3);
\end{scope}
\end{tikzpicture}
%\\ AIV\\
\end{center}
In this case, the formula \eqref{eq:rkone2} reads as  $\tTa{1}(B_1)= -q \tT_{\bw}^2(B_1)  \ck_1^{-1} \prod_{j\in \bI} {K_j'}^{-1}$.
%q^{3/2} \tT_{\bw}^2(B_1) \K_1^{-1},$ where $\K_1=-q^{1/2} \ck_1 \prod_{j\in \bI} K_j'$.

 \begin{remark}
 For type AIV, Dobson \cite[Theorem 3.4]{Dob20} obtained a different automorphism $\cT_1$ on $\Ui_{\bvs}$ such that
$\cT_{1}^{-1}(B_1)= q B_1 k_{n} K_{\varpi_{n-1}-\varpi_2}.$
Here $\varpi_j$ are the fundamental weights and $k_i$ is denoted by $L_i$ {\em loc. cit.}
 \end{remark}

\subsubsection{Split type}

The formulas of $\tTa{i}(B_j)$ in the split types AI$_2$, CI$_2$ and G$_2$ are identical to the braid group operators obtained using the $\imath$Hall algebra approach, cf. \cite[Lemma 5.1]{LW21a}.

\subsubsection{Formulas on $\Ui_{\bvs_\dm}$}

Applying central reductions and isomorphisms $\phi_\bvs:\Ui_{\bvs_\dm}\cong \Ui_\bvs$ (see \S\ref{braid:Uibvs} below) to our formulas, we recover various formulas obtained for $\Ui_{\bvs}$  in \cite{KP11} in split types and type AII. %Applying  to the formulas on $\Ui_{\bvs_\dm}$, we obtain formulas on $\Ui_\bvs$, which recovers formulas in \cite{KP11} in ,

%%%%%%
%%%%%%
\section{New symmetries $\tTb{i}$ on $\tUi$}
  \label{sec:Tb}

In this section, we introduce new symmetries $\tTb{i}$ on $\tUi$, for $i\in \wI$, via a new intertwining property using the quasi $K$-matrix, and establish explicit formulas of $\tTb{i}$ acting on the generators of $\tUi$. Then we
show that $\tTa{i}$ and $\tTb{i}$ are mutual inverses. (This in particular completes the proof of Theorem~\ref{thm:newb0} that $\tTa{i}$ is an automorphism.)

 \subsection{Characterization of $\tTb{i}$}
 \label{subsec:Tb}

We formulate $\tTb{i}$ below, as a variant of $\tTa{i}$ introduced in Theorem~\ref{thm:newb0}.

\begin{theorem}
 \label{thm:Tb}
Let $i\in \wI$.
\begin{enumerate}
\item
For any $x \in \tUi$, there is a unique element $x'' \in \tUi$ such that $x'' \tT_{\bs_i}(\tfX_i^{-1})= \tT_{\bs_i}(\tfX_i^{-1}) \tT_{\bs_i}(x)$.
\item
The map $x \mapsto x''$ defines an automorphism of the algebra $\tUi$, denoted by $\tTb{i}$.
\end{enumerate}
\end{theorem}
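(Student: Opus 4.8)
The plan is to mirror the proof of Theorem~\ref{thm:newb0}, using $\tT_{\bs_i}(\tfX_i^{-1})$ and $\tT_{\bs_i}$ in place of $\tfX_i$ and $\tT_{\bs_i}^{-1}$. First I would record uniqueness: since $\tfX_i$ is invertible in the relevant completion and $\tT_{\bs_i}$ is an automorphism, the element $\tT_{\bs_i}(\tfX_i^{-1})$ is invertible, so any $x''$ with $x''\,\tT_{\bs_i}(\tfX_i^{-1}) = \tT_{\bs_i}(\tfX_i^{-1})\,\tT_{\bs_i}(x)$ is determined by $x$. Next I would establish the multiplicativity step: writing $\mathcal{Y} := \tT_{\bs_i}(\tfX_i^{-1})$, if $x''\mathcal{Y} = \mathcal{Y}\,\tT_{\bs_i}(x)$ and $y''\mathcal{Y} = \mathcal{Y}\,\tT_{\bs_i}(y)$, then
\[
x''y''\,\mathcal{Y} = x''\,\mathcal{Y}\,\tT_{\bs_i}(y) = \mathcal{Y}\,\tT_{\bs_i}(x)\,\tT_{\bs_i}(y) = \mathcal{Y}\,\tT_{\bs_i}(xy),
\]
so $(xy)'' = x''y''$ and the assignment $x \mapsto x''$ is an algebra homomorphism (here, in contrast to $\sigma^\imath$, no order reversal occurs, precisely because $\tT_{\bs_i}$ is an algebra map). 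Thus it suffices to produce $x'' \in \tUi$ for $x$ ranging over the generating set $\tU^{\imath 0}\tbU \cup \{B_j \mid j\in\wI\}$ of $\tUi$.

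On the subalgebra $\tU^{\imath 0}\tbU$ the solution is immediate: I would take $x'' := \tT_{\bs_i}(x)$. Indeed, applying $\tT_{\bs_i}^{-1}$ to the desired identity reduces it to $x\,\tfX_i^{-1} = \tfX_i^{-1}\,x$, equivalently $x\tfX_i = \tfX_i x$, which holds by the characterization of $\tfX_i$ in Theorem~\ref{thm:fX1} applied to the rank one Satake subdiagram (together with \eqref{eq:FjUp}). Moreover $\tT_{\bs_i}$ preserves $\tbU$ (the inverse of \eqref{eq:newb8}) and maps $\tU^{\imath 0}$ into $\tU^{\imath 0}$ by Lemma~\ref{lem:newb1} with $w=\bs_i$, so $\tT_{\bs_i}(x) \in \tU^{\imath 0}\tbU \subset \tUi$ as required.

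The substantive content is the construction of $(B_j)''$, which I would carry out by explicit formulas paralleling the $\tTa{i}$ case. For $j \in \{i,\tau i\}$ this is a rank one computation, the counterpart of Lemma~\ref{lem:bsiBi} and Theorem~\ref{thm:rkone1}: one computes $\tT_{\bs_i}(B_i)$ in terms of $\tT_{\bw}$, a generator $B_{\tau_{\bullet,i}\tau i}$ and the Cartan element $\ck_{\tau_{\bullet,i}\tau i}$, and then pushes it through the intertwiner using $\tT_{\bs_i}(\tfX_i)=\tT_{\bs_i}(\tfX_i^{-1})^{-1}$, Proposition~\ref{prop:newb-1} and Proposition~\ref{prop:Tjblack} to exhibit a solution lying in $\tUi$. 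For $j \notin \{i,\tau i\}$ this is the rank two computation: I would establish a structural result analogous to Proposition~\ref{prop:rktwoRij}, expressing $\tT_{\bs_i}(F_j)$ and $\tT_{\bs_i}(\tT_{\bw}(E_{\tau j})K_j')$ through one and the same noncommutative polynomial evaluated on the two variable families $\{B_k^\sigma,\ck_k\}$ and $\{B_k,\ck_k\}$, so that the intertwining relation forces $(B_j)'' \in \tUi$; the resulting closed formulas are those collected in Table~\ref{table:rktwoSatake2} and verified type-by-type. Assembling these cases, the assignment $x\mapsto x''$ is a well-defined algebra endomorphism $\tTb{i}$ satisfying the stated relation.

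Finally, surjectivity — and hence that $\tTb{i}$ is an automorphism — cannot be read off from the formulas directly; I would defer it, exactly as for $\tTa{i}$, to Theorem~\ref{thm:newb1}, where $\tTa{i}$ and $\tTb{i}$ are shown to be mutual inverses via the uniqueness of elements satisfying an intertwining relation. The main obstacle is the rank two verification: extracting the correct noncommutative polynomials and confirming, across all rank two Satake types, that $(B_j)''$ indeed lies in $\tUi$. This is where genuine type-by-type computation, leaning on the classification of rank two Satake diagrams, appears unavoidable, just as in the proof of Theorem~\ref{thm:rktwo1}.
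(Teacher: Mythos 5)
Your proposal is correct and follows essentially the same route as the paper: uniqueness from invertibility of $\tT_{\bs_i}(\tfX_i^{-1})$, multiplicativity on products, the assignment $x\mapsto\tT_{\bs_i}(x)$ on $\tU^{\imath 0}\tbU$ (Proposition~\ref{prop:Cartanblack2}), an explicit rank one formula for $\tTb{i}(B_i)$ (Proposition~\ref{prop:TiBi}, where the paper reads off $\tTb{i}(B_i)=\tT_{\bs_i}(B_i^\sigma)$ directly by applying $\tT_{\bs_i}$ to $B_i\tfX_i=\tfX_i B_i^\sigma$), a rank two structural result with type-by-type verification (Proposition~\ref{prop:rktwoRijb}, Theorem~\ref{thm:TiBj}, Table~\ref{table:rktwoSatake2}), and surjectivity deferred to Theorem~\ref{thm:newb1}. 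No gaps beyond the explicit computations you correctly identify as the substantive work.
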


The strategy of proving Theorem~\ref{thm:Tb} is largely parallel to that of Theorem~\ref{thm:newb0} given in the previous sections.
We shall prove Theorem~\ref{thm:Tb}(1) and a weaker version of Part (2) that $x \mapsto x''$ defines an endomorphism $\tTb{i}$ of the algebra $\tUi$, by combining Proposition~\ref{prop:Cartanblack2}, Proposition~\ref{prop:TiBi}, and Theorem~\ref{thm:TiBj}. Finally, we show that $\tTb{i}$ is an automorphism of $\tUi$ in Theorem~\ref{thm:newb1}.

Hence $\tTb{i}$ satisfies the following intertwining relation:
\begin{align}
  \label{eq:newb0-1}
\tTb{i}(x) \tT_{\bs_i}(\tfX_i^{-1}) = \tT_{\bs_i}(\tfX_i^{-1}) \tT_{\bs_i}(x),
\quad \text{ for all } x\in \tUi.
\end{align}

%Note that $\tT_{\bs_i}(\tfX_i) $ has $\imath$weight $0$ since $\iwt(\tfX_i)=0$ and the action of $\tT_i$ is compatible with $\imath$weights; see \eqref{eq:iwt}. Therefore, the element $\tTb{i}(B_j)$ is homogeneous of $\imath$weight $\bs_i(\alpha_j)$, for any $i,j \in \wI$; compare Proposition~\ref{prop:newb2}.

%
%
\subsection{Action of $\tTb{i}$ on $\tU^{\imath 0} \tbU$}
  \label{sub:TbCartan}

Just as for Proposition~\ref{prop:Cartanblack}, we can prove the following.
\begin{proposition}
  \label{prop:Cartanblack2}
Let $i \in \wI$. For each $x \in \tU^{\imath 0} \tbU$, there is a unique element $\tTb{i}(x)\in \tU^{\imath 0} \tbU$ such that the intertwining relation
$\tTb{i}(x) \tT_{\bs_i}(\tfX_i^{-1})= \tT_{\bs_i}(\tfX_i^{-1}) \tT_{\bs_i}(x)$ holds; see \eqref{eq:newb0-1}. More explicitly, %$\tTb{i}(x) =\tT_{\bs_i}(x)$, and
\begin{align*}
\tTb{i}(u)& =(\widehat{\tau}_{\bullet,i} \circ \widehat{\tau} )(u),\quad
\tTb{i}(\tk_{j,\dm}) =\tk_{\bs_i \alpha_j,\dm},
\qquad
\text{ for } u\in \tbU \text{ and } j\in \wI.
\end{align*}
\end{proposition}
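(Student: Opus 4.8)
The plan is to mirror, essentially verbatim, the construction of $\tTa{i}$ on the Cartan part carried out in Proposition~\ref{prop:Cartanblack}, replacing the intertwining relation \eqref{eq:newb0} by \eqref{eq:newb0-1}. First I would note that the uniqueness of $\tTb{i}(x)$ for any given $x\in\tUi$ is immediate from the invertibility of $\tT_{\bs_i}(\tfX_i^{-1})$, exactly as in the proof of Theorem~\ref{thm:newb0}(1). So the content is to exhibit, for $x\in\tU^{\imath 0}\tbU$, an explicit element of $\tU^{\imath 0}\tbU$ satisfying the defining relation.

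The key observation is that on the subalgebra $\tU^{\imath 0}\tbU$ the candidate is simply $\tTb{i}(x):=\tT_{\bs_i}(x)$, just as $\tTa{i}(x)=\tT_{\bs_i}^{-1}(x)$ worked for the other symmetry. Indeed, by Proposition~\ref{prop:newb-1} and Proposition~\ref{prop:Tjblack} (applied to the rank one subdiagram $(\bIi,\tau)$), the operator $\tT_{w_\bullet}$ fixes $\tfX_i$, and one checks that $\tT_{\bs_i}$ preserves both $\tU^{\imath 0}$ and $\tbU$. Concretely, applying $\tT_{\bs_i}$ (rather than $\tT_{\bs_i}^{-1}$) to the computation \eqref{eq:newb8} gives $\tT_{\bs_i}(u)=(\widehat{\tau}_{\bullet,i}\circ\widehat{\tau})(u)$ for $u\in\tbU$, using that $\widehat{\tau}_{\bullet,i}\circ\widehat{\tau}$ is an involution. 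For the toral generators, setting $w=\bs_i$ in Lemma~\ref{lem:newb1} yields $\tT_{\bs_i}(\tk_{j,\dm})=\tk_{\bs_i\alpha_j,\dm}$ (note $\bs_i$ commutes with $\tau$, so the hypothesis of Lemma~\ref{lem:newb1} is satisfied). This establishes the displayed formulas.

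It then remains to verify that $\tTb{i}(x)=\tT_{\bs_i}(x)$ genuinely satisfies the intertwining relation \eqref{eq:newb0-1} for $x\in\tU^{\imath 0}\tbU$. Since $\tT_{\bs_i}$ is an algebra automorphism, this reduces to showing $\tT_{\bs_i}(x)$ commutes with $\tT_{\bs_i}(\tfX_i^{-1})$, equivalently (applying $\tT_{\bs_i}^{-1}$) that $x$ commutes with $\tfX_i^{-1}$, hence with $\tfX_i$. But this is precisely the second intertwining relation in Theorem~\ref{thm:fX1} restricted to the rank one subdiagram: $x\tfX_i=\tfX_i x$ for all $x\in\tU^{\imath 0}\tbU$. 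I expect no genuine obstacle here; the only point requiring mild care is confirming that $\tT_{\bs_i}$ maps $\tU^{\imath 0}\tbU$ into itself (so that the produced element lies in the correct subalgebra and the formulas are well-defined), which follows from Lemma~\ref{lem:newb1} together with the formula $\tT_{\bs_i}(\tbU)=\tbU$ derived above. The entire argument is the exact analogue of Proposition~\ref{prop:Cartanblack}, so I would simply invoke that proof with the roles of $\tT_{\bs_i}^{-1}$ and $\tT_{\bs_i}$ interchanged.
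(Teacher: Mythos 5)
Your proof is correct and is exactly the argument the paper intends: the paper's own proof of this proposition is the single line ``Just as for Proposition~\ref{prop:Cartanblack}'', i.e., take $\tTb{i}(x)=\tT_{\bs_i}(x)$ on $\tU^{\imath 0}\tbU$ and reduce the intertwining relation \eqref{eq:newb0-1} to the commutation $x\,\tfX_i=\tfX_i\,x$ furnished by Theorem~\ref{thm:fX1}. The only cosmetic point is that Lemma~\ref{lem:newb1} is stated for $\tT'_{\bs_i,-1}$ rather than $\tT_{\bs_i}$; since $\bs_i$ is an involution commuting with $\tau$, the two operators agree on $\tU^{\imath 0}$ (and likewise $\widehat{\tau}_{\bullet,i}\circ\widehat{\tau}$ is an involution on $\tbU$), which is precisely the coincidence of $\tTa{i}$, $\tTb{i}$ and $\tT_{\bs_i}^{\pm 1}$ that the paper records immediately after the proposition.
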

It follows by Proposition~\ref{prop:Cartanblack} and Proposition~\ref{prop:Cartanblack2} that $\tTa{i}, \tTb{i},$ and $\tT_{\bs_i}^{\pm 1}$ coincide on $\tU^{\imath 0} \tbU$. In particular, we have
\begin{align}
 \label{eq:TTCartan}
\tTb{i}(x)=(\sigma^\imath \circ \tTa{i} \circ \sigma^\imath)(x),
\qquad
\text{ for }
x\in \tU^{\imath 0} \tbU.
\end{align}

\subsection{Rank 1 formula for $\tTb{i}(B_i)$}
  \label{subsec:TbRank1} %{twist-2}

We shall establish a uniform formula for $\tTb{i}(B_i), $ for $i\in \wI$, a counterpart of Theorem~\ref{thm:rkone1}. Recall the anti-involution $\sigma^\imath$ of $\tUi$ from Proposition~\ref{prop:newb1}.

\begin{proposition}
  \label{prop:TiBi}
Let $i\in \wI$. There exists a unique element $\tTb{i}(B_i)\in \tUi$ which satisfies the following intertwining relation (see \eqref{eq:newb0-1})
\begin{align}
   \label{eq:rkone16}
\tTb{i}(B_i) \, \tT_{\bs_i}(\tfX_i)^{-1} = \tT_{\bs_i}(\tfX_i)^{-1} \, \tT_{\bs_i}(B_i).
\end{align}
More explicitly, we have
\begin{align}
  \label{eq:rkone20}
\tTb{i}(B_i)=-q^{-(\alpha_i, \alpha_{i})}  \tT_{w_\bullet}^{-2}( B_{\tau_{\bullet,i} \tau i})\tT_{\bw}(\ck_{\tau_{\bullet,i}  i}^{-1}).
\end{align}
In particular, we have $\tTb{i}(B_i)=(\sigma^\imath \circ \tTa{i})(B_i)$.
\end{proposition}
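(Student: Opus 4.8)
The plan is to derive the entire statement from the already-proved rank one formula for $\tTa{i}(B_i)$ (Theorem~\ref{thm:rkone1}) by transporting it through the anti-involutions $\sigma$ and $\sigma^\imath$, rather than recomputing $\tT_{\bs_i}(B_i^{\sigma})$ from scratch. The first ingredient I would record is the compatibility of $\sigma$ with the rescaled braid operators: since the scaling automorphism $\tPsi_{\bvs_\diamond}$ commutes with $\sigma$, the relation $\tTD'_{i,-1}=\sigma\circ\tTD''_{i,+1}\circ\sigma$ of \eqref{eq:sTs} descends through \eqref{def:tT-1} to $\sigma\circ\tT_i\circ\sigma=\tT_i^{-1}$. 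As $\bs_i$ and $w_\bullet$ are involutions, this yields $\sigma\circ\tT_{\bs_i}\circ\sigma=\tT_{\bs_i}^{-1}$ and $\sigma\circ\tT_{w_\bullet}\circ\sigma=\tT_{w_\bullet}^{-1}$, hence $\sigma\circ\tT_{\bs_i}^{-1}=\tT_{\bs_i}\circ\sigma$ and $\sigma\circ\tT_{w_\bullet}^{2}=\tT_{w_\bullet}^{-2}\circ\sigma$ as maps on $\tU$.

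Next I would settle the characterization and uniqueness. Applying $\tT_{\bs_i}^{-1}$ to the intertwining relation \eqref{eq:rkone16} and invoking the rank one case of Theorem~\ref{thm:fX1}, namely $B_i\tfX_i=\tfX_i B_i^{\sigma}$, shows that \eqref{eq:rkone16} is equivalent to $\tT_{\bs_i}^{-1}(\tTb{i}(B_i))=B_i^{\sigma}$, i.e. $\tTb{i}(B_i)=\tT_{\bs_i}(B_i^{\sigma})$; uniqueness is immediate from the invertibility of $\tT_{\bs_i}(\tfX_i)$.

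To prove the ``in particular'' identity and, with it, that $\tTb{i}(B_i)\in\tUi$, I would apply $\sigma$ to the rank one defining relation $\tTa{i}(B_i)\,\tfX_i=\tfX_i\,\tT_{\bs_i}^{-1}(B_i)$ from Theorem~\ref{thm:rkone1}. Using $\sigma(\tfX_i)=\tfX_i$ (Proposition~\ref{prop:inv}) and $\sigma\circ\tT_{\bs_i}^{-1}=\tT_{\bs_i}\circ\sigma$ gives $\tfX_i\,\sigma(\tTa{i}(B_i))=\tT_{\bs_i}(B_i^{\sigma})\,\tfX_i$. Comparing with the rank one form of the intertwining property of $\sigma^\imath$ from Proposition~\ref{prop:newb1}, namely $\sigma^\imath(x)\tfX_i=\tfX_i\sigma(x)$, and cancelling $\tfX_i$, I conclude $\sigma^\imath(\tTa{i}(B_i))=\tT_{\bs_i}(B_i^{\sigma})=\tTb{i}(B_i)$. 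Since $\sigma^\imath$ preserves $\tUi$ and $\tTa{i}(B_i)\in\tUi$, membership in $\tUi$ follows.

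It then remains to extract the explicit formula \eqref{eq:rkone20}. Writing $j_0:=\tau_{\bullet,i}\tau i\in\{i,\tau i\}$ and $j_1:=\tau_{\bullet,i}i$, I would apply $\sigma$ to the expression $\tT_{\bs_i}^{-1}(B_i)=-q^{-(\alpha_i,\bw\alpha_{\tau i})}\tT_{w_\bullet}^{2}(B_{j_0}^{\sigma})\,\ck_{j_0}^{-1}$ of Lemma~\ref{lem:bsiBi}, recalling $\tTb{i}(B_i)=\tT_{\bs_i}(B_i^{\sigma})=\sigma(\tT_{\bs_i}^{-1}(B_i))$. The relations $\sigma\circ\tT_{w_\bullet}^{2}=\tT_{w_\bullet}^{-2}\circ\sigma$ and $\sigma(B_{j_0}^{\sigma})=B_{j_0}$ turn the noncommutative factor into $\tT_{w_\bullet}^{-2}(B_{j_0})$, while a direct torus computation from \eqref{def:Ki}, using that $\tau$ and $\tau_{\bullet,i}$ commute so that $\tau j_0=j_1$, and that $\tT_{w_\bullet}=\tTD_{w_\bullet}$ acts on $\tU^{\imath 0}$ by $w_\bullet$ without $q$-scaling (Remark~\ref{rem:sameT}), gives $\sigma(\ck_{j_0})=\tT_{w_\bullet}(\ck_{j_1})$. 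Finally I would commute the Cartan factor $\tT_{w_\bullet}(\ck_{j_1}^{-1})$ past $\tT_{w_\bullet}^{-2}(B_{j_0})$; the resulting power of $q$ converts the prefactor $-q^{-(\alpha_i,\bw\alpha_{\tau i})}$ into $-q^{-(\alpha_i,\alpha_i)}$, producing \eqref{eq:rkone20}, and $\tT_{w_\bullet}^{-2}(B_{j_0})\in\tUi$ by Proposition~\ref{prop:Tjblack}. I expect the main obstacle to be exactly this last weight bookkeeping: keeping straight the index identifications $j_0,j_1$ under the two distinct diagram involutions $\tau$ and $\tau_{\bullet,i}$, and verifying that the commutation constant is precisely $(\alpha_i,\bw\alpha_{\tau i})-(\alpha_i,\alpha_i)$; all the remaining steps are formal manipulations with $\sigma$ and the braid operators.
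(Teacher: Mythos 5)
Your proposal is correct and follows essentially the same route as the paper: both identify $\tTb{i}(B_i)=\tT_{\bs_i}(B_i^{\sigma})=\sigma\circ\tT_{\bs_i}^{-1}(B_i)$ from the intertwining relation via $\sigma\tT_{\bs_i}\sigma=\tT_{\bs_i}^{-1}$, obtain \eqref{eq:rkone20} by applying $\sigma$ to \eqref{eq:TBi} together with $\sigma(\ck_{\tau_{\bullet,i}\tau i})=\tT_{\bw}(\ck_{\tau_{\bullet,i}i})$, and deduce $\tTb{i}(B_i)=\sigma^\imath(\tTa{i}(B_i))$ from Theorem~\ref{thm:rkone1} and the rank~1 case of \eqref{eq:newb1}. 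The only (harmless) differences are cosmetic: you prove membership in $\tUi$ via $\sigma^\imath$ before extracting the explicit formula, and you spell out the final Cartan-commutation and index bookkeeping that the paper leaves implicit; your weight computation there is correct.
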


\begin{proof}
By Theorem~\ref{thm:fX1} (applied to the rank 1 setting), we have $B_i  \tfX_i = \tfX_i B_i^{\sigma}$, which can be rewritten as
\begin{align}
  \label{eq:rkone17}
\tT_{\bs_i}(\tfX_i) \tT_{\bs_i}(B_i^\sigma)=\tT_{\bs_i}(B_i) \tT_{\bs_i}(\tfX_i).
\end{align}
Hence, by comparing \eqref{eq:rkone16} and \eqref{eq:rkone17} and then applying \eqref{eq:sTs}, we obtain that
\begin{align}\label{eq:rkone18}
\tTb{i}(B_i) =\tT_{\bs_i}(B_i^\sigma)
=(\sigma\circ\tT_{\bs_i}^{-1})(B_i).
\end{align}

We now convert the formula \eqref{eq:rkone18} to the desired formula \eqref{eq:rkone20} for $\tTb{i}(B_i)$, which particularly shows that $\tTb{i}(B_i) \in \tUi$. To that end, note that $\sigma(\ck_{\tau_{\bullet,i}\tau i})= \tT_{\bw}(\ck_{\tau_{\bullet,i}  i})$, by Proposition~\ref{prop:QG4} and definition \eqref{def:Ki} of $\ck_i$. Applying $\sigma$ to the identity $\tT_{\bs_i}^{-1}(B_i) =-q^{-(\alpha_i,\bw\alpha_{\tau i}) } \tT_{w_\bullet}^2 ( B_{\tau_{\bullet,i} \tau i}^{\sigma})\ck_{\tau_{\bullet,i} \tau i}^{-1}$ in \eqref{eq:TBi} and using \eqref{eq:rkone18}, we have established the formula \eqref{eq:rkone20} for $\tTb{i}(B_i)$.

It remains to show that $\tTb{i}(B_i)=(\sigma^\imath \circ \tTa{i})(B_i)$. Recall $(\sigma^\imath)^2 =1$. 
Indeed, we have
\begin{align*}
\tTb{i}(B_i)
&\overset{\eqref{eq:rkone18}}{=}(\sigma\circ\tT_{\bs_i}^{-1})(B_i)
\\
&\overset{(*)}{=}(\sigma\circ \ad_{\tfX_i^{-1}}\circ \tTa{i})(B_i)
\\
&\overset{(\dag)}{=}\big(\sigma^\imath \circ \tTa{i}\big)(B_i),
\end{align*}
where (*) follows by Theorem~\ref{thm:rkone1}, and (\dag) follows by applying \eqref{eq:newb1} to the rank 1 Satake subdiagram associated with $i$.
\end{proof}

\subsection{Rank 2 formulas for $\tTb{i}(B_j)$}

The following lemma is a reformulation of Lemma~\ref{lem:rktwo1}.

\begin{lemma}
\label{lem:rktwo4b}
We have
\begin{itemize}
\item[(1)] $\tT_{\bs_i}(F_j)$ commutes with $\tT_{\bs_i}(\tfX_i)$.
\item[(2)] $\tT_{\bw}(E_{\tau j}) K_j'$ commutes with $\tT_{\bs_i}(\tfX_i)$.
\end{itemize}
\end{lemma}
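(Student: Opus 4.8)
The plan is to deduce both statements directly from the two commutation identities in Lemma~\ref{lem:rktwo1} by applying the algebra automorphism $\tT_{\bs_i}$ of $\tU$; indeed, the whole point of the ``reformulation'' is that $\tT_{\bs_i}$ transports the relations \eqref{eq:FjUp}--\eqref{eq:TEjUp}, which are stated relative to $\tfX_i$, into relations relative to $\tT_{\bs_i}(\tfX_i)$. Since $\tT_{\bs_i}$ is multiplicative, commutation of a pair of elements is preserved under it, so each claim should amount to a single application of $\tT_{\bs_i}$ to one of the known identities.

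For part (1), I would start from \eqref{eq:FjUp}, namely $F_j \tfX_i = \tfX_i F_j$, and apply $\tT_{\bs_i}$ to both sides. Because $\tT_{\bs_i} \in \Aut(\tU)$ is a well-defined algebra homomorphism (the braid relations guaranteed by Proposition~\ref{prop:braid0} make $\tT_{\bs_i}$ meaningful), this yields $\tT_{\bs_i}(F_j)\, \tT_{\bs_i}(\tfX_i) = \tT_{\bs_i}(\tfX_i)\, \tT_{\bs_i}(F_j)$, which is exactly claim (1).

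For part (2), I would instead apply $\tT_{\bs_i}$ to \eqref{eq:TEjUp}. Writing $Y := \tT_{\bw}(E_{\tau j}) K_j'$, identity \eqref{eq:TEjUp} asserts that $\tT_{\bs_i}^{-1}(Y)$ commutes with $\tfX_i$; applying $\tT_{\bs_i}$ then gives $Y\, \tT_{\bs_i}(\tfX_i) = \tT_{\bs_i}(\tfX_i)\, Y$, i.e. $\tT_{\bw}(E_{\tau j}) K_j'$ commutes with $\tT_{\bs_i}(\tfX_i)$, as asserted.

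The only point requiring care — and the closest thing to an obstacle — is that $\tfX_i$ and $\tT_{\bs_i}(\tfX_i)$ are genuinely elements of a completion of $\tU^+$ rather than of $\tU$ itself (Remark~\ref{rmk:fX2}), so one must confirm that $\tT_{\bs_i}$ extends to the relevant completion and that ``apply $\tT_{\bs_i}$ to both sides'' is legitimate there. This is already implicit in the preceding text: expressions such as $\tT_{w_\bullet}(\tfX_i) = \tfX_i$ in Proposition~\ref{prop:newb-1} and the very notation $\tT_{\bs_i}(\tfX_i)$ in Theorem~\ref{thm:Tb} presuppose that braid operators act on these quasi $K$-matrices, so no new analytic input is needed. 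One only records that $\tT_{\bs_i}$ respects the weight grading, hence acts on the completion term-by-term along the weight-graded expansion $\tfX_i = \sum_{m\ge 0}\tfX_{i,m}$, making the multiplicativity used above valid.
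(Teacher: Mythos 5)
Your proof is correct and matches the paper's intent exactly: the paper gives no argument beyond calling the lemma ``a reformulation of Lemma~\ref{lem:rktwo1}'', and the reformulation is precisely the application of the automorphism $\tT_{\bs_i}$ to \eqref{eq:FjUp} and \eqref{eq:TEjUp} that you carry out. Your remark about extending $\tT_{\bs_i}$ to the weight-graded completion containing $\tfX_i$ is a reasonable extra precaution, consistent with how the paper already uses expressions like $\tT_{\bs_i}(\tfX_i)$.
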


Introduce a shorthand notation
\begin{align}
 \label{eq:Bhat}
\widehat{B}_i :=\tT_{\bs_i}\big(\tTa{i}(B_i) \big).
\end{align}
We reformulate the intertwining relation \eqref{eq:twinij} as
\begin{align}
 \label{eq:appB1}
\widehat{B}_i \cdot \tT_{\bs_i}(\tfX_i) = \tT_{\bs_i}(\tfX_i) \cdot B_i.
\end{align}

\begin{proposition}
  \label{prop:rktwoRijb}
Let $i\neq j\in \wItau$ be such that $j \not \in \{i, \tau i\}$. Then there exists a non-commutative polynomial $P_{ij}(x_i,x_{\tau i}, y_i, y_{\tau i}, z;\tbcG)$, which is linear in $z$, such that
\begin{enumerate}
\item
$\tT_{\bs_i} (F_j) = P_{ij}(B_i, B_{\tau i}, \tk_i, \tk_{\tau i}, F_j; \tbcG)$,
\item
$\tT_{\bs_i} \big(\tT_{\bw}(E_{\tau j})K_j'\big) = P_{ij}(\widehat{B}_i,\widehat{B}_{\tau i},\tk_i, \tk_{\tau i},\tT_{\bw}(E_{\tau j})K_j'; \tbcG)$.
\end{enumerate}
\end{proposition}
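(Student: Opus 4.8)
The two assertions of the proposition are the exact $\tT_{\bs_i}$-analogues of the two assertions of the already-established Proposition~\ref{prop:rktwoRij}, so the plan is to \emph{transport} the formulas for $\tT_{\bs_i}^{-1}$ to formulas for $\tT_{\bs_i}$ rather than redo a computation. The key preliminary observation is the operator identity
\[
\sigma\circ \tT_{\bs_i}^{-1} = \tT_{\bs_i}\circ \sigma \qquad\text{on } \tU .
\]
Indeed, the single-reflection identity $\tTD'_{k,-1}=\sigma\circ\tTD''_{k,+1}\circ\sigma$ from \eqref{eq:sTs} upgrades to the rescaled operators $\tT_k^{-1}=\sigma\circ\tT_k\circ\sigma$ once one notes that $\sigma$ commutes with $\tPsi_{\bvs_\diamond}$ (they agree by scalars on the Chevalley generators). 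Because $\sigma$ is an anti-automorphism, reassembling these along a reduced word $\bs_i=s_{k_1}\cdots s_{k_m}$ gives $\sigma\tT_{\bs_i}^{-1}=\tT_{s_{k_m}}\cdots\tT_{s_{k_1}}\sigma$; since $\bs_i$ is an involution, $s_{k_m}\cdots s_{k_1}$ is again a reduced word for $\bs_i$, and the displayed relation follows.

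For Part (1) I would apply $\sigma$ to Proposition~\ref{prop:rktwoRij}(1). The left side becomes $\sigma\tT_{\bs_i}^{-1}(F_j)=\tT_{\bs_i}\sigma(F_j)=\tT_{\bs_i}(F_j)$. On the right side $\sigma$ reverses the word order of the non-commutative polynomial $R_{ij}$ and replaces each argument by its $\sigma$-image: $\sigma(B_i^\sigma)=B_i$, $\sigma(F_j)=F_j$, $\sigma$ permutes $\tbcG$, and writing $\bw\alpha_{\tau i}=\alpha_{\tau i}+\beta_0$ with $\beta_0\in\Z\bI$ one computes $\ck_i=\tk_iK'_{\beta_0}$, hence $\sigma(\ck_i)=\tk_{\tau i}K_{\beta_0}$ with $K_{\beta_0}\in\tbU$ (and symmetrically for $\ck_{\tau i}$). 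Absorbing the black Cartan factors into the $\tbcG$-slot, this produces a non-commutative polynomial $P_{ij}$, still linear in its fifth slot because $\sigma(F_j)=F_j$, such that $\tT_{\bs_i}(F_j)=P_{ij}(B_i,B_{\tau i},\tk_i,\tk_{\tau i},F_j;\tbcG)$, as required.

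Part (2) cannot be obtained by applying $\sigma$ to Proposition~\ref{prop:rktwoRij}(2), since that would describe $\tT_{\bs_i}\big(K_j\tT_{\bw}^{-1}(E_{\tau j})\big)$ rather than $\tT_{\bs_i}\big(\tT_{\bw}(E_{\tau j})K_j'\big)$. Instead I would deduce it from Part (1) by applying $\cL=\tT_{\bw}\tT_{w_0}$, exactly as Lemma~\ref{lem:rktwo2-2} was deduced from Lemma~\ref{lem:rktwo2-1}: one uses that $\cL$ commutes with $\tT_{\bs_i}$ and $\tT_{\bw}$, that $\cL$ sends $F_j$ to a scalar multiple of $\tT_{\bw}(E_{\tau j})K_j'$ times a Cartan element (Lemma~\ref{lem:braid1}), and that $\cL$ carries $B_i$ to a scalar/Cartan multiple of $\widehat B_i$ (compare \eqref{eq:Bhat} and \eqref{eq:appB1}). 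The hard part will be precisely the matching of the \emph{same} polynomial $P_{ij}$ in both parts: this forces careful bookkeeping of the $q$-powers, the Cartan twists, and the $\ck\leftrightarrow\tk$ conversions produced by $\cL$, together with the (mildly type-dependent) identity $\cL\tT_{\bs_i}=\tT_{\bs_i}\cL$, which rests on the $\tau_0$-symmetry of each rank-$2$ diagram. If the $\cL$-step resists a fully uniform treatment in some type, one can fall back on a type-by-type verification entirely parallel to Appendix~\ref{app1}, reading the local data of each rank-$2$ Satake diagram off Table~\ref{table:rktwodatum}.
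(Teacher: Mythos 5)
Your derivation of Part (1) is correct and genuinely different from the paper's: the paper proves the whole proposition by a type-by-type computation parallel to Appendix~\ref{app1} (with details relegated to Appendix~B of the arXiv version), whereas you obtain Part (1) uniformly by applying $\sigma$ to Proposition~\ref{prop:rktwoRij}(1). The ingredients check out: $\sigma$ commutes with $\tPsi_{\bvs_\diamond}$, hence $\tT_k^{-1}=\sigma\circ\tT_k\circ\sigma$; $\sigma(B_i^\sigma)=B_i$; and $\sigma(\ck_i)=\tk_{\tau i}K_{\beta_0}$ with $K_{\beta_0}$ absorbable into the $\tbcG$-slot. One small slip: conjugation by $\sigma$ does \emph{not} reverse the order in which the operators $\tT_{k}^{-1}$ are composed (conjugation by any bijection preserves composition), so $\sigma\tT_{\bs_i}^{-1}\sigma=\tT_{k_1}\cdots\tT_{k_m}$ directly; your reversed word happens to give the same operator because $\bs_i$ is an involution and the $\tT_k$ satisfy the braid relations, so the conclusion survives. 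A pleasant by-product of your construction is that $P_{ij}$ is by definition the $\sigma$-reverse of $R_{ij}$, which makes the identity $\tTb{i}(B_j)=(\sigma^\imath\circ\tTa{i})(B_j)$ invoked in the proof of Theorem~\ref{thm:newb1} transparent rather than a comparison of tables.

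Part (2), however, is a plan rather than a proof, and the gap sits exactly where the content of the proposition lies: both parts must hold with the \emph{same} polynomial $P_{ij}$. Your $\cL$-step needs (i) the identity $\cL(B_i)=c\,\widehat{B}_i\,\kappa$ for an explicit scalar $c$ and Cartan element $\kappa$ --- this is true (it follows from \eqref{eq:app2} and \eqref{eq:rkone2} restricted to the rank-one subdiagram; e.g.\ in type AI$_2$ one finds $\cL(B_1)=-q^{-2}\widehat{B}_1\tk_1^{-1}$), but it is asserted by analogy rather than established --- and (ii) the cancellation of all the resulting scalars and Cartan twists $\tT_{\bs_i}(\ck^{-1})$, $\tT_{\bw}(\ck_{\tau j}^{-1})$, $\cL(\tk_i)$, etc., so that the polynomial emerging on the right-hand side is literally $P_{ij}$ again. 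Step (ii) is precisely the computation carried out case by case in Appendix~\ref{app1} (compare the passage from \eqref{eq:rktwo4-1} to \eqref{eq:rktwo4-3}), and nothing in your write-up suggests it becomes shorter here. So your fallback --- a type-by-type verification --- is not a contingency but is, in effect, the paper's actual proof; until (i)--(ii) are done uniformly or the case check is performed, Part (2) remains unproved.
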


The proof of Proposition~\ref{prop:rktwoRijb} is carried out through a type-by-type computation similar to Appendix~\ref{app1} (the detail can be found in Appendix~B in an arXiv version).

We set
\begin{align}\label{eq:appB2}
\tTb{i}(B_j) := P_{ij}(B_i,B_{\tau i},\tk_i,\tk_{\tau i}, B_j; \tbcG).
\end{align}
Clearly, we have $\tTb{i}(B_j)  \in \tUi$.

\begin{theorem}
 \label{thm:TiBj}
Let $i\neq j \in \wItau$. The elements $\tTb{i}(B_j)$ listed in Table~ \ref{table:rktwoSatake2} satisfy the following intertwining relation (see \eqref{eq:newb0-1})
\begin{align}
 \label{eq:TbiBj}
 \tTb{i}(B_j)  \tT_{\bs_i}(\tfX_i)^{-1} = \tT_{\bs_i}(\tfX_i)^{-1} \tT_{\bs_i}(B_j).
\end{align}
\end{theorem}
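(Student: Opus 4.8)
The goal of Theorem~\ref{thm:TiBj} is to verify the intertwining relation \eqref{eq:TbiBj} for the explicit elements $\tTb{i}(B_j)$ defined in \eqref{eq:appB2}. The plan is to mimic the proof of Theorem~\ref{thm:rktwo1} (see \S\ref{subsec:proofTiBj}), now working with the $\tTb{}$-intertwiner $\tT_{\bs_i}(\tfX_i)^{-1}$ instead of $\tfX_i$, and exploiting the structural result Proposition~\ref{prop:rktwoRijb} together with the commutation relations in Lemma~\ref{lem:rktwo4b}. First I would rewrite \eqref{eq:TbiBj} in the equivalent conjugated form
\begin{equation*}
\tT_{\bs_i}(\tfX_i)\,\tTb{i}(B_j)\,\tT_{\bs_i}(\tfX_i)^{-1}=\tT_{\bs_i}(B_j),
\end{equation*}
and then split $\tT_{\bs_i}(B_j)=\tT_{\bs_i}(F_j)+\tT_{\bs_i}\big(\tT_{\bw}(E_{\tau j})K_j'\big)$ according to the decomposition $B_j=F_j+\tT_{\bw}(E_{\tau j})K_j'$.

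The heart of the argument is to handle the two summands separately using Proposition~\ref{prop:rktwoRijb}. For the $F_j$-term, Proposition~\ref{prop:rktwoRijb}(1) expresses $\tT_{\bs_i}(F_j)$ as the polynomial $P_{ij}$ evaluated at $(B_i,B_{\tau i},\tk_i,\tk_{\tau i},F_j)$; here I would conjugate by $\tT_{\bs_i}(\tfX_i)$ and use Lemma~\ref{lem:rktwo4b}(1) (that $\tT_{\bs_i}(F_j)$ commutes with $\tT_{\bs_i}(\tfX_i)$, whence so does its polynomial expression) to move $\tT_{\bs_i}(\tfX_i)$ past it, at the cost of replacing each occurrence of $B_i,B_{\tau i}$ by $\widehat B_i,\widehat B_{\tau i}$ via the defining relation \eqref{eq:appB1}. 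For the $\tT_{\bw}(E_{\tau j})K_j'$-term, Proposition~\ref{prop:rktwoRijb}(2) already writes $\tT_{\bs_i}\big(\tT_{\bw}(E_{\tau j})K_j'\big)$ as $P_{ij}$ evaluated at $(\widehat B_i,\widehat B_{\tau i},\tk_i,\tk_{\tau i},\tT_{\bw}(E_{\tau j})K_j')$, and Lemma~\ref{lem:rktwo4b}(2) lets this term commute with $\tT_{\bs_i}(\tfX_i)$. Conjugating back and invoking \eqref{eq:appB1} once more to pass from $\widehat B_i$ to $B_i$, both summands collapse to $P_{ij}$ evaluated at $(B_i,B_{\tau i},\tk_i,\tk_{\tau i},F_j)$ and $(B_i,B_{\tau i},\tk_i,\tk_{\tau i},\tT_{\bw}(E_{\tau j})K_j')$ respectively. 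By the linearity of $P_{ij}$ in its fifth slot, these combine to $P_{ij}(B_i,B_{\tau i},\tk_i,\tk_{\tau i},B_j;\tbcG)=\tTb{i}(B_j)$, which is exactly \eqref{eq:appB2}.

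I expect the only genuine obstacle to lie not in this assembly — which is a clean formal manipulation parallel to the proof of Theorem~\ref{thm:rktwo1} — but in Proposition~\ref{prop:rktwoRijb} itself, whose verification is the type-by-type computation deferred to the arXiv appendix; the nontrivial structural input is that the same polynomial $P_{ij}$ governs both $\tT_{\bs_i}(F_j)$ (with arguments $B_i$) and $\tT_{\bs_i}(\tT_{\bw}(E_{\tau j})K_j')$ (with arguments $\widehat B_i$), which is what makes the linearity-in-$z$ collapse possible. As in the $\tTa{}$-case, care is needed with the $\tau i=i$ situation (using the convention of Remark~\ref{rem:fixed}), and one should double-check that the weight/commutation bookkeeping in Lemma~\ref{lem:rktwo4b} correctly accounts for the rescaling in $\tT_{\bs_i}$; but once Proposition~\ref{prop:rktwoRijb} is in hand, the proof of Theorem~\ref{thm:TiBj} is essentially a transcription of \S\ref{subsec:proofTiBj} with $\tfX_i$ replaced by $\tT_{\bs_i}(\tfX_i)^{-1}$.
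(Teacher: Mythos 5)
Your proposal is correct and follows essentially the same route as the paper: decompose $\tT_{\bs_i}(B_j)=\tT_{\bs_i}(F_j)+\tT_{\bs_i}\big(\tT_{\bw}(E_{\tau j})K_j'\big)$, conjugate each summand by $\tT_{\bs_i}(\tfX_i)^{-1}$ using Lemma~\ref{lem:rktwo4b} and \eqref{eq:appB1}, and reassemble via the linearity of $P_{ij}$ in its fifth slot, with all the genuine work residing in Proposition~\ref{prop:rktwoRijb}. The only imprecision is in your treatment of the $F_j$-summand, where the conjugation is simply trivial by Lemma~\ref{lem:rktwo4b}(1) (no replacement of $B_i$ by $\widehat B_i$ occurs there; that substitution is needed only for the $\tT_{\bw}(E_{\tau j})K_j'$-summand), but this does not affect the validity of the argument.
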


\begin{proof}
Recall $B_j =F_j +\tT_{\bw}(E_{\tau j})K_j'$.
By Lemma~\ref{lem:rktwo4b}, \eqref{eq:appB1} and \eqref{eq:appB2}, we have
\begin{align*}
&\tT_{\bs_i}(\tfX_i)^{-1} \cdot \tT_{\bs_i}(B_j)\cdot \tT_{\bs_i}(\tfX_i)
\notag \\
&=\tT_{\bs_i}(\tfX_i)^{-1} \Big(\tT_{\bs_i}(F_j)+\tT_{\bs_i} \big(\tT_{\bw}(E_{\tau j})K_j'\big) \Big) \tT_{\bs_i}(\tfX_i)
\\
&= \tT_{\bs_i}(\tfX_i)^{-1}\tT_{\bs_i}(F_j)\tT_{\bs_i}(\tfX_i)
+ \tT_{\bs_i}(\tfX_i)^{-1}\tT_{\bs_i}\big(\tT_{\bw}(E_{\tau j})K_j'\big)\tT_{\bs_i}(\tfX_i)
\\
&=P_{ij}(B_i,B_{\tau i},\tk_i, \tk_{\tau i},F_j; \tbcG)
+ P_{ij}(B_i,B_{\tau i},\tk_i, \tk_{\tau i},\tT_{\bw}(E_j)K_j' ; \tbcG)
\\
&=P_{ij}(B_i,B_{\tau i},\tk_i, \tk_{\tau i}, B_j; \tbcG)
\\
& =\tTb{i}(B_j),
\end{align*}
where the linearity of the polynomial $P_{ij}$ with respect to the fifth variable is used in the last step.
This proves the desired intertwining property \eqref{eq:TbiBj} and whence the theorem.
\end{proof}

\subsection{$\tTT_{i,e}'$ and $\tTT''_{i,-e}$ as inverses }

Recall the automorphisms $\tTa{i} \in \Aut (\tUi)$ by Theorem~\ref{thm:newb0}. Recalling the bar involution $\tpsi^\imath$ on $\tUi$ from Proposition~\ref{prop:newb3}, we define two more automorphisms $ \tTT''_{i,-1},\tTT'_{i,+1} \in \Aut (\tUi)$ via
\begin{align}
  \label{def:tTT2}
\tTT''_{i,-1}:= \tpsi^\imath \circ \tTT''_{i,+1} \circ \tpsi^\imath,\qquad \tTT'_{i,+1}:= \tpsi^\imath \circ \tTT'_{i,-1} \circ \tpsi^\imath.
\end{align}

Recall that Lusztig's symmetries $\tTD'_{i,e} $ and $\tTD''_{i,-e}$ are mutually inverses, for $i\in \I,e=\pm 1$; see \cite[37.1.2]{Lus93}. They in addition satisfy the relation $\tTD'_{i,-1} =\sigma \circ \tTD''_{i,+1} \circ \sigma$; see \eqref{eq:sTs}. We prove the following $\imath$-analog of Lusztig's symmetries.

\begin{theorem}
 \label{thm:newb1}
$\tTT_{i,e}'$ and $\tTT''_{i,-e}$ are mutually inverse automorphisms on $\tUi$, for $e=\pm 1,i\in \wI$.
Moreover, we have
\begin{align}
  \label{eq:Tsigma}
\tTT_{i,e}' =\sigma^\imath \circ \tTT''_{i,-e} \circ \sigma^\imath.
\end{align}
 \end{theorem}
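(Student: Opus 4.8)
The plan is to prove Theorem~\ref{thm:newb1} by treating the two claims in two stages, reducing everything to the uniqueness of elements satisfying an intertwining relation with a quasi $K$-matrix, which is the mechanism underlying all the (anti-)involution constructions in Section~\ref{sec:quasi K}. First I would establish that $\tTa{i}$ and $\tTb{i}$ are mutual inverses. Recall that $\tTa{i}$ is characterized by the intertwining relation \eqref{eq:newb0}, namely $\tTa{i}(x)\,\tfX_i =\tfX_i\,\tT_{\bs_i}^{-1}(x^\imath)$, while $\tTb{i}$ is characterized by \eqref{eq:newb0-1}, namely $\tTb{i}(x)\,\tT_{\bs_i}(\tfX_i^{-1}) =\tT_{\bs_i}(\tfX_i^{-1})\,\tT_{\bs_i}(x)$. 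The strategy is to show that the composite $\tTb{i}\circ\tTa{i}$ satisfies the defining intertwining relation of the identity map, and hence equals $\Id$ by uniqueness.

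Concretely, I would start from \eqref{eq:newb0} and apply $\tT_{\bs_i}$ to the quasi $K$-matrix identity. Using that $\tTa{i}(x)$ lies in $\tUi$, I apply the relation \eqref{eq:newb0-1} defining $\tTb{i}$ to the element $\tTa{i}(x)$: this yields
\begin{align*}
\tTb{i}\big(\tTa{i}(x)\big)\,\tT_{\bs_i}(\tfX_i^{-1})
=\tT_{\bs_i}(\tfX_i^{-1})\,\tT_{\bs_i}\big(\tTa{i}(x)^\imath\big).
\end{align*}
The key computation is to feed in $\tT_{\bs_i}$ applied to \eqref{eq:newb0}: from $\tTa{i}(x)\tfX_i =\tfX_i\tT_{\bs_i}^{-1}(x^\imath)$ one gets, after applying the automorphism $\tT_{\bs_i}$ of $\tU$, the relation $\tT_{\bs_i}(\tTa{i}(x)^\imath)\,\tT_{\bs_i}(\tfX_i) =\tT_{\bs_i}(\tfX_i)\,x^\imath$, which rearranges to $\tT_{\bs_i}(\tTa{i}(x)^\imath) =\tT_{\bs_i}(\tfX_i)\,x^\imath\,\tT_{\bs_i}(\tfX_i)^{-1}$. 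Substituting this into the displayed identity above and cancelling the conjugating factors $\tT_{\bs_i}(\tfX_i^{-1})$, I expect to obtain $\tTb{i}(\tTa{i}(x))\,\tT_{\bs_i}(\tfX_i^{-1}) =\tT_{\bs_i}(\tfX_i^{-1})\,x^\imath$, which is precisely the intertwining relation \eqref{eq:newb0-1} with $\tTb{i}(\tTa{i}(x))$ in place of the image of $x$ under the identity; by the uniqueness statement in Theorem~\ref{thm:Tb}(1) this forces $\tTb{i}\circ\tTa{i} =\Id$. Symmetrically $\tTa{i}\circ\tTb{i} =\Id$ follows by the uniqueness in Theorem~\ref{thm:newb0}(1), so $\tTa{i}$ is invertible with inverse $\tTb{i}$; this also completes the pending surjectivity in Theorem~\ref{thm:newb0}. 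Unwinding the definitions $\tTa{i}=\tTT'_{i,-1}$, $\tTb{i}=\tTT''_{i,+1}$ gives the case $e=-1$ of the first claim, and conjugating by the bar involution $\tpsi^\imath$ via \eqref{def:tTT2} yields the case $e=+1$.

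For the second claim \eqref{eq:Tsigma}, I would verify $\tTT'_{i,e} =\sigma^\imath\circ\tTT''_{i,-e}\circ\sigma^\imath$ by checking it on a generating set of $\tUi$, namely $\tU^{\imath 0}\tbU\cup\{B_j\mid j\in\wI\}$, using the explicit formulas already established. On the Cartan-and-black part $\tU^{\imath 0}\tbU$ the identity \eqref{eq:TTCartan} already records $\tTb{i} =\sigma^\imath\circ\tTa{i}\circ\sigma^\imath$, which is exactly the $e=-1$ instance there. On the rank one generator $B_i$, Proposition~\ref{prop:TiBi} gives $\tTb{i}(B_i)=(\sigma^\imath\circ\tTa{i})(B_i)$, and since $\sigma^\imath(B_i)=B_i$ this reads $\tTb{i}(B_i)=(\sigma^\imath\circ\tTa{i}\circ\sigma^\imath)(B_i)$, as required. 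For the rank two generators $B_j$ ($j\neq i,\tau i$), I would compare the explicit formulas in Table~\ref{table:rktwoSatake} (for $\tTa{i}$) and Table~\ref{table:rktwoSatake2} (for $\tTb{i}$) under the anti-involution $\sigma^\imath$; because $\sigma^\imath$ fixes each $B_k$ and acts as $\sigma$ on $\tU^{\imath 0}\tbU$, applying $\sigma^\imath$ reverses products and sends $\ck_k\mapsto\sigma(\ck_k)$, and the resulting formula should match the $P_{ij}$ formula term by term. Once verified on generators, \eqref{eq:Tsigma} for $e=-1$ follows since both sides are algebra homomorphisms (note $\sigma^\imath\circ\tTT''\circ\sigma^\imath$ is a composite of two anti-homomorphisms with a homomorphism, hence a homomorphism), and the $e=+1$ case follows by conjugating with $\tpsi^\imath$ together with the compatibility of $\sigma^\imath$ and $\tpsi^\imath$.

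The main obstacle will be the bookkeeping in the mutual-inverse computation: tracking exactly how $\tT_{\bs_i}$ interacts with the two different intertwining relations, one phrased with $\tfX_i$ and $\tT_{\bs_i}^{-1}$ and the other with $\tT_{\bs_i}(\tfX_i^{-1})$ and $\tT_{\bs_i}$, and ensuring that the conjugation factors cancel cleanly so that the composite lands in the precise form required by the uniqueness statement. A subtle point to get right is the placement of the embedding $u\mapsto u^\imath$ and the fact that $\tTa{i}(x)$ must be re-embedded before $\tT_{\bs_i}$ is applied; I would make these identifications explicit to avoid a spurious mismatch. By contrast, the verification of \eqref{eq:Tsigma} on generators is expected to be routine given \eqref{eq:TTCartan}, Proposition~\ref{prop:TiBi}, and the two tables, serving mainly as a consistency check.
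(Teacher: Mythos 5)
Your proposal follows the paper's own proof in both steps: the mutual-inverse claim is obtained by composing the two intertwining relations \eqref{eq:newb0} and \eqref{eq:newb0-1} and invoking the uniqueness clauses of Theorems~\ref{thm:newb0}(1) and \ref{thm:Tb}(1) (the paper applies $\tT_{\bs_i}^{-1}$ to \eqref{eq:newb0-1} to deduce $\tTa{i}\circ\tTb{i}=\Id$ first, whereas you apply $\tT_{\bs_i}$ to \eqref{eq:newb0}, which is the same computation read in the other direction), and \eqref{eq:Tsigma} is verified on generators exactly as in the paper via \eqref{eq:TTCartan}, Proposition~\ref{prop:TiBi}, and a comparison of Tables~\ref{table:rktwoSatake} and \ref{table:rktwoSatake2}. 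One bookkeeping slip to fix: after substitution the relation should read $\tTb{i}(\tTa{i}(x))\,\tT_{\bs_i}(\tfX_i^{-1})=x^\imath\,\tT_{\bs_i}(\tfX_i^{-1})$, with $x^\imath$ standing to the \emph{left} of the quasi $K$-matrix factor, from which $\tTb{i}\circ\tTa{i}=\Id$ follows by right-cancellation of the invertible element; your displayed version with $x^\imath$ on the right would instead give a conjugate of $x$.
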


\begin{proof}
By definition \eqref{def:tTT2},
 $\tTT''_{i,-1}= \tpsi^\imath \tTT''_{i,+1} \tpsi^\imath,$ and $\tTT'_{i,+1}= \tpsi^\imath \tTT'_{i,-1} \tpsi^\imath$. Hence,
 it suffices to show that $\tTT_{i,-1}'$ and $\tTT''_{i,+1}$ are mutually inverses.

We already knew that $\tTa{i}:\tUi \rightarrow \tUi$ is an injective endomorphism.
Let us now prove that this endomorphism $\tTa{i}$ is surjective.
More precisely, %We have $x' \in \tUi$ which satisfies $x' \tfX_i =\tfX_i \tT_{\bs_i}^{-1}(x^\imath)$.
we shall show the following.

{\bf Claim.}  For any $z \in \tUi$, set  $y :=\tTb{i}(z)$. Then we have $z=\tTa{i}(y)$.

Let us prove the claim. The identity \eqref{eq:newb0-1} reads in our setting as $ y^\imath \,\tT_{\bs_i}(\tfX_i^{-1})= \tT_{\bs_i}(\tfX_i^{-1}) \tT_{\bs_i}(z)$. Applying $\tT_{\bs_i}^{-1}$ to both sides of this identity, we obtain $\tT_{\bs_i}^{-1}(y^\imath) \tfX_i^{-1} = \tfX_i^{-1} z$, which can be rewritten as $z \tfX_i =  \tfX_i \tT_{\bs_i}^{-1}(y^\imath)$. By  \eqref{eq:newb0} and the uniqueness in Theorem~\ref{thm:newb0}(1), we conclude that $z=\tTa{i}(y)$.

By an entirely similar argument as above (switching the role of $\tTa{i}$ and $\tTb{i}$) and using the uniqueness in Theorem~\ref{thm:Tb}(1), we show that, for any $y_1 \in \tUi$, we have $y_1=\tTb{i}(z_1)$, where $z_1 :=\tTa{i}(y_1)$.

Hence $\tTa{i}$ and $\tTb{i}$ are mutually inverses. As $\tTa{i}$ is an endomorphism, we see that both $\tTa{i}$ and $\tTb{i}$ are automorphisms of $\tUi$.

Recall the anti-involuton $\sigma^\imath$ on $\tUi$ from Proposition~\ref{prop:newb1}. It remains to prove that $\tTb{i} =\sigma^\imath \circ \tTa{i} \circ \sigma^\imath$. This follows from the identity \eqref{eq:TTCartan}, the identity $\tTb{i}(B_i)=(\sigma^\imath \circ \tTa{i})(B_i)$ from Proposition~\ref{prop:TiBi}, and $\tTb{i}(B_j)=(\sigma^\imath \circ \tTa{i})(B_j)$, for $i\neq j \in \wItau$; the last identity follows by comparing the rank 2 formulas for $\tTa{i}(B_j)$ in Table~ \ref{table:rktwoSatake} and for $\tTb{i}(B_j)$ in Table~ \ref{table:rktwoSatake2}.
\end{proof}

In particular, Theorem~\ref{thm:newb1} above completes the proof of Theorem~\ref{thm:newb0} that $\tTa{i}$ are automorphisms of $\tUi$.
From now on, thanks to Theorem~\ref{thm:newb1}, we shall denote
\[
\tTT_i :=\tTT_{i,+1}'',
\qquad
\tTT_i^{-1} :=\tTT_{i,-1}'.
\]

%%%%%%
%%%%%%
\section{A basic property of new symmetries}
  \label{sec:BiBj}

In this section, we establish a basic property that $\tTT_{\underline{w}}$, for $w\in \reW$, sends $B_i$ to $B_j$, if $w \alpha_i =\alpha_j$; see Theorem~\ref{thm:fact1}. This is a generalization of a well-known property of braid group action on Chevalley generators in the setting of quantum groups.

We shall first study the rank 2 cases separately, depending on whether $\ell_\circ (\bbw) =3$, $4$, or $6$. Then we deal with the general cases.

\subsection{Rank 2 cases with $\ell_\circ (\bbw) =3$}

Assume that $\wItau=\{i,j\}$ such that $\ell_\circ (\bbw) =3$; in this case, according to Table~\ref{table:rktwodatum}, we must have $\tau =\mathrm{Id}$ and hence we identify $\wI=\{i,j\}$ as well.

\begin{lemma}
\label{lem:simple}
We have $\tT_{\bs_i} \tT_{\bs_j}(B_i )=B_j$.% and $\tT_{\bs_j} \tT_{\bs_i}(B_j )=B_i$.
\end{lemma}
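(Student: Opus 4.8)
The plan is to reduce the claim to the rank one formula of Lemma~\ref{lem:bsiBi} together with the combinatorics of the relative type $A_2$ root system. Since $\ell_\circ(\bbw)=3$, Table~\ref{table:rktwodatum} forces the relative root system to be of type $A_2$ and $\tau=\Id$ (so that $\wI=\{i,j\}$); consequently $\bbw=\bs_i\bs_j\bs_i=\bs_j\bs_i\bs_j$ and $\bs_i\bs_j\balpha_i=\balpha_j$. By Proposition~\ref{prop:LL} the absolute and relative length functions are compatible, so $\ell(\bbw)=\ell(\bs_i)+\ell(\bs_j)+\ell(\bs_i)$ and the braid operators compose, giving $\tT_{\bbw}=\tT_{\bs_i}\tT_{\bs_j}\tT_{\bs_i}$. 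Hence $\tT_{\bs_i}\tT_{\bs_j}=\tT_{\bbw}\tT_{\bs_i}^{-1}$, and the Lemma becomes the assertion
\[
\tT_{\bbw}\bigl(\tT_{\bs_i}^{-1}(B_i)\bigr)=B_j .
\]

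First I would substitute the rank one formula. Because $\tau=\Id$ we have $\tau_{\bullet,i}\tau i=\tau_{\bullet,i}i=i$, so Lemma~\ref{lem:bsiBi} reads $\tT_{\bs_i}^{-1}(B_i)=-q^{-(\alpha_i,\bw\alpha_i)}\tT_{\bw}^{2}(B_i^{\sigma})\ck_i^{-1}$, with $B_i^\sigma$ given by \eqref{eq:Bsig}. Since every $\bs_k$ commutes with $\bw$ and the lengths add in the factorization $W_\bullet\rtimes\reW$, the operator $\tT_{\bbw}$ commutes with $\tT_{\bw}$; therefore
\[
\tT_{\bbw}\bigl(\tT_{\bs_i}^{-1}(B_i)\bigr)
=-q^{-(\alpha_i,\bw\alpha_i)}\,\tT_{\bw}^{2}\bigl(\tT_{\bbw}(B_i^{\sigma})\bigr)\,\tT_{\bbw}(\ck_i^{-1}).
\]
This reduces the problem to evaluating $\tT_{\bbw}$ on the single rank one $\imath$-datum $B_i^\sigma$ and on the Cartan element $\ck_i\in\tU^{\imath 0}$.

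Next I would carry out these two evaluations. On the Cartan part this is immediate from Proposition~\ref{prop:braid0}, since $\tT_{\bbw}$ sends $K_\mu\mapsto(\text{scalar})K_{\bbw\mu}$ and $\bbw$ realizes the relative diagram automorphism $\balpha_i\mapsto-\balpha_j$, so that $\tT_{\bbw}(\ck_i^{-1})$ is a scalar multiple of a Cartan element at node $j$. For the element $B_i^\sigma$ I would use that $\bbw$ carries the rank one local datum at node $i$ to that at node $j$: applying Proposition~\ref{prop:braid0} and Lemma~\ref{lem:braid1} produces node-$j$ root vectors, and the anti-involution $\sigma$ together with the rank one formula \eqref{eq:rkone20} for $\tTb{j}(B_j)=\tT_{\bs_j}(B_j^{\sigma})$ (Proposition~\ref{prop:TiBi}) lets me recognize the outcome as $B_j$ after the $q$-powers cancel the prefactor $-q^{-(\alpha_i,\bw\alpha_i)}$. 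An efficient alternative is to verify the two evaluations directly on the three Satake diagrams of relative type $A_2$, namely AI$_2$, AII$_5$, and EIV.

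The hard part will be the bookkeeping in this last step: tracking the $q$-powers and, for AII$_5$ and EIV, the contributions of the black nodes sitting inside $\tT_{\bw}^{2}\tT_{\bbw}(B_i^\sigma)$, so as to confirm that all the $\tU^{\imath 0}$-factors recombine precisely into the single $K_j'$ appearing in $B_j=F_j+\tT_{\bw}(E_j)K_j'$. I expect the cleanest route is to exploit the symmetry $\bbw=\bs_j\bs_i\bs_j$, organizing the computation around node $j$ from the start and matching the already established rank one identity at node $j$, rather than re-deriving the scalars by hand.
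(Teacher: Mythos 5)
Your reduction $\tT_{\bs_i}\tT_{\bs_j}=\tT_{\bbw}\tT_{\bs_i}^{-1}$ is legitimate (Proposition~\ref{prop:LL} gives the additivity of lengths, so $\tT_{\bbw}=\tT_{\bs_i}\tT_{\bs_j}\tT_{\bs_i}$), and the substitution of Lemma~\ref{lem:bsiBi} together with the commutation of $\tT_{\bbw}$ with $\tT_{\bw}$ is also fine. But the proof stops exactly where the content of the lemma begins: the evaluations of $\tT_{\bbw}(B_i^\sigma)$ and $\tT_{\bbw}(\ck_i^{-1})$, and the cancellation of the prefactor $-q^{-(\alpha_i,\bw\alpha_i)}$, are only described as ``bookkeeping'' to be done later, with two alternative strategies sketched and neither executed. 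In particular, the claim that ``$\bbw$ carries the rank one local datum at node $i$ to that at node $j$'' is precisely what needs proof, and recognizing $\tT_{\bw}^2\tT_{\bbw}(B_i^\sigma)$ via \eqref{eq:rkone20} reintroduces $\tT_{\bs_j}$ and risks circularity unless the scalars and Cartan factors are tracked explicitly. As written, this is an unfinished plan rather than a proof.

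The missing idea is that the decisive fact lives in the \emph{absolute} Weyl group, not the relative one: since $\ell_\circ(\bbw)=3$ forces $\tau=\Id$, one has $\bs_i\bs_j(\alpha_i)=\alpha_j$ in $W$ with $\ell(\bs_i\bs_j s_i)=\ell(\bs_i\bs_j)+1$, so the standard property of braid operators (cf.\ \cite[39.2]{Lus93}, \cite[Proposition 8.20]{Ja95}) gives $\tT_{\bs_i\bs_j}(X_i)=X_j$ for $X=E,F,K'$ (the rescaling by $\tPsi_{\bvs_\dm}$ is harmless here because $\vs_{i,\dm}=\vs_{j,\dm}$ in all three relative type $A_2$ diagrams). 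Your proposal only records the restricted-root statement $\bs_i\bs_j\balpha_i=\balpha_j$, which does not by itself yield the action on root vectors. With the absolute statement in hand, $\tT_{\bs_i}\tT_{\bs_j}(B_i)=\tT_{\bs_i\bs_j}\bigl(F_i+\tT_{\bw}(E_i)K_i'\bigr)=F_j+\tT_{\bw}(E_j)K_j'=B_j$ in one line, using that $\tT_{\bw}$ commutes with $\tT_{\bs_i}$ and $\tT_{\bs_j}$; no rank-one formula, no $\ck_i$, and no case analysis over AI$_2$, AII$_5$, EIV is needed.
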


\begin{proof}
Noting that $\ell(\bs_i \bs_j) = \ell(\bs_i)  + \ell(\bs_j)$, we have $\tT_{\bs_i} \tT_{\bs_j} =\tT_{\bs_i \bs_j}$.
Noting that
$ \bs_i \bs_j (\alpha_i) =\alpha_j$, we have that  $\tT_{\bs_i \bs_j}(X_i)=X_j$, for $X=F, E$ or $K'$;  cf. \cite[39.2]{Lus93} or \cite[Proposition 8.20]{Ja95}.

Recall $\tau =\mathrm{Id}$, and $B_i =F_i + \tT_{w_\bullet} (E_i) K_i'$. Thanks to \eqref{def:bsi}, $\tT_{w_\bullet}$ commutes with both $\tT_{\bs_i}$ and $\tT_{\bs_j}$. Therefore, we have
\[
\tT_{\bs_i} \tT_{\bs_j}(B_i )
= \tT_{\bs_i \bs_j}(F_i + \tT_{w_\bullet} (E_i) K_i')
= F_j + \tT_{w_\bullet} (E_j) K_j'
= B_j.
\]
The lemma is proved.
\end{proof}

 \begin{proposition}
   \label{prop:fact1}
 We have $\tTT_i^{-1} \tTT_j^{-1} (B_i)=B_j$; or equivalently, $\tTT_j \tTT_i (B_j)=B_i$.
 \end{proposition}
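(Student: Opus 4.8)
The plan is to reduce the statement about the $\imath$-symmetries $\tTT_i^{-1}, \tTT_j^{-1}$ on $\tUi$ to the corresponding statement about the rescaled braid symmetries $\tT_{\bs_i}^{-1}, \tT_{\bs_j}^{-1}$ on $\tU$, which was already handled in Lemma~\ref{lem:simple}. The bridge between the two is the defining intertwining relation \eqref{eq:newb0}, which relates $\tTa{i}$ to $\tT_{\bs_i,-1}'$ via conjugation by the rank one quasi $K$-matrix $\tfX_i$. Concretely, I expect the proof to proceed by computing $\tTT_i^{-1} \tTT_j^{-1}(B_i) = \tTa{i}\big(\tTa{j}(B_i)\big)$ through two successive applications of \eqref{eq:newb0}.

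The key steps would be as follows. First I would invoke \eqref{eq:newb0} to write $\tTa{j}(B_i)\, \tfX_j = \tfX_j\, \tT_{\bs_j}^{-1}(B_i^\imath)$, and then apply $\tTa{i}$ (again via \eqref{eq:newb0}, now with the factor $\tfX_i$) to relate $\tTa{i}\tTa{j}(B_i)$ to a twisted product involving $\tT_{\bs_i}^{-1}\tT_{\bs_j}^{-1}(B_i^\imath)$ conjugated by an appropriate quasi $K$-matrix. By Lemma~\ref{lem:simple}, the inner object simplifies: $\tT_{\bs_i}^{-1}\tT_{\bs_j}^{-1}(B_i^\imath)=B_j^\imath$, since $\tT_{\bs_i}\tT_{\bs_j}(B_i)=B_j$ gives $\tT_{\bs_i}^{-1}\tT_{\bs_j}^{-1}(B_j)=B_i$ and hence the mutually inverse statement. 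The uniqueness clause in Theorem~\ref{thm:newb0}(1) (invertibility of the relevant quasi $K$-matrix) should then force $\tTT_i^{-1}\tTT_j^{-1}(B_i)=B_j$. The cleanest route is probably to directly verify that $B_j$ satisfies the intertwining relation characterizing $\tTT_i^{-1}\tTT_j^{-1}(B_i)$, namely an identity of the form $B_j \cdot (\text{quasi }K) = (\text{quasi }K)\cdot \tT_{\bs_i}^{-1}\tT_{\bs_j}^{-1}(B_i)$, and then conclude by uniqueness.

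The main obstacle I anticipate is bookkeeping the quasi $K$-matrix factors correctly when composing two intertwining relations: applying $\tTa{i}$ to a relation already phrased through $\tfX_j$ does not simply yield $\tfX_j$ conjugation again, because $\tTa{i}$ acts on $\tUi$ and one must track how the composite symmetry interacts with the product $\tfX_i \cdot (\text{something})$. One must ensure the relevant product of rank one quasi $K$-matrices is the right object and that Proposition~\ref{prop:inv} (the $\sigma$- and $\widehat\tau$-invariance, and the fact $\tfX_i=\tfX_{\tau i}$) together with $\tau=\Id$ in this case keeps things consistent. A secondary subtlety is the relation between $\bs_i\bs_j$ and the genuine factorization $\tT_{\bs_i\bs_j}=\tT_{\bs_i}\tT_{\bs_j}$, which is guaranteed by $\ell(\bs_i\bs_j)=\ell(\bs_i)+\ell(\bs_j)$ as used in Lemma~\ref{lem:simple}; I would lean on Proposition~\ref{prop:LL} to make sure this length-additivity holds when $\ell_\circ(\bbw)=3$.

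The equivalence between the two formulations follows formally: since $\tTT_i^{-1}$ and $\tTT_j^{-1}$ are inverse to $\tTT_i$ and $\tTT_j$ respectively (Theorem~\ref{thm:newb1}), the identity $\tTT_i^{-1}\tTT_j^{-1}(B_i)=B_j$ is equivalent, after applying $\tTT_j\tTT_i$ to both sides, to $B_i=\tTT_j\tTT_i(B_j)$. Thus establishing either statement yields the other with no extra work.
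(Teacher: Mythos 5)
Your setup is right and matches the paper's: two applications of the intertwining relation \eqref{eq:newb0} give
$\tTT_i^{-1}\tTT_j^{-1}(B_i)=\tfX_i\,\tT_{\bs_i}^{-1}(\tfX_j)\cdot \tT_{\bs_i}^{-1}\tT_{\bs_j}^{-1}(B_i)\cdot \tT_{\bs_i}^{-1}(\tfX_j)^{-1}\tfX_i^{-1}$, and Lemma~\ref{lem:simple} replaces the middle factor by $B_j$. But your conclusion is where the gap lies. From this you only know that $\tTT_i^{-1}\tTT_j^{-1}(B_i)$ is the conjugate of $B_j$ by $\Omega:=\tfX_i\,\tT_{\bs_i}^{-1}(\tfX_j)$; the ``uniqueness clause'' of Theorem~\ref{thm:newb0}(1) cannot force this conjugate to equal $B_j$ --- uniqueness merely says the element satisfying the intertwining identity is $\Omega B_j\Omega^{-1}$, which is what you already have. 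What you actually need is the commutation $B_j\,\Omega=\Omega\,B_j$, and that is essentially the whole content of the proposition (the paper records it afterwards as the corollary \eqref{eq:fact-7}, derived \emph{from} the proposition, not used to prove it). Saying one should ``directly verify'' this identity is not a proof: $\Omega$ is an infinite product of rank-one quasi $K$-matrix components with no closed form, so there is nothing to verify directly.

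The missing idea is the paper's weight/support argument. One first observes that $\tTT_i^{-1}\tTT_j^{-1}(B_i)-B_j$ lies in $\tUi$ (because the $\tTT$'s are already known to be endomorphisms of $\tUi$), expands it in the monomial basis $\{B_J\}$ of Proposition~\ref{prop:QG5} as $\sum_J A_JB_J$ with $A_J\in\tbU^+\tU^{\imath 0}$, and then compares $\Z\I$-weights: the weights occurring in $\Omega B_j\Omega^{-1}-B_j$ are constrained to the set $Q_{ij}^-\cup Q_{ij}^+$ built from $\N(\alpha_i+\bw\alpha_{\tau i})+\N(\bs_i\alpha_j+\bs_i\bw\alpha_{\tau j})$, while the lowest-weight component of $A_JB_J$ sits in $-\wt(J)+\N\bI$; the only possible common weight is $-\alpha_j$, and that component vanishes because $\Omega$ has constant term $1$. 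This forces every $A_JB_J=0$ and hence the claim. Without this (or some substitute for it), your argument does not close. Your final paragraph on the equivalence of the two formulations via Theorem~\ref{thm:newb1} is fine.
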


\begin{proof}
Since $\tTT_i^{-1}$ and $ \tTT_j^{-1}$ are automorphism of $\tUi$, we have $\tTT_i^{-1} \tTT_j^{-1} (B_i)-B_j\in \tUi$. Then we can write this element in terms of monomial basis of $\tUi$ (see Proposition~\ref{prop:QG5}):
\begin{align}\label{eq:fact-5}
\tTT_i^{-1} \tTT_j^{-1} (B_i)-B_j =\sum_{J\in \cJ} A_J B_J,\qquad \text{ for some } A_J\in \tbU^+ \tU^{\imath 0}.
\end{align}

On the other hand, using the intertwining relation \eqref{eq:newb0} twice, we have
\begin{align*}
\tTT_i^{-1} \tTT_j^{-1} (B_i)
&= \tfX_i \tT_{\bs_i}^{-1}(\tfX_j)\cdot \tT_{\bs_i}^{-1} \tT_{\bs_j}^{-1}(B_i)\cdot \tT_{\bs_i}^{-1}(\tfX_j^{-1}) \tfX_i^{-1}
\end{align*}
By Lemma~\ref{lem:simple}, we rewrite the above identity as
\begin{align}
\tTT_i^{-1} \tTT_j^{-1} (B_i)&=\tfX_i \tT_{\bs_i}^{-1}(\tfX_j)\cdot  B_j \cdot \tT_{\bs_i}^{-1}(\tfX_j^{-1}) \tfX_i^{-1}.\label{eq:fact-4}
\end{align}
By the equality \eqref{eq:fact-4}, we rewrite \eqref{eq:fact-5} in the following form
\begin{align}\label{eq:fact-6}
\tfX_i \tT_{\bs_i}^{-1}(\tfX_j)\cdot  B_j \cdot \tT_{\bs_i}^{-1}(\tfX_j^{-1}) \tfX_i^{-1}-B_j =\sum_{J\in \cJ} A_J B_J.
\end{align}

Now we claim $A_J B_J=0$, for each $J\in \cJ$, by comparing the weights in $\Z \I$. Recall from Remark \ref{rmk:fX2} that $\tfX_i=\sum_{m\geq 0} \tfX_i^m$ where $\wt(\tfX_i^m)=m(\alpha_i+\bw \alpha_{\tau i} )$ and then weights of $\tT_{\bs_i}^{-1}(\tfX_j)$ lie in $\N (\bs_i\alpha_j + \bs_i \bw\alpha_{\tau j})$. Hence, the weights appearing on LHS \eqref{eq:fact-6} must belong to the set $Q_{ij}$, where
\begin{align*}
& Q_{ij}=Q_{ij}^- \cup Q_{ij}^+,\\
& Q_{ij}^-:=  - \alpha_j+\N(\alpha_i+\bw \alpha_{\tau i}) + \N (\bs_i\alpha_j + \bs_i \bw\alpha_{\tau j}), \\
& Q_{ij}^+:=  \bw( \alpha_j ) + \N(\alpha_i+\bw \alpha_{\tau i}) + \N (\bs_i\alpha_j + \bs_i \bw\alpha_{\tau j} ) .
\end{align*}

On the other hand, note that the weight of the lowest weight component of $A_J B_J$ lies in $Q_J := -\wt (J)+\N \bI $. Then $A_J B_J\neq 0$ only if $Q_J\cap Q_{ij}\neq \varnothing$. It immediately follows that $A_J B_J=0$ unless $\wt(J)\in \alpha_j+ \N\bI$. Moreover, when $\wt(J)\in \alpha_j+ \N\bI$, the only possible element in the intersection $Q_J\cap Q_{ij} $ is $-\alpha_j$.

However, since $\tfX_i\tT_{\bs_i}^{-1}(\tfX_j)$ has constant term $1$, the weight $(-\alpha_j)$ component for LHS \eqref{eq:fact-6} is $0$. This implies that $A_J B_J=0$, for each $J\in \cJ$, and then the desired identity follows by \eqref{eq:fact-5}.
\end{proof}

\begin{corollary}
We have
\begin{align}\label{eq:fact-7}
\tfX_i \tT_{\bs_i}^{-1}(\tfX_j) B_j =B_j \tfX_i\tT_{\bs_i}^{-1} (\tfX_j).
\end{align}
\end{corollary}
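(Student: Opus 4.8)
The plan is to read the identity off directly from the two facts already displayed inside the proof of Proposition~\ref{prop:fact1}, so that the corollary becomes a one-line cancellation. First I would recall equation~\eqref{eq:fact-4}, which expresses $\tTT_i^{-1}\tTT_j^{-1}(B_i)$ as a conjugate of $B_j$:
\[
\tTT_i^{-1}\tTT_j^{-1}(B_i) = \tfX_i\,\tT_{\bs_i}^{-1}(\tfX_j)\cdot B_j \cdot \tT_{\bs_i}^{-1}(\tfX_j^{-1})\,\tfX_i^{-1}.
\]
Then I would invoke Proposition~\ref{prop:fact1} itself, which asserts $\tTT_i^{-1}\tTT_j^{-1}(B_i) = B_j$. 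Equating the two presentations of the same element yields
\[
B_j = \tfX_i\,\tT_{\bs_i}^{-1}(\tfX_j)\cdot B_j \cdot \tT_{\bs_i}^{-1}(\tfX_j^{-1})\,\tfX_i^{-1}.
\]

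Next I would observe that, since $\tT_{\bs_i}^{-1}$ is an algebra automorphism of $\tU$, we have $\tT_{\bs_i}^{-1}(\tfX_j^{-1}) = \bigl(\tT_{\bs_i}^{-1}(\tfX_j)\bigr)^{-1}$; moreover $\tfX_i$ and $\tfX_j$ are invertible (in the relevant completion) because each has constant term $1$. Consequently $\tT_{\bs_i}^{-1}(\tfX_j^{-1})\,\tfX_i^{-1}$ is precisely the two-sided inverse of the element $Y := \tfX_i\,\tT_{\bs_i}^{-1}(\tfX_j)$. The displayed equation therefore reads $B_j = Y\,B_j\,Y^{-1}$, that is, $Y$ commutes with $B_j$. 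Multiplying on the right by $Y$ gives $Y B_j = B_j Y$, which is exactly the asserted identity~\eqref{eq:fact-7}.

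I do not expect any genuine obstacle here: the entire content of the statement is Proposition~\ref{prop:fact1}, and the corollary is merely its reformulation as a commutation relation obtained by cancelling the conjugating factors. The only two points that warrant an explicit word are the invertibility of the rank one quasi $K$-matrices (supplied by their normalization $\tfX_i^0 = 1$) and the compatibility of the automorphism $\tT_{\bs_i}^{-1}$ with the formation of inverses; both are routine, so the write-up should remain short.
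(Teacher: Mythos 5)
Your proof is correct and follows essentially the same route as the paper: the paper reads off from the proof of Proposition~\ref{prop:fact1} that $\sum_J A_J B_J=0$ and concludes from \eqref{eq:fact-6}, which is exactly the conjugation identity $B_j = Y B_j Y^{-1}$ with $Y=\tfX_i\,\tT_{\bs_i}^{-1}(\tfX_j)$ that you obtain by combining the statement of Proposition~\ref{prop:fact1} with \eqref{eq:fact-4}. The two remarks you flag (invertibility of $\tfX_i,\tfX_j$ from the normalization of the constant term, and compatibility of the automorphism $\tT_{\bs_i}^{-1}$ with inverses) are indeed the only points needing mention, and both hold.
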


\begin{proof}
One reads off from the proof of Proposition~\ref{prop:fact1} that
$A_J B_J=0$, for $J\in \cJ$, and hence the corollary follows from the relation \eqref{eq:fact-6}.
\end{proof}

\begin{corollary}
\label{cor:fact1}
We have
 \begin{align}
   B_i  \tfX_j \tT_{\bs_i}(\tfX_j) & = \tfX_j \tT_{\bs_i}(\tfX_j) B_i,
   \label{eq:cor-1} \\
   B_j^\sigma\tT_{\bs_i}(\tfX_j) \tfX_i & = \tT_{\bs_i}(\tfX_j) \tfX_i B_j^\sigma.
     \label{eq:cor-2}
 \end{align}
\end{corollary}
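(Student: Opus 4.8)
The plan is to derive both identities in Corollary~\ref{cor:fact1} from the single relation \eqref{eq:fact-7} established in the previous corollary, together with the various intertwining properties of the rank~1 quasi $K$-matrices and the symmetry behavior of the braid operators $\tT_{\bs_i}$ under the anti-involution $\sigma$ and the involution $\widehat{\tau}$. The starting observation is that \eqref{eq:fact-7} may be rewritten, after applying the automorphism $\tT_{\bs_i}$ to both sides, in the form
\begin{align*}
\tT_{\bs_i}(\tfX_i) \, \tfX_j \, \tT_{\bs_i}(B_j)
=\tT_{\bs_i}(B_j)\,\tT_{\bs_i}(\tfX_i)\,\tfX_j,
\end{align*}
which already begins to resemble the desired statements once $\tT_{\bs_i}(B_j)$ is unwound. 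However, the cleaner route will be to exploit the anti-involution $\sigma$: recall from Proposition~\ref{prop:inv} that $\sigma(\tfX_j)=\tfX_j$ and $\widehat{\tau}(\tfX_j)=\tfX_j$, and from \eqref{eq:sTs} that $\tTD_{\bs_i}^{-1}=\sigma\circ\tTD_{\bs_i}\circ\sigma$ (hence the analogous statement holds for the rescaled $\tT_{\bs_i}$, since the rescaling automorphism is compatible with $\sigma$).

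First I would prove \eqref{eq:cor-1}. Noting that $\bs_i\bs_j(\alpha_i)=\alpha_j$ and using the factorization $\tT_{\bs_i}^{-1}=\sigma\,\tT_{\bs_i}\,\sigma$, I would apply $\sigma$ to \eqref{eq:fact-7}. Since $\sigma$ is an anti-homomorphism fixing each $\tfX_k$ and since $\sigma(B_j)=B_j^\sigma$, the image of \eqref{eq:fact-7} becomes an identity involving $\sigma\tT_{\bs_i}^{-1}(\tfX_j)=\tT_{\bs_i}(\tfX_j)$ and $B_j^\sigma$; reading off the result and then conjugating appropriately by $\tfX_i$ (using that $\tfX_i$ is invertible and that $\sigma(\tfX_i)=\tfX_i$) should produce exactly \eqref{eq:cor-2}, giving one of the two identities directly. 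For \eqref{eq:cor-1}, I would instead apply the involution $\widehat\tau$ combined with $\sigma$: since $\tfX_j=\tfX_{\tau j}$, $\bs_i=\bs_{\tau i}$, and $\widehat\tau$ fixes $\tfX_i,\tfX_j$ while sending $B_j\mapsto B_{\tau j}$, one transports \eqref{eq:fact-7} to a companion relation. Alternatively, and more robustly, I expect \eqref{eq:cor-1} to follow by the same weight-comparison argument used in the proof of Proposition~\ref{prop:fact1}, now applied to the endomorphism $\tTT_i\tTT_j$ (rather than $\tTT_i^{-1}\tTT_j^{-1}$) acting on $B_i$; the statement $\tTT_j\tTT_i(B_j)=B_i$ recorded in Proposition~\ref{prop:fact1} is precisely the input needed, and tracing through the intertwining relation \eqref{eq:newb0} twice yields an identity of the shape \eqref{eq:cor-1}.

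The cleanest organization will therefore be: establish \eqref{eq:cor-2} by applying $\sigma$ to \eqref{eq:fact-7}, and establish \eqref{eq:cor-1} by applying $\widehat\tau$ (or $\sigma\widehat\tau$) to the resulting relation, checking in each case that the weight supports of the two sides match so that no extra terms survive. In carrying this out I would keep careful track of the fact that $\sigma$ reverses the order of products, so that a relation $A\,\tfX\,B=\tfX\,B\,A$ on one side transforms into $\sigma(B)\sigma(\tfX)\sigma(A)=\sigma(A)\sigma(B)\sigma(\tfX)$, which must then be reshuffled using $\sigma(\tfX_j)=\tfX_j$ and the commuting relations already known.

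The main obstacle I anticipate is bookkeeping rather than conceptual: ensuring that the composite maps $\sigma$, $\widehat\tau$, and $\tT_{\bs_i}$ interact correctly (in particular that $\sigma\tT_{\bs_i}^{-1}(\tfX_j)=\tT_{\bs_i}(\tfX_j)$ holds, which requires combining $\sigma\tT_{\bs_i}\sigma=\tT_{\bs_i}^{-1}$ with $\sigma(\tfX_j)=\tfX_j$ and the fact that $\tT_{\bs_i}$ commutes with $\sigma$ up to inversion), and that the order-reversal under $\sigma$ lands the factors $\tfX_j$, $\tT_{\bs_i}(\tfX_j)$, $\tfX_i$, and $B_j^\sigma$ in exactly the positions displayed in \eqref{eq:cor-1}--\eqref{eq:cor-2}. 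Once these compatibility identities are verified, both corollaries drop out as formal consequences of \eqref{eq:fact-7} and Proposition~\ref{prop:inv}.
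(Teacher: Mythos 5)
Your derivation of \eqref{eq:cor-2} is exactly the paper's: apply the anti-involution $\sigma$ to \eqref{eq:fact-7}, using $\sigma(\tfX_i)=\tfX_i=\sigma(\tfX_j)$ (Proposition~\ref{prop:inv}) and $\sigma\circ\tT_{\bs_i}^{-1}\circ\sigma=\tT_{\bs_i}$; note that no further conjugation by $\tfX_i$ is needed — the order reversal already lands the factors in the displayed positions.

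For \eqref{eq:cor-1}, however, there is a genuine gap. Your primary route (transport \eqref{eq:fact-7} by $\widehat{\tau}$, or by $\sigma\widehat{\tau}$) cannot work: in the $\ell_\circ(\bbw)=3$ setting the paper has already observed that necessarily $\tau=\mathrm{Id}$, so $\widehat{\tau}$ is trivial and applying $\sigma\widehat{\tau}$ to \eqref{eq:cor-2} merely recovers \eqref{eq:fact-7}. Your fallback (rerun the weight argument for $\tTT_j\tTT_i(B_j)=B_i$ using the intertwining relation for $\tTT''_{i,+1}$) produces a commutation identity, but with conjugating factor $\tT_{\bs_j}\tT_{\bs_i}(\tfX_i)\,\tT_{\bs_j}(\tfX_j)$ rather than $\tfX_j\,\tT_{\bs_i}(\tfX_j)$, and identifying these requires extra braid-operator bookkeeping you have not supplied. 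The intended argument is much simpler and you missed it: since \eqref{eq:fact-7} holds for either ordering of the two elements of $\wItau$, interchange $i$ and $j$ to get $\tfX_j\,\tT_{\bs_j}^{-1}(\tfX_i)\,B_i=B_i\,\tfX_j\,\tT_{\bs_j}^{-1}(\tfX_i)$, and then invoke Proposition~\ref{prop:tau0} (in the form $\tT_{\bs_j}\tT_{\bs_i}(\tfX_j)=\tfX_i$, i.e.\ $\tT_{\bs_j}^{-1}(\tfX_i)=\tT_{\bs_i}(\tfX_j)$) to convert this into \eqref{eq:cor-1}. Without that relabeling step and the appeal to Proposition~\ref{prop:tau0}, your proposal does not establish \eqref{eq:cor-1}.
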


\begin{proof}
Switching $i,j$ in \eqref{eq:fact-7},  we obtain
 \begin{align}\label{eq:fact-8}
\tfX_j \tT_{\bs_j}^{-1}(\tfX_i) B_i =B_i \tfX_j\tT_{\bs_j}^{-1} (\tfX_i).
 \end{align}
By Proposition~\ref{prop:tau0}, we have $\tT_{\bs_j}^{-1} (\tfX_i) =\tT_{\bs_i}(\tfX_j)$. Hence, \eqref{eq:fact-8} implies the desired identity \eqref{eq:cor-1}.

Recall from Proposition~\ref{prop:inv} that $\tfX_i,\tfX_j$ are both fixed by the anti-involution $\sigma$. Recall also that $\sigma \tT_{\bs_i}^{-1} \sigma=\tT_{\bs_i} $. Applying the anti-involution $\sigma$ to the identity \eqref{eq:fact-7}, we have proved \eqref{eq:cor-2}.
\end{proof}

\subsection{Rank 2 cases with $\ell_\circ (\bbw) =4$}

In this subsection, we assume that $\wItau=\{i,j\}$ such that $\ell_\circ (\bbw) =4$. Let $\{i,\tau i\}$ and $\{j,\tau j\}$ be the corresponding two distinct $\tau$-orbits of $\wI$.

\begin{lemma}
 \label{lem:rho4}
Denote the diagram involution $\varrho := \tau_0 \tau_{\bullet,i}$. Then we have
\[
\bs_j \bs_i\bs_j (\alpha_i) =\alpha_{\varrho i},
\quad\text{ and }
\quad
\tT_{\bs_j}\tT_{\bs_i} \tT_{\bs_j}(B_{i}) =B_{\varrho i}.
\]
(Moreover, a nontrivial $\varrho$ can occur only in type AIII, and in this case, $\varrho =\tau$.)
\end{lemma}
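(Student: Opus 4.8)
The plan is to split the statement into two assertions: the purely combinatorial (Weyl group) claim that $\bs_j \bs_i \bs_j(\alpha_i) = \alpha_{\varrho i}$, and the algebraic claim that $\tT_{\bs_j}\tT_{\bs_i}\tT_{\bs_j}(B_i) = B_{\varrho i}$. For the combinatorial part, I would work inside the relative Weyl group $\reW$ of rank $2$ with $\ell_\circ(\bbw) = 4$, so that $\reW$ is of type $B_2$/$C_2$ and $\bs_j\bs_i\bs_j$ is a length-$3$ element of $\reW$. The claim $\bs_j\bs_i\bs_j(\alpha_i) = \alpha_{\varrho i}$ should be checked by tracking $\alpha_i$ under the three reflections using the explicit $\bs_i, \bs_j$ from Table~\ref{table:localSatake} together with the relation $\theta = -\bw \circ \tau$ in \eqref{eq:theta}; since $\bs_j\bs_i\bs_j$ is the longest element of the subsystem moving $\balpha_i$, its action on the relative roots is governed by $w_0$ of the relative system, and I expect $\bs_j\bs_i\bs_j$ to send $\alpha_i$ to $-\theta$ applied to a simple root, which is $\alpha_{\varrho i}$ after unraveling $\varrho = \tau_0 \tau_{\bullet,i}$.

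For the algebraic claim, the natural strategy mirrors the proof of Proposition~\ref{prop:fact1}: I would first establish the analogous identity for the \emph{rescaled braid operators on $\tU$}, namely $\tT_{\bs_j}\tT_{\bs_i}\tT_{\bs_j}(X_i) = X_{\varrho i}$ for $X = F, E, K'$, using that $\ell(\bs_j\bs_i\bs_j) = \ell(\bs_j)+\ell(\bs_i)+\ell(\bs_j)$ (via Proposition~\ref{prop:LL}) so that $\tT_{\bs_j}\tT_{\bs_i}\tT_{\bs_j} = \tT_{\bs_j\bs_i\bs_j}$, combined with the standard fact that $\tT_w(X_i) = X_{wi}$ whenever $w\alpha_i = \alpha_j$ is a simple root (cf.\ \cite[39.2]{Lus93}, \cite[Prop.~8.20]{Ja95}). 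Since $B_i = F_i + \tT_{w_\bullet}(E_{\tau i})K_i'$ and $\tT_{w_\bullet}$ commutes with $\tT_{\bs_i}, \tT_{\bs_j}$ by \eqref{def:bsi}, the identity on $\tU$ would yield $\tT_{\bs_j}\tT_{\bs_i}\tT_{\bs_j}(B_i) = B_{\varrho i}$ at the level of $\tU$; the subtlety is that these are operators on $\tU$, not the $\imath$-symmetries $\tTT$, so this is not yet what we want.

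To transfer from $\tU$ to $\tUi$, I would run the weight-comparison argument of Proposition~\ref{prop:fact1} essentially verbatim. Writing $\tTT_j^{-1}\tTT_i^{-1}\tTT_j^{-1}(B_i) - B_{\varrho i} = \sum_{J} A_J B_J$ in the monomial basis of Proposition~\ref{prop:QG5}, I would expand the left-hand side using the intertwining relation \eqref{eq:newb0} three times, producing a conjugate of $\tT_{\bs_j}\tT_{\bs_i}\tT_{\bs_j}(B_i^\imath) = B_{\varrho i}^\imath$ by a product of (braid-twisted) rank $1$ quasi $K$-matrices $\tfX_j, \tT_{\bs_j}^{-1}(\tfX_i), \tT_{\bs_j}^{-1}\tT_{\bs_i}^{-1}(\tfX_j)$. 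The weights of these quasi $K$-matrix factors lie in explicit $\N$-spans of the vectors $\alpha_i + \bw\alpha_{\tau i}$ and their $\bs$-translates, by Remark~\ref{rmk:fX2}, while the lowest-weight component of each $A_J B_J$ lies in $-\wt(J) + \N\bI$. Comparing these weight sets should force all $A_J B_J = 0$, exactly as in Proposition~\ref{prop:fact1}, since the constant term $1$ of the quasi $K$-matrix product kills the only potentially surviving weight.

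I expect the combinatorial verification that $\varrho = \tau_0 \tau_{\bullet,i}$ is the correct diagram involution, and in particular the parenthetical claim that a nontrivial $\varrho$ occurs \emph{only} in type AIII (where $\varrho = \tau$), to be the genuine obstacle: this requires inspecting the rank $2$ Satake diagrams with $\ell_\circ(\bbw)=4$ (types BI, CII, AIII$_3$, AIII$_n$, etc.\ from Table~\ref{table:rktwodatum}) and computing $\tau_0 \tau_{\bullet,i}$ in each, confirming it is trivial except in the AIII cases. The algebraic weight argument, by contrast, is a routine adaptation of the already-established $\ell_\circ(\bbw)=3$ case and should present no conceptual difficulty; its only new input is the length-additivity $\ell(\bs_j\bs_i\bs_j) = \ell(\bs_j)+\ell(\bs_i)+\ell(\bs_j)$ furnished by Proposition~\ref{prop:LL}.
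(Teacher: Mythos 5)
Your proof is correct, and its algebraic half coincides with the paper's: length-additivity of $\bs_j\bs_i\bs_j$ via Proposition~\ref{prop:LL}, the standard fact that $\tT_w(X_i)=X_{wi}$ when $w\alpha_i$ is simple, and the commutation of $\tT_{w_\bullet}$ with $\tT_{\bs_i},\tT_{\bs_j}$ together give $\tT_{\bs_j}\tT_{\bs_i}\tT_{\bs_j}(B_i)=B_{\varrho i}$ exactly as in the paper. Two differences are worth recording. First, for the combinatorial identity $\bs_j\bs_i\bs_j(\alpha_i)=\alpha_{\varrho i}$ you plan a case-by-case check over the rank~2 Satake diagrams; the paper instead gives a two-line uniform argument: from $\bbw=\bs_j\bs_i\bs_j\bs_i$ and $w_{\bullet,i}=\bs_i w_\bullet$ one gets $w_0=\bbw w_\bullet=\bs_j\bs_i\bs_j w_{\bullet,i}$, hence $\tau_0(\alpha_i)=-w_0(\alpha_i)=\bs_j\bs_i\bs_j\,\tau_{\bullet,i}(\alpha_i)$, which is precisely the claim with $\varrho=\tau_0\tau_{\bullet,i}$. (Your heuristic that $\bs_j\bs_i\bs_j$ sends $\alpha_i$ to ``$-\theta$ of a simple root'' is not literally correct, since $-\theta(\alpha_i)=\bw\alpha_{\tau i}$ is generally not simple, but this does not matter because you fall back on explicit verification.) Only the parenthetical observation that $\varrho\neq\Id$ forces type AIII requires inspecting the classification, so what you call ``the genuine obstacle'' is in fact the easy part. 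Second, your final paragraph transferring the identity from $\tU$ to $\tUi$ by the weight argument of Proposition~\ref{prop:fact1} is not needed here: the lemma is stated for the rescaled braid operators $\tT$ on $\tU$ (applied to $B_i\in\tU$ via the embedding), not for the $\imath$-symmetries $\tTT$; that transfer is the content of the separate Proposition~\ref{prop:fact2}, whose proof is indeed the weight argument you sketch. So you have proven the lemma and, gratis, outlined the proposition that follows it.
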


\begin{proof}
%The first statement $\bs_j \bs_i\bs_j (\alpha_i) =\alpha_{\varrho i}$ is verified by a type-by-type computation for each type of Satake diagram. In particular, we observe that a nontrivial $\varrho$ occurs only in type AIII, and in this case, $\varrho =\tau$.  \

 As before, set $w_0$ to be the longest element of the Weyl group $W$ and $w_{\bullet,i}=\bs_i w_\bullet$; set $\tau_0$ and $\tau_{\bullet,i}$ to be the diagram automorphisms corresponding to $w_0$ and $w_{\bullet,i}$, respectively. In this case, $w_0$ satisfies the relation $w_0=\bbw w_\bullet=\bs_j \bs_i \bs_j \bs_i w_\bullet=\bs_j \bs_i \bs_j w_{\bullet,i}$. Then we have
\begin{align*}
    \tau_0 (\alpha_i)
    = -w_0(\alpha_i)
    = -\bs_j \bs_i \bs_j w_{\bullet,i} (\alpha_i)
    = \bs_j \bs_i \bs_j \tau_{\bullet,i} (\alpha_i).
\end{align*}
Setting $\varrho:= \tau_0 \tau_{\bullet,i}$, we have obtained $\bs_j \bs_i\bs_j (\alpha_i) =\alpha_{\varrho i}$. (We thank Stefan Kolb for providing the above conceptual argument which replaces our earlier case-by-case proof of the existence of $\varrho$; moreover, his argument produces a precise formula for $\varrho$.)

In particular, we observe that a nontrivial $\varrho$ occurs only in type AIII (for some particular $i$), and in this case, $\varrho =\tau$.

Recalling $\bs_i =\bs_{\tau i}$, we also have $\bs_j \bs_i\bs_j (\alpha_{\tau i}) =\alpha_{\varrho \tau i}$.

We have $\ell (\bs_j \bs_i\bs_j) = \ell (\bs_j) +\ell(\bs_i) +\ell(\bs_j)$, by Proposition~\ref{prop:LL}. Therefore, it follows from $\bs_j \bs_i\bs_j (\alpha_i) =\alpha_{\varrho i}$ that $\tT_{\bs_j}\tT_{\bs_i} \tT_{\bs_j}(X_i) =X_{\varrho i}$, for $X= F, K'$;  cf. \cite[39.2]{Lus93} or \cite[Proposition 8.20]{Ja95}. Similarly, we have $\tT_{\bs_j}\tT_{\bs_i} \tT_{\bs_j}(E_{\tau i}) =E_{\varrho \tau i}$.

Recall $B_i =F_i + \tT_{w_\bullet} (E_{\tau i}) K_i'$. Thanks to \eqref{def:bsi}, $\tT_{w_\bullet}$ commutes with both $\tT_{\bs_i}$ and $\tT_{\bs_j}$. Therefore, we have
\[
 \tT_{\bs_j}\tT_{\bs_i} \tT_{\bs_j}(B_i )
= \tT_{\bs_j \bs_i \bs_j}(F_i + \tT_{w_\bullet} (E_{\tau i}) K_i')
= F_{\varrho i} + \tT_{w_\bullet} (E_{\varrho \tau i}) K_{\varrho i}'
= B_{\varrho i}.
\]
The lemma is proved.
\end{proof}

\begin{proposition}
  \label{prop:fact2}
Retain the notation in Lemma~\ref{lem:rho4}. Then $\tTT_{j}^{-1} \tTT_i^{-1} \tTT_{j}^{-1} (B_{i}) =B_{\varrho i}$; or equivalently,  $\tTT_{ j} \tTT_i \tTT_{j}(B_{i})=B_{\varrho i}$.
\end{proposition}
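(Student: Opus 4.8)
The plan is to mimic the proof of Proposition~\ref{prop:fact1} exactly, replacing the length-$3$ word $\bs_i\bs_j$ by the length-$4$ word $\bs_j\bs_i\bs_j$ and the target $B_j$ by $B_{\varrho i}$. First I would use the intertwining relation \eqref{eq:newb0} three times to compute, for the composite $\tTT_j^{-1}\tTT_i^{-1}\tTT_j^{-1}$, an explicit conjugation formula of the shape
\begin{equation*}
\tTT_j^{-1}\tTT_i^{-1}\tTT_j^{-1}(B_i) = \mathcal{Y}\cdot \tT_{\bs_j}^{-1}\tT_{\bs_i}^{-1}\tT_{\bs_j}^{-1}(B_i)\cdot \mathcal{Y}^{-1},
\end{equation*}
where $\mathcal{Y}=\tfX_j\,\tT_{\bs_j}^{-1}(\tfX_i)\,\tT_{\bs_j}^{-1}\tT_{\bs_i}^{-1}(\tfX_j)$ is the accumulated product of (rescaled-transported) rank one quasi $K$-matrices. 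By Lemma~\ref{lem:rho4}, the inner term $\tT_{\bs_j}^{-1}\tT_{\bs_i}^{-1}\tT_{\bs_j}^{-1}(B_i)$ equals $B_{\varrho i}$ exactly (since $\tT_{\bs_j}\tT_{\bs_i}\tT_{\bs_j}(B_i)=B_{\varrho i}$ gives the inverse statement after noting $\ell(\bs_j\bs_i\bs_j)$ is additive, so the inverse braid operator realizes $\alpha_{\varrho i}\mapsto\alpha_i$), so the identity becomes $\tTT_j^{-1}\tTT_i^{-1}\tTT_j^{-1}(B_i)=\mathcal{Y}\,B_{\varrho i}\,\mathcal{Y}^{-1}$.

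The core of the proof is then to show $\mathcal{Y}\,B_{\varrho i}=B_{\varrho i}\,\mathcal{Y}$, i.e.\ that $B_{\varrho i}$ commutes with $\mathcal{Y}$. I would establish this by the same weight-comparison argument as in Proposition~\ref{prop:fact1}: write
\begin{equation*}
\tTT_j^{-1}\tTT_i^{-1}\tTT_j^{-1}(B_i)-B_{\varrho i}=\sum_{J\in\cJ}A_J B_J,\qquad A_J\in\tbU^+\tU^{\imath 0},
\end{equation*}
using the monomial basis of Proposition~\ref{prop:QG5}, substitute the conjugation formula, and argue that every term vanishes. By Remark~\ref{rmk:fX2} the weights occurring in $\mathcal{Y}$ lie in $\N(\alpha_j+\bw\alpha_{\tau j})+\N(\bs_j\alpha_i+\bs_j\bw\alpha_{\tau i})+\N(\bs_j\bs_i\alpha_j+\bs_j\bs_i\bw\alpha_{\tau j})$, all of which are nonnegative combinations of simple roots lying in the $\{\alpha_i,\alpha_{\tau i},\alpha_j,\alpha_{\tau j}\}\cup\bI$ span. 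Intersecting the resulting weight set $Q_{ij}$ with the lowest-weight set $Q_J=-\wt(J)+\N\bI$ of each basis monomial forces $\wt(J)\in\alpha_{\varrho i}+\N\bI$, and then the only candidate weight is $-\alpha_{\varrho i}$; but since $\mathcal{Y}$ has constant term $1$, the $(-\alpha_{\varrho i})$-component of the left side is $0$. Hence all $A_J B_J=0$, giving the claimed commutation and therefore $\tTT_j^{-1}\tTT_i^{-1}\tTT_j^{-1}(B_i)=B_{\varrho i}$; the equivalent form $\tTT_j\tTT_i\tTT_j(B_i)=B_{\varrho i}$ follows since $\varrho^2=\Id$ and $\tTT_k^{-1}=(\tTT_k)^{-1}$ by Theorem~\ref{thm:newb1}.

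The main obstacle I anticipate is bookkeeping the three-fold transported quasi $K$-matrix product $\mathcal{Y}$ and verifying that its weight support is exactly what the weight argument needs — in particular confirming that the three cones $\N(\alpha_j+\bw\alpha_{\tau j})$, $\N(\bs_j(\alpha_i+\bw\alpha_{\tau i}))$, $\N(\bs_j\bs_i(\alpha_j+\bw\alpha_{\tau j}))$ are the correct ones and that none of them can contribute a spurious $-\alpha_{\varrho i}$ term beyond the constant part. This is the analog of the single application of Remark~\ref{rmk:fX2} in Proposition~\ref{prop:fact1}, but with two nested braid twists it requires care to track how $\tT_{\bs_j}^{-1}$ and $\tT_{\bs_j}^{-1}\tT_{\bs_i}^{-1}$ act on the rank one weight $\alpha_i+\bw\alpha_{\tau i}$ and $\alpha_j+\bw\alpha_{\tau j}$ respectively. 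Since this is genuinely a rank $2$ situation with $\tau$ possibly nontrivial (type AIII), I would double-check the $\varrho=\tau$ case separately to ensure the target index $\varrho i$ is handled consistently with the convention $\bs_i=\bs_{\tau i}$; apart from this the argument is uniform and parallels Proposition~\ref{prop:fact1} step for step.
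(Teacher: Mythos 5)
Your proposal is correct and follows essentially the same route as the paper's own proof: apply the intertwining relation \eqref{eq:newb0} three times to reduce to conjugation of $B_{\varrho i}$ by the accumulated product $\tfX_j\,\tT_{\bs_j}^{-1}(\tfX_i)\,\tT_{\bs_j}^{-1}\tT_{\bs_i}^{-1}(\tfX_j)$, invoke Lemma~\ref{lem:rho4}, and kill the monomial-basis expansion by the same weight comparison as in Proposition~\ref{prop:fact1}. The paper leaves the weight cones implicit ("entirely similar"), whereas you spell them out; otherwise the arguments coincide.
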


\begin{proof}
Since $\tTT_i^{-1}$ and $ \tTT_j^{-1}$ are automorphism of $\tUi$, we have $\tTT_j^{-1} \tTT_i^{-1} \tTT_j^{-1} (B_i)-B_{\varrho i}\in \tUi$. Then we can write this element in terms of monomial basis of $\tUi$ (see Proposition~\ref{prop:QG5}):
\begin{align}\label{eq:fact-10}
\tTT_j^{-1} \tTT_i^{-1} \tTT_j^{-1} (B_i)-B_{\varrho i} =\sum_{J\in \cJ} A_J B_J,\qquad \text{ for some } A_J\in \tbU^+ \tU^{\imath 0}.
\end{align}

On the other hand, using the intertwining relation \eqref{eq:newb0} of $\tTT_i^{-1}$, we have
\begin{align*}
 &\tTT_j^{-1} \tTT_i^{-1} \tTT_j^{-1} (B_i)\\
&= \tfX_j  \, \tT_{\bs_j}^{-1}(\tfX_i) \, \tT_{\bs_j}^{-1}\tT_{\bs_i}^{-1}(\tfX_j)\cdot \tT_{\bs_j}^{-1}\tT_{\bs_i}^{-1}  \tT_{\bs_j}^{-1}(B_i)\cdot \tT_{\bs_j}^{-1}\tT_{\bs_i}^{-1}(\tfX_j^{-1}) \,  \tT_{\bs_j}^{-1}(\tfX_i^{-1}) \, \tfX_j^{-1}.
\end{align*}
Since $\tT_{\bs_j}^{-1}\tT_{\bs_i}^{-1} \tT_{\bs_j}^{-1}(B_i)=B_{\varrho i}$ by Lemma~\ref{lem:rho4}, we rewrite the above identity as
\begin{align}
\tTT_j^{-1} \tTT_i^{-1} \tTT_j^{-1} (B_i)
= \tfX_j  \, \tT_{\bs_j}^{-1}(\tfX_i)  \, \tT_{\bs_j}^{-1}\tT_{\bs_i}^{-1}(\tfX_j)\cdot B_{\varrho i} \cdot \tT_{\bs_j}^{-1}\tT_{\bs_i}^{-1}(\tfX_j^{-1})  \, \tT_{\bs_j}^{-1}(\tfX_i^{-1}) \,  \tfX_j^{-1}.
\label{eq:fact-11}
\end{align}
By the identity \eqref{eq:fact-11}, we rewrite \eqref{eq:fact-10} in the following form
\begin{align}\label{eq:fact-12}
 \tfX_j  \, \tT_{\bs_j}^{-1}(\tfX_i)  \, \tT_{\bs_j}^{-1}\tT_{\bs_i}^{-1}(\tfX_j)\cdot B_{\varrho i} \cdot \tT_{\bs_j}^{-1}\tT_{\bs_i}^{-1}(\tfX_j^{-1})  \, \tT_{\bs_j}^{-1}(\tfX_i^{-1}) \,  \tfX_j^{-1} - B_{\varrho i} =\sum_{J\in \cJ} A_J B_J.
\end{align}

By a weight argument entirely similar to the proof of Proposition~\ref{prop:fact1}, we obtain $\sum_{J\in \cJ} A_J B_J=0$. Thus, the proposition follows by \eqref{eq:fact-10}.
\end{proof}

\begin{corollary}
We have
\begin{align}\label{eq:fact-13}
B_{\varrho i} \tfX_j \, \tT_{\bs_j}^{-1}(\tfX_i) \, \tT_{\bs_j}^{-1}\tT_{\bs_i}^{-1}(\tfX_j) = \tfX_j \,  \tT_{\bs_j}^{-1}(\tfX_i) \,  \tT_{\bs_j}^{-1}\tT_{\bs_i}^{-1}(\tfX_j) B_{\varrho i}.
\end{align}
\end{corollary}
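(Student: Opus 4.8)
The plan is to read this commutation relation directly off the proof of Proposition~\ref{prop:fact2}, in exactly the way that \eqref{eq:fact-7} was extracted from the proof of Proposition~\ref{prop:fact1} in the length-$3$ case. The key point is that the triple product $\tfX_j \, \tT_{\bs_j}^{-1}(\tfX_i) \, \tT_{\bs_j}^{-1}\tT_{\bs_i}^{-1}(\tfX_j)$ appearing in \eqref{eq:fact-13} is precisely the conjugating element that arose when unwinding $\tTT_j^{-1}\tTT_i^{-1}\tTT_j^{-1}(B_i)$ via the intertwining relation \eqref{eq:newb0}; so the corollary should require no new computation beyond a rearrangement of factors.

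First I would recall equation \eqref{eq:fact-12} from the proof of Proposition~\ref{prop:fact2}, together with the conclusion established there by the weight comparison in $\Z\I$ that $\sum_{J\in\cJ} A_J B_J = 0$. Substituting this vanishing into \eqref{eq:fact-12} gives
\[
\tfX_j \, \tT_{\bs_j}^{-1}(\tfX_i) \, \tT_{\bs_j}^{-1}\tT_{\bs_i}^{-1}(\tfX_j)\cdot B_{\varrho i} \cdot \tT_{\bs_j}^{-1}\tT_{\bs_i}^{-1}(\tfX_j^{-1}) \, \tT_{\bs_j}^{-1}(\tfX_i^{-1}) \, \tfX_j^{-1} = B_{\varrho i}.
\]
Writing $P := \tfX_j \, \tT_{\bs_j}^{-1}(\tfX_i) \, \tT_{\bs_j}^{-1}\tT_{\bs_i}^{-1}(\tfX_j)$ and observing that the string of inverse factors on the left is exactly $P^{-1}$ (since $\tT_{\bs_j}^{-1}$ and $\tT_{\bs_j}^{-1}\tT_{\bs_i}^{-1}$ are algebra automorphisms, they intertwine inverses), this displayed identity reads $P\, B_{\varrho i}\, P^{-1} = B_{\varrho i}$. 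Multiplying both sides on the right by $P$ yields $P\, B_{\varrho i} = B_{\varrho i}\, P$, which is precisely \eqref{eq:fact-13} after transposing the two sides.

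Since the substantive content—the vanishing of $\sum_{J} A_J B_J$ by the weight argument—has already been carried out inside the proof of Proposition~\ref{prop:fact2}, I expect there to be essentially no obstacle here. The only thing to verify is the bookkeeping that the trailing factors assemble into $P^{-1}$ and that $P$ is invertible (each $\tfX$ has constant term $1$, hence is invertible in the relevant completion, and automorphisms preserve invertibility), which is routine. Thus the corollary is an immediate algebraic consequence of the identity obtained in the proof of Proposition~\ref{prop:fact2}.
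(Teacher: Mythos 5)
Your proof is correct and is essentially identical to the paper's: the corollary is obtained by substituting the vanishing $\sum_{J\in\cJ}A_JB_J=0$ (established by the weight argument in the proof of Proposition~\ref{prop:fact2}) into \eqref{eq:fact-12} and rearranging $P\,B_{\varrho i}\,P^{-1}=B_{\varrho i}$ into the stated commutation relation. Your extra bookkeeping that the trailing factors form $P^{-1}$ is routine and accurate.
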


\begin{proof}
Since $\sum_{J\in \cJ} A_J B_J=0$, as shown in the proof of Proposition~\ref{prop:fact2}, the corollary follows from the relation \eqref{eq:fact-12}.
\end{proof}

\begin{corollary}
\label{cor:fact2}
We have
\begin{align}
 B_i \tfX_j  \, \tT_{\bs_j}^{-1}(\tfX_i)  \,  \tT_{\bs_i}(\tfX_j) &=  \tfX_j \,  \tT_{\bs_j}^{-1}(\tfX_i)  \, \tT_{\bs_i}(\tfX_j) B_i,
  \label{eq:fact-14}  \\
 B_j^\sigma \tT_{\bs_j}^{-1}(\tfX_i)  \, \tT_{\bs_i}(\tfX_j)  \, \tfX_i &= \tT_{\bs_j}^{-1}(\tfX_i)  \, \tT_{\bs_i}(\tfX_j)  \, \tfX_i B_j^\sigma.\label{eq:fact-15}
\end{align}
\end{corollary}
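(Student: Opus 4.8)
The plan is to derive both identities from the primitive commutation relation \eqref{eq:fact-13} obtained in the proof of Proposition~\ref{prop:fact2}, exactly paralleling how Corollary~\ref{cor:fact1} was deduced from \eqref{eq:fact-7} in the $\ell_\circ(\bbw)=3$ case. Write $Q:=\tfX_j\,\tT_{\bs_j}^{-1}(\tfX_i)\,\tT_{\bs_j}^{-1}\tT_{\bs_i}^{-1}(\tfX_j)$ and $P:=\tfX_j\,\tT_{\bs_j}^{-1}(\tfX_i)\,\tT_{\bs_i}(\tfX_j)$, so that \eqref{eq:fact-13} reads $B_{\varrho i}Q=QB_{\varrho i}$ while \eqref{eq:fact-14} is the assertion $B_iP=PB_i$. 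Note $Q$ and $P$ differ only in their third factor.

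First I treat \eqref{eq:fact-14}. By Lemma~\ref{lem:rho4} the diagram involution $\varrho$ is either $\Id$ or $\tau$, so I apply $\widehat{\varrho}$ to \eqref{eq:fact-13}. On the left, $\widehat{\varrho}(B_{\varrho i})=B_{\varrho^2 i}=B_i$ since $\varrho$ is an involution. On $Q$, I use that $\widehat{\varrho}$ fixes $\tfX_i$ and $\tfX_j$ (Proposition~\ref{prop:inv}) and commutes with $\tT_{\bs_i}^{\pm1}$ and $\tT_{\bs_j}^{\pm1}$, the latter because $\varrho$ preserves the $\tau$-orbits and $\bs_k=\bs_{\tau k}$; hence $\widehat{\varrho}(Q)=Q$, and \eqref{eq:fact-13} becomes $B_iQ=QB_i$. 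It then remains to identify $Q$ with $P$, which amounts to the single twist identity $\tT_{\bs_j}^{-1}\tT_{\bs_i}^{-1}(\tfX_j)=\tT_{\bs_i}(\tfX_j)$. This is supplied by Proposition~\ref{prop:tau0}, being the $\ell_\circ(\bbw)=4$ analogue of the identity $\tT_{\bs_j}^{-1}(\tfX_i)=\tT_{\bs_i}(\tfX_j)$ invoked for \eqref{eq:cor-1}; concretely, since $\ell(\bs_i\bs_j\bs_i)=\ell(\bs_i)+\ell(\bs_j)+\ell(\bs_i)$ one has $\tT_{\bs_i}^{-1}\tT_{\bs_j}^{-1}\tT_{\bs_i}^{-1}=\tT'_{\bs_i\bs_j\bs_i,-1}$, and the $i\leftrightarrow j$ form of Lemma~\ref{lem:rho4} gives $\bs_i\bs_j\bs_i(\alpha_j)=\alpha_{\varrho'j}$ with $\varrho'j\in\{j,\tau j\}$, so the rank-one datum at $j$ is carried to that at $\varrho'j$ and $\tfX_j$ is fixed (using $\tfX_j=\tfX_{\tau j}$). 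Substituting $Q=P$ yields \eqref{eq:fact-14}.

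For \eqref{eq:fact-15} I pass through the $\sigma$-image, mimicking the derivation of \eqref{eq:cor-2}. The argument for \eqref{eq:fact-14} is symmetric in the two $\tau$-orbits, so swapping $i\leftrightarrow j$ gives $B_jR=RB_j$ with $R:=\tfX_i\,\tT_{\bs_i}^{-1}(\tfX_j)\,\tT_{\bs_j}(\tfX_i)$. I then apply the anti-involution $\sigma$. Using $\sigma(\tfX_i)=\tfX_i$ and $\sigma(\tfX_j)=\tfX_j$ (Proposition~\ref{prop:inv}), $\sigma(B_j)=B_j^\sigma$, and $\sigma\circ\tT_{\bs_k}^{\pm1}=\tT_{\bs_k}^{\mp1}\circ\sigma$ (which follows, as in the proof of Corollary~\ref{cor:fact1}, from \eqref{eq:sTs} together with the fact that $\sigma$ commutes with the rescaling $\tPsi_{\bvs_\dm}$ and that $\sigma$-conjugation is a homomorphism on automorphisms), the anti-automorphism reverses $R$ into $\sigma(R)=\tT_{\bs_j}^{-1}(\tfX_i)\,\tT_{\bs_i}(\tfX_j)\,\tfX_i$, and $B_jR=RB_j$ becomes precisely $B_j^\sigma\,\sigma(R)=\sigma(R)\,B_j^\sigma$, i.e.\ \eqref{eq:fact-15}.

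The main obstacle is the twist identity $\tT_{\bs_j}^{-1}\tT_{\bs_i}^{-1}(\tfX_j)=\tT_{\bs_i}(\tfX_j)$: the rest is bookkeeping with the involutions $\widehat{\varrho}$ and $\sigma$, but this step genuinely requires the invariance of the rank-one quasi $K$-matrix $\tfX_j$ under the braid element attached to $\bs_i\bs_j\bs_i$, with the delicate point being the interplay between the primed and doubly-primed braid operators. I would therefore make sure Proposition~\ref{prop:tau0} is formulated strongly enough to cover the composite $\tT_{\bs_i\bs_j\bs_i}$, not merely the single twist $\tT_{\bs_j}^{-1}(\tfX_i)=\tT_{\bs_i}(\tfX_j)$ used in the $\ell_\circ=3$ case.
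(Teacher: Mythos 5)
Your proposal is correct and follows essentially the same route as the paper: apply the diagram involution $\varrho$ to \eqref{eq:fact-13} (using Proposition~\ref{prop:inv} and the commutation of $\varrho$ with $\tT_{\bs_i},\tT_{\bs_j}$), identify the third factor via Proposition~\ref{prop:tau0}, and then obtain \eqref{eq:fact-15} by switching $i,j$ and applying $\sigma$. Your concluding worry is unfounded: Proposition~\ref{prop:tau0} applied to the reduced expression $\bbw=\bs_i\bs_j\bs_i\bs_j$ directly yields $\tT_{\bs_i}\tT_{\bs_j}\tT_{\bs_i}(\tfX_j)=\tfX_{\tau_0 j}=\tfX_j$, and since $\tT'_{\bs_i,-1}$ is literally the inverse automorphism of $\tT''_{\bs_i,+1}$ there is no primed/doubly-primed subtlety in deducing $\tT_{\bs_j}^{-1}\tT_{\bs_i}^{-1}(\tfX_j)=\tT_{\bs_i}(\tfX_j)$.
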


\begin{proof}

We prove \eqref{eq:fact-14}. Noting that $\varrho$ equals either $\Id$ or $\tau$, we have by Remark~\ref{rmk:newb0} that $\varrho$ commutes with $\tT_{\bs_i},\tT_{\bs_j} $, and by Proposition~\ref{prop:inv} that $\varrho$ fixes $\tfX_i,\tfX_j$. Hence, applying $\varrho$ to both sides of \eqref{eq:fact-13}, we have

\begin{align}\label{eq:fact-16}
B_{  i} \tfX_j \, \tT_{\bs_j}^{-1}(\tfX_i) \, \tT_{\bs_j}^{-1}\tT_{\bs_i}^{-1}(\tfX_j) = \tfX_j  \, \tT_{\bs_j}^{-1}(\tfX_i) \,  \tT_{\bs_j}^{-1}\tT_{\bs_i}^{-1}(\tfX_j) B_{  i}.
\end{align}
 By Proposition~\ref{prop:tau0}, we have $\tT_{\bs_j}^{-1}\tT_{\bs_i}^{-1}(\tfX_j) =\tT_{\bs_i}(\tfX_j)$. Hence, the desired relation \eqref{eq:fact-14} follows by \eqref{eq:fact-16}.

We next show \eqref{eq:fact-15}. Recall from Proposition~\ref{prop:inv} that $\tfX_i,\tfX_j$ are both fixed by the anti-involution $\sigma$. Recall also that $\sigma \tT_{\bs_i}^{-1} \sigma=\tT_{\bs_i} $. Switching $i,j$ in \eqref{eq:fact-14} and then applying $\sigma$ to it, we obtain \eqref{eq:fact-15}.
\end{proof}

 \subsection{Rank 2 case with $\ell_\circ(\bbw) =6$}
   \label{sub:split}

The rank 2 case with $\ell_\circ(\bbw) =6$ occurs only in split $G_2$ type.
Let $(\I=\wI,\mathrm{Id})$ be a Satake diagram of split type $G_2$. In this case, the relative Weyl group $\reW$ is identified with $W$ and $\bs_a=s_a$ for $a\in \I =\wI =\{i, j\}$. We do not specify which root $i$ or $j$ is long.

Set $\underline{w_i}= s_j s_i s_j s_i s_j$ and $\tT_{\underline{w_i}} = \tT_{j}\tT_{i}\tT_{j}\tT_{i}\tT_{j}$. Then we have $\underline{w_i}(\alpha_i)=\alpha_i$.

\begin{lemma}
\label{lem:rho5}
We have $\tT_{\underline{w_i}}^{-1}(B_i)=B_{i}.$
\end{lemma}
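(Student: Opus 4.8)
The statement to prove is Lemma~\ref{lem:rho5}: in split type $G_2$, we have $\tT_{\underline{w_i}}^{-1}(B_i)=B_i$, where $\underline{w_i}=s_js_is_js_is_j$ and $\underline{w_i}(\alpha_i)=\alpha_i$.

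The plan is to follow exactly the same pattern that worked for Lemma~\ref{lem:simple} (the $\ell_\circ(\bbw)=3$ case) and Lemma~\ref{lem:rho4} (the $\ell_\circ(\bbw)=4$ case), since those reduced the braid action on $B_i$ to the known braid action on the Chevalley generators $F_i, E_{\tau i}, K_i'$ of $\tU$. First I would record that since $\reW$ is identified with $W$ in split $G_2$ and $\bs_a=s_a$, the element $\underline{w_i}=s_js_is_js_is_j$ is a reduced expression (it has length $5$, one less than $\ell(w_0)=6$), so $\tT_{\underline{w_i}}=\tT_j\tT_i\tT_j\tT_i\tT_j$ is well-defined via the braid relations in Proposition~\ref{prop:braid0}. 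Next I would verify the root-system fact $\underline{w_i}(\alpha_i)=\alpha_i$ (stated already in the lemma's setup), which is the $G_2$ analogue of $\bs_i\bs_j(\alpha_i)=\alpha_j$ used in Lemma~\ref{lem:simple}.

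The key computational step is then to invoke the standard property of Lusztig's symmetries on $\tU$: whenever $w$ is such that $\ell(ws_a)>\ell(w)$ appropriately and $w(\alpha_a)$ is again a simple root $\alpha_b$, one has $\tT_w(X_a)=X_b$ for $X=E,F,K'$; cf.\ \cite[39.2]{Lus93} or \cite[Proposition 8.20]{Ja95}, precisely as cited in the proofs of Lemmas~\ref{lem:simple} and~\ref{lem:rho4}. Applying this to $\underline{w_i}$ with $\underline{w_i}(\alpha_i)=\alpha_i$, I would conclude $\tT_{\underline{w_i}}(F_i)=F_i$, $\tT_{\underline{w_i}}(E_i)=E_i$, and $\tT_{\underline{w_i}}(K_i')=K_i'$ (and the same for the inverse $\tT_{\underline{w_i}}^{-1}$). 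Since $\tau=\mathrm{Id}$ here, $B_i=F_i+\tT_{w_\bullet}(E_i)K_i'$; but in split type $\bI=\emptyset$, so $w_\bullet=1$, $\tT_{w_\bullet}=\mathrm{Id}$, and simply $B_i=F_i+E_iK_i'$. Thus the computation collapses to
\begin{align*}
\tT_{\underline{w_i}}^{-1}(B_i)
= \tT_{\underline{w_i}}^{-1}(F_i)+\tT_{\underline{w_i}}^{-1}(E_i)\,\tT_{\underline{w_i}}^{-1}(K_i')
= F_i + E_i K_i'
= B_i,
\end{align*}
which is the desired identity.

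The main obstacle — really the only nontrivial input — is confirming that the generator-preservation property $\tT_{\underline{w_i}}(X_i)=X_i$ applies cleanly here, i.e.\ that the relevant length-additivity hypotheses hold so that $\tT_{\underline{w_i}}$ genuinely computes as the braid operator attached to the reduced word and sends $\alpha_i\mapsto\alpha_i$ at the level of generators. In the $\ell_\circ=4$ case (Lemma~\ref{lem:rho4}) this required Proposition~\ref{prop:LL} to guarantee $\ell(\bs_j\bs_i\bs_j)=\ell(\bs_j)+\ell(\bs_i)+\ell(\bs_j)$; the analogous check in split $G_2$ is that $s_js_is_js_is_j$ is reduced, which is immediate since its length is $5<6=\ell(w_0)$. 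I would also note, as in the earlier lemmas, that because $w_\bullet=1$ there is no commutation subtlety between $\tT_{w_\bullet}$ and the $\tT_{s_a}$ to worry about, so the split $G_2$ case is in fact the simplest of the three rank $2$ situations. Hence the proof is a direct two-line computation once the root-combinatorial fact is in hand.
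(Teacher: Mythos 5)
Your proof is correct and follows essentially the same route as the paper, which simply cites \cite[39.2]{Lus93} and the arguments of Lemmas~\ref{lem:simple} and \ref{lem:rho4}; your write-up fleshes out exactly those steps (reducedness of $s_js_is_js_is_j$, the fact $\underline{w_i}(\alpha_i)=\alpha_i$, generator-preservation for $F_i,E_i,K_i'$, and $B_i=F_i+E_iK_i'$ since $\bI=\emptyset$).
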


\begin{proof}
Follows by \cite[39.2]{Lus93} and the same type of arguments as for Lemmas~\ref{lem:simple} and  \ref{lem:rho4}.
\end{proof}

\begin{proposition}
  \label{prop:fact-G2}
    We have
$\tTT_{\underline{w_i}}^{-1} (B_i) =B_i$; or equivalently, $\tTT_{\underline{w_i}} (B_i) =B_i$.
 \end{proposition}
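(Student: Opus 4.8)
The plan is to follow exactly the template established in Propositions~\ref{prop:fact1} and \ref{prop:fact2}, now for the length-$6$ element $\underline{w_i}= s_j s_i s_j s_i s_j$ in split $G_2$ type. The geometric input is already provided by Lemma~\ref{lem:rho5}, which asserts $\tT_{\underline{w_i}}^{-1}(B_i)=B_i$ at the level of the Drinfeld double. The goal is to transfer this identity up to the level of the new symmetries $\tTT_a^{-1}$ on $\tUi$, showing that the quasi $K$-matrix correction terms cancel.

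First I would write out $\tTT_{\underline{w_i}}^{-1}(B_i) = \tTT_j^{-1}\tTT_i^{-1}\tTT_j^{-1}\tTT_i^{-1}\tTT_j^{-1}(B_i)$ and use the intertwining relation \eqref{eq:newb0} repeatedly (five times) to pull the quasi $K$-matrices out. Each application of $\tTT_a^{-1}$ conjugates by an appropriate $\tfX$-factor and applies $\tT_{\bs_a}^{-1}$; after stacking all five, the inner operator becomes $\tT_{\bs_j}^{-1}\tT_{\bs_i}^{-1}\tT_{\bs_j}^{-1}\tT_{\bs_i}^{-1}\tT_{\bs_j}^{-1}=\tT_{\underline{w_i}}^{-1}$, which by Lemma~\ref{lem:rho5} sends $B_i$ to $B_i$. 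Thus
\[
\tTT_{\underline{w_i}}^{-1}(B_i) = \mathcal{X}\, B_i\, \mathcal{X}^{-1},
\]
where $\mathcal{X}$ is the explicit product of (images under iterated $\tT_{\bs_a}^{-1}$ of) the rank-one quasi $K$-matrices $\tfX_i,\tfX_j$, having constant term $1$.

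Next, since $\tTT_{\underline{w_i}}^{-1}$ is an automorphism of $\tUi$, the difference $\tTT_{\underline{w_i}}^{-1}(B_i)-B_i$ lies in $\tUi$, so I would expand it in the monomial basis of Proposition~\ref{prop:QG5}, writing $\tTT_{\underline{w_i}}^{-1}(B_i)-B_i=\sum_{J\in\cJ}A_J B_J$ with $A_J\in\tbU^+\tU^{\imath 0}$, exactly as in \eqref{eq:fact-5} and \eqref{eq:fact-10}. Substituting the conjugation formula $\mathcal{X}B_i\mathcal{X}^{-1}$ gives the $G_2$-analog of \eqref{eq:fact-6} and \eqref{eq:fact-12}. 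The main obstacle, and where I would spend the most care, is the weight argument showing every $A_J B_J=0$: I need to verify that the weights appearing in $\mathcal{X}$ lie in the expected $\N$-cone (using Remark~\ref{rmk:fX2} that $\tfX_a^m$ has weight $m(\alpha_a+\bw\alpha_{\tau a})$, here with $\tau=\Id$ and $\bw=1$ since $G_2$ split has $\bI=\varnothing$), so that the weights on the left-hand side all lie in some set $Q_{ij}$ of the same shape as before. Intersecting $Q_{ij}$ with the lowest-weight cone $Q_J=-\wt(J)+\N\bI$ (with $\bI=\varnothing$ this is a single weight) forces $A_J B_J=0$ unless the weight is $-\alpha_i$; but since $\mathcal{X}$ has constant term $1$, the $(-\alpha_i)$-component of the left-hand side is $0$, killing the last term. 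This yields $\sum_J A_J B_J=0$, hence $\tTT_{\underline{w_i}}^{-1}(B_i)=B_i$, and the equivalent statement $\tTT_{\underline{w_i}}(B_i)=B_i$ follows by applying the automorphism inverse. The chief subtlety is bookkeeping the five-fold nested $\tT_{\bs_a}^{-1}$-images of $\tfX_i,\tfX_j$ and confirming the root-combinatorial cone is still disjoint enough from the $B_J$-weights in the denser $G_2$ root system; I expect this to go through by the same mechanism as the $\ell_\circ=3,4$ cases, possibly invoking an analog of Proposition~\ref{prop:tau0} to simplify the repeated $\tT_{\bs_a}^{-1}$-images.
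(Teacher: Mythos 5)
Your proposal matches the paper's proof essentially verbatim: the paper likewise stacks the intertwining relation \eqref{eq:newb0} five times to obtain $\tTT_{\underline{w_i}}^{-1}(B_i)=\Omega_i\,\tT_{\underline{w_i}}^{-1}(B_i)\,\Omega_i^{-1}$ with $\Omega_i$ the product of nested $\tT$-images of rank-one quasi $K$-matrices, invokes Lemma~\ref{lem:rho5}, expands the difference in the monomial basis of Proposition~\ref{prop:QG5}, and kills all terms by the same weight-cone argument as in Proposition~\ref{prop:fact1}. The only cosmetic difference is that the paper defers the simplification of $\Omega_i$ via Proposition~\ref{prop:tau0} to the subsequent corollary rather than using it in the proof itself.
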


\begin{proof}
Since $\tTT_i^{-1}$ and $ \tTT_j^{-1}$ are automorphism of $\tUi$, we have $ \tTT_{\underline{w_i}}^{-1}  (B_i)-B_{ i}\in \tUi$. Then we can write this element in terms of monomial basis of $\tUi$ (see Proposition~\ref{prop:QG5}):
\begin{align}\label{eq:fact-17}
 \tTT_{\underline{w_i}}^{-1}  (B_i)-B_{i} =\sum_{J\in \cJ} A_J B_J,\qquad \text{ for some } A_J\in \tbU^+ \tU^{\imath 0}.
\end{align}

On the other hand, using the intertwining relation \eqref{eq:newb0} of $\tTT_i^{-1}$, we have
\begin{align}
\label{eq:O1}
 &\quad \tTT_{\underline{w_i}}^{-1}  (B_i)= \Omega_i \tT_{\underline{w_i}}^{-1}(B_i) \Omega_i^{-1},
\end{align}
where
\begin{align}
\label{def:Omegai}
\Omega_i= \tfX_j  \, \tT_{ j}^{-1}(\tfX_i)  \, \tT_{ j}^{-1}\tT_{i}^{-1}(\tfX_j) \,  \tT_{j}^{-1}\tT_{i}^{-1}\tT_{j}^{-1}(\tfX_i) \, \tT_{ j}^{-1}\tT_{i}^{-1}\tT_{ j}^{-1}\tT_{ i}^{-1}(\tfX_j).
\end{align}

By Lemma~\ref{lem:rho5}, we rewrite the identity \eqref{eq:O1} as
\begin{align}
&\quad \tTT_{\underline{w_i}}^{-1} (B_i)= \Omega_i B_i \Omega_i^{-1}.
\label{eq:fact-18}
\end{align}
By the identity \eqref{eq:fact-18}, we rewrite \eqref{eq:fact-17} in the following form
\begin{align}\label{eq:fact-19}
  \Omega_i B_i \Omega_i^{-1}  - B_{ i} =\sum_{J\in \cJ} A_J B_J.
\end{align}

By a weight argument entirely similar to the proof of Proposition~\ref{prop:fact1}, we obtain $\sum_{J\in \cJ} A_J B_J=0$. Thus, the proposition follows by \eqref{eq:fact-17}.
\end{proof}

\begin{corollary}
Let $\Omega_i$ be as in \eqref{def:Omegai}. We have
\begin{align}  \label{eq:fact-20}
B_{i} \Omega_i = \Omega_i B_{i}.
\end{align}
\end{corollary}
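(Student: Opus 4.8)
The plan is to read this off directly from the proof of Proposition~\ref{prop:fact-G2} immediately preceding it, exactly as the analogous identities \eqref{eq:fact-7} and \eqref{eq:fact-13} were deduced as corollaries of Propositions~\ref{prop:fact1} and~\ref{prop:fact2}. Recall that in the proof of Proposition~\ref{prop:fact-G2} one writes $\tTT_{\underline{w_i}}^{-1}(B_i) - B_i = \sum_{J\in\cJ} A_J B_J$ in the monomial basis (with $A_J \in \tbU^+ \tU^{\imath 0}$), and that a weight argument identical to the one in the proof of Proposition~\ref{prop:fact1} forces $\sum_{J\in\cJ} A_J B_J = 0$. First I would recall the relation \eqref{eq:fact-19}, which by construction reads
\[
\Omega_i B_i \Omega_i^{-1} - B_i = \sum_{J\in\cJ} A_J B_J.
\]
Substituting the vanishing of the right-hand side gives $\Omega_i B_i \Omega_i^{-1} = B_i$; multiplying both sides on the right by $\Omega_i$ yields the claimed identity \eqref{eq:fact-20}, namely $B_i \Omega_i = \Omega_i B_i$.

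There is no real obstacle here, since all the substance resides in Proposition~\ref{prop:fact-G2}; the corollary is purely a matter of reinterpreting the conclusion of that proof. The only point worth checking is that $\Omega_i$ is invertible so that the rearrangement $\Omega_i B_i \Omega_i^{-1} = B_i \Leftrightarrow B_i \Omega_i = \Omega_i B_i$ is legitimate. This is clear from the definition \eqref{def:Omegai}: each of the five factors is a (braid-twisted) rank one quasi $K$-matrix with constant term $1$ in the appropriate completion of $\tU^+$, hence invertible, and therefore so is their product $\Omega_i$. Thus the corollary follows at once, in complete parallel with the derivations of \eqref{eq:fact-7} and \eqref{eq:fact-13}.
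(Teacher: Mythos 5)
Your proof is correct and is essentially identical to the paper's: both cite the vanishing of $\sum_{J\in\cJ}A_JB_J$ established in the proof of Proposition~\ref{prop:fact-G2} and read \eqref{eq:fact-20} off from \eqref{eq:fact-19}. The extra remark on invertibility of $\Omega_i$ is a harmless (and valid) sanity check that the paper leaves implicit.
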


\begin{proof}
Since $\sum_{J\in \cJ} A_J B_J=0$, as shown in the proof of Proposition~\ref{prop:fact-G2}, the corollary follows from the formula \eqref{eq:fact-19}.
\end{proof}

\begin{corollary}
\label{cor:G2}
We have the following intertwining relations:
\begin{align}
\notag
& B_i \tfX_j \tT_{s_i s_j s_i s_j}(\tfX_i) \tT_{s_i s_j s_i}(\tfX_j) \tT_{s_i s_j}(\tfX_i)\tT_{i}(\tfX_j)\\
& \quad = \tfX_j \tT_{s_i s_j s_i s_j}(\tfX_i) \tT_{s_i s_j s_i}(\tfX_j) \tT_{s_i s_j}(\tfX_i)\tT_{i}(\tfX_j) B_i, \label{eq:fact-21}
\\\notag
& B_j^\sigma\tT_{ s_i s_j s_i s_j}(\tfX_i) \tT_{s_i s_j s_i}(\tfX_j) \tT_{s_i s_j}(\tfX_i) \tT_{i} (\tfX_j) \tfX_i \\
&\quad = \tT_{ s_i s_j s_i s_j}(\tfX_i) \tT_{s_i s_j s_i}(\tfX_j) \tT_{s_i s_j}(\tfX_i) \tT_{i} (\tfX_j) \tfX_i B_j^\sigma.\label{eq:fact-22}
\end{align}
\end{corollary}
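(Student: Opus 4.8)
The plan is to deduce Corollary~\ref{cor:G2} from the commuting relation \eqref{eq:fact-20} by a combination of a braid-group rewriting of $\Omega_i$ and an application of the anti-involution $\sigma$, exactly paralleling the proofs of Corollaries~\ref{cor:fact1} and \ref{cor:fact2}. The starting point is the identity \eqref{eq:fact-20}, $B_i \Omega_i = \Omega_i B_i$, with $\Omega_i$ given by \eqref{def:Omegai}.

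First I would rewrite the factors of $\Omega_i$ into the symmetric form appearing in \eqref{eq:fact-21}. The definition \eqref{def:Omegai} expresses $\Omega_i$ as a product of the form $\tfX_j \cdot \tT_j^{-1}(\tfX_i) \cdot \tT_j^{-1}\tT_i^{-1}(\tfX_j) \cdot \tT_j^{-1}\tT_i^{-1}\tT_j^{-1}(\tfX_i) \cdot \tT_j^{-1}\tT_i^{-1}\tT_j^{-1}\tT_i^{-1}(\tfX_j)$, whereas \eqref{eq:fact-21} has the factors written with positive braid operators $\tT_{s_is_js_is_j}$, $\tT_{s_is_js_i}$, $\tT_{s_is_j}$, $\tT_i$. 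The key input here is Proposition~\ref{prop:tau0} (the analogue of the identities $\tT_{\bs_j}^{-1}(\tfX_i)=\tT_{\bs_i}(\tfX_j)$ used in Corollaries~\ref{cor:fact1}--\ref{cor:fact2}), which converts a negative braid operator applied to a rank one quasi $K$-matrix into a positive one applied to the other, using that the relevant $w\in \reW=W$ sends one restricted simple root to the other. I would apply this conversion term by term: since $\bs_a=s_a$ here, each factor $\tT_{w}^{-1}(\tfX_i)$ or $\tT_w^{-1}(\tfX_j)$ is matched against a positive braid operator via the $G_2$ braid relations $\ell_\circ(\bbw)=6$ and the identification of $\reW$ with $W$. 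Once each of the five factors is rewritten, $\Omega_i$ becomes literally the product $\tfX_j \tT_{s_is_js_is_j}(\tfX_i)\tT_{s_is_js_i}(\tfX_j)\tT_{s_is_j}(\tfX_i)\tT_i(\tfX_j)$ appearing in \eqref{eq:fact-21}, so \eqref{eq:fact-20} gives \eqref{eq:fact-21} directly.

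For \eqref{eq:fact-22} I would apply the anti-involution $\sigma$ to \eqref{eq:fact-21}, using the two facts already recorded in the previous corollaries' proofs: by Proposition~\ref{prop:inv} each $\tfX_i,\tfX_j$ is fixed by $\sigma$, and $\sigma \tT_w^{-1}\sigma = \tT_w$ (equivalently $\sigma\tT_w\sigma=\tT_w^{-1}$) from \eqref{eq:sTs}. Applying $\sigma$ reverses the order of the product and converts each positive braid operator into its inverse acting on the $\sigma$-fixed quasi $K$-matrices; after reindexing via the $G_2$ braid relations this yields the reversed product with $B_j^\sigma$ intertwined, as displayed in \eqref{eq:fact-22}. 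One should swap the roles of $i$ and $j$ before applying $\sigma$, just as in the proof of Corollary~\ref{cor:fact2}, so that the generator appearing is $B_j^\sigma$ rather than $B_i^\sigma$.

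The main obstacle I anticipate is the bookkeeping in the first step: one must verify that the five successive braid-conjugated quasi $K$-matrices genuinely coincide with the five positive-braid factors in \eqref{eq:fact-21}, which requires carefully tracking reduced expressions in the $G_2$ Weyl group and repeatedly invoking Proposition~\ref{prop:tau0} together with the $G_2$ braid relations $\tT_i\tT_j\tT_i\tT_j\tT_i\tT_j=\tT_j\tT_i\tT_j\tT_i\tT_j\tT_i$. This is purely a matter of matching reduced words and is conceptually routine, but it is the place where an error is most likely; everything downstream (the $\sigma$-argument for \eqref{eq:fact-22}) is a formal consequence once the rewriting of $\Omega_i$ is pinned down.
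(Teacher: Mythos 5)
Your proposal is correct and follows essentially the same route as the paper: rewrite $\Omega_i$ from \eqref{def:Omegai} into the positive-braid product using Proposition~\ref{prop:tau0} (which in split $G_2$ gives $\tT_{s_js_is_js_is_j}(\tfX_i)=\tfX_i$ and $\tT_{s_is_js_is_js_i}(\tfX_j)=\tfX_j$) together with the braid relations, so that \eqref{eq:fact-21} is just \eqref{eq:fact-20}, and then obtain \eqref{eq:fact-22} by swapping $i,j$ and applying $\sigma$ with $\sigma(\tfX_i)=\tfX_i$ and $\sigma\circ\tT_w\circ\sigma=\tT_w^{-1}$. The only cosmetic difference is that the paper applies $\sigma$ to the swapped relation $B_j\Omega_j=\Omega_jB_j$ with $\Omega_j$ kept in its original form \eqref{def:Omegai}, so each factor $\sigma\big(\tT_w^{-1}(\tfX_\bullet)\big)=\tT_w(\tfX_\bullet)$ lands directly in the shape of \eqref{eq:fact-22} and the second round of braid-word bookkeeping you anticipate is not needed.
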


\begin{proof}
By Proposition~\ref{prop:tau0}, we have $\tT_{s_j s_i s_j s_i s_j}(\tfX_i)=\tfX_i $ and $\tT_{s_i s_j s_i s_j s_i}(\tfX_j) =\tfX_j$. Then we have
\begin{align*}
\Omega_i=\tfX_j \tT_{s_i s_j s_i s_j}(\tfX_i) \tT_{s_i s_j s_i}(\tfX_j) \tT_{s_i s_j}(\tfX_i)\tT_{i}(\tfX_j).
\end{align*}
Hence, the desired identity \eqref{eq:fact-21} follows by \eqref{eq:fact-20}.

We next prove \eqref{eq:fact-22}. Switching $i,j$ in \eqref{eq:fact-20}, we have
\begin{align}
\label{eq:fact-23}
B_{j} \Omega_j = \Omega_j B_{j},
\end{align}
where $\Omega_j$ is defined by switching $i,j$ in \eqref{def:Omegai}.

Recall from Proposition~\ref{prop:inv} that $\tfX_i,\tfX_j$ are both fixed by $\sigma$. Then by the definition of $\Omega_j$, we have
\begin{align*}
\sigma(\Omega_j)=\tT_{ s_i s_j s_i s_j}(\tfX_i) \tT_{s_i s_j s_i}(\tfX_j) \tT_{s_i s_j}(\tfX_i) \tT_{i} (\tfX_j) \tfX_i.
\end{align*}
Hence, applying $\sigma$ to \eqref{eq:fact-23} and then using this formula of $\sigma(\Omega_j)$, we obtain \eqref{eq:fact-22}.
\end{proof}

\subsection{The general identity $\tTT_{w}(B_i)=B_{wi}$}

Let $w\in \reW$. Given a reduced expression $\underline{w} =\bs_{i_1} \bs_{i_2} \ldots \bs_{i_k}$ for $w$, we shall denote $\tTT_{\underline{w}} =\tTT_{i_1} \tTT_{i_2} \ldots \tTT_{i_k}$.

\begin{theorem}
  \label{thm:fact1}
Suppose that $wi\in \wI$, for $w\in \reW$ and $i\in \wI$. Then $\tTT_{\underline{w}}(B_i)=B_{wi}$, for {\em some} reduced expression $\underline{w}$ of $w$.
\end{theorem}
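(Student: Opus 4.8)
The plan is to induct on the relative length $\ell_\circ(w)$ and reduce the identity to the rank $2$ cases already settled in Propositions~\ref{prop:fact1}, \ref{prop:fact2} and \ref{prop:fact-G2}. A preliminary observation that makes this reduction legitimate is that these three Propositions, although stated for rank $2$ Satake diagrams, remain valid verbatim for any pair $\{i\},\{k\}$ of relative simple reflections inside the ambient $\tUi$: their proofs only combine the intertwining relation \eqref{eq:newb0}, the monomial basis of Proposition~\ref{prop:QG5}, a weight argument, and the purely ``downstairs'' identities in $\tU$ of Lemmas~\ref{lem:simple}, \ref{lem:rho4}, \ref{lem:rho5}; the latter rest on the classical braid formula $\tT_w(X_i)=X_{wi}$ (for $X=E,F,K'$ and $w\alpha_i$ simple), which is a global statement in $\tU$ and is insensitive to spectator nodes. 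Consequently I may use, for any $i\neq k$ in $\wItau$ and the corresponding rank $2$ element $v\in\langle\bs_i,\bs_k\rangle$ sending $\alpha_i$ to a simple root, the evaluation $\tTT_{\underline v}(B_i)=B_{vi}$ in the full algebra.

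The combinatorial heart is the following standard property of the Coxeter group $\reW$: if $w\balpha_i=\balpha_{wi}$ with $i,wi\in\wI$ and $w\neq e$, then there is a chain $i=i_0,i_1,\dots,i_m=wi$ in $\wI$ and a factorization $w=v_m\cdots v_1$, where each $v_\ell$ lies in a rank $2$ relative parabolic $\langle \bs_{i_{\ell-1}},\bs_{k_\ell}\rangle$ with $k_\ell\in\wItau$, satisfies $v_\ell\,\alpha_{i_{\ell-1}}=\alpha_{i_\ell}$, and the lengths add up: $\ell_\circ(w)=\sum_\ell \ell_\circ(v_\ell)$. I would produce such a factorization by the standard descent induction in $\reW$: choosing a left descent $\bs_k$ of $w$ (so $\ell_\circ(\bs_k w)<\ell_\circ(w)$, which forces $k\neq wi$ since $w^{-1}\balpha_{wi}=\balpha_i>0$), isolating the simple-to-simple element of the associated rank $2$ corner, and recursing. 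Length additivity at the level of $\reW$ transfers to $W$ by Proposition~\ref{prop:LL}, so that a reduced expression $\underline w$ of $w$ can be concatenated from reduced expressions $\underline{v_\ell}$. With such an adapted $\underline w$ one has $\tTT_{\underline w}=\tTT_{\underline{v_m}}\cdots\tTT_{\underline{v_1}}$, and applying the (ambient) rank $2$ evaluations stepwise gives
\[
\tTT_{\underline w}(B_{i})=\tTT_{\underline{v_m}}\cdots\tTT_{\underline{v_2}}(B_{i_1})=\cdots=B_{i_m}=B_{wi},
\]
which is the assertion.

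Two bookkeeping points must be handled with care. First, the rank $2$ output of Proposition~\ref{prop:fact2} carries a diagram involution $\varrho$ (as in Lemma~\ref{lem:rho4}), so at each step the index $i_\ell$ must be the honest $W$-image of $\alpha_{i_{\ell-1}}$, not merely a $\tau$-orbit representative. This is tracked using $\reW\subset W^{\theta}$ and the fact that $\reW$ commutes with $\bw$: from $w\alpha_i=\alpha_{wi}$ and $w\theta=\theta w$ one deduces $w\alpha_{\tau i}=\alpha_{\tau(wi)}$ and $w(\bw\alpha_{\tau i})=\bw\alpha_{\tau(wi)}$, so the three ingredients $F_{i}$, $\tT_{\bw}(E_{\tau i})$ and $K'_{i}$ of $B_i$ are transported consistently to those of $B_{wi}$. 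Second, the statement only asserts the existence of \emph{some} reduced expression $\underline w$; this is exactly what the adapted factorization supplies, and it is why Theorem~\ref{thm:fact1} can be proved before the braid relations (Theorem~\ref{thm:newb2}) are available. Once the braid relations are known, the result becomes independent of the chosen reduced word.

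The main obstacle I anticipate is the combinatorial factorization lemma together with the consistent tracking of the diagram involutions $\tau$ and $\varrho$: one must guarantee that peeling off rank $2$ corners never leaves the locus of simple-to-simple elements and that the cumulative involution matches the prescribed index $wi\in\wI$. By contrast, each individual step is already handled by Propositions~\ref{prop:fact1}--\ref{prop:fact-G2}, whose weight arguments---expanding the difference in the monomial basis of Proposition~\ref{prop:QG5} and conjugating the downstairs image by a product of $\tT$-twisted quasi $K$-matrices of constant term $1$, so that the only admissible weight is the excluded bottom one---apply unchanged in the ambient $\tUi$.
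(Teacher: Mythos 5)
Your overall strategy coincides with the paper's: induct on $\ell_\circ(w)$, peel off a rank~$2$ corner of $\reW$ using a descent and minimal coset representatives, transfer length additivity from $\reW$ to $W$ via Proposition~\ref{prop:LL}, and invoke Propositions~\ref{prop:fact1}, \ref{prop:fact2}, \ref{prop:fact-G2} (which, as you correctly note, apply verbatim to rank~$2$ Satake subdiagrams of the ambient diagram) to evaluate each rank~$2$ step. The bookkeeping of $\varrho$ and $\tau$, and the observation that the three constituents of $B_i$ are transported consistently once $w''\alpha_i$ is known to be simple, are also as in the paper.

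However, there is one genuine gap, and it is the step you yourself flag as "the main obstacle": you assert, as a "standard property of the Coxeter group $\reW$," that the rank~$2$ corner element $v_\ell$ satisfies $v_\ell\,\alpha_{i_{\ell-1}}=\alpha_{i_\ell}$ for honest simple roots of $\cR$. The standard descent argument (as in \cite[Lemma 8.20]{Ja95}) applied to the Coxeter system $(\reW,\{\bs_i\})$ only yields that $v_\ell$ sends the \emph{restricted} simple root $\balpha_{i_{\ell-1}}$ to a restricted simple root; it does not by itself show that $v_\ell\alpha_{i_{\ell-1}}$ is simple in $\cR$. Since $\balpha_i=\tfrac12(\alpha_i-\theta\alpha_i)$, the equality $w''\balpha_i=\balpha_k$ is compatible a priori with $w''\alpha_i$ being a non-simple positive root (e.g.\ of the form $\alpha_k+\alpha_\bullet$ with $\alpha_\bullet\in\N\bI$, or even lying entirely in $\N\bI$). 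Closing this gap requires an argument such as the paper's: write $w''\alpha_i=r\alpha_i+s\alpha_j+\alpha_\bullet$ with $r,s\ge 0$, $\alpha_\bullet\in\N\bI$, use that $w'$ sends $\alpha_i,\alpha_j$ and every $\alpha_a$ $(a\in\bI)$ to positive roots to rule out two or more of $r,s,\alpha_\bullet$ being nonzero (else $w\alpha_i=w'w''\alpha_i$ could not be simple), and use $w''\theta=\theta w''$ to exclude the case $w''\alpha_i=\alpha_\bullet\in\N\bI$ (which would force $\theta\alpha_i=\alpha_i$, impossible for $i\in\wI$). Without this case analysis the inductive step is not justified, because the rank~$2$ propositions can only be applied once one knows $w''i\in\wI$ in the strict sense that $w''\alpha_i$ is a simple root of $\cR$.
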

(Once Theorem~\ref{thm:newb2} on braid relation for $\tTT_i$ is proved, we can replace $\tTT_{\underline{w}}$ in Theorem~\ref{thm:fact1} by $\tTT_w$, which depends only on $w$, not on a reduced expression $\underline{w}$ of $w$.)

\begin{proof}
The strategy of the proof is modified from a well-known quantum group counterpart, cf. \cite[Lemma 8.20]{Ja95}. We shall reduce the proof to the rank 2 cases which were established earlier and finish the proof by induction on $\ell_\circ (w)$.

The statement holds for arbitrary rank 2 Satake (sub) diagrams $(\bI \cup \{i, \tau i, j, \tau j\}, \tau)$.
 Indeed in case when $\ell(\bbw) =2$, the claim is trivial. In case when $\ell(\bbw) =3,4$ or $6$, the claim has been established in Propositions~\ref{prop:fact1},  \ref{prop:fact2}, and  \ref{prop:fact-G2}, respectively.

In general, we use an induction on $l_\circ(w)$, for $w\in \reW$, where $\lc$ is the length function for the relative Weyl group $\reW$. Recall the simple system $\{\balpha_i|i\in \wItau\}$ for the relative root system from \eqref{def:balpha}. Since $w \theta = \theta w$ and $w i \in \wI$ by assumption, we have $w(\balpha_i)=\balpha_{wi}$. We denote a positive (and negative) root in the relative root system by $\beta>0$ (and respectively, $\beta<0$).

Suppose that $l_\circ(w)>0$. Then there exists $j\in \wItau$ such that $w(\balpha_j)<0$; clearly $j\neq i$ since $w(\balpha_i) >0$. Consider the minimal length representatives of $\reW$ with respect to the rank 2 parabolic subgroup $\langle \bs_i, \bs_j \rangle$. We have a decomposition $w=w' w''$ in $\reW$ such that $w'(\balpha_i)>0,w'(\balpha_j)>0$ and $w''$ lies in the subgroup $\langle \bs_i, \bs_j \rangle$; moreover, $\lc(w)=\lc(w')+\lc(w'')$. Now $w(\balpha_i)>0$ and $w(\balpha_j)<0$ implies that $w''(\balpha_i)>0$ and $w''(\balpha_j)<0$ (since $w'$ preserves the signs of the roots $w''(\balpha_i)$ and $w''(\balpha_j)$). It follows that
\begin{align*}
w''(\alpha_i)>0,\qquad w''(\alpha_j)<0,\qquad
w'(\alpha_i)>0,\qquad w'(\alpha_j)>0.
\end{align*}
(The positive system of the restricted root system is compatible with the positive system of $\cR$.) Moreover since $\bs_{s}$, for any $s\in \wI$, acts on $\bI$ as the involution $\tau_{\bullet,s} \tau$, we must have $w'(\alpha_a)>0$, for any $a \in \bI$; see also Proposition~\ref{prop:Cartanblack}.

We show that $w''i\in \wI $. Since $w''(\alpha_i)>0 $ and $w''(\alpha_i)\in \cR \cap (\Z\alpha_i + \Z \alpha_j + \Z\bI)$, we can write $w''(\alpha_i) \in \cR$ in the following form
\begin{align*}
w''(\alpha_i)= r \alpha_i + s \alpha_j +\alpha_\bullet
\end{align*}
for some $r,s\geq 0,\alpha_\bullet\in \N\bI$. We consider the following cases:
\begin{enumerate}
\item
At least two of $r,s,\alpha_\bullet$ are nonzero. Then $w'w''(\alpha_i)=r w'(\alpha_i) + s w'(\alpha_j) +w'(\alpha_\bullet)$ cannot be simple for $w'(\alpha_i)>0,w'(\alpha_j)>0,w'(\alpha_\bullet)>0$; this contradicts that $w (\alpha_i)=w'w''(\alpha_i)$ is simple.
\item
$r=0,\alpha_\bullet=0$ and $s>0$. Then $s=1$ and $w''(\alpha_i) =\alpha_j$ is simple. A similar argument applying to the case $s=0,\alpha_\bullet=0$ and $r>0$ shows that $w''(\alpha_i) =\alpha_i$ is simple.
\item
$r=s=0,\alpha_\bullet\neq 0$. We show this case cannot occur. Indeed, we have
$%$\begin{align*}
\theta w''(\alpha_i) = \theta(\alpha_\bullet)=\alpha_\bullet=w''(\alpha_i).
$ %\end{align*}
Since $w'' \theta=\theta w''$, the above identity implies that $\alpha_i$ is fixed by $\theta$, which is impossible for $i\in \wI$.
\end{enumerate}

Therefore, we have shown $w''i\in \wI $ and $w''(\alpha_i)=\alpha_{w''i}$. By the rank two results in Propositions~\ref{prop:fact1} and  \ref{prop:fact2}, we have $\tTT_{\underline{w''}}(B_i) =B_{w''i}$, for any reduced expression $\underline{w''}$ of $w''$. Now using the induction hypothesis, there exists a reduced expression $\underline{w'}$ such that $\underline{w} = \underline{w'} \cdot \underline{w''}$ is a reduced expression for $w$ and
\begin{align*}
\tTT_{\underline{w}}(B_i)
=\tTT_{\underline{w'}} \tTT_{\underline{w''}} (B_i)
=\tTT_{\underline{w'}} (B_{w''i})=B_{wi}.
\end{align*}
The theorem is proved.
\end{proof}

%%%%%%
%%%%%%
\section{Factorization of quasi $K$-matrices}
  \label{sec:factor}

It is conjectured by Dobson and Kolb \cite{DK19} that quasi $K$-matrices admit factorization into products of rank 1 quasi $K$-matrices analogous to the factorization properties of quasi $R$-matrices. They showed that the factorization of quasi $K$-matrices for arbitrary finite types reduces to the rank two cases.
In this section, using (the rank 2 cases of) Theorem~\ref{thm:fact1} we provide a uniform proof of the factorization of quasi $K$-matrices for all rank two Satake diagrams, hence completing the proof of Dobson-Kolb conjecture in all finite types.

\subsection{Factorization of $\tfX$}

Let $(\I=\bI \cup \wI,\tau)$ be a Satake diagram of arbitrary finite type. Let $w$ be any element in the relative Weyl group $\reW$ with a reduced expression
\[
\underline{w} =\bs_{i_1}\bs_{i_2}\cdots \bs_{i_m};
\]
here $m =\ell_\circ(w)$, the length of $w \in \reW$ (not to be confused as the length $\ell(w)$ in $W$).

Following \cite{DK19} (who worked in the setting of $\Ui_\bvs$), we define, for $1\leq k\leq m,$
\begin{align}
  \label{eq:Upk}
\begin{split}
\tfX^{[k]} %= \tfX^{[k]}_{\underline{w}}
&= \tT_{\bs_{i_1}} \tT_{\bs_{i_2}} \cdots\tT_{\bs_{i_{k-1}}}(\tfX_{i_k}),
\\
\tfX_{\underline{w}} &=\tfX^{[m]}\tfX^{[m-1]}\cdots\tfX^{[1]}.
\end{split}
\end{align}
(In the notation $\tfX^{[k]}$ above, we have suppressed the dependence on $\underline{w}$.)

The goal of this section is to establish Theorem~\ref{thm:factor}, which is a $\tUi$-variant of (and implies) \cite[Conjecture~3.22]{DK19} for $\Ui_\bvs$ with general parameters $\bvs$. The restriction on parameters $\bvs$ in \cite{DK19} can be removed in light of the development in \cite{AV22, KY20} which allows more general parameters in quasi $K$-matrices. Recall that $\bbw$ is the longest element in the relative Weyl group $\reW$.

\begin{theorem}
  \label{thm:factor}
  \quad
  \begin{enumerate}
  \item
For any $w \in \reW$, the partial quasi $K$-matrix $\tfX_{\underline{w}}$ is independent of the choice of reduced expressions of $w$ (and hence can be denoted by $\tfX_w$).
\item
The quasi $K$-matrix $\tfX$ for $\tUi$ of any finite type admits a factorization $\tfX = \tfX_{\bbw}$.
\end{enumerate}
\end{theorem}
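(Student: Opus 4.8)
The plan is to prove part (2) by verifying that the factored product $\tfX_{\bbw}$ satisfies the characterization of the quasi $K$-matrix in Theorem~\ref{thm:fX1}, and then to invoke uniqueness to conclude $\tfX=\tfX_{\bbw}$; part (1) will be deduced along the way by localizing to rank two. The basic computational tool is the concatenation identity, immediate from the definition \eqref{eq:Upk}: for any reduced expression $\underline{u}\cdot\underline{u'}$ one has $\tfX_{\underline{u}\cdot\underline{u'}}=\tT_{u}(\tfX_{\underline{u'}})\,\tfX_{\underline{u}}$. Writing $\tfX_{\bbw}=\sum_\mu\tfX_{\bbw}^\mu$, the conditions to check are: the constant term is $1$; each $\tfX_{\bbw}^\mu$ lies in $\tU^+_\mu$ with $\theta(\mu)=-\mu$; and the two intertwining relations $x\,\tfX_{\bbw}=\tfX_{\bbw}\,x$ for $x\in\tU^{\imath 0}\tbU$ and $B_i\,\tfX_{\bbw}=\tfX_{\bbw}\,B_i^\sigma$ for $i\in\wI$. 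The first three are routine: each rank one factor $\tfX_{i_k}$ has constant term $1$ and, by Remark~\ref{rmk:fX2}, weights in $\N(\alpha_{i_k}+\bw\alpha_{\tau i_k})$; as $\underline{\bbw}$ is reduced, the operators $\tT_{\bs_{i_1}}\cdots\tT_{\bs_{i_{k-1}}}$ carry these to root vectors of positive restricted roots, keeping each factor in $\tU^+$ (the positivity argument underlying the use of $\tU^+[\bs_i]$ in Lemma~\ref{lem:rktwo1}); and since $\reW\subset W^\theta$ these operators preserve $\theta$-anti-invariance of weights, while orthogonality of these weights to the relevant Cartan directions gives the commutation with $\tU^{\imath 0}\tbU$, exactly as in Proposition~\ref{prop:Cartanblack}.

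The heart is the relation $B_i\,\tfX_{\bbw}=\tfX_{\bbw}\,B_i^\sigma$, which I would first settle for a rank two Satake diagram $\langle\bs_i,\bs_j\rangle$. Fixing a reduced word, say $\underline{\bbw}=\bs_i\bs_j\bs_i$ when $\ell_\circ(\bbw)=3$, the concatenation identity and Proposition~\ref{prop:tau0} give $\tfX_{\bbw}=\tfX_j\,\tT_{\bs_i}(\tfX_j)\,\tfX_i$. Peeling $\tfX_i$ off the right and using $\tfX_i B_i^\sigma=B_i\tfX_i$ reduces the $B_i$-relation to the assertion that $B_i$ commutes with $\tfX_j\,\tT_{\bs_i}(\tfX_j)$, which is relation~\eqref{eq:cor-1} in Corollary~\ref{cor:fact1}; peeling $\tfX_j$ off the left and using $\tfX_j B_j^\sigma=B_j\tfX_j$ reduces the $B_j$-relation to $B_j^\sigma$ commuting with $\tT_{\bs_i}(\tfX_j)\,\tfX_i$, which is \eqref{eq:cor-2}. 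The cases $\ell_\circ(\bbw)=4$ and $\ell_\circ(\bbw)=6$ are identical in form, using Corollaries~\ref{cor:fact2} and \ref{cor:G2} respectively. Thus $\tfX_{\bbw}$ satisfies the rank two characterization, so by uniqueness equals the rank two quasi $K$-matrix; in particular both reduced words of the rank two longest element produce the same element, which is part (1) in rank two.

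Part (1) in general then follows by Tits' theorem: any two reduced expressions of $w$ are joined by braid moves, each replacing a subword $\underline{v_1}$ of the longest element $v$ of some $\langle\bs_i,\bs_j\rangle$ by its other reduced form $\underline{v_2}$. Writing the word as $\underline{a}\cdot\underline{v_\ast}\cdot\underline{b}$ and applying the concatenation identity twice gives $\tfX_{\underline{a}\,\underline{v_\ast}\,\underline{b}}=\tT_{a}\big(\tT_{v}(\tfX_{\underline{b}})\,\tfX_{\underline{v_\ast}}\big)\tfX_{\underline{a}}$, in which only $\tfX_{\underline{v_\ast}}$ depends on $\ast$ (the element $v$, hence $\tT_v$, being fixed); this is equal for $\ast=1,2$ by the rank two case of part (1). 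With independence in hand, part (2) in general type is completed as follows: for each $i\in\wItau$, compute $\tfX_{\bbw}$ from a reduced word beginning with $\bs_i$, so $\tfX_{\bbw}=\tT_{\bs_i}(\tfX_{w'})\,\tfX_i$ with $\bbw=\bs_i w'$, and the $B_i$-relation reduces to $B_i$ commuting with $\tT_{\bs_i}(\tfX_{w'})$. This last commutation is reduced to the rank two parabolic containing $i$ following the reduction of Dobson--Kolb \cite{DK19}, where it is supplied by the corollaries above; since $\tfX_{\bbw}$ is now independent of the word, verifying each relation on its adapted word establishes it for the single element $\tfX_{\bbw}$, and uniqueness in Theorem~\ref{thm:fX1} gives $\tfX=\tfX_{\bbw}$.

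I expect the main obstacle to lie in the reduction of the general commutation $B_i\,\tT_{\bs_i}(\tfX_{w'})=\tT_{\bs_i}(\tfX_{w'})\,B_i$ to the rank two parabolic: one must show that the rank one factors of $\tfX_{w'}$ indexed by restricted roots outside the plane spanned by $\balpha_i,\balpha_j$ commute past $B_i$ purely for weight reasons, so that only the two-dimensional relations remain. Carrying out this weight bookkeeping uniformly across all finite types --- rather than type by type --- is the delicate point, and is precisely where the monomial-basis and weight arguments developed in the proofs of Propositions~\ref{prop:fact1}, \ref{prop:fact2} and \ref{prop:fact-G2}, together with the Dobson--Kolb reduction, are indispensable.
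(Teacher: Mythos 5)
Your proposal is correct and follows essentially the same route as the paper: reduce to rank two via the Dobson--Kolb argument (Theorem~\ref{thm:BK} and Proposition~\ref{prop:tau0}), write $\tfX_{\bbw}$ as the product of factors \eqref{eq:Upk}, peel off the outer rank-one factors using $B_p\tfX_p=\tfX_p B_p^\sigma$, and supply the remaining commutations from Corollaries~\ref{cor:fact1}, \ref{cor:fact2} and \ref{cor:G2}, concluding by the uniqueness in Theorem~\ref{thm:fX1} (via Proposition~\ref{prop:UU}). Your more explicit Tits-theorem discussion of part (1) is a minor elaboration of what the paper delegates to \cite{DK19}, not a genuinely different argument.
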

\subsection{Reduction to rank 2}

Let us recall some partial results from \cite{DK19} in this direction (which can be adapted from $\Ui_\bvs$ to $\tUi$ without difficulties).

\begin{theorem}
 \cite[Theorems~3.17 and 3.20]{DK19}
  \label{thm:BK}
Theorem~\ref{thm:factor} holds for $\tUi$ of a given finite type if it holds for all its rank 2 Satake subdiagrams.
\end{theorem}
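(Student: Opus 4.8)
The plan is to deduce both parts of Theorem~\ref{thm:factor} for the given finite type from the assumption that they hold for every rank $2$ Satake subdiagram, by two independent mechanisms: part (1) by a Matsumoto--Tits braid-move argument, and part (2) by checking that $\tfX_{\bbw}$ meets the characterization of the quasi $K$-matrix in Theorem~\ref{thm:fX1} and invoking uniqueness. A uniform tool throughout is that the rescaled operators $\tT_{\bs_i}$ ($i\in\wItau$) satisfy the braid relations of $\reW$: if $\bs_{i_1}\cdots\bs_{i_k}$ is $\ell_\circ$-reduced then it is $\ell$-reduced in $W$ by Proposition~\ref{prop:LL}, so $\tT_{\bs_{i_1}}\cdots\tT_{\bs_{i_k}}=\tT_{\bs_{i_1}\cdots\bs_{i_k}}$ depends only on the product; hence each factor $\tfX^{[k]}$ in \eqref{eq:Upk} depends only on the element $\bs_{i_1}\cdots\bs_{i_{k-1}}$ and the letter $i_k$.

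For part (1), I will use that two reduced expressions of $w\in\reW$ are linked by braid moves in $(\reW,\wItau)$, each replacing a contiguous subword $\bs_{i_{a+1}}\cdots\bs_{i_{a+p}}$ --- an alternating reduced word for the longest element $v_0$ of a rank $2$ parabolic $\langle\bs_i,\bs_j\rangle$ --- by the other alternating word. Splitting $\tfX_{\underline w}=\tfX^{[m]}\cdots\tfX^{[1]}$ into left, middle and right blocks at positions $a$ and $a+p$, the right block $\tfX^{[a]}\cdots\tfX^{[1]}$ and the left block $\tfX^{[m]}\cdots\tfX^{[a+p+1]}$ are unchanged, since a braid move fixes the products $\bs_{i_1}\cdots\bs_{i_r}$ for $r\le a$ and for $r\ge a+p$. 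The middle block equals $\tT_{\bs_{i_1}\cdots\bs_{i_a}}$ applied to the rank $2$ partial quasi $K$-matrix attached to $\bs_{i_{a+1}}\cdots\bs_{i_{a+p}}$, which by the rank $2$ case of part (1) is independent of the alternating word chosen for $v_0$; hence the middle block is also unchanged. This proves the invariance of $\tfX_{\underline w}$ under every braid move, and thus part (1), legitimizing the notation $\tfX_w$.

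For part (2), I will verify the defining properties of Theorem~\ref{thm:fX1} for $\tfX_{\bbw}$. Membership in $\tU^+$ with constant term $1$ and weight support in $\{\nu:\theta(\nu)=-\nu\}$ is checked factor-by-factor: $\tfX_{i_k}$ has these properties for its rank $1$ subdiagram, reducedness of $\bs_{i_1}\cdots\bs_{i_k}$ makes $\bs_{i_1}\cdots\bs_{i_{k-1}}(\balpha_{i_k})$ a positive restricted root so $\tfX^{[k]}\in\tU^+$, and $\reW\subset W^\theta$ preserves $\theta(\nu)=-\nu$. The Cartan relation $x\tfX_{\bbw}=\tfX_{\bbw}x$ for $x\in\tU^{\imath 0}\tbU$ is also clean: for $\tbU$ it follows factor-by-factor because $\tT_{\bs}$ preserves $\tbU$ (Proposition~\ref{prop:Cartanblack}) while each $\tfX_{i_k}$ commutes with all of $\tbU$; for $\tU^{\imath 0}$ it follows because $\theta(\nu)=-\nu$ forces $\nu\perp\Z\bI$ (as $\theta$ fixes $\Z\bI$ pointwise, by admissibility $\theta\alpha_j=\alpha_j$ for $j\in\bI$), whence $\tU^{\imath 0}$ centralizes $\tU^+_\nu$. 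The only substantial relation left is
\[
B_i\,\tfX_{\bbw}=\tfX_{\bbw}\,B_i^{\sigma}\qquad (i\in\wItau).
\]
Using part (1) to choose a reduced expression of $\bbw$ beginning with $\bs_i$, one has $\tfX_{\bbw}=\tT_{\bs_i}(\tfX_{\bs_i\bbw})\,\tfX_i$; since $B_i\tfX_i=\tfX_iB_i^{\sigma}$ by the rank $1$ case of Theorem~\ref{thm:fX1}, this relation is equivalent to the commutation $[B_i,\tT_{\bs_i}(\tfX_{\bs_i\bbw})]=0$. By uniqueness of $\tfX$ in Theorem~\ref{thm:fX1}, establishing this commutation for all $i$ yields $\tfX=\tfX_{\bbw}$.

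I expect the commutation $[B_i,\tT_{\bs_i}(\tfX_{\bs_i\bbw})]=0$ to be the crux. The plan is an induction on $\ell_\circ$, peeling off at each stage a rank $2$ parabolic segment $\langle\bs_i,\bs_j\rangle$ (with $j$ a descent of $\bs_i\bbw$) from a reduced expression chosen compatibly via part (1); each step reduces the commutation to a relation among $\tfX_i,\tfX_j,\tT_{\bs_i},\tT_{\bs_j}$ that is supplied by the rank $2$ hypothesis. Concretely, the rank $2$ case of part (2), $\tfX=\tfX_{v_0}$ for the subdiagram, combined with the rank $2$ instance of the intertwining characterization $B_i\tfX=\tfX B_i^{\sigma}$, yields exactly the rank $2$ commutation of $B_i$ with the corresponding $\tT_{\bs_i}$-conjugated partial product (the analogue of \eqref{eq:cor-1}--\eqref{eq:cor-2}), which is the building block. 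The delicate points will be the weight bookkeeping in $\Z\I$ guaranteeing that no spurious terms survive the commutation --- a degree argument parallel to the proof of Proposition~\ref{prop:fact1} --- and the combinatorial verification that the descent $j$ can always be chosen so that the stripped segment lies in a single rank $2$ parabolic. Once these are in place, the induction closes and part (2) follows.
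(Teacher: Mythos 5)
Your part (1) is essentially right: the Matsumoto--Tits argument, combined with Proposition~\ref{prop:LL} (so that $\ell_\circ$-reduced words are $\ell$-reduced and the prefix operators $\tT_{\bs_{i_1}\cdots\bs_{i_{k-1}}}$ depend only on group elements, by Proposition~\ref{prop:braid0}), reduces invariance under a braid move to part (1) for the rank~2 subdiagram, and this is indeed the route of \cite[Theorem 3.17]{DK19}. (Be aware that the paper does not reprove Theorem~\ref{thm:BK} at all: it cites \cite{DK19} and only supplies a new proof of the key Proposition~\ref{prop:tau0}.) Your reduction of part (2) to the single commutation $[B_i,\tT_{\bs_i}(\tfX_{\bs_i\bbw})]=0$ is also correct, as is the treatment of the relations against $\tU^{\imath 0}\tbU$; one smaller imprecision is the claim that positivity of the restricted root $\bs_{i_1}\cdots\bs_{i_{k-1}}(\balpha_{i_k})$ puts $\tfX^{[k]}$ in (a completion of) $\tU^+$ --- positivity of a weight does not place an element in $\tU^+$; one needs $\tfX_{i_k}\in\tU^+[\bs_{i_k}]$ (cf.\ \cite[Proposition 4.5]{BW18b}, used in Lemma~\ref{lem:rktwo1}) together with additivity of lengths.

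The genuine gap is that this commutation --- the entire substance of part (2) --- is never proved; you offer a plan, and the plan does not close as described. Peeling a rank~2 block means writing $\bbw=v_{ij}u$ with $\ell_\circ$ additive, where $v_{ij}$ is the longest element of $\langle\bs_i,\bs_j\rangle$, so that by part (1) and the rank~2 hypothesis $\tfX_{\bbw}=\tT_{v_{ij}}(\tfX_u)\,\tfX^{(ij)}$ with $\tfX^{(ij)}$ the rank~2 quasi $K$-matrix. The rank~2 data then give $[B_i,\tfX^{(ij)}\tfX_i^{-1}]=0$ purely formally, and what remains to show is $[B_i,\tT_{v_{ij}}(\tfX_u)]=0$. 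This residual statement is \emph{not} an instance of your inductive claim: the conjugating element is now $v_{ij}$ rather than $\bs_i$, and the next block one would peel from $u$ sits inside a $\tT_{v_{ij}}$-conjugate, so the relation required mixes three or more simple reflections $\bs_i,\bs_j,\bs_k$ and is supplied by no single rank~2 subdiagram. (One can check that replacing $j$ by another descent $j'$ converts the statement for $(i,j)$ into the statement for $(i,j')$, but this reshuffling never terminates in rank $\geq 3$.) Closing this loop is precisely the non-formal content of \cite[Theorem 3.20]{DK19}; it is why the paper singles out Proposition~\ref{prop:tau0} --- identifying the full-prefix conjugate $\tT_{\bs_{i_1}}\cdots\tT_{\bs_{i_{m-1}}}(\tfX_{i_m})$ with the rank~1 quasi $K$-matrix $\tfX_{\tau_0 i_m}$ --- as the crucial ingredient, and why it gives a new proof of that proposition. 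Your argument never uses Proposition~\ref{prop:tau0} nor any substitute for it (e.g.\ a monomial-basis weight argument of the kind in Proposition~\ref{prop:fact1}), and the two ``delicate points'' you defer are exactly where the proof has to live. As it stands, part (2) is incomplete.
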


The arguments for Theorem~\ref{thm:BK} are largely formal once the following crucial result (see \cite[Proposition 3.18]{DK19}) is in place. We provide a short new proof below. Recall $\bbw$ is the longest element in $\reW$. Recall also the diagram involution $\tau_0$ such that $w_0 (\alpha_i) = -\alpha_{\tau_0 \alpha_i}$, for all $i$, where $w_0$ is the longest element in $W$.

\begin{proposition}
 \cite[Proposition 3.18]{DK19}
   \label{prop:tau0}
 Let $\bbw =\bs_{i_1}\bs_{i_2}\cdots \bs_{i_m}$ be a reduced expression of $\bbw$.  Then we have
$% \[
 \tT_{\bs_{i_1}} \tT_{\bs_{i_2}} \cdots\tT_{\bs_{i_{m-1}}}(\tfX_{i_m})
 =\tfX_{\tau_0 i_m}.
$% \]
\end{proposition}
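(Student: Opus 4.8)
The plan is to prove Proposition~\ref{prop:tau0} by exploiting the uniqueness characterization of the rank one quasi $K$-matrix $\tfX_{\tau_0 i_m}$ (Theorem~\ref{thm:fX1} restricted to the appropriate rank one Satake subdiagram), rather than by any direct computation with the reduced expression. Write $w' := \bs_{i_1}\bs_{i_2}\cdots\bs_{i_{m-1}}$, so that $\bbw = w' \bs_{i_m}$ with $\ell_\circ(\bbw) = \ell_\circ(w') + 1$, and set $Y := \tT_{\bs_{i_1}}\cdots\tT_{\bs_{i_{m-1}}}(\tfX_{i_m}) = \tT_{w'}(\tfX_{i_m})$. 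First I would record what $\bbw$ does to the relevant roots: since $\bbw$ is the longest element of $\reW$, it sends every positive relative root to a negative one, and in particular $\bbw(\balpha_{i_m}) = \balpha_{\bbw i_m}$ (a negative relative simple root), which forces $\bbw\, i_m = \tau_0 i_m$ at the level of the $\tau$-orbits; this is the root-combinatorial input that pins down the index. Because $\ell(\bbw) = \ell(w') + \ell(\bs_{i_m})$ by Proposition~\ref{prop:LL}, the operator $\tT_{w'}$ shifts the weight support of $\tfX_{i_m}$ into the positive cone associated with $\tfX_{\tau_0 i_m}$, so $Y \in \tU^+$ with $Y^0 = 1$; this is the first thing to verify.

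The heart of the argument is then to show that $Y$ satisfies the defining intertwining relations \eqref{eq:fX2} of $\tfX_{\tau_0 i_m}$, namely
\begin{align*}
B_{\tau_0 i_m}\, Y = Y\, B_{\tau_0 i_m}^{\sigma}, \qquad x\, Y = Y\, x \quad (x \in \tU^{\imath 0}\tbU),
\end{align*}
after which uniqueness gives $Y = \tfX_{\tau_0 i_m}$. The cleanest route I would take is to start from the known intertwining relation for the \emph{full} quasi $K$-matrix, combined with the basic property $\tTa{i}(B_i)$ and $\tTb{i}(B_i)$ of the new symmetries: since Theorem~\ref{thm:fact1} (in its rank one/rank two consequences) already tells us how $\tTT_{\underline{w}}$ moves the $B_i$ around, and since the symmetry $\tTa{i}$ is \emph{defined} by conjugating $\tT_{\bs_i}^{-1}$ through $\tfX_i$ via \eqref{eq:newb0}, applying the chain of intertwining relations \eqref{eq:newb0} along the reduced word for $w'$ converts the identity $\bbw(\balpha_{i_m}) = \balpha_{\tau_0 i_m}$ into precisely the commutation of $Y$ with $B_{\tau_0 i_m}$ up to the $\sigma$-twist. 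Concretely, I would feed $x = B_{i_m}$ through the successive operators $\tTT_{i_{m-1}}^{-1}\cdots\tTT_{i_1}^{-1}$, use that each step contributes a factor $\tfX_{i_k}$ conjugated by the preceding braid operators, and observe that the accumulated conjugating element is exactly $\tfX_{\bbw}\,\tfX_{\tau_0 i_m}^{-1}$; matching weights (as in the weight arguments of Propositions~\ref{prop:fact1}--\ref{prop:fact-G2}) isolates $Y$.

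The main obstacle I anticipate is the bookkeeping that identifies the product $\tT_{w'}(\tfX_{i_m})$ with the single rank one factor: one must check that the weights of $\tT_{w'}(\tfX_{i_m})$ genuinely lie in $\N(\alpha_{\tau_0 i_m} + \bw\alpha_{\tau\tau_0 i_m})$ and that no spurious components arise, which relies on $\ell(\bbw) = \ell(w') + \ell(\bs_{i_m})$ so that $\tT_{w'}$ maps the positive root vectors supporting $\tfX_{i_m}$ to positive root vectors (never dipping into $\tU^-$). A secondary subtlety is confirming that $\tau_0 i_m$ indeed lands in $\wI$ (not $\bI$) and that the diagram involution $\tau_0$ is the correct one governing $\bbw$'s action on $\wItau$; this follows from the general theory of the relative Weyl group $\reW$ as a Coxeter group with longest element $\bbw$, since $\bbw$ induces a diagram involution on the relative simple roots that is compatible with $\tau_0$. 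Once these weight and index compatibilities are established, the uniqueness of the rank one quasi $K$-matrix from Theorem~\ref{thm:fX1} closes the argument immediately.
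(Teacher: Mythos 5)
Your overall strategy --- identify $Y:=\tT_{\bs_{i_1}}\cdots\tT_{\bs_{i_{m-1}}}(\tfX_{i_m})$ by checking the rank-one characterization of Theorem~\ref{thm:fX1} and invoking uniqueness --- is a legitimate alternative in principle, and it is genuinely different from the paper's argument: the paper computes directly, writing $w_0=\bbw w_\bullet$, using Lemma~\ref{lem:braid1} to get $\tT_{w_0}^{-1}\widehat{\tau}_0=\tT_{w_{\bullet,i_m}}^{-1}\widehat{\tau}_{\bullet,i_m}$ on $\tU_{\I_{\bullet,i_m}}$, and then the invariance of $\tfX_{i_m}$ under $\widehat{\tau}_{\bullet,i_m}$ and under $\tT_{w_\bullet}$ (Propositions~\ref{prop:inv} and \ref{prop:newb-1}) to conclude $\tT_{\bbw}^{-1}(\tfX_{\tau_0 i_m})=\tT_{\bs_{i_m}}^{-1}(\tfX_{i_m})$ in a few lines, with no appeal to uniqueness, to the symmetries $\tTT$, or to Theorem~\ref{thm:fact1}.

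However, the mechanism you propose for the key step $B_{\tau_0 i_m}Y=YB_{\tau_0 i_m}^{\sigma}$ has a genuine gap. Chaining the intertwining relation \eqref{eq:newb0} along $w'=\bs_{i_1}\cdots\bs_{i_{m-1}}$ with $x=B_{i_m}$ produces, as in the proof of Theorem~\ref{thm:newb2}, an identity $\tTT'_{\underline{w'},-1}(B_{i_m})\cdot\Omega=\Omega\cdot\tT'_{w',-1}(B_{i_m})$ where $\Omega=\tfX_{i_1}\,\tT'_{\bs_{i_1},-1}(\tfX_{i_2})\cdots$ is the product of the first $m-1$ conjugated rank-one factors. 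Both $\tTT'_{\underline{w'},-1}(B_{i_m})$ and $\tT'_{w',-1}(B_{i_m})$ equal $B_{\varrho i_m}$ with $\varrho=\tau_0\tau_{\bullet,i_m}$, so what comes out is an \emph{untwisted} commutation of a single $B$ with the \emph{product} $\Omega$ --- precisely the kind of identity in \eqref{eq:fact-7}, \eqref{eq:fact-13}, \eqref{eq:fact-20}, whose polished forms in Corollaries~\ref{cor:fact1}, \ref{cor:fact2} and \ref{cor:G2} are in fact deduced \emph{from} Proposition~\ref{prop:tau0} in the paper. The element $Y$ is the $m$-th factor $\tfX^{[m]}$ of $\tfX_{\bbw}$ and does not occur in $\Omega$ at all, so no weight-matching can "isolate" it from this identity; and describing the accumulated conjugator as $\tfX_{\bbw}\tfX_{\tau_0 i_m}^{-1}$ already presupposes $\tfX^{[m]}=\tfX_{\tau_0 i_m}$, which is the statement to be proved. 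If you want to keep the uniqueness route, the workable move is far more direct: apply $\tT_{w'}$ itself to the defining relations $B_{i_m}\tfX_{i_m}=\tfX_{i_m}B_{i_m}^{\sigma}$ and $x\tfX_{i_m}=\tfX_{i_m}x$, and verify $\tT_{w'}(B_{i_m})=B_{\varrho i_m}$, $\tT_{w'}(B_{i_m}^{\sigma})=B_{\varrho i_m}^{\sigma}$ by the computation of Lemmas~\ref{lem:simple} and \ref{lem:rho4}; this needs neither Theorem~\ref{thm:fact1} nor the symmetries $\tTT$. Note finally that $w'(\alpha_{i_m})=\alpha_{\tau_0\tau_{\bullet,i_m}i_m}$ rather than $\alpha_{\tau_0 i_m}$ in general, so you must use $\tfX_{\tau_0\tau_{\bullet,i_m}i_m}=\tfX_{\tau_0 i_m}$ (via Proposition~\ref{prop:inv} and $\tfX_i=\tfX_{\tau i}$) to land on the stated index.
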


\begin{proof}
We have $w_0=\bbw \bw$, %, cf. \cite[Proof of Theorem 5.9(ii)]{Lus76}.
and hence, $\tT_{ w_0} =\tT_{\bbw} \tT_{w_{\bullet}}.$
It follows by Lemma~\ref{lem:braid1} that $\tT_{ w_0}^{-1} \widehat{\tau}_0 =\tT_{ w_{\bullet,i_m}}^{-1}  \widehat{\tau}_{\bullet,i_m}$ when acting on $\tU_{\I_{\bullet,i_m}}$. Thus,
\[
\tT_{ w_0}^{-1} \widehat{\tau}_0 (\tfX_{i_m})
=\tT_{ w_{\bullet,i_m}}^{-1}\widehat{\tau}_{\bullet,i_m} (\tfX_{i_m})
=\tT_{ w_{\bullet,i_m}}^{-1} (\tfX_{i_m}),
\]
since the quasi $K$-matrix $\tfX_{i_m}$ lies in a completion of $\tU^+_{\I_{\bullet,i_m}}$ and $\widehat{\tau}_{\bullet,i_m} (\tfX_{i_m}) =\tfX_{i_m}$ (see Proposition~\ref{prop:inv}). Then we obtain
\begin{align*}
\tT_{ w_0}^{-1} (\tfX_{\tau_0 i_m})
&=\tT_{ w_0}^{-1} \widehat{\tau}_0 (\tfX_{i_m})
=\tT_{ w_{\bullet,i_m}}^{-1} (\tfX_{i_m})
= \tT_{ \bs_{i_m}}^{-1} \tT_{w_{\bullet}}^{-1} (\tfX_{i_m})
= \tT_{ \bs_{i_m}}^{-1} (\tfX_{i_m}),
\end{align*}
where the last equality follows by Proposition~\ref{prop:newb-1}. By Proposition~\ref{prop:newb-1} again we have
\begin{align*}
\tT_{\bbw}^{-1} (\tfX_{\tau_0 i_m})
&=\tT_{ w_0}^{-1} \tT_{w_{\bullet}} (\tfX_{\tau_0 i_m})
=\tT_{ w_0}^{-1} (\tfX_{\tau_0 i_m})
= \tT_{ \bs_{i_m}}^{-1} (\tfX_{i_m}).
\end{align*}
%that is, $\tT_{\bbw} \tT_{ \bs_{i_m}}^{-1}(\tfX_{i_m}) =\tfX_{\tau_0 i_m}$.
Hence,
$ \tT_{\bs_{i_1}} \tT_{\bs_{i_2}} \cdots\tT_{\bs_{i_{m-1}}}(\tfX_{i_m})
 =\tT_{\bbw}\tT_{ \bs_{i_m}}^{-1}(\tfX_{i_m})
 =\tT_{\bbw} \tT_{\bbw}^{-1} (\tfX_{\tau_0 i_m})
= \tfX_{\tau_0 i_m}.$
\end{proof}

\begin{remark}
It was verified in \cite{DK19} that Theorem~\ref{thm:factor} holds in all type A rank 2 and all split rank 2 cases. The long computational proof therein is carried out  case-by-case based on several explicit rank 1 formulas which they also computed.
\end{remark}

We note that in the rank 2 setting the first statement in Theorem~\ref{thm:factor} is nontrivial only when $w=\bbw$, the longest element in $\reW$. Hence, in the remainder of this section, to prove Theorem~\ref{thm:factor} we can and shall assume that
\[
\text{ $(\I=\bI \cup \wI,\tau)$ is any {\em rank two} Satake diagram of finite type, and $w = \bbw$}.
\]
Moreover, we denote $\wI =\{i, \tau i, j, \tau j\}$.

Let $\bbw =\bs_{i_1}\bs_{i_2}\cdots \bs_{i_m}$ be a reduced expression.
Theorem~\ref{thm:factor} in the case for $\ell_\circ (\bbw) =2$, i.e., $\bbw = \bs_i \bs_j =\bs_j \bs_i$, trivially holds.
The next proposition reduces the proof of Theorem~\ref{thm:factor} in the remaining nontrivial cases into verifying its assumption.

\begin{proposition}
 \label{prop:UU}
Assume that $B_p \tfX_{\bbw} =\tfX_{\bbw} B_p^\sigma$, for $p =i,j.$
Then we have $\tfX =\tfX_{\bbw}$, for any reduced expression of ${\bbw}$.
\end{proposition}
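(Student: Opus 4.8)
The plan is to prove that $\tfX_{\bbw}$ satisfies the defining intertwining relations for the quasi $K$-matrix $\tfX$ of Theorem~\ref{thm:fX1}, and then invoke the uniqueness of $\tfX$ to conclude $\tfX = \tfX_{\bbw}$. Recall that Theorem~\ref{thm:fX1} characterizes $\tfX$ by $\tfX^0 = 1$ together with the two families of relations $B_p \tfX = \tfX B_p^\sigma$ for $p \in \wI$ and $x \tfX = \tfX x$ for $x \in \tU^{\imath 0}\tbU$. Since $\wI = \{i, \tau i, j, \tau j\}$ and $B_i^\sigma, B_{\tau i}^\sigma$ are related by $\widehat{\tau}$ (and likewise for $j$), the hypothesis $B_p \tfX_{\bbw} = \tfX_{\bbw} B_p^\sigma$ for $p = i, j$ already handles the $\tau$-conjugate nodes by applying $\widehat{\tau}$ and using Proposition~\ref{prop:inv}, which gives $\widehat{\tau}(\tfX_p) = \tfX_p$ and hence $\widehat{\tau}(\tfX_{\bbw}) = \tfX_{\bbw}$.

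First I would verify that $\tfX_{\bbw}$ has constant term $1$: each rank~1 factor $\tfX_{i_k}$ has the form $\tfX_{i_k} = 1 + (\text{higher weight terms})$ by Remark~\ref{rmk:fX2}, and applying the algebra automorphisms $\tT_{\bs_{i_1}}\cdots\tT_{\bs_{i_{k-1}}}$ preserves this normalization, so $\tfX^{[k]}$ has constant term $1$ and therefore so does the product $\tfX_{\bbw}$. Next I would check the commutation with $\tU^{\imath 0}\tbU$, namely $x \tfX_{\bbw} = \tfX_{\bbw} x$ for all $x \in \tU^{\imath 0}\tbU$. This should follow by an induction on the number of factors: each factor $\tfX^{[k]} = \tT_{\bs_{i_1}}\cdots\tT_{\bs_{i_{k-1}}}(\tfX_{i_k})$ is the image under braid automorphisms of a rank~1 quasi $K$-matrix, and since $\tfX_{i_k}$ commutes with the corresponding Cartan-and-black subalgebra, the braid-twisted version commutes with the braid-twisted subalgebra; the key point is that $\tT_{\bs_{i_1}}\cdots\tT_{\bs_{i_{k-1}}}$ preserves $\tU^{\imath 0}\tbU$ (by Proposition~\ref{prop:Cartanblack} the rescaled braid operators restrict to automorphisms of $\tU^{\imath 0}\tbU$), so the relevant weight/commutation bookkeeping goes through.

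The main obstacle, and the genuinely nontrivial input, is establishing the relations $B_p \tfX_{\bbw} = \tfX_{\bbw} B_p^\sigma$ for $p = i, j$ — precisely the hypothesis of this proposition. These are exactly the statements proved in the preceding rank~2 corollaries: the identities \eqref{eq:cor-1}--\eqref{eq:cor-2} (case $\ell_\circ(\bbw)=3$), \eqref{eq:fact-14}--\eqref{eq:fact-15} (case $\ell_\circ(\bbw)=4$), and \eqref{eq:fact-21}--\eqref{eq:fact-22} (case $\ell_\circ(\bbw)=6$). In each case the relevant product $\tfX_j \tT_{\bs_i}(\tfX_j)$, or its longer analogue, is exactly $\tfX_{\bbw}$ after invoking Proposition~\ref{prop:tau0} to simplify the iterated braid-twisted factors into $\tfX_{\bbw}$; the corollaries then assert precisely $B_i \tfX_{\bbw} = \tfX_{\bbw} B_i$ and $B_j^\sigma \tfX_{\bbw} = \tfX_{\bbw} B_j^\sigma$ in a suitably reordered form. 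So the substance of the proposition is bookkeeping: matching the products appearing in Corollaries~\ref{cor:fact1}, \ref{cor:fact2}, and \ref{cor:G2} with the definition \eqref{eq:Upk} of $\tfX_{\bbw}$ for the chosen reduced expression, using Proposition~\ref{prop:tau0} to collapse the leading factors.

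Finally, granting the hypothesis, $\tfX_{\bbw}$ satisfies all the defining relations of Theorem~\ref{thm:fX1}: constant term $1$, commutation with $\tU^{\imath 0}\tbU$, and $B_p \tfX_{\bbw} = \tfX_{\bbw} B_p^\sigma$ for every $p \in \wI$ (the cases $p = \tau i, \tau j$ obtained from $p = i, j$ by applying $\widehat{\tau}$). By the uniqueness clause of Theorem~\ref{thm:fX1}, we conclude $\tfX = \tfX_{\bbw}$. I would close by remarking that independence of the reduced expression (part~(1) of Theorem~\ref{thm:factor} in the rank~2 case) is then immediate, since $\tfX$ itself does not depend on any choice, so every reduced expression of $\bbw$ yields the same product $\tfX_{\bbw} = \tfX$.
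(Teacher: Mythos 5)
Your proposal is correct and follows essentially the same route as the paper: one checks that $\tfX_{\bbw}$ has constant term $1$, commutes with $\tU^{\imath 0}\tbU$ (via the rank~1 relation \eqref{eq:fX1b}, Proposition~\ref{prop:Cartanblack}, and the definition \eqref{eq:Upk}), and satisfies the $B_p$-intertwining relations by hypothesis (the cases $p=\tau i,\tau j$ following by applying $\widehat{\tau}$), and then invokes the uniqueness in Theorem~\ref{thm:fX1}. The paper's proof is simply a terser version of the same argument.
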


\begin{proof}
The identity $x  \tfX_{\bbw} = \tfX_{\bbw} x,$ for $x\in \tU^{\imath 0}\tbU$, holds by \eqref{eq:fX1b}, Proposition~\ref{prop:Cartanblack}, and \eqref{eq:Upk}.
Together with the assumption that $B_p \tfX_{\bbw} =\tfX_{\bbw} B_p^\sigma$ $(p =i,j)$, we conclude that $\tfX_{\bbw}$ satisfies the same intertwining relations in Theorem~\ref{thm:fX1} as for $\tfX$. Note also that clearly we have the constant term  $(\tfX_{\bbw})^0=1$.
Therefore, the desired identity  $\tfX =\tfX_{\bbw}$ follows by the uniqueness in Theorem~\ref{thm:fX1}.
\end{proof}

\subsection{Factorizations in rank 2}

The verification that $B_p \tfX_{\bbw} =\tfX_{\bbw} B_p^\sigma$ in the three cases $\ell_\circ (\bbw) =3, 4$, or $6$, is based on the same idea, though the notations are a little different. In the subsections below, we shall consider the three cases separately.

\subsubsection{Factorization for $\ell_\circ (\bbw) =3$}
  \label{cij'=-1}

In this subsection, we deal with the rank 2 cases for $\ell_\circ (\bbw) =3$, with the help of Proposition~\ref{prop:fact1} and Corollary~\ref{cor:fact1}.

Assume that $\wItau=\{i,j\}$ such that $\ell_\circ (\bbw) =3$; in this case only $\tau =\mathrm{Id}$ and hence we identify $\wI=\{i,j\}$ as well. The longest element $\bbw$ of the relative Weyl group has a reduced expression
\begin{align}
\bbw=\bs_i \bs_j \bs_i.
\end{align}

By definition \eqref{eq:Upk} of $\tfX^{[k]}$ and $\tfX_{\bbw}$, we have
\begin{align}
 \label{Up321}
\tfX_{\bbw}= \tfX^{[3]} \tfX^{[2]} \tfX^{[1]}.
\end{align}
 where by Proposition~\ref{prop:tau0}, $\tT_{\bs_i} \tT_{\bs_j} (\tfX_i) =\tfX_j$ and $\tT_{\bs_j} \tT_{\bs_i} (\tfX_j) =\tfX_i$, and hence,
 \begin{align}
  \label{eq:DK}
 \tfX^{[3]}&=\tfX_j,\qquad  \tfX^{[2]}=\tT_{\bs_i}(\tfX_j), \qquad  \tfX^{[1]}=\tfX_i.
 \end{align}

  By Corollary~\ref{cor:fact1}, we have
 \begin{align}
   B_i  \tfX^{[3]} \tfX^{[2]} & = \tfX^{[3]} \tfX^{[2]} B_i,
   \label{eq:fact-1} \\
   B_j^\sigma \tfX^{[2]} \tfX^{[1]} & = \tfX^{[2]} \tfX^{[1]} B_j^\sigma.
     \label{eq:fact-2}
 \end{align}

It follows by Theorem~\ref{thm:fX1} that, for $p =i,j,$
 \begin{align}
  \label{eq:Bp}
 B_p \tfX_p =\tfX_p B_p^\sigma.
 \end{align}

Now we show that $\tfX_{\bbw}$ satisfies the following intertwining relations
 % (then the factorization property follows by the uniqueness of quasi $K$-matrices)
 \begin{align*}
 B_p \tfX_{\bbw} =\tfX_{\bbw} B_p^\sigma ,\qquad (p =i,j).
 \end{align*}
 Indeed, $B_i \tfX_{\bbw} = B_i \tfX^{[3]} \tfX^{[2]} \tfX^{[1]} = \tfX^{[3]} \tfX^{[2]} B_i \tfX^{[1]} = \tfX^{[3]} \tfX^{[2]} \tfX^{[1]} B_i^\sigma$, by \eqref{Up321}, \eqref{eq:fact-1} and \eqref{eq:Bp}. Also,
 $B_j \tfX_{\bbw} = B_j \tfX^{[3]} \tfX^{[2]} \tfX^{[1]} = \tfX^{[3]} B_j^\sigma \tfX^{[2]}  \tfX^{[1]} = \tfX^{[3]} \tfX^{[2]} \tfX^{[1]} B_j^\sigma$, by  \eqref{eq:DK}, \eqref{eq:Bp},  and \eqref{eq:fact-2}.

It follows by Proposition~\ref{prop:UU} (whose assumption is verified above), we have $\tfX =\tfX_{\bbw}$. Using the other reduced expression for $\bbw$ amounts to switching notations $i,j$ above. Hence, $\tfX =\tfX_{\bbw}$ is independent of the choice of a reduced expression for $\bbw$.

\subsubsection{Factorization for $\ell_\circ (\bbw) =4$}
  \label{cij'=-2}

In this subsection, we deal with the rank 2 cases for $\ell_\circ (\bbw) =4$, with the help of Proposition~\ref{prop:fact2} and Corollary~\ref{cor:fact2}.

 Assume that $\wItau=\{i,j\}$ such that $\ell_\circ (\bbw) =4$. Let $\{i,\tau i\}$ and $\{j,\tau j\}$ be the corresponding two distinct $\tau$-orbits of $\wI$.
 The longest element $\bbw$ of the relative Weyl group has a reduced expression
\begin{align}
\bbw=\bs_i \bs_j \bs_i \bs_j.
\end{align}
By definition \eqref{eq:Upk} of $\tfX^{[k]}$ and $\tfX_{\bbw}$,  we have
\begin{align}
\tfX_{\bbw}= \tfX^{[4]} \tfX^{[3]} \tfX^{[2]} \tfX^{[1]},
\end{align}
 where by Proposition~\ref{prop:tau0}, $\tT_{\bs_i} \tT_{\bs_j} \tT_{\bs_i} (\tfX_j) =\tfX_j$ and $\tT_{\bs_j} \tT_{\bs_i} \tT_{\bs_j} (\tfX_i) =\tfX_i$, and hence
 \begin{align}
  \label{eq:DK2}
 \tfX^{[4]}&=\tfX_j,\quad
 \tfX^{[3]}=\tT_{\bs_i} \tT_{\bs_j} (\tfX_i)=\tT_{\bs_j}^{-1}(\tfX_i),\quad
 \tfX^{[2]}=\tT_{\bs_i}(\tfX_j), \quad \tfX^{[1]}=\tfX_i.
 \end{align}

By Corollary~\ref{cor:fact2}, we have
\begin{align}
 B_i \tfX^{[4]} \tfX^{[3]} \tfX^{[2]} &=  \tfX^{[4]} \tfX^{[3]} \tfX^{[2]} B_i,
  \label{eq:fact1}  \\
 B_j^\sigma \tfX^{[3]}\tfX^{[2]} \tfX^{[1]}  &= \tfX^{[3]} \tfX^{[2]} \tfX^{[1]} B_j^\sigma.\label{eq:fact2}
\end{align}

Just as in \S\ref{cij'=-1}, using the identities \eqref{eq:fact1}--\eqref{eq:fact2} we can show that $\tfX_{\bbw}$ satisfies the following intertwining relations
$B_p \tfX_{\bbw} =\tfX_{\bbw} B_p^\sigma$, for $p =i,j$.
It follows by Proposition~\ref{prop:UU} (whose assumption is verified above), we have $\tfX =\tfX_{\bbw}$, which is independent of the choice of a reduced expression for $\bbw$.

\subsubsection{Factorization for $\ell_\circ (\bbw) =6$}
  \label{cij'=-3}

The case for $\ell_\circ (\bbw) =6$ occurs only in split $G_2$ type. We shall prove this using Proposition~\ref{prop:fact-G2} and Corollary~\ref{cor:G2}.

Let $(\I=\wI,\tau=\Id)$ be the Satake diagram of split type $G_2$. In this case, $W^\circ=W$ and $\bs_a=s_a$.
 Assume that $\I =\{i,j\}$ such that $\ell_\circ (\bbw) =6$.
 The longest element $\bbw$ of the relative Weyl group has a reduced expression
\begin{align}
\bbw=s_i s_j s_i s_j s_i s_j.
\end{align}
By definition \eqref{eq:Upk} of $\tfX^{[k]}$ and $\tfX_{\bbw}$,  we have
\begin{align}
\tfX_{\bbw}= \tfX^{[6]}\tfX^{[5]}\tfX^{[4]} \tfX^{[3]} \tfX^{[2]} \tfX^{[1]}.
\end{align}
 where by Proposition~\ref{prop:tau0},
 $\tT_{s_i s_j s_i s_j s_i} (\tfX_j) =\tfX_j,$
 and hence
 \begin{align}\notag
 \tfX^{[6]}&=\tfX_j,\quad
 \tfX^{[5]} =\tT_{ s_i s_j s_i s_j}(\tfX_i),\quad
 \tfX^{[4]} =\tT_{ s_i s_j s_i}(\tfX_j),\\
 \tfX^{[3]}&=\tT_{ s_i  s_j} (\tfX_i) ,\quad
 \tfX^{[2]}=\tT_{ s_i}(\tfX_j), \quad \tfX^{[1]}=\tfX_i.
  \label{eq:fact-G2}
 \end{align}

By Corollary~\ref{cor:G2}, we have
\begin{align}
 B_i \tfX^{[6]}\tfX^{[5]}\tfX^{[4]} \tfX^{[3]} \tfX^{[2]} &=  \tfX^{[6]}\tfX^{[5]}\tfX^{[4]} \tfX^{[3]} \tfX^{[2]} B_i,
  \label{eq:fact4}  \\
 B_j^\sigma \tfX^{[5]}\tfX^{[4]} \tfX^{[3]}\tfX^{[2]} \tfX^{[1]}  &= \tfX^{[5]}\tfX^{[4]} \tfX^{[3]} \tfX^{[2]} \tfX^{[1]} B_j^\sigma.\label{eq:fact5}
\end{align}

Just as in \S\ref{cij'=-1}, using the identities \eqref{eq:fact4}--\eqref{eq:fact5} we can show that $\tfX_{\bbw}$ satisfies the following intertwining relations
$B_p \tfX_{\bbw} =\tfX_{\bbw} B_p^\sigma$, for $p =i,j$.
It follows by Proposition~\ref{prop:UU} (whose assumption is verified above), we have $\tfX =\tfX_{\bbw}$, which is independent of the choice of a reduced expression for $\bbw$.

\begin{remark}
A different and more computational proof of the factorization of the quasi $K$-matrix in split type $G_2$ was given earlier in Dobson's thesis \cite{Dob19}.
\end{remark}

%%%%%%
%%%%%%
\section{Relative braid group actions on $\imath$quantum groups}
   \label{sec:braid}

In this section, we show that $\tTae{i}, \tTbe{i}$, where $e =\pm 1$ and $i\in \wItau$, satisfy the relative braid group relations in $\Br(\reW)$.
%The proof uses the defining intertwining properties of $\tTae{i}, \tTbe{i}$, Lusztig's braid group relations, and factorization properties of quasi $K$-matrices.
An action of $\Br(\bW) \rtimes \Br(\reW)$ on $\tUi$ is then established.
Moreover we show that, by central reductions and isomorphisms among $\imath$quantum groups with different parameters, the symmetries $\tTae{i}, \tTbe{i}$ on $\tUi$ descend to $\TT'_{i,e}, \TT''_{i,e}$ on the $\imath$quantum groups $\Ui_\bvs$, inducing relative braid group actions on $\Ui_\bvs$, for an arbitrary parameter $\bvs$.

\subsection{Braid group relations among $\tTT_i$}

For $i\neq j\in \wItau$, let $m_{ij}$ be the order of $\bs_i \bs_j$ in $\reW$, with $m_{ij}\in \{2,3,4,6\}$. Then the following braid relation is satisfied in $\Br(\reW)$:
\begin{align}
\underbrace{\bs_i\bs_j \bs_i \cdots }_{m_{ij}}=\underbrace{\bs_j\bs_i\bs_j \cdots }_{m_{ij}}.
\end{align}

%Recall the automorphisms $\tTa{i} =\tTT_i^{-1}$ and $\tTb{i} =\tTT_i$ on $\tUi$ from Theorem~\ref{thm:newb0} and Theorem~\ref{thm:Tb}, for $i\in \wItau$.

\begin{theorem}
  \label{thm:newb2}
For $i\neq j\in \wItau,e=\pm1$, we have
\begin{align}
\label{eq:braid1}
\begin{split}
\underbrace{\tTT'_{i,e} \tTT'_{j,e} \tTT'_{i,e}  \cdots }_{m_{ij}}
&=\underbrace{\tTT'_{j,e} \tTT'_{i,e} \tTT'_{j,e}  \cdots}_{m_{ij}},
\\
\underbrace{\tTT''_{i,e} \tTT''_{j,e} \tTT''_{i,e}  \cdots }_{m_{ij}}
&=\underbrace{\tTT''_{j,e} \tTT''_{i,e} \tTT''_{j,e}  \cdots}_{m_{ij}}.
\end{split}
\end{align}
\end{theorem}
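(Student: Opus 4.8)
The plan is to reduce the braid relations for the $\imath$-symmetries to the already-established braid relations for the rescaled operators $\tT_i$ on $\tU$ (Proposition~\ref{prop:braid0}), using the defining intertwining relation \eqref{eq:newb0} together with the factorization of quasi $K$-matrices from Theorem~\ref{thm:factor}. First I would observe that the second family of relations (for $\tTT''_{i,e}$) follows from the first by applying the conjugations in \eqref{def:tTT2} and Theorem~\ref{thm:newb1}: since $\tTT''_{i,-e} = \tpsi^\imath \circ \tTT'_{i,e} \circ \tpsi^\imath$ and $\tTT'_{i,e}$, $\tTT''_{i,-e}$ are mutual inverses, a braid relation among the $\tTT'_{i,e}$ transports to one among the $\tTT''_{i,e}$. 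Likewise, the relation for general $e$ follows from the case $e=-1$ (i.e.\ for $\tTT_i^{-1} =\tTT'_{i,-1}$) by conjugating with $\tpsi^\imath$, so it suffices to treat $\tTT_i^{-1}\tTT_j^{-1}\cdots$.

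Second, I would fix $i\neq j \in \wItau$ and let $\bbw$ denote the longest element of the rank 2 relative Weyl group $\langle \bs_i,\bs_j\rangle$, so that $m_{ij} =\ell_\circ(\bbw)$ and the braid relation $\underbrace{\bs_i\bs_j\cdots}_{m_{ij}} =\underbrace{\bs_j\bs_i\cdots}_{m_{ij}}$ holds. The key computation is to expand the composite $\tTT_{i_1}^{-1}\tTT_{i_2}^{-1}\cdots\tTT_{i_m}^{-1}$ (for a reduced word $\bbw =\bs_{i_1}\cdots \bs_{i_m}$) using \eqref{eq:newb0} repeatedly. Each application contributes a quasi $K$-matrix factor, and telescoping these gives
\begin{align}
\label{eq:plan1}
\tTT_{\underline{\bbw}}^{-1}(x) \cdot \tfX_{\underline{\bbw}} = \tfX_{\underline{\bbw}} \cdot \tT_{\bbw}^{-1}(x^\imath),
\qquad x\in \tUi,
\end{align}
where $\tfX_{\underline{\bbw}} = \tfX^{[m]}\cdots \tfX^{[1]}$ is exactly the partial quasi $K$-matrix of \eqref{eq:Upk}. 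By Theorem~\ref{thm:factor}, $\tfX_{\underline{\bbw}} = \tfX_{\bbw}$ is \emph{independent} of the reduced expression of $\bbw$; since $\tT_{\bbw}^{-1}$ on $\tU$ depends only on $\bbw$ (braid relations for $\tT_i$), the right-hand side of \eqref{eq:plan1} is manifestly reduced-word-independent. Because $\tfX_{\bbw}$ is invertible, the intertwining relation \eqref{eq:plan1} determines $\tTT_{\underline{\bbw}}^{-1}(x)$ uniquely, whence $\tTT_{\underline{\bbw}}^{-1}$ itself is independent of the reduced word. Applying this to the two reduced expressions $\underbrace{\bs_i\bs_j\cdots}_{m_{ij}}$ and $\underbrace{\bs_j\bs_i\cdots}_{m_{ij}}$ of $\bbw$ yields the desired braid relation for $e=-1$.

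The main obstacle—and the step that genuinely requires the preceding machinery—is establishing \eqref{eq:plan1}, i.e.\ verifying that the telescoped product of quasi $K$-matrices assembles into precisely $\tfX_{\underline{\bbw}}$ as defined in \eqref{eq:Upk}. The subtlety is that \eqref{eq:newb0} is phrased with $\tfX_i$ and $\tT_{\bs_i}^{-1}$, and one must carefully commute the successive $\tT_{\bs_{i_k}}^{-1}$ through the accumulated $\tfX$-factors; each step converts $\tfX_{i_{k}}$ into $\tT_{\bs_{i_1}}\cdots\tT_{\bs_{i_{k-1}}}(\tfX_{i_k}) = \tfX^{[k]}$, matching \eqref{eq:Upk} exactly. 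This bookkeeping, together with the invocation of Theorem~\ref{thm:factor}, is where the factorization property of the quasi $K$-matrix feeds back to yield the braid relations; indeed this is the ``in turn'' compatibility advertised in the introduction. Once \eqref{eq:plan1} is in hand, the remaining reductions (between $e=\pm 1$ and between $\tTT'$ and $\tTT''$) are purely formal via $\tpsi^\imath$ and $\sigma^\imath$, as noted above.
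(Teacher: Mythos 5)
Your proposal follows essentially the same route as the paper: reduce to the $\tTa{i}$ via $\tpsi^\imath$-conjugation and mutual inverses, telescope the intertwining relation \eqref{eq:newb0} along the two reduced words of the rank~2 longest element $\bbw$, use the braid relations for $\tT'_{k,-1}$ on $\tU$, and invoke Theorem~\ref{thm:factor} plus invertibility of the accumulated quasi $K$-matrix factor to conclude. One bookkeeping point in your key step is off, though it does not sink the argument: the telescoped conjugating element is
$\tfX_{i_1}\,\tT'_{\bs_{i_1},-1}(\tfX_{i_2})\cdots \tT'_{\bs_{i_1},-1}\cdots\tT'_{\bs_{i_{m-1}},-1}(\tfX_{i_m})$,
built from the \emph{inverse} braid operators and multiplied in the \emph{opposite} order to \eqref{eq:Upk}; it is therefore $\sigma(\tfX_{\underline{\bbw}})$ rather than $\tfX_{\underline{\bbw}}$ itself. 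The paper closes this gap by applying $\sigma$ to the factorization identity, using $\sigma(\tfX_i)=\tfX_i$ (Proposition~\ref{prop:inv}) and $\sigma\circ\tT_{\bs_i}\circ\sigma=\tT'_{\bs_i,-1}$, after which reduced-word independence of $\tfX_{\bbw}$ transfers to the $\sigma$-twisted product and the two composites agree. (Your formula relating $\tTT''_{i,-e}$ to $\tTT'_{i,e}$ by $\tpsi^\imath$-conjugation is also not one of the relations in \eqref{def:tTT2}, but since braid relations are stable under both conjugation and inversion, the formal reductions you describe still go through.) With the $\sigma$-twist inserted, your argument is the paper's proof.
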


\begin{proof}
By Theorem~\ref{thm:newb1}, $\tTb{i}$ is the inverse of $\tTa{i}$. Moreover, by definition \eqref{def:tTT2}, $\tTT'_{i,+1},\tTT''_{i,-1}$ are conjugations of $\tTa{i},\tTb{i}$ respectively. Hence. it suffices to prove the identity \eqref{eq:braid1} for $\tTa{i}$. 

%Following Theorem \ref{thm:newb0} or \eqref{eq:newb0}, when acting on (some completion of) $\tU$, we have
%\begin{align}
%  \label{eq:braid2}
%\tTT^{-1}_i = \ad_{\tfX_i}\circ \tT^{-1}_{\bs_i},\qquad \tTT^{-1}_j = \ad_{\tfX_j}\circ \tT^{-1}_{\bs_j}.
%\end{align}
%In addition, we have
%\begin{align}
%  \label{eq:braid3-1}
%\tT'_{\bs_i,-1} \circ \ad_{y} &= \ad_{\tT'_{\bs_i,-1}(y)} \circ \tT'_{\bs_i,-1},  \\\ad_{y_1} \circ \ad_{y_2} &= \ad_{y_1 y_2}. \label{eq:braid3-2}
%\end{align}
Set $m =m_{ij}$. Let $\bbw = \bs_i \bs_j \bs_i \cdots$ be a reduced expression of length $m$. Define $\boldsymbol{w}_k$, for $1\leq k \leq m$, to be
\begin{align*}
\boldsymbol{w}_1 = \bs_i,\quad \boldsymbol{w}_2 =\bs_i \bs_j,\quad \boldsymbol{w}_3= \bs_i \bs_j \bs_i,\quad \ldots, \quad\boldsymbol{w}_{m}=\bbw.
\end{align*}
Write $\bbw'$ for the other reduced expression $\bs_j\bs_i\bs_j\cdots$, and define $\boldsymbol{w}_k'$, for $1\le k \le m$, accordingly. Let $r$ denote the last index in the reduced expression of $\bbw$; that is, $r=j$ if $m=2,4,6$ and $r=i$ if $m=3.$ Similarly, we define $r'$ for $\bbw'.$

%Using \eqref{eq:braid2}--\eqref{eq:braid3-2} we rewrite LHS \eqref{eq:braid1} as 
%\begin{align} \label{TTTAd}
%\underbrace{\tTa{i} \tTa{j} \tTa{i} \cdots }_{m}= (\ad_{\tfX_i \cdot \tT^{-1}_{\boldsymbol{w}_1}(\tfX_j)\cdots \tT^{-1}_{\boldsymbol{w}_{m-1}}(\tfX_r)}) \circ (\underbrace{\tT'_{\bs_i,-1} \tT'_{\bs_j,-1}\cdots }_{m }).
%\end{align}
%Similarly, RHS \eqref{eq:braid1} can be rewritten as
%\begin{align}  \label{TTTAd2}
%&\underbrace{\tTa{j} \tTa{i} \tTa{j} \cdots }_{m} =( \ad_{\tfX_j \cdot \tT^{-1}_{\boldsymbol{w}'_1}(\tfX_i) \cdots \tT^{-1}_{\boldsymbol{w}'_{m-1}}(\tfX_{r'})}) \circ (\underbrace{\tT'_{\bs_j,-1} \tT'_{\bs_i,-1}\cdots }_{m }).
%\end{align}
Applying the intertwining property \eqref{eq:newb0} for $m$ times, we obtain the following 2 identities:
\begin{align}
\underbrace{\tTa{i} \tTa{j} \tTa{i} \cdots (u)}_m \cdot 
    & \tfX_i \cdot \tT'_{\boldsymbol{w}_1,-1}(\tfX_j)\cdots \tT'_{\boldsymbol{w}_{m-1},-1}(\tfX_r)
    \notag
    \\
    =\, & \tfX_i \cdot \tT'_{\boldsymbol{w}_1,-1}(\tfX_j)\cdots \tT'_{\boldsymbol{w}_{m-1},-1}(\tfX_r) \cdot \underbrace{\tT'_{\bs_i,-1} \tT'_{\bs_j,-1}\cdots}_m (u^\imath),
    \label{m times} \\
   \underbrace{\tTa{j} \tTa{i} \tTa{j} \cdots (u)\cdot}_m 
    & \tfX_j \cdot \tT'_{\boldsymbol{w}'_1,-1}(\tfX_i)\cdots \tT'_{\boldsymbol{w}'_{m-1},-1}(\tfX_{r'})
    \notag
    \\
    =\, & \tfX_j \cdot \tT'_{\boldsymbol{w}'_1,-1}(\tfX_i)\cdots \tT'_{\boldsymbol{w}'_{m-1},-1}(\tfX_{r'}) \cdot \underbrace{\tT'_{\bs_j,-1} \tT'_{\bs_i,-1} \cdots}_m (u^\imath),
    \label{m times2} 
\end{align}
for all $u\in \tUi$.

By Proposition~\ref{prop:braid0}, the $\tT'_{k,-1}$'s satisfy braid relations. As $\ell(\bs_i\bs_j \bs_i \cdots) = \ell(\bbw) =\ell(\bs_j\bs_i \bs_j \cdots)$, we have
\begin{align}
\label{eq:braid4}
\underbrace{\tT'_{\bs_i,-1} \tT'_{\bs_j,-1} \tT'_{\bs_i,-1}\cdots }_{m}=\underbrace{\tT'_{\bs_j,-1} \tT'_{\bs_i,-1} \tT'_{\bs_j,-1} \cdots }_{m}.
\end{align}

Hence, by a comparison of \eqref{m times}--\eqref{m times2}, we reduce the proof of the desired identity \eqref{eq:braid1} to showing that
\begin{align}
\label{eq:braid6}
 \tfX_i \cdot \tT'_{\boldsymbol{w}_1,-1}(\tfX_j)\cdots \tT'_{\boldsymbol{w}_{m-1},-1}(\tfX_r) =\tfX_j \cdot \tT'_{\boldsymbol{w}'_1,-1}(\tfX_i) \cdots \tT'_{\boldsymbol{w}'_{m-1},-1}(\tfX_{r'}).
\end{align}
By definition~\eqref{eq:Upk}, $\tfX_{\bbw}=\tT_{\boldsymbol{w}_{m-1}}(\tfX_r)\cdots \tT_{\boldsymbol{w}_1}(\tfX_j)\tfX_i$. Applying $\sigma$ to this identity and then using Proposition~\ref{prop:inv}, we obtain
\begin{align}
\label{eq:braid5}
\sigma(\tfX_{\bbw})=\tfX_i \cdot \tT'_{\boldsymbol{w}_1,-1}(\tfX_j)\cdots \tT'_{\boldsymbol{w}_{m-1},-1}(\tfX_r).
\end{align}
We have a similar formula for $\sigma(\tfX_{\bbw'})$ as well. It follows by Theorem~\ref{thm:factor} that $\sigma(\tfX_{\bbw})=\sigma(\tfX_{\bbw'}).$ The identity \eqref{eq:braid6} now follows by the formula \eqref{eq:braid5} and its $\bbw'$-counterpart.

This completes the proof of the theorem. 
\end{proof}

For $w\in \reW$, take a reduced expression $w=\bs_{i_1}\bs_{i_2}\cdots\bs_{i_k}$ and define
\begin{align}\label{def:tTT3}
\tTT'_{w,e}:= \tTT'_{i_1,e} \tTT'_{i_2,e} \cdots \tTT'_{i_k,e},
\qquad
\tTT'_{w,e}:= \tTT''_{i_1,e} \tTT''_{i_2,e} \cdots \tTT''_{i_k,e}.
\end{align}
By Theorem~\ref{thm:newb2}, these are independent of the choice of reduced expressions for $w$.

\subsection{Action of the braid group $\Br(\bW) \rtimes \Br(\reW)$ on $\tUi$}

We first establish a commutator relation between $\tTT'_{i,-1}$ ($i\in \wI$) and $\tT_j^{-1}\equiv \tT'_{j,-1}$ ($j\in \bI$).

\begin{lemma}
  \label{lem:TbTw}
We have $\tT_{ j}^{-1} \tTa{i} (x) =\tTa{i} \tT_{\tau_{\bullet,i}\tau j}^{-1} (x)$, for $i\in \wItau,j\in \bI$, and $x \in \tUi$.
\end{lemma}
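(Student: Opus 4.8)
The plan is to reduce the identity to a purely braid-group statement on $\tU$, namely the conjugation relation
\[
\tT_j^{-1}\,\tT_{\bs_i}^{-1} = \tT_{\bs_i}^{-1}\,\tT_{\tau_{\bullet,i}\tau j}^{-1}
\qquad\text{on } \tU ,
\]
and then to transport it to $\tUi$ via the defining intertwining relation \eqref{eq:newb0} and the uniqueness in Theorem~\ref{thm:newb0}(1). First I would record that both sides of the asserted identity are genuine automorphisms of $\tUi$: for $j\in\bI$ the operator $\tT_j^{-1}=\tT'_{j,-1}$ restricts to $\tUi$ by Proposition~\ref{prop:Tjblack}, and since $\bs_i$ acts on $\bI$ as the diagram involution $\tau_{\bullet,i}\tau$ (as recorded in the proof of Theorem~\ref{thm:fact1}; cf.\ Proposition~\ref{prop:Cartanblack}), the index $\tau_{\bullet,i}\tau j$ again lies in $\bI$, so $\tT_{\tau_{\bullet,i}\tau j}^{-1}$ likewise restricts to $\tUi$.

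The reduction I have in mind runs as follows. Starting from $\tTa{i}(x)\,\tfX_i=\tfX_i\,\tT_{\bs_i}^{-1}(x^\imath)$, I would apply the algebra homomorphism $\tT_j^{-1}$ (extended to the relevant completion of $\tU$) to both sides. Using $\tT_j^{-1}(\tfX_i)=\tfX_i$, which follows from $\tT_j(\tfX_i)=\tfX_i$ in Proposition~\ref{prop:newb-1} and the fact that $\tT_j,\tT_j^{-1}$ are mutual inverses (see \eqref{TTshort}), this yields
\[
\tT_j^{-1}\big(\tTa{i}(x)\big)\,\tfX_i=\tfX_i\,\tT_j^{-1}\tT_{\bs_i}^{-1}(x^\imath).
\]
Invoking the braid identity above together with the fact that $\tT_{\tau_{\bullet,i}\tau j}^{-1}$ preserves $\tUi$, the right-hand side becomes $\tfX_i\,\tT_{\bs_i}^{-1}\big((\tT_{\tau_{\bullet,i}\tau j}^{-1}x)^\imath\big)$. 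Comparing with \eqref{eq:newb0} applied to $\tT_{\tau_{\bullet,i}\tau j}^{-1}x\in\tUi$ and cancelling the invertible $\tfX_i$ on the right gives exactly $\tT_j^{-1}\big(\tTa{i}(x)\big)=\tTa{i}\big(\tT_{\tau_{\bullet,i}\tau j}^{-1}x\big)$, as desired.

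The main work, and the step I expect to be the real content, is the braid identity on $\tU$. I would first note that the rescaling is harmless: since every $\tT_w=\tPsi_{\bvs_\diamond}^{-1}\circ\tTD_w\circ\tPsi_{\bvs_\diamond}$ is a conjugate by the \emph{same} automorphism $\tPsi_{\bvs_\diamond}$ (see \eqref{def:tT}--\eqref{def:tT-1}), the relation for the $\tT$'s is equivalent to the unrescaled relation $\tTD_{\bs_i}\,\tTD_j\,\tTD_{\bs_i}^{-1}=\tTD_{\tau_{\bullet,i}\tau j}$. The latter is the standard fact that $\tTD_w$ carries the Chevalley generators of the $j$-th rank-one subalgebra onto those of the $k$-th one whenever $w(\alpha_j)=\alpha_k$ is again simple (Lusztig \cite[39.2]{Lus93}; cf.\ Jantzen \cite[Proposition 8.20]{Ja95}). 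Here $w=\bs_i$, and since $\bs_i(\alpha_j)=\alpha_{\tau_{\bullet,i}\tau j}>0$ for $j\in\bI$, one has the length additivity $\ell(\bs_i s_j)=\ell(\bs_i)+1$ that this fact requires, and $k=\tau_{\bullet,i}\tau j$. Taking inverses then produces the displayed relation. The only delicacy is ensuring $\bs_i(\alpha_j)$ is a positive simple (black) root with the correct index, which is precisely the statement that $\bs_i$ restricts to $\tau_{\bullet,i}\tau$ on $\bI$ used already in Proposition~\ref{prop:Cartanblack}; granting this, the argument is essentially formal.
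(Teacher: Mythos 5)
Your proposal is correct and follows essentially the same route as the paper: apply $\tT_j^{-1}$ to the intertwining relation \eqref{eq:newb0}, use $\tT_j^{-1}(\tfX_i)=\tfX_i$ from Proposition~\ref{prop:newb-1}, commute $\tT_j^{-1}$ past $\tT_{\bs_i}^{-1}$ via the length-additive relation $\bs_i s_j = s_{\tau_{\bullet,i}\tau j}\bs_i$, and conclude by the uniqueness in Theorem~\ref{thm:newb0}(1) together with Proposition~\ref{prop:Tjblack}. The only cosmetic difference is that the paper derives the Weyl group relation directly from $\bw s_j=s_{\tau j}\bw$ and $\bwi s_j=s_{\tau_{\bullet,i}j}\bwi$ rather than phrasing it as $\bs_i(\alpha_j)=\alpha_{\tau_{\bullet,i}\tau j}$, but the content is identical.
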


\begin{proof}
Note that $\tau (j), \tau_{\bullet,i} (j), \tau_{\bullet,i}\tau (j) \in \bI$,  for $j\in \bI$.
Since $\bw s_j =s_{\tau j} \bw$,  for $j\in \bI$, and $\bwi s_j =s_{\tau_{\bullet,i} j} \bwi$, for $i\in \wI$, we have
\begin{align}
  \label{eq:braid10}
\bs_i s_j = \bwi \bw^{-1} s_j =s_{\tau_{\bullet,i}\tau j}  \bwi \bw^{-1}  = s_{\tau_{\bullet,i}\tau j} \bs_i.
\end{align}

Since $\ell(\bs_i s_j) = \ell(\bs_i) +1$, it follows by \eqref{eq:braid10} that
\begin{align}
  \label{eq:TT}
  \tT_{\tau_{\bullet,i}\tau(j)} \tT_{\bs_i} =\tT_{\bs_i} \tT_j.
\end{align}
 By Proposition~\ref{prop:newb-1}, $\tfX_i$ is fixed by $\tT_j^{-1}$. Hence, applying $\tT_j^{-1}$ to the intertwining relation \eqref{eq:newb0} in Theorem~ \ref{thm:newb0} and then using \eqref{eq:TT}, we obtain, for $x\in \tUi$,
\begin{align}
\tT_j^{-1}\tTa{i} (x)  \tfX_i
&= \tfX_i \tT_j^{-1}\tT^{-1}_{\bs_i}(x)
  \notag \\
&= \tfX_i \tT^{-1}_{\bs_i}\tT_{\tau_{\bullet,i}\tau j}^{-1}(x)
 = \tTa{i} \tT_{\tau_{\bullet,i}\tau j}^{-1}(x) \tfX_i,
  \label{eq:fact0-2}
\end{align}
where the last step uses Theorem~ \ref{thm:newb0} and the fact that $\tT_{\tau_{\bullet,i}\tau j}^{-1}(x) \in \tUi$ by Proposition~\ref{prop:Tjblack}.
The identity \eqref{eq:fact0-2} clearly implies the identity in the lemma.
\end{proof}

Let $\Br(\bW)$ and $\Br(\reW)$ be the braid groups associated to $\bW$ and $\reW$ respectively.

\begin{theorem}
  \label{thm:newb3}
There exists a braid group action of $\Br(\bW) \rtimes \Br(\reW)$ on $\tUi$ as automorphisms of algebras generated by $\tT'_{j,-1} \; (j\in \bI)$ and $\tTT'_{i,-1} \;(i\in \wItau)$.
%\begin{align*}
%s_j &\mapsto \tT_j\quad (j\in \bI),\qquad
%\bs_i \mapsto \tTT_i\quad (i\in \wItau).
%\end{align*}
\end{theorem}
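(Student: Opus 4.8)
The plan is to establish the semidirect product action $\Br(\bW)\rtimes\Br(\reW)$ by assembling three ingredients: the braid relations internal to each factor, and the mixing (semidirect) relations between the two factors. First I would recall that $\tT'_{j,-1}$ for $j\in\bI$ restrict to automorphisms of $\tUi$ (Proposition~\ref{prop:Tjblack}) and that they satisfy the braid relations of $\Br(\bW)$, since these relations already hold for the $\tTD_j$ (equivalently $\tT_j$) on all of $\tU$ by Proposition~\ref{prop:braid0} and the rescaling via $\tPsi_{\bvs_\diamond}$ is trivial on $\bI$ by Remark~\ref{rem:sameT}; restricting to the subalgebra $\tUi$ preserves these relations. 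Second, the $\tTT'_{i,-1}$ for $i\in\wItau$ satisfy the braid relations of $\Br(\reW)$ by Theorem~\ref{thm:newb2}. So the two subgroup actions are already in place, and the entire content of the theorem reduces to verifying the semidirect-product compatibility relations.

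The heart of the matter is therefore the mixing relation, which is exactly what Lemma~\ref{lem:TbTw} supplies at the level of generators. I would first dualize that lemma: since $\tTa{i}=\tTT'_{i,+1}$-type operators are inverses of the $\tTb{i}$ (Theorem~\ref{thm:newb1}), and we are working with $\tTT'_{i,-1}=\tTb{i}$ here, I would re-express Lemma~\ref{lem:TbTw} in the form
\begin{align}
\label{eq:semidirect-gen}
\tTT'_{i,-1}\,\tT'_{j,-1}\,(\tTT'_{i,-1})^{-1} = \tT'_{\tau_{\bullet,i}\tau j,-1},
\qquad i\in\wItau,\ j\in\bI.
\end{align}
This says precisely that conjugation by the relative generator $\tTT'_{i,-1}$ permutes the black generators $\tT'_{j,-1}$ according to the diagram involution $\tau_{\bullet,i}\tau$ acting on $\bI$. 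Since $\bs_i$ acts on the parabolic $W_\bullet$ by the automorphism induced by $\tau_{\bullet,i}\tau$ (this is the content of \eqref{eq:braid10}), the map $j\mapsto\tau_{\bullet,i}\tau j$ is a diagram automorphism of $\bI$, hence lifts to an automorphism of $\Br(\bW)$. This is exactly the data needed to form $\Br(\bW)\rtimes\Br(\reW)$: the relative braid group acts on $\Br(\bW)$ through the homomorphism $\reW\to\Aut(\bI)$, $\bs_i\mapsto(\tau_{\bullet,i}\tau)|_\bI$.

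To finish, I would check that \eqref{eq:semidirect-gen} promotes from the generating relations to a consistent group action. Concretely, define the candidate homomorphism from the abstract group $\Br(\bW)\rtimes\Br(\reW)$ into $\Aut(\tUi)$ by sending the standard generators to $\tT'_{j,-1}$ and $\tTT'_{i,-1}$; the three families of defining relations of the semidirect product are the $\Br(\bW)$-relations (verified above), the $\Br(\reW)$-relations (Theorem~\ref{thm:newb2}), and the mixing relations \eqref{eq:semidirect-gen}. Because all of these hold in $\Aut(\tUi)$, the assignment extends to a well-defined group homomorphism, which is the asserted action. The one point requiring a little care is the internal consistency of the semidirect structure itself, namely that $\bs_i\mapsto(\tau_{\bullet,i}\tau)|_\bI$ genuinely defines a homomorphism $\reW\to\Aut(\bI)$ compatible with the braid relations of $\reW$; I expect this to be the main obstacle, though it should follow formally from the identity $\bs_i s_j\bs_i^{-1}=s_{\tau_{\bullet,i}\tau j}$ in $W^\theta\cong W_\bullet\rtimes\reW$ recorded in \S2.3, since $W^\theta$ already has the semidirect structure we are lifting to the braid-group level. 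Once this compatibility is in hand, no further computation is needed.
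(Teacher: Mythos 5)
Your proposal is correct and follows essentially the same route as the paper: verify the $\Br(\bW)$-braid relations via Proposition~\ref{prop:braid0} (with Proposition~\ref{prop:Tjblack} ensuring restriction to $\tUi$), the $\Br(\reW)$-braid relations via Theorem~\ref{thm:newb2}, and the mixing relation corresponding to $\bs_i s_j = s_{\tau_{\bullet,i}\tau j}\bs_i$ via Lemma~\ref{lem:TbTw}. The only slip is notational: no ``dualization'' of Lemma~\ref{lem:TbTw} is needed, since $\tTa{i}$ \emph{is} $\tTT'_{i,-1}$ (and $\tTb{i}=\tTT''_{i,+1}$), so the lemma already gives your relation \eqref{eq:semidirect-gen} directly, using that $\tau_{\bullet,i}\tau$ is an involution on $\bI$.
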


\begin{proof}
By Remark \ref{rmk:newb0}, $\tTT'_{i,-1}$ is independent of the choice of representatives in a $\tau$-orbit. The defining relations of $\Br(\bW) \rtimes \Br(\reW) $ consist of braid relations for $\Br(\bW)$, the braid relations for $\Br(\reW)$, and relations \eqref{eq:braid10}.
The braid relations for $\tT'_{j,-1},j\in \bI$ are verified in Proposition~\ref{prop:braid0}. The braid relations for $\tTa{i},i\in \wItau$ are verified in Theorem~\ref{thm:newb2}.
The commutator relation for $\tT'_{j,-1},\tTa{i}$ corresponding to \eqref{eq:braid10} is verified in Lemma~\ref{lem:TbTw}. %Therefore, $\Sigma$ is a group homomorphism.
\end{proof}

\begin{remark}
\label{rmk:newb1}
Since $\tTa{i},\tTb{i}$ are mutually inverses and $\tT'_{j,-1},\tT''_{j,+1}$ are mutually inverses,
there also exists a braid group action of $\Br(\bW) \rtimes \Br(\reW)$ on $\tUi$ as automorphisms of algebras generated by $\tT''_{j,+1} \; (j\in \bI)$ and $\tTb{i} \;(i\in \wItau)$.
\end{remark}

Recall the remaining two symmetries $\tTT'_{i,+1},\tTT''_{i,-1}$ from \eqref{def:tTT2}. We shall establish a variant of Theorem~\ref{thm:newb3} for $\tTae{i}$ and $\tT'_{j,e}$ (and respectively, $\tTbe{i}$ and $\tT''_{j,e}$).

Let $j\in \I$. Recall
$\tT_{j,+1}''$ and
$\tT_{j,-1}'$ from \eqref{def:tT}--\eqref{def:tT-1}. Recalling $\tpsi_\star =\tPsi_{\bvs_\star} \circ \tpsi$ from \eqref{eq:psi star}, we define
 \begin{align}
  \label{eq:Tjstar}
 \tT''_{j,-1}:= \psi_\star \circ \tT_{j,+1}'' \circ \psi_\star,\qquad
 \tT'_{j,+1}:= \psi_\star \circ \tT_{j,-1}' \circ \psi_\star.
 \end{align}

Let $\bvs_{\star\dm} :=(\vs_{j,\star}\vs_{j,\dm})_{j\in \wI}$ be the parameter obtained as the componentwise product of parameters $\bvs_\dm$ and $\bvs_\star$ from \eqref{def:vsi} and \eqref{eq:bvs star}.
\begin{lemma}
 \label{lem:rescale2}
The $\tT''_{j,-1},\tT'_{j,+1}$ are related to   $\tTD''_{j,-1},\tTD'_{j,+1}$ via a rescaling automorphism:
\begin{align*}
\tT''_{j,-1} =\tPsi_{\bvs_{\star\dm}}\tTD''_{j,-1}\tPsi_{\bvs_{\star\dm}}^{-1},\qquad
\tT'_{j,+1} =\tPsi_{\bvs_{\star\dm}}\tTD'_{j,+1}\tPsi_{\bvs_{\star\dm}}^{-1}.
\end{align*}
\end{lemma}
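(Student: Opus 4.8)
The plan is to prove Lemma~\ref{lem:rescale2} by directly unraveling the two rescaling automorphisms and using the definitions \eqref{eq:Tjstar} together with the relations among the various braid operators collected in \eqref{eq:sTs} and \eqref{eq:4braid}. Recall that $\tT''_{j,-1}$ is defined as $\tpsi_\star \circ \tT''_{j,+1} \circ \tpsi_\star$ where $\tpsi_\star = \tPsi_{\bvs_\star}\circ \tpsi$, and that $\tT''_{j,+1} = \tPsi_{\bvs_\dm}^{-1}\circ \tTD''_{j,+1}\circ \tPsi_{\bvs_\dm}$ by \eqref{def:tT}. So the first step is to substitute these into the definition of $\tT''_{j,-1}$ and obtain
\begin{align*}
\tT''_{j,-1}
&= \tPsi_{\bvs_\star}\,\tpsi\,\tPsi_{\bvs_\dm}^{-1}\,\tTD''_{j,+1}\,\tPsi_{\bvs_\dm}\,\tpsi\,\tPsi_{\bvs_\star}.
\end{align*}

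Next I would push the bar involution $\tpsi$ past the rescaling automorphisms. Since $\tPsi_{\ba}$ acts by scaling generators by $a_i^{1/2}$ (see \eqref{tPsi}) and $\tpsi$ is anti-linear, conjugating $\tPsi_{\ba}$ by $\tpsi$ inverts the scalars: one checks on generators that $\tpsi \circ \tPsi_{\ba} \circ \tpsi = \tPsi_{\overline{\ba}}$ where the $q$-powers in $\ba$ are bar-conjugated; for the parameters $\bvs_\star,\bvs_\dm$ at hand, which lie in $\pm q^{\Z/2}$, this bar-conjugation amounts to replacing each $q$ by $q^{-1}$, i.e. inverting the $q$-exponents. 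The key computational input is then the identity $\tpsi \circ \tTD''_{j,+1}\circ \tpsi = \tTD''_{j,-1}$ from \eqref{eq:sTs}. After moving the two copies of $\tpsi$ inward to act directly on $\tTD''_{j,+1}$, converting it to $\tTD''_{j,-1}$, I would be left with an expression of the form $\tPsi_{\ba_1}\circ \tTD''_{j,-1}\circ \tPsi_{\ba_2}$ for explicit parameter tuples $\ba_1,\ba_2$ assembled from $\bvs_\star$ and the bar-conjugate of $\bvs_\dm$. The arithmetic of combining these scalars—using that bar-conjugation sends $q\mapsto q^{-1}$ and hence sends the $q$-powers defining $\bvs_\dm$ and $\bvs_\star$ to their inverses—should collapse $\ba_1$ to $\bvs_{\star\dm}$ and $\ba_2$ to $\bvs_{\star\dm}^{-1}$, yielding $\tT''_{j,-1} = \tPsi_{\bvs_{\star\dm}}\,\tTD''_{j,-1}\,\tPsi_{\bvs_{\star\dm}}^{-1}$ as claimed. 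The computation for $\tT'_{j,+1}$ is entirely parallel, starting from $\tT'_{j,-1} = \tPsi_{\bvs_\dm}^{-1}\tTD'_{j,-1}\tPsi_{\bvs_\dm}$ and using $\tpsi\circ \tTD'_{j,-1}\circ\tpsi = \tTD'_{j,+1}$.

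I expect the main obstacle to be the careful bookkeeping of the half-integer $q$-powers and signs in $\bvs_\dm$ and $\bvs_\star$ under bar-conjugation, and verifying that the scalars truly recombine into the single parameter $\bvs_{\star\dm}$ and its inverse rather than leaving a residual scalar discrepancy. In particular one must confirm that the trivial extension of $\bvs_\dm,\bvs_\star$ to $\bI$ (setting the black-node components to $1$) is consistent, so that the black-node components of $\bvs_{\star\dm}$ are also $1$ and no spurious rescaling survives on $\tbU$. It would be prudent to verify the final identity by evaluating both sides on the Chevalley generators $E_j,F_j,K_j,K_j'$ using the explicit formulas for $\tTD''_{i,+1}$ in Proposition~\ref{prop:braid1} (and the analogous formulas for $\tTD''_{i,-1}$ obtained via \eqref{eq:sTs}), thereby confirming the scalar matching term-by-term rather than relying solely on the abstract conjugation calculus.
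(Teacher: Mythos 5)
Your approach is exactly the paper's: substitute the definitions, commute $\tpsi$ past the rescaling automorphisms using $\tpsi\circ\tPsi_{\ba}\circ\tpsi=\tPsi_{\overline{\ba}}$ (with $\overline{\bvs_\dm}=\bvs_\dm^{-1}$, $\overline{\bvs_\star}=\bvs_\star^{-1}$), convert $\tTD''_{j,+1}$ to $\tTD''_{j,-1}$ via \eqref{eq:sTs}, and collect scalars. The paper's proof is precisely this, hinging on the single identity $\tpsi\circ\tPsi_{\bvs_\diamond}^{-1}=\tPsi_{\bvs_\diamond}\circ\tpsi$.

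However, your displayed starting formula contains an order error that would produce exactly the ``residual scalar discrepancy'' you worry about. Since $\tpsi_\star=\tPsi_{\bvs_\star}\circ\tpsi$, the right-hand copy of $\tpsi_\star$ contributes the factor $\tPsi_{\bvs_\star}\,\tpsi$ (equivalently $\tpsi\,\tPsi_{\bvs_\star}^{-1}$), not $\tpsi\,\tPsi_{\bvs_\star}$ as you wrote. If you push the bar involutions inward starting from your formula you land on $\tPsi_{\bvs_{\star\dm}}\,\tTD''_{j,-1}\,\tPsi_{\bvs_\dm^{-1}\bvs_\star}$, which differs from the claimed answer by $\tPsi_{\bvs_\star}^{2}$. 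With the corrected substitution the bookkeeping closes cleanly: the left side gives $\tPsi_{\bvs_\star}\tPsi_{\bvs_\dm}=\tPsi_{\bvs_{\star\dm}}$ and the right side gives $\tPsi_{\bvs_\dm}^{-1}\tPsi_{\bvs_\star}^{-1}=\tPsi_{\bvs_{\star\dm}}^{-1}$, as required; your observation that $\bvs_\dm,\bvs_\star$ (hence $\bvs_{\star\dm}$) are trivial on $\bI$ is correct and no issue arises there.
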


\begin{proof}
Recall
$\tT_{j,+1}'' = \tPsi_{\bvs_\diamond}^{-1} \circ \tTD''_{j,+1} \circ \tPsi_{\bvs_\diamond}$ and
$\tT_{j,-1}' = \tPsi_{\bvs_\diamond}^{-1} \circ \tTD'_{j,-1} \circ \tPsi_{\bvs_\diamond}$ from \eqref{def:tT}--\eqref{def:tT-1}.

Recall from \eqref{eq:sTs} that $\tTD''_{i,-1} = \tpsi \circ \tTD''_{i,+1} \circ \tpsi$ and $\tTD'_{i,+1} = \tpsi \circ \tTD'_{i,-1} \circ \tpsi$. Then we have
\begin{align*}
\tT''_{j,-1} &= \psi_\star \circ \tT_{j,+1}'' \circ \psi_\star
    \\
&= \tPsi_{\bvs_\star} \tpsi \circ \tPsi_{\bvs_\diamond}^{-1}  \tTD''_{j,+1}  \tPsi_{\bvs_\diamond} \circ  \tPsi_{\bvs_\star} \tpsi
= \tPsi_{\bvs_{\star\dm}}\tTD''_{j,-1}\tPsi_{\bvs_{\star\dm}}^{-1},
\end{align*}
where we used $\tpsi \circ \tPsi_{\bvs_\diamond}^{-1} = \tPsi_{\bvs_\diamond} \circ \tpsi$. The proof for the other formula is similar.
\end{proof}

By Proposition~\ref{prop:Tjblack}, the automorphisms $\tT''_{j,+1},\tT'_{j,-1}$ for $j\in \bI$ restrict to automorphisms on $\tUi$.

 \begin{lemma}
 \label{lem:newb2}
 The automorphisms $ \tT''_{j,e},\tT'_{j,e}$, for $j\in \bI$ and $e=\pm 1$, restrict to automorphisms on $\tUi$. Moreover, the following identities hold:
 \begin{align}
 \label{eq:tT2}
 \tT''_{j,-1}:= \psi^\imath \circ \tT''_{j,+1} \circ \psi^\imath,\qquad
 \tT'_{j,+1}:= \psi^\imath \circ \tT'_{j,-1} \circ \psi^\imath.
 \end{align}
\end{lemma}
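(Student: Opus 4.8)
The plan is to prove Lemma~\ref{lem:newb2} in two parts, mirroring the logical structure already used for the white-node braid symmetries. First I would establish that $\tT''_{j,e}$ and $\tT'_{j,e}$ restrict to automorphisms of $\tUi$ for both signs $e=\pm 1$; then I would verify the two conjugation identities \eqref{eq:tT2} relating the $e=-1$ operators to the $e=+1$ operators via the bar involution $\tpsi^\imath$.

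For the first part, the case $e=+1$ is already granted: by Proposition~\ref{prop:Tjblack} the operators $\tT''_{j,+1}=\tTD''_{j,+1}$ restrict to automorphisms of $\tUi$, and since $\tT'_{j,-1}$ is its inverse (the $\tT_{j,\pm1}$ are mutual inverses by Proposition~\ref{prop:braid0}, and $j\in\bI$ means the rescaling is trivial by Remark~\ref{rem:sameT}), $\tT'_{j,-1}$ also preserves $\tUi$. For the remaining sign, I would invoke Lemma~\ref{lem:rescale2}, which expresses $\tT''_{j,-1}$ and $\tT'_{j,+1}$ as rescalings of $\tTD''_{j,-1}$ and $\tTD'_{j,+1}$ by $\tPsi_{\bvs_{\star\dm}}$. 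The cleanest route is to prove directly that $\tpsi^\imath$-conjugation sends the known $\tUi$-automorphisms $\tT''_{j,+1},\tT'_{j,-1}$ to $\tT''_{j,-1},\tT'_{j,+1}$; establishing \eqref{eq:tT2} then simultaneously proves preservation of $\tUi$ (since $\tpsi^\imath$ is an automorphism of $\tUi$ by Proposition~\ref{prop:newb3}, a conjugate of a $\tUi$-automorphism by $\tpsi^\imath$ is again one).

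So the crux is \eqref{eq:tT2}. I would argue as follows. Recall $\tpsi^\imath = \ad_{\tfX}\circ\tpsi_\star$ restricted to $\tUi$ (from the proof of Proposition~\ref{prop:newb3}), and recall from \eqref{eq:Tjstar} that $\tT''_{j,-1}=\tpsi_\star\circ\tT''_{j,+1}\circ\tpsi_\star$ on all of $\tU$. Thus for $x\in\tUi$ I compute
\begin{align*}
\tpsi^\imath\circ\tT''_{j,+1}\circ\tpsi^\imath (x)
&= \ad_{\tfX}\,\tpsi_\star\,\tT''_{j,+1}\,\ad_{\tfX}\,\tpsi_\star (x)
= \ad_{\tfX}\,\tpsi_\star\,\tT''_{j,+1}\,\ad_{\tfX}\,\tpsi_\star (x).
\end{align*}
The key inputs are that $\tT''_{j,+1}$ commutes with $\ad_{\tfX}$ in the appropriate sense — which follows from $\tT''_{j,+1}(\tfX)=\tfX$ (Proposition~\ref{prop:newb-1}) — so that $\tT''_{j,+1}\circ\ad_{\tfX}=\ad_{\tfX}\circ\tT''_{j,+1}$, and from $\tpsi_\star\,\tT''_{j,+1}\,\tpsi_\star = \tT''_{j,-1}$ on $\tU$. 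Combining these, $\tpsi^\imath\,\tT''_{j,+1}\,\tpsi^\imath(x) = \ad_{\tfX^2}\,\tT''_{j,-1}(x)$; but $\tpsi_\star(\tfX)\tfX=1$ by Proposition~\ref{prop:psi1}, and a parallel intertwining argument shows the net conjugation by $\tfX$ is absorbed, yielding exactly $\tT''_{j,-1}(x)$.

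The main obstacle I anticipate is bookkeeping the $\ad_{\tfX}$ factors correctly: one must track that $\tT''_{j,+1}$ fixes $\tfX$, that $\tpsi_\star$ intertwines $\tfX$ with $\tpsi_\star(\tfX)=\tfX^{-1}$ (Proposition~\ref{prop:psi1}), and that these combine to cancel rather than compound. The cleanest way to avoid sign and inverse errors is to verify the identity \emph{abstractly} on the generators $B_i$ and $\tU^{\imath 0}\tbU$ using the defining intertwining property of $\tpsi^\imath$ in \eqref{eq:newb10}, rather than manipulating the completion $\ad_{\tfX}$ directly; the uniqueness of operators satisfying a given intertwining relation (as used repeatedly in Section~\ref{sec:quasi K}) then forces the equality. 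The identity for $\tT'_{j,+1}$ is entirely analogous, obtained by replacing $+1\leftrightarrow-1$ throughout and using that $\tpsi^\imath$ is involutive.
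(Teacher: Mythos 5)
Your approach is essentially the paper's: the paper likewise reduces everything to the conjugation identities (preservation of $\tUi$ by $\tT''_{j,-1},\tT'_{j,+1}$ then being automatic), writes $\tpsi_\star=\ad_{\tfX^{-1}}\circ\tpsi^\imath$ on $\tUi$, commutes $\ad_{\tfX^{\pm1}}$ past $\tT''_{j,+1}$ via Proposition~\ref{prop:newb-1}, and uses $\tpsi_\star\circ\ad_{\tfX^{-1}}=\ad_{\tfX}\circ\tpsi_\star$ (Proposition~\ref{prop:psi1}) to conclude. One caveat: your intermediate display $\tpsi^\imath\,\tT''_{j,+1}\,\tpsi^\imath(x)=\ad_{\tfX^2}\,\tT''_{j,-1}(x)$ is not correct as an equation, and no subsequent step can repair it, since $\ad_{\tfX^2}\neq\mathrm{id}$; the point is that when the inner $\ad_{\tfX}$ is pushed through $\tpsi_\star$ it becomes $\ad_{\tpsi_\star(\tfX)}=\ad_{\tfX^{-1}}$ and cancels the outer $\ad_{\tfX}$ immediately, so the factor $\ad_{\tfX^2}$ never arises. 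Since you explicitly name this cancellation via $\tpsi_\star(\tfX)\tfX=1$ as the crux and cite the right propositions, the argument goes through once the bookkeeping is done in that order, exactly as in the paper.
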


\begin{proof}
As $\tT_j \equiv \tT''_{j,+1}$ restricts to an automorphism on $\tUi$ by Proposition~\ref{prop:Tjblack}, it suffices to prove \eqref{eq:tT2}.

By Proposition~\ref{prop:newb3}, we have $\psi_\star=\ad_{\tfX^{-1}}\circ\psi^\imath$ when acting on $\tUi $. By Proposition~\ref{prop:newb-1}, $\ad_{\tfX^{-1}}$ commutes with $\tT_j$. By Proposition~\ref{prop:psi1}, we have
$\psi_\star\circ \ad_{\tfX^{-1}}=\ad_{\tfX }\circ \psi_\star.$
Using these properties and \eqref{eq:Tjstar}, we have, for $x\in \tUi$,
\begin{align*}
\tT''_{j,-1}(x)
&=\psi_\star \circ \tT''_{j,+1} \circ \psi_\star(x)
= \psi_\star\circ \tT''_{j,+1} \circ \ad_{\tfX^{-1}}\circ  \psi^\imath(x)
\\
&= \psi_\star\circ \ad_{\tfX^{-1}}\circ \tT''_{j,+1} \circ  \psi^\imath(x)
= \ad_{\tfX }\circ \psi_\star \circ \tT''_{j,+1} \circ  \psi^\imath(x)
= \psi^\imath \circ \tT''_{j,+1} \circ  \psi^\imath(x),
\end{align*}
where the last equality uses \eqref{eq:fX2}.

 The proof of the other formula for $\tT'_{j,+1}$ is similar and hence skipped.
\end{proof}

The next result follows from \eqref{def:tTT2}, Theorem~\ref{thm:newb3}, Remark~\ref{rmk:newb1}, and Lemma~\ref{lem:newb2}.

\begin{corollary}
  \label{cor:newb1}
  Let $e=\pm 1$.
\begin{enumerate}
    \item
There exists a braid group action of $\Br(\bW) \rtimes \Br(\reW)$ on $\tUi$ as automorphisms of algebras generated by $\tT'_{j,e} \; (j\in \bI)$ and $\tTT'_{i,e} \;(i\in \wItau)$.

    \item
There exists a braid group action of $\Br(\bW) \rtimes \Br(\reW)$ on $\tUi$ as automorphisms of algebras generated by $\tT''_{j,e} \; (j\in \bI)$ and $\tTT''_{i,e} \;(i\in \wItau)$.
\end{enumerate}
%We have group homomorphisms $\tSigma'_e,\tSigma''_e:\Br(\bW) \rtimes \Br(\reW) \rightarrow \mathrm{Aut}(\tUi)$ for $e=\pm1$ such that
%\begin{align*}
%&\tSigma'_e: s_j \mapsto \tT'_{j,e}\quad (j\in \bI),\qquad \bs_i \mapsto \tTT'_{i,e}\quad (i\in \wItau).\\
%&\tSigma''_e: s_j \mapsto \tT''_{j,e}\quad (j\in \bI),\qquad \bs_i \mapsto \tTT''_{i,e}\quad (i\in \wItau).
%\end{align*}
\end{corollary}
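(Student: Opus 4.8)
The plan is to deduce Corollary~\ref{cor:newb1} essentially by bookkeeping from the three ingredients already assembled in the previous paragraphs, together with the conjugation relations \eqref{def:tTT2} and \eqref{eq:tT2}. First I would observe that Theorem~\ref{thm:newb3} already gives part~(1) in the special case $e=-1$: the symmetries $\tT'_{j,-1}$ $(j\in\bI)$ and $\tTT'_{i,-1}$ $(i\in\wItau)$ generate an action of $\Br(\bW)\rtimes\Br(\reW)$. Likewise, Remark~\ref{rmk:newb1} supplies part~(2) in the case $e=+1$, using that $\tTb{i}=(\tTa{i})^{-1}$ and $\tT''_{j,+1}=(\tT'_{j,-1})^{-1}$. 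So the content of the corollary is to promote these two special cases to both values of $e$ in both parts.

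The mechanism for this promotion is simultaneous conjugation by the bar involution $\tpsi^\imath$. Concretely, I would package the three types of defining relations of $\Br(\bW)\rtimes\Br(\reW)$ --- the braid relations within $\Br(\bW)$, the braid relations within $\Br(\reW)$, and the semidirect-product relations coming from \eqref{eq:braid10} --- as a single statement that a prescribed list of operators satisfies them. Then I would apply $\tpsi^\imath\circ(-)\circ\tpsi^\imath$ to each such relation. Since $\tpsi^\imath$ is an (anti-linear) \emph{algebra} automorphism of $\tUi$ and an involution, conjugation by it is a group homomorphism on $\Aut(\tUi)$: any relation $\Phi_1\Phi_2\cdots=\Psi_1\Psi_2\cdots$ among automorphisms is preserved verbatim after conjugating every factor by $\tpsi^\imath$. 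By the definitions \eqref{def:tTT2}, conjugation sends $\tTT''_{i,+1}\mapsto\tTT''_{i,-1}$ and $\tTT'_{i,-1}\mapsto\tTT'_{i,+1}$, and by Lemma~\ref{lem:newb2}~\eqref{eq:tT2} it sends $\tT''_{j,+1}\mapsto\tT''_{j,-1}$ and $\tT'_{j,-1}\mapsto\tT'_{j,+1}$. Therefore conjugating the $e=-1$ instance of part~(1) yields the $e=+1$ instance of part~(1), and conjugating the $e=+1$ instance of part~(2) yields the $e=-1$ instance of part~(2). In each case I would note explicitly that the indexing by $\wItau$ is unaffected because $\tpsi^\imath$ fixes each $B_i$ and hence commutes with the passage to $\tau$-orbit representatives (cf. Remark~\ref{rmk:newb0}).

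The only genuine verification left is that the \emph{mixed} semidirect-product relation of the form \eqref{eq:braid10} survives conjugation in the barred normalization, i.e. that $\tT'_{j,+1}\,\tTT'_{i,+1}=\tTT'_{i,+1}\,\tT'_{\tau_{\bullet,i}\tau j,+1}$ holds once its $e=-1$ counterpart from Lemma~\ref{lem:TbTw} is known. This is immediate from the homomorphism property above applied to the single relation $\tT'_{j,-1}\,\tTa{i}=\tTa{i}\,\tT'_{\tau_{\bullet,i}\tau j,-1}$, but I would spell it out since the two subscripts $j$ and $\tau_{\bullet,i}\tau j$ must be tracked carefully: conjugation by $\tpsi^\imath$ does not move the indices, it only flips the sign of $e$, so the combinatorial shift $j\mapsto\tau_{\bullet,i}\tau j$ is identical in both normalizations. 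I anticipate that the main (and really the only) obstacle is purely organizational: presenting the defining relations of $\Br(\bW)\rtimes\Br(\reW)$ in a form to which the one-line conjugation argument applies cleanly, and confirming that $\tpsi^\imath$ indeed intertwines the generators as claimed --- which is exactly what \eqref{def:tTT2} and \eqref{eq:tT2} assert. Once that dictionary is in place, each of the four (part, $e$) cases is either Theorem~\ref{thm:newb3}/Remark~\ref{rmk:newb1} directly or its $\tpsi^\imath$-conjugate, completing the proof.
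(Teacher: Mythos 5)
Your proposal is correct and follows exactly the paper's route: the paper derives this corollary precisely from Theorem~\ref{thm:newb3} and Remark~\ref{rmk:newb1} (the $e=-1$ and $e=+1$ base cases) together with the conjugation identities \eqref{def:tTT2} and Lemma~\ref{lem:newb2}, which is the dictionary you set up. Nothing is missing.
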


\subsection{Intertwining properties of $\tTT'_{i,+1},\tTT''_{i,-1}$}

The automorphisms $\tTT'_{i,+1},\tTT''_{i,-1}$ on $\tUi$ also satisfy intertwining relations similar to those satisfied by $\tTa{i}$ in \eqref{eq:newb0} and $\tTb{i}$ in \eqref{eq:newb0-1}. These relations on $\tUi$ will descend to $\Ui_\bvs$ (see Proposition~\ref{prop:inter2}) and will then be used to define the relative braid operators on module level (see Definition~\ref{def:4T}).

\begin{proposition}
The automorphisms $\tTT'_{i,+1},\tTT''_{i,-1}$ satisfy the following intertwining relations
\begin{align}
\label{eq:braid14}
\tTT'_{i,+1}(x)  \tT'_{\bs_i,+1}(\tfX_i^{-1} )
&= \tT'_{\bs_i,+1}(\tfX_i^{-1} )  \tT'_{\bs_i,+1}(x),
 \\
\tTT''_{i,-1}(x)   \tfX_i   &=  \tfX_i   \tT''_{\bs_i,-1}(x).
\label{eq:braid15}
\end{align}
\end{proposition}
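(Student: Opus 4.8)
The plan is to obtain the two intertwining relations \eqref{eq:braid14}--\eqref{eq:braid15} by applying the defining conjugations \eqref{def:tTT2} to the already-established relations \eqref{eq:newb0} for $\tTa{i}$ and \eqref{eq:newb0-1} for $\tTb{i}$, using the bar involution $\tpsi^\imath$ on $\tUi$ together with the intertwining property \eqref{eq:newb10} that relates $\tpsi^\imath$ and $\tpsi_\star$ through $\tfX$. First I would treat \eqref{eq:braid15}. By definition $\tTT''_{i,-1} =\tpsi^\imath \circ \tTT''_{i,+1} \circ \tpsi^\imath$, so for $x\in\tUi$ I would set $y =\tpsi^\imath(x)$, apply the relation \eqref{eq:newb0-1} for $\tTb{i}=\tTT''_{i,+1}$ to $y$, and then apply $\tpsi^\imath$ to the whole identity. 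The point is that $\tpsi^\imath$ is an \emph{anti-linear algebra homomorphism} of $\tUi$, so it reverses the order of products; combined with the rescaled-bar compatibility $\tpsi^\imath(z)\,\tfX_i =\tfX_i\,\tpsi_\star(z)$ (the rank one version of \eqref{eq:newb10}) and the commutation of $\tpsi_\star$ with the rescaled braid operators, the factors $\tT_{\bs_i}(\tfX_i^{-1})$ should transform into the desired $\tfX_i$ on the nose.

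The key computation will be bookkeeping of how $\tpsi^\imath$ interacts with the rescaled braid operators $\tT_{\bs_i}^{\pm 1} \equiv \tT'_{\bs_i,\mp 1}$ and $\tT''_{\bs_i,\pm 1}$. Here I would invoke Lemma~\ref{lem:newb2} and the relation \eqref{eq:Tjstar}, extended multiplicatively along a reduced expression of $\bs_i$, to record the conjugation rule $\tpsi_\star\circ\tT''_{\bs_i,+1}\circ\tpsi_\star =\tT''_{\bs_i,-1}$ and $\tpsi_\star\circ\tT'_{\bs_i,-1}\circ\tpsi_\star =\tT'_{\bs_i,+1}$ on $\tU$; applying $\tpsi_\star$ to $\tT_{\bs_i}(\tfX_i)$ and using $\tpsi_\star(\tfX_i)^{-1}=\tfX_i$ from Proposition~\ref{prop:psi1} (in its rank one form) should convert $\tT''_{\bs_i,+1}(\tfX_i^{-1})$ into $\tfX_i$, yielding \eqref{eq:braid15}. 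For \eqref{eq:braid14} I would run the parallel argument starting from $\tTT'_{i,+1}=\tpsi^\imath\circ\tTT'_{i,-1}\circ\tpsi^\imath$ and the relation \eqref{eq:newb0} for $\tTa{i}=\tTT'_{i,-1}$, where conjugating the bare $\tfX_i$ by $\tpsi_\star$ together with the braid conjugation rule produces the twisted factor $\tT'_{\bs_i,+1}(\tfX_i^{-1})$.

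The main obstacle I anticipate is verifying the conjugation rule for the rescaled braid operators through $\tpsi_\star$ precisely as stated, since $\tpsi_\star=\tPsi_{\bvs_\star}\circ\tpsi$ involves the extra rescaling $\tPsi_{\bvs_\star}$ beyond the plain bar involution used in \eqref{eq:sTs}; I would need to check that this rescaling is compatible with the $\bvs_\diamond$-rescaling built into $\tT'_{\bs_i,\pm1},\tT''_{\bs_i,\pm1}$, in the same spirit as Lemma~\ref{lem:rescale2} and the parameter $\bvs_{\star\dm}$ appearing there. Once the scalar twists are confirmed to cancel correctly (so that $\tpsi_\star$ conjugates the $-1$ and $+1$ rescaled braid operators into each other without stray scalars), both identities follow by a direct application of the anti-homomorphism property of $\tpsi^\imath$ and the $\tfX_i$-intertwining relations, with no further type-by-type input required.
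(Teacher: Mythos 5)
Your strategy of conjugating the established intertwining relations \eqref{eq:newb0} and \eqref{eq:newb0-1} by the bar involution is the right starting point and is indeed how the paper's proof begins, but as written the argument has a genuine gap, plus one conceptual slip. The slip: $\tpsi^\imath$ is an anti-\emph{linear} involution (it sends $q\mapsto q^{-1}$) but it is a ring homomorphism and does \emph{not} reverse the order of products --- the product-reversing map is $\sigma^\imath$ (compare Propositions~\ref{prop:QG4}, \ref{prop:newb3} and \ref{prop:newb1}). If you actually used order reversal, the factors in \eqref{eq:braid15} would come out on the wrong sides.

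The genuine gap is the following. The relation \eqref{eq:newb0-1} is an identity in a completion of $\tU$, so you cannot apply $\tpsi^\imath$ to it; you must apply $\tpsi_\star$ and then translate back to $\tpsi^\imath$ via \eqref{eq:newb10}. For a general $x\in\tUi$ that translation reads $\tpsi^\imath(u)=\tfX\,\tpsi_\star(u)\,\tfX^{-1}$ with the \emph{full} quasi $K$-matrix $\tfX$; there is no ``rank one version of \eqref{eq:newb10}'' valid on all of $\tUi$ (it holds only for $u$ in the rank one subalgebra attached to $\bI\cup\{i,\tau i\}$). Carrying out your conjugation honestly, the resulting intertwiner for $\tTT'_{i,+1}$ is not $\tT'_{\bs_i,+1}(\tfX_i^{-1})$ on the nose but the dressed element
\[
\tfX\,\tpsi_\star(\tfX_i)\,\tT'_{\bs_i,+1}(\tfX^{-1}),
\]
which is exactly the Claim \eqref{eq:braid13} in the paper, and an analogous full-$\tfX$ dressing appears for \eqref{eq:braid15}. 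Identifying this element with $\tT'_{\bs_i,+1}(\tfX_i^{-1})$ is \emph{not} a formal consequence of $\tpsi_\star(\tfX_i)=\tfX_i^{-1}$ and the conjugation rules for the rescaled braid operators: it requires the factorization $\tfX=\tfX_{[\bbw]}$ of Theorem~\ref{thm:factor}, applied to one reduced expression of $\bbw$ beginning with $\bs_i$ and one ending with $\bs_{\tau_0 i}$ (this is the content of \eqref{eq:fact7}--\eqref{eq:fact8}). Your anticipated obstacle --- checking that the $\bvs_\star$ and $\bvs_\diamond$ rescalings are compatible --- is legitimate and does work out via Lemma~\ref{lem:rescale2} and \eqref{eq:Tjstar}, but it is not where the difficulty lies; without invoking the factorization of the quasi $K$-matrix the argument does not close.
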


\begin{proof}
We prove the first identity \eqref{eq:braid14}; the second identity \eqref{eq:braid15} can be derived from the first one by noting that $\tTT'_{i,+1},\tTT''_{i,-1}$ are inverses and $\tT'_{\bs_i,+1}, \tT''_{\bs_i,-1}$ are inverses.

We claim the following identity holds:
\begin{align}
\label{eq:braid13}
\tTT'_{i,+1}(x) \cdot \tfX \psi_\star(\tfX_i) \tT'_{\bs_i,+1}(\tfX^{-1} )
= \tfX \psi_\star(\tfX_i) \tT'_{\bs_i,+1}(\tfX^{-1}) \cdot \tT'_{\bs_i,+1}(x).
\end{align}

Let us prove \eqref{eq:braid13}. Recall from \eqref{def:tTT2} that $\tTT'_{i,+1}= \tpsi^\imath  \tTT'_{i,-1} \tpsi^\imath$ and from \eqref{eq:newb10} that $\tfX^{-1} \tpsi^\imath(u)   \tfX = \tpsi_\star(u)$. Hence, \[
\tfX^{-1} \tTT'_{i,+1}(x) \tfX =\tfX^{-1} \tpsi^\imath (\tTa{i}(\tpsi^\imath x) ) \tfX =\tpsi_\star(\tTa{i}(\tpsi^\imath x)).
\]
By \eqref{eq:newb0}, $\tfX_i^{-1} \tTa{i}(\tpsi^\imath x) \tfX_i = \tT_{\bs_i,-1}'(\tpsi^\imath x)$. Hence
\[
\psi_\star(\tfX_i)^{-1} \tfX^{-1} \tTT'_{i,+1}(x) \tfX \psi_\star(\tfX_i) =\psi_\star(\tT_{\bs_i,-1}'(\tpsi^\imath (x))).
\]
This allows us to write \eqref{eq:braid13} as an equivalent identity \begin{align}
\label{eq:same2}
\psi_\star(\tT_{\bs_i,-1}'(\tpsi^\imath (x))) \tT'_{\bs_i,+1}(\tfX^{-1} )= \tT'_{\bs_i,+1}(\tfX^{-1}) \tT'_{\bs_i,+1}(x).
\end{align}
Recalling by \eqref{eq:Tjstar} that $\tT'_{\bs_i,+1} = \psi_\star \tT_{\bs_i,-1}' \psi_\star$, we reduce the proof of \eqref{eq:same2} to verifying that $\tpsi^\imath (x) \tpsi_\star (\tfX)^{-1} = \tpsi_\star (\tfX)^{-1} \tpsi_\star (x)$, which by Proposition~\ref{prop:psi1} is equivalent to
$\tpsi^\imath (x)  \tfX  =  \tfX  \tpsi_\star (x)$. This last identity holds by \eqref{eq:newb10}. Therefore,  \eqref{eq:braid13} is proved.

Observe that if we define $\tfX_{[w]}$ by replacing $\tT_{\bs_i} \equiv \tT''_{\bs_i,+1}$ in the definition \eqref{eq:Upk} of $\tfX_{w}$ by $\tT'_{\bs_i,+1}$, then we still have a factorization
$\tfX=\tfX_{[\bbw]},$ for any reduced expression of $\bbw$. Below we shall use this version of factorization.

Let $\bbw'$ be a reduced expression of $\bbw$ starting with $\bs_i$, and $\bbw'' (= w_0 \bbw' w_0)$ be a reduced expression of $\bbw$ ending with $\bs_{\tau_0 i}$. It follows by definition that
\begin{align}
\label{eq:fact7}
    \tfX=\tfX_{[\bbw']}=\tT'_{\bs_i,+1}(\tfX_{[\bs_i\bbw']}) \tfX_i.
\end{align}
Since $w_0 \bs_{\tau_0 i}=\bs_i w_0$ and $w_0 =\bbw \bw$, we have $\bbw \bs_{\tau_0 i}=\bs_i \bbw$. By definition and  Proposition~\ref{prop:tau0}, we obtain
\begin{align}
\label{eq:fact8}
    \tfX=\tfX_{[\bbw'']}=\tfX_i \tfX_{[\bbw\bs_{\tau_0 i}]} =\tfX_i \tfX_{[\bs_{ i}\bbw]}.
\end{align}

Now, using \eqref{eq:fact7}-\eqref{eq:fact8}, we can simplify a key component appearing in \eqref{eq:braid13} as follows:
\begin{align*}
\tfX \psi_\star(\tfX_i) \tT'_{\bs_i,+1}(\tfX^{-1} )
&=\tfX \tfX_i^{-1}\tT'_{\bs_i,+1}(\tfX^{-1} )\\
&=\tT'_{\bs_i,+1}(\tfX_{[\bs_i\bbw]} \tfX^{-1} ) =\tT'_{\bs_i,+1}(\tfX_i^{-1} ) .
\end{align*}
Hence, the identity \eqref{eq:braid14} follows from \eqref{eq:braid13}.
\end{proof}

\subsection{Braid group action on $\Ui_{\bvs}$}
\label{braid:Uibvs}

Recall from \eqref{eq:Uibvs} the $\imath$quantum group $\Ui_{\bvs}$ with parameter $\bvs$ satisfying \eqref{def:par} (\`a la Letzter), and recall a central reduction $\pi_{\bvs}^\imath:\tUi\rightarrow \Ui_{\bvs}$ from Proposition~\ref{prop:QG2}.

We first construct the braid group action on $\Ui_{\bvs_\diamond}$ for the distinguished parameter $\bvs_\diamond$ \eqref{def:vsi}. By the definition \eqref{def:tkdm} of $\tk_{j,\dm}$ and Proposition~\ref{prop:QG2}, the kernel $\ker \pi^\imath_{\bvs_\diamond}$ is generated by
\begin{align*}
\tk_{j,\dm}-1 \quad (\tau j=j\in \wI), \quad \tk_{j,\dm}\tk_{\tau j,\dm}-1 \quad (\tau j\neq j\in \wI),\quad K_j K_j'-1 \quad (j\in \bI).
\end{align*}
In addition, by Proposition \ref{prop:Cartanblack}, we have
$\tTb{i} (\tk_{j,\dm}) = \tk_{\bs_i\alpha_j,\dm}.$
Hence, the kernel of $\pi_{\bvs_\diamond}$ is preserved by $\tTb{i}$. Therefore, $\tTb{i}$ induces a automorphism $\TT''_{i,+1;\bvs_\diamond}$ on $\Ui_{\bvs_\diamond}$ such that the following diagram commutes:
\begin{center}
\begin{tikzpicture}[scale=.9,thick]
\node (tUi1) at (-2,1) {$\tUi $};
\node (tUi2) at (2,1) {$\tUi$};
\node (Ui1) at (-2,-1) {$\Ui_{\bvs_\diamond}$};
\node (Ui2) at (2,-1) {$\Ui_{\bvs_\diamond}$};
\path[->] (tUi1) edge node[above]{$\tTb{i}$} (tUi2)
          (Ui1) edge node[above]{$\TT''_{i,+1;\bvs_\diamond}$} (Ui2);
\path[->] (tUi1) edge node[right]{$\pi^\imath_{\bvs_\diamond}$} (Ui1) ;
\path[->] (tUi2) edge node[right]{$\pi_{\bvs^\imath_\diamond}$} (Ui2);
\end{tikzpicture}
\end{center}

It follows from Theorem~\ref{thm:newb2} that $\TT''_{i,+1;\bvs_\diamond}$ satisfy the braid relations. By definition, $\tT_j$ ($j\in \bI$) descends to Lusztig's automorphism $T_j$ under the central reduction $\pi^\imath_{\bvs_\diamond}$. It then follows by Theorem~\ref{thm:newb3} and Remark~\ref{rmk:newb1} that there exists an action of the braid group $\Br(\bW) \rtimes \Br(\reW)$ on $\Ui_{\bvs_\diamond}$ generated by $T_j, \TT''_{i,+1;\bvs_\diamond}$, for $j\in \bI, i\in\wItau.$

We now consider the symmetries on $\Ui_\bvs$, for an arbitrary parameter $\bvs$ satisfying \eqref{def:par}.

Via the isomorphism $\phi_\bvs:\Ui_{\bvs_\diamond} \rightarrow\Ui_{\bvs}$ constructed in Proposition \ref{prop:QG3}, we transport the relative braid group action on $\Ui_{\bvs_\diamond}$ to a relative braid group action on $\Ui_\bvs.$ More precisely, there exist automorphisms $\TT''_{i,+1;\bvs}$ on $\Ui_\bvs$ such that the following diagram commutes:
\begin{center}
\begin{tikzpicture}[scale=.9,thick]
\node (tUi1) at (-2,1) {$\Ui_{\bvs_\diamond} $};
\node (tUi2) at (2,1) {$\Ui_{\bvs_\diamond}$};
\node (Ui1) at (-2,-1) {$\Ui_{\bvs}$};
\node (Ui2) at (2,-1) {$\Ui_{\bvs}$};
\path[->] (tUi1) edge node[above]{$\TT''_{i,+1;\bvs_\diamond}$} (tUi2)
          (Ui1) edge node[above]{$\TT''_{i,+1}$} (Ui2);
\path[->] (tUi1) edge node[right]{$\phi_{\bvs}$} (Ui1) ;
\path[->] (tUi2) edge node[right]{$\phi_{\bvs}$} (Ui2);
\end{tikzpicture}
\end{center}
{\bf Our convention here and below is that we suppress the dependence on a general parameter $\bvs$ for the symmetries $\TT''_{i,+1}$ (and $\TT'_{i,-1}$, $\TT''_{i,-1}$ and $\TT'_{i,+1}$ below) on $\Ui_\bvs$.}

In addition, $T_j$ commutes with $\phi_\bvs$ for $j\in \bI$. Summarizing we have obtained the following braid group action on $\Ui_\bvs$ (from Theorem~\ref{thm:newb2}, Theorem~\ref{thm:newb3} and Remark~\ref{rmk:newb1}).

\begin{theorem}
  \label{thm:braid5}
For an arbitrary parameter $\bvs$ satisfying \eqref{def:par}, there exists a braid group action of $\Br(\bW) \rtimes \Br(\reW)$ on $\Ui_\bvs$ as automorphisms of algebras generated by $T_j \; (j\in \bI)$ and $\TT''_{i,+1} \;(i\in \wItau)$.
\end{theorem}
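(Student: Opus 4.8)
The plan is to build the action first on the distinguished $\imath$quantum group $\Ui_{\bvs_\diamond}$ by pushing the symmetries on $\tUi$ through the central reduction $\pi^\imath_{\bvs_\diamond}$, and then to transport it to $\Ui_\bvs$ for an arbitrary parameter via the isomorphism $\phi_\bvs$ of Proposition~\ref{prop:QG3}. First I would check that the symmetries $\tT''_{j,+1}$ $(j\in\bI)$ and $\tTb{i}$ $(i\in\wItau)$ on $\tUi$ descend along $\pi^\imath_{\bvs_\diamond}\colon\tUi\to\Ui_{\bvs_\diamond}$. By Proposition~\ref{prop:QG2} and the definition \eqref{def:tkdm} of $\tk_{j,\dm}$, the kernel $\ker\pi^\imath_{\bvs_\diamond}$ is generated by $\tk_{j,\dm}-1$, $\tk_{j,\dm}\tk_{\tau j,\dm}-1$, and $K_jK_j'-1$. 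Using the explicit formula $\tTb{i}(\tk_{j,\dm})=\tk_{\bs_i\alpha_j,\dm}$ and the fact that $\tTb{i}$ preserves $\tbU$ (Proposition~\ref{prop:Cartanblack2}), I would verify that each of these generators is sent into the kernel, so that $\tTb{i}$ induces a well-defined automorphism $\TT''_{i,+1;\bvs_\diamond}$ of $\Ui_{\bvs_\diamond}$; likewise each $\tT''_{j,+1}$ descends to Lusztig's operator $T_j$, the rescaling being trivial for $j\in\bI$ by Remark~\ref{rem:sameT}.

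Next I would deduce the defining relations of $\Br(\bW)\rtimes\Br(\reW)$ for the induced operators from the corresponding relations upstairs. Since $\pi^\imath_{\bvs_\diamond}$ is surjective and satisfies the intertwining identity $\pi^\imath_{\bvs_\diamond}\circ\tTb{i}=\TT''_{i,+1;\bvs_\diamond}\circ\pi^\imath_{\bvs_\diamond}$ (and similarly for $\tT''_{j,+1}$), any word identity among the $\tTb{i}$ and $\tT''_{j,+1}$ descends to the same identity among the $\TT''_{i,+1;\bvs_\diamond}$ and $T_j$. Thus the braid relations for $\TT''_{i,+1;\bvs_\diamond}$ come from Theorem~\ref{thm:newb2}, those for $T_j$ from Lusztig's theory, and the mixed semidirect-product relations from the combined action of Theorem~\ref{thm:newb3} together with Remark~\ref{rmk:newb1}. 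This produces an action of $\Br(\bW)\rtimes\Br(\reW)$ on $\Ui_{\bvs_\diamond}$ generated by $T_j$ $(j\in\bI)$ and $\TT''_{i,+1;\bvs_\diamond}$ $(i\in\wItau)$.

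Finally I would transport this action through the algebra isomorphism $\phi_\bvs\colon\Ui_{\bvs_\diamond}\to\Ui_\bvs$ by setting $\TT''_{i,+1}:=\phi_\bvs\circ\TT''_{i,+1;\bvs_\diamond}\circ\phi_\bvs^{-1}$. Because conjugation by an algebra isomorphism is a group homomorphism $\Aut(\Ui_{\bvs_\diamond})\to\Aut(\Ui_\bvs)$, the transported operators automatically satisfy the same relations of $\Br(\bW)\rtimes\Br(\reW)$, so no relations need to be re-verified. The one point that genuinely requires checking—and the main (if modest) obstacle—is that the black generators transport to the \emph{genuine} Lusztig operators $T_j$ named in the statement, i.e.\ that $T_j\circ\phi_\bvs=\phi_\bvs\circ T_j$ for $j\in\bI$. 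Here I would use that, by Proposition~\ref{prop:QG3}, $\phi_\bvs$ fixes $E_j,F_j,K_j$ for $j\in\bI$ and acts by a scalar on each $B_i$ and each $k_r$, while $T_j$ preserves $\bU$ and the formula \eqref{eq:newb-1} for $T_j(B_i)$ is linear and homogeneous in $B_i$ with $F_j$-coefficients; comparing the two sides on the generators $B_i,k_r,E_j,F_j,K_j$ of $\Ui_\bvs$ then gives the commutation directly. This identifies the transported operator with $T_j$ and shows the statement is unambiguous, completing the construction.
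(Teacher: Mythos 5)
Your proposal is correct and follows essentially the same route as the paper: descend $\tTb{i}$ and $\tT''_{j,+1}$ along $\pi^\imath_{\bvs_\diamond}$ using $\tTb{i}(\tk_{j,\dm})=\tk_{\bs_i\alpha_j,\dm}$ to see the kernel is preserved, let the relations of $\Br(\bW)\rtimes\Br(\reW)$ descend by surjectivity, and then transport to $\Ui_\bvs$ via $\phi_\bvs$. Your explicit check that $T_j\circ\phi_\bvs=\phi_\bvs\circ T_j$ for $j\in\bI$ is a point the paper merely asserts, so including it is a harmless (and welcome) elaboration rather than a deviation.
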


 We next construct $\TT'_{i,+1}$ on $\Ui_\bvs$ for general parameters $\bvs$. %We show that each of them induces an action of $\Br(\bW) \rtimes \Br(\reW)$ analogous to Theorem~\ref{thm:braid5}.
By a similar argument as in \S \ref{tTtk}, we have $\tTT'_{i,+1}=\tT'_{\bs_i,+1}$ on $\tU^{\imath 0}$ and both are given by
\begin{align}
\label{eq:newb11}
 \vs_{j,\star\dm}\tk_j \mapsto  \vs_{\bs_i \alpha_j,\star\dm} \tk_{\bs_i \alpha_j}.
\end{align}

Denote the parameter $\obvs_{\star\dm}:=(\obvs_{j,\star\dm})_{j\in \wI}$. Then by \eqref{eq:newb11}, $\tTT'_{i,+1}$ preserves the kernel of $\pi^\imath_{\obvs_{\star\dm}}$ and hence it induces an automorphism $\TT'_{i,+1;\obvs_{\star\dm}}$ on $\Ui_{\obvs_{\star \dm}}$ such that the following diagram commutes:
\begin{center}
\begin{tikzpicture}[scale=.9,thick]
\node (tUi1) at (-2,1) {$\tUi $};
\node (tUi2) at (2,1) {$\tUi$};
\node (Ui1) at (-2,-1) {$\Ui_{\obvs_{\star\dm}}$};
\node (Ui2) at (2,-1) {$\Ui_{\obvs_{\star\dm}}$};
\path[->] (tUi1) edge node[above]{$\tTT'_{i,+1}$} (tUi2)
          (Ui1) edge node[above]{$\TT'_{i,+1;\obvs_{\star\dm}}$} (Ui2);
\path[->] (tUi1) edge node[right]{$\pi^\imath_{\obvs_{\star\dm}}$} (Ui1) ;
\path[->] (tUi2) edge node[right]{$\pi^\imath_{\obvs_{\star\dm}}$} (Ui2);
\end{tikzpicture}
\end{center}
On the other hand, by Lemma~\ref{lem:rescale2}, $\tT'_{j,+1}$ descends to Lusztig's automorphism $T'_{j,+1}$ under the central reduction $\pi^\imath_{\obvs_{\star\dm}}$. Hence, by Corollary~\ref{cor:newb1}, there exits an action of the braid group $\Br(\bW) \rtimes \Br(\reW)$ on $\Ui_{\obvs_{\star \dm}}$ generated by $T'_{j,+1}\; (j\in \bI)$ and $\TT'_{i,+1;\obvs_{\star\dm}}\; (i\in \wItau).$

Now, for an arbitrary parameter $\bvs$, we can use the isomorphism $\phi_{\bvs}\phi_{\obvs_{\star\dm}}^{-1}$ to translate this action on $\Ui_{\obvs_{\star \dm}}$ to an action on $\Ui_\bvs$, i.e., there exists automorphisms $\TT'_{i,+1}$ on $\Ui_{\bvs}$ such that
\begin{align*}
\TT'_{i,+1}\circ \phi_{\bvs}\phi_{\obvs_{\star\dm}}^{-1}=\phi_{\bvs}\phi_{\obvs_{\star\dm}}^{-1} \circ \TT'_{i,+1;\bvs_{\star\dm}}.
\end{align*}
In addition, $\tT'_{j,+1}$ commutes with $\phi_{\bvs}\phi_{\obvs_{\star\dm}}^{-1}$.

Similarly, we can formulate the automorphisms $\TT'_{i,-1}, \TT''_{i,-1}$ on $\Ui_{\bvs}$, which are inverses to $\TT''_{i,+1}, \TT'_{i,+1}$; the detail is skipped. Summarizing, we have established the following theorem, which was conjectured in \cite[Conjecture 1.2]{KP11}.

\begin{theorem}
\label{thm:braid6}
Let $e=\pm1$, and $\bvs$ be an arbitrary parameter satisfying \eqref{def:par}.
\begin{enumerate}
    \item
There exists a braid group action of $\Br(\bW) \rtimes \Br(\reW)$ on $\Ui_\bvs$ as automorphisms of algebras generated by $T'_{j,e} \; (j\in \bI)$ and $\TT'_{i,e} \;(i\in \wItau)$.
   \item
There exists a braid group action of $\Br(\bW) \rtimes \Br(\reW)$ on $\Ui_\bvs$ as automorphisms of algebras generated by $T''_{j,e} \; (j\in \bI)$ and $\TT''_{i,e} \;(i\in \wItau)$.
\end{enumerate}
\end{theorem}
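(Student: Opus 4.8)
The plan is to reduce Theorem~\ref{thm:braid6} to the already-established constructions on $\tUi$ (Theorem~\ref{thm:newb2}, Theorem~\ref{thm:newb3}, Corollary~\ref{cor:newb1}) by transporting the braid group actions through the central reductions $\pi^\imath_\bvs$ and the rescaling isomorphisms $\phi_\bvs$. Since the four families $\TT'_{i,e}, \TT''_{i,e}$ and $T'_{j,e}, T''_{j,e}$ are mutually related by the (anti-)involutions and by inversion, it suffices to establish part~(2) for a single sign, say $e=+1$, namely the action generated by $T''_{j,+1}\;(j\in\bI)$ and $\TT''_{i,+1}\;(i\in\wItau)$; the remaining cases will follow formally. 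Concretely, first I would verify that the symmetry $\tTb{i}=\tTT''_{i,+1}$ on $\tUi$ preserves $\ker\pi^\imath_{\bvs_\dm}$. This is the crux of descending to $\Ui_{\bvs_\dm}$: the kernel is generated by the Cartan-type elements $\tk_{j,\dm}-1$, $\tk_{j,\dm}\tk_{\tau j,\dm}-1$, and $K_jK_j'-1$ (Proposition~\ref{prop:QG2}), and by the explicit Cartan formula $\tTb{i}(\tk_{j,\dm})=\tk_{\bs_i\alpha_j,\dm}$ from Proposition~\ref{prop:Cartanblack2} together with Proposition~\ref{prop:Cartanblack}, each generator is sent into the kernel. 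This yields the induced automorphism $\TT''_{i,+1;\bvs_\dm}$ on $\Ui_{\bvs_\dm}$ fitting into the commutative square with $\pi^\imath_{\bvs_\dm}$.

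Next I would assemble the group action on $\Ui_{\bvs_\dm}$. The braid relations among the $\TT''_{i,+1;\bvs_\dm}$ follow by applying $\pi^\imath_{\bvs_\dm}$ to the relations in Theorem~\ref{thm:newb2}; the black-node operators $\tT_j=\tT''_{j,+1}\;(j\in\bI)$ descend to Lusztig's $T_j$ under $\pi^\imath_{\bvs_\dm}$ (since the rescaling $\bvs_\diamond$ is trivial on $\bI$ by Remark~\ref{rem:sameT}); and the mixed commutation relations realizing the semidirect product $\Br(\bW)\rtimes\Br(\reW)$ descend from Lemma~\ref{lem:TbTw} via Theorem~\ref{thm:newb3} and Remark~\ref{rmk:newb1}. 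Thus one obtains the desired action of $\Br(\bW)\rtimes\Br(\reW)$ on $\Ui_{\bvs_\dm}$ generated by $T_j$ and $\TT''_{i,+1;\bvs_\dm}$. Passing to a general parameter $\bvs$ is then a formal transport: the isomorphism $\phi_\bvs\colon\Ui_{\bvs_\dm}\to\Ui_\bvs$ of Proposition~\ref{prop:QG3} commutes with the $T_j$ (it fixes $E_j,F_j,K_j$ for $j\in\bI$), so conjugating $\TT''_{i,+1;\bvs_\dm}$ by $\phi_\bvs$ produces automorphisms $\TT''_{i,+1}$ on $\Ui_\bvs$ satisfying the same relations. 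This proves part~(2) for $e=+1$.

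For the remaining parts I would use the symmetry apparatus already in place rather than repeating the descent argument. The inverse operators $\TT''_{i,-1}$ are constructed analogously but must descend through a \emph{different} central reduction, namely $\pi^\imath_{\obvs_{\star\dm}}$, because the bar-twist $\tpsi^\imath$ built into $\tTT''_{i,-1}=\tpsi^\imath\circ\tTT''_{i,+1}\circ\tpsi^\imath$ (see \eqref{def:tTT2}) shifts the relevant scaling parameter from $\bvs_\dm$ to $\obvs_{\star\dm}$; the Cartan action \eqref{eq:newb11} shows the kernel of $\pi^\imath_{\obvs_{\star\dm}}$ is preserved, and Lemma~\ref{lem:rescale2} guarantees that $\tT'_{j,+1}$ descends to Lusztig's $T'_{j,+1}$ under this reduction. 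The primed operators $\TT'_{i,e}$ of part~(1) are then handled by the same two-parameter descent, or alternatively via the anti-involution relation $\tTT'_{i,e}=\sigma^\imath\circ\tTT''_{i,-e}\circ\sigma^\imath$ of Theorem~\ref{thm:C}, which converts relations for the $\TT''$-family into relations for the $\TT'$-family.

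The main obstacle I anticipate is bookkeeping rather than deep mathematics: one must keep precise track of \emph{which} distinguished parameter ($\bvs_\dm$ versus $\obvs_{\star\dm}$) governs the descent of each of the four operator families, since the bar involution $\tpsi^\imath$ and the rescaling automorphism $\tPsi_{\bvs_\star}$ interact nontrivially with the central reductions. The key technical point making everything work is that the scaling twist $\bvs_\diamond$ used to define the rescaled operators $\tT_{i}^{\pm1}$ is \emph{trivial on the black nodes} $\bI$ (Remark~\ref{rem:sameT}), so that the black-node operators descend to the genuine Lusztig symmetries $T_j$, and hence the two actions $\Br(\bW)$ and $\Br(\reW)$ glue into the semidirect product exactly as at the universal level. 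Once the correct parameter is fixed in each case, the verification of relations is purely formal, being inherited from the universal statements via the surjectivity of $\pi^\imath_{\bvs}$ and the isomorphisms $\phi_\bvs,\ \phi_\bvs\phi_{\bvs^e}^{-1}$.
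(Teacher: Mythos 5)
Your proposal is correct and follows essentially the same route as the paper: descend $\tTb{i}$ through $\pi^\imath_{\bvs_\dm}$ (kernel preservation via the Cartan formula of Proposition~\ref{prop:Cartanblack}), inherit the relations of Theorem~\ref{thm:newb2}, Theorem~\ref{thm:newb3} and Remark~\ref{rmk:newb1}, transport to arbitrary $\bvs$ via $\phi_\bvs$, and handle the $\TT'_{i,+1}$/$\TT''_{i,-1}$ family by the parallel descent through $\pi^\imath_{\obvs_{\star\dm}}$ using \eqref{eq:newb11} and Lemma~\ref{lem:rescale2}, with the remaining operators obtained as inverses. The one caveat is that your alternative route via $\sigma^\imath$ should be applied at the level of $\tUi$ before descending, since $\sigma^\imath$ itself only descends to $\Ui_\bvs$ for balanced parameters; but your primary argument does not rely on this.
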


%%%%%%%%%
%%%%%%%%%
\section{Relative braid group actions on $\U$-modules}
 \label{sec:modules}

Let $(\I=\bI \cup \wI,\tau)$ be a Satake diagram of arbitrary type and $(\U,\U_{\bvs})$ be the associated quantum symmetric pair. We set $\bvs$ to be a balanced parameter throughout this section. Based on the intertwining properties of $\Tae{i},\Tbe{i}$ on $\Ui_\bvs$, we formulate the compatible action of corresponding operators on an arbitrary finite-dimensional $\U$-module $M$. We then show that these operators on $M$ satisfy relative braid group relations.

\subsection{Intertwining relations on $\Ui_\bvs$}

Recall that the symmetries $\TT'_{i,e}$ and $\TT''_{i,e}$ on $\Ui_\bvs$, for $e=\pm 1$, were defined in \S\ref{braid:Uibvs}. In this subsection we formulate the intertwining properties of these symmetries.

Recall $\phi_\bvs$ from Proposition~\ref{prop:QG3}. Since $\bvs$ is a balanced parameter, $\phi_\bvs$ is the restriction of $\Phi_{\overline{\bvs}_\dm \bvs}$, where $\overline{\bvs}_\dm \bvs$ is defined by componentwise multiplication with $\overline{\bvs}_\dm =(\overline{\vs_{j,\dm}})_{j\in \wI}$; cf. also Proposition~\ref{prop:QG3}. Define
\begin{align}
\label{def:sT1}
\sT''_{i,+1;\bvs}:= \Phi_{\overline{\bvs}_\dm \bvs} T''_{i,+1} \Phi_{\overline{\bvs}_\dm \bvs}^{-1},
\qquad
\sT'_{i,-1;\bvs}:= \Phi_{\overline{\bvs}_\dm \bvs} T'_{i,-1} \Phi_{\overline{\bvs}_\dm \bvs}^{-1}.
\end{align}

\begin{proposition}
\label{prop:inter1}
Let $\bvs$ be a balanced parameter. The automorphisms $\TT'_{i,-1}$ and $\TT''_{i,+1}$ on $\Ui_\bvs$ satisfy the following intertwining relations:
\begin{align}
\label{eq:inter4}
    \TT'_{i, -1}(x) \fX_{i,\bvs}&= \fX_{i,\bvs} \sT'_{\bs_i,-1;\bvs}(x),
    \\
    \TT''_{i,+1}(x) \, \sT''_{\bs_i,+1;\bvs}(\fX_{i,\bvs}^{-1} )&=  \sT''_{\bs_i,+1;\bvs}(\fX_{i,\bvs}^{-1} ) \, \sT''_{\bs_i,+1;\bvs}(x),
    \label{eq:inter5}
\end{align}
for $x\in \Ui_\bvs$.
\end{proposition}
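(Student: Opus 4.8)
The plan is to obtain \eqref{eq:inter4}--\eqref{eq:inter5} by descending the universal intertwining relations \eqref{eq:newb0} and \eqref{eq:newb0-1} through the central reduction $\pi_\bvs$, carrying this out first for the distinguished balanced parameter $\bvs_\diamond$ of \eqref{def:vsi} and then transporting the result to a general balanced parameter $\bvs$ via the isomorphism $\phi_\bvs$ of Proposition~\ref{prop:QG3}.

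\emph{Stage 1 (the parameter $\bvs_\diamond$).} First I would record the compatibility of the rescaled braid operators with $\pi_{\bvs_\diamond}$. Since $\pi_{\bvs_\diamond}=\pi_{\mathbf 1}\circ\tPsi_{\bvs_\diamond}$ by \eqref{eq:braid12} and $\tT'_{\bs_i,-1}=\tPsi_{\bvs_\diamond}^{-1}\tTD'_{\bs_i,-1}\tPsi_{\bvs_\diamond}$ by \eqref{def:tT-1}, the rescaling $\tPsi_{\bvs_\diamond}$ cancels, and \eqref{eq:braid11} (together with its $T'_{\cdot,-1}$-variant) yields $\pi_{\bvs_\diamond}\circ\tT'_{\bs_i,-1}=T'_{\bs_i,-1}\circ\pi_{\bvs_\diamond}$, and likewise $\pi_{\bvs_\diamond}\circ\tT''_{\bs_i,+1}=T''_{\bs_i,+1}\circ\pi_{\bvs_\diamond}$ using \eqref{def:tT}. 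Because $\overline{\vs_{j,\dm}}\,\vs_{j,\dm}=1$ for all $j$, the parameter $\overline{\bvs}_\dm\bvs_\diamond$ is trivial, so $\sT'_{\bs_i,-1;\bvs_\diamond}=T'_{\bs_i,-1}$ and $\sT''_{\bs_i,+1;\bvs_\diamond}=T''_{\bs_i,+1}$ by \eqref{def:sT1}. Next, $\pi_{\bvs_\diamond}(\tfX_i)=\fX_{i,\bvs_\diamond}$ by \eqref{eq:sameUp} (as $\bvs_\diamond$ is balanced), while the defining commutative diagrams of \S\ref{braid:Uibvs} give $\pi^\imath_{\bvs_\diamond}\circ\tTa{i}=\TT'_{i,-1;\bvs_\diamond}\circ\pi^\imath_{\bvs_\diamond}$ and $\pi^\imath_{\bvs_\diamond}\circ\tTb{i}=\TT''_{i,+1;\bvs_\diamond}\circ\pi^\imath_{\bvs_\diamond}$. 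Applying the algebra homomorphism $\pi_{\bvs_\diamond}$ to \eqref{eq:newb0} and \eqref{eq:newb0-1} and substituting these identities then produces \eqref{eq:inter4} and \eqref{eq:inter5} for $\bvs=\bvs_\diamond$.

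\emph{Stage 2 (transport to general balanced $\bvs$).} Since $\bvs$ is balanced, $\phi_\bvs$ is the restriction of $\Phi_{\overline{\bvs}_\dm\bvs}$, and by construction $\TT'_{i,-1},\TT''_{i,+1}$ on $\Ui_\bvs$ are the $\phi_\bvs$-conjugates of $\TT'_{i,-1;\bvs_\diamond},\TT''_{i,+1;\bvs_\diamond}$, while $\sT'_{\bs_i,-1;\bvs},\sT''_{\bs_i,+1;\bvs}$ are the $\Phi_{\overline{\bvs}_\dm\bvs}$-conjugates of $T'_{\bs_i,-1},T''_{\bs_i,+1}$ by \eqref{def:sT1}. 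The remaining input is the covariance $\Phi_{\overline{\bvs}_\dm\bvs}(\fX_{i,\bvs_\diamond})=\fX_{i,\bvs}$, which I would deduce from the identity $\Phi_{\overline{\bvs}_\dm\bvs}\circ\pi_{\bvs_\diamond}=\pi_\bvs$ on $\tU^+$ (a one-line check on the generators $E_j$, using $\vs_{j,\diamond}=\vs_{j,\dm}$ and $\overline{\vs_{j,\dm}}\,\vs_{j,\dm}=1$) combined with $\pi_{\bvs_\diamond}(\tfX_i)=\fX_{i,\bvs_\diamond}$ and $\pi_\bvs(\tfX_i)=\fX_{i,\bvs}$. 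Applying $\Phi_{\overline{\bvs}_\dm\bvs}$ to the $\bvs_\diamond$-versions of \eqref{eq:inter4}--\eqref{eq:inter5} then yields the stated relations for $\bvs$.

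\emph{Main obstacle.} The conceptual steps are routine; the real care lies in the parameter bookkeeping of Stage~2, namely verifying that the three distinct rescalings---$\tPsi_{\bvs_\diamond}$ hidden inside the braid operators, $\Phi_{\overline{\bvs}_\dm\bvs}$ defining both $\phi_\bvs$ and the $\sT$-operators, and the central reductions $\pi_{\bvs_\diamond},\pi_\bvs$---are mutually consistent. The key reconciling identities are $\overline{\vs_{j,\dm}}\,\vs_{j,\dm}=1$ and $\Phi_{\overline{\bvs}_\dm\bvs}\circ\pi_{\bvs_\diamond}=\pi_\bvs$ on $\tU^+$; once these are in place the covariance of the quasi $K$-matrix and of the rescaled braid symmetries follows, and the two intertwining relations drop out. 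I would also confirm that $\tTa{i}$ (resp.\ $\tTb{i}$) preserves $\ker\pi^\imath_{\bvs_\diamond}$, since it acts on the Cartan part by $\tk_{j,\dm}\mapsto\tk_{\bs_i\alpha_j,\dm}$ by Proposition~\ref{prop:Cartanblack}, so that the descended operators $\TT'_{i,-1;\bvs_\diamond},\TT''_{i,+1;\bvs_\diamond}$ are well defined.
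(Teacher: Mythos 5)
Your proposal is correct and follows essentially the same route as the paper: descend the universal intertwining relations \eqref{eq:newb0} and \eqref{eq:newb0-1} through $\pi_{\bvs_\dm}$ to get the relations on $\Ui_{\bvs_\dm}$ with Lusztig's $T'_{\bs_i,-1}$, $T''_{\bs_i,+1}$, then transport to a general balanced $\bvs$ by applying $\phi_\bvs=\Phi_{\overline{\bvs}_\dm\bvs}|_{\Ui_{\bvs_\dm}}$, which by \eqref{def:sT1} conjugates the braid operators into $\sT'_{\bs_i,-1;\bvs}$, $\sT''_{\bs_i,+1;\bvs}$. Your explicit verification of the covariance $\Phi_{\overline{\bvs}_\dm\bvs}(\fX_{i,\bvs_\dm})=\fX_{i,\bvs}$ is a detail the paper leaves implicit, and it is the right thing to check.
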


\begin{proof}
By Theorem~\ref{thm:newb0} and Theorem~\ref{thm:Tb}, we have, for any $x\in \tUi$,
\begin{align}
  \label{eq:inter1}
\begin{split}
    \tTT'_{i,-1}(x) \; \tfX_{i }&= \tfX_{i} \; \tT'_{\bs_i,-1}(x),
    \\
    \tTT''_{i,+1} (x) \; \tT_{\bs_i}(\tfX_{i }^{-1})&= \tT_{\bs_i}(\tfX_{i}^{-1}) \; \tT''_{\bs_i,+1}(x).
\end{split}
\end{align}

Let $T'_{i,e},T''_{i,e}$ be Lusztig's automorphisms on $\U$. Recall the central reduction $\pi_{\bvs_\dm}: \tU \rightarrow \U$ from \eqref{pibvs}. By \eqref{eq:braid12} (with $\ba =\bvs_{\diamond}$) and \eqref{eq:braid11}, we have
\begin{equation*}
\pi_{\bvs_{\diamond}}\circ \tT''_{i,+1} =T''_{i,+1} \circ \pi_{\bvs_{\diamond}},
\qquad
\pi_{\bvs_{\diamond}}\circ \tT'_{i,-1} =T'_{i,-1} \circ \pi_{\bvs_{\diamond}}.
\end{equation*}
Hence, $\pi^\imath_{\bvs_\dm} \circ \tTT''_{i,+1} = \TT''_{i,+1;\bvs_\dm} \circ \pi^\imath_{\bvs_\dm} $. Since the parameter $\bvs_\dm$ is balanced, $\pi^{\imath}_{\bvs_\dm}$ is the restriction of $\pi_{\bvs_\dm}$ to $\Ui_{\bvs_\dm}$. Applying $\pi_{\bvs_\dm}$ to the intertwining relations \eqref{eq:inter1}, we obtain, for any $x\in \Ui_{\bvs_\dm}$,
\begin{align}
  \label{eq:inter2}
\begin{split}
\TT'_{i,-1;\bvs_\dm}(x) \; \fX_{i,\bvs_\dm}& = \fX_{i,\bvs_\dm} \; T'_{\bs_i,-1}(x),
\\
\TT''_{i,+1;\bvs_\dm}(x) \; T''_{\bs_i,+1}(\fX_{i,\bvs_\dm}^{-1})& = T''_{\bs_i,+1}(\fX_{i,\bvs_\dm}^{-1}) \; T''_{\bs_i,+1}(x).
\end{split}
\end{align}

Recall $\phi_\bvs$ from Proposition~\ref{prop:QG3}. As we have seen in \S \ref{braid:Uibvs}, we have $\phi_\bvs \circ \TT''_{i,+1;\bvs_\dm} = \TT''_{i,+1} \circ \phi_{\bvs}$, and $\phi_\bvs \circ \TT'_{i,-1;\bvs_\dm} = \TT'_{i,-1} \circ \phi_{\bvs}$. Therefore, applying $\phi_\bvs$ to the identities \eqref{eq:inter2} gives us the desired intertwining relations in the proposition.
\end{proof}

 We next formulate intertwining relations for the other two automorphisms $\TT'_{i,+1}$ and $\TT''_{i,-1}$.

 Recall the central reductions $\pi_\bvs: \tU \rightarrow \U$ from \eqref{pibvs} and $\pi^\imath_\bvs: \tUi \rightarrow \Ui_\bvs$ from Proposition~\ref{prop:QG2}.
 By Lemma~\ref{lem:rescale2}, we have $\pi_{\obvs_{\star\dm}} \circ \tT'_{i,+1} =T'_{i,+1} \circ \pi_{\obvs_{\star\dm}}$ and $\pi^\imath_{\obvs_{\star\dm}} \circ \tTT'_{i,+1} = \TT'_{i,+1;\obvs_{\star\dm}} \circ \pi^\imath_{\obvs_{\star\dm}} $. Since the parameter $\obvs_{\star\dm}$ is balanced, $\pi^{\imath}_{\obvs_{\star\dm}}$ is the restriction of $\pi_{\obvs_{\star\dm}}$ to $\tUi$. Applying $\pi_{\obvs_{\star\dm}}$ to \eqref{eq:braid14}-\eqref{eq:braid15}, we have, for any $x\in \Ui_{\obvs_{\star\dm}}$,
\begin{align}
\label{eq:inter6}
\begin{split}
    \TT'_{i,+1;\obvs_{\star\dm}}(x)\, T'_{\bs_i,+1}(\fX_{i,\obvs_{\star\dm}}^{-1})& = T'_{\bs_i,+1}(\fX_{i,\obvs_{\star\dm}}^{-1})\, T'_{\bs_i,+1}(x),
    \\
    \TT''_{i,-1;\obvs_{\star\dm}}(x) \fX_{i,\obvs_{\star\dm}}& = \fX_{i,\obvs_{\star\dm}} T''_{\bs_i,-1}(x).
 \end{split}
\end{align}

Since $\bvs$ is a balanced parameter, by the proof of Proposition~\ref{prop:QG3}, $\phi_\bvs \phi_{\obvs_{\star\dm}}^{-1}$ is the restriction of $\Phi_{\obvs_{\star\dm}^{-1} \bvs}=\Phi_{\bvs_{\star\dm} \bvs}$. Define
\begin{align}
\label{def:sT2}
\sT''_{i,-1;\bvs}:= \Phi_{\bvs_{\star\dm} \bvs} T''_{i,-1} \Phi_{\bvs_{\star\dm} \bvs}^{-1},
\qquad
\sT'_{i,+1;\bvs}:= \Phi_{\bvs_{\star\dm} \bvs} T'_{i,+1} \Phi_{\bvs_{\star\dm} \bvs}^{-1}.
\end{align}

Applying $\phi_\bvs \phi_{\obvs_{\star\dm}}^{-1}$ to \eqref{eq:inter6}, we have established the following.

\begin{proposition}
\label{prop:inter2}
Let $\bvs$ be a balanced parameter. The automorphisms $\TT'_{i,+1;\bvs}$ and $\TT''_{i,-1;\bvs}$ on $\Ui_\bvs$ satisfy the following intertwining relations, for all  $x\in \Ui_\bvs$:
\begin{align*}
\TT'_{i,+1}(x) \; \sT'_{\bs_i,+1;\bvs} (\fX_{i,\bvs}^{-1} )
&=  \sT'_{\bs_i,+1;\bvs} (\fX_{i,\bvs}^{-1} ) \; \sT'_{\bs_i,+1;\bvs}(x),
\\
\TT''_{i,-1}(x) \fX_{i,\bvs}  & =   \fX_{i,\bvs}  \sT''_{\bs_i,-1;\bvs} (x).
\end{align*}
\end{proposition}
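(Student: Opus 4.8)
The plan is to derive Proposition~\ref{prop:inter2} directly from the universal intertwining relations \eqref{eq:braid14}--\eqref{eq:braid15} by applying a suitable central reduction, exactly paralleling the proof of Proposition~\ref{prop:inter1}. First I would record the two universal identities for $\tTT'_{i,+1}$ and $\tTT''_{i,-1}$ on $\tUi$, namely
\begin{align*}
\tTT'_{i,+1}(x)\,\tT'_{\bs_i,+1}(\tfX_i^{-1}) &= \tT'_{\bs_i,+1}(\tfX_i^{-1})\,\tT'_{\bs_i,+1}(x),
\\
\tTT''_{i,-1}(x)\,\tfX_i &= \tfX_i\,\tT''_{\bs_i,-1}(x),
\end{align*}
valid for all $x\in\tUi$. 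The key observation is to choose the \emph{right} parameter for the central reduction: since $\tTT'_{i,+1}$ is built from $\tTa{i}$ via conjugation by $\tpsi^\imath$ and involves the rescaled symmetries $\tT'_{j,+1}$ from \eqref{eq:Tjstar}, the correct central reduction is $\pi_{\obvs_{\star\dm}}$ (equivalently $\pi^\imath_{\obvs_{\star\dm}}$), for which $\obvs_{\star\dm}$ is a balanced parameter. This is precisely the reduction already used in \S\ref{braid:Uibvs} to define $\TT'_{i,+1;\obvs_{\star\dm}}$.

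The next step is to push \eqref{eq:braid14}--\eqref{eq:braid15} through $\pi_{\obvs_{\star\dm}}$. By Lemma~\ref{lem:rescale2} we have $\pi_{\obvs_{\star\dm}}\circ\tT'_{i,+1} = T'_{i,+1}\circ\pi_{\obvs_{\star\dm}}$ and likewise $\pi_{\obvs_{\star\dm}}\circ\tT''_{i,-1}=T''_{i,-1}\circ\pi_{\obvs_{\star\dm}}$, while from \S\ref{braid:Uibvs} we have $\pi^\imath_{\obvs_{\star\dm}}\circ\tTT'_{i,+1}=\TT'_{i,+1;\obvs_{\star\dm}}\circ\pi^\imath_{\obvs_{\star\dm}}$ and the analogous statement for $\tTT''_{i,-1}$. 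Because $\obvs_{\star\dm}$ is balanced, $\pi^\imath_{\obvs_{\star\dm}}$ is just the restriction of $\pi_{\obvs_{\star\dm}}$ to $\tUi$, so it is multiplicative and applying it to the two displayed universal identities yields \eqref{eq:inter6} verbatim, using also $\pi_{\obvs_{\star\dm}}(\tfX_i)=\fX_{i,\obvs_{\star\dm}}$ (a rank-one instance of \eqref{eq:sameUp}) and the fact that $\bs_i$-products of the braid operators commute with the reduction.

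Finally I would transport \eqref{eq:inter6} along the isomorphism $\phi_\bvs\phi_{\obvs_{\star\dm}}^{-1}\colon\Ui_{\obvs_{\star\dm}}\to\Ui_\bvs$ from \eqref{phiphi}. For balanced $\bvs$, Proposition~\ref{prop:QG3} identifies $\phi_\bvs\phi_{\obvs_{\star\dm}}^{-1}$ with the restriction of the rescaling automorphism $\Phi_{\obvs_{\star\dm}^{-1}\bvs}=\Phi_{\bvs_{\star\dm}\bvs}$ of $\U$, which is exactly the twist appearing in the definition \eqref{def:sT2} of $\sT'_{i,+1;\bvs}$ and $\sT''_{i,-1;\bvs}$. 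Since this map intertwines $\TT'_{i,+1;\obvs_{\star\dm}}$ with $\TT'_{i,+1}$ (and $\TT''_{i,-1;\obvs_{\star\dm}}$ with $\TT''_{i,-1}$), conjugating the operators in \eqref{eq:inter6} by it converts $T'_{\bs_i,+1}$ into $\sT'_{\bs_i,+1;\bvs}$ and $T''_{\bs_i,-1}$ into $\sT''_{\bs_i,-1;\bvs}$, and sends $\fX_{i,\obvs_{\star\dm}}$ to $\fX_{i,\bvs}$, giving the two asserted intertwining relations. The only genuine subtlety — the ``hard part'' — is bookkeeping the parameters: one must check that the balanced hypothesis on $\bvs$ makes $\obvs_{\star\dm}$ balanced so that $\pi^\imath$ really is a restriction of $\pi$ (otherwise multiplicativity and the identification of quasi $K$-matrices fail), and that the composite twist $\Phi_{\bvs_{\star\dm}\bvs}$ matches both the braid-operator rescaling in \eqref{def:sT2} and the quasi $K$-matrix image consistently; once these parameter identities are pinned down, the argument is formal.
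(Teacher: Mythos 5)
Your proposal is correct and follows essentially the same route as the paper: apply the central reduction $\pi_{\obvs_{\star\dm}}$ to the universal relations \eqref{eq:braid14}--\eqref{eq:braid15} (using Lemma~\ref{lem:rescale2} and the fact that $\obvs_{\star\dm}$ is balanced so $\pi^\imath_{\obvs_{\star\dm}}$ is a restriction of $\pi_{\obvs_{\star\dm}}$) to get \eqref{eq:inter6}, then transport along $\phi_\bvs\phi_{\obvs_{\star\dm}}^{-1}=\Phi_{\bvs_{\star\dm}\bvs}|_{\Ui_{\obvs_{\star\dm}}}$, which matches the twist in \eqref{def:sT2}. The only minor imprecision is that $\obvs_{\star\dm}$ is balanced regardless of $\bvs$; the balancedness of $\bvs$ is needed only to identify $\phi_\bvs\phi_{\obvs_{\star\dm}}^{-1}$ with a rescaling automorphism in the final step.
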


\subsection{Compatible actions of $\Tae{i},\Tbe{i}$ on $\U$-modules}

Denote by $E_i^{(n)}, F_i^{(n)}$ the divided powers $\frac{E_i^n}{[n]_i!},\frac{F_i^n}{[n]_i!}$ in $\U$, for $n\in \N$.

Let $\mathcal{F}$ be the category of finite-dimensional $\U$-modules of type {\bf 1}. By definition, $M\in \mathcal{F}$ has a weight space decomposition (with respect a fixed $i\in \I$)
\[
M=\bigoplus_{n\in \Z} M_n,\qquad M_n=\{v\in M| K_i v = q_i^n v\}.
\]
Following \cite{Lus93}, we define linear operators $T'_{i,e},T''_{i,e},e=\pm1$ on $M$ by
\begin{align}
   \label{eq:mod1}
T'_{i,e}(v)&=\sum_{\substack{a,b,c \geq 0;\\a- b+c=m}}(-1)^b q_i^{e(b-ac)} F_i^{(a)}E_i^{(b)}F_i^{(c)}v,\qquad v\in M_m,\\
T''_{i,e}(v)&=\sum_{\substack{a,b,c \geq 0;\\-a+b-c=m}}(-1)^b q_i^{e(b-ac)} E_i^{(a)}F_i^{(b)}E_i^{(c)}v,\qquad v\in M_m.\label{eq:mod2}
\end{align}

\begin{proposition}
  \cite[39.4.3]{Lus93}
  Let $M \in \mathcal F$. Then, for any $u\in \U,v\in M,e=\pm 1$, we have
\begin{equation}
   \label{eq:mod3}
T'_{i,e}(uv)=T'_{i,e}(u)T'_{i,e}(v),\qquad T''_{i,e}(uv)=T''_{i,e}(u)T''_{i,e}(v).
\end{equation}
\end{proposition}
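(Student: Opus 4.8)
The plan is to verify the two multiplicativity identities by reducing first to algebra generators and then to a rank-one computation. The starting observation is that the desired property is closed under products in $\U$: if $T'_{i,e}(u_a v)=T'_{i,e}(u_a)\,T'_{i,e}(v)$ holds for $u_1,u_2$ and all $v$, then, using that $T'_{i,e}$ is an algebra automorphism of $\U$, one computes
\begin{align*}
T'_{i,e}\big((u_1u_2)v\big)
&=T'_{i,e}\big(u_1(u_2v)\big)
=T'_{i,e}(u_1)\,T'_{i,e}(u_2v)\\
&=T'_{i,e}(u_1)\,T'_{i,e}(u_2)\,T'_{i,e}(v)
=T'_{i,e}(u_1u_2)\,T'_{i,e}(v),
\end{align*}
and similarly for $T''_{i,e}$. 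Hence it suffices to check the identities for $u$ ranging over the generators $E_j,F_j,K_j^{\pm1}$.

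The Cartan case $u=K_j^{\pm1}$ is a direct weight computation. Since $T'_{i,e}$ carries the $K_i$-weight space $M_m$ to the reflected weight space and since the algebra formula reads $T'_{i,e}(K_j)=K_jK_i^{-c_{ij}}$, both sides act on a given weight vector by the same scalar; I would read this off from the definitions \eqref{eq:mod1}--\eqref{eq:mod2} together with the $K_j$-eigenvalue bookkeeping.

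The heart of the argument is the case $u=E_i,F_i$. Restricting $M$ to the subalgebra $\langle E_i,F_i,K_i^{\pm1}\rangle\cong U_{q_i}(\mathfrak{sl}_2)$, I would invoke complete reducibility to reduce to a simple finite-dimensional $U_{q_i}(\mathfrak{sl}_2)$-module and then verify $T'_{i,e}(E_iv)=T'_{i,e}(E_i)\,T'_{i,e}(v)$ and its $F_i$-analogue on a weight basis, where $T'_{i,e}(E_i)$ and $T'_{i,e}(F_i)$ are given by the explicit automorphism formulas. The identity becomes a finite sum manipulation among the divided powers $E_i^{(a)},E_i^{(b)},F_i^{(c)}$ appearing in \eqref{eq:mod1}--\eqref{eq:mod2}, governed by $q_i$-binomial identities. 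This is the step I expect to be the main obstacle, since it is the only genuinely computational input and requires care with the $q_i$-combinatorics.

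Finally, for $u=E_j,F_j$ with $j\neq i$, I would combine the explicit automorphism formula $T'_{i,e}(E_j)=\sum_{s}(-1)^sq_i^{\ast}E_i^{(s)}E_jE_i^{(r-s)}$ (with $r=-c_{ij}$) with the module operator and the commutation relations between the $i$-th $\mathfrak{sl}_2$ and $E_j$; the computation should collapse, by the already-established rank-one identity applied to the powers of $E_i$, to the statement that $T'_{i,e}$ transports the action of $E_j$ correctly. The identity for $T''_{i,e}$ is handled by the same method, and may alternatively be deduced from the $T'$-case via the relations among the four symmetries (e.g. conjugation by the bar map, as in \eqref{eq:sTs}); so it is enough to carry out one of the two displays in detail and obtain the other formally.
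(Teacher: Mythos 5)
The paper gives no proof of this proposition: it is quoted from Lusztig [39.4.3], so there is no in-paper argument to compare against, and your task is really to reconstruct Lusztig's. Your overall strategy is the standard (and essentially Lusztig's) one: the reduction to generators via the automorphism property is correct, the Cartan case is a routine check once one notes that \eqref{eq:mod1}--\eqref{eq:mod2} carry the $\lambda$-weight space of $M$ to the $s_i\lambda$-weight space (so that $T'_{i,e}(K_j)=K_jK_i^{-c_{ij}}$ acting on $M_{s_i\lambda}$ reproduces the $K_j$-eigenvalue on $M_\lambda$), and the case $u=E_i,F_i$ is indeed settled by a rank-one computation on weight vectors (complete reducibility for finite-dimensional type $\mathbf 1$ modules over $U_{q_i}(\mathfrak{sl}_2)$ is available, though one can also argue directly on an arbitrary integrable module as Lusztig does).

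There are, however, two genuine soft spots. First, the case $u=E_j,F_j$ with $j\neq i$ does not ``collapse'' from the rank-one identity: what is unknown there is precisely how the module operator $T'_{i,e}$ interacts with the action of $E_j$ on $M$, and knowing $T'_{i,e}(E_i^{(s)}v)=T'_{i,e}(E_i)^{(s)}T'_{i,e}(v)$ gives no information about $T'_{i,e}(E_jv)$. This case is an independent computation of comparable difficulty to the rank-one one; in Lusztig's book it rests on the higher-order quantum Serre relations and the identities of his Chapter 39, so you should not present it as a formal consequence of the $i$-string calculus. Second, deducing the $T''_{i,e}$ display from the $T'_{i,e}$ display ``via conjugation by the bar map, as in \eqref{eq:sTs}'' does not make sense at the module level: $\tpsi$ and $\sigma$ are (anti)automorphisms of the algebra and are not operators on $M$, so the conjugation identity cannot be applied to $M$ directly. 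What does transfer formally is mutual inverseness: $T'_{i,e}$ and $T''_{i,-e}$ are inverse to each other both on $\U$ and on $M$, so multiplicativity of $T'_{i,e}$ (for both signs of $e$) yields multiplicativity of $T''_{i,-e}$ by applying the inverse to $T'_{i,e}\bigl(T''_{i,-e}(u)\,T''_{i,-e}(v)\bigr)$. With the $j\neq i$ computation supplied and the $T''$ reduction rerouted through the inverse relation rather than the bar involution, the argument is complete.
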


Recall $\sT'_{i,e;\bvs}, \sT''_{i,e;\bvs}$ are merely rescalings of $T'_{i,e},T''_{i,e}$ defined in \eqref{def:sT1} and \eqref{def:sT2}.
Applying exactly the same rescalings to the operators on modules \eqref{eq:mod1}--\eqref{eq:mod2}, we obtain operators $\sT'_{i,e;\bvs}, \sT''_{i,e;\bvs}$ on $M$ which satisfy
\begin{align}
\label{eq:mod3-2}
\sT'_{i,e;\bvs}(uv)&=\sT'_{i,e;\bvs} (u)\sT'_{i,e;\bvs} (v),\qquad \sT''_{i,e;\bvs}(uv)=\sT''_{i,e;\bvs}(u)\sT''_{i,e;\bvs}(v).
\end{align}
for any $u\in \U,v\in M$.

We regard the $\U$-module $M$ as an $\Ui$-module by restriction.
\begin{definition}
 \label{def:4T}
Define linear operators $\Tae{i},\Tbe{i}$ on $M$, for $i\in \wI$ and $e =\pm 1$, by
\begin{align}
  \label{eq:mod4}
  \begin{split}
\Ta{i}(v) & := \fX_{i,\bvs} \sT'_{\bs_i,-1;\bvs}(v),
\\
\Tb{i}(v) & := \sT''_{\bs_i,+1;\bvs}(\fX_{i,\bvs}^{-1}) \sT''_{\bs_i,+1;\bvs}(v),
\\
\TT'_{i,+1}(v) & :=
\sT'_{\bs_i,+1;\bvs}(\fX_{i,\bvs}^{-1}) \sT'_{\bs_i,+1;\bvs}(v),
\\
\TT''_{i,-1}(v) & := \fX_{i,\bvs} \sT''_{\bs_i,-1;\bvs}(v),
\end{split}
\end{align}
for any $v\in M$.

(In these notations, we have suppressed the dependence on $\bvs$ on these operators.)
\end{definition}

The automorphisms $\Tae{i},  \Tbe{i}$ on $M$ in \eqref{eq:mod4} are compatible with the corresponding automorphisms on $\Ui_\bvs$.

\begin{theorem}
 \label{thm:braidM}
Let $M \in\mathcal F$, $i\in \wI$ and $e=\pm 1$. Then we have
\begin{equation}
\Tae{i}(x v)=\Tae{i}(x) \Tae{i}(v),\qquad
\Tbe{i}(x v)=\Tbe{i}(x) \Tbe{i}(v),
\end{equation}
 for any $x\in \Ui_\bvs,v\in M.$
\end{theorem}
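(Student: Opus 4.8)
The plan is to verify the compatibility identity by combining the defining formulas for the module operators in \eqref{eq:mod4} with the intertwining relations for the corresponding operators on $\Ui_\bvs$ established in Propositions~\ref{prop:inter1} and \ref{prop:inter2}, together with the multiplicativity \eqref{eq:mod3-2} of the rescaled operators $\sT'_{\bs_i,e;\bvs}, \sT''_{\bs_i,e;\bvs}$ acting on $M$. First I would treat the operator $\Ta{i}$ in detail, since the other three cases are entirely parallel. Fix $x\in \Ui_\bvs$ and $v\in M$. Using the definition $\Ta{i}(xv)=\fX_{i,\bvs}\,\sT'_{\bs_i,-1;\bvs}(xv)$ and the fact that $\sT'_{\bs_i,-1;\bvs}$ is an algebra homomorphism on the $\U$-action (so $\sT'_{\bs_i,-1;\bvs}(xv)=\sT'_{\bs_i,-1;\bvs}(x)\,\sT'_{\bs_i,-1;\bvs}(v)$ by \eqref{eq:mod3-2}, where $\sT'_{\bs_i,-1;\bvs}(x)$ denotes the action of the algebra automorphism on $x\in\Ui_\bvs\subset\U$), I would rewrite
\[
\Ta{i}(xv) = \fX_{i,\bvs}\,\sT'_{\bs_i,-1;\bvs}(x)\,\sT'_{\bs_i,-1;\bvs}(v).
\]

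The key step is then to move $\fX_{i,\bvs}$ past the algebra element $\sT'_{\bs_i,-1;\bvs}(x)$ using the intertwining relation \eqref{eq:inter4}, which reads $\TT'_{i,-1}(x)\,\fX_{i,\bvs}=\fX_{i,\bvs}\,\sT'_{\bs_i,-1;\bvs}(x)$. Substituting this into the previous display yields
\[
\Ta{i}(xv) = \TT'_{i,-1}(x)\,\fX_{i,\bvs}\,\sT'_{\bs_i,-1;\bvs}(v) = \Ta{i}(x)\,\Ta{i}(v),
\]
where the last equality uses $\Ta{i}(x)=\TT'_{i,-1}(x)$ (the module operator restricted to $\Ui_\bvs$ agrees with the algebra automorphism, by construction of the operators via the same intertwiner) and the definition $\Ta{i}(v)=\fX_{i,\bvs}\,\sT'_{\bs_i,-1;\bvs}(v)$. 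This completes the argument for $\Ta{i}$. The case $\TT''_{i,-1}$ is handled identically, replacing \eqref{eq:inter4} by the second relation of Proposition~\ref{prop:inter2}.

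For $\Tb{i}$ and $\TT'_{i,+1}$, the structure is the same but the intertwiner now appears on the left with $\sT''_{\bs_i,+1;\bvs}(\fX_{i,\bvs}^{-1})$ (respectively $\sT'_{\bs_i,+1;\bvs}(\fX_{i,\bvs}^{-1})$) as the conjugating factor, and one invokes \eqref{eq:inter5} (respectively the first relation of Proposition~\ref{prop:inter2}) in the form $\TT''_{i,+1}(x)\,\sT''_{\bs_i,+1;\bvs}(\fX_{i,\bvs}^{-1})=\sT''_{\bs_i,+1;\bvs}(\fX_{i,\bvs}^{-1})\,\sT''_{\bs_i,+1;\bvs}(x)$. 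The computation runs: $\Tb{i}(xv)=\sT''_{\bs_i,+1;\bvs}(\fX_{i,\bvs}^{-1})\,\sT''_{\bs_i,+1;\bvs}(x)\,\sT''_{\bs_i,+1;\bvs}(v)=\TT''_{i,+1}(x)\,\sT''_{\bs_i,+1;\bvs}(\fX_{i,\bvs}^{-1})\,\sT''_{\bs_i,+1;\bvs}(v)=\Tb{i}(x)\,\Tb{i}(v)$.

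The main subtlety — not a deep obstacle, but the point requiring care — is the well-definedness of the expressions in \eqref{eq:mod4}: one must confirm that $\fX_{i,\bvs}$ (and its image under the rescaled braid operators), which is an a priori infinite sum in a completion of $\U^+$, acts as a genuine \emph{finite-rank} (indeed locally finite) operator on the finite-dimensional module $M$, so that $\fX_{i,\bvs}\,\sT'_{\bs_i,-1;\bvs}(v)$ is a well-defined element of $M$ and the manipulations above are legitimate. This follows because $M$ is finite-dimensional of type \textbf{1}: only finitely many weight components $\tfX_{i,m}$ act nontrivially on any given $v$, as higher terms raise the weight out of the (bounded) weight support of $M$; hence each operator in \eqref{eq:mod4} is a well-defined linear endomorphism of $M$. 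With this in hand, the identities reduce to the purely formal intertwining computations sketched above, and the theorem follows.
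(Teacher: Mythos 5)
Your proposal is correct and follows essentially the same route as the paper: multiplicativity of the rescaled Lusztig operators on $M$ (equation \eqref{eq:mod3-2}), followed by the intertwining relations of Propositions~\ref{prop:inter1}--\ref{prop:inter2} to identify $\fX_{i,\bvs}\,\sT'_{\bs_i,-1;\bvs}(x)\,\fX_{i,\bvs}^{-1}$ with $\Ta{i}(x)$, with the other three cases handled in parallel. Your added remark on the local finiteness of the action of $\fX_{i,\bvs}$ on a finite-dimensional type~$\mathbf 1$ module is a reasonable point of care that the paper leaves implicit, but it does not alter the argument.
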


\begin{proof}
We prove the identity for $\Ta{i}$ ; the proofs for the remaining ones are similar. In the proof, we omit the subindex $\bvs$ for $\fX_{i,\bvs}$ and $\sT'_{\bs_i,-1;\bvs}$ as there is no confusion.

Since $ \sT'_{\bs_i,-1}(x v)= \sT'_{\bs_i,-1}(x)  \sT'_{\bs_i,-1}(v)$, we have
\begin{equation}\label{eq:mod5}
\fX_{i}  \sT'_{\bs_i,-1}(x v)
=\big(\fX_{i} \sT'_{\bs_i,-1}(x) \fX_{i}^{-1}\big) \fX_{i}  \sT'_{\bs_i,-1}(v),
\end{equation}
By Proposition~\ref{prop:inter1}, we have $\fX_{i} \sT_{\bs_i,-1}(x) \fX_{i}^{-1}= \Ta{i}(x).$ Hence, using the definition \eqref{eq:mod4}, the identity \eqref{eq:mod5} implies
that $\Ta{i}(x v)=\Ta{i}(x) \Ta{i}(v)$ as desired.
\end{proof}

\subsection{Relative braid relations on $\U$-modules}
\label{subsec:braidmodule}

Let $m_{ij}$ denotes the order of $\bs_i\bs_j$ in $\reW$.

\begin{theorem}
\label{thm:mod2}
Let $M \in \mathcal F$. Then the relative braid relations hold for the linear operators $\Tae{i}$ (and respectively, $\Tbe{i}$) on $M$; that is,
for any $i\neq j\in \wItau$ and for any $v\in M$, we have
\begin{align}
\label{eq:mod7}
    \underbrace{\Tae{i}\Tae{j}\Tae{i}\cdots}_{m_{ij}} (v)
    =
    \underbrace{\Tae{j}\Tae{i}\Tae{j} \cdots}_{m_{ij}}(v).
    \\
    \underbrace{\Tbe{i}\Tbe{j}\Tbe{i}\cdots}_{m_{ij}} (v)
    =
    \underbrace{\Tbe{j}\Tbe{i}\Tbe{j} \cdots}_{m_{ij}}(v).
\end{align}
\end{theorem}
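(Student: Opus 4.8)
The plan is to deduce the relative braid relations on an arbitrary module $M \in \mathcal{F}$ from the already-established relative braid relations on the algebra $\Ui_\bvs$ (Theorem~\ref{thm:braid6}) together with the compatibility of the module operators with the algebra operators (Theorem~\ref{thm:braidM}). The strategy mirrors the standard quantum group argument, so I would first reduce everything to the rank two situation: by the theory of Coxeter/braid groups, verifying a relative braid relation amounts to checking the single relation $\underbrace{\Tae{i}\Tae{j}\cdots}_{m_{ij}} = \underbrace{\Tae{j}\Tae{i}\cdots}_{m_{ij}}$ for each pair $i \neq j \in \wItau$, where $m_{ij}$ is the order of $\bs_i\bs_j$ in $\reW$. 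Thus I may restrict to the rank two Satake subdiagram generated by $\{i,\tau i, j, \tau j\}$ together with the relevant black nodes, exactly as was done for $\tUi$ in Theorem~\ref{thm:newb2}.

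The core mechanism is as follows. Write $P := \underbrace{\Tae{i}\Tae{j}\Tae{i}\cdots}_{m_{ij}}$ and $Q := \underbrace{\Tae{j}\Tae{i}\Tae{j}\cdots}_{m_{ij}}$ as linear operators on $M$, and let $P_{\mathrm{alg}}, Q_{\mathrm{alg}}$ denote the corresponding composite automorphisms of $\Ui_\bvs$. By Theorem~\ref{thm:braid6}, $P_{\mathrm{alg}} = Q_{\mathrm{alg}}$ on $\Ui_\bvs$. By iterating Theorem~\ref{thm:braidM}, both $P$ and $Q$ are compatible operators, meaning
\begin{align*}
P(xv) = P_{\mathrm{alg}}(x)\, P(v),
\qquad
Q(xv) = Q_{\mathrm{alg}}(x)\, Q(v),
\qquad \text{for all } x\in \Ui_\bvs,\ v\in M.
\end{align*}
Since $P_{\mathrm{alg}} = Q_{\mathrm{alg}}$, the operator $S := Q^{-1}P$ is $\Ui_\bvs$-linear: $S(xv) = x\,S(v)$. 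The remaining task is then to verify that $S$ acts as the identity, which I would reduce to checking that $S$ fixes a set of generating vectors (for instance lowest- or highest-weight vectors in each $\Ui_\bvs$-isotypic component), whence $\Ui_\bvs$-linearity forces $S = \mathrm{Id}$ on the submodule they generate, and hence on all of $M$.

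To execute the final verification I would use the explicit rank one/two structure. The operators $\Tae{i}$ on $M$ are built in Definition~\ref{def:4T} from the rescaled Lusztig module operators $\sT'_{\bs_i,\pm 1;\bvs}, \sT''_{\bs_i,\pm 1;\bvs}$ (which satisfy the absolute braid relations of $W$ on $M$ by \cite[\S39.4]{Lus93}, transported through the rescalings $\Phi$) and left-multiplication by the rank one quasi $K$-matrices $\fX_{i,\bvs}$. The key identity driving the cancellation is the factorization $\tfX = \tfX_{\bbw}$ from Theorem~\ref{thm:factor}, whose image under $\pi_\bvs$ gives the corresponding factorization of $\fX_\bvs$; together with Proposition~\ref{prop:tau0} and the braid relations for the $\sT$'s on $M$, this should collapse the difference between the two composite products of quasi $K$-matrix factors into a single common element, exactly paralleling the proof of Theorem~\ref{thm:newb2} but now at the module level. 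I expect the main obstacle to be bookkeeping: I must be careful that the quasi $K$-matrix factors $\fX_{i,\bvs}$ are multiplied on the correct side and that the rescaling twists $\bvs_\diamond, \bvs_\star, \bvs_{\star\dm}$ are tracked consistently when passing the $\sT$-operators past the $\fX$-factors, since the intertwining relations in Propositions~\ref{prop:inter1}--\ref{prop:inter2} involve precisely these rescaled operators. Once the algebraic factorization identity is in hand, the module statement follows formally from $\Ui_\bvs$-linearity of $S$ and the compatibility theorem.
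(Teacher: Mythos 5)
Your closing paragraph is, in substance, the paper's proof: one unfolds Definition~\ref{def:4T} using the module compatibility \eqref{eq:mod3-2} of the rescaled Lusztig operators, so that $\TT'_{i,-1}\TT'_{j,-1}\cdots(v)$ becomes a single product of quasi $K$-matrix factors
$\fX_{i,\bvs}\,\sT'_{\bs_i,-1;\bvs}(\fX_{j,\bvs})\,\sT'_{\bs_i,-1;\bvs}\sT'_{\bs_j,-1;\bvs}(\fX_{i,\bvs})\cdots$
applied to $\sT'_{\bs_i,-1;\bvs}\sT'_{\bs_j,-1;\bvs}\cdots(v)$; the first factor is identified with $\sigma(\fX_{\bbw})$ via the central reduction of \eqref{eq:braid5}, and the claim then follows from the independence $\fX_{\bbw}=\fX_{\bbw'}$ of reduced expressions (Theorem~\ref{thm:factor}) together with Lusztig's braid relations for the $\sT$'s on $M$. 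So the operative part of your plan coincides with the argument in the paper, and your bookkeeping worries (which side the $\fX$-factors sit on, which rescaling twist goes with which intertwiner) are exactly the points the paper's computation handles.

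The ``core mechanism'' you lead with, however, is not what carries the proof and, on its own, has a genuine gap. From $P_{\mathrm{alg}}=Q_{\mathrm{alg}}$ and Theorem~\ref{thm:braidM} you only get that $S=Q^{-1}P$ is a $\Ui_\bvs$-module endomorphism of $M$; that does not force $S=\mathrm{Id}$. Restricted to $\Ui_\bvs$, a finite-dimensional $\U$-module is in general neither irreducible nor multiplicity-free, so $S$ could a priori be any $\Ui_\bvs$-module automorphism; and since $\Ui_\bvs$ has no triangular decomposition, ``highest/lowest weight vectors of the isotypic components'' is not a generating set you can identify and test $S$ on without redoing essentially the same explicit computation. (It also silently assumes $Q$ is invertible on $M$, which is true but is an additional check.) The direct computation in your last paragraph already yields $P(v)=Q(v)$ for every $v\in M$, so the $Q^{-1}P$ detour should simply be dropped rather than presented as the main reduction.
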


\begin{proof}
We prove the first identity for $e=-1$ ; the proofs for the remaining ones are similar and skipped.

Set $m =m_{ij}$.
We keep the notations $\bbw,\bbw',\boldsymbol{w}_k,\boldsymbol{w}_k'$ for $1\leq k\leq m$ from the proof of Theorem~\ref{thm:newb2}.
We shall write $\sT_{\bs_i}^{-1}$ for $\sT'_{\bs_i,-1,\bvs}$ and omit the subindex $\bvs$ for $\fX_{i,\bvs}$ in the proof, since there is no confusion.

By definition \eqref{eq:mod4}, for any $v \in M$, we have
\begin{align}
 \label{eq:TM}
  \underbrace{\Ta{i}\Ta{j}\Ta{i}\cdots}_{m } (v)
   = (\fX_{i} \sT_{\boldsymbol{w}_1}^{-1}\fX_{j}\sT_{\boldsymbol{w}_2}^{-1} \fX_{i} \cdots)   \underbrace{\sT_{\bs_i}^{-1}\sT_{\bs_j}^{-1}\sT_{\bs_i}^{-1}\cdots}_{m }(v)
\end{align}
 By taking a central reduction to \eqref{eq:braid5}, the first factor on RHS \eqref{eq:TM} is $\sigma(\fX_{\bbw})$. Hence, we have
\begin{align}
\label{eq:mod8}
  \underbrace{\Ta{i}\Ta{j}\Ta{i}\cdots}_{m} (v)
  &= \sigma(\fX_{\bbw})\underbrace{\sT_{\bs_i}^{-1}\sT_{\bs_j}^{-1}\sT_{\bs_i}^{-1}\cdots}_{m } (v).
\end{align}
Similarly, by switching $i,j$ in \eqref{eq:mod8}, we obtain
\begin{align}
\label{eq:mod9}
    \underbrace{\Ta{j}\Ta{i}\Ta{j}\cdots}_{m} (v)
    &=
    \sigma(\fX_{\bbw'}) \underbrace{\sT_{\bs_j}^{-1}\sT_{\bs_i}^{-1}\sT_{\bs_j}^{-1}\cdots}_{m } (v).
\end{align}
Applying a central reduction to Theorem~\ref{thm:factor}, we have $\fX_{\bbw}=\fX_{\bbw'}$. Since $\sT_i$ are defined by rescaling $T''_{i,+1}$ in \eqref{def:sT1}, they satisfy the braid relations. Hence, we have
\begin{align}
\label{eq:mod8L}
    \underbrace{\sT_{\bs_i}^{-1}\sT_{\bs_j}^{-1}\sT_{\bs_i}^{-1}\cdots}_{m} (v) =\underbrace{\sT_{\bs_j}^{-1}\sT_{\bs_i}^{-1}\sT_{\bs_j}^{-1}\cdots}_{m} (v).
\end{align}
Combining \eqref{eq:mod8}--\eqref{eq:mod8L}, we have proved the first identity for $e=-1$.
\end{proof}

%
%\subsection{Table~ \ref{table:rktwoSatake} on rank two formulas for $\tTa{i}(B_j)$, where $i\neq j \in \wItau$}

\newpage

 \begin{table}[H]
\caption{Rank 2 formulas for $\tTa{i}(B_j)$ \, ($i\neq j \in \wItau$)}
     \label{table:rktwoSatake}
     \resizebox{5.5 in}{!}{%
\begin{tabular}{| c  | c |}
\hline

\begin{tikzpicture}[baseline=0]
\node at (0, -0.15) {Rank 2 Satake diagrams};
\end{tikzpicture}
&

\begin{tikzpicture}[baseline=0]
\node at (0, -0.15) {Formulas for $\tTa{i}(B_j)$};
\end{tikzpicture}\\
\hline
\begin{tikzpicture}[baseline=0, scale=1.2]
		\node at (-0.5,0) {$\circ$};
		\node at (0.5,0) {$\circ$};
       \draw[-]  (0.45, 0) to (-0.45, 0);
		\node at (-0.5, -.2) {\small 1};
		\node at (0.5,-.2) {\small 2};
\node at (-1.5, -0.05) {AI$_2$};
	\end{tikzpicture}
&
\begin{tikzpicture}[baseline=0]
\node at (0, -0.15) {$\tTa{1}(B_2) = [B_1,B_2]_q$};
\end{tikzpicture}
\\
\hline
\begin{tikzpicture}[baseline=0, scale=1.2]
		\node at (-0.5,0) {$\circ$};
		\node at (0.5,0) {$\circ$};
		\draw[-implies, double equal sign distance]  (0.4, 0) to (-0.4, 0);
		\node at (-0.5, -.2) {\small 1};
		\node at (0.5,-.2) {\small 2};
\node at (-1.5,-0.05) {CI$_2$};
	\end{tikzpicture}
&
\begin{tikzpicture}[baseline=0]
\node at (0, -0.15) {$\tTa{1}(B_2) = \frac{1}{[2]_{q_1}} \big[B_1,[B_1,B_2]_{q_1^2}\big]-q_1^2 B_2 \ck_1$};
\end{tikzpicture}
\\
\hline
\begin{tikzpicture}[baseline=0, scale=1.5]
		\node at (-0.5,0) {$\circ$};
		\node at (0.5,0) {$\circ$};
		\draw[->]  (0.4, 0) to (-0.4, 0);
		\draw[->]  (0.4, 0.05) to (-0.4, 0.05);
		\draw[->]  (0.4, -0.05) to (-0.4, -0.05);
		%\node at ( 0, 0.2) {3};
		\node at (-0.5, -.2) {\small 1};
		\node at (0.5,-.2) {\small 2};
\node at (-1 , -0.05) {G$_2$};
	\end{tikzpicture}
&
\begin{tikzpicture}[baseline=0]
\node at (0,  0.15) {$\tTa{1}(B_2) = \frac{1}{[3]! } \Big[B_1,\big[B_1, [B_1,B_2]_{q^3} \big]_q \Big]_{q^{-1}}$};
\node at (0,  -0.55) {$\quad -\frac{1}{[3]!} \big( q(1+[3])[B_1, B_2]_{q^3} + q^3[3] [B_1,B_2]_{q^{-1}}\big)\tk_1$};
\end{tikzpicture}
\\
\hline
\begin{tikzpicture}[baseline=0]
\end{tikzpicture}
    \begin{tikzpicture}[baseline=0, scale=1.2]
		\node at (0.5,0) {$\circ$};
		\node at (1.0,0) {$\circ$};
		\node at (1.5,0) {$\bullet$};
		\draw[-] (0.55,0)  to (0.95,0);
		\draw[-] (1.05,0)  to (1.45,0);
		\draw[-] (1.55,0) to (1.8, 0);
		\draw[dashed] (1.8,0) to (2.5,0);
		\draw[-] (2.5,0) to (2.75, 0);
		\node at (2.8,0) {$\bullet$};
		\draw[-implies, double equal sign distance]  (2.85, 0) to (3.45, 0);
		\node at (3.5,0) {$\bullet$};
		\node at (0.5,-.2) {\small 1};
		\node at (1,-.2) {\small 2};
		\node at (1.5,-.2) {\small 3};
		%\node at (3, -.2) {$\small n-1$};
		\node at (3.5,-.2) {$n$};
 \node at (2.5, -0.35) {BI$_n,n\geq 3$};
	\end{tikzpicture}
&
\begin{tikzpicture}[baseline=0]
\node at (0, -0.15) {$\tTa{2}(B_1)=\big[ \tT_{\bw} ( B_2), [B_2 ,B_1 ]_{q_2}\big]_{q_2} - q_2  B_1 \tT_{\bw}( \ck_2)$};
\end{tikzpicture}\\
\hline
\begin{tikzpicture}[baseline=0]
\end{tikzpicture}
    \begin{tikzpicture}[baseline=0, scale=1.2]
		\node at (0.55,0) {$\circ$};
		\node at (1.05,0) {$\circ$};
		\node at (1.5,0) {$\bullet$};
		\draw[-] (0.6,0)  to (1.0,0);
		\draw[-] (1.1,0)  to (1.4,0);
		\draw[-] (1.4,0) to (1.9, 0);
		\draw[dashed] (1.9,0) to (2.7,0);
		\draw[-] (2.7,0) to (2.9, 0);
		\node at (3,0) {$\bullet$};
		\node at (3.8,0.5) {$\bullet$};
		\node at (3.8,-0.5) {$\bullet$};
        \draw (3,0) to (3.8,0.5);
        \draw (3,0) to (3.8,-0.5);
		\node at (0.5,-.2) {\small 1};
		\node at (1,-.2) {\small 2};
		\node at (1.5,-.2) {\small 3};
		%\node at (3, -.2) {\small n-2};
		%\node at (3.8,0.3) {\small n-1};
		%\node at (3.8,-0.7) {\small n};
 \node at (2.5, -0.45) {DI$_n,n\geq 5$};
	\end{tikzpicture}
&
\begin{tikzpicture}[baseline=0]
\node at (0, -0.15) {$\tTa{2}(B_1)=\big[ \tT_{\bw} ( B_2), [B_2 ,B_1 ]_q\big]_q -q B_1 \tT_{\bw}( \ck_2)$};
\end{tikzpicture}
\\
\hline
\begin{tikzpicture}[baseline=0]
\end{tikzpicture}
  \begin{tikzpicture}[baseline=0, scale=1.2]
		\node at (0.65,0) {$\circ$};
		\node at (1.5,0) {$\circ$};
		\draw[-] (0.7,0)  to (1.45,0);
		\node at (2.3,0.5) {$\bullet$};
		\node at (2.3,-0.5) {$\bullet$};
        \draw (1.55,0) to (2.3,0.5);
        \draw (1.55,0) to (2.3,-0.5);
		\node at (0.65,-.2) {\small 1};
		\node at (1.5,-.2) {\small 2};
		\node at (2.3,0.3) {\small 3};
		\node at (2.3,-0.3) {\small 4};
 \node at (-.5, 0) {DIII$_4$};
	\end{tikzpicture}
&
\begin{tikzpicture}[baseline=0]
\node at (0, -0.15) {$\tTa{2}(B_1)=\big[ \tT_{\bw} ( B_2), [B_2 ,B_1 ]_q\big]_q -q  B_1 \tT_{\bw}( \ck_2)$};
\end{tikzpicture}
\\
\hline

\begin{tikzpicture}[baseline=0]
\end{tikzpicture}
 \begin{tikzpicture}[baseline=0,scale=1.2]
		\node at (-0.5,0) {$\bullet$};
		\node  at (0,0) {$\circ$};
		\node at (0.5,0) {$\bullet$};
		\node at (1,0) {$\circ$};
		\node at (1.5,0) {$\bullet$};
		\draw[-] (-0.5,0) to (-0.05, 0);
		\draw[-] (0.05, 0) to (0.5,0);
		\draw[-] (0.5,0) to (0.95,0);
		\draw[-] (1.05,0)  to (1.5,0);
		\node at (-0.5,-0.2) {1};
		\node  at (0,-0.2) {2};
		\node at (0.5,-0.2) {3};
		\node at (1,-0.2) {4};
		\node at (1.5,-0.2) {5};
\node at (-1.5, -0.05) {AII$_5$};
	\end{tikzpicture}
&
\begin{tikzpicture}[baseline=0]
\node at (0, -0.15) {$\tTa{4}(B_2)=[\tT_3(B_4),B_2]_q$};
\end{tikzpicture}
\\
\hline

\begin{tikzpicture}[baseline=6,scale=1.2]
		\node  at (0,0) {$\bullet$};
		\node  at (0,-0.2) {1};
		\draw (0.05, 0) to (0.45, 0);
		\node  at (0.5,0) {$\circ$};
		\node  at (0.5,-0.2) {2};
		\draw (0.55, 0) to (0.95, 0);
		\node at (1,0) {$\bullet$};
		\node at (1,-.2) {3};
		\node at (1.5,0) {$\circ$};
		\node at (1.5,-0.2) {4};
		\draw[-] (1.05,0)  to (1.45,0);
		\draw[-] (1.55,0) to (1.95, 0);
		\node at (2,0) {$\bullet$};
		\node at (2,-0.2) {5};
		\draw (1.9, 0) to (2.1, 0);
		\draw[dashed] (2.1,0) to (2.7,0);
		\draw[-] (2.7,0) to (2.9, 0);
		\node at (3,0) {$\bullet$};
		%\node at (3,-0.2) {$n-1$};
		\draw[implies-, double equal sign distance]  (3.1, 0) to (3.7, 0);
		\node at (3.8,0) {$\bullet$};
		\node at (3.8,-0.2) {$n$};
\node at (2.15, -0.55) {CII$_n,n\geq 5$};
	\end{tikzpicture}
&
\begin{tikzpicture}[baseline=0]
\node at (0, -0.15) {$\tTa{4}(B_2)=\big[ [\tT_{5\cdots n\cdots 5}(B_4 ), \tT_3(B_4) ]_{q_2},B_2\big]_{q_2}$};
\node at (0.8, -0.75) {$ - q_2\tT_3^{-2}(B_2) \tT_{5\cdots n\cdots 5}(\ck_4)$};
\end{tikzpicture}
\\
\hline

\begin{tikzpicture}[baseline=6,scale=1.5]
        \node at (-1,0) {$\bullet$};
        \node at (-1,-0.2) {1};
		\draw[-] (-0.95,0) to (-0.55, 0);
        \node at (-0.5,0) {$\circ$};
        \node at (-0.5,-0.2) {2};
		\draw[-] (-.45,0) to (-0.05, 0);
		\node at (0,0) {$\bullet$};
		\node at (0,-0.2) {3};
		\draw[implies-, double equal sign distance]  (0.05, 0) to (0.75, 0);
		\node at (0.8,0) {$\circ$};
		\node at (0.8,-0.2) {4};
\node at (-1.5, 0) {CII$_4$};
	\end{tikzpicture}
&

\begin{tikzpicture}[baseline=0]
\node at (0,0.45) {$\tTa{4}(B_2)=\big[[B_4,F_3]_{q_4},B_2 \big]_{q_3}$};
\node at (0, -0.15) {$\tTa{2}(B_4)=\big[ \tT_3(B_2),[\tT_3(B_2),B_4]_{q_3^2}\big]$};
\node at (1.6,-0.85) {$-(q_3-q_3^{-1})[F_3,B_4]_{q_3^2}E_1 \tT_3(\ck_2) K_1'^{-1}$};
\end{tikzpicture}
\\
\hline
\begin{tikzpicture}[baseline = 0, scale =1.5]
		\node at (-1,0) {$\circ$};
        \node at (-1,-0.2) {1};
		\draw (-0.95,0) to (-0.55,0);
		\node at (-0.5,0) {$\bullet$};
        \node at (-0.5,-0.2) {2};
		\draw (-0.45,0) to (-0.05,0);
		\node at (0,0) {$\bullet$};
        \node at (0.1,-0.2) {3};
		\draw (0.05,0) to (0.45,0);
		\node at (0.5,0) {$\bullet$};
		\node at (0.5,-0.2) {4};
		\draw (0.55,0) to (0.95,0);
		\node at (1,0) {$\circ$};
		\node at (1,-0.2) {5};
		\draw (0,-0.05) to (0,-0.45);
		\node at (0,-0.4) {$\bullet$};
		\node at (-.15,-0.35) {6};
\node at (-1.5, -0.05) {EIV};
	\end{tikzpicture}
&
\begin{tikzpicture}[baseline=0]
\node at (0, -0.15) {$\tTa{1}(B_5)=\big[ \tT_4\tT_3\tT_2(B_1) ,B_5\big]_q$};
\end{tikzpicture}
\\
\hline
\begin{tikzpicture}[baseline=0,scale=1.5]
		\node  at (-0.65,0) {$\circ$};
		\node  at (0,0) {$\circ$};
		\node  at (0.65,0) {$\circ$};
		\draw[-] (-0.6,0) to (-0.05, 0);
		\draw[-] (0.05, 0) to (0.6,0);
		\node at (-0.65,-0.15) {1};
		\node at (0,-0.15) {2};
		\node at (0.65,-0.15) {3};
        \draw[bend left,<->,red] (-0.65,0.1) to (0.65,0.1);
        \node at (0,0.2) {$\textcolor{red}{\tau}$};
        \node at (-1.5,-0.05) {AIII$_3$};
	\end{tikzpicture}
&

\begin{tikzpicture}[baseline=0]
\node at (0, -0.15) {$\tTa{1}(B_2)=\big[B_3, [B_1, B_2]_q\big]_q -q B_2 \ck_3$};
\end{tikzpicture}
\\
\hline
 \begin{tikzpicture}[baseline=0,scale=1.2]
		\node  at (-2.1,0) {$\circ$};
		\node  at (-1.3,0) {$\circ$};
		\node  at (-0.5,0) {$\bullet$};
		\node  at (0.5,0) {$\bullet$};
		\node  at (1.3,0) {$\circ$};
		\node  at (2.1,0) {$\circ$};
		\draw[-] (-2.05,0) to (-1.35, 0);
		\draw[-] (-1.25,0) to (-0.55, 0);
		\draw[-] (0.55,0) to (1.25, 0);
		\draw[-] (1.35, 0) to (2.05,0);
		\node at (-2.1,-0.2) {1};
		\node at (-1.3,-0.2) {2};
		\node at (-0.5,-0.2) {3};
		%\node at (0.5,-0.2) {\small$n-2$};
		\node at (1.3,-0.2) {\small$n-1$};
		\node at (2.1,-0.2) {$n$ };
        \draw[dashed] (-0.5,0) to (0.5,0);
        \draw[bend left,<->,red] (-1.3,0.1) to (1.3,0.1);
        \draw[bend left,<->,red] (-2.1,0.1) to (2.1,0.1);
        \node at (0,0.6) {$\textcolor{red}{\tau} $};
\node at (0, -0.55) {AIII$_n,n\geq 4$};
	\end{tikzpicture}
&

\begin{tikzpicture}[baseline=0]
\node at (0, 0.55) {$\tTa{1}(B_2)=[B_1 ,B_2]_q$};
\node at (0, -0.15) { $\tTa{2}(B_1)=\big[ \tT_{\bw}(B_{n-1} ),[B_2 ,B_1]_q\big]_q- B_1  \tT_{\bw}(\ck_{n-1})$};
%\node at (0,-0.75) {$$};
\end{tikzpicture}
\\
\hline

\begin{tikzpicture}[baseline=0,scale=1.5]
		\node at (-0.5,0) {$\bullet$};
		\node at (0,0) {$\circ$};
		\node at (0.5,0) {$\bullet$};
        \node at (1,0.3) {$\circ$};
        \node at (1,-0.3) {$\circ$};
		\draw[-] (-0.45,0) to (-0.05, 0);
		\draw[-] (0.05, 0) to (0.45,0);
		\draw[-] (0.5,0) to (0.965, 0.285);
		\draw[-] (0.5, 0) to (0.965,-0.285);
		\node at (-0.5,-0.2) {1};
		\node at (0,-0.2) {2};
		\node at (0.5,-0.2) {3};
        \node at (1,0.15) {4};
        \node at (1,-0.45) {5};
        \draw[bend left,<->,red] (1.1,0.3) to (1.1,-0.3);
        \node at (1.4,0) {$\textcolor{red}{\tau} $};
\node at (-1, 0) {DIII$_5$};
	\end{tikzpicture}
&
\begin{tikzpicture}[baseline=0]
\node at (0,0.25) {$\tTa{2}(B_4)=[\tT_3(B_2 ), B_4]_q$};
\node at (0, -0.55) {$\tTa{4}(B_2)=\big[B_4, [\tT_3(B_5) ,B_2]_q\big]_q - \tT_3^{-2}(B_2)\ck_4$};
\end{tikzpicture}
\\
\hline
\begin{tikzpicture}[baseline = 0, scale =1.5]
		\node at (-1,0) {$\circ$};
        \node at (-1,-0.2) {1};
		\draw (-0.95,0) to (-0.55,0);
		\node at (-0.5,0) {$\bullet$};
        \node at (-0.5,-0.2) {2};
		\draw (-0.45,0) to (-0.05,0);
		\node at (0,0) {$\bullet$};
        \node at (0.1,-0.2) {3};
		\draw (0.05,0) to (0.45,0);
		\node at (0.5,0) {$\bullet$};
		\node at (0.5,-0.2) {4};
		\draw (0.55,0) to (0.95,0);
		\node at (1,0) {$\circ$};
		\node at (1,-0.2) {5};
		\draw (0,-0.05) to (0,-0.35);
		\node at (0,-0.4) {$\circ$};
		\node at (0,-0.55) {6};
        \draw[bend left, <->, red] (-0.9,0.1) to (0.9,0.1);
        \node at (0,0.45) {$\color{red} \tau $};
\node at (-1.5, 0) {EIII};
	\end{tikzpicture}
&
\begin{tikzpicture}[baseline=0]
\node at (0,0.45) {$\tTa{6}(B_1)=[\tT_{23}(B_6 ),B_1]_q$};
\node at (0, -0.35) {$\tTa{1}(B_6)=\big[\tT_4(B_5),[\tT_{32}(B_1),B_6]_q\big]_q$ };
\node at (0.5,-.85) {
$\quad -\tT_{32323}^{-1}(B_6) \tT_4(\ck_5)$};
\end{tikzpicture}
\\
\hline
\end{tabular}
}
\newline
\end{table}

%
%
%The second table
%
%

%
%\subsection{Table~ \ref{table:rktwoSatake2} on rank two formulas for $\tTb{i}(B_j)$, where $i\neq j \in \wItau$}

%%%%%%%%%
\newpage

 \begin{table}[H]
\caption{Rank 2 formulas for $\tTb{i}(B_j)$ \, ($i\neq j \in \wItau$)}
     \label{table:rktwoSatake2}
\resizebox{5.5 in}{!}{%
\begin{tabular}{| c | c|}
\hline

\begin{tikzpicture}[baseline=0]
\node at (0, -0.15) {Rank 2 Satake diagrams};
\end{tikzpicture}
&

\begin{tikzpicture}[baseline=0]
\node at (0, -0.15) {Formulas for $\tTb{i}(B_j)$};
\end{tikzpicture}\\
\hline
\begin{tikzpicture}[baseline=0, scale=1.2]
		\node at (-0.5,0) {$\circ$};
		\node at (0.5,0) {$\circ$};
       \draw[-]  (0.45, 0) to (-0.45, 0);
		\node at (-0.5, -.2) {\small 1};
		\node at (0.5,-.2) {\small 2};
\node at (-1.5, -0.05) {AI$_2$};
	\end{tikzpicture}
&
\begin{tikzpicture}[baseline=0]
\node at (0, -0.15) {$\tTb{1}(B_2) = [B_2,B_1]_q$};
\end{tikzpicture}
\\
\hline
\begin{tikzpicture}[baseline=0, scale=1.2]
		\node at (-0.5,0) {$\circ$};
		\node at (0.5,0) {$\circ$};
		\draw[-implies, double equal sign distance]  (0.4, 0) to (-0.4, 0);
		\node at (-0.5, -.2) {\small 1};
		\node at (0.5,-.2) {\small 2};
\node at (-1.5,-0.05) {CI$_2$};
	\end{tikzpicture}
&
\begin{tikzpicture}[baseline=0]
\node at (0, -0.15) {$\tTb{1}(B_2) = \frac{1}{[2]_{q_1}} \big[[B_2 ,B_1]_{q_1^2},B_1\big]-q_1^2 B_2 \ck_1$};
\end{tikzpicture}
\\
\hline
\begin{tikzpicture}[baseline=0, scale=1.5]
		\node at (-0.5,0) {$\circ$};
		\node at (0.5,0) {$\circ$};
		\draw[->]  (0.4, 0) to (-0.4, 0);
		\draw[->]  (0.4, 0.05) to (-0.4, 0.05);
		\draw[->]  (0.4, -0.05) to (-0.4, -0.05);
		%\node at ( 0, 0.2) {3};
		\node at (-0.5, -.2) {\small 1};
		\node at (0.5,-.2) {\small 2};
\node at (-1 , -0.05) {G$_2$};
	\end{tikzpicture}
&
\begin{tikzpicture}[baseline=0]
\node at (0,  0.15) {$\tTb{1}(B_2) = \frac{1}{[3]_1! } \Big[\big[ [B_2, B_1]_{q_1^3},B_1 \big]_{q_1}, B_1\Big]_{q_1^{-1}}$};
\node at (0,  -0.55) {$\quad -\frac{1}{[3]_1!} \big( q_1(1+[3]_1)[ B_2, B_1]_{q_1^3} + q_1^3[3]_1 [B_2, B_1]_{q_1^{-1}}\big)\tk_1$};
\end{tikzpicture}
\\
\hline
\begin{tikzpicture}[baseline=0]
\end{tikzpicture}
    \begin{tikzpicture}[baseline=0, scale=1.2]
		\node at (0.5,0) {$\circ$};
		\node at (1.0,0) {$\circ$};
		\node at (1.5,0) {$\bullet$};
		\draw[-] (0.55,0)  to (0.95,0);
		\draw[-] (1.05,0)  to (1.45,0);
		\draw[-] (1.55,0) to (1.8, 0);
		\draw[dashed] (1.8,0) to (2.5,0);
		\draw[-] (2.5,0) to (2.75, 0);
		\node at (2.8,0) {$\bullet$};
		\draw[-implies, double equal sign distance]  (2.85, 0) to (3.45, 0);
		\node at (3.5,0) {$\bullet$};
		\node at (0.5,-.2) {\small 1};
		\node at (1,-.2) {\small 2};
		\node at (1.5,-.2) {\small 3};
		%\node at (3, -.2) {$\small n-1$};
		\node at (3.5,-.2) {$n$};
 \node at (2.5, -0.35) {BI$_n,n\geq 3$};
	\end{tikzpicture}
&
\begin{tikzpicture}[baseline=0]
\node at (0, -0.15) {$\tTb{2}(B_1)=\big[[B_1, B_2 ]_{q_2}, \tT_{\bw}^{-1} ( B_2)\big]_{q_2} - q_2  B_1  \ck_2 $};
\end{tikzpicture}\\
\hline
\begin{tikzpicture}[baseline=0]
\end{tikzpicture}
    \begin{tikzpicture}[baseline=0, scale=1.2]
		\node at (0.55,0) {$\circ$};
		\node at (1.05,0) {$\circ$};
		\node at (1.5,0) {$\bullet$};
		\draw[-] (0.6,0)  to (1.0,0);
		\draw[-] (1.1,0)  to (1.4,0);
		\draw[-] (1.4,0) to (1.9, 0);
		\draw[dashed] (1.9,0) to (2.7,0);
		\draw[-] (2.7,0) to (2.9, 0);
		\node at (3,0) {$\bullet$};
		\node at (3.8,0.5) {$\bullet$};
		\node at (3.8,-0.5) {$\bullet$};
        \draw (3,0) to (3.8,0.5);
        \draw (3,0) to (3.8,-0.5);
		\node at (0.5,-.2) {\small 1};
		\node at (1,-.2) {\small 2};
		\node at (1.5,-.2) {\small 3};
		%\node at (3, -.2) {\small n-2};
		%\node at (3.8,0.3) {\small n-1};
		%\node at (3.8,-0.7) {\small n};
 \node at (2.5, -0.45) {DI$_n,n\geq 5$};
	\end{tikzpicture}
&
\begin{tikzpicture}[baseline=0]
\node at (0, -0.15) {$\tTb{2}(B_1)=\big[[B_1, B_2 ]_{q}, \tT_{\bw}^{-1} ( B_2)\big]_{q} - q  B_1  \ck_2 $};
\end{tikzpicture}
\\
\hline
\begin{tikzpicture}[baseline=0]
\end{tikzpicture}
  \begin{tikzpicture}[baseline=0, scale=1.2]
		\node at (0.65,0) {$\circ$};
		\node at (1.5,0) {$\circ$};
		\draw[-] (0.7,0)  to (1.45,0);
		\node at (2.3,0.5) {$\bullet$};
		\node at (2.3,-0.5) {$\bullet$};
        \draw (1.55,0) to (2.3,0.5);
        \draw (1.55,0) to (2.3,-0.5);
		\node at (0.65,-.2) {\small 1};
		\node at (1.5,-.2) {\small 2};
		\node at (2.3,0.3) {\small 3};
		\node at (2.3,-0.3) {\small 4};
 \node at (-.5, 0) {DIII$_4$};
	\end{tikzpicture}
&
\begin{tikzpicture}[baseline=0]
\node at (0, -0.15) {$ \tTb{2}(B_1)=\big[[B_1, B_2 ]_{q}, \tT_{\bw}^{-1} ( B_2)\big]_{q} - q  B_1  \ck_2 $};
\end{tikzpicture}
\\
\hline

\begin{tikzpicture}[baseline=0]
\end{tikzpicture}
 \begin{tikzpicture}[baseline=0,scale=1.2]
		\node at (-0.5,0) {$\bullet$};
		\node  at (0,0) {$\circ$};
		\node at (0.5,0) {$\bullet$};
		\node at (1,0) {$\circ$};
		\node at (1.5,0) {$\bullet$};
		\draw[-] (-0.5,0) to (-0.05, 0);
		\draw[-] (0.05, 0) to (0.5,0);
		\draw[-] (0.5,0) to (0.95,0);
		\draw[-] (1.05,0)  to (1.5,0);
		\node at (-0.5,-0.2) {1};
		\node  at (0,-0.2) {2};
		\node at (0.5,-0.2) {3};
		\node at (1,-0.2) {4};
		\node at (1.5,-0.2) {5};
\node at (-1.5, -0.05) {AII$_5$};
	\end{tikzpicture}
&
\begin{tikzpicture}[baseline=0]
\node at (0, -0.15) {$\tTb{4}(B_2)=[B_2,\tT_3^{-1}(B_4)]_q$};
\end{tikzpicture}
\\
\hline

\begin{tikzpicture}[baseline=6,scale=1.2]
		\node  at (0,0) {$\bullet$};
		\node  at (0,-0.2) {1};
		\draw (0.05, 0) to (0.45, 0);
		\node  at (0.5,0) {$\circ$};
		\node  at (0.5,-0.2) {2};
		\draw (0.55, 0) to (0.95, 0);
		\node at (1,0) {$\bullet$};
		\node at (1,-.2) {3};
		\node at (1.5,0) {$\circ$};
		\node at (1.5,-0.2) {4};
		\draw[-] (1.05,0)  to (1.45,0);
		\draw[-] (1.55,0) to (1.95, 0);
		\node at (2,0) {$\bullet$};
		\node at (2,-0.2) {5};
		\draw (1.9, 0) to (2.1, 0);
		\draw[dashed] (2.1,0) to (2.7,0);
		\draw[-] (2.7,0) to (2.9, 0);
		\node at (3,0) {$\bullet$};
		%\node at (3,-0.2) {$n-1$};
		\draw[implies-, double equal sign distance]  (3.1, 0) to (3.7, 0);
		\node at (3.8,0) {$\bullet$};
		\node at (3.8,-0.2) {$n$};
\node at (2.15, -0.55) {CII$_n,n\geq 5$};
	\end{tikzpicture}
&
\begin{tikzpicture}[baseline=0]
\node at (0, -0.15) {$\tTb{4}(B_2)=\big[B_2,[\tT_3^{-1}(B_4), \tT^{-1}_{5\cdots n\cdots 5}(B_4 )]_{q_2}\big]_{q_2}$};
\node at (0.8, -0.75) {$ - q_2^2 \tT_3^{ 2}(B_2) \tT_{3}(\ck_4)$};
\end{tikzpicture}
\\
\hline

\begin{tikzpicture}[baseline=6,scale=1.5]
        \node at (-1,0) {$\bullet$};
        \node at (-1,-0.2) {1};
		\draw[-] (-0.95,0) to (-0.55, 0);
        \node at (-0.5,0) {$\circ$};
        \node at (-0.5,-0.2) {2};
		\draw[-] (-.45,0) to (-0.05, 0);
		\node at (0,0) {$\bullet$};
		\node at (0,-0.2) {3};
		\draw[implies-, double equal sign distance]  (0.05, 0) to (0.75, 0);
		\node at (0.8,0) {$\circ$};
		\node at (0.8,-0.2) {4};
\node at (-1.5, 0) {CII$_4$};
	\end{tikzpicture}
&

\begin{tikzpicture}[baseline=0]
\node at (0,0.45) {$\tTb{4}(B_2)=\big[B_2,[ F_3,B_4]_{q_4} \big]_{q_3}$};
\node at (0, -0.15) {$\tTb{2}(B_4)=\big[ [B_4,\tT_3^{-1}(B_2)]_{q_3^2},\tT_3^{-1}(B_2)\big]$};
\node at (1.6,-0.85) {$-(q_3-q_3^{-1})[ B_4,F_3]_{q_3^2}E_1 \ck_2 K_1'^{-1}$};
\end{tikzpicture}
\\
\hline
\begin{tikzpicture}[baseline = 0, scale =1.5]
		\node at (-1,0) {$\circ$};
        \node at (-1,-0.2) {1};
		\draw (-0.95,0) to (-0.55,0);
		\node at (-0.5,0) {$\bullet$};
        \node at (-0.5,-0.2) {2};
		\draw (-0.45,0) to (-0.05,0);
		\node at (0,0) {$\bullet$};
        \node at (0.1,-0.2) {3};
		\draw (0.05,0) to (0.45,0);
		\node at (0.5,0) {$\bullet$};
		\node at (0.5,-0.2) {4};
		\draw (0.55,0) to (0.95,0);
		\node at (1,0) {$\circ$};
		\node at (1,-0.2) {5};
		\draw (0,-0.05) to (0,-0.45);
		\node at (0,-0.4) {$\bullet$};
		\node at (-.15,-0.35) {6};
\node at (-1.5, -0.05) {EIV};
	\end{tikzpicture}
&
\begin{tikzpicture}[baseline=0]
\node at (0, -0.15) {$\tTb{1}(B_5)=\big[ B_5, \tT_{4}^{-1}\tT_{3}^{-1}\tT_{2}^{-1} (B_1)\big]_q$};
\end{tikzpicture}
\\
\hline
\begin{tikzpicture}[baseline=0,scale=1.5]
		\node  at (-0.65,0) {$\circ$};
		\node  at (0,0) {$\circ$};
		\node  at (0.65,0) {$\circ$};
		\draw[-] (-0.6,0) to (-0.05, 0);
		\draw[-] (0.05, 0) to (0.6,0);
		\node at (-0.65,-0.15) {1};
		\node at (0,-0.15) {2};
		\node at (0.65,-0.15) {3};
        \draw[bend left,<->,red] (-0.65,0.1) to (0.65,0.1);
        \node at (0,0.2) {$\textcolor{red}{\tau}$};
        \node at (-1.5,-0.05) {AIII$_3$};
	\end{tikzpicture}
&

\begin{tikzpicture}[baseline=0]
\node at (0, -0.15) {$\tTb{1}(B_2)=\big[[ B_2,B_1]_q,B_3\big]_q -q B_2 \ck_1$};
\end{tikzpicture}
\\
\hline
 \begin{tikzpicture}[baseline=0,scale=1.2]
		\node  at (-2.1,0) {$\circ$};
		\node  at (-1.3,0) {$\circ$};
		\node  at (-0.5,0) {$\bullet$};
		\node  at (0.5,0) {$\bullet$};
		\node  at (1.3,0) {$\circ$};
		\node  at (2.1,0) {$\circ$};
		\draw[-] (-2.05,0) to (-1.35, 0);
		\draw[-] (-1.25,0) to (-0.55, 0);
		\draw[-] (0.55,0) to (1.25, 0);
		\draw[-] (1.35, 0) to (2.05,0);
		\node at (-2.1,-0.2) {1};
		\node at (-1.3,-0.2) {2};
		\node at (-0.5,-0.2) {3};
		%\node at (0.5,-0.2) {\small$n-2$};
		\node at (1.3,-0.2) {\small$n-1$};
		\node at (2.1,-0.2) {$n$ };
        \draw[dashed] (-0.5,0) to (0.5,0);
        \draw[bend left,<->,red] (-1.3,0.1) to (1.3,0.1);
        \draw[bend left,<->,red] (-2.1,0.1) to (2.1,0.1);
        \node at (0,0.6) {$\textcolor{red}{\tau} $};
\node at (0, -0.55) {AIII$_n,n\geq 4$};
	\end{tikzpicture}
&

\begin{tikzpicture}[baseline=0]
\node at (0, 0.55) {$\tTb{1}(B_2)=[B_2 ,B_1]_q$};
\node at (0, -0.15) { $\tTb{2}(B_1)=\big[ [ B_1,B_2]_q,\tT_{\bw}^{-1}(B_{n-1} )\big]_q- \ck_{2} B_1$};
%\node at (0,-0.75) {$$};
\end{tikzpicture}
\\
\hline

\begin{tikzpicture}[baseline=0,scale=1.5]
		\node at (-0.5,0) {$\bullet$};
		\node at (0,0) {$\circ$};
		\node at (0.5,0) {$\bullet$};
        \node at (1,0.3) {$\circ$};
        \node at (1,-0.3) {$\circ$};
		\draw[-] (-0.45,0) to (-0.05, 0);
		\draw[-] (0.05, 0) to (0.45,0);
		\draw[-] (0.5,0) to (0.965, 0.285);
		\draw[-] (0.5, 0) to (0.965,-0.285);
		\node at (-0.5,-0.2) {1};
		\node at (0,-0.2) {2};
		\node at (0.5,-0.2) {3};
        \node at (1,0.15) {4};
        \node at (1,-0.45) {5};
        \draw[bend left,<->,red] (1.1,0.3) to (1.1,-0.3);
        \node at (1.4,0) {$\textcolor{red}{\tau} $};
\node at (-1, 0) {DIII$_5$};
	\end{tikzpicture}
&
\begin{tikzpicture}[baseline=0]
\node at (0,0.25) {$\tTb{2}(B_4)=[ B_4,\tT_3^{-1}(B_2 )]_q$};
\node at (0, -0.55) {$\tTb{4}(B_2)=\big[ [B_2,\tT_3^{-1}(B_5)]_q ,B_4\big]_q - q \tT_3^{2}(B_2)\tT_3(\ck_5)$};
\end{tikzpicture}
\\
\hline
\begin{tikzpicture}[baseline = 0, scale =1.5]
		\node at (-1,0) {$\circ$};
        \node at (-1,-0.2) {1};
		\draw (-0.95,0) to (-0.55,0);
		\node at (-0.5,0) {$\bullet$};
        \node at (-0.5,-0.2) {2};
		\draw (-0.45,0) to (-0.05,0);
		\node at (0,0) {$\bullet$};
        \node at (0.1,-0.2) {3};
		\draw (0.05,0) to (0.45,0);
		\node at (0.5,0) {$\bullet$};
		\node at (0.5,-0.2) {4};
		\draw (0.55,0) to (0.95,0);
		\node at (1,0) {$\circ$};
		\node at (1,-0.2) {5};
		\draw (0,-0.05) to (0,-0.35);
		\node at (0,-0.4) {$\circ$};
		\node at (-.15,-0.35) {6};
        \draw[bend left, <->, red] (-0.9,0.1) to (0.9,0.1);
        \node at (0,0.45) {$\color{red} \tau $};
\node at (-1.5, 0) {EIII};
	\end{tikzpicture}
&
\begin{tikzpicture}[baseline=0]
\node at (0,0.45) {$\tTb{6} (B_1)=[B_1,\tT_{2}^{-1}\tT_{3}^{-1}(B_6 )]_q$};
\node at (0, -0.35) {$\tTb{1}(B_6)=\big[[B_6, \tT_{3}^{-1}\tT_{2}^{-1} (B_1)]_q,\tT_4^{-1}(B_5)\big]_q$ };
\node at (0.5,-.85) {
$\quad -q \tT_{32323}(B_6) \tT_{s_4\bw}(\ck_1)$};
\end{tikzpicture}
\\
\hline
\end{tabular}
}%
\newline
\end{table}

%%%%%%%%%%%%
%%%%%%%%%%%%
\appendix

\section{Proofs of Proposition~\ref{prop:rktwoRij} and Table~\ref{table:rktwoSatake}}
  \label{app1}

In this Appendix, we shall provide constructive proofs for Proposition~\ref{prop:rktwoRij} and verify the rank 2 formulas for $\tTa{i}(B_j)$ in Table~\ref{table:rktwoSatake}. The proofs are based on type-by-type computations in $\tU$ for each rank two Satake diagram. Along the way, we will also specify a reduced expression for $\bs_i$ in $W$.

\subsection{Some preparatory lemmas}
\label{prep:app1}

Denote the $t$-commutator
\[
[C,D]_t =CD -t DC,
\]
for various $q$-powers $t$.
Let $(\I=\bI\cup \wI,\tau)$ be an arbitrary Satake diagram. Recall that $B_i =F_i+ \tT_{\bw}(E_{\tau i})K_i'$ and $B_i^\sigma= F_i+ K_i \tT_{\bw}^{-1}(E_{\tau i})$.

\begin{lemma}
  \label{lem:app1}
Suppose that $i,j\in \wI$ such that $j\not \in \{i,\tau i\}$. Then we have
\begin{align}
[B_i^\sigma,F_j]_{q^{-(\alpha_i,\alpha_j) }}
&=[F_i,F_j]_{q^{-(\alpha_i,\alpha_j) }},
 \label{eq:q-comm} \\
[B_i, \tT_{\bw}( E_{\tau j}) K_j']_{q^{-(\alpha_i,\alpha_j)}}
&=q^{-(\alpha_i,\bw(\alpha_{\tau j}))}\tT_{\bw}\big([E_{\tau i},E_{\tau j}]_{q^{- (\alpha_i,\alpha_j)}}\big) K_i' K_j'.
\label{eq:q-comm2}
\end{align}
\end{lemma}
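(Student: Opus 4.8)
The statement to prove is Lemma~\ref{lem:app1}, which records two $q$-commutator identities for the elements $B_i^\sigma$ and $B_i$ under the hypothesis $j \notin \{i, \tau i\}$. My plan is to establish each identity by a direct computation that reduces the problem to known commutation relations in $\tU$, exploiting the explicit forms $B_i^\sigma = F_i + K_i \tT_{\bw}^{-1}(E_{\tau i})$ and $B_i = F_i + \tT_{\bw}(E_{\tau i}) K_i'$ from \eqref{def:gen} and \eqref{eq:Bsig}.

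For the first identity \eqref{eq:q-comm}, I would substitute $B_i^\sigma = F_i + K_i \tT_{\bw}^{-1}(E_{\tau i})$ into $[B_i^\sigma, F_j]_{q^{-(\alpha_i,\alpha_j)}}$ and split into two summands. The $F_i$ summand gives exactly the desired right-hand side $[F_i, F_j]_{q^{-(\alpha_i,\alpha_j)}}$, so the entire content is to show that the second summand $[K_i \tT_{\bw}^{-1}(E_{\tau i}), F_j]_{q^{-(\alpha_i,\alpha_j)}}$ vanishes. The key observation is that $\tT_{\bw}^{-1}(E_{\tau i})$ is a positive root vector whose weight lies in $\alpha_{\tau i} + \Z\bI$ (since $\bw$ permutes $\bI$ and fixes the $\wI$-component up to $\tau$), so it is built from $E_{\tau i}$ and $E_r$ for $r \in \bI$; since $j \notin \{i, \tau i\}$ and $j \in \wI$, the generator $F_j$ commutes with all of these $E$'s except possibly through the Cartan $K_i$. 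A careful weight bookkeeping shows the $q$-power $q^{-(\alpha_i,\alpha_j)}$ is precisely the one that makes the $K_i$-twisted term cancel, i.e. the chosen commutator parameter is exactly the eigenvalue by which $K_i$ scales $F_j$. I would phrase this cleanly using the weight grading: $F_j$ is orthogonal (in the relevant sense) to the support of $\tT_{\bw}^{-1}(E_{\tau i})$, and the $t$-commutator with $t = q^{-(\alpha_i,\alpha_j)}$ is engineered to annihilate a term of weight $(\text{positive part}) - \alpha_j$.

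For the second identity \eqref{eq:q-comm2}, I would proceed similarly but now the surviving term comes from the $\tT_{\bw}(E_{\tau i}) K_i'$ summand of $B_i$ rather than from $F_i$. Here $[F_i, \tT_{\bw}(E_{\tau j}) K_j']_{q^{-(\alpha_i,\alpha_j)}}$ should vanish (by the mirror of the argument above, using that $E_{\tau j}$-based vectors commute with $F_i$ for $j \neq i, \tau i$), leaving $[\tT_{\bw}(E_{\tau i}) K_i', \tT_{\bw}(E_{\tau j}) K_j']_{q^{-(\alpha_i,\alpha_j)}}$. I would then pull the Cartan elements $K_i', K_j'$ to the right, picking up $q$-powers governed by \eqref{eq:K2}; the resulting scalar $q^{-(\alpha_i, \bw\alpha_{\tau j})}$ is exactly the weight-interaction factor recorded on the right-hand side. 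Finally, since $\tT_{\bw}$ is an algebra automorphism, $[\tT_{\bw}(E_{\tau i}), \tT_{\bw}(E_{\tau j})]_{q^{-(\alpha_i,\alpha_j)}} = \tT_{\bw}\big([E_{\tau i}, E_{\tau j}]_{q^{-(\alpha_i,\alpha_j)}}\big)$, matching the claimed formula after verifying the $q$-power inside the commutator is preserved under $\tT_{\bw}$ (which holds because $\bw$ preserves the pairing $(\alpha_i,\alpha_j)$ on the $\wI$-part).

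The main obstacle I anticipate is the careful tracking of the $q$-powers arising from commuting the Cartan factors $K_i, K_i', K_j'$ past root vectors, together with confirming that the vanishing terms genuinely vanish rather than producing lower-order corrections. This requires knowing the precise weight of $\tT_{\bw}^{\pm 1}(E_{\tau i})$ and using that $(\alpha_r, \alpha_j)$-type pairings with $r \in \bI$ do not disturb the chosen commutator parameter; the admissibility of the Satake diagram (so that $\bw$ acts as a signed permutation compatible with $\tau$) is what guarantees the clean cancellation. I would verify the weight computation once in full and then present both identities in parallel, since the structure of the two arguments is identical up to applying the anti-involution $\sigma$ and swapping the roles of positive and negative root vectors.
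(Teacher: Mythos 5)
Your proposal is correct and follows essentially the same route as the paper, whose proof is precisely the "simple computation" you describe: split $B_i^\sigma$ (resp.\ $B_i$) into its two summands, use $[E_k,F_j]=0$ for $k\neq j$ (valid here because $j\notin\{i,\tau i\}$ forces $\tau j\neq i$ and $j\notin\bI$) to kill or simplify each piece, and track the Cartan conjugation factors from \eqref{eq:EK}--\eqref{eq:K2}. The only detail you leave implicit is that extracting the single scalar $q^{-(\alpha_i,\bw\alpha_{\tau j})}$ in \eqref{eq:q-comm2} uses the symmetry $(\alpha_i,\bw\alpha_{\tau j})=(\alpha_j,\bw\alpha_{\tau i})$, which follows since $\bw\tau=-\theta$ is an isometric involution; with that noted, the argument is complete.
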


\begin{proof}
Follows by a simple computation and using the identity $[E_k, F_j]=0$, for $k \neq j$.
\end{proof}

Introduce the following operator (see Lemma~\ref{lem:braid1} for some of the notations)
\begin{align}
  \label{eq:D}
 \cL:=\tT_{w_0}\tT_{\bw}\widehat{\tau}_0 \widehat{\tau}.
\end{align}
We shall formulate several basic properties for $\cL$ below. A systematic use of $\cL$ throughout Appendices~\ref{app1} will allow us to reduce the proofs of many challenging identities to easier ones.

\begin{lemma}
We have \begin{align}
\cL(B_i^\sigma) &= -q^{-(\alpha_i,\alpha_i)}B_{i} \tT_{\bw}(\ck_{\tau i}^{-1}),
  \label{eq:app2} \\
\cL(F_j) &=-q_j^{-2}\tT_{\bw}(E_{\tau j}) K_j' \tT_{\bw}(\ck_{\tau j}^{-1}).
 \label{eq:app3}
\end{align}
\end{lemma}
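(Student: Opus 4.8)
The plan is to establish the second identity \eqref{eq:app3} first and then deduce \eqref{eq:app2} from it by linearity. The structural fact driving both computations is that $\tT_{w_0}$ and $\tT_{\bw}$ commute. As in the proof of Proposition~\ref{prop:tau0} we have $w_0 = \bbw\bw$ with $\tT_{w_0} = \tT_{\bbw}\tT_{\bw}$; moreover $\bbw$ and $\bw$ commute, since $\bbw$ induces a diagram automorphism of $\bI$ which must fix the (unique) longest element $\bw$ of $W_\bullet$, so that also $w_0 = \bw\bbw$ with lengths adding and $\tT_{w_0} = \tT_{\bw}\tT_{\bbw}$. Combining the two factorizations gives $\tT_{w_0}\tT_{\bw} = \tT_{\bw}\tT_{\bbw}\tT_{\bw} = \tT_{\bw}\tT_{w_0}$, and likewise with $\tT_{\bw}$ replaced by $\tT_{\bw}^{-1}$. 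I also use that the diagram involution $\tau_0$ preserves the Satake data (it fixes $\bI$, $\wI$ and the restricted root lengths), which holds in finite type, so that $\widehat{\tau}_0\widehat{\tau}$ commutes with $\tT_{\bw}^{\pm1}$ and the index relabelling under $\tT_{w_0}^{\pm1}$ is governed by $\tau_0$.

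For \eqref{eq:app3}, note $\widehat{\tau}_0\widehat{\tau}(F_j) = F_{\tau_0\tau j}$; commuting $\tT_{w_0}$ past $\tT_{\bw}$ and then invoking the closed formula $\tT_{w_0}(F_k) = -K_{\tau_0 k}^{-1}E_{\tau_0 k}$ of Lemma~\ref{lem:braid1} with $k = \tau_0\tau j$ (so $\tau_0 k = \tau j$) yields
\[
\cL(F_j) = \tT_{\bw}\tT_{w_0}(F_{\tau_0\tau j}) = \tT_{\bw}\big(-K_{\tau j}^{-1}E_{\tau j}\big) = -K_{\bw\alpha_{\tau j}}^{-1}\,\tT_{\bw}(E_{\tau j}),
\]
using $\tT_{\bw}(K_\mu) = K_{\bw\mu}$ (no scalar appears, as $\vs_{k,\dm} = 1$ for $k\in\bI$). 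Commuting the Cartan factor past the weight vector $\tT_{\bw}(E_{\tau j})$ of weight $\bw\alpha_{\tau j}$ produces $q^{-(\alpha_{\tau j},\alpha_{\tau j})} = q_j^{-2}$, and the identity $K_{\bw\alpha_{\tau j}}^{-1} = K_j'\,\tT_{\bw}(\ck_{\tau j}^{-1})$ — obtained by unravelling $\ck_{\tau j} = K_{\tau j}K'_{\bw\alpha_j}$ and $\tT_{\bw}(K'_{\bw\alpha_j}) = K_j'$ — gives exactly the right-hand side of \eqref{eq:app3}.

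For \eqref{eq:app2}, write $B_i^\sigma = F_i + K_i\tT_{\bw}^{-1}(E_{\tau i})$. The $F_i$-summand is covered by the $j=i$ case of \eqref{eq:app3}, which produces the term $-q^{-(\alpha_i,\alpha_i)}\tT_{\bw}(E_{\tau i})K_i'\tT_{\bw}(\ck_{\tau i}^{-1})$ of the target; it therefore remains to show $\cL\big(K_i\tT_{\bw}^{-1}(E_{\tau i})\big) = -q^{-(\alpha_i,\alpha_i)}F_i\,\tT_{\bw}(\ck_{\tau i}^{-1})$. Applying $\widehat{\tau}_0\widehat{\tau}$ gives $K_{\tau_0\tau i}\tT_{\bw}^{-1}(E_{\tau_0 i})$, and commuting $\tT_{w_0}$ through the braid operators makes the $\tT_{\bw}\tT_{\bw}^{-1}$ cancel on the $E$-factor, leaving
\[
\cL\big(K_i\tT_{\bw}^{-1}(E_{\tau i})\big) = \tT_{\bw}\tT_{w_0}(K_{\tau_0\tau i})\cdot \tT_{w_0}(E_{\tau_0 i}) = c_1\,K_{\bw\alpha_{\tau i}}^{-1}\cdot\big(-\vs_{\tau_0 i,\dm}F_iK_i'^{-1}\big)
\]
by Lemma~\ref{lem:braid1}, where $c_1$ is the rescaling scalar attached to $\tT_{w_0}(K_{\tau_0\tau i})$. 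After commuting $K_{\bw\alpha_{\tau i}}^{-1}$ past $F_i$ and comparing with $\tT_{\bw}(\ck_{\tau i}^{-1}) = K_{\bw\alpha_{\tau i}}^{-1}(K_i')^{-1}$, the claim reduces to the scalar equation $c_1\,\vs_{\tau_0 i,\dm}\,q^{(\bw\alpha_{\tau i},\alpha_i)} = q^{-(\alpha_i,\alpha_i)}$, which I verify from $\vs_{i,\dm} = -q^{-(\balpha_i,\balpha_i)}$ together with $(\balpha_i,\balpha_i) = \tfrac12\big[(\alpha_i,\alpha_i) + (\alpha_i,\bw\alpha_{\tau i})\big]$, using that $\tau_0$ and $\tau$ preserve restricted lengths so that $\vs_{\tau_0\tau i,\dm} = \vs_{\tau i,\dm} = \vs_{\tau_0 i,\dm} = \vs_{i,\dm}$.

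The main obstacle is the scalar bookkeeping. One must track the rescaling factors $\vs_{\cdot,\dm}$ introduced by $\tT_{w_0}$ on Cartan elements — most transparently by writing $\tT_{w_0} = \tPsi_{\bvs_\diamond}^{-1}\tTD_{w_0}\tPsi_{\bvs_\diamond}$ and using that the unrescaled $\tTD_{w_0}$ fixes the weight lattice without scalars — and then combine them with the $q$-powers coming from the Cartan commutations. The only genuinely Coxeter-combinatorial inputs, namely $w_0\bw = \bbw$, $\bw w_0\bw = w_0$, and the $\tau_0$-invariance of the Satake data, are precisely what guarantee the commutation $\tT_{w_0}\tT_{\bw} = \tT_{\bw}\tT_{w_0}$ and the correct index-matching, and it is there that care is required.
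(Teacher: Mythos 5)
Your proof is correct. For \eqref{eq:app3} your argument is the same as the paper's: commute $\tT_{w_0}$ past $\tT_{\bw}$ and apply Lemma~\ref{lem:braid1}. For \eqref{eq:app2} you take a genuinely different, though closely related, route: the paper simply rewrites the already-established rank-one identity \eqref{eq:TBi} of Lemma~\ref{lem:bsiBi} in the form $B_i\,\tT_{\bs_i}(\ck_{\tau_{\bullet,i}\tau i}) = -q^{-(\alpha_i,\bw\alpha_{\tau i})}\cL(B_i^\sigma)$ and then evaluates the Cartan factor $\tT_{\bs_i}(\ck_{\tau_{\bullet,i}\tau i}) = \vs_{i,\dm}^2\,\tT_{\bw}(\ck_{\tau i}^{-1})$, whereas you compute $\cL(B_i^\sigma)$ from scratch by splitting $B_i^\sigma = F_i + K_i\tT_{\bw}^{-1}(E_{\tau i})$ and applying Lemma~\ref{lem:braid1} to each summand. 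The two computations carry essentially the same content --- Lemma~\ref{lem:bsiBi} is itself proved by the same kind of manipulation with $\tT_{\bwi} = \tT_{\bs_i}\tT_{w_\bullet}$ and Lemma~\ref{lem:braid1} --- but yours is self-contained and makes the scalar bookkeeping (your identity $c_1\vs_{\tau_0 i,\dm} = \vs_{i,\dm}^2 = q^{-(\alpha_i,\alpha_i)-(\alpha_i,\bw\alpha_{\tau i})}$, which I checked) fully explicit, at the cost of re-deriving what \eqref{eq:TBi} already packages. Your auxiliary inputs (the commutation $\tT_{w_0}\tT_{\bw} = \tT_{\bw}\tT_{w_0}$ coming from $w_0 = \bbw\bw = \bw\bbw$, the $\tau_0$-invariance of the Satake datum, and the constancy of $\vs_{\cdot,\dm}$ on the $\langle\tau,\tau_0\rangle$-orbit of $i$) all hold and are the same facts the paper uses implicitly in Lemma~\ref{lem:cL} and Proposition~\ref{prop:tau0}; the only quibble is that your justification that $\bbw$ and $\bw$ commute reads a bit circularly --- the cleanest reason is that each generator $\bs_i$ already commutes with $w_\bullet$ by \eqref{def:bsi}.
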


\begin{proof}
We rewrite the identity \eqref{eq:TBi} as follows:
\begin{align}\notag
 B_{i} \tT_{\bs_i}(\ck_{\tau_{\bullet,i}\tau i})
 &= -q^{-(\alpha_i,\bw\alpha_{\tau i})} \tT_{\bw} \tT_{w_{\bullet,i}}(B_{\tau_{\bullet,i}\tau i}^\sigma)\\
 &=-q^{-(\alpha_i,\bw\alpha_{\tau i})} \tT_{\bw} \tT_{w_0}(B_{\tau_{0}\tau i}^\sigma)= -q^{-(\alpha_i,\bw\alpha_{\tau i})} \cL(B_i^\sigma).\label{eq:app2-1}
\end{align}
Since $\tT_{\bs_i}(\ck_{\tau_{\bullet,i}\tau i})
=\tT_{\bw}\tT_{w_{\bullet,i}}(\ck_{\tau_{\bullet,i}\tau i})
=\vs_{i,\dm}^2 \tT_{\bw}(\ck_{\tau i}^{-1})$, the formula \eqref{eq:app2} follows from \eqref{eq:app2-1}.

By Lemma~\ref{lem:braid1}, we have
$%$\begin{align*}
\cL(F_j)=- K_{\bw(\tau j)}^{-1}\tT_{\bw}(E_{\tau j})=-q_j^{-2}\tT_{\bw}(E_{\tau j}) K_j' \tT_{\bw}(\ck_{\tau j}^{-1}).
$ %\end{align*}
This proves \eqref{eq:app3}.
\end{proof}

\begin{lemma}
\label{lem:cL}
The operator $\cL$ commutes with $\tT_{\bs_i},\tT_j$, for $i\in \wI,j\in \bI$.
\end{lemma}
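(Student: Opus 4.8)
The plan is to reduce the whole statement to the conjugation behaviour of braid operators under the longest elements $w_0$ and $\bw$ together with the diagram involutions $\widehat{\tau}_0,\widehat{\tau}$. The crucial preliminary observation is that the operator $\mathcal{D} := \tT_{w_0}\widehat{\tau}_0$ commutes with every $\tT_k$, $k\in\I$. Indeed, diagram-automorphism equivariance gives $\widehat{\tau}_0 \tT_k \widehat{\tau}_0^{-1} = \tT_{\tau_0 k}$, while the standard longest-element identity $\tT_{w_0}\tT_m \tT_{w_0}^{-1} = \tT_{\tau_0 m}$ holds because $w_0 s_m<w_0$ forces $\tT_{w_0}\tT_m^{-1}=\tT_{w_0 s_m}$ and $w_0 s_m = s_{\tau_0 m} w_0$. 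Combining the two yields $\mathcal{D}\tT_k\mathcal{D}^{-1} = \tT_{w_0}\tT_{\tau_0 k}\tT_{w_0}^{-1} = \tT_k$, so $\mathcal{D}$ commutes with the entire braid-group image, in particular with $\tT_w$ for every $w\in W$.

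Next I would rewrite $\cL$ in terms of $\mathcal{D}$. Inserting $\widehat{\tau}_0\widehat{\tau}_0^{-1}$ around $\tT_{\bw}$ and again using equivariance, $\cL = \tT_{w_0}\widehat{\tau}_0 (\widehat{\tau}_0^{-1}\tT_{\bw}\widehat{\tau}_0)\widehat{\tau} = \mathcal{D}\,\tT_{\tau_0(\bw)}\,\widehat{\tau}$, where $\tau_0(\bw)$ is the element $w_0 \bw w_0^{-1}$. Since $w_0 = \bbw \bw$ with additive length and $\bbw$ commutes with $\bw$ (so that $w_0$ commutes with $\bw$), one gets $w_0 \bw w_0^{-1} = \bbw\bw\bbw^{-1} = \bw$, hence $\tT_{\tau_0(\bw)} = \tT_{\bw}$ and therefore $\cL = \mathcal{D}\,\tT_{\bw}\,\widehat{\tau}$.

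With this factorization the computation becomes transparent. For any $v\in W$, since $\mathcal{D}$ commutes with all braid operators and $\widehat{\tau}\tT_v\widehat{\tau}^{-1} = \tT_{\tau(v)}$, I obtain $\cL\,\tT_v\,\cL^{-1} = \tT_{\bw}\tT_{\tau(v)}\tT_{\bw}^{-1}$, and it remains to treat the two families of generators. For $v = \bs_i$ with $i\in\wI$, one has $\tau(\bs_i) = \bs_{\tau i} = \bs_i$, and $\bw$ commutes with $\bs_i$ with $\ell(\bw\bs_i) = \ell(\bw) + \ell(\bs_i)$ by \eqref{def:bsi}; hence $\tT_{\bw}\tT_{\bs_i} = \tT_{\bw\bs_i} = \tT_{\bs_i}\tT_{\bw}$ and $\cL\tT_{\bs_i}\cL^{-1} = \tT_{\bs_i}$. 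For $v = s_j$ with $j\in\bI$, one has $\tau(s_j) = s_{\tau j}$; applying the parabolic analogue of the $w_0$-identity inside $W_\bullet$, namely $\tT_{\bw}\tT_k\tT_{\bw}^{-1} = \tT_{\tau k}$ for $k\in\bI$ (which follows from $\bw(\alpha_k) = -\alpha_{\tau k}$ and $\bw s_k < \bw$, exactly as for $w_0$), with $k = \tau j$ gives $\tT_{\bw}\tT_{\tau j}\tT_{\bw}^{-1} = \tT_j$, so $\cL\tT_j\cL^{-1} = \tT_j$.

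The main obstacle is not any single calculation but keeping the conjugation identities honest: conjugation by $\tT_w$ does \emph{not} realize $\tT_{w v w^{-1}}$ for general $w,v$, and the identities $\tT_{w_0}\tT_m\tT_{w_0}^{-1} = \tT_{\tau_0 m}$ and $\tT_{\bw}\tT_k\tT_{\bw}^{-1} = \tT_{\tau k}$ are valid precisely because $w_0$ and $\bw$ are the longest elements of $W$ and of $W_\bullet$, where the needed length inequalities hold. The point of introducing $\mathcal{D}$ is exactly to absorb the $\tT_{w_0}$ into something that commutes with everything, so that these subtleties only have to be faced once, for $w_0$ and for the parabolic $\bw$. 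The structural input $\tau_0(\bw)=\bw$, resting on $w_0=\bbw\bw$ and $[\bbw,\bw]=1$, is what must be secured before the two-case verification goes through cleanly.
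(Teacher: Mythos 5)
Your proof is correct and follows essentially the same route as the paper's: both rest on the observation that $\tT_{w_0}\widehat{\tau}_0$ commutes with every $\tT_k$ (via $w_0 s_k = s_{\tau_0 k}w_0$), on the parabolic analogue $\tT_{\bw}\tT_k\tT_{\bw}^{-1}=\tT_{\tau k}$ for $k\in\bI$, and on the fact that $\tT_{\bs_i}$ commutes with $\tT_{\bw}$ and $\widehat{\tau}$ by \eqref{def:bsi}. Your explicit factorization $\cL=\mathcal{D}\,\tT_{\bw}\,\widehat{\tau}$ via $\tau_0(\bw)=\bw$ merely makes precise a commutation the paper leaves implicit; no substantive difference.
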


\begin{proof}
Since $w_0 s_k =s_{\tau_0 k} w_0$, for $k\in \I$, we have
$\tT_{w_0}\tT_{k}^{-1} = \tT_{w_0 s_k} = \tT_{s_{\tau_0 k}w_0} = \tT_{\tau_0 k}^{-1} \tT_{w_0}.$ Hence, $\tT_{w_0}\tT_{k} =\tT_{\tau_0 k}  \tT_{w_0}$ for any $k\in \I$. Therefore, $\tT_{w_0} \widehat{\tau}_0$ commutes with $\tT_k$ ($k\in \I$) and thus commutes with $\tT_{\bs_i},\tT_j$, for $i\in \wI,j\in \bI$.

Similarly, one can show that $\tT_{\bw} \widehat{\tau}$ commutes with $\tT_j$, for $j\in \bI$. Hence, by definition \eqref{eq:D}, the operator $\cL$ commutes with $\tT_j$ for $j\in \bI$.

On the other hand, by definition \eqref{def:bsi}, $\tT_{\bs_i} $, for $i\in \wI $, commutes with both $\tT_{\bw}$ and $ \widehat{\tau}$. Hence, $\cL$ also commutes with $\tT_{\bs_i} $.
\end{proof}
\subsection{Split types of rank 2}

Consider rank 2 split Satake diagrams $(\I=\wI =\{i,j\},\Id)$. In this case, we have
$ %\[
\bs_i=s_i,\;
B_i^\sigma =F_i +K_i E_i.
$ %\]

\subsubsection{$c_{ij}=-1$}

In this case, in line with the first line of Table~ \ref{table:rktwoSatake}, Proposition~\ref{prop:rktwoRij} is reformulated and proved as follows.

\begin{lemma}
We have
\begin{align}  \label{eq:app1}
\tT_i^{-1}(F_j)=  [B_i^\sigma,F_j]_{q_i}, \qquad \tT_i^{-1}(E_j K_j')= [B_i,E_j K_j']_{q_i}.
\end{align}
\end{lemma}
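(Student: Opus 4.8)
The plan is to prove both identities in \eqref{eq:app1} by direct computation in the Drinfeld double $\tU$, using the explicit formulas for $\tT_i^{-1}$ from Proposition~\ref{prop:braid0} together with the rank one $q$-commutator relations in Lemma~\ref{lem:app1}. Since $\bs_i=s_i$ in split rank two type, I would first recall $\tT_i^{-1}(F_j)$ for $r=-c_{ij}=1$: by Proposition~\ref{prop:braid0}, $\tT_i^{-1}(F_j)=F_i F_j - q_i F_j F_i = [F_i,F_j]_{q_i}$. By the identity \eqref{eq:q-comm} of Lemma~\ref{lem:app1}, this equals $[B_i^\sigma,F_j]_{q_i}$ (noting $(\alpha_i,\alpha_j)=-1$ so $q^{-(\alpha_i,\alpha_j)}=q_i$ in the simply-laced local picture), establishing the first formula.

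For the second identity I would apply the operator $\cL$ from \eqref{eq:D} to the first one. Since $\tau=\Id$ here, $\cL=\tT_{w_0}$ (with $\bw=1$), and by Lemma~\ref{lem:cL} it commutes with $\tT_i^{-1}$. Applying $\cL$ to $\tT_i^{-1}(F_j)=[B_i^\sigma,F_j]_{q_i}$ and using the transformation formulas \eqref{eq:app2}--\eqref{eq:app3}, which convert $B_i^\sigma$ and $F_j$ into (rescaled) $B_i$ and $E_jK_j'$ respectively, the left-hand side becomes $\tT_i^{-1}\cL(F_j)$, a scalar multiple of $\tT_i^{-1}(E_jK_j')$, while the right-hand side becomes the corresponding scalar multiple of $[B_i,E_jK_j']_{q_i}$. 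Tracking the Cartan prefactors (the $\ck$-terms and $q$-powers coming from \eqref{eq:app2}--\eqref{eq:app3}) and checking they cancel consistently on both sides yields $\tT_i^{-1}(E_jK_j')=[B_i,E_jK_j']_{q_i}$, as claimed. Alternatively, one could verify this second formula by a direct computation analogous to the first, using $\tT_i^{-1}(E_j)$ from Proposition~\ref{prop:braid0} and the relation \eqref{eq:q-comm2}.

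The main obstacle I anticipate is the bookkeeping of the Cartan-part scalars when transporting via $\cL$: one must carefully track the factors $q^{-(\alpha_i,\alpha_i)}$, $q_j^{-2}$, and the $\tT_{\bw}(\ck_{\tau i}^{-1})$, $\tT_{\bw}(\ck_{\tau j}^{-1})$ terms appearing in \eqref{eq:app2}--\eqref{eq:app3}, and confirm they are compatible with the weight of the commutator so that the $q$-commutator structure is preserved exactly (rather than up to an unwanted scalar or extra Cartan factor). Because this is the split simply-laced case with $\tau=\Id$ and $\bw=1$, many of these factors simplify drastically, so I expect the verification to be short; the $\cL$-symmetry argument essentially reduces the $E_jK_j'$ computation to the already-established $F_j$ computation, which is the conceptual payoff of introducing $\cL$ in \S\ref{prep:app1}.
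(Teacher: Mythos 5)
Your proof is correct. For the first identity it coincides with the paper's argument, which is just Proposition~\ref{prop:braid0} for $r=-c_{ij}=1$ combined with \eqref{eq:q-comm} and the observation that $q^{-(\alpha_i,\alpha_j)}=q^{\epsilon_i}=q_i$ whenever $c_{ij}=-1$ (this holds for any symmetrizer, not only in the simply-laced case, which matters since the lemma is also invoked for the short-to-long direction in CI$_2$). For the second identity the paper's proof is the one-line direct computation you mention as an alternative: expand $\tT_i^{-1}(E_jK_j')$ from Proposition~\ref{prop:braid0}, collect the scalar $\vs_{i,\diamond}^{-r/2+c_{ij}/2}=\vs_{i,\diamond}^{-1}=-q_i^{2}$, and match against \eqref{eq:q-comm2} with $\bw=1$, $\tau=\Id$. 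Your primary route via $\cL$ is the technique the paper deploys for the harder split cases $c_{ij}=-2,-3$ (Lemmas~\ref{lem:split1}, \ref{lem:G2-2}) and for all non-split types, and it works equally well here: since $\tk_i,\tk_j$ are central in the split case, the Cartan prefactors from \eqref{eq:app2}--\eqref{eq:app3} and from $\tT_i^{-1}(\tk_j^{-1})=\vs_{i,\diamond}\tk_i^{-1}\tk_j^{-1}$ cancel cleanly, so the bookkeeping you worry about is harmless. One small slip: with $\bw=1$ and $\tau=\Id$ one has $\cL=\tT_{w_0}\widehat{\tau}_0$, and in type AI$_2$ the diagram involution $\tau_0$ is nontrivial, so $\cL\neq\tT_{w_0}$ there; this does not affect your argument, which only uses Lemma~\ref{lem:cL} and \eqref{eq:app2}--\eqref{eq:app3}, both stated for $\cL$ as defined in \eqref{eq:D}.
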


\begin{proof}
Follows immediately by Lemma \ref{lem:app1} and the definition of $\tT_i$.
\end{proof}

\subsubsection{$c_{ij}=-2$}

In this case, the rank 2 Satake diagram is given by
 \begin{center}
\begin{tikzpicture}[baseline=0, scale=1.5]
		\node at (-0.5,0) {$\circ$};
		\node at (0.5,0) {$\circ$};
		\draw[-implies, double equal sign distance]  (0.4, 0) to (-0.4, 0);
		\node at (-0.5, -.2) {\small i};
		\node at (0.5,-.2) {\small j};
	\end{tikzpicture}\\
\end{center}
and in line with Table \ref{table:rktwoSatake}, Proposition~\ref{prop:rktwoRij} can be reformulated and proved as follows.

\begin{lemma}
\label{lem:split1}
We have
\begin{align}\label{eq:app4}
\tT_i^{-1}(F_j)&=\frac{1}{[2]_i}\big[B_i^\sigma, [B_i^\sigma,F_j]_{q_i^2} \big]- q_i^2 F_j K_i  K_i',\\
\tT_i^{-1}(E_j K_j')&=\frac{1}{[2]_i}\big[B_i, [B_i, E_j K_j' ]_{q_i^2} \big]- q_i^2 E_j K_j' K_i  K_i'.\label{eq:app5}
\end{align}
\end{lemma}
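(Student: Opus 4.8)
The plan is to prove the two identities in turn: first establish the formula \eqref{eq:app4} for $\tT_i^{-1}(F_j)$ by a direct expansion, and then deduce \eqref{eq:app5} for $\tT_i^{-1}(E_jK_j')$ from it by applying the operator $\cL =\tT_{w_0}\tT_{\bw}\widehat{\tau}_0\widehat{\tau}$ of \eqref{eq:D}, exactly in the spirit of the passage from Lemma~\ref{lem:rktwo2-1} to Lemma~\ref{lem:rktwo2-2}. Since we are in split rank $2$ we have $\bI =\varnothing$, so $\tT_{\bw}=\widehat{\tau}=\Id$, and because $w_0$ acts as $-\Id$ in type $B_2=C_2$ we also have $\tau_0=\Id$; hence throughout $\cL =\tT_{w_0}$, $\bs_i=s_i$, $\ck_i=K_iK_i'$, and $\vs_{i,\dm}=-q_i^{-2}$.

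To prove \eqref{eq:app4}, I would expand $\tT_i^{-1}(F_j)$ via Proposition~\ref{prop:braid0} with $r=-c_{ij}=2$, obtaining $\tT_i^{-1}(F_j)=\tfrac{1}{[2]_i}\big(F_i^2F_j-q_i[2]_iF_iF_jF_i+q_i^2F_jF_i^2\big)$. On the right-hand side of \eqref{eq:app4} I would first reduce the inner bracket using \eqref{eq:q-comm}, namely $[B_i^\sigma,F_j]_{q_i^2}=[F_i,F_j]_{q_i^2}$ (here $q^{-(\alpha_i,\alpha_j)}=q_i^2$). Writing $B_i^\sigma=F_i+K_iE_i$ and splitting the outer bracket accordingly, the $F_i$-part $[F_i,[F_i,F_j]_{q_i^2}]$ expands to precisely $[2]_i\,\tT_i^{-1}(F_j)$, while the $K_iE_i$-part requires the short computation $[K_iE_i,[F_i,F_j]_{q_i^2}]=q_i^2[2]_i\,F_jK_iK_i'$, carried out by commuting $E_i$ past $F_i$ with $[E_i,F_i]=(K_i-K_i')/(q_i-q_i^{-1})$ and tracking the $K$-scalars (using $E_i,F_j$ commute). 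This last term is exactly cancelled by the correction $-q_i^2F_jK_iK_i'$, leaving $\tT_i^{-1}(F_j)$.

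For \eqref{eq:app5} I would apply the algebra automorphism $\cL$ to \eqref{eq:app4}. Since $\cL$ commutes with $\tT_i^{\pm 1}$ by Lemma~\ref{lem:cL}, the left side becomes $\tT_i^{-1}(\cL F_j)$; using \eqref{eq:app3} gives $\cL F_j=-q_j^{-2}E_jK_j^{-1}$, and using \eqref{eq:app2} gives $\cL B_i^\sigma=-q_i^{-2}B_i\ck_i^{-1}$. Crucially $\ck_i=K_iK_i'$ and $\ck_j=K_jK_j'$ are \emph{central} in $\tU$, so every Cartan factor produced by $\cL$ pulls out freely of all brackets; the image of the right side of \eqref{eq:app4} is then $\frac{1}{[2]_i}\cL B_i^\sigma$-conjugate of $\cL F_j$ times scalars, plus the transformed correction $-q_i^2(\cL F_j)(\cL\ck_i)$. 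I would finish by substituting $E_jK_j^{-1}=E_jK_j'\,\ck_j^{-1}$, together with the Cartan values $\cL(\ck_i)=\vs_{i,\dm}^2\ck_i^{-1}$ and $\tT_i^{-1}(\ck_j)=\vs_{i,\dm}^{-2}\ck_j\ck_i^{2}$ read off from Lemma~\ref{lem:newb1} (via $s_i\alpha_j=\alpha_j+2\alpha_i$), and then dividing through by the common central factor to recover exactly $\tT_i^{-1}(E_jK_j')=\frac{1}{[2]_i}\big[B_i,[B_i,E_jK_j']_{q_i^2}\big]-q_i^2E_jK_j'K_iK_i'$. The main obstacle I anticipate is precisely this scalar bookkeeping in the second step: the powers of $q_i$ attached to $\ck_i,\ck_j$ under $\cL$ and $\tT_i^{-1}$ must match perfectly (the factor $\vs_{i,\dm}^2=q_i^{-4}$ on $\cL(\ck_i)$ is exactly what converts the naive coefficient into the correct $-q_i^2$), and getting these scalars right is the only delicate point; as a cross-check, one may alternatively verify \eqref{eq:app5} by the same direct expansion as in the second paragraph, now using $B_i=F_i+E_iK_i'$ and the dual $q$-commutator \eqref{eq:q-comm2}.
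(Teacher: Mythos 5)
Your proposal is correct and follows essentially the same route as the paper's own proof: part \eqref{eq:app4} by splitting $B_i^\sigma=F_i+K_iE_i$ after reducing the inner bracket via \eqref{eq:q-comm}, and part \eqref{eq:app5} by applying $\cL=\tT_{w_0}$ to \eqref{eq:app4}, invoking Lemma~\ref{lem:cL} and \eqref{eq:app2}--\eqref{eq:app3}, and cancelling the central factors $\tk_i,\tk_j$ with the same scalar bookkeeping ($\tT_{w_0}(\tk_i)=q_i^{-4}\tk_i^{-1}$, $\tT_i^{-1}(\tk_j^{-1})=q_i^{-4}\tk_j^{-1}\tk_i^{-2}$). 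No gaps.
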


\begin{proof}
We prove the formula \eqref{eq:app4}. By Lemma~\ref{lem:app1}, we have $[B_i^\sigma,F_j]_{q_i^2}=[F_i,F_j]_{q_i^2}$. By Proposition \ref{prop:braid0}, we have
$%$\begin{align*}
\tT_i^{-1}(F_j)=\frac{1}{[2]_i}\big[F_i, [F_i,F_j]_{q_i^2} \big].
$ %\end{align*}
Now we compute the first term on RHS \eqref{eq:app4} using Lemma~\ref{lem:app1} as follows:
\begin{align*}
\big[B_i^\sigma, [B_i^\sigma,F_j]_{q_i^2} \big]&= \big[B_i^\sigma, [F_i ,F_j]_{q_i^2} \big]\\
&= \big[F_i ,[F_i ,F_j]_{q_i^2} \big]+\big[K_i E_i,  [F_i ,F_j]_{q_i^2} \big]\\
&= [2]_i \tT_i^{-1}(F_j)+K_i  [\frac{K_i-K_i'}{q_i-q_i^{-1}} ,F_j]_{q_i^2} \\
&= [2]_i \tT_i^{-1}(F_j)+ q_i^2[2]_i F_j K_i  K_i'.
\end{align*}
Hence the formula \eqref{eq:app4} holds.

We next prove the formula \eqref{eq:app5}. In this case, we read \eqref{eq:D} as $\cL=\tT_{w_0}$, and note that $\ck_i=\tk_i$. By Lemma~\ref{lem:cL}, $\cL$ commutes with $\tT_i^{-1}$. Applying this operator $\cL$ to the formula \eqref{eq:app4} and then using \eqref{eq:app2}--\eqref{eq:app3}, we obtain
\begin{align} %\notag
\tT_i^{-1}(E_j K_j')\tT_i^{-1}(\tk_{j}^{-1})
&=\frac{q_i^{-4}}{[2]_i}\big[B_i \tk_i^{-1} , [B_i \tk_i^{-1}, E_j K_j'\tk_{j}^{-1} ]_{q_i^2} \big]
- q_i^2 E_j K_j' \tk_{j}^{-1}\tT_{w_0}( \tk_i).
 \label{eq:app6}
\end{align}
Recall our symmetries $\tT_{j}$ are defined in \S \ref{Double} by normalizing a variant of Lusztig's symmetries $\tT''_{j,+1}$. In this case, we have $\tT_{w_0}( \tk_i)=q_i^{-4}\tk_i^{-1}$ and $ \tT_i^{-1}(\tk_{j}^{-1})= q_i^{-4}\tk_{j}^{-1} \tk_i^{-2}$. Hence, since $\tk_i,\tk_j$ are central, \eqref{eq:app6} is simplified as the following formula
\begin{align}
\tT_i^{-1}(E_j K_j')\tk_{j}^{-1} \tk_i^{-2}&=\Big(\frac{1}{[2]_i}\big[B_i , [B_i, E_j K_j']_{q_i^2} \big]
- q_i^2 E_j K_j' K_i  K_i'\Big)\tk_{j}^{-1} \tk_i^{-2},\label{eq:app7}
\end{align}
which clearly implies the formula \eqref{eq:app5}.
\end{proof}

\subsubsection{ $c_{ij}=-3$ }

Consider the Satake diagram of split type $G_2$
 \begin{center}
\begin{tikzpicture}[baseline=0, scale=1.5]
		\node at (-0.5,0) {$\circ$};
		\node at (0.5,0) {$\circ$};
		\draw[->]  (0.4, 0) to (-0.4, 0);
		\draw[->]  (0.4, 0.05) to (-0.4, 0.05);
		\draw[->]  (0.4, -0.05) to (-0.4, -0.05);
		%\node at ( 0, 0.2) {3};
		\node at (-0.5, -.2) {\small i};
		\node at (0.5,-.2) {\small j};
	\end{tikzpicture}\\
\end{center}

In this case, we have $q_i=q$ and $q_j=q^3.$
\begin{lemma}
\label{lem:G2-0}
We have
\begin{align}\label{eq:G2-1}
\big[K_i E_i, [F_i ,F_j]_{q^3} \big]_q&=q^3[3] F_j K_i K_i',\\
\Big[K_i E_i ,\big[F_i , [F_i ,F_j]_{q^3} \big]_q \Big]_{q^{-1}}&=q(1+[3])[B_i^\sigma,F_j]_{q^3} K_i K_i'.\label{eq:G2-2}
\end{align}
\end{lemma}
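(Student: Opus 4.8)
The plan is to prove both identities by a direct computation in $\tU$, organized around the single element $A:=K_iE_i$, which obeys two clean commutation rules against $F_i$ and $F_j$. First I would record, from the defining relations \eqref{eq:EK}--\eqref{eq:K2} together with $[E_i,F_j]=0$ and $[E_i,F_i]=(K_i-K_i')/(q-q^{-1})$, that
\begin{align*}
A F_j = q^3 F_j A,
\qquad
A F_i = q^{-2} F_i A + C,
\qquad
C:=\frac{K_i^2-K_iK_i'}{q-q^{-1}}.
\end{align*}
Here $q_i=q$, $q_j=q^3$, $c_{ij}=-3$ and $(\alpha_i,\alpha_j)=-3$. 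These two rules say that, up to the lower-order defect $C$, the operator $A$ commutes with $F_j$ with factor $q^3$ and with $F_i$ with factor $q^{-2}$; all the arithmetic in the lemma is driven by them.

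For the first identity, I would set $Z:=[F_i,F_j]_{q^3}=F_iF_j-q^3F_jF_i$ and compute $AZ$ by moving $A$ to the right through each monomial using the two rules above. The terms that keep $A$ on the far right assemble into $qZA$, which is exactly cancelled by the outer factor $q$ in $[A,Z]_q=AZ-qZA$; what survives is the defect $CF_j-q^6F_jC$. Using $K_i^2F_j=q^6F_jK_i^2$ and $K_iK_i'F_j=F_jK_iK_i'$ this collapses to $\tfrac{q^6-1}{q-q^{-1}}F_jK_iK_i'$, and recognizing $\tfrac{q^6-1}{q-q^{-1}}=q^3[3]$ yields \eqref{eq:G2-1}.

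For the second identity, writing $G:=[F_i,Z]_q$ and $D:=CF_j-q^6F_jC=q^3[3]F_jK_iK_i'$, I would reuse $AZ=qZA+D$ and $AF_i=q^{-2}F_iA+C$ to expand $AG$. Again the terms carrying $A$ on the right organize into $q^{-1}GA$, matching the outer factor in $[A,G]_{q^{-1}}$, so that $[A,G]_{q^{-1}}=q^{-2}F_iD-qDF_i+CZ-q^2ZC$. The first pair evaluates, using centrality of $K_iK_i'$, to $q[3]ZK_iK_i'$; for the second pair one checks $K_i^2Z=q^2ZK_i^2$ (so the $K_i^2$-contribution to $CZ-q^2ZC$ vanishes) and uses centrality of $K_iK_i'$ again to obtain $qZK_iK_i'$. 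Summing gives $q(1+[3])ZK_iK_i'$, and the identification $Z=[B_i^\sigma,F_j]_{q^3}$ from Lemma~\ref{lem:app1} produces \eqref{eq:G2-2}.

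The computation has no genuine obstruction; the only delicate points are keeping track of the many $q$-powers when commuting $K_i^{\pm}$ past the $F$'s and correctly collapsing the rational expressions $\tfrac{q^6-1}{q-q^{-1}}=q^3[3]$ and $\tfrac{q^2-1}{q-q^{-1}}=q$. The conceptual simplification, worth stating explicitly, is that both nested commutators telescope to defect terms precisely because the chosen outer brackets ($[\,\cdot\,]_q$ and $[\,\cdot\,]_{q^{-1}}$) are tuned to annihilate the leading terms, leaving only contributions proportional to $F_jK_iK_i'$ and $ZK_iK_i'$.
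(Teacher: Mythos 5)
Your proposal is correct; all the commutation rules, the cancellation of the leading terms by the tuned outer brackets, and the evaluations $\tfrac{q^6-1}{q-q^{-1}}=q^3[3]$ and $\tfrac{q^2-1}{q-q^{-1}}=q$ check out, and the final identification $Z=[B_i^\sigma,F_j]_{q^3}$ via Lemma~\ref{lem:app1} is exactly what is needed. This is essentially the paper's argument in a slightly different packaging: the paper first factors out $K_i$ (observing that the outer bracket parameter matches the weight shift, so $[K_iE_i,\,\cdot\,]_{q^{\pm1}}=K_i[E_i,\,\cdot\,]$) and then applies the derivation property of $[E_i,\,\cdot\,]$, whereas you keep $A=K_iE_i$ intact and track the defect terms; the two computations are the same.
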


\begin{proof}
The first identity \eqref{eq:G2-1} is derived as follows:
\begin{align*}
\text{LHS}\eqref{eq:G2-1}
&=K_i \big[ E_i, [F_i ,F_j]_{q^3} \big]
=K_i [\frac{K_i-K_i^{-1}}{q-q^{-1}} ,F_j]_{q^3}
=q^3[3] K_i K_i' F_j = \text{RHS}\eqref{eq:G2-1}.
%q^3[3] F_j K_i K_i'.
\end{align*}

We next compute
\begin{align*}
\text{LHS}\eqref{eq:G2-2}
&=K_i\Big[ E_i ,\big[F_i , [F_i ,F_j]_{q^3} \big]_q \Big]\\
&=K_i \big[\frac{K_i-K_i^{-1}}{q-q^{-1}} , [F_i ,F_j]_{q^3} \big]_q +K_i \big[F_i , [\frac{K_i-K_i'}{q-q^{-1}} ,F_j]_{q^3} \big]_q  \\
&= q K_i K_i'[F_i ,F_j]_{q^3} + q^{3}[3] K_i \big[F_i , K_i' F_j  \big]_q \\
&= (q + q[3])[F_i ,F_j]_{q^3}K_i K_i'\\
&= (q + q[3])[B_i^\sigma ,F_j]_{q^3}K_i K_i',
\end{align*}
where the last equality follows from Lemma~\ref{lem:app1}.
This proves \eqref{eq:G2-2}.
\end{proof}

In line with Table~\ref{table:rktwoSatake}, Proposition~\ref{prop:rktwoRij} can be reformulated and proved as follows.
\begin{lemma}
\label{lem:G2-1}
We have
\begin{align}\notag
\tT_i^{-1} (F_j) &=\frac{1}{[3]! } \Big[B_i^\sigma,\big[B_i^\sigma, [B_i^\sigma,F_j]_{q^3} \big]_q \Big]_{q^{-1}}\\
&\quad -\frac{1}{[3]! } \Big( q(1+[3])[B_i^\sigma , F_j]_{q^3} + q^3[3] [B_i^\sigma,F_j]_{q^{-1}}\Big)\tk_i.\label{eq:G2-3}
\end{align}
\end{lemma}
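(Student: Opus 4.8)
The plan is to expand the nested $q$-commutator on the right-hand side of \eqref{eq:G2-3} by substituting $B_i^\sigma = F_i + K_i E_i$ and collecting the correction terms produced by the $K_i E_i$ summand, which are precisely the quantities pre-computed in Lemma~\ref{lem:G2-0}. First I would record the base identity that in $\tU$ one has
\begin{align*}
\tT_i^{-1}(F_j) = \frac{1}{[3]!}\Big[F_i, \big[F_i, [F_i, F_j]_{q^3}\big]_q\Big]_{q^{-1}}.
\end{align*}
This follows from the explicit expression $\tT_i^{-1}(F_j) = \sum_{s=0}^3 (-1)^s q^s F_i^{(3-s)} F_j F_i^{(s)}$ of Proposition~\ref{prop:braid0} (with $r = -c_{ij} = 3$ and $q_i = q$) by the same elementary rewriting used in the $c_{ij}=-2$ case of Lemma~\ref{lem:split1}; it is the split-$G_2$ analog of the nested-commutator presentation of Lusztig's operators, with $q$-shifts $q^3, q, q^{-1}$ running from the innermost to the outermost bracket.

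Next I would peel off the three brackets one at a time. By Lemma~\ref{lem:app1} (using $-(\alpha_i,\alpha_j) = 3$ in the $G_2$ normalization) the innermost bracket satisfies $[B_i^\sigma, F_j]_{q^3} = [F_i, F_j]_{q^3}$. For the middle bracket, writing $B_i^\sigma = F_i + K_i E_i$ and invoking the first identity of Lemma~\ref{lem:G2-0} gives
\begin{align*}
\big[B_i^\sigma, [B_i^\sigma, F_j]_{q^3}\big]_q = \big[F_i, [F_i, F_j]_{q^3}\big]_q + q^3[3]\, F_j K_i K_i'.
\end{align*}
Applying $[B_i^\sigma, -]_{q^{-1}}$ then splits into two contributions: the $F_i$-part together with the second identity of Lemma~\ref{lem:G2-0} produces $\big[F_i, [F_i, [F_i, F_j]_{q^3}]_q\big]_{q^{-1}} + q(1+[3])[B_i^\sigma, F_j]_{q^3} K_i K_i'$, while the remaining term $[B_i^\sigma, q^3[3] F_j K_i K_i']_{q^{-1}}$ equals $q^3[3][B_i^\sigma, F_j]_{q^{-1}} K_i K_i'$, since $K_i K_i' = \tk_i$ is central in $\tUi$ (as $i = \tau i$ in split type) and hence commutes with $B_i^\sigma$ without disturbing the $q^{-1}$-twist.

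Collecting these, dividing by $[3]!$, and recognizing $\frac{1}{[3]!}\big[F_i, [F_i, [F_i, F_j]_{q^3}]_q\big]_{q^{-1}} = \tT_i^{-1}(F_j)$ via the base identity, the formula \eqref{eq:G2-3} follows after transposing the two correction terms (each carrying the central factor $\tk_i = K_i K_i'$) to the opposite side. I expect no genuine obstacle here: the argument is a bookkeeping exercise in nested $q$-commutators, and the only points requiring care are the verification of the base identity and the exact tracking of the $q$-power shifts, both of which are made routine by Lemma~\ref{lem:G2-0} having isolated the two nontrivial $E_i$-corrections in advance.
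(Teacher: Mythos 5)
Your proposal is correct and follows essentially the same route as the paper's proof: both start from the nested-commutator form of $\tT_i^{-1}(F_j)$ given by Proposition~\ref{prop:braid0}, reduce the innermost bracket via Lemma~\ref{lem:app1}, and absorb the two $K_iE_i$-corrections using the two identities of Lemma~\ref{lem:G2-0} together with the centrality of $K_iK_i'=\tk_i$. The only difference is cosmetic ordering of which bracket is expanded first.
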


\begin{proof}
By Proposition~\ref{prop:braid0}, 
$\tT_i^{-1} (F_j) =\frac{1}{[3]! } \Big[F_i,\big[F_i, [F_i,F_j]_{q^3} \big]_q \Big]_{q^{-1}}.$ By Lemma~\ref{lem:app1}, we have $[B_i^\sigma,F_j]_{q^3}=[F_i,F_j]_{q^3}$. Then we have
\begin{align} \label{eq:G2-4}
\Big[B_i^\sigma,\big[B_i^\sigma, [B_i^\sigma,F_j]_{q^3} \big]_q \Big]_{q^{-1}}
=\Big[B_i^\sigma,\big[F_i , [F_i ,F_j]_{q^3} \big]_q \Big]_{q^{-1}}+ \Big[B_i^\sigma,\big[K_i E_i, [F_i ,F_j]_{q^3} \big]_q \Big]_{q^{-1}}.
\end{align}
Using Lemma~\ref{lem:G2-0}, we rewrite RHS \eqref{eq:G2-4} as
\begin{align} \notag
& \Big[F_i ,\big[F_i , [F_i ,F_j]_{q^3} \big]_q \Big]_{q^{-1}}+\Big[K_i E_i ,\big[F_i , [F_i ,F_j]_{q^3} \big]_q \Big]_{q^{-1}}+ q^3[3] [B_i^\sigma, F_j  ]_{q^{-1}} K_i K_i'\\
&= [3]!\tT_i^{-1} (F_j) +q(1+[3])[B_i^\sigma,F_j]_{q^3} K_i K_i' + q^3[3] [B_i^\sigma, F_j  ]_{q^{-1}} K_i K_i'.
\label{eq:G2-5}
\end{align}
Now the desired formula \eqref{eq:G2-3} follows from \eqref{eq:G2-4}-\eqref{eq:G2-5}.
\end{proof}

\begin{lemma}
\label{lem:G2-2}
We have
\begin{align}\notag
\tT_i^{-1} (E_j K_j') &=\frac{1}{[3]! } \Big[B_i ,\big[B_i , [B_i ,E_j K_j']_{q^3} \big]_q \Big]_{q^{-1}}\\
& \quad -\frac{1}{[3]! } \big( q(1+[3])[B_i  , E_j K_j']_{q^3}   - q^3[3] [B_i ,E_j K_j']_{q^{-1}}\big) \tk_i.\label{eq:G2-6}
\end{align}
\end{lemma}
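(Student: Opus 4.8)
The statement to prove is Lemma~\ref{lem:G2-2}, which computes $\tT_i^{-1}(E_jK_j')$ in split type $G_2$ as an explicit polynomial in $B_i$ (as opposed to $B_i^\sigma$ in Lemma~\ref{lem:G2-1}). Let me think about the cleanest route.

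The paper has established a powerful mechanism for converting $\sigma$-flavored formulas into the plain (non-$\sigma$) ones: apply the operator $\cL = \tT_{w_0}\tT_{\bw}\widehat{\tau}_0\widehat{\tau}$, which in split $G_2$ reduces to $\tT_{w_0}$ since $\bw=1$ and $\tau=\Id$. This is exactly the strategy used to pass from \eqref{eq:app4} to \eqref{eq:app5} in the $c_{ij}=-2$ case. So I expect Lemma~\ref{lem:G2-2} to follow from Lemma~\ref{lem:G2-1} by applying $\cL$ to \eqref{eq:G2-3}.

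\begin{proof}
In split type $G_2$ we have $\bw=1$, $\tau=\Id$, $\tau_0=\Id$, and hence $\cL=\tT_{w_0}$ by \eqref{eq:D}; moreover $\ck_i=\tk_i$. By Lemma~\ref{lem:cL}, $\cL$ commutes with $\tT_i^{-1}$. We apply $\cL$ to the identity \eqref{eq:G2-3} of Lemma~\ref{lem:G2-1}. On the left-hand side, commutativity gives $\cL\big(\tT_i^{-1}(F_j)\big) = \tT_i^{-1}\big(\cL(F_j)\big)$, and by \eqref{eq:app3} (with $\bw=1$, so $\tT_{\bw}=\Id$ and $\tk_{\tau j}=\tk_j$) we have $\cL(F_j) = -q_j^{-2} E_j K_j' \tk_j^{-1} = -q^{-6} E_jK_j'\tk_j^{-1}$. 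On the right-hand side, we use \eqref{eq:app2} which in this setting reads $\cL(B_i^\sigma) = -q^{-(\alpha_i,\alpha_i)} B_i \tk_i^{-1} = -q^{-2}B_i\tk_i^{-1}$, together with $\cL$ being an algebra homomorphism and $\cL(\tk_i)=\tT_{w_0}(\tk_i)=q^{-(\alpha_i,\alpha_i)\cdot\text{(appropriate power)}}\tk_i^{-1}$ computed from Lemma~\ref{lem:braid1}. Since $\tk_i,\tk_j$ are central in $\tUi$, the various central factors $\tk_i^{-1},\tk_j^{-1}$ and the $\tT_i^{-1}$-eigenvalues of $\tk_j$ can be moved to one side and cancelled, exactly as in the passage \eqref{eq:app6}--\eqref{eq:app7} of Lemma~\ref{lem:split1}. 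After this cancellation, the bracketed polynomial expressions transform from $B_i^\sigma$ into $B_i$ verbatim (brackets $[\,\cdot\,,\,\cdot\,]_t$ are preserved under the homomorphism $\cL$ up to the recorded scalars), while the central terms $q(1+[3])[B_i^\sigma,F_j]_{q^3}\tk_i$ and $q^3[3][B_i^\sigma,F_j]_{q^{-1}}\tk_i$ become, respectively, $q(1+[3])[B_i,E_jK_j']_{q^3}\tk_i$ and $-q^3[3][B_i,E_jK_j']_{q^{-1}}\tk_i$. The sign flip on the last term arises precisely because the scalar attached to $[B_i,\,\cdot\,]_{q^{-1}}$ under the $q^{-1}$-bracket picks up an opposite power of $q$ relative to the $q^{3}$-bracket term when $\cL$ is applied; this is the one point requiring care. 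Collecting terms yields the claimed formula \eqref{eq:G2-6}.
\end{proof}

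The main obstacle, and the only genuinely non-formal point, is tracking the precise $q$-power scalars produced by $\cL=\tT_{w_0}$ acting on each $t$-bracket and on the central elements $\tk_i$, so that the final coefficients—including the crucial sign change in the last term of \eqref{eq:G2-6} compared to \eqref{eq:G2-3}—come out correctly after the central factors are cancelled. Everything else is a direct transcription of the $c_{ij}=-2$ argument, since $\cL$ is an algebra homomorphism fixing the combinatorial shape of the nested brackets. I would carry out the explicit scalar bookkeeping using Lemma~\ref{lem:braid1} for $\tT_{w_0}(\tk_i)$ and $\tT_{w_0}(E_j)$, together with $\tT_i^{-1}(\tk_j)$, verifying that all central contributions match on both sides so they may be stripped off, leaving \eqref{eq:G2-6}.
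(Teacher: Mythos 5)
Your proposal follows essentially the same route as the paper: apply $\cL=\tT_{w_0}$ to \eqref{eq:G2-3}, use \eqref{eq:app2}--\eqref{eq:app3} and the commutativity of $\cL$ with $\tT_i^{-1}$ from Lemma~\ref{lem:cL}, then strip off the central factors after tracking the scalars $\cL(\tk_i)=q^{-4}\tk_i^{-1}$ and $\tT_i^{-1}(\tk_j^{-1})=-q^{-6}\tk_j^{-1}\tk_i^{-3}$. The only imprecision is your heuristic for the sign flip on the $[B_i,E_jK_j']_{q^{-1}}$ term: it does not come from the brackets themselves (both $t$-brackets transform by the same positive scalar under $\cL$) but from the sign in $\tT_i^{-1}(\tk_j^{-1})=-q^{-6}\tk_j^{-1}\tk_i^{-3}$, which your planned bookkeeping via Proposition~\ref{prop:braid0} would reveal.
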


\begin{proof}
In this case, $\ck_i=\tk_i$ and $\ck_j=\tk_j$  are central. By \eqref{eq:app2}--\eqref{eq:app3}, we have
\begin{align}
\label{eq:G2-7}
\cL(F_j)=-q_j^{-2} E_j K_j' \tk_j^{-1},\qquad \cL(B_i^\sigma)=-q^{-2} B_i \tk_i^{-1}.
\end{align}

Recall from Lemma~\ref{lem:cL} that $\cL$ commutes with $\tT_i$. Applying $\cL$ to \eqref{eq:G2-3} and then using \eqref{eq:G2-7}, we have
\begin{align}\notag
\tT_i^{-1} & (E_j K_j') \tT_i^{-1}(\tk_j^{-1})\\\label{eq:G2-8}
&=-q^{-6}\frac{1}{[3]! } \Big[B_i  ,\big[B_i , [B_i ,E_j K_j']_{q^3} \big]_q \Big]_{q^{-1}}\tk_j^{-1} \tk_i^{-3}\\\notag
&+q^{-2}\frac{1}{[3]! } \big( q(1+[3])[B_i^\sigma , E_j K_j']_{q^3} - q^3[3] [B_i^\sigma,E_j K_j']_{q^{-1}} \big) \cL(\tk_i)\tk_j^{-1}\tk_i^{-1}.
\end{align}

Since $s_i(\alpha_j)=\alpha_j+3\alpha_i$, by Proposition~\ref{prop:braid0}, we have $\tT_i^{-1}(\tk_j^{-1})=-q^{-6}\tk_j^{-1} \tk_i^{-3}$. Note also that $\cL(\tk_i)=q^{-4} \tk_i^{-1}$. Hence, \eqref{eq:G2-8} implies the desired formula \eqref{eq:G2-6}.
\end{proof}

\subsection{Type AII}\label{AII-1}
Consider the rank 2 Satake diagram of type AII$_5$
\begin{center}
   \begin{tikzpicture}[baseline=0,scale=1.5]
		\node at (-0.5,0) {$\bullet$};
		\node  at (0,0) {$\circ$};
		\node at (0.5,0) {$\bullet$};
		%\node at (1.5,0) {$\bullet$};
		\node at (1,0) {$\circ$};
		\node at (1.5,0) {$\bullet$};
		\draw[-] (-0.5,0) to (-0.05, 0);
		\draw[-] (0.05, 0) to (0.5,0);
		\draw[-] (0.5,0) to (0.95,0);
		\draw[-] (1.05,0)  to (1.5,0);
		\node at (-0.5,-0.2) {1};
		\node  at (0,-0.2) {2};
		\node at (0.5,-0.2) {3};
		%\node at (1.5,0) {$\bullet$};
		\node at (1,-0.2) {4};
		\node at (1.5,-0.2) {5};
	\end{tikzpicture}\\
% AII$_5$\\
$\bs_4=s_4 s_3 s_5 s_4.$
\end{center}
In this case, Proposition~\ref{prop:rktwoRij} is reformulated and proved as follows.

\begin{lemma}  \label{lem:AII}
We have
\begin{align*}
\tT^{-1}_{\bs_4}(F_2)&= [ \tT_3(B_4^\sigma) ,F_2 ]_q,\\
\tT^{-1}_{\bs_4}\big(\tT_{\bw}(E_2)K_2'\big)&=[\tT_3(B_4),\tT_{\bw}(E_2)K_2']_q.
\end{align*}
\end{lemma}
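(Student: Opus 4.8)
The statement to be proved is the rank-2 computation in type AII$_5$, namely the two explicit formulas for $\tT^{-1}_{\bs_4}(F_2)$ and $\tT^{-1}_{\bs_4}\big(\tT_{\bw}(E_2)K_2'\big)$ appearing in Lemma~\ref{lem:AII}. Here $\bI = \{1,3,5\}$, $\wI = \{2,4\}$, $\tau = \Id$, and the reduced expression is $\bs_4 = s_4 s_3 s_5 s_4$. My plan is to compute $\tT^{-1}_{\bs_4} = \tT^{-1}_4 \tT^{-1}_3 \tT^{-1}_5 \tT^{-1}_4$ applied to the generator $F_2$ directly, using the explicit braid formulas in Proposition~\ref{prop:braid0}, and then to derive the second formula from the first by the general transport argument via the operator $\cL = \tT_{w_0}\tT_{\bw}\widehat\tau_0\widehat\tau$ introduced in \eqref{eq:D}, exactly as was done in the split rank-2 cases (e.g.\ Lemmas~\ref{lem:split1} and \ref{lem:G2-2}).

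\textbf{First formula.} First I would compute $\tT^{-1}_{\bs_4}(F_2)$. Since the nodes $2$ and $5$ are not adjacent, $s_5$ commutes with the relevant piece and $\tT^{-1}_5$ will act trivially on the weight subspace involved; concretely $\tT^{-1}_4(F_2) = [F_4, F_2]_{q}$ (as $c_{24} = 0$ makes this the appropriate $q$-commutator through node $3$—I must track the Cartan pairings carefully here), and then I apply $\tT^{-1}_3$, $\tT^{-1}_5$, $\tT^{-1}_4$ in turn, using that $\tT^{-1}_s$ for $s\in\bI$ are genuine (un-rescaled, by Remark~\ref{rem:sameT}) Lusztig operators. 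The key simplification is that $B_4^\sigma = F_4 + K_4 \tT^{-1}_{\bw}(E_4)$, and I expect the calculation to collapse to $[\tT_3(B_4^\sigma), F_2]_q$ after recognizing that $\tT_3$ applied to $B_4^\sigma$ produces precisely the root vector that $\tT^{-1}_{\bs_4}$ generates. I would use Lemma~\ref{lem:app1}, which tells me that in $q$-commutators against $F_j$ the term $K_i\tT^{-1}_{\bw}(E_{\tau i})$ in $B_i^\sigma$ is inert, so the commutator $[\tT_3(B_4^\sigma),F_2]_q$ equals $[\tT_3(F_4),F_2]_q$, matching the braid-group output.

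\textbf{Second formula.} For $\tT^{-1}_{\bs_4}\big(\tT_{\bw}(E_2)K_2'\big)$, rather than recomputing from scratch I would apply the operator $\cL$ to the first formula. By \eqref{eq:app2}--\eqref{eq:app3}, $\cL$ sends $F_2$ to a scalar multiple of $\tT_{\bw}(E_2)K_2'$ (times a Cartan factor) and $B_i^\sigma$ to a scalar multiple of $B_i$, and by Lemma~\ref{lem:cL} it commutes with $\tT_{\bs_4}$ and with all $\tT_s$, $s\in\bI$. Thus applying $\cL$ to $\tT^{-1}_{\bs_4}(F_2) = [\tT_3(B_4^\sigma),F_2]_q$ and unraveling the central/Cartan factors (exactly the bookkeeping carried out in the proof of Lemma~\ref{lem:G2-2}) should yield $[\tT_3(B_4),\tT_{\bw}(E_2)K_2']_q$. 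In this type AII$_5$ case the Cartan contributions are mild since $\ck_4$ is essentially $\tk_4$ up to a $\bI$-weight, and $(\alpha_4,\bw\alpha_4)=0$, so I anticipate the scalar twists to cancel cleanly, leaving no residual central term (consistent with Table~\ref{table:rktwoSatake}, where the AII$_5$ formula has no $\ck$ correction).

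\textbf{Main obstacle.} The hard part will be the bookkeeping of $q$-powers and Cartan (weight) factors when applying $\cL$, since $\cL$ introduces both a $\tT_{w_0}$-twist (mapping $F$'s to $E$'s with $\tau_0$-relabeling) and Cartan elements $\tT_{\bw}(\ck_{\cdot}^{-1})$ that must be commuted past the $q$-commutator and shown to cancel against $\tT^{-1}_{\bs_4}$ acting on the corresponding $\tk$. I will need to verify that the weight of the bracketed expression is exactly such that all these Cartan factors telescope, using $\tT^{-1}_{\bs_4}(\tk_{2,\dm}) = \tk_{\bs_4\alpha_2,\dm}$ from Lemma~\ref{lem:newb1}. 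The actual verification that the resulting element satisfies the intertwining relation \eqref{eq:twinij} is then immediate from Theorem~\ref{thm:rktwo1} once these two formulas are in hand, so the combinatorial weight-tracking is the genuine content here.
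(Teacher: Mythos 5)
Your approach matches the paper's: the first formula is obtained by direct computation from $\tT^{-1}_{\bs_4}=\tT^{-1}_{4354}$, Proposition~\ref{prop:braid0} and Lemma~\ref{lem:app1}, and the second by applying $\cL$ to the first and cancelling the Cartan factors by weight bookkeeping, exactly as in the paper. One small correction to your intermediate step: since nodes $2$ and $4$ are not adjacent ($c_{42}=0$), $\tT_4^{-1}(F_2)=F_2$ rather than $[F_4,F_2]_q$; the chain is $\tT_3^{-1}(F_2)=[F_3,F_2]_q$ followed by $\tT_4^{-1}$, yielding $[\tT_4^{-1}(F_3),F_2]_q=[\tT_3(F_4),F_2]_q$, and only then does the inertness of the $K_4\tT_{\bw}^{-1}(E_4)$ term upgrade $\tT_3(F_4)$ to $\tT_3(B_4^\sigma)$.
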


\begin{proof}
The first formula follows by $\tT^{-1}_{\bs_4} =\tT^{-1}_{4354}$, Proposition~\ref{prop:braid0}, and the formula \eqref{eq:q-comm}.

We prove the second formula. By \eqref{eq:app2}--\eqref{eq:app3}, we have
\begin{align}
\label{eq:AII-1}
\cL(F_2)=-q^{-2} \tT_{\bw}(E_2) K_2' \tT_{\bw}(\ck_2^{-1}),\qquad \cL(B_4^\sigma)=-q^{-2} B_4 \tT_{\bw}(\ck_4^{-1}).
\end{align}
Recall from Lemma~\ref{lem:cL} that the operator $\cL$ in \eqref{eq:D} commutes with $\tT_3, \tT_{\bs_4}$. Applying the operator $\cL$ to both sides of the first formula and then using \eqref{eq:AII-1}, we have
\begin{align}
\label{eq:AII-2}
\tT^{-1}_{\bs_4}\big(\tT_{\bw}(E_2)K_2'\big)\tT_{w_{\bullet,4}}(\ck_2^{-1})&=-q^{-2}[\tT_3(B_4)\tT_{5}(\ck_4^{-1}),\tT_{\bw}(E_2)K_2'\tT_{\bw}(\ck_2^{-1})]_q.
\end{align}
For a weight reason, we have
\begin{align*}
\tT_{5}(\ck_4^{-1})\tT_{\bw}(E_2)K_2'&=q\tT_{\bw}(E_2)K_2'\tT_{5}(\ck_4^{-1}), \\
\tT_{\bw}(\ck_2^{-1})\tT_3(B_4)&=q\tT_3(B_4)\tT_{\bw}(\ck_2^{-1}).
\end{align*}
Using these two identities, we simplify \eqref{eq:AII-2} as
\begin{align}
\label{eq:AII-3}
\tT^{-1}_{\bs_4}\big(\tT_{\bw}(E_2)K_2'\big)\tT_{w_{\bullet,4}}(\ck_2^{-1})&=-q^{-1}[\tT_3(B_4),\tT_{\bw}(E_2)K_2']_q\tT_{5}(\ck_4^{-1})\tT_{\bw}(\ck_2^{-1}).
\end{align}
Finally, by Proposition~\ref{prop:braid0}, we have $\tT_{w_{\bullet,4}}(\ck_2)=-q \tT_{5}(\ck_4 )\tT_{\bw}(\ck_2 )$. Hence, \eqref{eq:AII-3} implies the second desired formula.
\end{proof}

%\vspace{1em}

%
%
\subsection{Type CII$_n$, $n\geq 5$}
 \label{CIIn-1} %braid group action

 Consider the rank 2 Satake diagram of type CII$_n$, for $n\geq 5$:
\begin{center}
\begin{tikzpicture}[baseline=6,scale=1.5]
		\node  at (0,0) {$\bullet$};
		\node  at (0,-0.2) {1};
		\draw (0.05, 0) to (0.45, 0);
		\node  at (0.5,0) {$\circ$};
		\node  at (0.5,-0.2) {2};
		\draw (0.55, 0) to (0.95, 0);
		\node at (1,0) {$\bullet$};
		\node at (1,-.2) {3};
		\node at (1.5,0) {$\circ$};
		\node at (1.5,-0.2) {4};
		\draw[-] (1.05,0)  to (1.45,0);
		\draw[-] (1.55,0) to (1.95, 0);
		\node at (2,0) {$\bullet$};
		\node at (2,-0.2) {5};
		\draw (1.9, 0) to (2.1, 0);
		\draw[dashed] (2.1,0) to (2.7,0);
		\draw[-] (2.7,0) to (2.9, 0);
		\node at (3,0) {$\bullet$};
		\node at (3,-0.2) {n-1};
		\draw[implies-, double equal sign distance]  (3.1, 0) to (3.7, 0);
		\node at (3.8,0) {$\bullet$};
		\node at (3.8,-0.2) {n};
	\end{tikzpicture}\\
%CII$_n,n\geq 5$\\
$\vs_{2,\dm}=-q_2^{-1},\qquad \vs_{4,\dm}=-q_4^{-1/2}$
\\
$\bs_4=s_{4\cdots n \cdots 4} s_3 s_{4\cdots n \cdots 4}.$
\end{center}
Note that $q_2=q_4=q$. The notation $4\cdots n \cdots 4$ (with the local minima/maxima indicated) denotes a sequence $4 \ 5 \cdots n-1 \ n\ n-1 \cdots 5\ 4$, and we denote $s_{4\cdots n \cdots 4} =s_4\cdots s_n \cdots s_4$.

%In concrete cases below, the sequence $i_1  \dots i_k$ is naturally partitioned into increasing and decreasing subsequences, and we shall insert indices $i_l$ to indicate the local maxima/minima of the sequence. For example,  $\T_{i_1 \cdots i_l \cdots i_k}$ means the subsequences $i_1 \cdots i_l$ and $i_l \cdots i_k$ are monotone, and $\T_{i_1 \cdots i_l \cdots i_m \cdots i_k}$ means the subsequences $i_1 \cdots i_l$, $i_l \cdots i_m$,  and $i_m \cdots i_k$ are monotone, and so on.  Here is a concrete example which occurs in Type CII below: the shorthand ${2 \cdots n \cdots 1 \cdots n \cdots k}$, for some $1\le k \le n$, means ${2 \ 3 \cdots n-1 \ n\ n-1 \cdots 2\ 1 2 \cdots n-1\ n \ n-1 \cdots k}$.

In this case, Proposition~\ref{prop:rktwoRij} is reformulated and proved as Lemmas~\ref{lem:CIIn-1}--\ref{lem:CIIn-2} below.

\begin{lemma}  \label{lem:CIIn-1}
We have
\begin{align}\label{eq:app10}
\tT_{\bs_4}^{-1}(F_2)&=\big[[\tT_{5\cdots n\cdots 5}(B_4^\sigma), \tT_3 (B_4^\sigma)]_q,F_2\big]_q-q\tT_3^{-2}(F_2)\tT_3(K_4')\tT_{5\cdots n\cdots 5}(K_4).
\end{align}
\end{lemma}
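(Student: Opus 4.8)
The goal is to compute $\tT_{\bs_4}^{-1}(F_2)$ explicitly in type CII$_n$ and recognize the result as the stated polynomial in $B_4^\sigma$, $\tT_3(B_4^\sigma)$, $F_2$, and Cartan elements. The strategy is to split the reduced word $\bs_4 = s_{4\cdots n\cdots 4}\, s_3\, s_{4\cdots n\cdots 4}$ into manageable pieces and apply the braid operators of Proposition~\ref{prop:braid0} step by step, using the same uniform device (the operator $\cL = \tT_{w_0}\tT_{\bw}\widehat{\tau}_0\widehat{\tau}$) and the commutator identities of Lemma~\ref{lem:app1} that governed the split-type and AII computations. First I would set $t := s_{4\cdots n\cdots 4}$ so that $\bs_4 = t\, s_3\, t$ with $\ell(\bs_4) = 2\ell(t) + 1$, and record that the node $2$ is attached only to $1$ (black) and $3$ (black), so that $F_2$ interacts with the $\bs_4$-string only through $s_3$ and the first reflection $s_4$.

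\textbf{Key steps.} The computation proceeds from the innermost reflection outward. I would first apply $\tT_t^{-1}$ to $F_2$; since $t$ involves only nodes $\geq 4$ and $2$ is not adjacent to any of them except through the (absent) link, $\tT_t^{-1}(F_2) = F_2$, so $\tT_{\bs_4}^{-1}(F_2) = \tT_t^{-1}\tT_3^{-1}\tT_t^{-1}(F_2) = \tT_t^{-1}\tT_3^{-1}(F_2)$. Next, since $2$ and $3$ are adjacent with $c_{23} = -1$, Proposition~\ref{prop:braid0} gives $\tT_3^{-1}(F_2) = [F_3, F_2]_q$ (a $q$-commutator). The main body of work is then applying $\tT_t^{-1}$ to this $q$-commutator: here $t = s_{4\cdots n\cdots 4}$ acts nontrivially because $3$ is adjacent to $4$, so $\tT_t^{-1}(F_3)$ expands into a root vector supported on nodes $3,4,\dots,n$, while $\tT_t^{-1}(F_2) = F_2$. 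I expect $\tT_t^{-1}(F_3)$ to organize into a nested $q$-commutator of the form $[\tT_{5\cdots n\cdots 5}(F_4), \cdots]$, matching the structure $[\tT_{5\cdots n\cdots 5}(B_4^\sigma), \tT_3(B_4^\sigma)]_q$ once the passage from $F_4$ to $B_4^\sigma$ is made via Lemma~\ref{lem:app1} (equation \eqref{eq:q-comm}), which asserts $[B_4^\sigma, F_2]_{q^{-(\alpha_4,\alpha_2)}} = [F_4, F_2]_{q^{-(\alpha_4,\alpha_2)}}$ and similar identities that let me freely replace $F_4$ by $B_4^\sigma$ inside commutators with $F_2$. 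The Cartan correction term $-q\,\tT_3^{-2}(F_2)\tT_3(K_4')\tT_{5\cdots n\cdots 5}(K_4)$ should emerge from the $K$-contributions produced when a raising operator inside $B_4^\sigma$ meets a lowering operator, exactly as in the split $c_{ij}=-2$ case \eqref{eq:app4} and the AII computation of Lemma~\ref{lem:AII}; I would track these weight/Cartan factors carefully through each application of Proposition~\ref{prop:braid0}.

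\textbf{Main obstacle.} The hard part will be the explicit bookkeeping in the expansion of $\tT_t^{-1}$ applied to the $q$-commutator $[F_3,F_2]_q$: the long reflection $t$ threads through all nodes $4,\dots,n$, so a naive expansion produces many terms, and the challenge is to collect them into the two advertised pieces — the double $q$-commutator and the single Cartan term — without error in the $q$-powers. I anticipate managing this by using the factorization $t = s_4\, s_{5\cdots n\cdots 5}\, s_4$ and peeling off the outer $s_4$'s first (so that $\tT_4^{-1}$ is applied to something already simplified), and by systematically invoking Lemma~\ref{lem:app1} to convert all $F_4$'s into $B_4^\sigma$'s only at the very end, thereby keeping the intermediate algebra inside the quantum group $\tU$ where the root-vector combinatorics of Proposition~\ref{prop:braid0} are standard. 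A subsidiary subtlety is verifying that the weight arguments (used to move Cartan elements past root vectors and to discard cross terms, as in the simplification from \eqref{eq:AII-2} to \eqref{eq:AII-3}) apply verbatim here; I expect them to, since $\tau = \mathrm{Id}$ on the relevant white nodes forces $\tau_{\bullet,4}\tau = \mathrm{Id}$ and the Cartan exponents collapse as in \eqref{eq:app6}--\eqref{eq:app7}.
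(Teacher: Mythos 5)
Your plan reproduces the paper's proof of Lemma~\ref{lem:CIIn-1} essentially step for step: peel off the outer $\tT_{4\cdots n\cdots 4}^{-1}$ (which fixes $F_2$), compute $\tT_3^{-1}(F_2)=[F_3,F_2]_q$, expand $\tT_{4\cdots n\cdots 4}^{-1}$ of that commutator via the factorization $s_4\,s_{5\cdots n\cdots 5}\,s_4$ together with the fixed-point identity $\tT_4^{-1}\tT_{5\cdots n\cdots 5}^{-1}(F_4)=\tT_{5\cdots n\cdots 5}(F_4)$, and then match the result with the right-hand side by expanding $B_4^\sigma=F_4+K_4\tT_{\bw}^{-1}(E_4)$ and identifying the single surviving $E$--$F$ contraction as the Cartan correction term. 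The one small imprecision is your appeal to Lemma~\ref{lem:app1} for the conversion $F_4\leadsto B_4^\sigma$: that lemma only covers commutators with $F_j$ for $j$ a white node, whereas here the inner commutator is against the black node $3$, so one needs the short direct check (as in the paper) that $\big[[K_4\tT_{\bw}(E_4),F_3]_q,F_2\big]_q=0$ before the replacement is legitimate.
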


\begin{proof}
Since $s_{5\cdots n \cdots 5} s_4 s_{5\cdots n \cdots 5}(\alpha_4)=\alpha_4$, we have $\tT_{4}^{-1}\tT_{5\cdots n\cdots 5 }^{-1}(F_4)=\tT_{ 5\cdots n\cdots 5 }(F_4).$ Then
\begin{align*}
\tT_{\bs_4}^{-1}(F_2) =\tT^{-1}_{4\cdots n\cdots 4} [F_3,F_2]_q
&= \big[\tT_4^{-1} \tT_{5\cdots n\cdots 5}^{-1} ([F_4,F_3]_q),F_2\big]_q\\
&= \Big[\big[\tT_{5\cdots n\cdots 5}(F_4), [F_4,F_3]_q\big]_q,F_2\Big]_q.
\end{align*}
On the other hand, we compute RHS \eqref{eq:app10} as follows. First, note that \[
[K_4 \tT^{-1}_{w_\bullet}(E_4),F_3]_q=q^{-1} \tT_{5\cdots n\cdots 5}^{-1}(E_4) K_3 K_4,
\]
and hence
\begin{align*}
\big[[K_4 \tT_{w_\bullet}(E_4),F_3]_q, F_2\big]_q=[\tT_{5\cdots n\cdots 5}^{-1}(E_4),F_2]K_3K_4=0.
\end{align*}
Thus, we have
\begin{align*}
&\big[ [\tT_{5\cdots n\cdots 5}(B_4^\sigma), \tT_3(B_4^\sigma)]_q,F_2\big]_q\\
&=\Big[\big[\tT_{5\cdots n\cdots 5}(B_4^\sigma), [B_4^\sigma,F_3]_q\big]_q,F_2\Big]_q\\
&=\Big[\big[\tT_{5\cdots n\cdots 5}(B_4^\sigma), [F_4,F_3]_q\big]_q,F_2\Big]_q\\
&=\Big[\big[\tT_{5\cdots n\cdots 5}(F_4), [F_4 ,F_3]_q\big]_q,F_2\Big]_q+\Big[\big[\tT_{5\cdots n\cdots 5}(K_4) \tT_3^{-1}(E_4), [F_4 ,F_3]_q\big]_q,F_2\Big]_q\\
&=\tT_{\bs_4}^{-1}(F_2)+q\Big[\big[ \tT_3^{-1}(E_4), [F_4 ,F_3]_q\big] ,F_2\Big]_q\tT_{5\cdots n\cdots 5}(K_4)\\
&=\tT_{\bs_4}^{-1}(F_2)+q\Big[ [\tT_3^{-1}(F_3),F_3] ,F_2\Big]_{q^2}\tT_3(K_4')\tT_{5\cdots n\cdots 5}(K_4)\\
&=\tT_{\bs_4}^{-1}(F_2)+q\tT_3^{-2}(F_2)\tT_3(K_4')\tT_{5\cdots n\cdots 5}(K_4),
\end{align*}
as desired. This proves the formula \eqref{eq:app10}.
\end{proof}

\begin{lemma}
  \label{lem:CIIn-2}
We have
\begin{align}\notag
\tT_{\bs_4}^{-1}(\tT_{\bw}(E_2)K_2')
&=\big[[\tT_{ 5\cdots n\cdots 5}(B_4), \tT_3(B_4)]_q,\tT_{\bw}(E_2)K_2'\big]_q\\
&\quad -q\tT_3^{-2}\big(\tT_{\bw}(E_2)K_2'\big)\tT_3(K_4')\tT_{5\cdots n\cdots 5}(K_4).\label{eq:app11}
\end{align}
\end{lemma}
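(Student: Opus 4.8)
The statement to prove (Lemma~\ref{lem:CIIn-2}) is the ``$E$-version'' of Lemma~\ref{lem:CIIn-1}, and the natural strategy is to deduce it from Lemma~\ref{lem:CIIn-1} by applying the operator $\cL = \tT_{w_0}\tT_{\bw}\widehat{\tau}_0\widehat{\tau}$ of \eqref{eq:D}, exactly as was done in the CII$_n$ split-type analogues (cf. the passage from \eqref{eq:app4} to \eqref{eq:app5} and from \eqref{eq:app10}) and in the type AII computation passing from the first to the second formula of Lemma~\ref{lem:AII}. The point is that $\cL$ sends $F_2 \mapsto \tT_{\bw}(E_2)K_2'$ up to an explicit Cartan factor by \eqref{eq:app3}, and $B_4^\sigma \mapsto B_4$ up to a Cartan factor by \eqref{eq:app2}; since $\cL$ commutes with $\tT_3$ and $\tT_{\bs_4}$ by Lemma~\ref{lem:cL}, applying $\cL$ to \eqref{eq:app10} will transform the left-hand side into $\tT_{\bs_4}^{-1}\big(\tT_{\bw}(E_2)K_2'\big)$ times a Cartan element, and the right-hand side into the desired $q$-bracket expression times Cartan elements.

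The concrete steps I would carry out are as follows. First, I would record the images under $\cL$ of each building block appearing in \eqref{eq:app10}: using \eqref{eq:app2} with $i=4$ I get $\cL(B_4^\sigma) = -q^{-(\alpha_4,\alpha_4)}B_4\,\tT_{\bw}(\ck_{\tau 4}^{-1})$, and similarly for $\cL(\tT_{5\cdots n\cdots 5}(B_4^\sigma))$ after commuting $\cL$ past $\tT_{5\cdots n\cdots 5}$ (which it does, being a product of $\tT_k$ with $k\in\bI$, by Lemma~\ref{lem:cL}); using \eqref{eq:app3} I get $\cL(F_2)=-q_2^{-2}\tT_{\bw}(E_2)K_2'\,\tT_{\bw}(\ck_{\tau 2}^{-1})$. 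I would also need $\cL$ applied to the Cartan term $\tT_3^{-2}(F_2)\tT_3(K_4')\tT_{5\cdots n\cdots 5}(K_4)$, which reduces to evaluating $\cL$ on the $K$-factors via Lemma~\ref{lem:braid1}. Second, I would substitute these into $\cL$ of \eqref{eq:app10}, then commute all the Cartan factors $\tT_{\bw}(\ck_{\tau 4}^{-1})$, $\tT_{\bw}(\ck_{\tau 2}^{-1})$ to the right through the $q$-brackets, picking up explicit $q$-powers from the weight pairings (as in the weight arguments used to pass from \eqref{eq:AII-2} to \eqref{eq:AII-3}). Third, I would cancel a common right Cartan factor on both sides — this is the analogue of cancelling $\tT_{\bs_4}(\tk_j^{-1})$ or $\tT_{w_{\bullet,4}}(\ck_2^{-1})$ in the earlier proofs — using $\tT_{\bs_4}^{-1}$ applied to the appropriate $\tk$, to arrive at \eqref{eq:app11}.

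The main obstacle, as in every such $\cL$-transport argument in this appendix, is bookkeeping the Cartan factors and the resulting $q$-powers: verifying that after commuting $\tT_{\bw}(\ck^{-1})$-type elements through the nested $q$-commutators and cancelling the common factor, the normalization coefficient is exactly $-q$ (matching $\vs_{4,\dm}$-type scalars) and that the leftover Cartan element is precisely $\tT_3(K_4')\tT_{5\cdots n\cdots 5}(K_4)$ rather than some twisted variant. The key computational identities I expect to need are the weight relations that let one commute $\tT_{\bw}(\ck_{\tau 4}^{-1})$ past $\tT_{\bw}(E_2)K_2'$ and past $\tT_3(B_4)$, together with $\tT_{w_{\bullet,4}}(\ck_2)$ and the Cartan evaluations from Proposition~\ref{prop:braid0} and Lemma~\ref{lem:braid1}; these are entirely analogous to the identities displayed between \eqref{eq:AII-2} and \eqref{eq:AII-3}, so once set up the computation should be routine, if delicate. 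I would organize the proof so that the generic weight-commutation steps are invoked uniformly rather than rederived, thereby minimizing the type-specific arithmetic.
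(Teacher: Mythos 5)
Your proposal matches the paper's own proof: the paper likewise applies $\cL$ to \eqref{eq:app10}, invokes Lemma~\ref{lem:cL} together with \eqref{eq:app2}--\eqref{eq:app3}, commutes the resulting Cartan factors through the nested $q$-brackets via weight arguments, and cancels the common factor $\tT_{w_{\bullet,4}}(\ck_2^{-1})$ using the identity $\tT_{w_{\bullet,4}}(\ck_2)=q\,\tT_{3}(\ck_4)\tT_{5\cdots n\cdots 5}(\ck_4)\tT_{\bw}(\ck_2)$. The approach and all key ingredients are the same, so this is essentially the paper's argument.
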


\begin{proof}
%In this case, $\tau_0=\tau=\Id$ and we read \eqref{eq:D} as $\cL=\tT_{w_0} \tT_{\bw}$.
By Lemma~\ref{lem:cL}, the operator $\cL$ in \eqref{eq:D} commutes with $\tT_3,\tT_{5\cdots n \cdots 5},\tT_{\bs_4}$. Applying $\cL$ to \eqref{eq:app10} and then using \eqref{eq:app2}-\eqref{eq:app3}, we obtain
\begin{align}\notag
&\tT_{\bs_4}^{-1}(\tT_{\bw}(E_2)K_2')\tT_{w_{\bullet,4}}(\ck_2^{-1})\\\notag
&=q^{-4}\big[[\tT_{ 5\cdots n\cdots 5}(B_4)\tT_{3}(\ck_4^{-1}), \tT_3(B_4)\tT_{5\cdots n\cdots 5}(\ck_4^{-1})]_q,\tT_{\bw}(E_2)K_2'\tT_{\bw}(\ck_2^{-1})\big]_q\\
&-q\tT_3^{-2}\big(\tT_{\bw}(E_2)K_2'\big)\tT_{\bw}(\ck_2^{-1})\cL(\tT_3(K_4')\tT_{5\cdots n\cdots 5}(K_4)).\label{eq:app12}
\end{align}
Recalling $ck_i$ from \eqref{def:Ki}, we have
  \begin{align*}
 \ck_4 B_4&= q^{-3} B_4 \ck_4 ,\\
\tT_{\bw}(\ck_2 )\tT_{ 5\cdots n\cdots 5}(B_4 )\tT_3(B_4 )&=\tT_{ 5\cdots n\cdots 5}(B_4 )\tT_3(B_4 )\tT_{\bw}(\ck_2 ),\\
\cL(\tT_3(K_4')\tT_{5\cdots n\cdots 5}(K_4))&=q^{-1} \tT_{5\cdots n\cdots 5}(\ck_4^{-1})\tT_3(\ck_4^{-1})\tT_3(K_4')\tT_{5\cdots n\cdots 5}(K_4).
\end{align*}
Using these formulas, we simplify \eqref{eq:app12} as
 \begin{align}\notag
&\tT_{\bs_4}^{-1}(\tT_{\bw}(E_2)K_2')\tT_{w_{\bullet,4}}(\ck_2^{-1})\\\notag
&=q^{-1}\big[[\tT_{ 5\cdots n\cdots 5}(B_4), \tT_3(B_4)]_q,\tT_{\bw}(E_2)K_2'\big]_q\tT_{3}(\ck_4^{-1})\tT_{5\cdots n\cdots 5}(\ck_4^{-1})\tT_{\bw}(\ck_2^{-1})\\
&-\tT_3^{-2}\big(\tT_{\bw}(E_2)K_2'\big)\tT_3(K_4')\tT_{5\cdots n\cdots 5}(K_4)\tT_{5\cdots n\cdots 5}(\ck_4^{-1})\tT_3(\ck_4^{-1})\tT_{\bw}(\ck_2^{-1}).\label{eq:app13}
\end{align}
Finally, by \eqref{def:Ki} we have
$\tT_{w_{\bullet,4}}(\ck_2)=q \tT_{3}(\ck_4)\tT_{5\cdots n\cdots 5}(\ck_4)\tT_{\bw}(\ck_2).$ Therefore, the formula \eqref{eq:app11} follows from \eqref{eq:app13}.
\end{proof}
\vspace{1em}

\subsection{Type CII$_4$} \label{CII4-1}%braid group action

Consider the rank 2 Satake diagram of type CII$_4$:
\begin{center}
\begin{tikzpicture}[baseline=6,scale=1.5]
        \node at (2,0) {$\bullet$};
        \node at (2,-0.2) {1};
		\draw[-] (2.05,0) to (2.45, 0);
        \node at (2.5,0) {$\circ$};
        \node at (2.5,-0.2) {2};
		\draw[-] (2.55,0) to (2.95, 0);
		\node at (3,0) {$\bullet$};
		\node at (3,-0.2) {3};
		\draw[implies-, double equal sign distance]  (3.05, 0) to (3.75, 0);
		\node at (3.8,0) {$\circ$};
		\node at (3.8,-0.2) {4};
	\end{tikzpicture}\\
%CII$_4$\\
$\bs_4=s_4 s_3 s_4,\qquad \bs_2 = s_2 s_1 s_3 s_2$.
\end{center}
In this case, Proposition~\ref{prop:rktwoRij} is reformulated and proved as Lemmas~\ref{lem:CII4-1}--\ref{lem:CII4-2} below.

\begin{lemma} \label{lem:CII4-1}
We have
\begin{align}\label{eq:CII4-1}
\tT^{-1}_{\bs_4}(F_2)&=\big[[B_4^\sigma,F_3]_{q_4},F_2\big]_{q_3},\\
\tT^{-1}_{\bs_2}(F_4)&=\big[\tT_3(B_2^\sigma),[\tT_3(B_2^\sigma),F_4]_{q_3^2}\big]-(q_3-q_3^{-1})[F_3,F_4]_{q_3^2}E_1 K_2 K_2' K_3.\label{eq:CII4-2}
\end{align}
\end{lemma}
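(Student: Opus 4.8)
The plan is to prove both identities by an explicit computation in $\tU$, using the formulas for the rescaled braid operators in Proposition~\ref{prop:braid0} together with the reduced expressions $\bs_4=s_4s_3s_4$ and $\bs_2=s_2s_1s_3s_2$, so that $\tT_{\bs_4}^{-1}=\tT_4^{-1}\tT_3^{-1}\tT_4^{-1}$ and $\tT_{\bs_2}^{-1}=\tT_2^{-1}\tT_1^{-1}\tT_3^{-1}\tT_2^{-1}$ (the factors $\tT_1,\tT_3$ commuting). Throughout I will use the normalization read off the diagram (node $4$ long, nodes $1,2,3$ short, so that $q_2=q_3$, $c_{43}=-1$, $c_{34}=-2$), the fact that nodes $2$ and $4$ are non-adjacent, and that $1,3\in\bI$.

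For \eqref{eq:CII4-1} I would compute the left-hand side directly. Since $2,4$ are non-adjacent, $\tT_4^{-1}(F_2)=F_2$; since $2,3$ are joined by a single bond, $\tT_3^{-1}(F_2)=[F_3,F_2]_{q_3}$; applying $\tT_4^{-1}$ once more and using $\tT_4^{-1}(F_3)=[F_4,F_3]_{q_4}$ (a single $q$-commutator, as $c_{43}=-1$) yields $\tT_{\bs_4}^{-1}(F_2)=\big[[F_4,F_3]_{q_4},F_2\big]_{q_3}$. It then remains to replace $[F_4,F_3]_{q_4}$ by $[B_4^\sigma,F_3]_{q_4}$; note that Lemma~\ref{lem:app1} does not apply verbatim here since $3\in\bI$. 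Expanding $B_4^\sigma=F_4+K_4\tT_{\bw}^{-1}(E_4)$ from \eqref{eq:Bsig}, the extra contribution to the inner bracket is $\big[K_4\tT_{\bw}^{-1}(E_4),F_3\big]_{q_4}$, whose weight after reduction is $\alpha_3+\alpha_4$; a short calculation comparing the Cartan $q$-powers in $K_\bullet F_2=q_3^{\,a}F_2K_\bullet$ against the outer bracket parameter $q_3$ shows that this $E$-contribution is annihilated by $[\,\cdot\,,F_2]_{q_3}$, which finishes the identity.

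For \eqref{eq:CII4-2} I would again start from the direct computation. Since $\tT_3^{-1}(F_4)$ lies in the subalgebra generated by $F_3,F_4$, the factor $\tT_1^{-1}$ acts trivially on it, so the left-hand side reduces to the pure-$F$ expression $\tT_2^{-1}\tT_3^{-1}(F_4)$, obtained by substituting $F_3\mapsto[F_2,F_3]_{q_2}$ into the quadratic element $\tT_3^{-1}(F_4)$. The right-hand side must then be expanded: writing $\tT_3(B_2^\sigma)=\tT_3(F_2)+\tT_3\big(K_2\tT_{\bw}^{-1}(E_2)\big)$ with $\bw=s_1s_3$, the double $q_3^2$-commutator splits into a pure-$F$ part and an $E$-containing part. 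Using the defining relation $[E_2,F_2]=(K_2-K_2')/(q-q^{-1})$, the quantum Serre relations, and the fact that $\tT_{\bw}^{-1}(E_2)$ carries the factor $E_1$ coming from the black reflection $s_1$, I expect to show that the $E$-containing part equals exactly $(q_3-q_3^{-1})[F_3,F_4]_{q_3^2}E_1K_2K_2'K_3$, so that subtracting the stated correction leaves a pure-$F$ expression which one then matches with $\tT_2^{-1}\tT_3^{-1}(F_4)$. Both formulas have the shape predicted by Proposition~\ref{prop:rktwoRij}(1), with $R_{4,2}$ and $R_{2,4}$ read off the results.

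The main obstacle is the second identity: the interaction of the $E$-part of $\tT_3(B_2^\sigma)$ with $F_4$ through the defining and Serre relations produces several Cartan-valued terms, and one must verify that all of them cancel except the single surviving $E_1$-term matched by the correction. Tracking these contributions — in particular keeping the $q$-powers arising from moving Cartan elements past the root vectors $E_1,E_2,E_3$ — is where the bookkeeping is genuinely delicate; the pure-$F$ matching, by contrast, is a routine (if lengthy) application of the quantum Serre relations. Should the direct route prove unwieldy, an alternative cross-check is to feed the $F$-side identity into the operator $\cL$ of \eqref{eq:D}, exploiting \eqref{eq:app2}--\eqref{eq:app3} and Lemma~\ref{lem:cL} as in the type $\mathrm{CII}_n$ computation, to confirm the coefficient of the correction term.
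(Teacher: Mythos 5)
Your proposal is correct and follows essentially the same route as the paper: both identities are obtained by computing $\tT_{\bs_4}^{-1}(F_2)$ and $\tT_{\bs_2}^{-1}(F_4)$ directly from the reduced expressions via Proposition~\ref{prop:braid0}, and then expanding $B^\sigma$ into its $F$-part plus $E$-part so that the surviving cross-terms produce the stated correction. The only remark is that the bookkeeping you anticipate for \eqref{eq:CII4-2} is lighter than you fear: the $E$-part of the \emph{inner} bracket $[\tT_3(B_2^\sigma),F_4]_{q_3^2}$ already vanishes (since $K_2K_3\tT_1^{-1}(E_2)$ $q_3^2$-commutes with $F_4$), so only the single outer cross-commutator $\big[K_2K_3\tT_1^{-1}(E_2),[\tT_3(F_2),F_4]_{q_3^2}\big]$ survives, and it is evaluated in one line to the correction term.
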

\begin{proof}
The first formula \eqref{eq:CII4-1} follows by a direct computation.

We prove \eqref{eq:CII4-2}. We have
\begin{align*}
\tT_{\bs_2}^{-1}(F_4)&=\big[\tT_2^{-1}(F_3),[\tT_2^{-1}(F_3),F_4]_{q_3^2}\big]= \big[\tT_3(F_2),[\tT_3(F_2),F_4]_{q_3^2}\big].
\end{align*}
Hence, recalling that $B_2^\sigma=F_2 + K_2\tT_{13}^{-1}(E_2)$, we have
\begin{align*}
\big[ \tT_3(B_2^\sigma),[\tT_3(B_2^\sigma),F_4]_{q_3^2}\big]&=\big[ \tT_3(B_2^\sigma),[\tT_3(F_2),F_4]_{q_3^2}\big]\\
&= \big[ \tT_3(F_2),[\tT_3(F_2),F_4]_{q_3^2}\big]+ \big[ K_2K_3\tT_1^{-1}(E_2),[\tT_3(F_2),F_4]_{q_3^2}\big]\\
&= \tT_{\bs_2}^{-1}(F_4) + \big[[\tT_1^{-1}(E_2),\tT_3(F_2)],F_4\big]_{q_3^2} K_2K_3\\
&= \tT_{\bs_2}^{-1}(F_4) + (q_3-q_3^{-1})[F_3,F_4]_{q_3^2}E_1 K_2 K_2' K_3.
\end{align*}
Thus, \eqref{eq:CII4-2} is proved.
\end{proof}
\begin{lemma}\label{lem:CII4-2}
We have
\begin{align}\label{eq:CII4-3}
\tT^{-1}_{\bs_4}\big(\tT_{\bw}(E_2)K_2'\big)&=\big[[B_4,F_3]_{q_4},\tT_{\bw}(E_2)K_2'\big]_{q_3},\\\notag
\tT^{-1}_{\bs_2}\big(\tT_{\bw}(E_4)K_4'\big)&=\big[ \tT_3(B_2),[\tT_3(B_2),\tT_{\bw}(E_4)K_4']_{q_3^2}\big]\\
&-(q_3-q_3^{-1})[F_3,\tT_{\bw}(E_4)K_4']_{q_3^2}E_1 K_2 K_2' K_3.\label{eq:CII4-4}
\end{align}
\end{lemma}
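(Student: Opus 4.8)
The plan is to establish the two formulas in Lemma~\ref{lem:CII4-2} by the same $\cL$-twisting technique that was used throughout this appendix, leveraging the already-proven companion formulas \eqref{eq:CII4-1}--\eqref{eq:CII4-2} in Lemma~\ref{lem:CII4-1}. The guiding principle is that the action of $\tT_{\bs_i}^{-1}$ on $\tT_{\bw}(E_{\tau j})K_j'$ is obtained from its action on $F_j$ by applying the operator $\cL=\tT_{w_0}\tT_{\bw}\widehat{\tau}_0\widehat{\tau}$ of \eqref{eq:D}, which by Lemma~\ref{lem:cL} commutes with $\tT_{\bs_i}$ and with all $\tT_j$ ($j\in\bI$), and which sends $B_i^\sigma$ and $F_j$ to scalar multiples of $B_i$ and $\tT_{\bw}(E_{\tau j})K_j'$ respectively via \eqref{eq:app2}--\eqref{eq:app3}. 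Since in type CII$_4$ we have $\tau=\mathrm{Id}$, the operator $\cL$ simplifies and the bookkeeping of $\tau$-twists disappears.

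First I would treat formula \eqref{eq:CII4-3} for $\tT^{-1}_{\bs_4}\big(\tT_{\bw}(E_2)K_2'\big)$. I would apply $\cL$ to both sides of the already-established \eqref{eq:CII4-1}. On the left, $\cL\,\tT_{\bs_4}^{-1}(F_2)=\tT_{\bs_4}^{-1}\cL(F_2)$ by Lemma~\ref{lem:cL}, and $\cL(F_2)$ is a scalar times $\tT_{\bw}(E_2)K_2'$ times a Cartan factor by \eqref{eq:app3}. On the right, I would push $\cL$ through the nested $q$-commutator $\big[[B_4^\sigma,F_3]_{q_4},F_2\big]_{q_3}$: the operator fixes $F_3$ (as $3\in\bI$ and $\tau_0$ acts trivially here) up to the appropriate Cartan conjugation, converts $B_4^\sigma\mapsto -q^{-(\alpha_4,\alpha_4)}B_4\tT_{\bw}(\ck_4^{-1})$ by \eqref{eq:app2}, and converts $F_2\mapsto$ scalar times $\tT_{\bw}(E_2)K_2'$ by \eqref{eq:app3}. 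The remaining work is to collect all the Cartan elements $\ck_i$, $\tk_i$, $K_i'$ and verify — by weight reasoning exactly as in Lemmas~\ref{lem:CIIn-1}--\ref{lem:CIIn-2} — that they commute past the root vectors in the correct order and cancel, leaving precisely $\big[[B_4,F_3]_{q_4},\tT_{\bw}(E_2)K_2'\big]_{q_3}$. Formula \eqref{eq:CII4-4} is proved by the same recipe applied to \eqref{eq:CII4-2}, with the extra subtlety that the inhomogeneous correction term $-(q_3-q_3^{-1})[F_3,F_4]_{q_3^2}E_1K_2K_2'K_3$ must also be tracked through $\cL$; here I would use that $E_1$ ($1\in\bI$) transforms predictably and that the $\widehat B$-type substitutions match the expression $\widehat B_i=\tT_{\bs_i}(\tTa{i}(B_i))$ appearing implicitly in the parallel $P_{ij}$-formulation.

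The main obstacle I anticipate is the precise tracking of the Cartan factors and the $q$-power prefactors: unlike the split cases where $\ck_i=\tk_i$ is central, in type CII$_4$ the elements $\ck_2,\ck_4$ are genuinely non-central and satisfy nontrivial $q$-commutation relations with $B_2,B_4$ and with $\tT_{\bw}(E_2)K_2'$, so the weight arguments determining the order in which factors may be interchanged (as in \eqref{eq:app12}--\eqref{eq:app13}) must be carried out carefully to ensure the auxiliary Cartan elements on both sides agree and cancel. A second point requiring care is verifying that the image under $\cL$ of the inhomogeneous term in \eqref{eq:CII4-2} lands exactly on the inhomogeneous term in \eqref{eq:CII4-4} and not on some spurious additional contribution; this amounts to checking one weight computation involving $\ck_2$ and the fixed-point behavior of $\tT_3^{-2}$. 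Once these routine but delicate commutations are settled, both identities follow immediately, and together with Lemma~\ref{lem:CII4-1} they furnish the non-commutative polynomials $P_{2,4}$ and $P_{4,2}$ of Proposition~\ref{prop:rktwoRijb}, confirming the corresponding entries of Table~\ref{table:rktwoSatake2}.
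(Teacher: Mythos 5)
Your proposal matches the paper's proof essentially verbatim: the paper establishes \eqref{eq:CII4-4} by applying the operator $\cL$ of \eqref{eq:D} to \eqref{eq:CII4-2}, invoking Lemma~\ref{lem:cL} and \eqref{eq:app2}--\eqref{eq:app3}, and then carrying out exactly the weight-commutation and Cartan-factor bookkeeping you describe (including the treatment of the inhomogeneous $E_1$-term), with \eqref{eq:CII4-3} handled by the same recipe. One small slip at the end: Lemmas~\ref{lem:CII4-1}--\ref{lem:CII4-2} furnish the polynomials $R_{ij}$ of Proposition~\ref{prop:rktwoRij} and the entries of Table~\ref{table:rktwoSatake} for $\tTa{i}(B_j)$, not the $P_{ij}$ of Proposition~\ref{prop:rktwoRijb} and Table~\ref{table:rktwoSatake2}.
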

\begin{proof}
We shall prove the formula \eqref{eq:CII4-4} only, and skip a similar proof for \eqref{eq:CII4-3}.

By Lemma~\ref{lem:cL}, the operator $\cL$ defined in \eqref{eq:D} commutes with $\tT_{3},\tT_{\bs_2}$. Applying $\cL$ to the identity \eqref{eq:CII4-2} and then using \eqref{eq:app2}-\eqref{eq:app3}, we have
\begin{align}\notag
&\tT^{-1}_{\bs_2}\big(\tT_{\bw}(E_4)K_4'\big)\tT_{w_{\bullet,2}}(\ck_4^{-1})\\\notag
&=q_2^{-4}\big[ \tT_3(B_2)\tT_{1}(\ck_2^{-1}),[\tT_3(B_2)\tT_{1}(\ck_2^{-1}),\tT_{\bw}(E_4)K_4'\tT_{\bw}(\ck_4^{-1})]_{q_3^2}\big]\\
&-(q_3-q_3^{-1})q_3^{-4}[F_3 K_3 K_3'^{-1},\tT_{\bw}(E_4)K_4'\tT_{\bw}(\ck_4^{-1})]_{q_3^2} E_1 K_1^{-1} K_1' \cL(K_2 K_2' K_3).\label{eq:app14}
\end{align}
For a weight reason, we have
\begin{align*}
\tT_{1}(\ck_2^{-1})\tT_{\bw}(E_4)&=q_3^2 \tT_{\bw}(E_4)\tT_{1}(\ck_2^{-1}),\\
\tT_{\bw}(\ck_4^{-1}) \tT_3(B_2)& =q_3^2 \tT_3(B_2) \tT_{\bw}(\ck_4^{-1}),\\
\tT_{1}(\ck_2^{-1}) \tT_3(B_2)\tT_{\bw}(E_4)&=\tT_3(B_2)\tT_{\bw}(E_4)\tT_{1}(\ck_2^{-1}),\\
\tT_{1}(\ck_2^{-1})\tT_{\bw}(\ck_4^{-1})\tT_3(B_2)&=\tT_3(B_2)\tT_{1}(\ck_2^{-1})\tT_{\bw}(\ck_4^{-1}).
\end{align*}
We also have $\cL(K_2 K_2' K_3)=q_2^{-2}\tT_{\bw}(K_2 K_2')^{-1} K_3$. Hence, \eqref{eq:app14} is simplified as
\begin{align}\notag
\tT^{-1}_{\bs_2} & \big(\tT_{\bw}(E_4)K_4'\big)\tT_{w_{\bullet,2}}(\ck_4^{-1})\\\notag
&=q_2^{-2}\big[ \tT_3(B_2) ,[\tT_3(B_2) ,\tT_{\bw}(E_4)K_4']_{q_3^2}\big]\tT_{\bw}(\ck_4^{-1})\tT_{1}(\ck_2^{-1})^2\\
&-(q_3-q_3^{-1})q_2^{-2} [F_3  ,\tT_{\bw}(E_4)K_4']_{q_3^2} E_1 K_2 K_2' K_3\tT_{\bw}(\ck_4^{-1}) \tT_{1}(\ck_2^{-1})^2.\label{eq:app15}
\end{align}
By the definition of $\ck_i$ in \eqref{def:Ki}, we have $\tT_{w_{\bullet,2}}(\ck_4^{-1})=q_2^{-2}\tT_{\bw}(\ck_4^{-1}) \tT_{1}(\ck_2^{-1})^2$. Thus, \eqref{eq:app15} implies the desired formula \eqref{eq:CII4-4}.
\end{proof}

\subsection{Type EIV}\label{EIV-1}%braid group action

Consider the rank 2 Satake diagram of type EIV:
\begin{center}
\begin{tikzpicture}[baseline = 0, scale =1.5]
		\node at (-1,0) {$\circ$};
        \node at (-1,-0.2) {1};
		\draw (-0.95,0) to (-0.55,0);
		\node at (-0.5,0) {$\bullet$};
        \node at (-0.5,-0.2) {2};
		\draw (-0.45,0) to (-0.05,0);
		\node at (0,0) {$\bullet$};
        \node at (0.1,-0.2) {3};
		\draw (0.05,0) to (0.45,0);
		\node at (0.5,0) {$\bullet$};
		\node at (0.5,-0.2) {4};
		\draw (0.55,0) to (0.95,0);
		\node at (1,0) {$\circ$};
		\node at (1,-0.2) {5};
		\draw (0,-0.05) to (0,-0.45);
		\node at (0,-0.5) {$\bullet$};
		\node at (0,-0.7) {6};
	\end{tikzpicture}\\
 %EIV\\
$\bs_1=s_1 s_2 s_3 s_4 s_6 s_3 s_2 s_1.$
\end{center}
In this case, Proposition~\ref{prop:rktwoRij} is reformulated and proved as Lemma~\ref{lem:EIV} below.
\begin{lemma}  \label{lem:EIV}
\begin{align}\label{eq:EIV1}
\tT^{-1}_{\bs_1}(F_5)&=\big[ \tT_4\tT_3\tT_2(B_1^\sigma) ,F_5\big]_q,\\
\tT^{-1}_{\bs_1}\big(\tT_{\bw}(E_5) K_5'\big)&=\big[ \tT_4\tT_3\tT_2(B_1) ,\tT_{\bw}(E_5) K_5'\big]_q.\label{eq:EIV2}
\end{align}
\end{lemma}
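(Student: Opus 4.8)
The plan is to establish the two identities \eqref{eq:EIV1} and \eqref{eq:EIV2} following the template already used for types AII and CII (Lemmas~\ref{lem:AII}--\ref{lem:CII4-2}): prove the first identity by a direct braid-group computation, then derive the second from it by applying the operator $\cL$ of \eqref{eq:D}. Throughout, $\tau=\Id$ on the EIV diagram, so that $B_1 = F_1 + \tT_{\bw}(E_1)K_1'$ and $B_1^\sigma = F_1 + K_1\tT_{\bw}^{-1}(E_1)$, and all $q_k = q$ since $E_6$ is simply laced.

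For \eqref{eq:EIV1}, I would expand $\tT^{-1}_{\bs_1}$ along the reduced word $\bs_1 = s_1 s_2 s_3 s_4 s_6 s_3 s_2 s_1$ and apply it to $F_5$ using Proposition~\ref{prop:braid0}. As $5$ is adjacent only to $4$, the operators $\tT_1^{-1},\tT_2^{-1},\tT_3^{-1},\tT_6^{-1}$ fix $F_5$, and the surviving operators produce the nested $q$-commutator $\tT^{-1}_{\bs_1}(F_5) = \big[[[[F_1,F_2]_q,F_3]_q,F_4]_q,F_5\big]_q$. On the other hand, a parallel application of Proposition~\ref{prop:braid0} gives $\tT_4\tT_3\tT_2(F_1) = [[[F_1,F_2]_q,F_3]_q,F_4]_q$ (using $[F_1,F_3]=[F_1,F_4]=[F_2,F_4]=0$ to identify the two nestings), so that $[\tT_4\tT_3\tT_2(F_1),F_5]_q$ reproduces the same element. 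It then remains to check that the contribution of the second summand $K_1\tT_{\bw}^{-1}(E_1)$ of $B_1^\sigma$ drops out of $[\tT_4\tT_3\tT_2(B_1^\sigma),F_5]_q$; this is precisely the role played by \eqref{eq:q-comm} in Lemma~\ref{lem:AII}, and here should follow by the same mechanism: the $E$-contribution is supported on the nodes $\{1,2,3,4,6\}$ and meets $F_5$ only through the chosen $q$-power, so that the associated $q$-bracket vanishes by a weight argument.

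For \eqref{eq:EIV2}, I would apply $\cL = \tT_{w_0}\tT_{\bw}\widehat{\tau}_0\widehat{\tau}$ to the identity \eqref{eq:EIV1}. By Lemma~\ref{lem:cL} the operator $\cL$ commutes with $\tT_{\bs_1}$ and with $\tT_2,\tT_3,\tT_4$, while by \eqref{eq:app2}--\eqref{eq:app3} it sends $B_1^\sigma \mapsto -q^{-2}B_1\tT_{\bw}(\ck_1^{-1})$ and $F_5 \mapsto -q^{-2}\tT_{\bw}(E_5)K_5'\,\tT_{\bw}(\ck_5^{-1})$. Substituting these and commuting $\cL$ through $\tT^{-1}_{\bs_1}$ and $\tT_4\tT_3\tT_2$ turns \eqref{eq:EIV1} into an identity for $\tT^{-1}_{\bs_1}(\tT_{\bw}(E_5)K_5')$, up to a collection of Cartan factors built from $\ck_1$, $\ck_5$ and the toral images under $\cL$. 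Clearing these factors using the weight relations together with the definition \eqref{def:Ki} of $\ck_i$ and the value of $\tT_{w_{\bullet,1}}(\ck_5)$ would then yield \eqref{eq:EIV2}, exactly as in the proof of Lemma~\ref{lem:CIIn-2}.

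The first identity is the routine part, the braid computation collapsing cleanly because the local geometry along the path $1$-$2$-$3$-$4$-$5$ is of type $A$. I expect the main obstacle to be the Cartan bookkeeping in the second identity: correctly tracking the elements $\ck_1$, $\ck_5$ and the toral factors through the conjugation by $\cL$ and across the $q$-commutator, and verifying that they cancel to leave precisely $[\tT_4\tT_3\tT_2(B_1),\tT_{\bw}(E_5)K_5']_q$ with no residual scalar. The weight-based vanishing of the $E$-contribution in the first identity requires comparable care, though it is lighter.
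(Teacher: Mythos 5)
Your proposal is correct and follows the paper's proof essentially verbatim: identity \eqref{eq:EIV1} by peeling off the braid operators that fix $F_5$ along the reduced word of $\bs_1$ and identifying $\tT_1^{-1}\tT_2^{-1}\tT_3^{-1}(F_4)=\tT_4\tT_3\tT_2(F_1)$ (with the $E$-part of $B_1^\sigma$ killed by the weight/$q$-power argument), and identity \eqref{eq:EIV2} by applying $\cL$ to \eqref{eq:EIV1} via Lemma~\ref{lem:cL} and \eqref{eq:app2}--\eqref{eq:app3} and then clearing the Cartan factors using $\tT_{w_{\bullet,1}}(\ck_5^{-1})=-q^{-1}\tT_{632}(\ck_1^{-1})\tT_{\bw}(\ck_5^{-1})$. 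The bookkeeping you flag as the remaining work is exactly the content of the paper's displayed computation, and your sketched steps are all accurate.
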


\begin{proof}
We prove the formula \eqref{eq:EIV1}. Indeed, we have
\begin{align*}
\tT_{\bs_1}^{-1}(F_5)=\tT_1^{-1}\tT_2^{-1}\tT_3^{-1}[F_4,F_5]_q
&=  [\tT_1^{-1}\tT_2^{-1}\tT_3^{-1}(F_4),F_5]_q
\\
&= [\tT_4\tT_3\tT_2(F_1),F_5]_q
= [\tT_4\tT_3\tT_2(B_1^\sigma),F_5]_q.
\end{align*}

We next prove the formula \eqref{eq:EIV2}. Recall from Lemma~\ref{lem:cL} that $\tT_j$, for $j\in \bI$, commutes with $\cL$ in \eqref{eq:D}. Applying $\cL$ to the formula \eqref{eq:EIV1} and then using \eqref{eq:app2}-\eqref{eq:app3}, we have
\begin{align}\notag
&\tT^{-1}_{\bs_1}\big(\tT_{\bw}(E_5) K_5'\big)\tT_{w_{\bullet,1}}(\ck_5^{-1})\\
&=-q^{-2}\big[ \tT_{432}(B_1) \tT_{632}(\ck_1^{-1}),\tT_{\bw}(E_5) K_5'\tT_{\bw}(\ck_5^{-1})\big]_q.\label{eq:app17}
\end{align}
By a weight consideration, we have
\begin{align*}
\tT_{\bw}(\ck_5^{-1})\tT_{432}(B_1)&= q\tT_{432}(B_1)\tT_{\bw}(\ck_5^{-1}),\\
\tT_{632}(\ck_1^{-1})\tT_{\bw}(E_5)&=q\tT_{\bw}(E_5)\tT_{632}(\ck_1^{-1}).
\end{align*}
Hence, using these two identities, \eqref{eq:app17} is simplified as
\begin{align}\notag
&\tT^{-1}_{\bs_1}\big(\tT_{\bw}(E_5) K_5'\big)\tT_{w_{\bullet,1}}(\ck_5^{-1})\\
&=-q^{-1}\big[ \tT_{432}(B_1),\tT_{\bw}(E_5) K_5'\big]_q\tT_{632}(\ck_1^{-1})\tT_{\bw}(\ck_5^{-1}).\label{eq:app18}
\end{align}
Finally, by the definition \eqref{def:Ki} of $\ck_i$,  $\tT_{w_{\bullet,1}}(\ck_5^{-1})=-q^{-1}\tT_{632}(\ck_1^{-1})\tT_{\bw}(\ck_5^{-1})$. Then \eqref{eq:app18} implies the desired formula \eqref{eq:EIV2}.
\end{proof}
\subsection{Type AIII$_3$}%braid group action

Consider the rank 2 Satake diagram of type AIII$_3$:
\begin{center}
   \begin{tikzpicture}[baseline=0,scale=1.5]
		\node  at (-0.5,0) {$\circ$};
		\node  at (0,0) {$\circ$};
		\node  at (0.5,0) {$\circ$};
		\draw[-] (-0.45,0) to (-0.05, 0);
		\draw[-] (0.05, 0) to (0.45,0);
		\node at (-0.5,-0.2) {1};
		\node at (0,-0.2) {2};
		\node at (0.5,-0.2) {3};
        \draw[bend left,<->,red] (-0.5,0.1) to (0.5,0.1);
        \node at (0,0.4) {$\textcolor{red}{\tau} $};
	\end{tikzpicture}
\\
%AIII$_3$\\
$\vs_{1,\diamond}=\vs_{3,\diamond}=-q^{-1},\qquad \vs_{2,\diamond}=-q^{-2}$
\\
$\quad \bs_1=s_1 s_3,\qquad\quad \bs_2=s_2.$
\end{center}
In this case, Proposition~\ref{prop:rktwoRij} is reformulated and proved as the following lemma.

\begin{lemma}\label{lem:AIII}
We have
\begin{align}\label{eq:AIII1}
\tT_{\bs_1}^{-1}(F_2)&=\big[B_3^\sigma, [B_1^\sigma, F_2]_q\big]_q -q F_2 K_3 K_1',\\
\tT_{\bs_1}^{-1}(E_2K_2')&=\big[B_3, [B_1, E_2 K_2' ]_q\big]_q -q E_2 K_2' K_3 K_1'.\label{eq:AIII2}
\end{align}
\end{lemma}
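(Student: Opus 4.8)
The plan is to follow the two-step template used throughout this appendix: establish the $F_j$-formula \eqref{eq:AIII1} by a direct computation, then deduce the $E_{\tau j}K_j'$-formula \eqref{eq:AIII2} from it by applying the operator $\cL$ of \eqref{eq:D}. First I record the data for this diagram: since all three nodes are white, $\bI=\varnothing$, so $\bw=1$ and $\tT_{\bw}=\Id$; moreover $\tau$ swaps $1\leftrightarrow 3$ and fixes $2$, the reflections $s_1,s_3$ commute (nodes $1,3$ are non-adjacent), and $\bs_1=s_1 s_3$. Thus $B_1^\sigma=F_1+K_1 E_3$, $B_3^\sigma=F_3+K_3 E_1$, and $\tT_{\bs_1}^{-1}=\tT_1^{-1}\tT_3^{-1}=\tT_3^{-1}\tT_1^{-1}$.

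For \eqref{eq:AIII1}, I would compute both sides and match them. On the one hand, Proposition~\ref{prop:braid0} gives $\tT_3^{-1}(F_2)=[F_3,F_2]_q$, and since $\tT_1^{-1}$ fixes $F_3$ (as $c_{13}=0$) and sends $F_2\mapsto[F_1,F_2]_q$, we obtain $\tT_{\bs_1}^{-1}(F_2)=\big[F_3,[F_1,F_2]_q\big]_q$. On the other hand, Lemma~\ref{lem:app1}\eqref{eq:q-comm} gives $[B_1^\sigma,F_2]_q=[F_1,F_2]_q$, so the $F_3$-part of $\big[B_3^\sigma,[B_1^\sigma,F_2]_q\big]_q$ already reproduces $\tT_{\bs_1}^{-1}(F_2)$. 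It then remains to check that the $K_3E_1$-part, namely $\big[K_3 E_1,[F_1,F_2]_q\big]_q$, collapses to $q\,F_2 K_3 K_1'$ using $[E_1,F_1]=(K_1-K_1')/(q-q^{-1})$, $[E_1,F_2]=0$, and the relations \eqref{eq:EK}; combining the two parts yields \eqref{eq:AIII1}.

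For \eqref{eq:AIII2}, I would apply $\cL$ to the just-proved identity \eqref{eq:AIII1}. By Lemma~\ref{lem:cL}, $\cL$ commutes with $\tT_{\bs_1}$, so the left-hand side becomes $\tT_{\bs_1}^{-1}(\cL(F_2))$, and \eqref{eq:app3} (with $\tT_{\bw}=\Id$) identifies $\cL(F_2)$ with $E_2 K_2'$ up to a scalar and a Cartan factor; applying $\cL$ to the right-hand side and using \eqref{eq:app2} to convert $B_1^\sigma\mapsto B_1$ and $B_3^\sigma\mapsto B_3$ turns the bracket into $\big[B_3,[B_1,E_2 K_2']_q\big]_q$ up to Cartan factors. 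The two sides are then reconciled by commuting the resulting $\ck_i$-factors past the $B$'s (picking up the correct $q$-powers by weight considerations) and invoking the definition \eqref{def:Ki} of $\ck_i$, exactly as in the proofs of Lemmas~\ref{lem:AII} and \ref{lem:EIV}.

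The routine-but-delicate part will be the Cartan bookkeeping in both steps: in \eqref{eq:AIII1}, verifying that the mixed term $\big[K_3 E_1,[F_1,F_2]_q\big]_q$ produces exactly the scalar $q$ together with the factor $K_3 K_1'$ (rather than some other $K$-monomial); and in \eqref{eq:AIII2}, tracking the scalars and the $\ck_i^{\pm1}$-factors through $\cL$ and through the final weight-based commutations so that everything cancels down to the stated form. These are straightforward given the defining relations of $\tU$, and no new idea beyond the template of this appendix is required.
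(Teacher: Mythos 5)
Your proposal is correct and follows essentially the same route as the paper: \eqref{eq:AIII1} by expanding $\big[B_3^\sigma,[B_1^\sigma,F_2]_q\big]_q$ via Lemma~\ref{lem:app1} and collapsing the mixed term $\big[K_3E_1,[F_1,F_2]_q\big]_q$ to $qF_2K_3K_1'$, and \eqref{eq:AIII2} by applying $\cL$ (which here reduces to $\tT_{w_0}$) together with \eqref{eq:app2}--\eqref{eq:app3}, Lemma~\ref{lem:cL}, and the Cartan bookkeeping via $\tT_{\bs_1}(\tk_2^{-1})=q^{-2}\tk_2^{-1}\tk_1^{-1}\tk_3^{-1}$. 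No gaps.
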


\begin{proof}
By Lemma~\ref{lem:app1}, we have $[B_1^\sigma,F_2]_q=[F_1,F_2]_q$.
Then the first term on the RHS of \eqref{eq:AIII1} is computed as follows:
\begin{align*}
\big[B_3^\sigma, [B_1^\sigma, F_2]_q\big]_q &= \big[ K_3 E_1, [F_1 , F_2]_q\big]_q+\big[F_3, [F_1, F_2]_q\big]_q\\
&=q\big[ [E_1,F_1 ], F_2\big]_q K_3+\big[F_3, [F_1, F_2]_q\big]_q\\
&=q[\frac{K_1-K_1'}{q-q^{-1}},F_2]_q K_3 +\big[F_3, [F_1, F_2]_q\big]_q\\
&=q F_2 K_3 K_1'+ \big[F_3, [F_1, F_2]_q\big]_q\\
&=  \tT_{13}^{-1}(F_2)+ q F_2 K_3 K_1'.
\end{align*}
This proves the formula \eqref{eq:AIII1}.

We next prove \eqref{eq:AIII2}. In this case, $\tau_0=\tau\neq \Id$, $\tau_{\bullet,1}=\Id$, and we simplify $\cL$ in \eqref{eq:D} as $\cL=\tT_{w_0}$. We also have $\ck_i=\tk_i$ for $i=1,2,3$. Applying the operator $\cL=\tT_{w_0}$ to the identity \eqref{eq:AIII1} and then using \eqref{eq:app2}-\eqref{eq:app3}, we have
\begin{align}
\tT_{\bs_1}^{-1}(E_2K_2')\tT_{\bs_1}(\tk_2^{-1})&=q^{-4}\big[B_3 \tk_1^{-1}, [B_1\tk_3^{-1}, E_2 K_2'\tk_2^{-1} ]_q\big]_q
 -q E_2 K_2'\tk_2^{-1} \cL(K_3 K_1').\label{eq:AIII3}
\end{align}
We have $\cL(K_3 K_1')=q^{-2}\tk_1^{-1}\tk_3^{-1}K_3 K_1'$. Note also that $\tk_2$ is central and $\tk_3,\tk_1$ commute with $E_2$. Hence, \eqref{eq:AIII3} can be rewritten as
\begin{align}\notag
\tT_{\bs_1}^{-1}(E_2K_2')\tT_{\bs_1}(\tk_2^{-1})&=q^{-2}\big[B_3, [B_1, E_2 K_2' ]_q\big]_q\tk_1^{-1}\tk_3^{-1}\tk_2^{-1} \\
&-q^{-1} E_2 K_2'\tk_2^{-1}\tk_1^{-1}\tk_3^{-1}  K_3 K_1' .\label{eq:AIII4}
\end{align}
Finally, since $\bs_1(\alpha_2)=\alpha_2+\alpha_1+\alpha_3 $, we have $\tT_{\bs_1}(\tk_2^{-1})= q^{-2}\tk_2^{-1}\tk_1^{-1}\tk_3^{-1}$. Therefore the desired formula \eqref{eq:AIII2} follows from \eqref{eq:AIII4}.
\end{proof}

%By the definition \eqref{def:gen}, we have $\ck_3=K_3 K_1'.$ Hence, by Theorem~\ref{prop:rktwo3} and Lemma \ref{lem:AIII}, we have
%\begin{align}\label{eq:AIII3}
%\tTT_{1}^{-1}(B_2)&=\big[B_3, [B_1, B_2]_q\big]_q -q B_2 \ck_3,\qquad \tTT_2^{-1} (B_1)=[B_2,B_1]_q.
%\end{align}

%
%
\subsection{Type AIII$_n,n\geq 4$}%braid group action

Consider the rank 2 Satake diagram of type AIII$_n,n\geq 4$:
\begin{center}
   \begin{tikzpicture}[baseline=0,scale=1.5]
		\node  at (-2.1,0) {$\circ$};
		\node  at (-1.3,0) {$\circ$};
		\node  at (-0.5,0) {$\bullet$};
		\node  at (0.5,0) {$\bullet$};
		\node  at (1.3,0) {$\circ$};
		\node  at (2.1,0) {$\circ$};
		\draw[-] (-2.05,0) to (-1.35, 0);
		\draw[-] (-1.25,0) to (-0.55, 0);
		\draw[-] (0.55,0) to (1.25, 0);
		\draw[-] (1.35, 0) to (2.05,0);
		\node at (-2.1,-0.2) {1};
		\node at (-1.3,-0.2) {2};
		\node at (-0.5,-0.2) {3};
		\node at (0.5,-0.2) {\small$n-2$};
		\node at (1.3,-0.2) {\small$n-1$};
		\node at (2.1,-0.2) {$n$ };
        \draw[dashed] (-0.5,0) to (0.5,0);
        \draw[bend left,<->,red] (-1.3,0.1) to (1.3,0.1);
        \draw[bend left,<->,red] (-2.1,0.1) to (2.1,0.1);
        \node at (0,0.6) {$\textcolor{red}{\tau} $};
	\end{tikzpicture}
\\
%AIII$_n,n\geq 4$\\
$\vs_{1,\diamond}=\vs_{n,\diamond}=-q^{-1},\qquad \vs_{2,\diamond}=\vs_{n-1,\diamond}=-q^{-1/2}$,\\
$\quad \bs_1=s_1 s_n, \qquad\quad \bs_2= s_2 \cdots s_{n-1} \cdots s_2.$
\end{center}

We first have a simple observation.

\begin{lemma} \label{lem:AIIIn2}
For any $3\leq s \leq n-2$, $\tT_{2\cdots n-2} (F_{n-1})$ is fixed by $\tT_s $.
\end{lemma}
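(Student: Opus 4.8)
The plan is to prove Lemma~\ref{lem:AIIIn2} by a direct computation showing that the relevant root vector has weight orthogonal to the simple roots $\alpha_s$ in the appropriate sense, so that the braid operator $\tT_s$ acts trivially on it. First I would analyze the weight of $\tT_{2\cdots n-2}(F_{n-1})$. Since $s_2 s_3 \cdots s_{n-2}(\alpha_{n-1}) = \alpha_2 + \alpha_3 + \cdots + \alpha_{n-1}$ in type $A$, the element $\tT_{2\cdots n-2}(F_{n-1})$ lies in the weight space $\tU_{-(\alpha_2 + \cdots + \alpha_{n-1})}$. This is a root vector associated with the positive root $\beta := \alpha_2 + \cdots + \alpha_{n-1}$ of the parabolic subsystem generated by $\bI$.

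The key step is to verify that for $3 \le s \le n-2$, the braid operator $\tT_s$ fixes this root vector. The standard criterion (cf. \cite[Proposition 8.20]{Ja95} or \cite[37.2]{Lus93}) is that $\tT_s$ fixes a root vector $X_\beta$ whenever $s_s(\beta) = \beta$, equivalently $(\alpha_s, \beta) = 0$. For $\beta = \alpha_2 + \cdots + \alpha_{n-1}$ and $3 \le s \le n-2$, I would compute $(\alpha_s, \beta) = (\alpha_s, \alpha_{s-1}) + (\alpha_s, \alpha_s) + (\alpha_s, \alpha_{s+1}) = -1 + 2 - 1 = 0$, using that $\alpha_{s-1}, \alpha_{s+1}$ both appear in $\beta$ (which holds precisely because $3 \le s \le n-2$ keeps both neighbors inside the index range $\{2, \ldots, n-1\}$). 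Thus $s_s(\beta) = \beta$, and hence $\tT_s$ fixes $\tT_{2\cdots n-2}(F_{n-1})$ up to a possible scalar.

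To pin down that the scalar is exactly $1$ (so that the vector is genuinely fixed, not merely preserved up to a unit), I would invoke the explicit normalization of $\tT_s$ on root vectors within the subalgebra $\tU^+[\bs]$, or alternatively rewrite $\tT_{2\cdots n-2}(F_{n-1})$ as $\tT_\gamma(F_{n-1})$ for a suitable Weyl group element $\gamma$ commuting with $s_s$ and use the braid relations together with $\tT_s \tT_\gamma = \tT_\gamma \tT_s$ and $\tT_s(F_{n-1}) = F_{n-1}$ (valid since $(\alpha_s, \alpha_{n-1}) = 0$ for $s \le n-2$, as $s$ and $n-1$ are non-adjacent then). This reduces the claim to commuting $\tT_s$ past $\tT_{2\cdots n-2}$, which is legitimate whenever $s_s$ commutes with the word $s_2 \cdots s_{n-2}$ in the relevant sense; here the cleaner route is the weight computation combined with the fact that $\tT_s$ preserves the root vector normalization.

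The main obstacle I anticipate is the bookkeeping for the boundary index values: the computation $(\alpha_s, \beta) = 0$ uses crucially that both $\alpha_{s-1}$ and $\alpha_{s+1}$ are summands of $\beta$, which fails at the endpoints $s = 2$ and $s = n-1$, and this is exactly why the statement restricts to $3 \le s \le n-2$. I would double-check the edge cases to confirm the restriction is sharp, and verify that no subtle sign or $q$-power scalar is introduced by $\tT_s$ acting on this intermediate root vector. Once the orthogonality $(\alpha_s, \beta) = 0$ is established, the conclusion follows immediately from the standard behavior of Lusztig's symmetries on root vectors, so the result is essentially a short weight-theoretic verification rather than a heavy computation.
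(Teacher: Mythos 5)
Your weight computation is correct ($\tT_{2\cdots n-2}(F_{n-1})$ has weight $-\beta$ with $\beta=\alpha_2+\cdots+\alpha_{n-1}$ and $(\alpha_s,\beta)=0$ for $3\le s\le n-2$), but it does not prove the lemma. The results you cite (\cite[Proposition 8.20]{Ja95}, \cite[39.2]{Lus93}) say that $\tT_w(F_i)=F_j$ when $w\alpha_i=\alpha_j$ for \emph{simple} roots; here $\beta$ is not simple, and the weight space $\tU_{-\beta}$ is multi-dimensional, so $s_s(\beta)=\beta$ only tells you that $\tT_s$ preserves that weight space --- not that it acts on your particular vector by a scalar, let alone by $1$. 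There is no off-the-shelf ``standard criterion'' that orthogonality $(\alpha_s,\beta)=0$ forces $\tT_s$ to fix a root vector of weight $-\beta$; proving that \emph{is} the content of the lemma.

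Your proposed repair also contains two concrete errors. First, $s_s$ does not commute with $\gamma:=s_2\cdots s_{n-2}$ for $3\le s\le n-2$: one has $\gamma(\alpha_{s-1})=\alpha_s$, hence $s_s\gamma=\gamma s_{s-1}$ (with lengths adding), so the correct identity is $\tT_s\tT_\gamma=\tT_\gamma\tT_{s-1}$, not $\tT_s\tT_\gamma=\tT_\gamma\tT_s$. Second, your claim that $\tT_s(F_{n-1})=F_{n-1}$ because ``$s$ and $n-1$ are non-adjacent for $s\le n-2$'' fails at the boundary value $s=n-2$, which lies in the allowed range; what the argument actually requires is $\tT_{s-1}(F_{n-1})=F_{n-1}$, which does hold since $s-1\le n-3$. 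Once these are fixed, the proof is exactly the paper's: commute $\tT_s$ past $\tT_{2\cdots s-2}$, apply the braid relation $\tT_s\tT_{s-1}\tT_s=\tT_{s-1}\tT_s\tT_{s-1}$, commute the leftover $\tT_{s-1}$ past $\tT_{s+1\cdots n-2}$, and use $\tT_{s-1}(F_{n-1})=F_{n-1}$. So the viable kernel of your proposal is the braid-relation manipulation you relegated to a fallback, and as written that fallback does not go through.
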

\begin{proof}
Recall from Proposition~\ref{prop:braid0} that $\tT_s$ satisfies the braid relation. Then we have
\begin{align*}
\tT_{s}\tT_{2\cdots n-2} (F_{n-1})&=\tT_{2\cdots s-2}\tT_s\tT_{s-1}\tT_s \tT_{s+1\cdots n-2}(F_{n-1})\\
&=\tT_{2\cdots s-2} \tT_{s-1}\tT_s \tT_{s-1}\tT_{s+1\cdots n-2}(F_{n-1})\\
&=\tT_{2\cdots s-2} \tT_{s-1}\tT_s\tT_{s+1\cdots n-2}\tT_{s-1}(F_{n-1})\\
&=\tT_{2\cdots s-2} \tT_{s-1}\tT_s \tT_{s+1\cdots n-2}(F_{n-1})=\tT_{2\cdots n-2} (F_{n-1}).
\end{align*}
Hence, $\tT_{2\cdots n-2} (F_{n-1})$ is fixed by $\tT_s $ for $3\leq s \leq n-2$.
\end{proof}

In this case, Proposition~\ref{prop:rktwoRij} is reformulated and proved as Lemmas \ref{lem:AIIIn1}--\ref{lem:AIIIn3} below.

\begin{lemma}  \label{lem:AIIIn1}
We have
\begin{align}\label{eq:AIIIn1}
\tT_{\bs_1}^{-1}(F_2)&=[B_1^\sigma,F_2]_q,\\
\tT_{\bs_2}^{-1}(F_1)&= \big[ \tT_{\bw}(B_{n-1}^\sigma),[B_2^\sigma,F_1]_q\big]_q - F_1 K_2'  K_{\bw(\alpha_{n-1})} .\label{eq:AIIIn2}
\end{align}
\end{lemma}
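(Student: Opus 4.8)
The first identity \eqref{eq:AIIIn1} is essentially immediate. Since nodes $1$ and $n$ are non-adjacent, $s_1$ and $s_n$ commute, so $\tT_{\bs_1}^{-1}=\tT_1^{-1}\tT_n^{-1}$; and since node $n$ is non-adjacent to node $2$, Proposition~\ref{prop:braid0} gives $\tT_n^{-1}(F_2)=F_2$. Applying Proposition~\ref{prop:braid0} once more with $r=-c_{12}=1$ yields $\tT_1^{-1}(F_2)=[F_1,F_2]_q$. Finally, as $-(\alpha_1,\alpha_2)=1$ and $q_1=q$, equation \eqref{eq:q-comm} of Lemma~\ref{lem:app1} converts this into $[F_1,F_2]_q=[B_1^\sigma,F_2]_q$, proving \eqref{eq:AIIIn1}.

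For the second identity \eqref{eq:AIIIn2}, the plan is to split the right-hand side into an ``$F$-part'' and an ``$E$-part'' and match each against $\tT_{\bs_2}^{-1}(F_1)$. First, by \eqref{eq:q-comm} of Lemma~\ref{lem:app1} (again $-(\alpha_1,\alpha_2)=1$) one may replace $[B_2^\sigma,F_1]_q$ by $[F_2,F_1]_q$. Next, from $B_{n-1}^\sigma=F_{n-1}+K_{n-1}\tT_{\bw}^{-1}(E_2)$ together with $\tT_{\bw}\tT_{\bw}^{-1}(E_2)=E_2$ and $\tT_{\bw}(K_{n-1})=K_{\bw\alpha_{n-1}}$, one gets $\tT_{\bw}(B_{n-1}^\sigma)=\tT_{\bw}(F_{n-1})+K_{\bw\alpha_{n-1}}E_2$. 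Hence $\big[\tT_{\bw}(B_{n-1}^\sigma),[F_2,F_1]_q\big]_q=\big[\tT_{\bw}(F_{n-1}),[F_2,F_1]_q\big]_q+\big[K_{\bw\alpha_{n-1}}E_2,[F_2,F_1]_q\big]_q$, and it suffices to identify the first summand with $\tT_{\bs_2}^{-1}(F_1)$ and the second with the correction $F_1K_2'K_{\bw\alpha_{n-1}}$.

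The $E$-part is routine Cartan bookkeeping. Since $E_2$ commutes with $F_1$ and $[E_2,F_2]=(K_2-K_2')/(q-q^{-1})$, a short computation gives $[E_2,[F_2,F_1]_q]=F_1K_2'$. Moreover $(\bw\alpha_{n-1},-\alpha_1-\alpha_2)=1$, so $K_{\bw\alpha_{n-1}}$ passes through $[F_2,F_1]_q$ with a factor $q$, while $(\bw\alpha_{n-1},\alpha_1)=0$ means it commutes with $F_1$. Combining these, $\big[K_{\bw\alpha_{n-1}}E_2,[F_2,F_1]_q\big]_q=K_{\bw\alpha_{n-1}}\,[E_2,[F_2,F_1]_q]=F_1K_2'K_{\bw\alpha_{n-1}}$, which is exactly the correction term.

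The $F$-part is the crux. Writing $\bs_2=s_2\,u\,s_2$ with $u=s_{3\cdots n-1\cdots 3}$ (a reduced factorization, as $\ell(\bs_2)=\ell(u)+2$), and using $\tT_2^{-1}(F_1)=[F_2,F_1]_q$ together with $\tT_u^{-1}(F_1)=F_1$, the same manipulation as in Lemma~\ref{lem:rktwo2-1} gives $\tT_{\bs_2}^{-1}(F_1)=\big[\tT_2^{-1}\tT_u^{-1}(F_2),[F_2,F_1]_q\big]_q$. Thus the $F$-part follows once I establish $\tT_2^{-1}\tT_u^{-1}(F_2)=\tT_{\bw}(F_{n-1})$. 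Both sides are negative root vectors of the same weight: using \eqref{def:bsi} one checks $(s_2u)(\alpha_2)=\alpha_3+\cdots+\alpha_{n-1}=\bw(\alpha_{n-1})$. The main obstacle is promoting this weight coincidence to an equality of root vectors, which I would handle by a reduced-expression argument in the spirit of the proof of Lemma~\ref{lem:simple} (cf. \cite[39.2]{Lus93}, \cite[Proposition~8.20]{Ja95}), realizing both $\tT_2^{-1}\tT_u^{-1}(F_2)=\tT_{us_2}^{-1}(F_2)$ and $\tT_{\bw}(F_{n-1})$ as the PBW root vector attached to $\alpha_3+\cdots+\alpha_{n-1}$. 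Granting this, adding the $F$- and $E$-parts yields \eqref{eq:AIIIn2}.
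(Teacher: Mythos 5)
Your proof of \eqref{eq:AIIIn1}, and your treatment of the ``$E$-part'' of \eqref{eq:AIIIn2} (splitting $\tT_{\bw}(B_{n-1}^\sigma)=\tT_{\bw}(F_{n-1})+K_{\bw\alpha_{n-1}}E_2$, replacing $[B_2^\sigma,F_1]_q$ by $[F_2,F_1]_q$ via \eqref{eq:q-comm}, and computing $[K_{\bw\alpha_{n-1}}E_2,[F_2,F_1]_q]_q=F_1K_2'K_{\bw\alpha_{n-1}}$), are correct and coincide with what the paper does. The reduction $\tT_{\bs_2}^{-1}(F_1)=\big[\tT_2^{-1}\tT_u^{-1}(F_2),[F_2,F_1]_q\big]_q$ with $u=s_{3\cdots n-1\cdots 3}$ is also fine.

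The gap is in how you close the ``$F$-part,'' i.e.\ the identity $\tT_2^{-1}\tT_u^{-1}(F_2)=\tT_{\bw}(F_{n-1})$. You propose to deduce it from the weight coincidence $(s_2u)(\alpha_2)=\bw(\alpha_{n-1})=\alpha_3+\cdots+\alpha_{n-1}$ by ``realizing both sides as the PBW root vector attached to'' this root. There is no such canonical root vector: images of simple root vectors under braid group operators are \emph{not} determined by their weight when the target root is non-simple. Already in type $A_2$ one has $s_1(\alpha_2)=s_2(\alpha_1)=\alpha_1+\alpha_2$ but $\tT_1(F_2)=F_2F_1-qF_1F_2\neq F_1F_2-qF_2F_1=\tT_2(F_1)$; PBW root vectors genuinely depend on the chosen reduced expression of $w_0$. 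The results you cite (\cite[39.2]{Lus93}, \cite[Proposition~8.20]{Ja95}, as used in Lemma~\ref{lem:simple}) apply only when the image root is \emph{simple}, which is not the case here. So the step you yourself flag as ``the main obstacle'' is not actually overcome, and the principle you invoke to overcome it is false in general. The paper instead proves this identity by an explicit chain of braid-relation manipulations,
\begin{align*}
\tT_{2\cdots n-1\cdots 3}^{-1}(F_2)=\tT_{2\cdots n-2}^{-1}\tT_{2\cdots n-2}(F_{n-1})=\tT_{3\cdots n-2}(F_{n-1})=\tT_{\bw}(F_{n-1}),
\end{align*}
where the middle equalities rest on the preparatory Lemma~\ref{lem:AIIIn2} (that $\tT_{2\cdots n-2}(F_{n-1})$ is fixed by $\tT_s$ for $3\le s\le n-2$) and the last on $\bw(\alpha_{n-1})=s_{3\cdots n-2}(\alpha_{n-1})$ together with the fact that the remaining factor of $\bw$ involves only nodes not adjacent to $n-1$. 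Some argument of this explicit kind is needed; without it your proof of \eqref{eq:AIIIn2} is incomplete.
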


\begin{proof}
The formula \eqref{eq:AIIIn1} follows from Lemma~\ref{lem:app1}.

We prove \eqref{eq:AIIIn2}. By a direct computation, we have
\begin{align*}
\tT_{\bs_2}^{-1}(F_1)
&= \big[\tT_{2\cdots n-1 \cdots 3}^{-1}(F_2),[F_2,F_1]_q\big]_q\\
&= \big[\tT_{2\cdots n-2}^{-1}\tT_{2\cdots n-2}(F_{n-1}),[F_2,F_1]_q\big]_q
\\
&=\big[ \tT_{3\cdots n-2}(F_{n-1}),[F_2,F_1]_q\big]_q
\\
&= \big[ \tT_{\bw}(F_{n-1}),[F_2,F_1]_q\big]_q,
\end{align*}
where the last equality follows by applying Lemma \ref{lem:AIIIn2} and noting that  $\bw(\alpha_{n-1})=s_{3\cdots n-2}(\alpha_{n-1})$.
Recalling that $B_{n-1}^\sigma=F_{n-1}+K_{n-1}\tT_{\bw}^{-1}(E_2)$, we compute the RHS of \eqref{eq:AIIIn2} as follows:
\begin{align*}
\big[ \tT_{\bw}(B_{n-1}^\sigma),[B_2^\sigma,F_1]_q\big]_q
&=\big[ \tT_{\bw}(B_{n-1}^\sigma),[F_2 ,F_1]_q\big]_q\\
&=\big[ \tT_{\bw}(F_{n-1} ),[F_2 ,F_1]_q\big]_q+\big[ \tT_{\bw}(K_{n-1}) E_2 ,[F_2 ,F_1]_q\big]_q\\
&=\tT_{\bs_2}^{-1}(F_1) + \big[ E_2 ,[F_2 ,F_1]_q\big] \tT_{\bw}(K_{n-1})\\
&=\tT_{\bs_2}^{-1}(F_1) + F_1 K_2'K_{\bw(\alpha_{n-1})}.
\end{align*}
This proves the formula \eqref{eq:AIIIn2}.
\end{proof}

\begin{lemma}\label{lem:AIIIn3}
We have
\begin{align}\label{eq:AIIIn3}
\tT_{\bs_1}^{-1}\big(\tT_{\bw} (E_{n-1})K_2'\big)&=[B_1,\tT_{\bw}(E_{n-1})K_2']_q,\\\label{eq:AIIIn4}
\tT_{\bs_2}^{-1}(E_n K_1')&= \big[ \tT_{\bw}(B_{n-1}),[B_2,E_n K_1']_q\big]_q - E_n K_1' K_2' K_{\bw(\alpha_{n-1})}.
\end{align}
\end{lemma}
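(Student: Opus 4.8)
The two identities \eqref{eq:AIIIn3}--\eqref{eq:AIIIn4} are the $E$-type counterparts of the $F$-type formulas \eqref{eq:AIIIn1}--\eqref{eq:AIIIn2} established in Lemma~\ref{lem:AIIIn1}, exactly in the sense of Proposition~\ref{prop:rktwoRij}(2): the passage from $\tT_{\bs_i}^{-1}(F_j)$ to $\tT_{\bs_i}^{-1}(\tT_{\bw}(E_{\tau j})K_j')$ amounts to replacing each $B_p^\sigma$ by $B_p$ and $F_j$ by $\tT_{\bw}(E_{\tau j})K_j'$. Following the device used throughout this appendix, I would deduce each $E$-type formula from its $F$-type partner by applying the operator $\cL = \tT_{w_0}\tT_{\bw}\widehat{\tau}_0\widehat{\tau}$ of \eqref{eq:D}, rather than by a fresh commutator computation. (For the single $q$-commutator \eqref{eq:AIIIn3} one could alternatively argue directly from \eqref{eq:q-comm2}, but the $\cL$-route is uniform.) I would first record the relevant special feature of this diagram: since node $n$ is adjacent only to the white node $n-1$ and to no black node, one has $\tT_{\bw}(E_n) = E_n$, which is why the $E$-part of $B_1$ reads $E_n K_1'$ and the term $\tT_{\bs_2}^{-1}(E_n K_1')$ in \eqref{eq:AIIIn4} carries no $\tT_{\bw}$; by contrast $n-1$ is adjacent to the black node $n-2$, so $\tT_{\bw}(E_{n-1})$ genuinely occurs in \eqref{eq:AIIIn3}.

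To prove \eqref{eq:AIIIn3}, I would apply $\cL$ to \eqref{eq:AIIIn1}. By Lemma~\ref{lem:cL}, $\cL$ commutes with $\tT_{\bs_1}$, so the left-hand side becomes $\tT_{\bs_1}^{-1}(\cL(F_2))$; using \eqref{eq:app3} together with the identity $\tT_{\bs_1}^{-1}\tT_{\bw} = \tT_{w_{\bullet,1}}$ on the Cartan part (valid since $\bs_1$ acts as an involution on $\Z\I$, so that $\bs_1 w_\bullet = w_{\bullet,1}$), this is a scalar multiple of $\tT_{\bs_1}^{-1}(\tT_{\bw}(E_{n-1})K_2')$ dressed by $\tT_{w_{\bullet,1}}(\ck_{n-1}^{-1})$. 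On the right-hand side, \eqref{eq:app2}--\eqref{eq:app3} turn $[B_1^\sigma, F_2]_q$ into $[B_1, \tT_{\bw}(E_{n-1})K_2']_q$ up to factors of the form $\tT_{\bw}(\ck_p^{-1})$. After commuting these Cartan elements past the root vectors (each crossing contributing the $q$-power dictated by the relevant weight) and collecting them, matching the surviving Cartan factors via the definition \eqref{def:Ki} of $\ck_i$ and Lemma~\ref{lem:newb1} yields \eqref{eq:AIIIn3}.

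The proof of \eqref{eq:AIIIn4} is entirely parallel: I would apply $\cL$ to \eqref{eq:AIIIn2}, again commuting $\cL$ through $\tT_{\bs_2}$ and $\tT_{\bw}$ by Lemma~\ref{lem:cL}. The double $q$-commutator $\big[\tT_{\bw}(B_{n-1}^\sigma), [B_2^\sigma, F_1]_q\big]_q$ transforms into $\big[\tT_{\bw}(B_{n-1}), [B_2, E_n K_1']_q\big]_q$ up to Cartan dressing, while the lower-order term $-F_1 K_2' K_{\bw(\alpha_{n-1})}$ transforms into $-E_n K_1' K_2' K_{\bw(\alpha_{n-1})}$, the factor $K_{\bw(\alpha_{n-1})}$ surviving the application of $\cL$ unchanged.

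As in every case of this appendix, the one genuinely delicate point is the Cartan bookkeeping. Each use of \eqref{eq:app2}--\eqref{eq:app3} introduces factors $\tT_{\bw}(\ck_p^{-1})$ that must be transported to one side, each transport contributing a precise power of $q$, and then collected; the net factor $\tT_{w_{\bullet,i}}(\ck_{\tau j}^{-1})$ appearing on the left must be shown to coincide, through \eqref{def:Ki} and the action of $\tT_{\bs_i}$ on $\tU^{\imath 0}$, with the product assembled on the right, so that the two Cartan factors cancel and the root-vector parts match. This is the step requiring the most care — though it remains routine — and it is where the parameters $\vs_{1,\diamond}=\vs_{n,\diamond}=-q^{-1}$ and $\vs_{2,\diamond}=\vs_{n-1,\diamond}=-q^{-1/2}$ enter the $q$-power bookkeeping.
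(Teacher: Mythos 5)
Your proposal is correct, and for \eqref{eq:AIIIn4} it coincides exactly with the paper's proof: apply $\cL$ to \eqref{eq:AIIIn2}, use Lemma~\ref{lem:cL} and \eqref{eq:app2}--\eqref{eq:app3}, and then carry out precisely the Cartan bookkeeping you describe (the paper records the four weight-commutation identities and the relation $\tT_{w_{\bullet,2}}(\tk_n^{-1})=q^{-1}\tk_n^{-1}\tT_{\bw}(\ck_{n-1}^{-1})\ck_2^{-1}$ to cancel the dressing factors). The only divergence is \eqref{eq:AIIIn3}, where you opt for the uniform $\cL$-route while the paper takes the direct alternative you mention in passing: since $\tT_{\bs_1}=\tT_1\tT_n$ commutes with $\tT_{\bw}$ and $\tT_{\bs_1}^{-1}(K_2')=\vs_{1,\dm}^{-1/2}K_2'K_1'$, one computes $\tT_{\bs_1}^{-1}\big(\tT_{\bw}(E_{n-1})K_2'\big)=\vs_{1,\dm}^{-1}\tT_{\bw}\big([E_{n-1},E_n]_{q^{-1}}\big)K_2'K_1'=\tT_{\bw}\big([E_n,E_{n-1}]_q\big)K_2'K_1'$, which is $[B_1,\tT_{\bw}(E_{n-1})K_2']_q$ by \eqref{eq:q-comm2} (noting $(\alpha_1,\bw\alpha_{n-1})=0$). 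That direct route is four lines and involves no Cartan dressing at all, whereas your $\cL$-argument, though valid, imports the factors $\tT_{\bw}(\ck_n^{-1})$ and $\tT_{\bw}(\ck_{n-1}^{-1})$ from \eqref{eq:app2}--\eqref{eq:app3} only to cancel them again; your parenthetical caution about $\tT_{\bs_1}^{-1}\tT_{\bw}$ versus $\tT_{w_{\bullet,1}}$ on the Cartan part (they agree on weights since $\bs_1$ is an involution, but the scalars must be tracked) is exactly the sort of overhead the direct computation avoids. Both arguments are sound; the comparison only affects economy, not correctness.
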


\begin{proof}
Note that $\tT_{\bs_1}=\tT_1 \tT_n$ commutes with $\tT_{\bw}$. Hence, we have
\begin{align*}
\tT_{\bs_1}^{-1}\big(\tT_{\bw} (E_{n-1})K_2'\big)&=\vs_{1,\dm}^{-1/2}\tT_{\bw} \tT_{\bs_1}^{-1} (E_{n-1})K_2'K_1'\\
&=\vs_{1,\dm}^{-1}\tT_{\bw}  \big([E_{n-1} E_n]_{q^{-1}}\big)K_2'K_1'\\
&= \tT_{\bw}  [E_n, E_{n-1} ]_{q}K_2'K_1'\\
&=[B_1,\tT_{\bw}(E_{n-1})K_2']_q
\end{align*}
where the last step follows from Lemma~\ref{lem:app1}. Hence, we have proved \eqref{eq:AIIIn3}.

We next prove \eqref{eq:AIIIn4}. In this case,  $\tau_0=\tau$, $\tT_{\bw}(\ck_n)=\ck_n=\tk_n$, and we simplify $\cL$ in \eqref{eq:D} as $\cL=\tT_{\bw}\tT_{w_0}$. Applying $\cL$ to \eqref{eq:AIIIn2} and then using \eqref{eq:app2}-\eqref{eq:app3}, we have
\begin{align}\notag
\tT_{\bs_2}^{-1} &(E_n K_1')\tT_{w_{\bullet,2}}(\tk_n^{-1})\\\notag
&= q^{-4}\big[ \tT_{\bw}(B_{n-1})\ck_2^{-1},[B_2\tT_{\bw}(\ck_{n-1}^{-1}),E_n K_1' \tk_n^{-1} ]_q\big]_q\\
 &\quad - E_n K_1' \tk_n^{-1} \cL(K_2'  K_{\bw(\alpha_{n-1})}).\label{eq:AIIIn5}
\end{align}
For a weight reason, we have
\begin{align*}
\tT_{\bw}(\ck_{n-1}^{-1}) E_n &=q E_n\tT_{\bw}(\tk_{n-1}^{-1}), \\
\tk_n^{-1} B_2&=q B_2 \tk_n^{-1},\\
\ck_2^{-1} B_2 E_n&=q^2 B_2 E_n\ck_2^{-1},\\
\tk_n^{-1}\tT_{\bw}(\ck_{n-1}^{-1}) \tT_{\bw}(B_{n-1})&=q^2\tT_{\bw}(B_{n-1})\tk_n^{-1}\tT_{\bw}(\ck_{n-1}^{-1}).
\end{align*}
In addition, by \eqref{def:Ki}, we have $\cL(K_2'  K_{\bw\alpha_{n-1}})=q^{-1}\tT_{\bw}(\ck_{n-1}^{-1})\ck_2^{-1}  K_2' K_{\bw\alpha_{n-1}}$. Using these formulas, we rewrite \eqref{eq:AIIIn5} as
\begin{align}\notag
\tT_{\bs_2}^{-1}  (E_n K_1')\tT_{w_{\bullet,2}}(\tk_n^{-1})
&= q^{-1}\big[ \tT_{\bw}(B_{n-1}),[B_2,E_n K_1' ]_q\big]_q\tk_n^{-1}\tT_{\bw}(\ck_{n-1}^{-1})\ck_2^{-1}\\
 & \quad -q^{-1} E_n K_1' \tk_n^{-1} \tT_{\bw}(\ck_{n-1}^{-1})\ck_2^{-1}  K_2'  K_{\bw\alpha_{n-1}}.
 \label{eq:AIIIn6}
\end{align}
Finally, we have $\tT_{w_{\bullet,2}}(\tk_n^{-1}) = q^{-1}\tk_n^{-1} \tT_{\bw}(\ck_{n-1}^{-1})\ck_2^{-1}$. Then the formula \eqref{eq:AIIIn4} follows from \eqref{eq:AIIIn6}.
\end{proof}

%By the definition \eqref{def:gen} of $\ck_{n-1},$ we have $\tT_{\bw}(\ck_{n-1})=K_3 \cdots K_{n-2}K_{n-1} K_2'$. Hence, by Theorem~\ref{prop:rktwo3} and Lemma \ref{lem:AIIIn1}, we have
%\begin{align}\label{eq:AIIIn3}
%\tTT_{ 1}^{-1}(B_2)&=[B_1 ,B_2]_q\\
%\tTT_{ 2}^{-1}(B_1)&= \big[ \tT_{3\cdots n-2}(B_{n-1} ),[B_2 ,B_1]_q\big]_q - B_1  \tT_{\bw}(\ck_{n-1}) \label{eq:AIIIn4}
%\end{align}

%
%
\subsection{Type DIII$_5$}%braid group action

Consider the rank 2 Satake diagram of type DIII$_5$:
\begin{center}
   \begin{tikzpicture}[baseline=0,scale=1.5]
		\node at (-0.5,0) {$\bullet$};
		\node at (0,0) {$\circ$};
		\node at (0.5,0) {$\bullet$};
        \node at (1,0.3) {$\circ$};
        \node at (1,-0.3) {$\circ$};
		\draw[-] (-0.45,0) to (-0.05, 0);
		\draw[-] (0.05, 0) to (0.45,0);
		\draw[-] (0.5,0) to (0.965, 0.285);
		\draw[-] (0.5, 0) to (0.965,-0.285);
		\node at (-0.5,-0.2) {1};
		\node at (0,-0.2) {2};
		\node at (0.5,-0.2) {3};
        \node at (1,0.1) {4};
        \node at (1,-0.5) {5};
        \draw[bend left,<->,red] (1.1,0.3) to (1.1,-0.3);
        \node at (1.4,0) {$\textcolor{red}{\tau} $};
	\end{tikzpicture}\\
%DIII$_5$\\
$\quad \vs_{2,\diamond}=-q^{-1},\qquad\quad \vs_{4,\diamond}=\vs_{5,\diamond}=-q^{-1/2}$, \\
$\bs_2=s_2 s_1 s_3 s_2, \qquad \bs_4= s_4 s_5 s_3 s_4 s_5.$
\end{center}
In this case, Proposition~\ref{prop:rktwoRij} is reformulated and proved as Lemmas \ref{lem:DIII1}--\ref{lem:DIII2} below.

\begin{lemma}  \label{lem:DIII1}
We have
\begin{align}\label{eq:DIII1}
\tT_{\bs_2}^{-1}(F_4)&=[\tT_3( B_2^\sigma) ,F_4 ]_q,\\
\tT_{\bs_4}^{-1}(F_2)&=\big[B_4^\sigma ,  [\tT_3(B_5^\sigma) ,F_2]_q\big]_q -\tT_3^{-2}(F_2)  K_4 K_5'K_3'.\label{eq:DIII2}
\end{align}
\end{lemma}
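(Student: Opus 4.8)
The plan is to verify both identities by direct computation in $\tU$, using the reduced expressions $\bs_2 = s_2 s_1 s_3 s_2$ and $\bs_4 = s_4 s_5 s_3 s_4 s_5$ together with the explicit braid formulas of Proposition~\ref{prop:braid0} and the $q$-commutator identities of Lemma~\ref{lem:app1}. Here $\bw = s_1 s_3$, and $\tau$ fixes $2$ while interchanging $4,5$, so $B_2^\sigma = F_2 + K_2 \tT_{\bw}^{-1}(E_2)$, whereas $B_4^\sigma, B_5^\sigma$ involve $\tT_{\bw}^{-1}(E_5), \tT_{\bw}^{-1}(E_4)$ respectively. No use of the operator $\cL$ is needed for this lemma itself; $\cL$ enters only when passing from these $F_j$-formulas to the companion $\tT_{\bw}(E_{\tau j})K_j'$-formulas.

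For the first identity I would argue as in Lemma~\ref{lem:AII}. Since $F_4$ commutes with $\tT_1^{\pm 1}$ and $\tT_2^{\pm 1}$ (nodes $1,2$ are not adjacent to $4$), unwinding $\tT_{\bs_2}^{-1} = \tT_2^{-1}\tT_3^{-1}\tT_1^{-1}\tT_2^{-1}$ collapses to $\tT_{\bs_2}^{-1}(F_4) = \tT_2^{-1}\tT_3^{-1}(F_4) = \tT_2^{-1}\big([F_3,F_4]_q\big) = [[F_2,F_3]_q, F_4]_q = [\tT_3(F_2), F_4]_q$, using $\tT_3(F_2) = [F_2,F_3]_q$. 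It then remains to see that replacing $\tT_3(F_2)$ by $\tT_3(B_2^\sigma)$ inside the $q$-bracket changes nothing: the extra summand $\tT_3\big(K_2\tT_{\bw}^{-1}(E_2)\big)$ is built from $E_1, E_2, E_3$ only, and since $[E_k, F_4]=0$ for all $k \neq 4$, its $q$-bracket with $F_4$ vanishes by the argument of Lemma~\ref{lem:app1}. This yields $\tT_{\bs_2}^{-1}(F_4) = [\tT_3(B_2^\sigma), F_4]_q$.

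For the second identity I would first compute the pure braid side. Unwinding $\tT_{\bs_4}^{-1} = \tT_5^{-1}\tT_4^{-1}\tT_3^{-1}\tT_5^{-1}\tT_4^{-1}$ on $F_2$, using that $F_2$ is fixed by $\tT_4^{\pm 1}, \tT_5^{\pm 1}$ and that $\tT_3^{-1}(F_2) = [F_3,F_2]_q$, produces a nested $q$-commutator in the root vectors $F_2,F_3,F_4,F_5$; this is the target value of $\tT_{\bs_4}^{-1}(F_2)$. Separately I would expand the claimed right-hand side $\big[B_4^\sigma, [\tT_3(B_5^\sigma), F_2]_q\big]_q$ by substituting $B_p^\sigma = F_p + K_p\tT_{\bw}^{-1}(E_p)$. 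The pure-$F$ contribution must reproduce the nested braid commutator above (after checking that $\tT_3(F_5)$ and $F_4$ assemble correctly), while every term carrying an $E$-factor is controlled by Lemma~\ref{lem:app1} and the relations $[E_k,F_j]=0$ ($k \neq j$): the cross terms in which an $E$ meets a non-matching $F$ drop out, and the single surviving term, in which the $E_4$-part of $B_4^\sigma$ meets its matching $F_4$ (arising from $\tT_3(B_5^\sigma)$ after commuting past $F_2$), contracts through $[E_4,F_4] = (K_4 - K_4')/(q-q^{-1})$ to a Cartan element.

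The main obstacle will be the bookkeeping of this surviving Cartan term: I must track the $q$-powers produced by moving the relevant $K$-factors past $F_2,F_3,F_4,F_5$ and confirm that, after collecting weights, it equals exactly $-\tT_3^{-2}(F_2) K_4 K_5' K_3'$. The factor $\tT_3^{-2}(F_2)$ appears because the $\tT_3$ braid operator acts twice on the ``$F_2$'' slot; isolating this precise factor, rather than some unstructured nested commutator, is the delicate point, and I would confirm it by computing $\tT_3^{-2}(F_2)$ directly from Proposition~\ref{prop:braid0} and matching weights on the nose. Once both $F$-identities are in place, the companion $\tT_{\bw}(E_{\tau j})K_j'$-formulas (the second halves of Proposition~\ref{prop:rktwoRij} in this type) follow mechanically by applying the operator $\cL$ of \eqref{eq:D}, together with \eqref{eq:app2}--\eqref{eq:app3} and Lemma~\ref{lem:cL}, exactly as in Lemmas~\ref{lem:AII} and \ref{lem:CII4-2}.
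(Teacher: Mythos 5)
Your overall strategy is exactly the paper's: unwind $\tT_{\bs_2}^{-1}$ and $\tT_{\bs_4}^{-1}$ along the given reduced expressions, show the $E$-carrying cross terms die against $F_2$ (resp.\ $F_4$) by the weight bookkeeping of Lemma~\ref{lem:app1}, and identify the one surviving contraction as the Cartan correction term. Your treatment of \eqref{eq:DIII1} is correct as written, and matches what the paper does (it simply refers to Lemma~\ref{lem:AII} for this part).

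For \eqref{eq:DIII2}, however, you have misidentified the surviving contraction, and if you run the computation as described you will not find the term you are looking for. First, the substitution $B_p^\sigma = F_p + K_p\tT_{\bw}^{-1}(E_p)$ drops the diagram involution: since $\tau 4 = 5$, the $E$-part of $B_4^\sigma$ is $K_4\tT_{\bw}^{-1}(E_5) = K_4\tT_3^{-1}(E_5)$, built from $E_5$ and $E_3$ — there is no $E_4$ in $B_4^\sigma$. Second, the inner bracket contains no $F_4$: one has $\tT_3(B_5^\sigma) = \tT_3(F_5) + K_3K_5E_4$, and the $E_4$-summand is annihilated already at the inner stage because $[K_3K_5E_4, F_2]_q = q[E_4,F_2]K_3K_5 = 0$; what survives is $[\tT_3(F_5),F_2]_q$, built from $F_5,F_3,F_2$. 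So the correction term does \emph{not} arise from an $[E_4,F_4]$ contraction. It arises from $K_4\tT_3^{-1}(E_5)$ (the $E$-part of $B_4^\sigma$) meeting $[\tT_3(F_5),F_2]_q$, i.e.\ from the contractions $[E_5,F_5]$ and $[E_3,F_3]$ — which is precisely why the answer carries the factors $K_5'$ and $K_3'$ (the $K_4$ being the explicit prefactor), and why the residue organizes itself as $\tT_3^{-2}(F_2)$, which still contains an uncancelled $E_3$. A quick weight check confirms this: the cross term lives in weight $-\alpha_2$, consistent with $\tT_3^{-2}(F_2)K_4K_5'K_3'$, whereas an $[E_4,F_4]$ contraction would have to leave a $K_4^{\pm1}$-type factor from the contraction itself, which is not present. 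Once you correct which $E$ meets which $F$, the rest of your plan (including deferring the companion $\tT_{\bw}(E_{\tau j})K_j'$ identities to the $\cL$-conjugation argument) is sound and coincides with the paper's proof.
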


\begin{proof}
The proof for \eqref{eq:DIII1} is similar to that of Lemma \ref{lem:AII}, and thus omitted.

We prove \eqref{eq:DIII2}. By a direct computation, we have
\begin{align}\notag
\tT_{\bs_4}^{-1}(F_2)&=\Big[\big[F_4 , [F_5 , F_3]_q\big]_q,F_2 \Big]_q=\big[F_4 , [\tT_3(F_5) , F_2]_q\big]_q.
\end{align}
Note that $B_5^\sigma=F_5+ K_5 \tT_3^{-1}(E_4)$. Since
$[\tT_3(K_5) E_4 ,F_2 ]_q =  q [E_4,F_2] K_3 K_5=0,$
we have $[\tT_3(B_5^\sigma) , F_2]_q=[\tT_3(F_5) , F_2]_q$. We now compute the first term of RHS~\eqref{eq:DIII2} as
\begin{align*}
\big[B_4^\sigma ,  [\tT_3(B_5^\sigma) ,F_2]_q\big]_q
 &=\big[B_4^\sigma , [\tT_3(F_5),F_2 ]_q\big]_q\\
 &=\big[F_4  ,[\tT_3(F_5) ,F_2 ]_q\big]_q+ \big[K_4 \tT_3^{-1}(E_4),[\tT_3(F_5)  ,F_2]_q\big]_q\\
 &=\tT_{\bs_4}^{-1}(F_2)+ K_4 \big[ \tT_3^{-1}(E_4),[\tT_3(F_5)  ,F_2]_q\big] \\
 &=\tT_{\bs_4}^{-1}(F_2)- q^{-1}\big[[E_3 ,F_3]_{q^2}, F_2\big]_q  K_4 K_5'\\
 &=\tT_{\bs_4}^{-1}(F_2)+ \tT_3^{-2}(F_2) K_4 K_5' K_3'.
\end{align*}
This proves \eqref{eq:DIII2}.
\end{proof}

\begin{lemma}\label{lem:DIII2}
We have
\begin{align}\label{eq:DIII3}
\tT_{\bs_2}^{-1}(\tT_{\bw}(E_5)K_4')&=[ \tT_3(B_2) ,\tT_{\bw}(E_5)K_4']_q,\\\label{eq:DIII4}
\tT_{\bs_4}^{-1}(\tT_{\bw}(E_2)K_2')&=\big[B_4 ,  [\tT_3(B_5) ,\tT_{\bw}(E_2)K_2']_q\big]_q -\tT_3^{-2}(\tT_{\bw}(E_2)K_2')  K_4 K_5'K_3'.
\end{align}
\end{lemma}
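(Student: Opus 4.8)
The plan is to deduce both identities of Lemma~\ref{lem:DIII2} from the $F$-side formulas \eqref{eq:DIII1}--\eqref{eq:DIII2} of Lemma~\ref{lem:DIII1} by applying the operator $\cL = \tT_{w_0}\tT_{\bw}\widehat{\tau}_0\widehat{\tau}$ of \eqref{eq:D}, exactly in the spirit of the proofs of Lemmas~\ref{lem:AII}, \ref{lem:CII4-2}, and \ref{lem:AIIIn3}. The essential tools are that $\cL$ commutes with $\tT_{\bs_2}$, $\tT_{\bs_4}$, and $\tT_3$ (Lemma~\ref{lem:cL}), together with the substitution rules $\cL(B_i^\sigma) = -q^{-(\alpha_i,\alpha_i)}B_i\tT_{\bw}(\ck_{\tau i}^{-1})$ and $\cL(F_j) = -q_j^{-2}\tT_{\bw}(E_{\tau j})K_j'\tT_{\bw}(\ck_{\tau j}^{-1})$ from \eqref{eq:app2}--\eqref{eq:app3}. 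Since $\tau$ fixes the node $2$ and interchanges $4 \leftrightarrow 5$, applying $\cL$ will convert the $F$-vectors occurring in Lemma~\ref{lem:DIII1} into the desired $E$-type vectors $\tT_{\bw}(E_2)K_2'$ and $\tT_{\bw}(E_5)K_4'$, up to explicit Cartan prefactors. Thus these two identities establish part~(2) of Proposition~\ref{prop:rktwoRij} for $(i,j) = (4,2)$ and $(i,j) = (2,4)$, respectively.

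For \eqref{eq:DIII3} I would apply $\cL$ to \eqref{eq:DIII1}. On the left $\cL$ passes through $\tT_{\bs_2}^{-1}$ and yields $\tT_{\bs_2}^{-1}\bigl(\cL(F_4)\bigr)$, while on the right it produces a $q$-commutator of $\tT_3\bigl(\cL(B_2^\sigma)\bigr)$ with $\cL(F_4)$. Substituting \eqref{eq:app2}--\eqref{eq:app3} introduces the Cartan factors $\tT_{\bw}(\ck_2^{-1})$ and $\tT_{\bw}(\ck_5^{-1})$, which are then cleared by the same weight-commutation argument as in Lemma~\ref{lem:AII}: one checks that these factors move past $\tT_3(B_2)$ and $\tT_{\bw}(E_5)K_4'$ with the prescribed $q$-powers, and that the residual Cartan monomial on the right matches $\tT_{w_{\bullet,2}}(\ck_4^{-1})$ on the left via \eqref{def:Ki}, so that everything cancels. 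This computation is a near-verbatim copy of the one producing \eqref{eq:DIII1}, and presents no real difficulty.

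The substantive step is \eqref{eq:DIII4}, obtained by applying $\cL$ to the doubly-nested commutator \eqref{eq:DIII2}, which additionally carries the Cartan-dressed term $-\tT_3^{-2}(F_2)K_4K_5'K_3'$. Using \eqref{eq:app2}--\eqref{eq:app3} on each of $B_4^\sigma \mapsto B_4$, $\tT_3(B_5^\sigma) \mapsto \tT_3(B_5)$, $F_2 \mapsto \tT_{\bw}(E_2)K_2'$, and $\tT_3^{-2}(F_2) \mapsto \tT_3^{-2}(\tT_{\bw}(E_2)K_2')$ immediately reproduces the skeleton of \eqref{eq:DIII4}. The main obstacle is the weight bookkeeping: the images $\tT_{\bw}(\ck_5^{-1})$, $\tT_{\bw}(\ck_4^{-1})$, $\tT_{\bw}(\ck_2^{-1})$ of the inserted Cartan factors, and the $\cL$-image of $K_4K_5'K_3'$, must be commuted out of the nested $q$-commutators with careful tracking of the $q$-powers --- involving $(\alpha_4,\alpha_4)$, $(\alpha_5,\alpha_5)$, $q_2^{-2}$, and the cross-terms from $\tT_3$ acting on weight vectors --- after which the accumulated Cartan monomial is shown to coincide with $\tT_{w_{\bullet,4}}(\ck_2^{-1})$ on the left via \eqref{def:Ki} and cancels. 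The remaining verification is that these powers balance so that the extra summand collapses exactly to $-\tT_3^{-2}(\tT_{\bw}(E_2)K_2')K_4K_5'K_3'$; this is the only delicate calculation, and it is entirely parallel to the simplifications carried out in \eqref{eq:app14}--\eqref{eq:app15} and \eqref{eq:AIIIn5}--\eqref{eq:AIIIn6}.
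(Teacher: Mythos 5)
Your proposal is correct and follows essentially the same route as the paper: the paper proves \eqref{eq:DIII4} precisely by applying $\cL$ to \eqref{eq:DIII2}, invoking Lemma~\ref{lem:cL} and \eqref{eq:app2}--\eqref{eq:app3}, clearing the inserted Cartan factors by weight commutations, and matching the residual monomial with $\tT_{w_{\bullet,4}}(\ck_2^{-1})$ via \eqref{def:Ki}, while \eqref{eq:DIII3} is treated as the easier analogue (as in Lemma~\ref{lem:AII}). The only nit is a transposed labeling at the end: \eqref{eq:DIII3} is the case $(i,j)=(2,4)$ and \eqref{eq:DIII4} the case $(i,j)=(4,2)$ of Proposition~\ref{prop:rktwoRij}(2), not the other way around.
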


\begin{proof}
We prove \eqref{eq:DIII4}. The proof for  \eqref{eq:DIII3} is easier and hence omitted.

%In this case, $\tau_0=\tau$, and we simplify \eqref{eq:D} as $\cL=\tT_{\bw}\tT_{w_0}$.
By Lemma~\ref{lem:cL}, the operator $\cL$ defined in \eqref{eq:D} commutes with $\tT_{3},\tT_{\bs_4}$. Applying $\cL$ to \eqref{eq:DIII2} and using \eqref{eq:app2}-\eqref{eq:app3}, we have
\begin{align}\notag
\tT_{\bs_4}^{-1} & (\tT_{\bw}(E_2)K_2')\tT_{w_{\bullet,4}}(\ck_2^{-1})\\\notag
&=q^{-4}\big[B_4 \tT_{3}(\ck_5^{-1}),  [\tT_3(B_5) \ck_4^{-1},\tT_{\bw}(E_2)K_2'\tT_{\bw }(\ck_2^{-1})]_q\big]_q\\
& \quad -\tT_3^{-2}(\tT_{\bw}(E_2)K_2')\tT_{ \bw }(\ck_2^{-1}) \cL( K_4 K_5'K_3').
 \label{eq:DIII5}
\end{align}
For a weight reason, we have
\begin{align*}
\ck_4^{-1} \tT_{\bw}(E_2)&=q\tT_{\bw}(E_2)\ck_4^{-1},\\
\tT_{\bw }(\ck_2^{-1})\tT_3(B_5) &= q\tT_3(B_5)\tT_{\bw }(\ck_2^{-1}),\\
\tT_{3}(\ck_5^{-1}) \tT_3(B_5)\tT_{\bw}(E_2)&=q^2\tT_3(B_5)\tT_{\bw}(E_2)\tT_{3}(\ck_5^{-1}) ,\\
\ck_4^{-1}\tT_{\bw }(\ck_2^{-1})B_4 &= q^2B_4\ck_4^{-1}\tT_{\bw }(\ck_2^{-1}).
\end{align*}
We also have $\cL( K_4 K_5'K_3')=q^{-1}\tT_{3}(\ck_5^{-1})\ck_4^{-1}K_4 K_5'K_3'$. Hence \eqref{eq:DIII5} is written as
\begin{align}\notag
\tT_{\bs_4}^{-1} & (\tT_{\bw}(E_2)K_2')\tT_{w_{\bullet,4}}(\ck_2^{-1})\\\notag
&=q^{-1}\big[B_4,  [\tT_3(B_5),\tT_{\bw}(E_2)K_2']_q\big]_q \tT_{\bw }(\ck_2^{-1})\tT_{3}(\ck_5^{-1})\ck_4^{-1}\\
& -q^{-1} \tT_3^{-2}(\tT_{\bw}(E_2)K_2')K_4 K_5'K_3'\tT_{ \bw }(\ck_2^{-1}) \tT_{3}(\ck_5^{-1})\ck_4^{-1}.\label{eq:DIII6}
\end{align}
Finally, by definition of $\ck_i$ \eqref{def:Ki}, we have $\tT_{w_{\bullet,4}}(\ck_2^{-1})=q^{-1}\tT_{ \bw }(\ck_2^{-1}) \tT_{3}(\ck_5^{-1})\ck_4^{-1}$. Thus, \eqref{eq:DIII6} implies \eqref{eq:DIII4}.
\end{proof}

%By the definition \eqref{def:gen} of $\ck_4$, we have $\ck_4=K_4 K_5' K_3'$. Hence, by Theorem~\ref{prop:rktwo3} and Lemma \ref{lem:DIII}, we have
%\begin{align}\label{eq:DIII3}
%\tTT_{2}^{-1}(B_4)&=\big[[B_2,F_3]_q,B_4\big]_q,\\
%\tTT_{4}^{-1}(B_2)&=\big[B_4, [\tT_3(B_5) ,B_2]_q\big]_q - \tT_3^{-2}(B_2)\ck_4.\label{eq:DIII4}
%&=\big[B_4, [\tT_3(B_5) ,B_2]_q\big]_q+ q^{-1}\big[[E_3 ,F_3]_{q^2}, B_2\big]_q \ck_4.\\
%\end{align}
%Note that the parameter $\vs_4$ is different from the one in type $AIII_3$, which contributes $q^{1/2}$ for the second coefficient in \eqref{eq:DIII4}.

%
%
\subsection{Type EIII}%braid group action

Consider the rank 2 Satake diagram of type EIII:
\begin{center}
\begin{tikzpicture}[baseline = 0, scale =1.5]
		\node at (-1,0) {$\circ$};
        \node at (-1,-0.2) {1};
		\draw (-0.95,0) to (-0.55,0);
		\node at (-0.5,0) {$\bullet$};
        \node at (-0.5,-0.2) {2};
		\draw (-0.45,0) to (-0.05,0);
		\node at (0,0) {$\bullet$};
        \node at (0.1,-0.2) {3};
		\draw (0.05,0) to (0.45,0);
		\node at (0.5,0) {$\bullet$};
		\node at (0.5,-0.2) {4};
		\draw (0.55,0) to (0.95,0);
		\node at (1,0) {$\circ$};
		\node at (1,-0.2) {5};
		\draw (0,-0.05) to (0,-0.45);
		\node at (0,-0.5) {$\circ$};
		\node at (0,-0.7) {6};
        \draw[bend left, <->, red] (-0.9,0.1) to (0.9,0.1);
        \node at (0,0.45) {$\color{red} \tau $};
	\end{tikzpicture}\\
%EIII\\
$\vs_{1,\diamond}=\vs_{5,\diamond}=-q^{-1/2},\qquad \vs_{6,\diamond}=-q^{-1}$\\
$\qquad \bs_1=s_1 \cdots s_5 \cdots s_1,\qquad \bs_6=s_6 s_3 s_2 s_4 s_3 s_6$\\
$\bw=s_3s_2 s_4 s_3 s_2 s_4 =s_2 s_4 s_3 s_2 s_4  s_3.$
\end{center}
In this case, Proposition~\ref{prop:rktwoRij} is reformulated and proved as Lemmas ~\ref{lem:EIII1}--\ref{lem:EIII2} below.

\begin{lemma}  \label{lem:EIII1}
We have
\begin{align}\label{eq:EIII1}
\tT_{\bs_6}^{-1}(F_1)&=[\tT_{23}(B_6^\sigma),F_1]_q,\\
\tT_{\bs_1}^{-1}(F_6)&=\big[\tT_4(B_5^\sigma),[\tT_{32}(B_1^\sigma),F_6]_q\big]_q -\tT_{32323}^{-1}(F_6) K_1' K_2' K_3'K_4 K_5\label{eq:EIII2}
\end{align}
\end{lemma}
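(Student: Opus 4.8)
The plan is to verify both identities by a direct computation in $\tU$, following the same two-step template used for the other rank $2$ types in this appendix: first compute $\tT_{\bs_i}^{-1}(F_j)$ explicitly as a nested $q$-commutator of Chevalley generators by peeling off the braid operators one at a time via Proposition~\ref{prop:braid0}, and then recognize the result as a bracket of the $B^\sigma$'s by reinstating the raising parts and discarding the contributions that vanish. Throughout I will use two elementary facts: $\tT_k^{-1}$ fixes $F_\ell$ whenever the nodes $k,\ell$ are non-adjacent, and for a positive root vector $x\in\tU^+_\beta$ supported on nodes $\neq \ell$ one has $[xK_\gamma,F_\ell]_q=(q^{-(\gamma,\alpha_\ell)}-q)\,F_\ell xK_\gamma$, which vanishes precisely when $(\gamma,\alpha_\ell)=-1$; this standard weight argument is the mechanism by which the raising parts of the various $B^\sigma$ either drop out or contract. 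The relevant data in EIII are that node $1$ meets only node $2$, node $6$ meets only node $3$, and $\tau$ interchanges $1\leftrightarrow 5$ and $2\leftrightarrow 4$ while fixing $3,6$, so that by \eqref{eq:Bsig} we have $B_1^\sigma=F_1+K_1\tT_{\bw}^{-1}(E_5)$, $B_5^\sigma=F_5+K_5\tT_{\bw}^{-1}(E_1)$, and $B_6^\sigma=F_6+K_6\tT_{\bw}^{-1}(E_6)$.

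For \eqref{eq:EIII1} I would expand $\tT_{\bs_6}^{-1}=\tT_6^{-1}\tT_3^{-1}\tT_2^{-1}\tT_4^{-1}\tT_3^{-1}\tT_6^{-1}$ and apply it to $F_1$. Since nodes $3,4,6$ are all non-adjacent to $1$, the three rightmost operators fix $F_1$, the step $\tT_2^{-1}$ produces $[F_2,F_1]_q$, and the remaining operators act only on the $F_2$-slot, yielding $\tT_{\bs_6}^{-1}(F_1)=\big[[[F_6,F_3]_q,F_2]_q,F_1\big]_q$. Because $F_2$ and $F_6$ commute, a short $q$-Jacobi manipulation identifies the inner triple bracket with $\tT_{23}(F_6)=\tT_2\tT_3(F_6)$, so that $\tT_{\bs_6}^{-1}(F_1)=[\tT_{23}(F_6),F_1]_q$. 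Replacing $F_6$ by $B_6^\sigma$ then only adds $[\tT_{23}(K_6\tT_{\bw}^{-1}(E_6)),F_1]_q$, whose raising part is supported on the black region and $q$-commutes to zero with $F_1$ by the weight criterion above; this gives \eqref{eq:EIII1}.

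The harder identity is \eqref{eq:EIII2}, which carries a correction term and follows the pattern of \eqref{eq:DIII2} and \eqref{eq:AIIIn2}. I would first compute $\tT_{\bs_1}^{-1}(F_6)$ directly from $\bs_1=s_1s_2s_3s_4s_5s_4s_3s_2s_1$, using that node $6$ meets only node $3$, to obtain a nested $q$-commutator. Then I would expand the right-hand side. In the inner bracket $[\tT_{32}(B_1^\sigma),F_6]_q$ the raising part of $\tT_{32}(B_1^\sigma)=\tT_{32}(F_1)+\tT_{32}(K_1\tT_{\bw}^{-1}(E_5))$ has weight supported on $\{2,3,4,5\}$ and hence $q$-commutes with $F_6$ to zero, so the inner bracket collapses to $[\tT_{32}(F_1),F_6]_q$. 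In the outer bracket the $F$-part $\tT_4(F_5)$ of $\tT_4(B_5^\sigma)$ reproduces exactly the direct computation of $\tT_{\bs_1}^{-1}(F_6)$ (to be matched via Lemma~\ref{lem:app1} and braid relations), while the raising part $\tT_4(K_5\tT_{\bw}^{-1}(E_1))$ has weight involving $\alpha_1$ and therefore contracts, through $[E_1,F_1]=(K_1-K_1')/(q-q^{-1})$, against the $F_1$ sitting inside $\tT_{32}(F_1)$; after converting the surviving outer $q$-bracket into a plain commutator by the weight argument, this contraction is designed to produce precisely $-\tT_{32323}^{-1}(F_6)\,K_1'K_2'K_3'K_4K_5$.

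The main obstacle is this last step of \eqref{eq:EIII2}: carrying out the contraction inside the $E_6$ root system and pinning down the surviving correction exactly. This requires tracking the supports and Cartan weights of the root vectors $\tT_{\bw}^{-1}(E_5)$, $\tT_{\bw}^{-1}(E_1)$ and $\tT_4\tT_{\bw}^{-1}(E_1)$ to confirm that every cross-term except the intended one vanishes, evaluating the length-five operator $\tT_{32323}^{-1}=\tT_3^{-1}\tT_2^{-1}\tT_3^{-1}\tT_2^{-1}\tT_3^{-1}$ applied to $F_6$, and checking that the accumulated $q$-powers and Cartan factors assemble into exactly $K_1'K_2'K_3'K_4K_5$. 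This bookkeeping is routine but error-prone; everything else reduces to the elementary braid formulas of Proposition~\ref{prop:braid0} together with the $q$-commutator identities of Lemma~\ref{lem:app1}.
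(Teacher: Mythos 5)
Your proposal is correct and follows essentially the same route as the paper's proof: peel off the braid operators of $\tT_{\bs_6}^{-1}$ and $\tT_{\bs_1}^{-1}$ via Proposition~\ref{prop:braid0} to get nested $q$-commutators of the $F$'s, then show the raising parts of the $B^\sigma$'s either $q$-commute to zero by the weight criterion or contract through $[E_1,F_1]=(K_1-K_1')/(q-q^{-1})$ against $\tT_{32}(F_1)$ to produce the correction term $-\tT_{32323}^{-1}(F_6)K_1'K_2'K_3'K_4K_5$. The computations you defer (the evaluation of $\tT_{32323}^{-1}(F_6)$ and the bookkeeping of Cartan factors in the contraction) are exactly the ones the paper carries out explicitly, so there is no gap in the strategy.
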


\begin{proof}
We have
\begin{align*}
\tT_{\bs_6}^{-1}(F_1)&=\tT_{632}^{-1}(F_1)=[\tT_{63}^{-1}(F_2),F_1]_q =[\tT_{23}(F_6),F_1]_q=[\tT_{23}(B_6^\sigma),F_1]_q.
\end{align*}
Hence, \eqref{eq:EIII1} follows.

We next prove \eqref{eq:EIII2}. We have
\begin{align}\notag
\tT_{\bs_1}^{-1}(F_6)
&=\tT_{1\cdots 5 \cdots 3}^{-1}(F_6)
=\tT_{123}^{-1}[\tT_{454}^{-1}(F_3),F_6]_q
=\tT_{123}^{-1}[\tT_{34}(F_5),F_6]_q
\\
&=\big[\tT_4(F_5),[\tT_{12}^{-1}(F_3),F_6]_q\big]_q
=\big[\tT_4(F_5),[\tT_{32}(F_1),F_6]_q\big]_q.
  \label{eq:EIII3}
\end{align}
Recall that $B_1^\sigma=F_1+K_1\tT_{\bw}^{-1}(E_5)$. Hence,
\begin{align}\notag
[\tT_{32}(B_1^\sigma),F_6]_q&=[\tT_{32}(F_1),F_6]_q + [K_{123} \tT_3\tT_{434}^{-1}(E_5),F_6]_q\\
&=[\tT_{32}(F_1),F_6]_q + K_{123}[ \tT_{4}^{-1}(E_5),F_6]= [\tT_{32}(F_1),F_6]_q .\label{eq:EIII4}
\end{align}
On the other hand, we have
\begin{align}\notag
\tT_{32323}^{-1}(F_6) &= \tT_{323}^{-1}[\tT_2^{-1}(F_3),F_6]_q=\tT_{323}^{-1}[\tT_3(F_2),F_6]_q\\\notag
&= - [\tT_3^{-1}(E_2 K_2'^{-1}),\tT_{232}^{-1}(F_6)]_q\\\notag
&= - q^{-1} [\tT_3^{-1}(E_2),\tT_{23}^{-1}(F_6)]_{q^2}  K_2'^{-1} K_3'^{-1}\\\notag
&= - q^{-1} \big[\tT_3^{-1}(E_2),[\tT_{2}^{-1}(F_3),F_6]_q \big]_{q^2}  K_2'^{-1} K_3'^{-1}\\\label{eq:EIII5}
&= - q^{-1} \big[[\tT_3^{-1}(E_2), \tT_{2}^{-1}(F_3)]_{q^2} ,F_6 \big]_q K_2'^{-1} K_3'^{-1},
\end{align}
We now rewrite RHS \eqref{eq:EIII2} as follows:
\begin{align*}
 \big[\tT_4(B_5^\sigma), & [\tT_{32}(B_1^\sigma),F_6]_q\big]_q
\\
\overset{\eqref{eq:EIII4}}{=}&\big[\tT_4(B_5^\sigma),[\tT_{32}(F_1 ),F_6]_q\big]_q\\
\overset{\qquad}{=}&\big[\tT_4(F_5 ),[\tT_{32}(F_1 ),F_6]_q\big]_q+ \big[K_4K_5\tT_{232}^{-1}(E_1),[\tT_{32}(F_1 ),F_6]_q\big]_q\\
\overset{\eqref{eq:EIII3}}{=}&\tT_{\bs_1}^{-1}(F_6) + K_4K_5 \big[\tT_{32}^{-1} (E_1),[\tT_{32}(F_1 ),F_6]_q\big]\\
\overset{\qquad}{=}&\tT_{\bs_1}^{-1}(F_6) - q^{-1} \big[[\tT_3^{-1}(E_2),\tT_3(F_2)]_{q^2},F_6\big]_q K_1' K_4 K_5\\
\overset{\eqref{eq:EIII5}}{=}&\tT_{\bs_1}^{-1}(F_6) + \tT_{32323}^{-1}(F_6) K_1' K_2' K_3' K_4 K_5.
\end{align*}
Therefore, the formula \eqref{eq:EIII2} follows.
\end{proof}

\begin{lemma}\label{lem:EIII2}
We have
\begin{align}\label{eq:EIII6}
\tT_{\bs_6}^{-1}\big(\tT_{\bw}(E_5)K_1'\big)&=[\tT_{23}(B_6),\tT_{\bw}(E_5)K_1']_q,\\\notag
\tT_{\bs_1}^{-1}\big(\tT_{\bw}(E_6)K_6'\big)&=\big[\tT_4(B_5),[\tT_{32}(B_1),\tT_{\bw}(E_6)K_6']_q\big]_q \\
&-\tT_{32323}^{-1}\big(\tT_{\bw}(E_6)K_6'\big) K_1' K_2' K_3'K_4 K_5.\label{eq:EIII7}
\end{align}
\end{lemma}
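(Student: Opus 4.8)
The plan is to derive the two identities \eqref{eq:EIII6}--\eqref{eq:EIII7} from the already-established part-(1) formulas \eqref{eq:EIII1}--\eqref{eq:EIII2} of Lemma~\ref{lem:EIII1} by applying the operator $\cL=\tT_{w_0}\tT_{\bw}\widehat{\tau}_0\widehat{\tau}$ from \eqref{eq:D}, exactly as in the proofs of Lemmas~\ref{lem:EIV}, \ref{lem:DIII2}, and \ref{lem:AIIIn3}. The three tools that make this work are: the commutation relations $\cL\tT_j=\tT_j\cL$ ($j\in\bI$) and $\cL\tT_{\bs_p}=\tT_{\bs_p}\cL$ ($p\in\wI$) from Lemma~\ref{lem:cL}; the transport formulas \eqref{eq:app2}--\eqref{eq:app3}, which here read $\cL(B_1^\sigma)=-q^{-2}B_1\tT_{\bw}(\ck_5^{-1})$, $\cL(B_5^\sigma)=-q^{-2}B_5\tT_{\bw}(\ck_1^{-1})$, $\cL(B_6^\sigma)=-q^{-2}B_6\tT_{\bw}(\ck_6^{-1})$, and $\cL(F_6)=-q^{-2}\tT_{\bw}(E_6)K_6'\tT_{\bw}(\ck_6^{-1})$ (using $\tau 6=6$, $\tau 1=5$, $q_6=q$); and the fact that each of $\tT_{\bs_6}^{-1},\tT_{\bs_1}^{-1}$ is an algebra homomorphism, so that $\cL$ may be pushed through the commutators on the right-hand sides of \eqref{eq:EIII1}--\eqref{eq:EIII2}.

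For the first, simpler identity \eqref{eq:EIII6}, applying $\cL$ to \eqref{eq:EIII1} and using that $\cL$ commutes with $\tT_{\bs_6}$ and with $\tT_{23}$ turns the left side into $\tT_{\bs_6}^{-1}(\cL(F_1))$ and the right side into $[\tT_{23}(\cL(B_6^\sigma)),\cL(F_1)]_q$; substituting the transport formulas and cancelling the common Cartan factor via a single weight commutation yields the stated commutator $[\tT_{23}(B_6),\tT_{\bw}(E_5)K_1']_q$. As with the analogous $\bs_1$-identity \eqref{eq:AIIIn3}, this formula can alternatively be checked directly from the reduced expression of $\bs_6$ and Lemma~\ref{lem:app1}, so either route is available.

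The substance is \eqref{eq:EIII7}. Applying $\cL$ to \eqref{eq:EIII2}, commuting it past $\tT_{\bs_1}^{-1}$ on the left and past $\tT_4,\tT_{32}$ and $\tT_{32323}^{-1}$ on the right, and inserting the four transport formulas above produces an identity of the schematic shape
\[
\tT_{\bs_1}^{-1}\big(\tT_{\bw}(E_6)K_6'\big)\,\tT_{w_{\bullet,1}}(\ck_6^{-1})
=\big[\tT_4(B_5),\,[\tT_{32}(B_1),\,\tT_{\bw}(E_6)K_6']_q\big]_q\,\mathcal{C}
-\tT_{32323}^{-1}\big(\tT_{\bw}(E_6)K_6'\big)\,\mathcal{C}',
\]
where $\mathcal{C},\mathcal{C}'$ are monomials in $\ck_1^{-1},\ck_5^{-1},\tT_{\bw}(\ck_6^{-1})$ and the Cartan generators $K_1',\dots,K_5$. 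It then remains to commute each Cartan factor past the root vectors $B_1,B_5,\tT_{\bw}(E_6)K_6'$ (picking up the appropriate powers of $q$ from the weights, read off from the EIII Cartan matrix and $\bw=s_3s_2s_4s_3s_2s_4$), and finally to evaluate $\tT_{w_{\bullet,1}}(\ck_6^{-1})$ through the definition \eqref{def:Ki} of $\ck_i$, so that it matches $\mathcal{C}$ and $\mathcal{C}'$ and the common Cartan factor cancels from both sides, leaving precisely \eqref{eq:EIII7}. The main obstacle is this last weight-bookkeeping step: tracking the exact $q$-powers that arise when the three distinct Cartan elements are moved past the three root vectors, and confirming that $\tT_{w_{\bullet,1}}(\ck_6^{-1})$ factors as the common Cartan monomial appearing on the right. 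This is entirely routine but delicate — exactly the kind of computation relegated to the appendix — and mirrors the corresponding simplifications carried out in \eqref{eq:app18}, \eqref{eq:DIII6}, and \eqref{eq:AIIIn6}.
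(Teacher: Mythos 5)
Your proposal is correct and follows essentially the same route as the paper: the paper's proof of Lemma~\ref{lem:EIII2} likewise applies $\cL$ to \eqref{eq:EIII1}--\eqref{eq:EIII2}, invokes Lemma~\ref{lem:cL} and the transport formulas \eqref{eq:app2}--\eqref{eq:app3}, and then performs exactly the weight-commutation bookkeeping and the evaluation of $\tT_{w_{\bullet,6}}(\ck_5^{-1})$ and $\tT_{w_{\bullet,1}}(\ck_6^{-1})$ that you describe.
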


\begin{proof}
 Recall from Lemma~\ref{lem:cL} that the operator $\cL$ defined in \eqref{eq:D} commutes with each of the automorphisms $\tT_4,\tT_{32},\tT_{23},\tT_{\bs_1},\tT_{\bs_6}$.

We first prove the formula \eqref{eq:EIII6}.
%In this case, $\tau_0=\tau$ and $\cL=\tT_{\bw}\tT_{w_0}$.
 Applying $\cL$ to \eqref{eq:EIII1} and then using \eqref{eq:app2}-\eqref{eq:app3}, we obtain
\begin{align}
\tT_{\bs_6}^{-1} & \big(\tT_{\bw}(E_5)K_1'\big) \tT_{w_{\bullet,6}}(\ck_5^{-1})
\notag \\
&= -q^{-2}[\tT_{23}(B_6)\tT_{432}(\ck_6^{-1}),\tT_{\bw}(E_5)K_1'\tT_{ \bw}(\ck_5^{-1})]_q
 \notag \\
&= -q^{-1}[\tT_{23}(B_6),\tT_{\bw}(E_5)K_1']_q\tT_{432}(\ck_6^{-1})\tT_{ \bw}(\ck_5^{-1}),
 \label{eq:EIII9}
\end{align}
where the last equality follows by a weight consideration.
On the other hand, we have
$\tT_{w_{\bullet,6}}(\ck_5^{-1})=-q^{-1}\tT_{432}(\ck_6^{-1})\tT_{ \bw}(\ck_5^{-1}).$
Thus the formula \eqref{eq:EIII6} follows from \eqref{eq:EIII9}.

We next prove the formula \eqref{eq:EIII7}.
Applying $\cL$ in the identity \eqref{eq:D} to \eqref{eq:EIII2} and using \eqref{eq:app2}-\eqref{eq:app3}, we obtain
\begin{align}\notag
&\tT_{\bs_1}^{-1}\big(\tT_{\bw}(E_6)K_6'\big)\tT_{w_{\bullet,1}}(\ck_6^{-1})\\\notag
&=q^{-4}\big[\tT_4(B_5)\tT_4\tT_{\bw}(\ck_1^{-1}),[\tT_{32}(B_1)\tT_{32}\tT_{\bw}(\ck_5^{-1}),\tT_{\bw}(E_6)K_6'\tT_{\bw}(\ck_6^{-1})]_q\big]_q \\
&-\tT_{32323}^{-1}\big(\tT_{\bw}(E_6)K_6'\big)\tT_{\bw}(\ck_6^{-1}) \cL(K_1' K_2' K_3'K_4 K_5).\label{eq:EIII10}
\end{align}
Note that $\tT_4\tT_{\bw}(\ck_1^{-1})= \tT_{32}(K_1^{-1})\tT_4(K_5')^{-1}$ and $\tT_{32}\tT_{\bw}(\ck_5^{-1})= \tT_4(K_5^{-1})\tT_{32}(K_1')^{-1}$. We also note that $K_1' K_2' K_3'K_4 K_5=\tT_{32}(K_1')\tT_4(K_5)$ and then $\cL(K_1' K_2' K_3'K_4 K_5)=q^{-1}\tT_{4}(K_5')^{-1}\tT_{32}(K_1^{-1})$. Hence, \eqref{eq:EIII10} can be rewritten as
\begin{align}\notag
&\tT_{\bs_1}^{-1}\big(\tT_{\bw}(E_6)K_6'\big)\tT_{w_{\bullet,1}}(\ck_6^{-1})\\\notag
&=q^{-4}\big[\tT_4(B_5)\tT_{32}(K_1^{-1})\tT_4(K_5')^{-1},[\tT_{32}(B_1 K_1'^{-1}) \tT_4(K_5^{-1}),\tT_{\bw}(E_6)K_6'\tT_{\bw}(\ck_6^{-1})]_q\big]_q \\
&\quad-q^{-1}\tT_{32323}^{-1}\big(\tT_{\bw}(E_6)K_6'\big)\tT_{\bw}(\ck_6^{-1}) \tT_{4}(K_5')^{-1}\tT_{32}(K_1^{-1}).\label{eq:EIII11}
\end{align}
For a weight reason, we have
\begin{align*}
\tT_4(K_5^{-1})\tT_{32}(K_1')^{-1}\tT_{\bw}(E_6)&=q\tT_{\bw}(E_6)\tT_4(K_5^{-1})\tT_{32}(K_1')^{-1},
\\
\tT_{\bw}(\ck_6^{-1})\tT_{32}(B_1)&=q\tT_{32}(B_1)\tT_{\bw}(\ck_6^{-1}),
\\
\tT_{32}(K_1)\tT_4(K_5')[\tT_{32}(B_1),\tT_{\bw}(E_6)K_6']_q&=q^{-2}[\tT_{32}(B_1),\tT_{\bw}(E_6)K_6']_q\tT_{32}(K_1)\tT_4(K_5'),
\\
\tT_4(K_5^{-1})\tT_{32}(K_1')^{-1}\tT_{\bw}(\ck_6^{-1})\tT_4(B_5)&=q^2\tT_4(B_5)\tT_4(K_5^{-1})\tT_{32}(K_1')^{-1}\tT_{\bw}(\ck_6^{-1}).
\end{align*}
Using the above four identities, we rewrite \eqref{eq:EIII11} as
\begin{align}\notag
&\tT_{\bs_1}^{-1}\big(\tT_{\bw}(E_6)K_6'\big)\tT_{w_{\bullet,1}}(\ck_6^{-1})\\\notag
&=q^{-1}\big[\tT_4(B_5),[\tT_{32}(B_1),\tT_{\bw}(E_6)K_6']_q\big]_q \tT_{\bw}(\ck_6^{-1})\tT_{32}(K_1 K_1')^{-1}\tT_4(K_5 K_5')^{-1} \\
&-q^{-1}\tT_{32323}^{-1}\big(\tT_{\bw}(E_6)K_6'\big) K_1' K_2' K_3'K_4 K_5\tT_{\bw}(\ck_6^{-1})\tT_{32}(K_1 K_1')^{-1}\tT_4(K_5 K_5')^{-1}.\label{eq:EIII12}
\end{align}
Moreover, we have
$\tT_{w_{\bullet,1}}(\ck_6^{-1})=q^{-1} \tT_{\bw}(\ck_6^{-1})\tT_{32}(K_1 K_1')^{-1}\tT_4(K_5 K_5')^{-1}.$
Thus, \eqref{eq:EIII12} implies the desired formula \eqref{eq:EIII7}.
\end{proof}

%%%%%%%%%%%%%%%%%%%%%%%%%%%%%%%

\end{document}